\numberwithin{itemcounter}{subsection}
\theoremstyle{plain}
\newtheorem*{conjecture}{Conjecture}
\newtheorem*{thmA}{Theorem A}
\newtheorem*{thmB}{Theorem B}
\newtheorem*{thmC}{Theorem C}
\newtheorem{theorem}{Theorem}[section]
\newtheorem{lemma}[theorem]{Lemma}
\newtheorem{definition-lemma}[theorem]{Definition-Lemma}
\newtheorem{proposition}[theorem]{Proposition}
\newtheorem{corollary}[theorem]{Corollary}
\theoremstyle{definition}
\newtheorem{definition}[theorem]{Definition}
\theoremstyle{remark}
\newtheorem{remark}[theorem]{Remark}
\numberwithin{equation}{section}
\def\bbC{\mathbb{C}}
\def\bbF{\mathbb{F}}
\def\bbN{\mathbb{N}}
\def\bbZ{\mathbb{Z}}
\def\scrC{\mathscr{C}}
\def\scrH{\mathscr{H}}
\def\scrI{{I}}
\def\scrL{\mathscr{L}}
\def\scrP{\mathscr{P}}
\def\scrS{\mathscr{S}}
\def\scrU{\mathscr{U}}
\def\scrQU{\mathscr{QU}}
\def\frakg{\mathfrak{g}}
\def\fraks{\mathfrak{s}}
\def\frakl{\mathfrak{l}}
\def\frakA{\mathfrak{A}}
\def\frakC{\mathfrak{C}}
\def\frakE{\mathfrak{E}}
\def\frakS{\mathfrak{S}}
\def\frakg{\mathfrak{g}}
\def\frakh{\mathfrak{h}}
\def\frakl{\mathfrak{l}}
\def\bfL{\mathbf{L}}
\def\bfB{\mathbf{B}}
\def\bfF{\mathbf{F}}
\def\bfG{\mathbf{G}}
\def\bfH{\mathbf{H}}
\def\bfN{\mathbf{N}}
\def\bfO{\mathbf{O}}
\def\bfP{\mathbf{P}}
\def\bfS{\mathbf{S}}
\def\bfT{\mathbf{T}}
\def\bfU{\mathbf{U}}
\def\bfV{\mathbf{V}}
\def\bfW{\mathbf{W}}
\def\bfY{\mathbf{Y}}
\def\bfP{\mathbf{P}}
\def\bfT{\mathbf{T}}
\def\bfV{\mathbf{V}}
\def\geqs{\geqslant}
\def\leqs{\leqslant}
\def\simto{\overset{\sim}\to}
\def\bfSp{{\mathbf{Sp}}}
\def\bfSO{{\mathbf{SO}}}
\def\O{\operatorname{O}\nolimits}
\def\SO{\operatorname{SO}\nolimits}
\def\Sp{\operatorname{Sp}\nolimits}
\def\Irr{{\operatorname{Irr}\nolimits}}
\def\sp{{\operatorname{sp}\nolimits}}
\def\e{{\operatorname{e}\nolimits}}
\def\op{{\operatorname{op}\nolimits}}
\def\GL{\operatorname{GL}\nolimits}
\def\diag{\operatorname{diag}\nolimits}
\def\Mat{{\operatorname{Mat}\nolimits}}
\def\H{\operatorname{H}\nolimits}
\def\P{\operatorname{P}\nolimits}
\def\Q{\operatorname{Q}\nolimits}
\def\R{\operatorname{R}\nolimits}
\def\X{\operatorname{X}\nolimits}
\def\Z{\operatorname{Z}\nolimits}
\def\cl{{\operatorname{cl}\nolimits}}
\def\rk{{\operatorname{rk}\nolimits}}
\def\rank{{\operatorname{rank}\nolimits}}
\def\min{{\operatorname{min}\nolimits}}
\def\Hom{\operatorname{Hom}\nolimits}
\def\End{\operatorname{End}\nolimits}
\def\Res{\operatorname{Res}\nolimits}
\def\res{\operatorname{res}\nolimits}
\def\Ind{\operatorname{Ind}\nolimits}
\def\Id{\operatorname{Id}\nolimits}
\def\id{\operatorname{id}\nolimits}
\def\Mod{\operatorname{-Mod}\nolimits}
\def\umod{\operatorname{-umod}\nolimits}
\def\qumod{\operatorname{-qumod}\nolimits}
\def\mod{\operatorname{-mod}\nolimits}
\def\idun{\mathbf{1}}
\newcommand{\cF}{{\mathcal{F}}}
\newcommand{\cC}{{\mathcal{C}}}
\newcommand{\SL}{{\operatorname{SL}}}
\let\la=\lambda
\renewcommand*{\H}{\mathrm{H}}
\newcommand*{\K}{\mathcal{K}}
\newcommand*{\comp}{\models}
\newcommand*{\tuple}[1]{\boldsymbol{#1}}
\newcounter{BKproof}
\begin{document}
\title{Categorical actions and derived equivalences for finite odd-dimensional orthogonal groups}

\author{Pengcheng Li, Yanjun Liu and Jiping Zhang}
\address{School of Mathematical Sciences, Peking University, Beijing 100871, China}
\email{pcli17@pku.edu.cn}

\address{School of Mathematics and Statistics, Jiangxi Normal University, Nanchang, China}
\email{liuyanjun@pku.edu.cn}

\address{School of Mathematical Sciences, Peking University,
  Beijing 100871, China}
\email{jzhang@pku.edu.cn}

\thanks{The first and third authors gratefully acknowledge the support by National Key R\&D Program of China (Grant No. 2020YFE0204200) and NSFC (11631001 \& 11871083),
and the second author gratefully acknowledges
the support by an Alexander von Humboldt Fellowship for Experienced Researchers.
Also, the second author deeply thanks the support by
NSFC (12171211) and the Natural Science Foundation of Jiangxi Province (20192ACB21008).}

\keywords{Categorical action, Brou\'{e}'s abelian defect group conjecture, quiver Hecke algebra,
 orthogonal group, linear prime}

\subjclass[2010]{20C20, 20C33}

\begin{abstract} In this paper we prove that
Brou\'{e}'s abelian defect group conjecture holds true for the
finite odd-dimensional
orthogonal groups $\SO_{2n+1}(q)$ at linear primes with $q$ odd.
We first make use of the reduction theorem of Bonnaf\'{e}-Dat-Rouquier
to reduce the problem to isolated blocks.
Then we construct a categorical action of a Kac-Moody algebra
 on the category of quadratic unipotent representations of the  various groups $\SO_{2n+1}(q)$
 in non-defining characteristic, by generalizing
  the corresponding work of Dudas-Varagnolo-Vasserot for unipotent representations.
 This is one of the main ingredients of our work which may be of independent interest.
 To obtain derived equivalences
 of blocks and their Brauer correspondents, we define and investigate
 isolated RoCK blocks.
 Finally, we establish the desired derived equivalence based on
 the work of Chuang-Rouquier  that categorical actions provide
derived equivalences between  certain weight spaces.
\end{abstract}
\maketitle

\pagestyle{myheadings}
\markboth{Categorical actions and derived equivalences}{P. Li, Y. Liu and J. Zhang}


\section*{Introduction}\label{intro}

In the modular representation theory of finite groups,
there are some famous conjectures, one of which is
Brou\'{e}'s abelian defect group conjecture
put forward by Brou\'{e} \cite{broue1990} around 1988.

\begin{conjecture}
[Brou\'e's abelian defect group conjecture \cite{broue1990}] \label{conj:BADGC}
Let $G$ be a finite group, $\ell$ a prime and $(K, \mathcal O, k)$  a
splitting $\ell$-modular system for all subgroups of $G$.
Let $B$ be a block algebra of $\mathcal OG$ or $kG$ with a defect group $P$.
If $P$ is abelian, then $B$ is derived equivalent to its
Brauer correspondent in $N_G(P)$. \qed
\end{conjecture}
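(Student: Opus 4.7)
The plan is to establish the statement not in full generality---which remains open---but in the form the abstract announces: for $G=\SO_{2n+1}(q)$ with $q$ odd and $\ell$ a linear prime. The first step is a Morita-theoretic reduction: given a block $B$ of $kG$ with abelian defect group $P$, I would apply the Bonnaf\'e--Dat--Rouquier theorem to replace $B$ by a Morita-equivalent block $\tilde B$ of $kL^F$, where $L$ is the centralizer in the dual group of a suitable semisimple $\ell'$-element, and $\tilde B$ covers an \emph{isolated} block. Since this reduction is compatible with Brauer correspondence, it suffices to prove the derived equivalence for the isolated blocks that remain. In the linear prime setting $L$ is a product of general linear/unitary groups and a smaller odd-dimensional orthogonal group, so the residual case is that of quadratic unipotent blocks of some $\SO_{2m+1}(q)$.

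The second step is to construct a categorical action of a suitable Kac--Moody algebra $\widehat{\frakg}$ on $\bigoplus_{m\geq 0} k\SO_{2m+1}(q)\qumod$ via Harish-Chandra induction and restriction along parabolic subgroups with Levi factor $\GL_1(q)\times\SO_{2m-1}(q)$, following the blueprint of Dudas--Varagnolo--Vasserot in the unipotent case: decompose the biadjoint pair into generalized eigenfunctors for the centre of an associated Hecke algebra, verify the $\fraksl_2$ and quiver-Hecke relations on each eigenspace, and identify the type of $\widehat{\frakg}$ from the residue combinatorics of quadratic unipotent symbols. The Chuang--Rouquier theorem then supplies derived equivalences between weight spaces related by simple reflections. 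The third step is to define an \emph{isolated RoCK block} as a quadratic unipotent block whose core datum lies deep enough in a fundamental alcove, and to show by a local/combinatorial argument that such a block is Morita (hence derived) equivalent to its Brauer correspondent---a wreath-product-like algebra built from a local block of a smaller orthogonal group together with a symmetric group factor encoding the weight. Combining the three steps, an arbitrary block of $\SO_{2n+1}(q)$ is reduced to a quadratic unipotent block, moved by Chuang--Rouquier derived equivalences to a RoCK block, and then matched with its Brauer correspondent; transporting back through the Bonnaf\'e--Dat--Rouquier reduction yields the desired equivalence.

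The main obstacle I expect is step two: Dudas--Varagnolo--Vasserot work with unipotent symbols of fixed defect, whereas quadratic unipotent characters are parametrized by \emph{pairs} of symbols and the associated Hecke algebras are of type $B$ with unequal parameters, so the Kac--Moody algebra in play is a twisted affine algebra rather than the untwisted one appearing in the unipotent setting. Computing the Hecke eigenvalues correctly, verifying the relations on each $\fraksl_2$-string, and pinning down the precise twisted affine type is where the bulk of the technical work will sit; a secondary difficulty will be ensuring that the Bonnaf\'e--Dat--Rouquier Morita equivalence interacts well enough with the categorical action so that the Chuang--Rouquier derived equivalence on the isolated/RoCK side can be pulled back to the original block.
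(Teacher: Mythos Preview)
Your three-step strategy---Bonnaf\'e--Dat--Rouquier reduction to isolated blocks, a categorical Kac--Moody action on quadratic unipotent representations, and isolated RoCK blocks as the anchor for Chuang--Rouquier---is exactly the architecture of the paper.

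Your anticipated obstacle, however, is miscalibrated in a way worth flagging. The Kac--Moody algebra is \emph{not} twisted affine: in the linear prime case it is a direct sum $(\widetilde{\fraks\frakl}_d)^{\oplus 4}$, and over $K$ it is $(\fraks\frakl_\bbZ)^{\oplus 4}$. The reason is that quadratic unipotent characters are labeled by \emph{pairs} of symbols $(\Theta_+,\Theta_-)$, and the paper handles this not by a single induction functor with type-$B$ Hecke relations, but by introducing \emph{two} biadjoint pairs $(E,F)$ and $(E',F')$: the functor $F$ inducts through $L_{r,1}\simeq G_r\times\GL_1(q)$ with the trivial character of $\bbF_q^\times$ and moves $\Theta_+$, while $F'$ uses the Legendre symbol $\zeta$ and moves $\Theta_-$. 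The representation datum is then the tuple $(E,F,X,T;E',F',X',T';H,H')$ where $H,H'$ are natural transformations intertwining $FF'$ and $F'F$; the resulting quiver is a disjoint union, which produces the direct-sum Lie algebra. A further subtlety is that this datum cannot be built globally on $\bigoplus_n R\SO_{2n+1}(q)\mod$ when $q\equiv -1\pmod 4$ because of sign issues with the spinor norm; the paper first constructs it on $\bigoplus_n R\O_{2n+1}(q)\mod$ and then descends blockwise to $\scrQU^{\SO}_R$. The genuinely hard computation is determining the eigenvalues of $X$ and $X'$ on the cuspidal modules $E_{t_+,t_-}$ (which are a priori known only up to sign), done via Brauer-tree arguments at carefully chosen auxiliary primes.
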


In 2008, Chuang and Rouquier \cite{CR} proved that Brou\'e's abelian defect group conjecture is true
in its stronger version of splendid derived equivalences for symmetric groups and for
general linear groups over finite fields in the non-defining characteristic case.
In \cite{DVV}, Dudas, Varagnolo and Vasserot proved that
Brou\'e's abelian defect group conjecture is true for
unipotent blocks of finite unitary groups $U_n(q)$ at linear primes.
Since the reduction theorem of
Bonnaf\'{e}-Dat-Rouquier \cite[Theorem 7.7]{BDR17} reduces Brou\'e's abelian defect group conjecture to unipotent blocks
in (and only in) groups of type $A$, they actually proved that
Brou\'e's abelian defect group conjecture is true for finite unitary groups $U_n(q)$ at linear primes.
In their another paper \cite{DVV2},
Dudas, Varagnolo and Vasserot proved that
Brou\'e's abelian defect group conjecture is true for
unipotent blocks
of finite classical groups $\SO_{2n+1}(q)$ and $\Sp_{2n}(q)$  at odd linear primes
with $q$ odd.

\smallskip

 Throughout this paper,
we always let $\ell$ be a prime and $(K, \mathcal O,k)$ be an $\ell$-modular system
such that both $K$ and $k$ contain all roots of unity.
The main purpose of this paper is to prove the following:
\begin{thmA}\label{thm:Main}
Brou\'{e}'s abelian defect group conjecture is true for
 all $\ell$-blocks of finite
 odd-dimensional orthogonal groups $\SO_{2n+1}(q)$  at linear primes $\ell$ with $q$ odd. \qed
\end{thmA}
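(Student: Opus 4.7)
The plan is to follow the strategy outlined in the abstract, extending the Dudas--Varagnolo--Vasserot approach from unipotent blocks to arbitrary $\ell$-blocks of $\SO_{2n+1}(q)$. First I would invoke the Bonnaf\'e--Dat--Rouquier reduction theorem to reduce Brou\'e's conjecture for an arbitrary $\ell$-block $B$ of $\SO_{2n+1}(q)$ to the case of \emph{isolated} blocks, that is, blocks in Lusztig series attached to quasi-isolated semisimple elements of the dual group $\bfSO_{2n+1}^{*}$. The Bonnaf\'e--Dat--Rouquier machinery transports derived equivalences together with matching of defect groups and Brauer correspondents from a suitable block of a disconnected Levi-type subgroup, so Brou\'e for $B$ follows from Brou\'e for an isolated block of a smaller orthogonal group, combined with general linear factors for which the conjecture is already known by Chuang--Rouquier. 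The isolated blocks of $\SO_{2n+1}(q)$ are indexed by semisimple elements of order dividing $2$ in the dual group, and their representation theory is governed by the \emph{quadratic unipotent} characters, so the task reduces to establishing Brou\'e's conjecture for quadratic unipotent $\ell$-blocks at linear primes.

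Second, I would construct a categorical action of a Kac--Moody algebra $\frakg$, of type determined by the order of $q$ modulo $\ell$ and by the chosen quasi-isolated element, on the direct sum over $n \geq 0$ of the categories of quadratic unipotent $k\SO_{2n+1}(q)$-modules. The biadjoint pair $(\calE,\calF)$ should arise from Harish-Chandra induction and restriction along parabolic subgroups with a $\GL_1$ factor, decomposed into eigenspaces indexed by $\ell$-regular residues; the Hecke-algebra-type relations should come from identifying the endomorphism algebras of the induced modules with cyclotomic Hecke algebras, whose parameters are dictated by the quadratic unipotent cuspidal supports. This is the central construction of the paper and generalizes \cite{DVV2} from the principal (unipotent) Harish-Chandra series to the quadratic unipotent series. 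Once in place, each isotypic summand of the action becomes an integrable $\frakg$-module identified, in terms of Fock-space data attached to the cuspidal pairs, with a tensor product of basic representations.

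Finally, following the Chuang--Rouquier paradigm, I would introduce and analyze \emph{isolated RoCK blocks}, the quadratic unipotent analogues of Rouquier's core blocks for $\mathfrak{S}_n$ and of the RoCK unipotent blocks of \cite{DVV2}. For such RoCK blocks one expects an explicit Morita-type description involving a wreath product of a small local algebra with a symmetric group, which verifies Brou\'e's conjecture for them by direct inspection relative to the Brauer correspondent. Each isolated quadratic unipotent block sits, by construction, in the same Weyl-group orbit inside the weight lattice of $\frakg$ as a RoCK block, so Chuang--Rouquier's theorem, that the composed divided-power functors built from $\calE$ and $\calF$ implement the Weyl-group action as derived equivalences between weight spaces, produces the required derived equivalence between the given block and the RoCK block. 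Composing this with the explicit RoCK derived equivalence yields Brou\'e's conjecture for every isolated quadratic unipotent block, and hence, through the first step, for every $\ell$-block of $\SO_{2n+1}(q)$. The main obstacle I anticipate is the second step: the construction of the categorical action outside the unipotent series demands careful control of cuspidal quadratic unipotents, of the parameters of the associated Hecke algebras, and of the symbol combinatorics governing Harish-Chandra branching, all of which go genuinely beyond what is handled in \cite{DVV2}.
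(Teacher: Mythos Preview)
Your proposal follows essentially the same strategy as the paper: Bonnaf\'e--Dat--Rouquier reduction to isolated (= quadratic unipotent) blocks, construction of a Kac--Moody categorical action on quadratic unipotent representations, isolated RoCK blocks verified directly, and Chuang--Rouquier derived equivalences along Weyl-group orbits to reach all isolated blocks.

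Two points need attention. First, in the Bonnaf\'e--Dat--Rouquier reduction the Levi subgroup has the form $\SO_{2m+1}(q)\times\prod_i\GL_{n_i}(q^{m_i})\times\prod_j U_{n_j}(q^{m_j})$, so unitary factors appear as well; you need that $\ell$ remains a linear prime for each $U_{n_j}(q^{m_j})$ (which it does, since $f$ odd forces the order of $-q^{m_j}$ modulo $\ell$ to be even) and that Brou\'e holds there, which is the Dudas--Varagnolo--Vasserot result for unitary groups at linear primes, not just Chuang--Rouquier for $\GL$. Second, your final composition gives a derived equivalence between the given block $B$ and the Brauer correspondent $b'$ of the RoCK block $B'$, not the Brauer correspondent $b$ of $B$ itself; you still need $b$ and $b'$ to be (Morita or derived) equivalent. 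The paper closes this gap by showing that both $b$ and $b'$ are Morita equivalent to the same explicit wreath-product block of $(N_{\GL_d(q).2}(P_0)\wr\frakS_{w_+})\times(N_{\GL_d(q).2}(P_0)\wr\frakS_{w_-})$, since the local data depend only on the degree vector $(w_+,w_-)$, which is constant along the Weyl orbit.
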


The proof of Theorem A  depends on
 the theory of categorical action,
developed by Chuang and Rouquier (see \cite{CR,R08}).
 Let $G_n:=\SO_{2n+1}(q)$, and $R$ be  one of $K, \mathcal O$ or $k$.
Using the tower of inclusion of groups
$\cdots \subset \mathrm{G}_n \subset  \mathrm{G}_{n+1} \subset \cdots,$
one can form the abelian categories
$$\scrU_R:=\bigoplus_{n\in \bbN}RG_n\umod~\mbox{and}~\scrQU_R:=\bigoplus_{n\in \bbN}RG_n\qumod$$
of unipotent representations and quadratic unipotent representations
of the various  groups $G_n$, respectively. 

\smallskip

To prove Theorem A, we construct
 a representation datum $$(E,F,T,X;E',F',T',X';H,H')$$ on $\scrQU_R$ based on
the representation datum $(E,F,T,X)$  of Dudas, Varagnolo and Vasserot \cite{DVV2} on $\scrU_R.$
Here  $(E,F)$ and $(E',F')$
are adjoint pairs, where $F$ and $F'$ are modified from a Harish-Chandra induction
from $G_n$ to $G_{n+1}$ through the  embedding of specific Levi subgroups,
 and $E$ and $E'$ are from their adjoint restrictions.
The
natural transformations $X$ of $F$  and $X'$ of $F'$
have to be defined blockwisely. However, the natural transformations $T\in\End(F^2)$,
$T'\in\End((F')^2)$, $H\in \Hom(FF',F'F)$ and $H'\in \Hom(F'F,FF')$
can be globally constructed.
Interestingly, the functor arising from the spinor norm of $\SO_{2n+1}(q)$
implies the symmetry between the functors $F$ and $F'$ and
between the natural transformations
$X$ and $X'$, $T$ and $T'$, and $H$ and $H'$.
The above representation datum  essentially comes from the
representation datum on $\bigoplus_{n\in \bbN}R\rm{O}_{2n+1}(q)\mod$
whose construction is global and is also valid for groups $\Sp_{2n}(q)$ and $\O_{2n}^{\pm}(q)$.
See \S \ref{cha:repdatum} for more details.

\smallskip

In general, when evaluating at a cuspidal quadratic unipotent module $E_{t_+,t_-}$,
the eigenvalues of $X(E_{t_+,t_-})$  on $F(E_{t_+,t_-})$ and of
$X'(E_{t_+,t_-})$ on $F'(E_{t_+,t_-})$ are determined up to a sign \cite{L77}.
Using the compatibility of the Jordan decomposition of characters and Brauer trees,
we determine all of the signs under suitable modular systems, so that all eigenvalues involved are powers of $-q$ (see Theorem \ref{thm:HL-BC}).
Indeed, it turns out that there are explicit isomorphisms among
the  following algebras (see Theorems \ref{thm:connectwith-HL} and \ref{thm:F^nu}):
\begin{itemize}[leftmargin=8mm]
 \item the endomorphism algebra of the evaluation of a combinatorial sum of $F$s and $F'$s
at $E_{t_+,t_-}$,
\item some cyclotomic quiver Hecke algebra of disconnected quiver,
and
\item a generic algebra introduced by  Howlett and Lehrer to prove
 Comparison Theorem.
 \end{itemize}

 \smallskip
We shall prove that the generalized eigenvalues of $X$ on $F$ and $X'$ on $F'$ both belong to $q^{\bbZ}\cup -q^{\bbZ}$.
Corresponding to the disjoint union of quivers associated to them,
we can form a Kac-Moody algebra $\frakg$ isomorphic to  $\fraks\frakl_{\bbZ}^{\oplus 4}$  (resp. $(\widehat{\fraks\frakl}_{f})^{\oplus 4}$
or $\widehat{\fraks\frakl}_{f}^{\oplus 2}$)
in characteristic zero (resp. in the linear or unitary case),
where $f$ is the order of $q$ modulo $\ell$
(see Theorem \ref{thm:Linearprime} and \ref{thm:unitary}).
\smallskip

\begin{thmB}\label{thm:ginfinityB-intro}Let $R=K$ or $k.$ Let $\mathfrak{A}(\frakg)$ be
the Kac-Moody $2$-category of $\frakg$. Then the tuple $$(E,F,T,X;E',F',T',X';H,H')$$
 endows $\scrQU_R$
with a structure of $\frakA(\frakg)$-categorification,
which is isomorphic to a direct sum of
some minimal categorifications of $\frakA(\frakg).$
\end{thmB}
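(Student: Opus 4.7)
The plan is to establish Theorem B in two logically independent stages: first, verify that the tuple $(E,F,T,X;E',F',T',X';H,H')$ satisfies the axioms of an $\frakA(\frakg)$-categorification in the sense of Rouquier \cite{R08}; second, exhibit an explicit decomposition into minimal categorifications indexed by cuspidal quadratic unipotent modules.

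For the first stage, I would invoke Rouquier's criterion: an $\frakA(\frakg)$-categorification on an abelian category is specified by a biadjoint pair of exact functors whose endomorphism algebras of powers realize a cyclotomic quiver Hecke algebra for the underlying quiver, together with divided-power decomposition data dictated by the eigenspaces of $X$. Since $\frakg$ is, by hypothesis, the disjoint union of the quivers governing the eigenvalues of $X$ on $F$ and of $X'$ on $F'$, the required categorification decomposes into two independent pieces bridged by the isomorphisms $H,H'$. Concretely, I would check in turn: (i) $(E,F,T,X)$ defines a categorification on $\scrU_K$ with the quiver of eigenvalues of $X$ (this is precisely the content of \cite{DVV2}, carried over to $\scrQU_K$ by the construction of \S\ref{cha:repdatum}); (ii) $(E',F',T',X')$ defines a categorification with the quiver of eigenvalues of $X'$, by the symmetry induced by the spinor norm; (iii) $H\colon FF'\iso F'F$ and $H'\colon F'F\iso FF'$ are mutually inverse and intertwine $X,X'$ and $T,T'$ so that the combined quiver Hecke data on $(F\oplus F')^n$ is exactly the KLR algebra of the disjoint-union quiver. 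Step (iii) is the globalization step: because the two Harish-Chandra functors $F$ and $F'$ come from inductions along Levi subgroups with commuting unipotent radicals, all relevant diagrams of natural transformations commute up to scalars that one must pin down using Theorem \ref{thm:HL-BC}.

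For the second stage, I would proceed by Harish-Chandra series. Every block of $\scrQU_K$ contains (up to isomorphism) a unique cuspidal quadratic unipotent module $E_{t_+,t_-}$, and by the Comparison Theorem invoked in Theorem \ref{thm:F^nu} the endomorphism algebra of $F^\nu E_{t_+,t_-}$ for any word $\nu$ in $\{F,F'\}$ is a cyclotomic quiver Hecke algebra of level one, whose charge is determined by $E_{t_+,t_-}$. This is exactly the defining property of the minimal $\frakA(\frakg)$-categorification at the corresponding dominant weight, so by Rouquier's uniqueness theorem \cite[Thm.~5.1]{R08} the subcategorification generated by $E_{t_+,t_-}$ is isomorphic to that minimal categorification. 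Summing over a complete set of cuspidal modules then yields the desired global direct-sum decomposition.

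The main obstacle will be the compatibility required for step (iii): ensuring that the block-wise definitions of $X$ and $X'$, whose generalized eigenvalues by Theorem \ref{thm:HL-BC} are powers of $-q$ but living in two distinct copies of $\fraks\frakl_\bbZ$, really separate into non-interacting strands under $H,H'$, so that no spurious KLR relations mix the $F$-labelled and $F'$-labelled strands. Once this disconnectedness is verified — which reduces to a calculation with the generic algebra of Howlett--Lehrer for the maximal standard Levi involved — the categorification structure and its decomposition into minimal pieces both follow formally.
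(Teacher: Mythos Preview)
Your overall architecture is sound, but several concrete statements are wrong, and the paper's route differs from yours in a way that matters.

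First, two factual errors in Stage 2. You write that ``the endomorphism algebra of $F^\nu E_{t_+,t_-}$ for any word $\nu$'' is a cyclotomic quiver Hecke algebra ``of level one''. Neither part is correct. For a \emph{single} word $\nu$, Theorem~\ref{thm:HL-BC} gives only a tensor product $\bfH^{q;\,\tuple\xi_{t_+}}_{K,m_+}\otimes\bfH^{q;\,\tuple\xi_{t_-}}_{K,m_-}$ of two level-$2$ cyclotomic Hecke algebras of type $A$. To get the full cyclotomic quiver Hecke algebra $\bfH_m^{\Lambda_{t_+,t_-}}(Q)$ for the disconnected quiver one must take the direct sum over \emph{all} words $\nu\in\mathbb{J}^m$, as in Theorem~\ref{thm:F^nu}; this is essential because the idempotents $e(\nu)$ for different $\nu$ are glued together by the elements $\tau_{\pi_\nu}$ coming from $H,H'$ (Proposition~\ref{keylemma}). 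And the dominant weight is $\Lambda_{t_+,t_-}=\Lambda_{(-q)^{t_+}}+\Lambda_{(-q)^{-1-t_+}}+\Lambda'_{(-q)^{t_-}}+\Lambda'_{(-q)^{-1-t_-}}$, of total level $4$, not $1$. Your justification in Stage~1(iii) is also off: $F$ and $F'$ are Harish-Chandra inductions from the \emph{same} Levi $L_{r,1}\cong G_r\times\bbF_q^\times$, twisted by the two characters $1$ and $\zeta$ of the torus factor; there are no ``commuting unipotent radicals'' in play, and the relations (g)--(l) of Proposition~\ref{prop:12relations} are obtained by direct computation in the group algebra.

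Second, a difference of strategy. You propose to verify the $\frakA(\frakg)$-axioms on $\scrQU_K$ first and then invoke a Rouquier-type uniqueness theorem to identify each $\scrQU_{K,t_+,t_-}$ with a minimal categorification. The paper goes the other way: it \emph{constructs} the explicit equivalence
\[
\frakE=\bigoplus_m\Big(\bigoplus_{\nu\in\mathbb{J}^m}F^{\nu}(E_{t_+,t_-})\Big)\otimes_{\bfH_m^{\Lambda_{t_+,t_-}}}-\ :\ \scrL(\Lambda_{t_+,t_-})_\infty\ \simto\ \scrQU_{K,t_+,t_-},
\]
and checks by hand (using the bimodule isomorphism $F^\pm(\bigoplus_\nu F^\nu(E_{t_+,t_-}))\cong\bigoplus_{\nu'\in\mathbb{J}^{m+1}}F^{\nu'}(E_{t_+,t_-})\cdot e(m,\pm)$) that $\frakE$ intertwines $F^\pm$, $X^\pm$, and all four $T^{\alpha\beta}\in\{T,T',H,H'\}$ in the sense of Definition~\ref{equ}. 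The categorification structure on $\scrQU_{K,t_+,t_-}$ is then \emph{transported} from $\scrL(\Lambda_{t_+,t_-})$ via $\frakE$, so no uniqueness theorem is needed. Your approach could be made to work in characteristic zero (both sides are semisimple and decategorify to $\bfL(\Lambda_{t_+,t_-})$), but you would still need the full strength of Theorem~\ref{thm:F^nu} over the direct sum, not a single word.
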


Let $E=\bigoplus_{i\in I}E_i$ and  $F=\bigoplus_{i\in I}F_i$ (resp. $E'=\bigoplus_{i'\in I'}E'_{i'}$ and $F'=\bigoplus_{i'\in I'}F'_{i'}$) be the decomposition of the functors into generalized
$i$-eigenspaces for $X$ (resp. generalized
$i'$-eigenspaces for $X'$). Then $\{[E_i], [F_i],[E'_{i'}], [F'_{i'}]\}_{i\in I,i'\in I'}$ act as the Chevalley generators of $\frakg$ on
the Grothendieck group $[\scrQU_R]$ of $\scrQU_R$ and many problems on $\scrQU_R$ have a Lie-theoretic
counterpart. By looking at the action of $[E_i],[F_i],[E'_{i'}],[F'_{i'}]$ on
the basis of $[\scrQU_R]$ formed by quadratic unipotent characters and their $\ell$-reduction, we prove that there is a natural $\frakg$-module isomorphism
$$\bigoplus_{t_+,t_-\in\bbN}(\bfF({\tuple\xi_{t_+}})\otimes\bfF({\tuple\xi_{t_-}}))  \simto [\scrQU_R],$$
where $\bfF({\tuple\xi_{t_+}})$ and $\bfF({\tuple\xi_{t_-}})$ are both level 2 Fock spaces
(see Theorem \ref{thm:ginfinityB}, \ref{thm:Linearprime} and \ref{thm:unitary}). Through
this isomorphism, the basis of quadratic unipotent characters (or their $\ell$-reduction) is sent to
the standard monomial basis.
This generalizes the  corresponding work of Dudas-Varagnolo-Vasserot \cite{DVV2} for unipotent representations.

\smallskip

Finally, we define the isolated RoCK blocks which
play an important role in  obtaining
derived equivalences between blocks with their Brauer correspondents
(see \S \ref{subsec:RouBlock}).

\begin{thmC} 
Let $G=\SO_{2n+1}(q)$ with $q$ odd.
Assume that $\ell$ is an odd linear prime with respect to $q$.
Then any isolated RoCK block of $\SO_{2n+1}(q)$ with abelian defect groups is derived equivalent to its Brauer
correspondent in $N_G(P)$.
\end{thmC}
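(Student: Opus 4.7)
The plan is to establish an explicit Morita-theoretic description of an isolated RoCK block of $G=\SO_{2n+1}(q)$ that matches the structure of its Brauer correspondent, so that the desired derived equivalence follows in a routine way. The setup is the categorical $\frakg$-action on $\scrQU_k$ obtained by specializing Theorem B to characteristic $\ell$. Since $\ell$ is an odd linear prime with respect to $q$, the Kac-Moody algebra acting on $\scrQU_k$ has a Dynkin diagram which is a disjoint union of affine components of controlled type, indexed by the choice of isolated semisimple classes $(t_+,t_-)$.

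First, I would realize the isolated RoCK block as a weight space lying deep inside a Weyl chamber of this categorical action. Such an isolated quadratic unipotent $\ell$-block is labeled by a pair $(t_+,t_-)$, a pair of $e$-cores, and an $\ell$-weight $w$, and the RoCK condition forces the bipartitions indexing its ordinary characters to have $e$-quotients of Rouquier (staircase) form. Using the functors $F,F',E,E'$ from the representation datum constructed in \S\ref{cha:repdatum}, and following the categorical strategy of Chuang-Rouquier and Dudas-Varagnolo-Vasserot, I expect to show that the block is Morita equivalent to a wreath product $b_0\wr\fS_w$, where $b_0$ is a base algebra built from the Brauer tree algebras of the weight-one blocks attached to the parameters $(t_+,t_-)$.

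Next, I would analyze the Brauer correspondent. Since $P$ is abelian, $P$ decomposes as a direct product of $w$ cyclic $\ell$-groups, and the local normalizer structure endows the Brauer correspondent with an analogous wreath product form $b_0'\wr\fS_w$. A Brauer tree computation together with Rickard's theorem on derived equivalences of Brauer tree algebras identifies $b_0$ and $b_0'$ as derived equivalent. Applying Marcus's theorem on wreath products of derived equivalences then gives the derived equivalence between the isolated RoCK block and its Brauer correspondent.

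The main obstacle is establishing the wreath product Morita structure of the isolated RoCK block in the quadratic unipotent setting, particularly in view of the two-parameter $(t_+,t_-)$ nature of isolated classes and the disconnected quiver underlying $\frakg$. The unipotent-block analogue was handled by Dudas-Varagnolo-Vasserot via the categorical action on $\scrU_k$, and the key task here is to adapt that argument by exploiting the symmetry between the functors $F$ and $F'$ (and between $X,X'$ and $T,T'$) that is built into the representation datum, so that the categorical argument producing the wreath product structure applies componentwise in each part of the Dynkin diagram. Once this Morita description is in hand, the comparison with the Brauer correspondent is a routine combinatorial verification and the derived equivalence drops out.
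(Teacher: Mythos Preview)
Your proposal has a genuine gap at its central step, and the overall strategy differs from the paper's route in a way that matters.

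The categorical action in Theorem~B and its mod-$\ell$ specialization produces, via Chuang--Rouquier, \emph{derived equivalences between weight spaces in the same Weyl group orbit}. It does not by itself yield a Morita equivalence between a RoCK block and a wreath product $b_0\wr\fS_w$. You invoke ``the categorical strategy of Chuang--Rouquier and Dudas--Varagnolo--Vasserot'' for this, but in \cite{DVV2} for classical groups the RoCK-block structure is \emph{not} obtained from the categorical action; it follows Livesey's direct approach. Your final paragraph acknowledges this is the main obstacle, but the proposed fix---exploiting the $F/F'$ symmetry to run the argument ``componentwise''---does not address how the wreath product Morita equivalence would actually be produced. (Note also that the correct target is a product $(b_0^+\wr\fS_{w_+})\times(b_0^-\wr\fS_{w_-})$ indexed by the two weights $w_+,w_-$, not a single wreath product over $\fS_w$.)

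The paper's argument is purely module-theoretic and makes no use of the categorical action for Theorem~C. It works inside a chain $N_G(P)\leqslant N\leqslant G$ with $N$ built from $\widetilde G_m\times((\GL_d(q).2)\wr\fS_w)$, identifies a block $\mathcal{O}Nf$ as the Brauer correspondent of $b$ in $N$, and shows directly that $\mathcal{O}Gb$ and $\mathcal{O}Nf$ are Morita equivalent. The mechanism is Green correspondence: the Green correspondent $X$ of $\mathcal{O}Gb$ in $G\times N$ gives $\mathcal{O}$-split embeddings $\mathcal{O}Nf\hookrightarrow\End_{\mathcal{O}G}(X)\hookrightarrow\End_{\mathcal{O}G}(Y)$, where $Y$ is a sum of iterated Harish-Chandra inductions cut by the relevant block idempotents. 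An explicit computation of $\dim_K\End_{KG}(K\otimes Y)$ via Littlewood--Richardson coefficients---using the Rouquier-core condition through the Chuang--Kessar lemma on $d$-hook removals---forces these embeddings to be isomorphisms. Finally, both $\mathcal{O}Nf$ and the Brauer correspondent in $N_G(P)$ are shown (by Clifford theory) to be Morita equivalent to blocks of $(\GL_d(q).2\wr\fS_{w_+})\times(\GL_d(q).2\wr\fS_{w_-})$ and its local analogue, and Marcus's theorem together with the cyclic-defect case finishes. The categorical action is used only later, in Theorem~A, to transport this result from RoCK blocks to arbitrary isolated blocks via the Weyl group action.
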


In the linear prime case, by
the work of Chuang and Rouquier \cite{CR}
 that the categorical actions provide derived equivalences between certain weight spaces,
 we can finally get a proof of Theorem A.
 
\smallskip

Here we remark that our method depends on
the work of Fong-Srinivasan \cite{FS82,FS89} about
the block theory of classical groups with connected center.
The center of $\Sp_{2n}(q)$ and $\O_{2n}^{\pm}(q)$ are not connected,
and blocks of $\Sp_{2n}(q)$ and $\O_{2n}^{\pm}(q)$ are not completely parameterized
since the decompositions of their Lusztig inductions 
are not known in general. However, if the condition on $q$
in \cite{S08} can be removed, our method is excepted to  apply to
 blocks of the groups  $\Sp_{2n}(q)$ and $\O_{2n}^{\pm}(q)$.

\smallskip
The paper is organized as follows.
 In Section \ref{cha:2kamoody}, we collect basic concepts and notation.
 It includes quiver Hecke algebra,  Kac-Moody $2$-category, representation datum and
 integrable $2$-representation, etc.
In Section \ref{cha:typeA}, we recall the 2-representation theory of type $A$,
minimal categorical representation, and
the charged Fock space. 
Section  \ref{cha:modular}
consists of
the definition of Brauer homomorphism and correspondent,
some
standard facts in the representation theory of finite groups of Lie type
in non-defining characteristic, and representations of $\SO_{2n+1}(q)$.
Section \ref{cha:repdatum} is about an
 $\frakA(\mathfrak g)$-representation
 datum on $\bigoplus_{n\in \bbN}RG_n\mod$
for groups $G_n=\SO_{2n+1}(q)$, $\Sp_{2n}(q)$ and $\O_{2n}^{\pm}(q)$.
In Section \ref{cha:QU}, we prove Theorem B, by restricting
 the representation datum obtained in
Section \ref{cha:repdatum} to quadratic unipotent (i.e., isolated) blocks of $\SO_{2n+1}(q)$.
In the final section (Section \ref{cha:broue}), we first define isolated RoCK blocks
and then prove Theorems C and  A.

\section{2-Kac-Moody representations}\label{cha:2kamoody}

Here we collect some preliminaries about
quiver Hecke algebras,  Kac-Moody $2$-categories and their representations, mainly referring to \cite{R08}.
Throughout this section, $R$ will denote a noetherian commutative domain (with unit) in general.

\subsection{Algebras and categories\label{sec:rings-cat}}

Given an algebra $A$, we
denote by $A^\op$ the opposite algebra of $A$. Analogously,
by  $\scrC^\op$ we denote
the opposite category of a category $\scrC$.
An \emph{$R$-category} $\scrC$ is an additive category enriched over the tensor
category of $R$-modules. 
As usual, we write $EF$ for a composition of functors $E$ and $F$,
and $\psi\circ\phi$ for a composition of
morphisms of functors (or natural transformations) $\psi$ and $\phi$.
Also, we denote by $1_\scrC$ the identity functor on $\scrC$,
and by  $1_F$ or sometimes by $F$ the identity element in the endomorphism
ring $\End(F)$.
\smallskip

Let $\scrC$ be an abelian $R$-category.
We denote by $[\scrC]$ the complexified Grothendieck group of $\scrC$,
 by $[M]$ the isomorphism class of  an object $M$ of $\scrC$ in $[\scrC]$,
 and by $[F]$ the linear map on $[\scrC]$ induced by an exact endofunctor $F$ of $\scrC$.
\smallskip

If the Hom spaces of $\scrC$ are finitely generated over $R$, then
the category $\scrC$ is called Hom-finite. In this case, we set $\scrH(M)=
\End_\scrC(M)^\text{op}$ for an object $M\in\scrC$,
so that $\scrH(M)$ is an $R$-algebra which is finitely generated as an $R$-module.

\smallskip

Now let $A$ be an $R$-algebra that is free and finitely generated over $R$.
Let $\scrC=A\mod$. We write $\Irr(\scrC)$ or  $\Irr(A)$ for
the set of isomorphism classes of simple objects of $\scrC$.
If there is a ring homomorphism from $R$ to $S$ then
we have $SA=S\otimes_R A$ and the $S$-category $S\scrC=SA\mod$.
Given another $R$-category $\scrC'$ as above and an exact ($R$-linear)
functor
$F:\scrC\to\scrC'$, then $F$ is represented by a projective object $P\in\scrC$, i.e., $F= \Hom_{\scrC}(P,\bullet):\scrC\to \scrC'.$
We set $SF=\Hom_{S\scrC}(SP,\bullet):S\scrC\to S\scrC'$.

\smallskip

Finally, for a finite group $G$,
the group ring of $G$ over $R$ is denoted by $RG$.
 If  $R$ is not a field, an $RG$-module which is free as an $R$-module
 will be called an \emph{$RG$-lattice}.

\smallskip

\subsection{Quiver Hecke algebras }\label{sec:quiverHecke}
In this section, we recall the definition of quiver Hecke algebras.
\subsubsection{Cartan datum and Kac-Moody Lie algebra} \label{subsec:cartandatum}
 Let $I$ be an index set. A matrix
$A=(a_{st})_{s,t \in I}$ is called a {\em symmetrizable generalized
Cartan matrix} if it satisfies
\begin{itemize}[leftmargin=8mm]
\item $a_{st}$ are integers for each $s,t \in I$ and in particular $a_{ss}  = 2$,
\item  $a_{st} \leqs 0$ for all $s \neq t$, where $a_{st} = 0$ if and only if $a_{ts} = 0$, and
\item there are positive integers
$d_s, d_t$ such that $d_s a_{st} = d_t a_{ts}$
for all $s,t \in I$.
\end{itemize}

\smallskip

A \emph{Cartan datum} $(\X_\scrI,\X_\scrI^\vee,\langle\bullet,\bullet\rangle_\scrI,
\Pi, \Pi^\vee)$ associated with a symmetrizable generalized
Cartan matrix $A$ consists of
\begin{itemize}[leftmargin=8mm]
 \item  a free abelian group $\X_\scrI,$
 \item a free abelian group $\X_\scrI^\vee,$
 \item  a set of vectors
$\Pi=\{ \alpha_s\in \X_\scrI \mid \ s \in I\}$ called \emph{simple roots},

 \item a set of vectors $\Pi^{\vee}= \{ \alpha^\vee_s \in\X^\vee_\scrI\ | \ s \in I\}$
called \emph{simple coroots},
 \item  a perfect pairing $\langle \bullet,  \bullet \rangle_\scrI : \X_\scrI^\vee\times \X_\scrI
 \longrightarrow \bbZ$
\end{itemize}
 satisfying the following properties:
\begin{itemize}[leftmargin=8mm]
 \item  $\Pi^\vee$ are linearly independent in $\X_\scrI^\vee$,
 \item for each $s\in \scrI$ there exists a fundamental weight $\Lambda_s\in\X_\scrI$
 satisfying $\langle \alpha_t^\vee,\Lambda_s \rangle_\scrI = \delta_{st}$ for all $t\in\scrI$,
 \item  $\langle \alpha_t^\vee,\alpha_s\rangle_\scrI =~a_{st}$.
\end{itemize}

\smallskip

The \emph{Kac-Moody algebra} $\frakg_\scrI$ corresponding to this datum is the Lie algebra generated by
the Chevalley generators $e_s, f_s$ for $s \in \scrI$ and the Cartan algebra
$\frakh = \bbC\otimes \X_\scrI^{\vee}$.
The Lie algebra $\frakg_\scrI'$ is the derived
subalgebra $[\frakg_\scrI,\frakg_\scrI]$.
We write  $$\Q_\scrI = \bigoplus_{s\in\scrI} \bbZ \alpha_s,\quad
\Q_\scrI^\vee = \bigoplus_{s\in\scrI} \bbZ \alpha_s^\vee,
\quad\text{and}\,\, \P_\scrI = \bigoplus_{s\in\scrI} \bbZ \Lambda_s$$
for the root lattice, the coroot lattice,  and the weight lattice of $\frakg_\scrI$, respectively.
In addition, we set $\Q_{\scrI}^+=\bigoplus_{s\in \scrI}\bbN\alpha_s$.
When there is no risk of confusion, we simply write $X=X_I$, $\frakg=\frakg_\scrI$ and $\P=\P_{\scrI}$, etc.
\smallskip

A $\frakg$-module $V$ is called an \emph{integrable highest weight} $\frakg$-module
if $V$ satisfies
\begin{itemize}[leftmargin=8mm]
  \item $V = \bigoplus_{\omega \in \X} V_\omega$ is the weight space decomposition
  and $\dim V_\omega < \infty$ for each $\omega \in \X$,
  \item the actions of $e_s$ and $f_s$ on $V$ are locally nilpotent for each $s \in \scrI$,
  \item there exists a finite set $B \subset \X$ such that
  $\mathrm{wt}(V) \subset B + \sum_{s \in \scrI} \bbZ_{\leqslant 0} \alpha_s$.
\end{itemize}

Let $\X^+ = \{\omega \in \X\, \mid\, \langle \alpha_s^\vee,\omega\rangle \in \bbN \text{ for all } s \in \scrI\}$ be the set of \emph{integral dominant weights}.
Given $\Lambda \in  \X^+$, there exists a unique irreducible
highest weight module, denoted by $\bfL(\Lambda)$,  with highest weight $\Lambda$.
\begin{remark}\label{rk:sum} The direct sum of two Kac-Moody algebras can be constructed as follows.
Let $\mathcal{C}=(\X_\scrI,\X_\scrI^\vee,\langle\bullet,\bullet\rangle_\scrI,
\Pi, \Pi^\vee)$ and $\mathcal{C}'=(\X_{\scrI'},\X_{\scrI'}^\vee,\langle\bullet,\bullet\rangle_{\scrI'},
\Pi', \Pi'^\vee)$ be two Cartan data associated with symmetrizable generalized
Cartan matrices $A$ and $A'$, respectively.
 We can construct a new Cartan datum $(\X_{\K},\X_{\K}^\vee,\langle\bullet,\bullet\rangle_{\K},
\Pi_{\K}, \Pi_{\K}^\vee)$ associated with
the block-diagonal symmetrizable generalized
Cartan matrix $\begin{pmatrix}A&0\\ 0&A'\end{pmatrix}$,
where
\begin{itemize}[leftmargin=8mm]
 \item  $\K=\scrI\sqcup \scrI'$,
 \item $X_{\K}=\X_\scrI \oplus \X_{\scrI'}$ and
 $X^\vee_{\K}=\X^\vee_\scrI \oplus \X^\vee_{\scrI'}$,
 \item   $\Pi_{\K}=\Pi\times \{0\}\sqcup \{0\}\times\Pi'$
 and $\Pi_{\K}^\vee=\Pi^\vee\times \{0\}\sqcup \{0\}\times\Pi'^\vee$, and
 \item  $\langle v+v', w+w'\rangle_{\K}=\langle v,w\rangle_\scrI+\langle v',w'\rangle_{\scrI'}$
 for all $v\in \X_\scrI^\vee, $ $v'\in \X_{\scrI'}^\vee,$ $w\in \X_\scrI$ and  $w'\in \X_{\scrI'}.$
\end{itemize}
It is called the \emph{direct sum} of $\mathcal{C}$ and $\mathcal{C}'.$
The Kac-Moody Lie algebra corresponding to the direct sum of $\mathcal{C}$ and $\mathcal{C}'$
is 
exactly $\frakg_{I}\oplus\frakg_{I'}.$
\end{remark}

\subsubsection{Quiver Hecke algebra}\label{subsec:QHA}

Here we recall the definition of
quiver Hecke algebras, which are also called the Khovanov-Lauda-Rouquier algebras.
\smallskip

Let $(\X_\scrI,\X_\scrI^\vee,\langle\bullet,\bullet\rangle_\scrI,
\Pi, \Pi^\vee)$ be a Cartan datum associated with $A$ and $\frakg$ be the associated Kac-Moody Lie algebra.
For $n\in \Z_{\ge 0}$ and $\beta \in \Q_{\scrI}^{+}$ such that the height of $\beta$ is  $|\beta|=n$, we set
$$I^{\beta} = \{ \tuple k = (k_1, \dots, k_n) \in I^n |
\alpha_{k_1} + \cdots + \alpha_{k_n} = \beta \}.$$
Naturally, the symmetric group $\mathfrak{S}_{n} = \langle s_1, \ldots, s_{n-1} \rangle$ acts on $I^n$, where $s_a = (a, a+1)$.
We define a matrix $Q=(Q_{st}(u,v))_{s,t\in \scrI}$ with entries in $R[u,v]$ such that
$$Q_{st}(u,v)=\begin{cases}
0 & \text{ if } s=t, \\
\gamma_{st} &\text{ if }s\not=t \text{ and }a_{st}=0,\\
\gamma_{st}u^{-a_{st}}+\sum_{\substack{0\le p<-a_{st}\\ 0\le q<-a_{ts}}}
\beta_{st}^{pq} u^p v^q + \gamma_{ts}v^{-a_{ts}} & \text{ if }s\not=t
\text{ and }a_{st}\not=0.
\end{cases}$$
where $\gamma_{st}\in R^\times$  for $s,t \in I$
with $s \neq t$ such that
$\gamma_{st} = \gamma_{ts}$ if $a_{st}=0$, and  $\beta_{st}^{pq} \in R$ for $s,t \in I$, $0
\leqs p < -a_{st}$ and $0 \leqs q < -a_{ts}$
such that $\beta_{st}^{pq} = \beta_{ts}^{qp}$.
\begin{definition}[\cite{{KL09},{R08}}] \label{def:KLRalg}
The {\em quiver Hecke algebra}  $\H_\beta(Q)$ associated with $Q$ at $\beta$
 is the associative algebra over $R$
generated by $e(\tuple k)$ $({\tuple k}=(k_1,\dots,k_n) \in \scrI^{\beta})$, $x_a$ $(1 \le a \le n)$,
$\tau_b$ $(1 \le b \le n-1)$ satisfying the following defining
relations:
\begin{align*}
& e(\tuple k) e(\tuple k') = \delta_{\tuple k, \tuple k'} e(\tuple k), \ \
\sum_{\tuple k \in I^{\beta}}  e(\tuple k) = 1, \\
& x_{a} x_{b} = x_{b} x_{a}, \ \ x_{a} e(\tuple k) = e(\tuple k) x_{a}, \\
& \tau_{b} e(\tuple k) = e(s_{b}(\tuple k)) \tau_{b}, \ \ \tau_{a} \tau_{b} =
\tau_{b} \tau_{a} \ \ \text{if} \ |a-b|>1, \\
& \tau_{a}^2 e(\tuple k) = Q_{ k_{a},  k_{a+1}} (x_{a}, x_{a+1})
e(\tuple k), \\
& (\tau_{a} x_{b} - x_{s_a(b)} \tau_{a}) e(\tuple k) = \begin{cases}
-e(\tuple k) \ \ & \text{if} \ b=a,  k_{a} =  k_{a+1}, \\
e(\tuple k) \ \ & \text{if} \ b=a+1,  k_{a}=  k_{a+1}, \\
0 \ \ & \text{otherwise},
\end{cases} \\[.5ex]
& (\tau_{a+1} \tau_{a} \tau_{a+1}-\tau_{a} \tau_{a+1} \tau_{a}) e(\tuple k)\\
& =\begin{cases} \dfrac{Q_{ k_{a},  k_{a+1}}(x_{a},
x_{a+1}) - Q_{ k_{a+2},  k_{a+1}}(x_{a+2}, x_{a+1})}
{x_{a} - x_{a+2}}e(\tuple k) \ \ & \text{if} \
 k_{a} =  k_{a+2}, \\
0 \ \ & \text{otherwise}.
\end{cases}
\end{align*}

\end{definition}

We  define $$\bfH_n(Q) \coloneqq \bigoplus_{ |\beta|=n} \bfH_\beta(Q).$$
 Observe that $\bfH_\beta(Q) =\bfH_n(Q)e(\beta)$, 
 where $e(\beta): = \sum_{\tuple k \in I^{\beta}}e(\tuple k)$.
If $\scrI$ is finite, then the direct sum is finite and $\bfH_n(Q)$ is an  algebra
 with unit $\sum_{\tuple k \in I^n} e(\tuple k)$.
 If $n = 0$ we understand $\bfH_\beta(Q) = \bfH_0(Q) = R$.

 \begin{remark} \label{rmk:quiver}
 Assume that $\Gamma$ is a loop-free quiver with vertex set $\scrI$ and edge set $E$.
 For any $i, j \in I$,
   we write $\lvert i \to j\rvert$
 for the (finite) number of $a \in E$ such that the starting vertex $o(a)$ is $i$ and
 the ending vertex $t(a)$ is $j$.
 We define $i.i:=2$ and $i.j \coloneqq -(\lvert i \to j \rvert + \lvert i \leftarrow j\rvert)$
  for $i\neq j$.
  Since $o(a)\not=t(a)$ for any edge $a\in E$, this
  defines a symmetric Cartan matrix $(i.j)_{i,j\in\scrI}$
  and so a (derived) Kac-Moody algebra $\frakg_{\Gamma}$ over $\bbC$.
%

\smallskip

Let $u, v$ be two indeterminates over $R$.
For any $i, j \in I$, we define the polynomial $Q_{ij}(u, v) \in R[u, v]$ by
\begin{equation*}
\label{equation:def_Q}
Q_{ij}(u, v) \coloneqq \begin{cases}
(-1)^{\lvert i \to j \rvert} (u - v)^{-i.j} &\text{if } i \neq j,
\\
0 &\text{otherwise.}
\end{cases}
\end{equation*}
We call $Q:=(Q_{ij}(u, v))_{i,j\in I}$ the matrix associated with the quiver $\Gamma$,
and define
  $$\bfH_\beta(\Gamma) \coloneqq \bfH_\beta(Q).$$

 \end{remark}
\subsubsection{Cyclotomic quiver Hecke algebra}\label{subsec:CQHA}
Let $\Lambda \in \X_{\scrI}^{+}$ be an integral dominant weight.
For 
$s\in \scrI$, we choose a monic polynomial
$$f_s^{\Lambda}(u)=\sum_{k=0}^{\langle \alpha^\vee_{s},\Lambda \rangle}
c_{s;k}u^{\langle \alpha^\vee_{s},\Lambda \rangle-k} $$
of
degree $\langle \alpha^\vee_{s},\Lambda \rangle$ with $c_{s;k}\in
{R}$ and $c_{s;0}=1$.
For $1\le a\le n$, we write
$$f^{\Lambda}(x_a)= \sum_{\tuple k \in I^{n}} f_{k_a}^{\Lambda}(x_a) e(\tuple k)\in \bfH_{n}(Q).$$

\begin{definition}\label{Def:cqh}
The {\em cyclotomic quiver Hecke
algebra $\bfH_{\beta}^{\Lambda}(Q)$ at $\beta$} is defined to be the
quotient algebra
$$\bfH_{\beta}^{\Lambda}(Q) := \frac{\bfH_{\beta}(Q)}{\bfH_{\beta}(Q) f^\Lambda(x_1) \bfH_{\beta}(Q)}.$$
If $\beta=0$ we understand $\bfH_{\beta}^{\Lambda}(Q)=R$.
\end{definition}

Now let $\bfH_{\beta}^{\Lambda}(Q)$ be a cyclotomic quiver Hecke
algebra 
at $\beta$, and assume that $I$ is finite.
For each $n\ge0$, we define
\begin{equation*}
\bfH_n^{\Lambda}(Q) := \frac{\bfH_{n}(Q)}{\bfH_{n}(Q) f^\Lambda(x_1) \bfH_{n}(Q)}.
\end{equation*}
Then $\bfH_n^{\Lambda}(Q) \cong\bigoplus_{|\beta|=n}\bfH_{\beta}^\Lambda(Q)$ and $\bfH_{\beta}^{\Lambda}(Q) =\bfH_n^{\Lambda}(Q)e(\beta)$.
\begin{remark}
When considering the cyclotomic quiver Hecke algebra $\bfH^{\Lambda}_{\beta}(\Gamma)$
associated with a quiver $\Gamma$, we always assume $c_{s;k}=0$ for all $k> 0.$
\end{remark}

Given $\tuple k\in \scrI^n$, $\tuple k'\in \scrI^m$ we write $\tuple k\tuple k'\in \scrI^{n+m}$ for their concatenation.
For each $s\in I$,  the $R$-algebra embedding
$\iota_s: \bfH_{\beta}(Q)\hookrightarrow \bfH_{\beta+\alpha_s}(Q)$
given by
$$e(\tuple k),x_a,\tau_b\mapsto e(\tuple k s),x_a,\tau_b~\mbox{with}~
\tuple k\in I^\beta,1\leqs a\leqs n,1\leqs b\leqs n-1$$
induces an $R$-algebra homomorphism
$\iota_s: \bfH_{\beta}^\Lambda(Q)\to \bfH_{\beta+\alpha_s}^\Lambda(Q)$.

\smallskip

The induction and restriction functors form an adjoint pair $(F^{\Lambda}_s,E^{\Lambda}_s)$ with
$$\begin{array}{ccccl}
 F_{s}^{\Lambda}:&(\bfH_{\beta}^{\Lambda}(Q))\Mod & \longrightarrow & (\bfH_{\beta+ \alpha_s}^{\Lambda}(Q))\Mod & \mbox{and} \\ \vspace{2ex}
 E_{s}^{\Lambda}:&(\bfH_{\beta+\alpha_s}^{\Lambda}(Q))\Mod & \longrightarrow & (\bfH_{\beta}^{\Lambda}(Q))\Mod &
\end{array}
$$
given by
$$
\begin{array}{cccl} \label{eq:E_iLambda}
F_{s}^{\Lambda}(M) & = &  \bfH_{\beta+\alpha_s}^{\Lambda}(Q) e(\beta, s)
\otimes_{\bfH_{\beta}^{\Lambda}(Q)} M & \mbox{and} \\
E_{s}^{\Lambda}(N)& =&  e(\beta, s) N = e(\beta, s)\bfH_{\beta+
\alpha_s}^{\Lambda}(Q) \otimes_{\bfH^{\Lambda}_{\beta+\alpha_s}(Q)} N, \\
 \end{array}
$$
where $M \in\bfH_{\beta}^{\Lambda}(Q)\mod$, $N \in \bfH_{\beta+\alpha_s}^{\Lambda}(Q)\mod$ and
 $$e(\beta, s) = \sum_{\tuple k \in I^{\beta }}
e(\tuple k s)\in \bfH_{\beta+\alpha_s}(Q).$$

\smallskip

\subsection{Kac-Moody $2$-categories and their representations}
\label{sec:2-rep}

In this section, we recall from \cite{R08}
the definitions of Kac-Moody $2$-categories and their representations  including
integrable 2-representations.

\subsubsection{Kac-Moody $2$-category} \label{subsec:kacmoody2cat}
According to the work of Brundan \cite{B16},
the Kac-Moody 2-categories defined by Rouquier and by Khovanov and Lauda are equivalent.
Let $(\X_\scrI,\X_\scrI^\vee,\langle\bullet,\bullet\rangle_\scrI,
\Pi, \Pi^\vee)$ be a Cartan datum and $\frakg$ be the associated Kac-Moody Lie algebra.

\begin{definition}\label{def1} The {\em Kac-Moody $2$-category} $\mathfrak{A}(\mathfrak{g})$
is the strict additive $R$-linear $2$-category with

\begin{itemize}[leftmargin=8mm]
  \item  object set $\X_{\scrI}$,
  \item generating $1$-morphisms $E_s 1_\lambda:\lambda \rightarrow \lambda-\alpha_s~\mbox{and}~
F_s 1_\lambda:\lambda \rightarrow \lambda+\alpha_s$ for each $s \in I$ and $\lambda \in \X_\scrI$, and
  \item generating $2$-morphisms
$x_s:F_s 1_\lambda \rightarrow F_s 1_\lambda$, $\tau_{st}:F_s F_t 1_\lambda \rightarrow F_t F_s 1_\lambda$,
$\eta_s:1_\lambda \rightarrow F_s E_s 1_\lambda$~\mbox{and}~$\epsilon_s:E_s F_s 1_\lambda \rightarrow 1_\lambda$
\end{itemize}
satisfying
\begin{itemize}[leftmargin=8mm]
  \item[$(i)$] the {\em quiver Hecke relations}:

  \begin{enumerate}
\item
\label{en:half1}
$\tau_{st}\circ \tau_{ts}=Q_{st}(F_tx_s,x_tF_s)$,
\item
\label{en:half2}
$\tau_{tu}F_s\circ F_t \tau_{su}\circ \tau_{st}F_u-
F_u \tau_{st}\circ \tau_{su}F_t\circ F_s \tau_{tu}\\=
\begin{cases}
\frac{Q_{st}(x_sF_t,F_sx_t)F_s-F_sQ_{st}(F_tx_s,x_tF_s)}{x_sF_tF_s-F_sF_tx_s}F_s
 & \text{ if } s=u,\vspace{0.2cm}\\
0  & \text{ otherwise,}
\end{cases}$
\item
\label{en:half3}
$\tau_{st}\circ x_s F_t-F_t x_s\circ \tau_{st}=\delta_{st}$,
\item
\label{en:half4}
$\tau_{st}\circ F_sx_t-x_tF_s\circ \tau_{st}=-\delta_{st}$,
\end{enumerate}
\smallskip

\item[$(ii)$] the {\em right adjunction relations}:
\smallskip
\begin{enumerate}
\item[(5)]
$(\epsilon_s{E_s})\circ({E_s}\eta_s)={E_s}$,
\item[(6)] $({F_s}\epsilon_s)\circ(\eta_s{F_s})={F_s}$,
\end{enumerate}
\smallskip

\item[$(iii)$] the {\em inversion relations}, that is,
the following $2$-morphisms are isomorphisms:

\begin{itemize}[leftmargin=8mm]
\item[$\bullet$]  when $\langle\alpha_s^\vee,\lambda\rangle\ge 0$,
$$\rho_{s,\lambda}=
\sigma_{ss}+\sum_{i=0}^{\langle\alpha_s^\vee,\lambda\rangle-1}\epsilon_s\circ
(x_s^i F_s):
E_s F_s\idun_\lambda\to F_s E_s\idun_\lambda \oplus
\idun_\lambda^{\langle\alpha_s^\vee,\lambda\rangle}$$
\item[$\bullet$]  when $\langle\alpha_s^\vee,\lambda\rangle\le 0$,
$$\rho_{s,\lambda}=
\sigma_{ss}+\sum_{i=0}^{-1-\langle\alpha_s^\vee,\lambda\rangle}
(F_s x_s^i)\circ \eta_s:
E_s F_s\idun_\lambda\oplus
 \idun_\lambda^{-\langle\alpha_s^\vee,\lambda\rangle}
\to F_s E_s\idun_\lambda$$
\item[$\bullet$] $\sigma_{st}:E_sF_t\idun_\lambda\to F_tE_s\idun_\lambda$
 for all $s\not=t$ and all $\lambda$,
\end{itemize}
\end{itemize}
where\begin{equation*}\label{sigmast} \sigma_{st}=(F_t E_s\epsilon_t)\circ (F_t \tau_{ts}F_s)\circ
(\eta_t E_sF_t):E_s F_t\to F_t E_s.
\end{equation*}
\end{definition}

\smallskip

\subsubsection{{$\mathfrak{A}(\mathfrak{g})$-categorification}}\label{subsec:g-cat}
Let $\scrC$ be an abelian $R$-category.

\begin{definition} An
$\mathfrak{A}(\mathfrak{g})$-\emph{representation datum} on $\scrC$ is a tuple
$$(\{E_s\}_{s\in I},\{F_s\}_{s\in I},\{x_s\}_{s\in I},\{\tau_{s,t}\}_{s,t\in I})$$ where for each $s\in I$,
$E_s$ and $F_s$ are bi-adjoint functors $\scrC\to\scrC$,
$x_s\in\End(F_s)$ and $\tau_{st}\in\Hom(F_sF_t,F_tF_s)$
are endomorphisms of functors satisfying the quiver Hecke relations  (1)-(4) in Definition \ref{def1}.

\end{definition}

\begin{remark}
If the tuple $\big(\{E_s\}_{s\in I},\{F_s\}_{s\in I},\{x_s\}_{s\in I},\{\tau_{s,t}\}_{s,t\in I}\big)$ is an $\mathfrak{A}(\mathfrak{g})$-representation datum,  then for each $n\in\bbN$, the map
\begin{equation*}
\begin{aligned}
\phi_n\, :\bfH_{n}(Q)&\rightarrow (\bigoplus_{\tuple k,\tuple k'\in I^n} \Hom(F_{k_n}\dots
F_{k_1},F_{k'_n}\dots F_{k'_1}))^\op\\
e(\tuple k)&\mapsto {F_{k_n}\cdots F_{k_1}} \\
x_{a,\tuple k}:={x_ae(\tuple k)}&\mapsto F_{k_n}\cdots F_{k_{a+1}}x_{k_a}F_{k_{a-1}}\cdots F_{k_1}\\
\tau_{a,\tuple k}:={e(s_a(\tuple k))\tau_ae(\tuple k)}&\mapsto F_{k_n}\cdots F_{k_{a+2}}\tau_{k_{a+1},k_a}
F_{k_{a-1}}\cdots F_{k_1}
\end{aligned}
\end{equation*}
with ${\tuple k}=(k_1,\dots,k_n)$ and ${\tuple k}'=(k_1',\dots,k_n')$
is a well-defined $R$-algebra homomorphism.
\end{remark}

\begin{definition}A \emph{representation of $\mathfrak{A}(\mathfrak{g})$} on $\scrC$
is defined to be a strict $2$-functor from $\mathfrak{A}(\mathfrak{g})$ to the
strict $2$-category of the $R$-linear category $\scrC$.
This is equivalent to the data of
\begin{itemize}[leftmargin=8mm]
\item an $R$-linear category $\scrC=\bigoplus\limits_{\lambda\in \X_{\scrI}}\scrC_\lambda$,
\item an $\mathfrak{A}(\mathfrak{g})$-representation datum $\big(\{E_s\}_{s\in I},\{F_s\}_{s\in I},\{x_s\}_{s\in I},\{\tau_{s,t}\}_{s,t\in I}\big)$
\end{itemize}
such that the maps $\rho_{s,\lambda}$ and $\sigma_{st}$ ($s\not=t$) 
are isomorphisms.

\smallskip

An {\em integrable $2$-representation} of $\mathfrak{A}(\mathfrak{g})$
is a representation of $\mathfrak{A}(\mathfrak{g})$ such that
$E_s$ and $F_s$ are locally nilpotent for all $s\in I$.
\end{definition}
\begin{theorem}[\cite{R08}]\label{def:2kacmoodyrep}
 Assume given an $\mathfrak{A}(\mathfrak{g})$-representation datum
$$(\{E_s\}_{s\in I},\{F_s\}_{s\in I},\{x_s\}_{s\in I},\{\tau_{s,t}\}_{s,t\in I})$$ on $\scrC$ and a decomposition $\scrC=\bigoplus_{\omega\in\X_{\scrI}}\scrC_\omega$ such that
\begin{itemize}[leftmargin=8mm]
\item[$(a)$] the actions of $[E_s]$ and $[F_s]$ for $s\in \scrI$ endow $[\scrC]$ with
the structure of an integrable $\frakg$-module such that $[\scrC]_\omega=[\scrC_\omega]$, and
\item[$(b)$] $E_s(\scrC_\omega)\subset\scrC_{\omega+\alpha_s}$ and
$F_s(\scrC_\omega)\subset\scrC_{\omega-\alpha_s}$.
\end{itemize}
Then the datum above defines an (integrable) $\mathfrak{A}(\mathfrak{g})$-representation on $\scrC.$\qed
\end{theorem}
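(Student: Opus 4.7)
The plan is to promote the given representation datum, together with the weight decomposition $\scrC=\bigoplus_\omega\scrC_\omega$, into a strict $2$-functor from $\mathfrak{A}(\mathfrak{g})$ to the $2$-category of $R$-linear endofunctors of $\scrC$, by producing all the generating $2$-morphisms and verifying each relation in Definition \ref{def1}; this is essentially Rouquier's main theorem in \cite{R08}. The quiver Hecke relations (1)--(4) are built into the notion of an $\mathfrak{A}(\mathfrak{g})$-representation datum, and the bi-adjunction hypothesis on $(E_s,F_s)$ supplies units $\eta_s$ and counits $\epsilon_s$ with the triangle equalities (5)--(6). What genuinely has to be checked is the inversion relations, namely that $\rho_{s,\lambda}$ and $\sigma_{st}$ for $s\neq t$ are actually invertible; this is where hypotheses (a) and (b) enter.

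For $s\neq t$, the Chevalley relation $[e_s,f_t]=0$ in $\frakg$ together with hypothesis (a) forces $[E_sF_t\idun_\lambda]=[F_tE_s\idun_\lambda]$ in $[\scrC_{\lambda-\alpha_s+\alpha_t}]$. The morphism $\sigma_{st}$, defined from $\eta_t$, $\tau_{ts}$ and $\epsilon_t$, realizes this equality at the decategorified level. To lift it to an actual isomorphism I would pass to the sub-$2$-datum generated by the indices $s$ and $t$: integrability, by hypothesis (a), makes each of $E_s,F_s,E_t,F_t$ locally nilpotent, so on each weight space we reduce to a finite filtration whose subquotients are controlled by the $\frakg$-module structure on $[\scrC]$; on each such piece, $\sigma_{st}$ is inverted by the analogous morphism with the roles of $s$ and $t$ exchanged, using only the quiver Hecke relations together with the triangle identities.

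For the diagonal case $s=t$, the strategy is to reduce to the $\fraksl_2$-categorification picture generated by $E_s,F_s$. The $\fraksl_2$-commutator $e_sf_s-f_se_s=\langle\alpha_s^\vee,\lambda\rangle\cdot\id$ on weights of weight $\lambda$, combined with local nilpotence inherited from the integrability of $[\scrC]$, shows that $\rho_{s,\lambda}$ decategorifies to an isomorphism with the correct multiplicity of $\idun_\lambda$ summands. One then builds an explicit inverse to $\rho_{s,\lambda}$ from $\eta_s$, $\epsilon_s$, powers of $x_s$ and $\sigma_{ss}$, whose verification reduces to computations inside the nilHecke algebra acting on the minimal $\fraksl_2$-categorification (the polynomial representation on $\scrH^\Lambda_\beta$).

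The main obstacle is precisely this rank-one step: promoting a Grothendieck-group identity to a genuine categorical isomorphism requires non-degeneracy of a pairing involving the polynomial action $x_s$, the adjunction $(\eta_s,\epsilon_s)$ and the Hecke morphism $\sigma_{ss}$, and this is the technical core of the Chuang--Rouquier theory of categorical $\fraksl_2$-actions. Once it is settled, combining it with the commutativity statement for $s\neq t$ assembles the full $\mathfrak{A}(\mathfrak{g})$-structure, and integrability of the resulting $2$-representation then follows automatically from the integrability imposed on $[\scrC]$ in (a) together with the compatibility of grading in (b).
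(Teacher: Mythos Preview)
The paper does not give its own proof of this theorem: the statement is attributed to \cite{R08} and is closed with a \qed immediately after, so there is nothing in the paper to compare your argument against. Your proposal is a reasonable sketch of Rouquier's original argument, correctly isolating that the quiver Hecke relations and the right adjunction relations come for free from the hypotheses, and that the substantive content is the invertibility of $\sigma_{st}$ for $s\neq t$ and of $\rho_{s,\lambda}$, the latter reducing to the $\fraksl_2$-case of Chuang--Rouquier.
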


The tuple $$(\{E_s\}_{s\in I},\{F_s\}_{s\in I},\{x_s\}_{s\in I},\{\tau_{s,t}\}_{s,t\in I})$$
and the decomposition $\scrC=\bigoplus_{\omega\in \X}
\scrC_{\omega}$ is called an \emph{$\mathfrak{A}(\mathfrak{g})$-categorification} of the integrable $\frakg$-module
$[\scrC]$.

\begin{definition}\label{equ}\rm
Let $\scrC$ and $\scrC'$ be $\mathfrak{A}(\frakg)$-categorifications with the tuples
$$(\{E_s\}_{s\in I},\{F_s\}_{s\in I},\{x_s\}_{s\in I},\{\tau_{s,t}\}_{s,t\in I})
~\mbox{and}~(\{E'_s\}_{s\in I},\{F'_s\}_{s\in I},\{x'_s\}_{s\in I},\{\tau'_{s,t}\}_{s,t\in I}),$$ respectively.
A functor $\frakE:\scrC \rightarrow \scrC'$
is called
{\em strongly equivariant}
if there exist
isomorphisms of functors
$\Phi_s:
F_{s} \circ \frakE
\stackrel{\sim}{\rightarrow}
\frakE\circ F_{s}$ for all $s\in\scrI$
such that
\begin{itemize}[leftmargin=8mm]
\item[$(a)$]
the natural transformation
$E'_s \frakE \epsilon \circ E'_s \Phi_s E_s \circ \eta_s'
\frakE E_s:\frakE\circ E_s \rightarrow E'_s \circ \frakE $
is an isomorphism
\item[$(b)$]
$\Phi_s \circ x'_s \frakE  = \frakE x_s \circ \Phi_s\in \Hom(F'_s \circ \frakE,\frakE\circ F_s)$, and
\item[$(c)$]
$\Phi_t F_s \circ F'_t \Phi_s
\circ \tau_{st}' \frakE=\frakE \tau_{st} \circ \Phi_s F_t \circ F'_s \Phi_t\in \Hom(F'_sF'_t\circ \frakE, \frakE \circ F_tF_s)$.
\end{itemize}
If $\frakE$ is an equivalence of categories, then the axiom (a) holds
automatically. In this case, we call the functor $\frakE$ an {\em isomorphism} of $\mathfrak{A}(\frakg)$-representations.
\end{definition}

\subsubsection{Minimal categorical representations}
\label{subsec:mini}

Let $\Lambda \in \X_{\scrI}^{+}$ be a dominant integral weight, and
$\lambda=\Lambda-\beta$ for a given $\beta\in \Q_{\scrI}^+$.
We define $$\scrL(\Lambda):=\bigoplus_{\beta\in Q^+}\bfH_{\beta}^\Lambda(Q)\mod,
  \scrL(\Lambda)_{\lambda}:=\bfH_{\beta}^\Lambda(Q)\mod,$$
and the following data on
$\scrL(\Lambda)$ :
\begin{itemize}[leftmargin=8mm]
\item $E_s1_\lambda=E^{\Lambda}_s1_{\beta}$,
\item $F_s1_\lambda=F^{\Lambda}_s1_{\beta}$,
\item $x_s1_\lambda\in\Hom(F^{\Lambda}_s1_\beta,F^{\Lambda}_s1_\beta)$
is represented by the right multiplication by
$x_{n+1}$ on $\bfH_{\beta+\alpha_s}^\Lambda(Q)e(\beta,s),$
\item $\tau_{st}1_\lambda\in\Hom(F^{\Lambda}_sF^{\Lambda}_t1_\beta,F^{\Lambda}_tF^{\Lambda}_s1_\beta)$
is represented by the right multiplication by
$\tau_{n+1}$ on $\bfH_{\beta+\alpha_s+\alpha_t }^\Lambda(Q)e(\beta,ts)$ where $e(\beta,ts)=\sum_{\tuple k \in I^{\beta }}
e(\tuple k ts)$.
\end{itemize}

\begin{theorem}[\cite{KK12}, \cite{K12}]\label{thm:minimalcat}
  The endofunctors $E^{\Lambda}_s$ and $F^{\Lambda}_s$ of $\scrL(\Lambda)$ are bi-adjoint for all $s\in \scrI.$ The tuple $(\{E^{\Lambda}_s\}_{s\in I},\{F^{\Lambda}_s\}_{s\in I},\{x_s\}_{s\in I},\{\tau_{s,t}\}_{s,t\in I})$ and
   the decomposition $\scrL(\Lambda)=
  \bigoplus_{\omega\in \X} \scrL(\Lambda)_{\,\omega}$ form a minimal $\frakA(\frakg)$-categorical representation
  of $\bfL(\Lambda)$.\qed
\end{theorem}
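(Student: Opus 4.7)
The plan is to verify the hypotheses of Theorem~\ref{def:2kacmoodyrep} for the category $\scrL(\Lambda)$ equipped with the prescribed tuple, and then to identify $[\scrL(\Lambda)]$ with $\bfL(\Lambda)$ as a $\frakg$-module. The verification splits into four parts: (i) the quiver Hecke relations among $x_s$ and $\tau_{s,t}$; (ii) the adjunction $(F_s^\Lambda, E_s^\Lambda)$; (iii) the opposite adjunction $(E_s^\Lambda, F_s^\Lambda)$ establishing bi-adjointness; and (iv) the integrable highest-weight module structure on $[\scrL(\Lambda)]$ with the correct weight decomposition.

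Steps (i) and (ii) are essentially formal. Because $x_s1_\lambda$ and $\tau_{st}1_\lambda$ are defined by right multiplication by $x_{n+1}$ and $\tau_{n+1}$ on the relevant cyclotomic bimodules, the quiver Hecke relations (1)-(4) of Definition~\ref{def1} are direct consequences of the defining relations of $\bfH_m(Q)$, which descend to the cyclotomic quotient. The adjunction $(F_s^\Lambda, E_s^\Lambda)$ is precisely the standard tensor-Hom (Frobenius) adjunction for the $(\bfH_{\beta+\alpha_s}^\Lambda(Q), \bfH_\beta^\Lambda(Q))$-bimodule $\bfH_{\beta+\alpha_s}^\Lambda(Q)e(\beta,s)$, where the counit and unit are read off from the idempotent $e(\beta,s)$.

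The heart of the argument, and the main obstacle, is step (iii): producing a second adjunction so that $E_s^\Lambda$ and $F_s^\Lambda$ are bi-adjoint, which is exactly the Kang-Kashiwara theorem \cite{KK12}. The plan is to construct explicit unit $\eta_s$ and counit $\epsilon_s$ $2$-morphisms from a canonical idempotent and polynomial element in $\bfH_{\beta+\alpha_s}^\Lambda(Q)$, then verify the triangular identities together with the inversion relations of Definition~\ref{def1}. This amounts to proving that the morphism $\rho_{s,\lambda}$ is an isomorphism for every $\lambda = \Lambda - \beta$, which in turn reduces to a precise bimodule decomposition of $\bfH_{\beta+\alpha_s}^\Lambda(Q)$: as an $(\bfH_\beta^\Lambda(Q),\bfH_\beta^\Lambda(Q))$-bimodule it should split off $e(\beta,s)\bfH_{\beta+\alpha_s}^\Lambda(Q)e(\beta,s)$ together with $\langle \alpha_s^\vee,\lambda\rangle$ copies of the regular bimodule when this pairing is positive, and dually when it is negative. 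Controlling the kernel and cokernel of the natural projection uses an explicit splitting built from the top-degree coefficient of $f_s^\Lambda(x_{n+1})$, resting ultimately on the projectivity and flatness properties established by Khovanov-Lauda for the induction/restriction bimodules $e(\beta,s)\bfH_{\beta+\alpha_s}^\Lambda(Q)$.

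For step (iv), once the representation datum satisfies the hypotheses of Theorem~\ref{def:2kacmoodyrep}, it remains to check that $[\scrL(\Lambda)]$ is isomorphic to $\bfL(\Lambda)$ with the weight grading $[\scrL(\Lambda)_{\Lambda-\beta}] = [\bfH_{\beta}^\Lambda(Q)\mod]$. The class of the trivial $\bfH_0^\Lambda(Q)=R$-module is a highest-weight vector of weight $\Lambda$ (annihilated by every $E_s^\Lambda$) and generates $[\scrL(\Lambda)]$ under iterated action of the $[F_s^\Lambda]$. The cyclotomic relation $f^\Lambda(x_1)=0$ forces local nilpotency of $E_s^\Lambda$ and $F_s^\Lambda$, hence integrability, and also realises the defining relations of $\bfL(\Lambda)$ on the highest weight vector. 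Comparing with the universal integrable highest-weight module and invoking the Khovanov-Lauda-Rouquier categorification result \cite{K12} then identifies $[\scrL(\Lambda)]$ with $\bfL(\Lambda)$, which is exactly the statement of minimality.
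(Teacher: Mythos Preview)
The paper does not prove this theorem: the statement ends with a \qed and simply cites \cite{KK12} and \cite{K12}, treating the result as a black box from the literature. Your proposal is a correct outline of the argument contained in those references --- in particular, your identification of step (iii), the bi-adjointness via the explicit bimodule decomposition of $\bfH_{\beta+\alpha_s}^\Lambda(Q)$, as the substantive content of Kang--Kashiwara is accurate, and the remaining steps are indeed formal or follow from the categorification theorem. Since the paper offers no proof of its own, there is nothing further to compare; your sketch goes well beyond what the paper itself provides.
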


\smallskip

\subsection{Derived equivalences}\label{sec:derivedequ}
Let $V$ be an integrable $\frakg$-module. For $i \in \scrI$, the simple reflection
$$s_i = \exp(-f_i) \exp(e_i) \exp(-f_i)$$ has an action on $V$. For each weight $\omega\in \X$,
it maps a weight space $V_\omega$ to $V_{s_i(\omega)}$ with
$s_i(\omega) = \omega - \langle \alpha_i^\vee,\omega \rangle \alpha_i$.
If $\scrC$ is a categorification of $V$, then it restricts to an $\fraks\frakl_2$-categorification in the sense
of Chuang-Rouquier. In particular, the simple objects are weight vectors for the categorical $\fraks\frakl_2$-action.
Thus, the theory of Chuang-Rouquier can be applied and
 \cite[Theorem. 6.6]{CR} implies that $s_i$
can be lifted to a derived equivalence $\Theta_i$ of $\scrC$.  (See also \cite[Section 1.7]{DVV2}.)

\begin{theorem}\label{thm:reflection}
Assume that $R$ is a field.
Let $$(\{E_s\}_{s\in I},\{F_s\}_{s\in I},\{x_s\}_{s\in I},\{\tau_{s,t}\}_{s,t\in I})$$
be an $\mathfrak{A}(\mathfrak{g})$-categorification on an abelian $R$-category $\scrC$, and
$i \in \scrI$. Then there exists a derived self-equivalence $\Theta_i$ of
$\scrC$ which restricts to derived equivalences
 $$\Theta_i \, : \, D^b(\scrC_\omega) \mathop{\longrightarrow}\limits^\sim
 D^b(\scrC_{s_i(\omega)})$$
for all weights $\omega \in \X$. Furthermore, $[\Theta_i] = s_i$ as a linear map
of $[\scrC]$.
\qed
\end{theorem}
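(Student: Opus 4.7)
The plan is to reduce the statement to the $\mathfrak{sl}_2$-case treated by Chuang and Rouquier and then invoke \cite[Theorem~6.6]{CR} verbatim. More precisely, the first step is to observe that fixing an index $i\in\scrI$ and forgetting all the structure except the bi-adjoint pair $(E_i,F_i)$, the endomorphism $x_i$, and the endomorphism $\tau_{ii}$ of $F_i^2$ produces an $\mathfrak{sl}_2$-categorification on $\scrC$ in the sense of Chuang--Rouquier. Indeed, the quiver Hecke relations (1)--(4) of Definition \ref{def1} restricted to $s=t=i$ become the nil affine Hecke relations, and the assumption that $\scrC=\bigoplus_{\omega\in\X}\scrC_\omega$ together with the inversion relation $\rho_{i,\lambda}$ guarantees that the restrictions $E_i|_{\scrC_\omega}$ and $F_i|_{\scrC_\omega}$ satisfy the $\fraks\frakl_2$-compatibility: $[E_i]$ and $[F_i]$ act as Chevalley generators on the $i$-chains $\bigoplus_{k\in\bbZ}[\scrC_{\omega+k\alpha_i}]$, and the local nilpotency comes from integrability of $[\scrC]$ as a $\frakg$-module.

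Having established the $\mathfrak{sl}_2$-categorification, I would quote the construction of the Rickard complex $\Theta_i$: it is the total complex of a bounded complex of functors built from $E_i^{(a)}F_i^{(b)}$ (divided-power functors, which exist because $x_i$ and $\tau_{ii}$ satisfy the nil affine Hecke relations) whose differentials are assembled from the units and counits of the $(E_i,F_i)$ adjunctions and from $x_i,\tau_{ii}$. Chuang and Rouquier prove that for each weight $\omega$ the component of $\Theta_i$ from $D^b(\scrC_\omega)$ to $D^b(\scrC_{s_i(\omega)})$ is a two-sided tilting complex, hence an equivalence. Since the construction is purely internal to the $\fraks\frakl_2$-action on $\scrC$, it applies to our setting without modification.

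To verify that $[\Theta_i]=s_i$ on $[\scrC]$, I would take Euler characteristics: on a weight space $[\scrC_\omega]$ with $\langle\alpha_i^\vee,\omega\rangle=n$, the alternating sum of the terms $[E_i^{(a)}F_i^{(b)}]$ appearing in $\Theta_i$ reproduces, by direct computation in the integrable $\fraks\frakl_2$-module $[\scrC]$, the action of $\exp(-f_i)\exp(e_i)\exp(-f_i)$, which is exactly $s_i$. This also forces the source and target weights to be $\omega$ and $s_i(\omega)=\omega-n\alpha_i$ respectively, matching the target of the derived equivalence.

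The main obstacle is not conceptual but bookkeeping: one must check that all hypotheses of \cite[Theorem~6.6]{CR} are genuinely satisfied by an $\mathfrak{A}(\frakg)$-categorification in the sense of Section \ref{subsec:g-cat}. The nontrivial points are (i) producing the divided-power functors, which requires the nil Hecke algebra acting faithfully on $E_i^n$ and $F_i^n$ (this follows from the injectivity of the algebra map $\phi_n$ when one restricts to color $i$, using the inversion relations together with the quiver Hecke relations), and (ii) verifying that $E_i$ and $F_i$ are locally nilpotent on $\scrC$, which is built into the categorification hypothesis via the integrability of $[\scrC]$ as a $\frakg$-module. Once these are in place, the statement is a direct citation.
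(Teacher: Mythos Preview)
Your proposal is correct and follows exactly the approach the paper takes: the paper simply notes that restricting the $\mathfrak{A}(\frakg)$-categorification to the pair $(E_i,F_i)$ with $x_i,\tau_{ii}$ yields an $\fraks\frakl_2$-categorification in the sense of Chuang--Rouquier, and then cites \cite[Theorem~6.6]{CR} (with a pointer to \cite[Section~1.7]{DVV2}). One minor remark: for the divided-power functors you do not need the nil Hecke algebra to act \emph{faithfully} on $F_i^n$; you only need the algebra homomorphism to exist so that the primitive idempotents act and $F_i^{(n)}$ can be defined as the corresponding summand.
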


\smallskip

\section{Categorical Representation theory of type A}\label{cha:typeA}

\smallskip
In this section, we focus on the categorical representations of affine Lie algebras of type A, following \cite{DVV2} and keeping their notation.
\subsection{Affine Lie algebras associated with a quiver}\label{sec:typeA}
Let $1\neq q \in R^\times$ and  $\scrI$ be a (possibly infinite) subset of $R^\times$.
To the pair $(\scrI,q)$ we  associate a (not necessarily connected)
quiver $\scrI(q)$ (also denoted by $\scrI$) as follows:
\begin{itemize}[leftmargin=8mm]
  \item  the vertex set is $\scrI$, and
  \item there is an arrow $i\to i\cdot q$
if and only if $i, i\cdot q\in \scrI$.
\end{itemize}
In this paper,  we will always assume that $(q^{\bbZ}\scrI(q))/q^{\bbZ}$ is finite.
Assume that $\scrI$ is stable under the multiplication by $q$ and $q^{-1}$.
If $q$ is a primitive $e$-th root of unity
then the quiver $\scrI(q)$ is cyclic of type $A_{e-1}^{(1)}$,
 while if  $q$ is not a root of unity then  $\scrI(q)$
is the disjoint union of quivers of type $A_\infty$.

\smallskip

The quiver $\scrI(q)$ defines a symmetric generalized Cartan matrix
$A = (a_{ij})_{i,j\in \scrI}$ with

$$
\begin{cases}
a_{ii}= 2 &   \\
a_{ij} =-1 & \text{if}~i \rightarrow j~\text{or}~j \rightarrow i\\
a_{ij}=0 & \text{ otherwise}.
\end{cases}
$$ To this Cartan matrix one can
associate the (derived) Kac-Moody algebra $\fraks\frakl_{\scrI}'=\widetilde{\fraks\frakl}_{\scrI}$ over $\bbC$, which
has Chevalley generators $e_i,f_i$ for $i\in \scrI$, subject to the usual
relations.
For $i\in\scrI$, let $\alpha_i,$ $\alpha^\vee_i$ be the simple root and coroot
corresponding to $e_i$ and let $\Lambda_i$ be the $i$-th fundamental weight. Recall that
 $\Q^\vee = \bigoplus\limits_{i\in \scrI} \bbZ \alpha_i^\vee$ and
$\P=\bigoplus\limits_{i\in \scrI}\bbZ \Lambda_i.$

When $\scrI = q^{\bbZ}$, there are two cases to consider.

If $q$ has finite order $e$, then $\scrI$ is isomorphic
  to a cyclic quiver of type $A_{e-1}^{(1)}$. We can form $\X^\vee = \Q^\vee
  \oplus \bbZ \partial$ and $\X = \P \oplus \bbZ \delta$ with
  $$ \langle \alpha^\vee_j,\Lambda_i\rangle = \delta_{ij},\quad \langle\partial,\Lambda_i \rangle = \langle \alpha^\vee_i,  \delta\rangle = 0, \quad\langle \partial,\delta \rangle = 1.$$
   The pairing is non-degenerate, and
  $\frakg_\scrI$ is isomorphic to the Kac-Moody algebra
  $$\widehat{\fraks\frakl}_e = \fraks\frakl_e(\bbC) \otimes \bbC[t,t^{-1}] \oplus
  \bbC c \oplus \bbC \partial.$$
  An explicit isomorphism sends $e_{q^i}$ (resp. $f_{q^i}$) to the matrix
  $E_{i,i+1} \otimes 1$ (resp. $E_{i+1,i} \otimes 1$) if $i \neq e$ and $e_1$
  (resp. $f_1$) to $E_{e,1}\otimes t$ (resp. $E_{1,e}\otimes t^{-1}$).
  Via this isomophism the central element $c$ corresponds to $\sum_{i \in \scrI}
  \alpha_i^\vee$,
  and the derived algebra $\frakg_\scrI$ corresponds
  to
  $$\widetilde{\fraks\frakl}_e=\fraks\frakl_e(\bbC) \otimes \bbC[t,t^{-1}] \oplus \bbC c.$$
    Let $\frakh = \bbC\otimes \X^\vee$ be the Cartan subalgebra of $\widehat{\fraks\frakl}_e,$ and let $\frakh^* =  \bbC\otimes\X$ be its dual.
The space $\frakh^*$ is equipped with a bilinear symmetric form defined by
$$ \textstyle{(\Lambda_{q^k} | \Lambda_{q^l}) = \min(k,l) - \frac{kl}{e},\quad (\Lambda_{q^k} | \delta) =1,\quad (\delta | \delta) =0} $$ for all $0\leqs k,l\leqs e-1.$
For $\Lambda \in \frakh^*$ we shall write $|\Lambda|^2$ for $(\Lambda | \Lambda).$
Then we have that $\alpha_i = 2\Lambda_i - \Lambda_{qi} - \Lambda_{q^{-1}i} + \delta_{i,1} \delta,$ where $\delta_{i,1} =1$ if $i=1$ and $\delta_{i,1} =0$ if otherwise.

 If $\scrI$ is infinite, then $\frakg_\scrI$ is isomorphic to
  $\mathfrak{sl}_\bbZ$, the Lie algebra of traceless matrices with finitely many
  non-zero entries. It will be sometimes useful to consider a completion of
$\frakg_\scrI$ denoted by $\overline{\frakg}_\scrI$, which has $\prod \bbC \alpha_i^\vee \simeq \bbC^\scrI$ as a Cartan subalgebra. This allows
to consider some infinite sums of the generators, such as $c = \sum \alpha_i^\vee$
which is a central element in $\overline{\frakg}_\scrI$. This will not affect
the representation theory of $\frakg_\scrI$ as we will  only  work with integrable
representations.

\smallskip

Let $R'$ be another commutative domain with unit, and $\theta : R \longrightarrow R'$ be
a ring homomorphism. Then there is a Lie algebra homorphism $\overline{\frakg}_{\theta(\scrI)}'
\longrightarrow \overline{\frakg}_{\scrI}'$ defined on the Chevalley generators by
$$e_i   \longmapsto \displaystyle\sum_{\theta(j) = i} e_j \qquad \text{and} \qquad
	 f_i \longmapsto \displaystyle\sum_{\theta(j) = i} f_j.$$

\subsection{$\fraks\frakl_{I}$-categorification} \label{sec:datumA}

\subsubsection{Representation datum of type A}\label{subsec:datumA}
\begin{definition} \label{def:repdataA} Let $\scrC$ be an abelian $R$-category and $q\in R^{\times}$.
A \emph{representation datum $($of type $A$$)$} on $\scrC$ with parameter $q\neq 1$ is a tuple $(E,F,X,T)$ where
$E$, $F$ are bi-adjoint functors $\scrC\to\scrC$,   $X\in\End(F)^\times$ and $T\in\End(F^2)$
are endomorphisms of functors satisfying the \emph{affine Hecke relations}:
\begin{itemize}[leftmargin=8mm]
  \item[(a)] $1_FT\circ T1_F\circ 1_FT=T1_F\circ 1_FT\circ T1_F$,

  \item[(b)] $(T+1_{F^2})\circ(T-q1_{F^2})=0$, and

  \item[(c)] $T\circ(1_FX)\circ T=qX1_F$.
\end{itemize}
\end{definition}

Definition \ref{def:repdataA} can be formulated in terms of actions of affine Hecke algebras.
In fact, for  a pair $(E,F)$ of bi-adjoint functors, $X\in\End(F)$ and  $T\in\End(F^2)$,
the tuple $(E,F,X,T)$ is a representation datum if and only if
for each $m\in\bbN$, the map
\begin{equation}\label{equ:heckealgebrahom}
\begin{array}{rcl} \phi_{F^m}\, :\, \bfH_{R,m}^q & \longrightarrow & \End(F^m) ^{\op}\\[4pt]
  X_k & \longmapsto &  {F^{m-k}} X{F^{k-1}}\\
  T_l & \longmapsto & {F^{m-l-1}}T{F^{l-1}}\\
  \end{array}
\end{equation}
is a well-defined $R$-algebra homomorphism.
Here $\bfH_{R,m}^q$ is the \emph{affine Hecke algebra} of type $A_{m-1}$ over $R$,
generated by $T_1,\ldots,T_{m-1}$, $X^{\pm 1}_1,\ldots,X^{\pm 1}_m$
subject to the following relations:
$$
\begin{array}{ll}
 (T_i+1)(T_i-q)=0, &   X_iX_j=X_jX_i,  \\
 T_iT_{i+1}T_i=T_{i+1}T_iT_{i+1},  & X_iX_i^{-1}=X_i^{-1}X_i=1, \\
 T_iT_j=T_jT_i \ \ \textrm{if}\ \ |i-j|>1,   &T_iX_{i}T_i=qX_{i+1}, \\
      X_iT_j=T_jX_i\ \ \textrm{if}\ \ i-j\neq 0,1.
\end{array}
$$
As is conventional, we understand $\bfH_{R,0}^q=R$.

\smallskip

\smallskip

Now let $R$ be a field, $\scrC$ be Hom-finite, and $I=I(q)$ be the quiver as defined in \S \ref{sec:typeA}.

\begin{definition}[\cite{R08}]\label{def:rep--datum}
An \emph{$\fraks\frakl_I$-representation} on $\scrC$ consists of a representation datum
$(E,F,X,T)$ on $\scrC$ and a decomposition $\scrC=\bigoplus_{\omega\in\X}\scrC_\omega$
satisfying
\begin{itemize}[leftmargin=8mm]

\item[(a)] $F=\bigoplus_{i\in \scrI} F_i$ and $E=\bigoplus_{i\in \scrI} E_i$ where
$F_i$ and $E_i$ are the generalized $i$-eigenspaces of $X$ respectively acting on $F$ and on $E$  for $i\in \scrI,$

\item[(b)] the actions of $[E_i]$ and $[F_i]$ for $i\in \scrI$ endow $[\scrC]$ with
a structure of integrable $\fraks\frakl_{I}$-module such that $[\scrC]_\omega=[\scrC_\omega]$,

\item[(c)] $E_i(\scrC_\omega)\subset\scrC_{\omega+\alpha_i}$ and
$F_i(\scrC_\omega)\subset\scrC_{\omega-\alpha_i}$.
\end{itemize}
\end{definition}

The tuple $(E, F, X, T)$ and the decomposition $\scrC=\bigoplus_{\omega\in \X}
\scrC_{\omega}$ is called an \emph{$\fraks\frakl_{\scrI}$-categorification} of the integrable $\fraks\frakl_{\scrI}$-module
$[\scrC]$.

\subsubsection{Brundan--Kleshchev--Rouquier equivalence}\label{subsec:BKR}
Now we write $\Gamma$ for the quiver $I(q)$ with vertex set $\scrI.$
Let $\bfH_m(\Gamma)\Mod_0$ be the full subcategory of the category of representations
of $\bfH_m(\Gamma)$ consisting of objects on which $x_{a,\tuple k}$ is locally
nilpotent for all $\tuple k$ and $1 \leqs a \leqs m$.
Let $\cC_\Gamma$ be the category of modules over
the  affine Hecke algebra $\bfH_{R,m}^q$
which are direct sums of their generalized eigenspaces for each $X_i$, with eigenvalues in $\scrI$.
For $M \in \cC_\Gamma$ and $\tuple k=(k_1,\ldots,k_m) \in \scrI^m$,
we denote by $M_{\tuple k}$ the generalized $\tuple k $-eigenspace:
\[
M_{\tuple k } := \{ x \in M \mid (X_a - k_a)^Nx = 0
\text{ for all $1 \le a \le m$ and } N \gg 0 \}.
\]

The following theorem is due to Brundan-Kleshchev~\cite{BK3} and Rouquier~\cite[Theorem~3.16]{R08}.

\begin{theorem}\cite{BK3,R08}
\label{thm:BKR}
There exists an equivalence of categories
\[
\cC_\Gamma \simto
\bfH_m(\Gamma)\Mod_0
\]
which associates to $M \in \cC_\Gamma$ the representation $V$ defined by
\begin{enumerate}
\item $V_{\tuple k} = M_{\tuple k }$  for all $\tuple k  \in \scrI^m$;
\item
$x_{a,\tuple k} := (k^{-1}_aX_a - 1)e(\tuple k)$ for all $\tuple k  \in \scrI^m$ and $1 \le a \le m$;
\item
\label{it:BKR-functor}
$\tau_{a,\tuple k}$ given by the formulas
\[
  \tau_{a,\tuple k} := \begin{cases}
  (k_a(qX_a-X_{a+1})^{-1}(T_a-q))e(\tuple k) &
    \text{if $k_a = k_{a+1}$,} \\
(q^{-1}k^{-1}_a((X_a-X_{a+1})T_a + (q-1)X_{a+1}))e(\tuple k) & \text{if $k_{a+1} = k_a +1$,}\\
(\frac{X_a-X_{a+1}}{qX_a-X_{a+1}}(T_a-q)+1)e(\tuple k) & \text{otherwise}
\end{cases}
\]
for all $\tuple k \in \scrI^m$ and $1 \leqs a < m$.
\end{enumerate}
\end{theorem}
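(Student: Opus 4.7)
The plan is to construct the equivalence explicitly using the formulas in the statement, verify that they define a functor $\cC_\Gamma \to \bfH_m(\Gamma)\Mod_0$ by checking the defining relations of the quiver Hecke algebra, and then invert the formulas to obtain a quasi-inverse. Throughout, I will use that the quiver $\Gamma = \scrI(q)$ is loop-free (since $q\neq 1$) and that the polynomial matrix $Q_{ij}(u,v)$ is the one attached to $\Gamma$ in Remark \ref{rmk:quiver}.

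\textbf{Step 1: the functor is well defined.} For $M \in \cC_\Gamma$, the decomposition $M = \bigoplus_{\tuple k \in \scrI^m} M_{\tuple k}$ provides orthogonal idempotents $e(\tuple k)$ acting as projection to $M_{\tuple k}$. Since $X_a - k_a$ is locally nilpotent on $M_{\tuple k}$, the operator $x_{a,\tuple k} := (k_a^{-1} X_a - 1)e(\tuple k)$ is locally nilpotent as required for membership in $\bfH_m(\Gamma)\Mod_0$. For $\tau_{a,\tuple k}$, one checks that the denominators make sense on $M_{\tuple k}$: when $k_a = k_{a+1}$, the element $qX_a - X_{a+1}$ has generalized eigenvalue $(q-1)k_a \ne 0$ on $M_{\tuple k}$ and is therefore invertible; the third case uses invertibility of $qX_a - X_{a+1}$, with eigenvalue $qk_a - k_{a+1}$, which is nonzero since that case excludes both $k_a = k_{a+1}$ and $k_{a+1} = qk_a$.

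\textbf{Step 2: verification of the KLR relations.} The idempotent relations and commutativity of the $x_a$ with each other and with the $e(\tuple k)$ are immediate from the corresponding properties of the $X_a$. The relation $\tau_b e(\tuple k) = e(s_b(\tuple k)) \tau_b$ follows from $T_a X_b T_a^{\pm 1} \in R[X_a^{\pm 1}, X_{a+1}^{\pm 1}]$ together with the fact that $T_a$ intertwines the characters $(k_a,k_{a+1})$ and $(k_{a+1},k_a)$ modulo nilpotents. The bulk of the work consists in verifying (i) the quadratic relation $\tau_a^2 e(\tuple k) = Q_{k_a,k_{a+1}}(x_a,x_{a+1}) e(\tuple k)$, (ii) the linear relation between $\tau_a$ and $x_b$, and (iii) the braid relation. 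Each requires case analysis according to the relationship between consecutive indices: $k_a = k_{a+1}$, $k_{a+1} = q k_a$, $k_a = q k_{a+1}$, or all others. Using $(T_a - q)(T_a + 1) = 0$ in the Hecke algebra, one computes, for instance, that in the case $k_a = k_{a+1}$ the formula for $\tau_{a,\tuple k}$ squares to zero (matching $Q_{ii} = 0$), while in the case $k_{a+1} = qk_a$ one recovers $\pm(x_a - x_{a+1})$ as $\tau_{a,\tuple k}^2$. This is the main obstacle of the proof, since both the change of variables $X_a = k_a(1 + x_{a,\tuple k})$ introduces noncommutative corrections and the definition of $\tau_{a,\tuple k}$ varies with the case; consequently the braid relation requires the most delicate analysis, particularly when the three indices $k_a, k_{a+1}, k_{a+2}$ sit in a length-two subpath of $\Gamma$.

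\textbf{Step 3: quasi-inverse and equivalence.} To invert the construction, let $V \in \bfH_m(\Gamma)\Mod_0$ and put $M = \bigoplus_{\tuple k} e(\tuple k) V$ as an $R$-module. Define $X_a e(\tuple k) := k_a(1 + x_{a,\tuple k})$; local nilpotence of $x_{a,\tuple k}$ guarantees $X_a$ is invertible (via the geometric series) and places the summand $e(\tuple k)V$ into the generalized $k_a$-eigenspace of $X_a$. The operator $T_a$ is recovered by solving the formula in the statement for $T_a$ on each summand $e(\tuple k) V$, again a piecewise definition. The affine Hecke relations are then verified by the same case analysis read in reverse, making essential use of the quiver Hecke relations already available on $V$. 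This yields a functor $\bfH_m(\Gamma)\Mod_0 \to \cC_\Gamma$ which is manifestly inverse to the one of Step 1 on objects and on morphisms, so both functors are equivalences of categories.
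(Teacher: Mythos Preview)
The paper does not supply its own proof of this theorem; it is quoted from Brundan--Kleshchev \cite{BK3} and Rouquier \cite[Theorem~3.16]{R08} and used as a black box. Your outline follows exactly the strategy of those references: define the functor via the given formulas, verify the quiver Hecke relations by a case analysis on the relative position of $k_a,k_{a+1}$ (and $k_{a+2}$ for the braid relation), and invert the change of variables to get a quasi-inverse. So your approach is the standard and correct one.

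One small remark: in Step~1 you justify invertibility of $qX_a-X_{a+1}$ in the ``otherwise'' case by noting $qk_a-k_{a+1}\neq 0$. This is right, but be aware that the ``otherwise'' branch still includes the case $k_a=qk_{a+1}$ (an arrow $k_{a+1}\to k_a$), and there the quadratic relation must again produce $Q_{k_a,k_{a+1}}(x_a,x_{a+1})=\pm(x_a-x_{a+1})$ rather than $1$; the formula you use in that branch does yield this, but your text only mentions the constant case, so make sure your case analysis in Step~2 treats all four possibilities $k_a=k_{a+1}$, $k_{a+1}=qk_a$, $k_a=qk_{a+1}$, and ``neither adjacent nor equal'' separately when checking $\tau_a^2$.
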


\smallskip
Let $(E,F,X,T)$  be an $\fraks\frakl_I$-representation datum on $\scrC.$
Assume there are decompositions
$E=\bigoplus_{i\in I} E_i$ and
$F=\bigoplus_{i\in I} F_i$,
where $X-i$ is locally nilpotent on $E_i$ and $F_i$.
Let $I(q)$ be the quiver defined as before.
We put
\begin{equation*}\label{tran}
\begin{aligned}
&x_i=i^{-1}X-1~\text{(acting on $F_i$) and}\\
&\tau_{ij}= \begin{cases}
i(qF_iX-XF_j)^{-1}(T-q) & \text{ if }i=j,\\
q^{-1}i^{-1}(F_iX-XF_j)T+i^{-1}(1-q^{-1})XF_j & \text{ if }i=qj,\\
\frac{F_iX-XF_j}{qF_iX-XF_j}(T-q)+1 & \text{ otherwise}
\end{cases}
\end{aligned}
\end{equation*}
(restricted to $F_iF_j$).
Then the tuple
$(\{E_i\}_{i\in I},\{F_i\}_{i\in I},\{x_i\}_{i\in I},\{\tau_{i,j}\}_{i,j\in I})$
defines  an  $\mathfrak{A}(\mathfrak{sl}_{I})$-representation datum on $\scrC$.
\begin{theorem}[\cite{R08}]
\label{th:slcat2rep}
Given an $\fraks\frakl_{I}$-categorification on $\scrC$, the construction as above
gives rise to an integrable action of $\mathfrak{A}(\mathfrak{sl}_{I})$  on $\scrC$.

Conversely, an integrable action of $\mathfrak{A}(\mathfrak{sl}_{I})$
 on $\scrC$ gives rise to
an $\fraks\frakl_{I}$-categorification on $\scrC$.
\end{theorem}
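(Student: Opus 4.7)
The plan is to derive the theorem from the Brundan--Kleshchev--Rouquier equivalence (Theorem \ref{thm:BKR}), lifted from the module level to the functor level. The substitutions $x_i = i^{-1}X - 1$ and the three case formulas for $\tau_{ij}$ are exactly the BKR change of variables, except that here they are applied inside $\End(F)$ and $\Hom(F_iF_j, F_jF_i)$ rather than inside $\End_R(M)$. Since both the affine Hecke algebra $\bfH_{R,n}^q$ and the quiver Hecke algebra $\bfH_n(\Gamma)$ act on $\scrC$ only through $\End(F^n)^{\mathrm{op}}$, the BKR identities can be read off purely formally at the functor level, provided that the inverses appearing in the BKR formulas are well defined.

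First I would check that the operators $(qF_iX - XF_j)^{-1}$ and the fraction $(F_iX - XF_j)/(qF_iX - XF_j)$ entering the definition of $\tau_{ij}$ are well defined as natural transformations. This rests on the hypothesis that $X - i$ is locally nilpotent on $F_i$: the relevant denominators are invertible as formal power series in the nilpotent variables $F_i x_j$ and $x_i F_j$, either because their constant term is a nonzero scalar (when $i \neq qj$) or because the leading factor is cancelled by a compensating numerator (in the cases $i = qj$ or $i = j$). Given this, the quiver Hecke relations (1)--(4) of Definition \ref{def1} follow from the affine Hecke relations (a)--(c) of Definition \ref{def:repdataA} by the case analysis of \cite{R08}: the quadratic relation $(T+1)(T-q)=0$ combined with the three cases of $\tau_{ij}\circ\tau_{ji}$ gives relation (1); the commutator $TX - XT$ gives the nilHecke-type relations (3) and (4); and the braid relation $T_1T_2T_1 = T_2T_1T_2$ yields the triple-braid relation (2).

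To complete the datum of an $\mathfrak{A}(\mathfrak{sl}_I)$-representation, I would verify that the maps $\rho_{s,\lambda}$ and $\sigma_{st}$ of Definition \ref{def1}(iii) are isomorphisms. The hypothesis that $[\scrC]$ is integrable for $\fraks\frakl_I$ gives the commutator $[E_i, F_j] = \delta_{ij} \alpha_i^\vee$ in the Grothendieck group, which forces the source and target of $\rho_{s,\lambda}$ (respectively of $\sigma_{st}$ with $s \neq t$) to have the same class. A standard rigidity argument---an endomorphism of a direct sum of biadjoint functors between Hom-finite categories that is an isomorphism in $[\scrC]$ is itself an isomorphism of functors---then promotes equality of classes to an honest isomorphism and yields the inversion relations.

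For the converse direction I would run the previous computation in reverse. Starting from an integrable 2-representation of $\mathfrak{A}(\mathfrak{sl}_I)$, set $E := \bigoplus_i E_i$, $F := \bigoplus_i F_i$, $X|_{F_i} := i(1 + x_i)$, and define $T$ on $F_iF_j$ by inverting the piecewise formulas. The affine Hecke relations then fall out of the quiver Hecke relations by the same BKR identities read backwards, while the weight decomposition together with the local nilpotence of the $E_i, F_i$ built into integrability deliver the remaining axioms of an $\fraks\frakl_I$-categorification. The hard part throughout is the triple-braid relation (2) of Definition \ref{def1}: matching the symmetric ratio $(Q_{st}(u,v) - Q_{ut}(w,v))/(u-w)$ against the rational expression in three $X$-variables produced by composing three $\tau$s requires a delicate identity in $\bfH_{R,3}^q$, which is the heart of Rouquier's argument in \cite{R08}; every other step is comparatively routine once this identity is in hand.
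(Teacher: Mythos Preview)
The paper does not supply a proof of this theorem: it is stated with attribution to \cite{R08} and no argument is given. So your proposal is not being compared against anything in this paper, but rather against the underlying proof in Rouquier's work, which the paper simply quotes.

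As a sketch of Rouquier's argument, your outline is broadly accurate: the BKR change of variables (Theorem~\ref{thm:BKR}) is indeed applied at the level of natural transformations, local nilpotence of $X-i$ on $F_i$ is what makes the rational expressions well defined, and the triple-braid relation is the substantive computation. One point, however, is too loose. You write that the inversion relations follow from a ``standard rigidity argument'' because source and target have equal classes in $[\scrC]$. Equality of Grothendieck classes does not by itself force a natural transformation to be an isomorphism; one needs either injectivity or surjectivity as additional input, or some filtered/triangular structure on the map. In Rouquier's treatment this step is genuinely nontrivial---it is essentially the content of what the paper records separately as Theorem~\ref{def:2kacmoodyrep}---and it does not reduce to a comparison of classes. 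Apart from this overstatement, your plan matches the structure of the proof in \cite{R08}.
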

\subsection{Partitions }\label{sec:partitions}

\subsubsection{Compositions, partitions and $l$-partitions}\label{subec:partitions}
A \emph{composition} $\lambda\comp_{l} n$
of an integer $n$ is an ordered tuple $\lambda=(\lambda_1,\dots,\lambda_l)$ of positive integers such
that  $\sum_{i=1}^l\lambda_i=n$.

\smallskip

A \emph{partition} of $n$ is a non-increasing sequence of non-negative integers
$\lambda = (\lambda_1 \geqslant \lambda_2 \geqslant \cdots)$ with $\sum_i \lambda_i=n$,
to which one
associates the so-called \emph{Young diagram}
$Y(\lambda)=\{(x,y)\in\bbZ_{>0}\times\bbZ_{>0}\,\mid\,y\leqslant \lambda_x\}.$
We write $\scrP=\bigsqcup_n\scrP_n$ for the set
of all partitions, where $\scrP_n$ is the set of partitions of $n$.
For $\lambda\in \scrP$, we denote by $|\lambda|$ the \emph{weight}
of $\lambda$ and by $\lambda^*$ the partition  conjugate to $\lambda$.

\smallskip

An \emph{$l$-partition} of $n$ is an $l$-tuple $\tuple\lambda=(\lambda^{1},\ldots,\lambda^{l})$ of partitions whose weights add up to $n$,
and its Young diagram is the set $Y(\tuple\lambda)=\bigsqcup_{p=1}^lY(\lambda^p)\times\{p\}.$
The integer $|\tuple\lambda|=\sum_p|\lambda^{p}|$ is called the weight of the $l$-partition.
We write $\scrP^l=\bigsqcup_n\scrP^l_n$ for the set of all $l$-partitions, where
$\scrP^l_n$ is the set of $l$-partitions of $n$.

\subsubsection{Residues and contents}\label{subsec:contents}
We fix $\tuple \xi = (\xi_1,\ldots,\xi_l) \in \scrI^l$, and assume that $\scrI$ is stable under the multiplication by
$q$ and $q^{-1}$.
Let $A\in Y(\tuple\lambda)$ be the box which lies in
the $i$-th row and $j$-th column of the diagram of
$\lambda^p$.
The \emph{$(\tuple \xi,q)$-shifted residue} of the node $A$ is the element of $\scrI$ given by
$q\text{-}\!\res^{\tuple \xi}(A)=q^{j-i}\xi_{p}.$  If $\tuple\lambda$, $\tuple\mu$ are $l$-partitions such
that $|\tuple\mu|=|\tuple\lambda|+1$ we write $q\text{-}\!\res^{\tuple\xi}(\tuple\mu-\tuple\lambda)=i$ if $Y(\tuple\mu)$ is obtained by
adding a node of $(\tuple\xi,q)$-shifted residue $i$ to $Y(\tuple \lambda)$.  We call the node a removable $i$-node of $\tuple \mu$ and an addable node of $\tuple \lambda$.
A \emph{charge} of the tuple $\tuple \xi= (\xi_1,\ldots,\xi_l)$ is an $l$-tuple of integers
$\tuple s = (s_1,\ldots,s_l)$ such that $\xi_p = q^{s_p}$ for all $p = 1,\ldots,l$.
Conversely, given $\scrI \subset R^\times$ and $q \in R^\times$, any
$l$-tuple of integers $\tuple s \in \bbZ^l$ defines a tuple $\tuple \xi = (q^{s_1},\ldots,q^{s_l})$ with
charge $\tuple s$.
The \emph{$\tuple s$-shifted content}
of the box $A=(i,j,p)$ is the integer $\text{cont}^{\tuple s}(A)=s_p+j-i$. Similarly, if $\tuple\lambda$, $\tuple\mu$ are $l$-partitions such
that $|\tuple\mu|=|\tuple\lambda|+1$ we write $\text{cont}^{\tuple s}(\tuple\mu-\tuple\lambda)=i$ if $Y(\tuple\mu)$ is obtained by
adding a node of $\tuple s$-shifted content  $i$ to $Y(\lambda)$.
It is related to the residue of $A$ by the formula $q\text{-}\!\res^{\tuple \xi}(A)=q^{\text{cont}^{\tuple s}(A)}$.
We will also write $p(A)=p$.
\subsubsection{$l$-cores and $l$-quotients}\label{subsec:l-core}
We start with the case $l=1$.

Let  $(\lambda,s)$ be a charged partition.
Then it is uniquely determined by
the set $\beta_s(\lambda)=\{\lambda_u+s+1-u\,\mid
\,u\geqslant 1\}$ of the  so-called \emph{$\beta$-numbers}.
For a positive integer $e$,  an \emph{$e$-hook} of $(\lambda,s)$ is a pair $(x,x+e)$ such
that $x+e\in \beta_s(\lambda)$ and $x\not\in \beta_s(\lambda)$, and
an \emph{$e$-core} of $(\lambda,s)$ which does not depend on $s$
is the charged partition obtained by recursively removing $e$-hooks
$(x,x+e)$
(i.e., replacing $x+e$ with $x$ in the set of $\beta$-numbers).

\smallskip

Next, we construct a bijection $\tau_l:\scrP\times\bbZ\to\scrP^l\times\bbZ^l$.
It takes the pair $(\lambda,s)$ to $(\tuple\lambda,\tuple s),$ where $\tuple\lambda=(\lambda^1,\dots,\lambda^l)$ is an
$l$-partition and $\tuple s=(s_1,\dots,s_l)$ is a $l$-tuple in
$\bbZ^l(s)=\{\tuple s\in\bbZ^l\,\mid\,s_1+\cdots+s_l=s\}.$
The bijection is uniquely determined by the relation
$\beta_s(\lambda)=\bigsqcup_{p=1}^l\big(p-l+l\beta_{s_p}(\lambda^p)\big).$

\smallskip

The bijection $\tau_l$ takes the pair $(\lambda,0)$ to $(\tuple\lambda^{[l]},\tuple\lambda_{[l]}),$
where $\tuple\lambda^{[l]}$ is the \emph{$l$-quotient} of $\lambda$ and $\tuple\lambda_{[l]}$ lies
in $\bbZ^l(0)$. Since $\lambda$ is an $l$-core if and only if $\tuple\lambda^{[l]}=\tuple \emptyset$,
this bijection identifies the set of $l$-cores and $\bbZ^l(0)$. We define the \emph{$l$-weight} $w_l(\lambda) := |\tuple\lambda^{[l]}|$ of the partition $\lambda$ to be the weight of its $l$-quotient.

\subsection{Minimal categorical representations and Fock spaces}
\label{subsec:minimalcat}
We now assume that $R$ is a field
and that $\scrI$ is finite. In particular, $v \in R^\times$ will be  a root of unity.

\smallskip

Let $m \geqslant 0$, $q \in R^\times$ and $\tuple \xi=(\xi_1,\ldots,\xi_l)\in (R^\times)^l$
be a fixed tuple.
The
\emph{cyclotomic Hecke algebra} $\bfH^{q;\,\tuple\xi}_{R,m}$ is the quotient of $\bfH_{R,m}^q$
by the two-sided ideal generated by $\prod_{i=1}^l(X_1-\xi_{i})$, where $\bfH_{R,m}^q$ is
 the affine Hecke algebra as defined in \S \ref{subsec:datumA}.
It was known that $\bfH^{q;\,\tuple\xi}_{R,m}$ is
isomorphic to a cyclotomic quiver Hecke algebra.
The latter can be constructed as follows:

\smallskip

Any finite dimensional $\bfH^{q;\,\tuple \xi}_{R,m}$-module $M$
is the direct sum of the weight subspaces
  $$M_{\tuple k}=\{v\in M\,\mid\,(X_r-k_r)^dv=0,\
  r\in[1,m],\, d\gg 0\},\quad\tuple k=(k_1,\dots,k_m)\in R^m.$$
Let  $\{e_{\tuple k}\,;\,\tuple k
\in R^m\}$  be  a system of orthogonal idempotents of $\bfH^{q;\,\tuple \xi}_{R,m}$
such that $e_{\tuple k} M=M_{\tuple k}$ for each $M$.
  The eigenvalues of $X_r$ are always of the form $\xi_i q^j$ for some $i \in \{1,\ldots,l\}$
and $j \in \bbZ$.
Write $\scrI = \bigcup \xi_i q^\bbZ$. Let $\Gamma$ be the quiver $\scrI(q)$ with vertex set $I$  as defined in
\S \ref{sec:typeA} and we can consider the corresponding Kac-Moody algebra $\frakg_\scrI$ and
its root lattice $\Q_\scrI$.
By Brundan--Kleshchev--Rouquier isomorphism \cite{BK3,R08},
we have $\bfH^{q;\,\tuple \xi}_{R,m}\cong\bfH_m^{\Lambda_{\tuple \xi}}(\Gamma)$
 where  $\Lambda_{\tuple \xi}=\sum_{i=1}^l\Lambda_{\xi_{i}}.$

 \smallskip

 Given $\beta\in \Q_\scrI^+$ with $|\beta|=m$, let
$e_{\beta}=\sum_{\tuple k\in \scrI^{\beta}} e_{\tuple k}$. The nonzero $e_\beta$'s are the primitive central
idempotents in $\bfH^{q;\,\tuple \xi}_{R,m}$.
Under the above isomorphism, the idempotent $e_{\tuple k}$ (resp. $e_{\beta}$) of $\bfH^{q;\,\tuple \xi}_{R,m}$ is mapped to the idempotent $e(\tuple k)$ (resp. $e(\beta)$) of $\bfH_m^{\Lambda_{\tuple \xi}}(\Gamma).$
So we get $e_\beta\bfH^{q;\,\tuple \xi}_{R,m}
\cong e(\beta)\bfH_m^{\Lambda_{\tuple \xi}}(\Gamma)=
\bfH_{\beta}^{\Lambda_{\tuple\xi}}(\Gamma).$
Now we have the following isomorphic abelian categories:
  $$\scrL(\Lambda_{\tuple \xi}) = \bigoplus_{m\in\bbN}\bfH^{q;\, \tuple \xi}_{R,m}\mod \cong \bigoplus_{\beta\in Q^+}\bfH_{\beta}^{\Lambda_{\tuple\xi}}(\Gamma)\mod$$
  and
  $$\scrL(\Lambda_{\tuple \xi})_{\Lambda_{\tuple \xi}-\beta}=e_{\beta}\bfH^{q;\,\tuple \xi}_{R,m}\mod\cong\bfH_{\beta}^{\Lambda_{\tuple\xi}}(\Gamma)\mod.$$
By \S \ref{subsec:mini}, it is a minimal $\frakA(\frakg)$-categorical representation
  of $\bfL(\Lambda_{\tuple\xi})$.

 \smallskip

Between $\bfH^{q;\,\tuple \xi}_{R,n}\mod$ and $\bfH^{q;\,\tuple \xi}_{R,m}\mod$,
there is a pair $(\Ind^{n}_m,\,\Res^{n}_m)$ of exact adjoint functors
from the embedding $\bfH^{q;\,\tuple\xi}_{R,m}\hookrightarrow \bfH^{q;\,\tuple\xi}_{R,n}$
that is
induced by the $R$-algebra embedding of the affine Hecke algebras $\bfH_{R,m}^v
\hookrightarrow \bfH_{R,n}^v$ given by $T_i\mapsto T_i$ and $X_j\mapsto X_j$,
where $m<n$.
They induce endofunctors $E$ and $F$ of $\scrL(\Lambda_{\tuple \xi})$ by
$E=\bigoplus_{m\in\bbN}
\Res_m^{m+1}~\mbox{and}~F=\bigoplus_{m\in\bbN}\Ind_m^{m+1}.$ The right multiplication on
$\bfH^{q;\,\tuple\xi}_{R,m+1}$ by $X_{m+1}$ (resp. $T_{m+1}$)
yields an endomorphism of the functor $\Ind_m^{m+1}$ (resp. $\Ind_m^{m+2}$).
If we set $X\in\End(F)$ and $T\in\End(F^2)$ by
$X=\bigoplus_{m}X_{m+1}~\mbox{and}~T=\bigoplus_{m}T_{m+1},$
then the tuple $(E,F,X,T)$ is an $\fraks\frakl_{I}$-representation datum on $\scrL(\Lambda_{\tuple \xi})$.

 \smallskip

However, we have
$E=\bigoplus_{i\in I} E^{\Lambda_{\tuple \xi}}_i$ and
$F=\bigoplus_{i\in I} F^{\Lambda_{\tuple \xi}}_i$ (see \S\ref{subsec:mini} for the definition),
where $X-i$ is locally nilpotent on $E^{\Lambda_{\tuple \xi}}_i$ and $F^{\Lambda_{\tuple \xi}}_i$.
Indeed, by Theorem \ref{thm:BKR}, $e(\tuple k)X_{n+1}= {k_{n+1}}e(\tuple k)(x_{\tuple k} + 1)$ and $x_{\tuple k}$ is a nilpotent element.

So we have $e(n,i)X_{n+1}=i(1+u)e(n,i),$
 where $u$ is a nilpotent element and $e(n,i) = \sum_{\tuple k \in I^{n}}e(\tuple k i)\in \bfH_{n+1}(Q).$
 Hence $X_{n+1}-i$ is locally nilpotent on
 $E^{\Lambda_{\tuple \xi}}_i$, 
  as stated. By the transformation in \eqref{tran}, the tuple
$(\{E^{\Lambda_{\tuple \xi}}_i\}_{i\in I},\{F^{\Lambda_{\tuple \xi}}_i\}_{i\in I},\{x_i\}_{i\in I},\{\tau_{i,j}\}_{i,j\in I})$ defines  an  $\mathfrak{A}(\mathfrak{sl}_{I})$-representation datum on $\scrL(\Lambda_{\tuple \xi}) $.

 \smallskip

Let $S(\tuple\lambda)^{q,\,\tuple \xi}_R$ be
the $\bfH^{q;\,\tuple \xi}_{R,m}$-module  defined as in \cite[\S 2.4.3]{RSVV} or \cite[\S 5.3]{GJ}.

When $R$ is a field of characteristic 0 which contains a primitive $l$-th root of unity,
  we have $$F^{\Lambda_{\tuple \xi}}_iS(\tuple\lambda)^{q,\,\tuple \xi}_R=\bigoplus\limits_{q\text{-}\!\res^{\tuple\xi}(\tuple\mu-\tuple\lambda)=i}S(\tuple\mu)^{q,\,\tuple\xi}_R,$$
$$E^{\Lambda_{\tuple \xi}}_iS(\tuple\mu)^{q,\,\tuple\xi}_R=\bigoplus\limits_{q\text{-}\!\res^{\tuple\xi}(\tuple\mu-\tuple\lambda)=i}S(\tuple\lambda)^{q,\,\tuple \xi}_R$$
(see \cite[\S 5.6]{GJ}).
\smallskip

Recall that a \emph{Fock space} $\bfF(\tuple\xi)$
 is the $\bbC$-vector
space with basis
$\{|\tuple\lambda,\tuple\xi\rangle_I\,|\,\tuple\lambda\in\scrP^l\}$  and the action of $e_i, f_i$ for all $i \in \scrI$ given by
$$
  f_i(|\tuple\lambda,\tuple \xi\rangle_{\scrI})=\sum\limits_{q\text{-}\!\res^{\tuple\xi}(\tuple\mu-\tuple\lambda)=i}|\tuple\mu,\tuple\xi\rangle_{\scrI},$$$$
  e_i(|\tuple\mu,\tuple\xi\rangle_{\scrI})=\sum\limits_{q\text{-}\!\res^{\tuple\xi}(\tuple\mu-\tuple\lambda)=i}|\tuple\lambda,\tuple\xi\rangle_{\scrI}.
$$

For brevity, we shall omit the subscript $\scrI$ if there is no confusion. The basis
$\{|\tuple\lambda,\tuple\xi\rangle\,|\,\tuple\lambda\in\scrP^l\}$ is
called  the
\emph{standard monomial basis}, where every element is a weight vector.

Hence the operators $[E^{\Lambda_{\tuple \xi}}_i], [ F^{\Lambda_{\tuple \xi}}_i]$ endow $[\scrL(\Lambda_{\tuple \xi})]$ with a structure of $\fraks\frakl_{\scrI}'$-module.
And the composition $[\scrL(\Lambda_{\tuple \xi})] \simto \bfL(\Lambda_{\tuple \xi}) \to \bfF(\tuple \xi)$ obtained as above  sends the class
of $S(\tuple\lambda)^{q,\,\tuple \xi}_R$ to the standard monomial $|\tuple\lambda,\tuple\xi\rangle.$
\smallskip

If $\scrI= A_\infty$ then $\bfF(\tuple \xi)=\bfL(\Lambda_{\tuple \xi})$.
In general,
the $\frakg'$-submodule of $\bfF(\tuple\xi)$ generated by $|\tuple\emptyset,\tuple\xi\rangle$ is
 isomorphic to $\bfL(\Lambda_{\tuple \xi})$.

For each $p = 1,\ldots, l$, let $\scrI_p$ be the subquiver of $\scrI$ corresponding
to the subset $q^\bbZ \xi_p$ of $\scrI$. We define a relation on $\{1,\ldots,l\}$ by
$i \sim j \iff \scrI_i = \scrI_j$.
Denote by $\Omega = \{1,\ldots,l\}/\sim$ the set of
equivalence classes with respect to this relation,
and by $\tuple \xi_p$ for $p \in \Omega$  the tuple
of $(\xi_{i_1},\ldots,\xi_{i_r})$ where $(i_1,\ldots,i_r)$ is the ordered set of elements
in $\scrI_p$.
The decomposition $\scrI = \bigsqcup_{p \in \Omega} \scrI_p$ yields a canonical
decomposition of Lie algebras
 $\frakg'_\scrI  = \bigoplus_{p \in \Omega} \frakg'_{\scrI_p}$.
The corresponding decomposition of the Fock space is given in the following proposition.

\begin{proposition}\label{prop:tensorfock}
The map $|\tuple\lambda,\tuple\xi\rangle_\scrI \longmapsto \otimes_{p \in \Omega} |\tuple\lambda^p,
\tuple\xi_p\rangle_{\scrI_p}$ yields an isomorphism of $\frakg'_I$-modules
$ \bfF(\tuple \xi)_\scrI \ \mathop{\longrightarrow}\limits^\sim\ \bigotimes_{p\in \Omega}\bfF({\tuple \xi_p})_{\scrI_p}.$
 \qed
\end{proposition}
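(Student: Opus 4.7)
The plan is to verify the claim essentially on the standard monomial basis, by observing that the $(\tuple\xi,q)$-shifted residues of all boxes in a single component $\lambda^j$ of $\tuple\lambda$ are confined to one connected subquiver of $\scrI$, so that the generators $e_i,f_i$ with $i\in\scrI_p$ act only on the tensor factor indexed by $p$.

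First, I would check that the proposed map, call it $\Phi$, is an isomorphism of vector spaces. This is immediate: the rule $\tuple\lambda\longmapsto(\tuple\lambda^p)_{p\in\Omega}$, where $\tuple\lambda^p$ is obtained by selecting the components $\lambda^{i_1},\ldots,\lambda^{i_r}$ of $\tuple\lambda$ corresponding to the ordered index set of $\scrI_p$, defines a bijection $\scrP^l\simto\prod_{p\in\Omega}\scrP^{|p|}$, hence $\Phi$ is a bijection on standard monomial bases and so a linear isomorphism.

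Next, I would show that $\Phi$ intertwines the $\frakg'_\scrI$-action with the action of $\bigoplus_{p\in\Omega}\frakg'_{\scrI_p}$ through the decomposition $\frakg'_\scrI=\bigoplus_{p\in\Omega}\frakg'_{\scrI_p}$. Fix $p\in\Omega$ and $i\in\scrI_p$. By definition of the $(\tuple\xi,q)$-shifted residue (\S\ref{subsec:contents}), if $A$ is a box in the diagram of $\lambda^j$ with $j$ belonging to the equivalence class $p'$, then $q\text{-}\!\res^{\tuple\xi}(A)=q^{k}\xi_j\in q^{\bbZ}\xi_j=\scrI_{p'}$. Hence an addable (resp.\ removable) $i$-node of $\tuple\lambda$ can only be located in some component $\lambda^j$ with $j\in p$, and the action of $f_i$ (resp.\ $e_i$) on $|\tuple\lambda,\tuple\xi\rangle_\scrI$ reduces to adding (resp.\ removing) an $i$-node inside the sub-multipartition $\tuple\lambda^p$. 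Comparing with the definition of the Fock space $\bfF(\tuple\xi_p)_{\scrI_p}$, this is exactly the action of $f_i$ (resp.\ $e_i$) on $|\tuple\lambda^p,\tuple\xi_p\rangle_{\scrI_p}$, extended by the identity on the remaining tensor factors. Therefore
\[
\Phi\bigl(f_i|\tuple\lambda,\tuple\xi\rangle_\scrI\bigr)=\bigl(f_i\otimes\mathrm{id}\bigr)\bigl(\Phi|\tuple\lambda,\tuple\xi\rangle_\scrI\bigr),
\]
and similarly for $e_i$, which yields the desired equivariance.

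The argument is essentially a bookkeeping exercise and there is no substantial obstacle; the only point to be careful about is the matching of orderings, namely that the identification $\tuple\lambda\leftrightarrow(\tuple\lambda^p)_p$ uses the same ordered index set as the one used in defining $\tuple\xi_p$, which is precisely built into the definition of $\tuple\xi_p$ given just before the proposition. Once this is set up, the residue computation above shows at once that generators with label in $\scrI_p$ act only on the $p$-th tensor factor, so $\Phi$ is a $\frakg'_\scrI$-module isomorphism.
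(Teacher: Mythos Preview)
Your proof is correct. The paper itself gives no proof at all: the proposition is stated with a terminal \qed, treating the result as evident from the definitions. Your verification on the standard monomial basis, together with the residue observation that boxes of $\lambda^j$ have $(\tuple\xi,q)$-shifted residue in $q^{\bbZ}\xi_j=\scrI_j$, is precisely the routine check being left to the reader, and it is carried out accurately.
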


\subsection{The $\frakg$-action on the Fock space}\label{sec:chargfock}

A \emph{charged Fock space} is a pair $\bfF(\tuple s)=(\bfF(\tuple\xi),\tuple s)$ such that $\tuple s\in\bbZ^l$ is a charge
of $\tuple \xi$, that is  $\tuple \xi = (q^{s_1},\ldots,q^{s_l})$.

\smallskip

 Throughout this section, we will always
assume that $\scrI$ is either of type $A_\infty$ or a cyclic quiver. For more general quivers we
can invoke Proposition \ref{prop:tensorfock} to reduce to that case.
\smallskip

The action of $\fraks\frakl_{\scrI}'$ on $\bfF(\tuple \xi)$ can be extended to an action of $\fraks\frakl_{\scrI}$
when $\tuple \xi$ admits a charge $\tuple s$. We describe this action in the case where $q$ has finite
order $e$. In that case $\scrI = q^{\bbZ}$ is isomorphic to the cyclic quiver  $A^{(1)}_{e-1}$ (see \S \ref{sec:typeA}).
For an $l$-partition $\tuple \lambda$ and $i\in\scrI$ we define
\begin{align*}
&N_i(\tuple\lambda |\tuple s,e) = \sharp\{ \text{ addable $i$-nodes of $\tuple\lambda$}\}
 - \sharp\{ \text{ removable $i$-nodes of $\tuple\lambda$} \}, \\
&M_i(\tuple\lambda |\tuple s,e) = \sharp\{ \text{  $i$-nodes of $\tuple\lambda$}\},
\end{align*}
and for $\tuple s =(s_1,\dots,s_l) \in \bbZ^l$ we set
$$
\Delta(\tuple s|e) =  \frac{1}{2} \sum_{b=1}^l|\Lambda_{q^{s_b}}|^2 +  \frac{1}{2} \sum_{b=1}^l ( \frac{s_b^2}{e} - s_b).
$$
Now we can state the following theorem.
\begin{theorem}
The following formulas define on $\bfF(\tuple \xi)$ the structure of an integrable $\fraks\frakl_{\scrI}$-module:
\begin{align*}
&f_i |\tuple \lambda,\tuple s\rangle  = \sum_{q\text{-}\!\res^{\tuple\xi}(\tuple \mu-\tuple \lambda) = i}  |\tuple \mu,\tuple s\rangle, \\
&e_i |\tuple \mu,\tuple s\rangle  = \sum_{q\text{-}\!\res^{\tuple\xi}(\tuple \mu-\tuple \lambda) = i}|\tuple \lambda,\tuple s\rangle,  \\
&\alpha^\vee_i |\tuple \lambda,\tuple s\rangle  = {N_i(\tuple \lambda |\tuple s,e)} |\tuple \lambda,\tuple s\rangle,  \\
& \partial |\tuple \lambda,\tuple s\rangle  =  -(\Delta(\tuple s|e) + M_{1}(\tuple \lambda |\tuple s,e)) |\tuple \lambda,\tuple s\rangle .
\end{align*}\qed
\end{theorem}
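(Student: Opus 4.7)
The plan is to verify the Serre-type relations of $\fraks\frakl_{\scrI}$ and then integrability, building on what is already known for the derived subalgebra $\fraks\frakl_{\scrI}'$. The actions of $e_i$ and $f_i$ are unchanged from Section \ref{subsec:minimalcat}, where they were shown (via the minimal categorification and Brundan--Kleshchev--Rouquier) to satisfy the $\fraks\frakl_{\scrI}'$-relations on $\bfF(\tuple\xi)$. What remains is to check that $\alpha_i^\vee$ as defined acts by $[e_i,f_i]$, that the action of the chosen $\partial$ is compatible with the grading, and that $\bfF(\tuple\xi)$ is integrable in the sense of \S\ref{subsec:cartandatum}.

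First I would reprove the crucial identity
\[
[e_i,f_j]\,|\tuple\lambda,\tuple s\rangle = \delta_{ij}\,N_i(\tuple\lambda\,|\,\tuple s,e)\,|\tuple\lambda,\tuple s\rangle
\]
by a direct bookkeeping on Young diagrams. Writing $e_i f_j|\tuple\lambda\rangle - f_j e_i |\tuple\lambda\rangle$ as a sum over pairs $(\tuple\lambda \to \tuple\mu \to \tuple\lambda')$ and $(\tuple\lambda \to \tuple\nu \to \tuple\lambda')$, one sees that off-diagonal boxes cancel, and the diagonal contribution at $\tuple\lambda' = \tuple\lambda$ counts addable $i$-nodes of $\tuple\lambda$ minus removable $i$-nodes, which is precisely $N_i(\tuple\lambda|\tuple s,e)$. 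For $i \neq j$ the absence of addable/removable interference gives zero. Next I would verify
\[
[\alpha_i^\vee,f_j] = -a_{ij}\,f_j,\qquad [\alpha_i^\vee,e_j] = a_{ij}\,e_j,
\]
which amounts to the combinatorial identity
\[
N_i(\tuple\mu|\tuple s,e) - N_i(\tuple\lambda|\tuple s,e) = -a_{ij}
\]
whenever $\tuple\mu$ is obtained from $\tuple\lambda$ by adding a $j$-node. This is a small local calculation in the four neighboring boxes of the added node: adding a $j$-node removes one addable $j$-node and creates one removable $j$-node (contribution $-2$); if $i = qj$ it may create/destroy a single addable/removable $i$-node (contribution $+1 = -a_{ij}$), symmetrically for $i = q^{-1}j$; otherwise there is no change.

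Then I would treat the $\partial$-relations. From the definition $M_1(\tuple\mu|\tuple s,e) - M_1(\tuple\lambda|\tuple s,e) = \delta_{j,1}$ when $\tuple\mu$ is obtained by adding a $j$-node, so
\[
[\partial,f_j]\,|\tuple\lambda,\tuple s\rangle = -\delta_{j,1}\,f_j|\tuple\lambda,\tuple s\rangle = -\langle\partial,\alpha_j\rangle\,f_j|\tuple\lambda,\tuple s\rangle,
\]
using $\alpha_j = 2\Lambda_j - \Lambda_{qj} - \Lambda_{q^{-1}j} + \delta_{j,1}\delta$ from \S\ref{sec:typeA}. The bracket $[\partial,\alpha_i^\vee] = 0$ is immediate since both operators are diagonal in the standard monomial basis. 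These calculations also force the weight of $|\tuple\lambda,\tuple s\rangle$ to be
\[
\sum_{b=1}^l \Lambda_{q^{s_b}} - \sum_{i\in\scrI} M_i(\tuple\lambda|\tuple s,e)\alpha_i - \bigl(\Delta(\tuple s|e) + M_1(\tuple\lambda|\tuple s,e)\bigr)\delta,
\]
and the particular choice of the constant $\Delta(\tuple s|e)$ (which is a scalar on each $\bfF(\tuple\xi)$ component) makes the charged Fock space decompose into the desired weight spaces.

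For integrability I would note that for any fixed $\tuple\lambda$ the sets of addable and removable $i$-nodes are finite, so $e_i$ and $f_i$ act locally finitely; local nilpotency of $f_i$ (resp.\ $e_i$) follows by induction on $|\tuple\lambda|$ together with the $\fraks\frakl_2$-theory on each $i$-string, which is already implicit in the minimal categorification of Theorem~\ref{thm:minimalcat}. Finite-dimensionality of weight spaces is clear because each weight $\omega$ determines $\sum_i M_i(\tuple\lambda|\tuple s,e) = |\tuple\lambda|$, and there are only finitely many $l$-partitions of a given size. The main obstacle is the bookkeeping in the identity $N_i(\tuple\mu) - N_i(\tuple\lambda) = -a_{ij}$, which must be checked case by case according to whether $j = i$, $j = qi$, $j = q^{-1}i$ (including the cyclic cases when $e$ is small, in particular $e=2$ where $qi = q^{-1}i$), or otherwise; the careful treatment of the $\delta$-shift in $\alpha_1$ in the cyclic case is the point where one must be attentive.
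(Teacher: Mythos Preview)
The paper does not prove this theorem: the statement ends with \qed\ and is quoted as a known result (the underlying references being Uglov~\cite{U} and Jimbo--Misra--Miwa--Okado~\cite{JMMO}). Your direct verification is the standard route and is correct in outline: the commutator identity $[e_i,f_j]=\delta_{ij}N_i$, the bracket $[\alpha_i^\vee,f_j]=-a_{ij}f_j$ via the local change in $N_i$, the $\partial$-bracket via the change in $M_1$, and integrability from local nilpotency of $e_i$ together with $\fraks\frakl_2$-theory are exactly the checks one makes.

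One slip to correct: your displayed weight formula double-counts the $\delta$-contribution, since $\alpha_1 = 2\Lambda_1 - \Lambda_q - \Lambda_{q^{-1}} + \delta$ already carries a $\delta$. The correct expression is
\[
\mathrm{wt}(|\tuple\lambda,\tuple s\rangle)=\sum_{b=1}^l \Lambda_{q^{s_b}} - \Delta(\tuple s|e)\,\delta - \sum_{i\in\scrI} M_i(\tuple\lambda|\tuple s,e)\,\alpha_i,
\]
exactly as the paper records immediately after the theorem; pairing with $\partial$ then gives $-(\Delta(\tuple s|e) + M_1)$ as required, whereas your formula would give $-(\Delta+2M_1)$. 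This does not affect your commutator computations, which are independent of the weight formula. You should also explicitly mention that the Serre relations among the $f_i$'s (and among the $e_i$'s) hold on $\bfF(\tuple\xi)$; they follow from the same local box-counting and are part of what one must check to obtain a genuine $\fraks\frakl_\scrI$-module.
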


\smallskip

For this action the weight of a standard basis element is
\begin{equation*}\label{eq:weightlevel1}
  \mathrm{wt}(|\tuple\lambda,\tuple s\rangle)= \Lambda_{q^{s_1}}+\cdots+\Lambda_{q^{s_l}} -\Delta(\tuple s,e)\,\delta  -\sum_{i\in\scrI} M_i(\tuple \lambda|\tuple s,e) \alpha_i.
\end{equation*}

\smallskip

We now describe the action of the affine Weyl group of $\frakg$ on the weights of $\bfF(\tuple s)$.
For $i \in \scrI \smallsetminus\{1\}$, we denote by $\alpha_i^\cl = 2 \Lambda_i -
\Lambda_{q i}-\Lambda_{q^{-1}i}$ and $\Lambda_i^\cl=\Lambda_i-\Lambda_1$ the $i$-th simple root and fundamental weight of
$\fraks \frakl_{e}$. These (classical) simple roots span the lattice of the classical roots $\Q^\cl$.
The affine Weyl group of $\frakg$ is $W =
\mathfrak{S}_\scrI \ltimes \Q^\cl$. It is generated by  $\sigma_i$ ($0\leqs i\leqs e-1$)
which act linearly on $\frakh^*$ by
$\sigma_i(\Lambda)=\Lambda-(\alpha_i,\Lambda)\alpha_i.$
 See \cite{U} for more details.

\section{Representations of finite groups} \label{cha:modular}
This section refers to representations of finite groups,
and consists of three parts. The first part recalls
the definition of Brauer homomorphism and correspondent,
the second part collects some
standard facts in the representation theory of finite groups of Lie type
in non-defining characteristic, and the third part
lists representations of $\SO_{2n+1}(q)$,
 especially quadratic unipotent characters and blocks of $\SO_{2n+1}(q)$.

\subsection{Brauer homomorphism and correspondent}\label{sub:BrauerHandCor}
Recall that we have assumed that $\ell$ is a prime and $(K, \mathcal O,k)$ is an $\ell$-modular system
such that both $K$ and $k$ contain all roots of unity.
When $G$ is a finite reductive group,
 we always assume that $\ell$ is different from the defining characteristic.
Let $R$ be any commutative domain (with 1) and $U$ be a finite group.
If $|U|$ is invertible in $R$, then $$e_U:=|U|^{-1}\sum_{u\in U}u$$
is a well-defined idempotent in $RU$.

\subsubsection{Brauer homomorphism}\label{subsec:brauerhom}
We briefly describe the Brauer homomorphism here. Let $R=\mathcal O$ or $k.$
Let $G$ be a finite group and $M$ an $RG$-module.
For $H \leqs G$, let $M^H$ be the set of $H$-invariant elements of $M$, i.e.,
$M^H=\{x \in M\mid  xh= x \text { for all } h \in H\}$,
so that the trace map $\operatorname{Tr}_{H}^{G}:
M^H \rightarrow M^G$
is given by
$\operatorname{Tr}_{H}^{G}(x)=\sum_{t \in H \backslash G} x t$ for $x \in M^H$.

\smallskip

Let $P$ be an $\ell$-subgroup of $G$.
The \emph{Brauer quotient} of $M$ with respect to $P$,
 is defined to be
$$M(P)=M^P/(\sum_{Q<P}{\rm Tr}_Q^P(M^Q)+\mathcal{J}M^P)$$
where $\mathcal{J}$ is the unique maximal ideal of $R.$
Note that $M(P)$ is  a $kN_G(P)$-module, and that
if $M$ is a summand of a permutation module then
 $M(P) \neq 0$ if and only if there is a direct summand of $M$
with a vertex containing $P$.

\smallskip

The \emph{Brauer homomorphism} of $M$ with respect to $P$ is the
natural surjection $$Br_P^G:M^P\rightarrow M(P).$$
In the case where $M = RG$ with the $G$-conjugate action,
we have $RG(P) = kC_G(P)$, and the
Brauer homomorphism of $RG$ with respect to $P$ is indeed the
natural algebraic homomorphism
$$Br_P^G:(RG)^P \rightarrow kC_G(P)$$ given by
$$Br_P^G(\sum_{g\in G}\alpha_gg)=\sum_{g\in C_G(P)}\overline{\alpha_g}g.$$

\subsubsection{Brauer correspondent}\label{brauercorr}

Let $b$ be a  block idempotent of $RG$ with defect group $P$, and
$H$ be a subgroup of $G$ containing $N_G(P)$.
If $c$ is a block idempotent of $RH$ with defect group $P$
and $Br_P^G(b)=Br_P^H(c)$, then $RHc$ is called
the \emph{Brauer correspondent} of $RGb$ in $H$.
In the case that $H=N_G(P)$, the block $RHc$ will be simply called the
Brauer correspondent of $RGb$.

\subsection{Finite reductive groups}\label{subsec:reductivegroup}
In this section we introduce some notation about
algebraic groups and finite reductive groups. Also, we
introduce two constructions of finite classical groups.

\subsubsection{Algebraic groups} \label{subsec:reductivegroup}
Let $\bfG$ be a (possibly disconnected) reductive algebraic group  over $\overline\bbF_q$
and $F$ be a Frobenius map of $\bfG$. Then the group of fixed points
$G=\bfG^F$ of $\bfG$ under $F$
is called a finite reductive group (or a finite group of Lie type).

\smallskip

Always, if $\bfG,\mathbf{H},\ldots$ are algebraic groups, we
denote by $G,H,\ldots$ their corresponding finite groups.

\smallskip

Assume that $\bfG$ is a connected reductive algebraic group.
Let $\bfB$ be an $F$-stable Borel subgroup of $\bfG$,
$\bfT$ be an $F$-stable maximal torus of $\bfG$ with $\bfT \subset\bfB$,
and $\bfN$ be the normalizer of $\bfT$ in $\bfG$.
The groups $\bfB$, $\bfN$ form a reductive $BN$-pair of $\bfG$ with Weyl group
$\bfW=\bfW_\bfG=\bfN/\bfT$.
Since both $\bfB$ and $\bfN$ are $F$-stable and $\bfG$ is connected,
the finite groups $B$, $N$ form a split BN-pair of $G$ whose Weyl group
is $W=\bfW^F=N/T$.

\smallskip

In the case where $\bfG$ is not connected,
a torus of $\bfG$ is torus of $\bfG^{\circ}$, where $\bfG^{\circ}$ is
the connected component of $\bfG$ containing the identity.
Moreover, a Borel subgroup of $\bfG$ is a maximal connected solvable subgroup of $\bfG$,
and a parabolic subgroup of $\bfG$ to be a subgroup $\bfP$
of $\bfG$ such that $\bfG/\bfP$ is complete.
Also, the unipotent radical $\bfV$ of a parabolic
subgroup $\bfP$ is the unique maximal connected unipotent normal subgroup of $\bfP$,
and a Levi complement to $\bfV$ in $\bfP$ is a subgroup $\bfL$ of $\bfP$ such that $\bfP = \bfV\rtimes\bfL.$
Note that a closed subgroup $\bfP$ of $\bfG$ is a parabolic subgroup of $\bfG$ if and only if $\bfP^{\circ}$ is
a parabolic subgroup of $\bfG^{\circ}.$
We have $\bfP^{\circ} = \bfP\cap\bfG^{\circ}$, that
 $\bfV$ is also the unipotent radical of $\bfP^{\circ}$,
 and that a Levi complement to $\bfV$ in
$\bfP$ is a subgroup of the form $N_{\bfP}(\bfL_{\circ})$, where $\bfL_{\circ}$ is a Levi complement of $\bfV$ in
$\bfP^{\circ}$ (so that $\bfL^{\circ} = \bfL_{\circ}$ and $\bfP^{\circ} = \bfV\rtimes\bfL_{\circ}$).

\smallskip

\subsubsection{Construction of classical groups} \label{sec:construction}\hfill\\
For $n\geqs 1$,
let $J_n$  be the $n\times n$ matrix with entry $1$ in $(i,n-i+1)$ for $1\leqs i\leqs n$
and zero elsewhere, and denote $\widetilde J_{2n}=\left(\begin{array}{cc}
  0  & J_n \\
-J_n & 0   \\
\end{array}\right)$ and
$J'_{2n}=\left(\begin{array}{cccc}
 0 & 0 & 0 & J_{n-1} \\
 0 & 1 & 0 & 0 \\
 0 & 0 & -\delta & 0 \\
 J_{n-1} & 0 & 0 & 0 \\
\end{array}\right)$, where $\delta$ is a non-square in $\mathbb{F}_q$.

\smallskip

(1) The odd-dimensional full orthogonal group $\bfO_{2n+1}$
is
  $$\bfO_{2n+1} = \O_{2n+1}(\overline\bbF_q)= \{g \in \bfG\bfL_{2n+1} \, \mid
  \, {}^tg  J_{2n+1} g =   J_{2n+1}\},$$
whose connected component
containing the identity is the odd-dimensional orthogonal group
  $\bfSO_{2n+1} = \mathrm{SO}_{2n+1}(\overline\bbF_q) =\bfO_{2n+1}\cap \bfS\bfL_{2n+1}(\overline\bbF_q).$
  The corresponding finite groups of fixed points under $F$
  are  the \emph{finite full orthogonal group} $\O_{2n+1}(q)$ and
   the \emph{finite orthogonal group} $\SO_{2n+1}(q)$, respectively,
where $F$ is the Frobenius endomorphism of $\bfO_{2n+1}$ induced  by
  the standard Frobenius map $F_q$ on $\bfG\bfL_{2n+1}$
  raising all entries of a matrix to their $q$th powers.

\vspace{1ex}

(2)
The symplectic group $\bfSp_{2n}$ is
  $$ \bfSp_{2n} = \Sp_{2n}(\overline\bbF_q)= \{g \in \bfG\bfL_{2n} \, \mid \, {}^tg \widetilde J_{2n} g =  \widetilde J_{2n}\},$$
  and the \emph{finite symplectic group} $\Sp_{2n}(q)$ is  the finite group $(\bfSp_{2n})^F$ of fixed points
of $\bfSp_{2n}$ under the Frobenius endomorphism $F$ of $\bfSp_{2n}$
induced by the standard Frobenius map $F_q$ on $\bfG\bfL_{2n}$.

\vspace{1ex}

(3) The even-dimensional full orthogonal group $\bfO_{2n}$ is
  $$\bfO_{2n}= \mathrm{O}_{2n}(\overline\bbF_q) = \{g \in \bfG\bfL_{2n} \, \mid
  \, {}^tg  J_{2n} g =   J_{2n}\}.$$

\begin{itemize}[leftmargin=8mm]
  \item The \emph{finite full orthogonal group}  $\O^{+}_{2n}(q) $ is the group $(\bfO_{2n})^F$ of fixed points
of $\bfO_{2n}$ under the Frobenius endomorphism $F$ of $\bfO_{2n}$
   induced by the standard Frobenius map $F_q$ on $\bfG\bfL_{2n}$.
  \item The \emph{finite non-split full orthogonal group}  $\O^{-}_{2n}(q)$ is
 the group $(\bfO_{2n})^{F'}$ of fixed points  of $\bfO_{2n}$ under $F'$
  where $F':=F_q\circ \sigma$ and  $\sigma(g)=sgs^{-1}$
for $g\in \bfG\bfL_{2n}$ with  $s\in \bfG\bfL_{2n}$ the permutation matrix
interchanging the $n$th and $(n+1)$th rows.
\end{itemize}

\smallskip

For later use, here we recall another way of constructing the above groups.
Let $\mathbf{V}$ be an $m$-dimensional vector space over $\overline{\mathbb{F}}_q$ with basis $\{e_1,\dots,e_m\}$,
 and $V$ be the $\mathbb{F}_q$-vector space given by the $\mathbb{F}_q$-span of
$\{e_1,\dots,e_m\}$. Suppose that $\tilde{\beta}$ is a
 bilinear form on $\mathbf{V}$, and that $\beta$
 is a bilinear form on $V$ (which has the same expression as $\tilde{\beta}$).  We write
 \begin{align*}
I(\mathbf{V},\tilde{\beta})&=\{x\in \mathbf{GL}(\mathbf{V})\mid \tilde{\beta}(xu,xv)=\tilde{\beta}(u,v)
~\text{for~all}~u,v\in \mathbf{V}\},  \\
I(V,\beta)&=\{x\in {\rm GL}(V)\mid \beta(xu,xv)=\beta(u,v)
~\text{for~all}~u,v\in V\},
\end{align*}
$I_0(\mathbf{V},\tilde{\beta})=I(\mathbf{V},\tilde{\beta})\cap \mathbf{SL}(\mathbf{V})$ and
$I_0(V,\beta)=I(V,\beta)\cap {\rm SL}(V)$.
Then we have Table \ref{tb:classical}, where
$\bfO_{2n}^-=\mathrm{O}^-_{2n}(\overline\bbF_q) = \{g \in \bfG\bfL_{2n} \, \mid
  \, {}^tg  J'_{2n} g =   J'_{2n}\}$, $\bfO_{2n}^+=\bfO_{2n}\cong \bfO_{2n}^-$ and
 $\bfSO^{\pm}_{2n}=\bfO^{\pm}_{2n}\cap \SL_{2n}(\overline\bbF_q)$
 (see \cite[Section 1]{FS89} and \cite[Section 4]{hiskes2000}).

\begin{center}
\begin{table} \caption{Classical groups} \label{tb:classical} 
\begin{tabular}{cccccc}
  \hline
 $m$ &  $\tilde{\beta}$~\text{and}~$\beta$ & $I(\mathbf{V},\tilde{\beta})$ &  $I(V,\beta)$ & $I_0(\mathbf{V},\tilde{\beta})$  & $I_0(V,\beta)$\\  \hline
 $2n+1$ & $u_1v_{2n+1}+\dots+u_{2n+1}v_1$  & $\bfO_{2n+1}$ & $\O_{2n+1}(q)$  & $\bfSO_{2n+1}$  & $\SO_{2n+1}(q)$ \\
 $2n$   & $(u_1v_{2n}+\dots+u_nv_{n+1})-$   &   $\bfSp_{2n}$ & $\Sp_{2n}(q)$  & $\bfSp_{2n}$ & $\Sp_{2n}(q)$  \\
         & $(u_{n+1}v_n+\dots+u_{2n}v_1)$ &  & &&  \\
  $2n$   & $u_1v_{2n}+\dots+u_{2n}v_1$  &   $\bfO^+_{2n}$ & $\O^+_{2n}(q)$  &  $\bfSO^+_{2n}$ &  $\SO^+_{2n}(q)$  \\
   $2n$   &  $u_1v_{2n}+\dots+u_nv_n+$ &  $\bfO^-_{2n}$ & $\O^-_{2n}(q)$ & $\bfSO^-_{2n}$ & $\SO^-_{2n}(q)$\\
    & $\delta u_{n+1}v_{n+1}+\dots+u_{2n}v_1$ &  & &&  \\
  \hline
\end{tabular}
\end{table}
\end{center}

\subsection{Representations of finite reductive groups}\label{sub:repReductive}
Here we make a sketchy introduction of the representation theory of finite reductive groups
in non-defining characteristic, and refer to \cite{CE,DM} for more details.

\subsubsection{Lusztig induction and restriction} \label{subsec:lusztigind}

Assume that $\bfG$ is a reductive algebraic group.
Let $\bfL$ be an $F$-stable Levi subgroup of $\bfG$ contained in a parabolic subgroup
$\bfP$.
As usual, we write $R^{\bfG}_{\bfL\subset\bfP}$ and ${}^*\!R^{\bfG}_{\bfL\subset\bfP}$
for Lusztig induction and restriction, respectively.
We shall mainly handle the case of finite odd-dimensional orthogonal groups,
where the Mackey formula holds. Hence
Lusztig induction and restriction do not depend on the
choice of the parabolic subgroup $\bfP$, and we shall abbreviate
$R^{\bfG}_{\bfL}=R^{\bfG}_{\bfL\subset\bfP}$
and ${}^*\!R^{\bfG}_{\bfL}={}^*\!R^{\bfG}_{\bfL\subset\bfP}$
 (see \cite{BM11} for more details).

\smallskip

\subsubsection{Harish-Chandra series}\label{subsec:HCseries}

We now assume that the parabolic subgroup $\bfP\subseteq\bfG$ in
\S\ref{subsec:lusztigind} is $F$-stable. Then
the group $L$ is $G$-conjugate to a standard Levi subgroup of $G$,
and the Lusztig induction and restriction are just
the usual Harish-Chandra induction  $R_L^{G}$  and restriction ${}^*\!R^{G}_L$.
If we assume that $q$ is  invertible in $R,$
the functors $R_L^{G}$ and ${}^*\!R^{G}_L$ are represented by the
$(RG,RL)$-bimodule $RG\cdot e_U$ and the $(RL,RG)$-bimodule $e_U\cdot RG$,
 respectively, where $U=\bfU^F$  and
$\bfU\subset \bfP$ is the unipotent radical of $\bfP$.
Namely,
for all $M\in RL\mod$ and $N\in RG\mod$, we have
  $$R_L^{G}(M)=RG\cdot e_U\otimes_{RL}M \quad \text{and} \quad
  {}^*\!R_L^{G}(N)=e_U\cdot RG\otimes_{RG}N.$$

\smallskip
Now let $R=K$ or $k$. Let
 $\chi$ be a character of $RG$, $b$
a central primitive idempotent of $RG$ and
$c$  a central primitive idempotent of $RL$.
We denote by $\chi_b$
the character of $RG$ given by $x\mapsto \chi(xb)$
(see \cite[p. 320 (3.3)]{Nagao}), and by $R_{L,c}^{G,b}(\psi)$
the character $(R_L^G(\psi_c))_b$ of $RG$, so we have
the functor
 \begin{gather*}
R_{L,c}^{G,b}: {\rm Char}(RLc)\rightarrow {\rm Char}(RGb).
\end{gather*}

\smallskip

Let $(L,E)$  be a cuspidal pair.
The \emph{Harish-Chandra series} of $(L,E)$  is  the set $\Irr(R G,E)\subseteq\Irr(R G)$
 consisting of the constituents  of the head of $R^G_L(E)$.
 There is a
bijection
$$\Irr(R G,E)
\mathop{\longleftrightarrow}\limits^{1:1} \Irr(\scrH(RG,E))$$
given by
the functor $\Hom_{RG}(R^{G}_L(E),-)$,
where the $R$-algebra $$\scrH(RG,E)=\End_{RG}(R^{G}_L(E))^\op$$ is
the \emph{ramified Hecke algebra} of $(L,E)$.
 It is well known that Harish-Chandra series form a partition of $\Irr(RG)$.

\subsubsection{Jordan decomposition}\label{subsec:jordan}

Let $(\bfG^*,F^*)$ be in duality with $(\bfG,F)$, and let $G^*=\bfG^{*F^*}.$
Deligne and Lusztig decomposed $\Irr(G)$ into rational Lusztig series
$$\mathrm{Irr}(G)=\coprod {\mathcal E}(G,(s))$$
where $(s)$ runs over the set of $G^*$-conjugacy classes of semisimple elements of
$G^*$.
The {\em unipotent characters} of $G$ are those in $\mathcal{E}(G,(1))$.
 By Lusztig \cite{Lu84},
 there exists a bijection $\mathcal{L}_s:\mathcal{E}(G,(s))\to \mathcal{E}(C_{G^*}(s)^*,(1))$ satisfying
 $$\chi(1)=\frac{|G|_{p'}}{|C_{G^*}(s)|_{p'}}(\mathcal{L}_s(\chi)(1)).$$
In particular, if $C_{\bfG^*}(s)$ is a Levi subgroup
of $\bfG^*$, then the characters in ${\mathcal E}(G,(s))$ can be described
in terms of unipotent characters of $C_{G^*}(s)$.

\smallskip

In general, let $\bfL$ be an $F$-stable Levi subgroup of $\bfG$ with dual $\bfL^*\subset \bfG^*$ containing
$C_{\bfG^*}(t)$ with $t\in G^*$ semisimple.
Then the Jordan decomposition above can be constructed by Deligne-Lusztig induction
$$\mathcal{E}(L,(t))\xrightarrow{\sim}\mathcal{E}(G,(t)),\ \psi\mapsto\pm
R_{\bfL}^{\bfG}(\psi).$$
If $t\in Z(\bfL^*)$, then there is a bijection
$\mathcal{E}(L,(1))\xrightarrow{\sim}\mathcal{E}(L,(t)),\ \psi\mapsto
\eta\psi$
where $\eta$ is the one-dimensional character of $L$ corresponding to $t$. Thus we obtain
a bijection
$\mathcal{E}(L,(1))\xrightarrow{\sim}\mathcal{E}(G,(t)).$

\smallskip

\subsubsection{$\ell$-blocks and their basic sets}\label{subsec:block}

For a semisimple $\ell'$-element $s$ of $G^*$,
we denote by $\mathcal E_{\ell}(G, (s))$
the union of the Lusztig series $\mathcal E(G,(st))$,
where $t$ runs through semisimple $\ell$-elements of $G^*$ commuting with $s$.

\begin{definition}\label{def:eGs}
Let $s\in (\bfG^*)^{F^*}$ be a semisimple element of $\ell'$-order and define
$$e_{s}^{G}:=\sum e_\chi,$$
where $\chi$ runs over characters of
$\mathcal E_{\ell}(G,(s))$ and $e_{\chi}\in Z(KG)$ is the central idempotent corresponding to $\chi.$
\end{definition}

\begin{theorem}[\cite{BM}]\label{Thm:eGs}
Let $s\in (\bfG^*)^{F^*}$ be a semisimple element of $\ell'$-order.
Then $e_{s}^{G}\in Z(\mathcal O G)$ and hence $\mathcal O G e_{s}^{G}$ is  a sum of blocks. \qed
\end{theorem}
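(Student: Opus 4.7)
The plan is to show that $\mathcal{E}_{\ell}(G,(s))$ is a union of $\ell$-blocks of $\mathcal{O}G$. Granting this, each such block $B$ has its central primitive idempotent $e_B = \sum_{\chi\in\Irr(B)}e_\chi$ in $Z(\mathcal{O}G)$, whence $e_s^G = \sum_B e_B$ lies in $Z(\mathcal{O}G)$ and $\mathcal{O}Ge_s^G$ is a direct sum of those blocks. The Deligne--Lusztig partition $\Irr(G) = \coprod_{(r)}\mathcal{E}(G,(r))$, combined with the Jordan decomposition $r = r_{\ell}r_{\ell'}$ of a semisimple element, already yields the refined partition $\Irr(G) = \coprod_{(s)}\mathcal{E}_{\ell}(G,(s))$ indexed by $G^*$-classes of semisimple $\ell'$-elements, so it remains to verify that no $\ell$-block is split between two such pieces.

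By Brauer's central-character criterion, this amounts to showing that for $\chi\in\mathcal{E}(G,(st))$ with $t$ a semisimple $\ell$-element commuting with $s$, and every $\ell$-regular class sum $\widehat{C}$ of $G$, one has
\[
\omega_\chi(\widehat{C}) \equiv \omega_{\chi_0}(\widehat{C}) \pmod{\frakp}
\]
for a suitable $\chi_0\in\mathcal{E}(G,(s))$ attached to $\chi$ via Lusztig's Jordan decomposition, while for $\chi'\in\mathcal{E}_{\ell}(G,(s'))$ with $(s)\neq (s')$ some $\widehat{C}$ separates $\omega_\chi$ from $\omega_{\chi'}$ modulo $\frakp$. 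The key identity lives at the level of Deligne--Lusztig characters: for any $F$-stable maximal torus $\bfT$, any linear character $\theta = \theta_s\theta_t$ of $T$ decomposed into its $\ell'$- and $\ell$-parts, and any $\ell$-regular $g\in G$ with semisimple part $g_s$, the Deligne--Lusztig character formula gives $R_{\bfT}^{\bfG}(\theta)(g) = R_{\bfT}^{\bfG}(\theta_s)(g)$, since each factor $\theta_t(h g_s h^{-1})$ appearing in the formula is a root of unity whose order divides both a power of $\ell$ and the $\ell'$-order of $g_s$, and therefore equals $1$. Expressing $\chi$ and $\chi_0$ as $\bbQ$-linear combinations of such Deligne--Lusztig characters via Lusztig's theorem, and using the degree relation $\chi(1)/\chi_0(1) = |C_{G^*}(s)|_{p'}/|C_{G^*}(st)|_{p'}$---an $\ell$-unit since $t\in Z(C_{\bfG^*}(s)^{\circ})$---then yields the required congruence on the ratios $\chi(g)/\chi(1)$, and hence on central characters.

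The main obstacle is that Lusztig's bijection $\mathcal{L}_s$ is defined only up to signs and non-canonical choices, so no intrinsic correspondence $\chi\mapsto\chi_0$ exists across distinct Lusztig series. I would bypass this by carrying out the whole comparison at the level of uniform functions (rational combinations of Deligne--Lusztig characters), where the matching between $\mathcal{E}(G,(st))$ and $\mathcal{E}(G,(s))$ is canonical via $\theta\mapsto\theta_s$, and any sign ambiguities introduced when passing back to irreducible characters are $\ell$-units and hence trivial modulo $\frakp$. The converse direction---that $\mathcal{E}_{\ell}(G,(s))$ and $\mathcal{E}_{\ell}(G,(s'))$ lie in different $\ell$-blocks when $(s)\neq (s')$---then follows by evaluating central characters on regular semisimple $\ell$-regular classes, on which Deligne--Lusztig orthogonality cleanly distinguishes Lusztig series by their $\ell'$-semisimple parameter modulo $\frakp$.
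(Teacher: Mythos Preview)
The paper does not prove this statement: it is quoted from Brou\'e--Michel \cite{BM} and marked with a \qed. So there is no ``paper's own proof'' to compare against; I compare instead with the original Brou\'e--Michel argument.

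You have correctly isolated the decisive lemma: for $\ell$-regular $g$, the Deligne--Lusztig value $R_{\bfT}^{\bfG}(\theta)(g)$ depends only on the $\ell'$-part $\theta_{\ell'}$ of $\theta$. This is exactly the engine of \cite{BM}. The difference lies in how the lemma is deployed. Brou\'e--Michel do \emph{not} attempt to match individual irreducibles $\chi\in\mathcal{E}(G,(st))$ with a partner $\chi_0\in\mathcal{E}(G,(s))$ and compare central characters. Instead they work directly with the idempotent: the coefficient of $g$ in $e_s^G$ is $|G|^{-1}\sum_t D_{st}(g^{-1})$ where $D_r=\sum_{\chi\in\mathcal{E}(G,(r))}\chi(1)\chi$. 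The crucial (and non-obvious) input is that each $D_r$ is a \emph{uniform} class function, hence a $\bbZ$-combination of $R_T^G(\theta)$'s with $(T,\theta)\leftrightarrow r$; this follows because the regular character is uniform and its projection to each Lusztig series is $D_r$. The key lemma then lets one rewrite $\sum_t D_{st}$ in terms of DL characters indexed by $s$ alone (with $\ell'$-integral coefficients), and integrality of $e_s^G$ drops out.

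Your route via central characters runs into a genuine obstruction at the step ``expressing $\chi$ and $\chi_0$ as $\bbQ$-linear combinations of Deligne--Lusztig characters'': an individual irreducible $\chi$ need not be uniform (think of cuspidal unipotent characters), so no such expression exists. Your proposed fix---work with uniform projections and then ``pass back'' with sign ambiguities being $\ell$-units---does not close this gap: the uniform projection of $\chi$ loses exactly the information you need about $\omega_\chi$ on classes with nontrivial unipotent part, and the transition between irreducibles and almost-characters involves Lusztig's non-abelian Fourier matrix, whose entries are not merely signs. What makes the Brou\'e--Michel argument go through is that one never needs individual $\chi$'s to be uniform, only the aggregate $D_r$, and that is what you are missing.
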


The above work of Brou\'{e}-Michel yields a decomposition
$$\mathcal O G\mod=\bigoplus_{(s)} \mathcal O Ge_s^{G}\mod$$ where $(s)$
runs over $G^*$-conjugacy classes of semisimple $\ell'$-elements of $G^*$.

\smallskip

\begin{theorem}[\cite{GH91, Ge93}]\label{basicset}
	Let $\ell$ be a prime good for $G$ and different from the defining characteristic of $G$.
	Assume that $\ell$ does not divide $(Z(\bfG)/Z^\circ (\bfG))_F$~(the largest quotient of $Z(\bfG)$ on which $F$ acts trivially).
	Let $s\in G^*$ be a semisimple $\ell'$-element.
	Then $\mathcal E(G,(s))$ form a basic set of $\mathcal E_\ell(G,(s))$.\qed
\end{theorem}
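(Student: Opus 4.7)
The plan is to reduce to the case of unipotent characters via Jordan decomposition, and then to establish a lower-unitriangular decomposition matrix between ordinary unipotent characters and irreducible Brauer characters in $\mathcal E_\ell(G,(1))$ ordered by Lusztig's $a$-function.

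First I would invoke the Jordan decomposition recalled in \S\ref{subsec:jordan}. Fix an $F$-stable Levi subgroup $\bfL$ of $\bfG$ dual to an $F^*$-stable Levi $\bfL^* \subseteq \bfG^*$ containing $C_{\bfG^*}(s)$. Under the hypotheses that $\ell$ is good and $\ell \nmid (Z(\bfG)/Z^\circ(\bfG))_F$, the centralizer $C_{\bfG^*}(s)$ of the semisimple $\ell'$-element $s$ is connected; Deligne--Lusztig induction $\pm R_\bfL^\bfG$ provides an isometry $\mathcal E(L,(s)) \simto \mathcal E(G,(s))$, and this isometry is compatible with the decomposition map on the union of blocks defined by $e_s^G$ (as both $\mathcal E_\ell(L,(s))$ and $\mathcal E_\ell(G,(s))$ are naturally identified via extending the correspondence to include $\ell$-singular central characters). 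A standard formal argument then shows that a basic set of $\mathcal E_\ell(L,(s))$ lifts to a basic set of $\mathcal E_\ell(G,(s))$. Since $s \in Z(\bfL^*)$, the bijection $\psi \mapsto \eta\psi$ (with $\eta$ the linear character of $L$ corresponding to $s$) further reduces the problem to proving the statement for unipotent characters of $L$, so I may assume $s = 1$ and replace $G$ by $L$.

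Assume now $s = 1$. Enumerate $\mathcal E(G,(1)) = \{\chi_1, \ldots, \chi_r\}$ with $\mathbf a(\chi_1) \leqslant \cdots \leqslant \mathbf a(\chi_r)$. The crucial input is the theorem of Brou\'e--Michel that for any $F$-stable maximal torus $\bfT$ and any $\ell'$-linear character $\theta$ of $T$, the Deligne--Lusztig virtual character $R_\bfT^\bfG(\theta)$ is a projective virtual character modulo $\ell$; this is precisely where both hypotheses on $\ell$ enter. Combining this with Lusztig's combinatorial description of the scalar products $\langle \chi_i, R_\bfT^\bfG(\theta) \rangle$ in terms of Lusztig families and the non-abelian Fourier transform, I would build inductively, for each $i$, a projective virtual character $\Psi_i$ with $\langle \chi_i, \Psi_i \rangle = 1$ and $\langle \chi_j, \Psi_i \rangle = 0$ for all $j > i$. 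The unitriangularity of the pairing $(\chi_i, \Psi_j)$ forces the $\ell$-modular reductions of $\chi_1, \ldots, \chi_r$ to be $\mathbb Z$-linearly independent in the Brauer-character group and to span the sublattice corresponding to $\mathcal E_\ell(G,(1))$; that is exactly the basic-set property.

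The main obstacle is the last step: arranging the unitriangularity via the $a$-function. Linear independence of the $\ell$-modular reductions can be obtained by much softer means, but the spanning statement is delicate and depends on Lusztig's full classification of unipotent characters together with the combinatorics of families and Fourier matrices. A secondary technical point is ensuring that the Jordan-decomposition reduction truly descends to the $\ell$-modular level in a way respecting the decomposition map; the hypotheses on $\ell$ are indispensable here, since they simultaneously guarantee the connectedness of $C_{\bfG^*}(s)$ and the projectivity modulo $\ell$ of the Deligne--Lusztig characters used as test projectives in the final unitriangularity argument.
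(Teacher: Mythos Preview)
The paper does not prove this statement; it records it with a terminal \qed and defers entirely to \cite{GH91,Ge93}. Your sketch is in the right spirit and follows the architecture of those references: \cite{Ge93} performs the reduction to the unipotent case, and \cite{GH91} establishes the basic-set property for $\mathcal E(G,(1))$ via an $a$-function unitriangularity argument against a family of projective virtual characters.

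Two points in your sketch are inaccurate, though neither breaks the overall strategy. First, the hypotheses on $\ell$ do \emph{not} force $C_{\bfG^*}(s)$ to be connected: connectedness of centralizers of semisimple elements is governed by whether $Z(\bfG)$ is connected, independently of $\ell$. What the hypothesis $\ell \nmid |(Z(\bfG)/Z^\circ(\bfG))_F|$ buys is that the component group $C_{\bfG^*}(s)/C^\circ_{\bfG^*}(s)$ has $\ell'$-order; Geck's argument in \cite{Ge93} accommodates disconnected centralizers and does not proceed via the clean Levi reduction you describe. Second, the projectivity input is misattributed. Brou\'e--Michel's theorem (Theorem~\ref{Thm:eGs} here) says only that $e_s^G$ is a central idempotent of $\mathcal OG$. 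The projective virtual characters used in \cite{GH91} come from Gelfand--Graev representations (projective because they are induced from a $p$-group with $p\neq\ell$) together with Alvis--Curtis duality; the fact that $R_\bfT^\bfG(\theta)$ for $\theta$ of $\ell'$-order is a virtual projective follows from the perfect-complex structure of $\ell$-adic Deligne--Lusztig cohomology, not from \cite{BM}. With these two corrections your outline matches the cited proofs.
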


\subsubsection{Isolated blocks}
Recall that an element of $\mathbf{G}^*$ is called \textit{quasi-isolated} (resp. \textit{isolated})
 if its centralizer in $\bfG^*$ (resp. the connected component of its centralizer in  $\bfG^*$)
 is not contained in any proper Levi subgroup of $\mathbf{G}^*$.

 \begin{definition} Let $s$ be a quasi-isolated (resp. isolated) semisimple element of $G^*$
 of order prime to $\ell$. We say that a block $b$ of $G$ is \textit{quasi-isolated}
 (resp. \textit{isolated}) if it occurs in $\mathcal{O} G e_s^{G}$.
\end{definition}

\smallskip

Let $\bfL$ be an $F$-stable Levi subgroup of $\bfG$ with dual $\bfL^*$
such that $C_{\bfG^*}^\circ(s)\subset\bfL^*$, and that
$\bfL^*$ is minimal with respect to this property.
Let $\bfP$ be a parabolic subgroup of $\bfG$ with unipotent radical $\bfV$ and Levi complement
$\bfL$, and let
$$\bfY^{\bfG}_{\bfP}:=\{g\bfV\in \bfG/\bfV\,\,|\,\,g^{-1}F(g)\in \bfV F(\bfV)\}$$
be the Deligne-Lusztig
variety associated to $\bfP$.
Rickard and Rouquier  showed that there exists an object
$G\Gamma_c(\bfY^{\bfG}_{\bfP},\mathcal{O})$ in $\mathrm{Ho}^b(\mathcal{O} G\text{-} \mathrm{perm})$,
the bounded homotopy category of $\ell$-permutation $\mathcal{O} G$-modules.
Write $H_c^i(\bfY^{\bfG}_{\bfP},\mathcal{O})$ for its $i$th cohomology group and abbreviate $H_c^{\mathrm{dim}(\bfY^{\bfG}_{\bfP})}(\bfY^{\bfG}_{\bfP},\mathcal{O})$ by $H_c^{\mathrm{dim}}(\bfY^{\bfG}_{\bfP},\mathcal{O})$.

\smallskip

Bonnaf\'e, Dat and Rouquier showed that
the right action of $L$ on $H^{\dim}(\bfY^{\bfG}_{\bfP},\mathcal{O})e_s^{L}$ extends to
an action of $N=N_{G}(\bfL,e_s^{L})$ commuting with the action of $G$.

\begin{theorem}[\cite{BDR17, Ru20}]
\label{th:introequiv}
Assume $C_{\bfG^*}^\circ(s)\subset\bfL^*$ and that
$\bfL^*$ is minimal with respect to this property.

The right action of $L$ on $G\Gamma_c(\bfY^{\bfG}_{\bfP},\mathcal{O})e_s^{L}$ extends
to an action of $N$ and the resulting complex induces a splendid
Rickard equivalence between $\mathcal{O}\bfG^Fe_s^{G}$ and $\mathcal{O} Ne_s^{L}$.
The bimodule $H^{\dim}(\bfY^{\bfG}_{\bfP},\mathcal{O})e_s^{L}$ induces a Morita equivalence
between $\mathcal{O}\bfG^Fe_s^{G}$ and $\mathcal{O} Ne_s^{L}$.

The bijections between blocks of
$\mathcal{O}\bfG^Fe_s^{G}$ and $\mathcal{O} Ne_s^{L}$ induced by those equivalences
preserve the local structure, and in particular, preserve defect groups.\qed
\end{theorem}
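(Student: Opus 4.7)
The plan is to reproduce the Bonnaf\'e--Dat--Rouquier strategy, completed in bad characteristic by Ruhstorfer, which builds on the Bonnaf\'e--Rouquier Morita equivalence theorem for quasi-isolated blocks. The overall input is the complex $C := G\Gamma_c(\bfY^{\bfG}_{\bfP},\mathcal{O})e_s^{L}$, viewed as an object of $\mathrm{Ho}^b(\mathcal{O} G \otimes_{\mathcal O} (\mathcal O L)^{\op}\text{-}\mathrm{perm})$, together with the block decomposition provided by Theorem~\ref{Thm:eGs}.

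First, I would extend the right $L$-action on $C$ to the normalizer $N=N_G(\bfL,e_s^{L})$. For $n\in N$, conjugation by $n$ carries the pair $(\bfL,\bfP)$ to $(\bfL,{}^n\bfP)$, and one produces a canonical isomorphism in the homotopy category between $G\Gamma_c(\bfY^{\bfG}_{\bfP},\mathcal{O})e_s^{L}$ and $G\Gamma_c(\bfY^{\bfG}_{{}^n\bfP},\mathcal{O})e_s^{L}$ using the parabolic-independence available after projection onto the block $e_s^L$ (this uses the Mackey-type arguments in \cite{BDR17}). Composing with the variety isomorphism induced by $n$ gives the desired action, and cocycle coherence must be checked to promote this to a genuine $N$-action commuting with $G$.

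Second, I would establish that the complex $C$ induces a splendid Rickard equivalence $\mathcal{O} G e_s^{G} \simeq \mathcal{O} N e_s^{L}$. The splendid property is inherited from the smooth compactification of $\bfY^{\bfG}_{\bfP}$: the associated $\ell$-permutation structure has vertices contained in Sylow $\ell$-subgroups of $L$-stabilizers of points, which under the minimality $C_{\bfG^*}^\circ(s)\subset\bfL^*$ are controlled. To check the equivalence at the level of derived categories, one computes $\mathrm{RHom}$'s using the trace formula, reducing everything to the Lusztig induction $R_{\bfL}^{\bfG}$ on characters; disjointness of the series $\mathcal{E}_{\ell}(G,(s))$ on $G$ and its $L$-counterpart (Theorem~\ref{Thm:eGs}) forces the needed orthogonality relations. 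Concentrating on $e_s^{G}$ and $e_s^{L}$ kills the contributions of other series, and Broué's abelian criterion applied to the graded self-extensions shows that $C$ is a two-sided tilting complex.

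Third, to upgrade from a Rickard equivalence to a Morita equivalence via the bimodule $H^{\dim}(\bfY^{\bfG}_{\bfP},\mathcal{O})e_s^{L}$, I would show that, after cutting by $e_s^{L}$, the cohomology of $\bfY^{\bfG}_{\bfP}$ is concentrated in the middle degree $\dim(\bfY^{\bfG}_{\bfP})$. This is done by combining the purity/weight argument with the Deligne--Lusztig character formula: for characters $\chi\in\mathcal{E}(L,(s))$, the multiplicity of $\pm R_{\bfL}^{\bfG}(\chi)$ in each cohomology degree is forced to vanish outside the middle one by the minimality of $\bfL^*$ and the fact that $C_{\bfG^*}^\circ(s)\subset\bfL^*$. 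The extension from characteristic zero to $\mathcal{O}$ uses the $\ell$-permutation structure and the fact that the resulting bimodule is free (of finite rank) as an $\mathcal{O}$-module. Preservation of the local structure, and in particular of defect groups, is then automatic from the definition of a splendid Rickard equivalence via Brauer homomorphisms.

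The main obstacle is the concentration-in-middle-degree statement for $H_c^*(\bfY^{\bfG}_{\bfP},\mathcal{O})e_s^{L}$: in the original work of Bonnaf\'e--Rouquier the quasi-isolated case in good characteristic already required a delicate use of Deligne--Lusztig theory, and extending it to arbitrary isolated $s$ and to bad primes (as needed here for $\SO_{2n+1}(q)$ at odd $\ell$) requires the refinements of \cite{BDR17} and \cite{Ru20}, both to handle torsion in the cohomology and to guarantee that the extension of the $L$-action to $N$ is compatible with the splendid structure. Every other step (extension of the action, Mackey-type reductions, the deduction of the Morita statement from Rickard plus concentration, and the preservation of local structure) is formal once this concentration result is in hand.
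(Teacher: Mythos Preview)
The paper does not prove this theorem: the \qed after the statement signals that it is quoted without proof from the references \cite{BDR17,Ru20}, and it is used only as a black box in the reduction step of Theorem~A. There is therefore no argument in the paper to compare your proposal against.

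As an outline of the actual Bonnaf\'e--Dat--Rouquier/Ruhstorfer argument, your sketch has the right ingredients (extension of the $L$-action to $N$ via independence-of-parabolic, the splendid structure coming from the $\ell$-permutation complex, concentration of cohomology in the middle degree under the minimality hypothesis on $\bfL^*$), but a couple of attributions are off. The Morita equivalence via $H^{\dim}$ goes back to Bonnaf\'e--Rouquier \cite{BoRo03}; what \cite{BDR17} adds is precisely the extension to $N$ and the splendid Rickard statement, and \cite{Ru20} removes a restriction on $q$ needed for the Morita part, rather than handling bad primes. Also, the tilting property is not established via ``Brou\'e's abelian criterion applied to the graded self-extensions'' as you suggest; it is checked by reducing to local subgroups via the Brauer functor (this is where splendidness is used) and analysing the endomorphism complex directly. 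None of this affects your grade here, since the paper itself supplies no proof.
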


\subsection{Representations of $\SO_{2n+1}(q)$}\label{sec:SO}

For later use we collect some facts about representations of $\SO_{2n+1}(q)$,
including (quadratic unipotent) characters and blocks of $\SO_{2n+1}(q)$.
In particular, characters in a cyclic quadratic unipotent (i.e., isolated)
block of $\SO_{2n+1}(q)$ will be highlighted.

\smallskip

Throughout this section we assume that $G=\SO_{2n+1}(q)$ with $q$ odd,
and that $\ell$ is an odd prime not dividing $q$.
We write $f$ (resp. $d$) for the order of $q$ (resp. $q^2$) in $k^\times$.
As usual, the prime $\ell$ is called \emph{linear} if $f$ is odd, and \emph{unitary} otherwise.
In the first case we have $f=d$, and in the second $f=2d$.
\smallskip

\subsubsection{Semisimple elements in $G^*=\Sp_{2n}(q)$}\label{subsec:semi}

To describe the characters of $G$, we first recall semisimple conjugacy classes of $G^*=\Sp_{2n}(q)$.
We denote by  $\Irr(\bbF_{q}[X])$ the set of all  monic irreducible polynomials over the field $\bbF_{q}$.
For each $\Delta$ in $\Irr(\bbF_{q}[X])\setminus \{X\}$, we let $\Delta^*$ be the polynomial in
$\Irr(\bbF_{q}[X])$ whose roots are the inverses of the roots of $\Delta$,
and denote
\begin{align*}
\cF_{0}&=\left\{ X-1,X+1 ~\right\},\\
\cF_{1}&=\left\{ \Delta\in\Irr(\bbF_{q}[X])\mid \Delta\notin \cF_0, \Delta\neq X,\Delta=\Delta^* ~\right\},\\
\cF_{2}&=\left\{~ \Delta\Delta^* ~|~ \Delta\in\Irr(\bbF_{q}[X])\setminus \cF_0, \Delta\neq X,\Delta\ne\Delta^* ~\right\},~\mbox{and}\\
\cF&=\cF_0\cup\cF_1\cup\cF_2.
\end{align*}
Furthermore, we denote by $d_\Gamma$
the degree of  a polynomial $\Gamma\in\cF$  and by
$$\delta_\Gamma:=
\left\{ \begin{array}{ll} d_\Gamma & \text{if}\ \Gamma\in\cF_0; \\
\frac{1}{2}d_\Gamma & \text{if}\ \Gamma\in\cF_1\cup \cF_2 \end{array} \right.$$
the so-called reduced degree of  $\Gamma$.
In addition, we define a sign $\varepsilon_\Gamma$ for $\Gamma\in\cF_1\cup \cF_2$ by
$$\varepsilon_\Gamma=
\left\{ \begin{array}{ll} -1 & \text{if}\ \Gamma\in\cF_1; \\
1 & \text{if}\ \Gamma\in\cF_2 . \end{array} \right.$$

\smallskip

For a semisimple element $s\in G^*$, we write $s=\prod_{\Gamma}s(\Gamma)$
for the primary decomposition of $s$,
where  $\Gamma$ is the minimal polynomial of $s(\Gamma)$.
Let $m_\Gamma(s)$ be the multiplicity of $\Gamma$ in $s(\Gamma)$, so that
$\Gamma$ is an elementary divisor of $s$ if $m_\Gamma(s)\ne 0$.

Semisimple conjugacy classes of $G^*$ are in bijection with the
functions given by
\begin{align*}
\cF &\rightarrow \mathbb{N}_0\\
\Gamma &\mapsto m_\Gamma(s)
\end{align*}
with $m_{X+1}(s)$ even and $\sum_{\Gamma\in\cF} m_\Gamma(s)d_\Gamma = 2n$.
In particular, an isolated semisimple element of $\Sp_{2n}(q)$ has $2n_{+}$ eigenvalues of $1$ and $2n_{-}$ eigenvalues of $-1$ where $n=n_++n_-.$
Furthermore, if $s=\prod\limits_{\Gamma\in \cF}s(\Gamma)$ is the primary decomposition of $s$, then
$$C_{G^*}(s)\cong \Sp_{2m_{X-1}(s)}(q)\times \Sp_{2m_{X+1}(s)}(q)\times(\prod_{\Gamma\in
\cF_1}U_{m_\Gamma(s)}(q^{d_\Gamma}))\times(\prod_{\Gamma\in
\cF_2}\GL_{m_\Gamma(s)}(q^{d_\Gamma})).
$$
Dually, we have
$${C_{G^*}(s)}^*\cong G_{m_{X-1}(s)}(q)\times G_{m_{X+1}(s)}(q)\times(\prod_{\Gamma\in
\cF_1}U_{m_\Gamma(s)}(q^{d_\Gamma}))\times(\prod_{\Gamma\in
\cF_2}\GL_{m_\Gamma(s)}(q^{d_\Gamma})).
$$
See  \cite{FS89} for more details.

\smallskip

\subsubsection{Charged symbol}\label{subsec:symbol}
A \emph{charged symbol} for
a bipartition $\tuple\mu = (\mu^1,\mu^2)$ with charge $\tuple s =(s_1,s_2)$
is a pair of charged $\beta$-sets $ \beta_{\tuple s}(\tuple\mu) = (\beta_{s_1}(\mu^1),\beta_{s_2}(\mu^2))$,
Write $\Theta = \beta_{\tuple s}(\tuple\mu)$ for brevity.
If $\beta_{s_1}(\mu^1) =: X =  \{x_1 > x_2 > \cdots\}$ and $\beta_{s_2}(\mu^2) =: Y
= \{y_1 > y_2 > \cdots\}$ we write
 $$ \Theta  = (X,Y) \, = \, \left( \begin{array}{ccccc}
 x_1 &  x_2  & \ldots  \\ y_1 & y_2 & \ldots  \\ \end{array} \right).$$
The components $X$ is called the first row of the symbol $\Theta$
and $Y$ the second one.
The \emph{defect} and \emph{rank} of $\Theta$ are respectively
$$D = s_1-s_2\ \ \ \ \mbox{and}\ \ \ \ |\tuple\mu| + \lfloor D^2/4 \rfloor.$$

\smallskip

A \emph{$d$-hook} of $\Theta$ is a pair of integers $(x,x+d)$ which is either
a $d$-hook of $X$ or a $d$-hook of $Y$. The charged symbol obtained by deleting $x+d$ from $X$ (resp. $Y$)
and replacing it by $x$ is said to be gotten from $\Theta$ by \emph{removing the $d$-hook $(x,x+d)$}.
A \emph{$d$-cohook} is a pair of integers $(x,x+d)$ such that $x+d\in X$ and $x\not\in Y$, or
$x+d\in Y$ and $x\not\in X$.
The charged symbol obtained by deleting $x+d$ from $X$ (resp. $Y$) and adding $x$ to $Y$ (resp. $X$) is said to be gotten from $\Theta$ by \emph{removing the $d$-cohook $(x,x+d)$}.
Removing recursively all $d$-hooks (resp. $d$-cohooks) from $\Theta$,
we obtain the \emph{$d$-core} (resp. \emph{$d$-cocore}) of $\Theta$.

\smallskip

By $\Theta^\dag = (Y,X)$ we denote the charged symbol of charge $(s_2, s_1)$ obtained
by swapping the two $\beta$-sets. The defect of $\Theta^\dag$ is $-D$ but the rank is the same. The operation
 shifting simultaneously the charged $\beta$-sets $X$ and $Y$ by an integer $m$
  preserves both defect and rank.

\smallskip

\emph{Symbols} are orbits of charged symbols
under the shift operator and the transformation $\Theta \mapsto \Theta^\dag$.
We write
$$ \{X,Y\} \, = \, \left\{ \begin{array}{ccccc}
 x_1 &  x_2  & \ldots  \\ y_1 & y_2 & \ldots  \\ \end{array} \right\}$$
 for the symbol associated with $(X,Y)$. The rank of
the symbol is the rank of any charged symbol in its class whereas its defect is
the absolute value of the defect of any representative.
Removing and adding $d$-hooks or $d$-cohooks
are well-defined operations on symbols.

\smallskip

For a positive integer $d$, a \emph{$2d$-abacus} of
 a symbol $\{X,Y\}$ is a $2d$-linear diagram
in which for integers $i\geqs 0$ and $0\leqs
j\leqs d-1$,  we put a bead on the $i$th row and $j$th runner if and only if $di+j\in
X$ and on the $i$th row and $(d+j)$th runner if and only if $di+j\in Y$.

\subsubsection{Characters of $\SO_{2n+1}(q)$}\label{subsec:char}
We have known that
$\mathrm{Irr}(G)=\coprod {\mathcal E}(G,(s))$,
where $(s)$ runs over the set of $G^*$-conjugacy classes of semisimple elements of
$G^*$.
Recall that unipotent characters of ${\rm \GL}_n(q)$ and ${\rm U}_n(q)$ are both labeled by
partitions of $n$ (for example $(n)$ corresponds to the trivial character of
both groups), and that the unipotent characters of $\Sp_{2n}(q)$ and $\SO_{2n+1}(q)$ are both labeled by
symbols with rank $n$ and odd defect (see \cite{DL,L77} for more details).

\smallskip

Given a semisimple element $s\in G^*$ with $s=\prod\limits_{\Gamma\in \cF}s(\Gamma)$,
we write $\Psi_\Gamma(s)$ for the set of partitions of $m_\Gamma(s)$ if $\Gamma\in\cF_1\cup\cF_2$
and for the set of symbols with rank $[\frac{m_\Gamma(s)}{2}]$ and odd defect
if $\Gamma\in\cF_0$, and denote
\begin{equation*}\label{def-par-sym}
\Psi(s)=\prod_{\Gamma\in\cF} \Psi_\Gamma(s).
\end{equation*}

The Jordan decomposition $\mathcal{L}_s$ in \S \ref{subsec:jordan}
 induces a bijection between $\mathcal{E}(G,(s))$ and $\Psi(s)$.
For $\mu\in\Psi(s)$, we denote by $\psi_\mu$ the unipotent character of $C_{G^*} (s)$ corresponding to $\mu$.
  So the characters in  $\mathcal{E}(G,(s))$ corresponding to $\psi_\mu$ can be simply denoted by $\chi_{s,\mu}$,
in which the (conjugacy class of) semisimple element $s$ is called the \emph{semisimple label}
and $\mu\in\Psi(s)$ is called the \emph{unipotent label} of $\chi_{s,\mu}$.

\subsubsection{Harish-Chandra induction of $\SO_{2n+1}(q)$}

Let $M$ be a split
$F$-stable Levi subgroup of $G$. Then $M^*$ is a split $F^*$-stable Levi subgroup
of $G^*$. Let $s$ be an $F^*$-stable semisimple element of $M^*$.
As shown by Fong-Srinivasan \cite{FS82,FS89},
the Jordan decomposition of characters
 commutes with Lusztig induction, and in particular Harish-Chandra induction.
 Hence the computation of Harish-Chandra induction $R_{M}^G$
 reduces to Harish-Chandra induction
 $R_{{C_{M^*(s)}}^*}^{{C_{G^*(s)}}^*}$ with unipotent characters.
By Comparison Theorem, it
 can be reduced to
 the inductions of Weyl groups
 which are controlled by  Littlewood-Richardson coefficients (see \cite[\S 3]{hiskes2000}).

\smallskip

\subsubsection{Quadratic unipotent characters of $\SO_{2n+1}(q)$}\label{subsec:quchar}

We write $G=G_n=\SO_{2n+1}(q)$.

\begin{definition} A character $\chi \in {\mathcal E}(G,(s))$ is called quadratic
unipotent if $s^2=1$.
\end{definition}

Such a character is called square-unipotent in \cite{W} (see also \cite{S08}),
and is exactly unipotent if $s=1$.
 Clearly, if $s^2=1$ then
all eigenvalues of $s$ are $1$ or $-1$.

\begin{proposition}[Lusztig \cite{L77}]\label{prop:cuspidalBC}
 Up to isomorphism, there is at most one cuspidal quadratic  unipotent module in ${\mathcal E}(G_r,(s))$ for $s\in G_r^*$ satisfying $s^2=1$, where
 $G_r=\SO_{2r+1}(q)$.

Furthermore, if $s$ has $2r_+$ eigenvalues of $1$ and $2r_-$ eigenvalues of $-1$ (with $r=r_++r_-$)
then there exists one cuspidal quadratic  unipotent module in ${\mathcal E}(G_r,(s))$
if and only if $r_+=t_+(t_++1)$ and $r_-=t_-(t_-+1)$ for some $t_+,t_-\in \bbN$,
in which case the module will be denoted by $E_{t_+, t_-}$.
\end{proposition}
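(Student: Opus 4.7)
The statement is classical (Lusztig, 1977) and my plan would be to reduce it, via Jordan decomposition of characters, to the known classification of cuspidal unipotent characters of odd-dimensional orthogonal groups.

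First, I would invoke the Jordan decomposition recalled in \S\ref{subsec:jordan}. Since $\bfG=\bfSO_{2r+1}$ has connected center, the dual group $\bfG^*=\bfSp_{2r}$ has connected centralizers, and there is a bijection
\[
\mathcal{L}_s:\mathcal{E}(G_r,(s))\ \longrightarrow\ \mathcal{E}(C_{G_r^*}(s)^*,(1)).
\]
For $s$ with $s^2=1$ and eigenvalue multiplicities $2r_+$ (for $1$) and $2r_-$ (for $-1$), the centralizer computation recalled in \S\ref{subsec:semi} gives
\[
C_{G_r^*}(s)\cong \Sp_{2r_+}(q)\times\Sp_{2r_-}(q),
\]
whose dual is
\[
C_{G_r^*}(s)^*\cong \SO_{2r_++1}(q)\times \SO_{2r_-+1}(q).
\]
Hence quadratic unipotent characters in $\mathcal{E}(G_r,(s))$ correspond bijectively to pairs $(\psi_+,\psi_-)$ of unipotent characters of the two orthogonal factors.

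Next, I would argue that under $\mathcal{L}_s$ cuspidality is preserved. This uses the fact, established by Fong--Srinivasan for classical groups with connected center and recalled in \S\ref{sub:repReductive}, that Jordan decomposition commutes with Lusztig induction from an $F$-stable Levi $\bfL\subset\bfG$ whose dual contains $C_{\bfG^*}(s)$. Indeed, any proper split Levi $\bfM\subset\bfG_r$ has dual $\bfM^*\supset C_{\bfG_r^*}(s)$ iff one of the corresponding dual Levis of $\bfSO_{2r_\pm+1}$ is proper. Thus $\chi_{s,(\mu_+,\mu_-)}$ is cuspidal if and only if both unipotent characters $\psi_{\mu_+}$ and $\psi_{\mu_-}$ are cuspidal in the respective factors.

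The problem then reduces to the classification of cuspidal unipotent characters of $\SO_{2m+1}(q)$. By Lusztig's parametrization in \cite{L77}, unipotent characters of $\SO_{2m+1}(q)$ are labelled by symbols of rank $m$ and odd defect, and a unipotent character is cuspidal iff its symbol has the degenerate shape
\[
\left\{\begin{array}{cccc} 0 & 1 & \cdots & 2t \\ - & & & \end{array}\right\},
\]
whose rank equals $t(t+1)$. Consequently there is at most one cuspidal unipotent character of $\SO_{2m+1}(q)$, and such a character exists if and only if $m=t(t+1)$ for some $t\in\bbN$. Applying this to each factor yields the cuspidality condition $r_\pm=t_\pm(t_\pm+1)$, together with uniqueness of the cuspidal quadratic unipotent module $E_{t_+,t_-}$ whenever the condition is met.

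The only genuinely delicate point is the second step, namely the preservation of cuspidality under Jordan decomposition: one must check that every proper Harish-Chandra Levi of $G_r$ whose dual contains the centralizer of $s$ corresponds, on the dual side, to a proper Levi of $\SO_{2r_++1}(q)\times\SO_{2r_-+1}(q)$, and vice versa. This is a combinatorial verification using the explicit description of Levi subgroups of split classical groups via ordered partitions of $r=r_++r_-$ and the fact that $s$ is isolated, so that its centralizer never sits inside a Levi of $G_r^*$ coming from a factor of type $A$. The remaining ingredients are either textbook (Jordan decomposition, parametrization of unipotent characters by symbols) or explicitly recorded in \S\ref{sub:repReductive} and \S\ref{subsec:symbol}.
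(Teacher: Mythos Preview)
Your proposal is correct and follows essentially the same route as the paper: reduce via Jordan decomposition to the pair $G_{r_+}\times G_{r_-}$, argue that cuspidality is preserved, and then invoke Lusztig's classification of cuspidal unipotent characters of $\SO_{2m+1}(q)$. The only difference is in how the preservation of cuspidality is justified: the paper appeals directly to \cite[Theorem]{L77} together with the observation that the centers of $G_r$ and $G_{r_+}\times G_{r_-}$ have the same $\bbF_q^\times$-rank, whereas you argue via the Fong--Srinivasan commutation of Jordan decomposition with Harish-Chandra induction; both are valid, but the paper's criterion is the original one from Lusztig and avoids the Levi bookkeeping you flag as delicate.
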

\begin{proof} We have $C_{{G_r}^*}(s)^*\cong G_{r_+}\times G_{r_-}$
by \S\ref{subsec:semi}, and that the center of $G_r$ and $G_{r_+}\times G_{r_-}$ have the same $\bbF_q^{\times}$-rank.
By the Jordan decomposition of characters \cite[Theorem]{L77},
there is a bijection between $\mathcal{E}(G_r,(s))$ and $\mathcal(C_{{G_r}^*}(s)^*,(1))$,
mapping the cuspidal quadratic
unipotent  modules in $\mathcal{E}(G_r,(s))$ to the cuspidal unipotent  modules in $\mathcal(C_{{G_r}^*}(s)^*,(1))$.
Note that any cuspidal unipotent
module of $G_{r_+}\times G_{r_-}$ is the product of a cuspidal  unipotent module of $G_{r_+}$ and a cuspidal unipotent module of $G_{r_-}.$ However, there is at most one cuspidal  unipotent module in ${\mathcal E}(G_{r_\pm},(1))$ and there exists one cuspidal  unipotent module in ${\mathcal E}(G_{r_\pm},(1))$ if and only if $r_\pm=t_\pm(t_\pm+1)$ for some $t_\pm\in\bbN$.
Hence the proposition holds.
\end{proof}

When $t_-=0$, the module $E_{t_+, t_-}$ is exactly the cuspidal unipotent module $E_{t_+}$ considered in \cite{DVV2}.
We write $K_1$ for the trivial $K\bbF^\times_{q}$-module and
$K_\zeta$ for the $K\bbF^\times_{q}$-module
   affording  the unique  character $\zeta$ of $\bbF^\times_{q}$ of order 2 (i.e., Legendre symbol for $\bbF^\times_{q}$).
Since $\mathrm{GL}_n(q)$ does not have any
cuspidal quadratic unipotent character unless $n =1$ in which case
 $K_1$ and $K_\zeta$ are clearly
  the only two cuspidal quadratic unipotent modules of  $\mathrm{GL}_1(q)=\bbF_q^{\times}$,
it follows from Proposition \ref{prop:cuspidalBC} that
any cuspidal quadratic unipotent pair of $G_n:=\SO_{2n+1}(q)$ is conjugate to a pair of the form
$$(L_{r,1^m}\,,\, E_{t_+,t_-}\otimes K_1^{m_+}\otimes K_\zeta^{m_-}),$$
where $L_{r,1^m}\simeq G_r \times (\bbF^\times_{q})^m$ with $n=r+m$,
$m=m_++m_-$, $n=n_++n_-$ and $n_\pm=t_\pm(t_\pm+1)+m_\pm$
with $t_\pm,m_\pm\in\bbN$. According to
\S \ref{subsec:HCseries},
the irreducible characters lying in the
Harish-Chandra series above $ E_{t_+,t_-}\otimes K_1^{m_+}\otimes K_\zeta^{m_-}$
are in bijection with the irreducible representations of the ramified Hecke algebra
$\scrH(KG_n,E_{t_+,t_-}\otimes K_1^{m_+}\otimes K_\zeta^{m_-})$  of type $B_{m_+}\times B_{m_-}$
(see  \cite[\S 5,8]{L77}).

\smallskip

Recall that $G_0=\{1\}$. As in \cite{DVV2} for the category $\scrU_K:=\bigoplus_{n\in \bbN}KG_n\mod$
of unipotent modules over $K$, we call the category of quadratic unipotent modules over $K$ the category
  $$\scrQU_K=\scrQU_K^{\SO}=\bigoplus_{n\in\bbN}K G_n\qumod.$$
This category is abelian semisimple. According to the work of Lusztig \cite[\S 5,8]{L77},
we have a parametrization of the quadratic unipotent characters
$$\Irr(\scrQU_K)=
\{E_{\Theta_+,\Theta_-}\,|\,\Theta_\pm\in\scrS_{\mathrm{odd}}\},$$ where
$\scrS_{\mathrm{odd}}$ means  the set of symbols of odd defects.
Since the conjugacy classes of involutions of $\Sp_{2n}(q)$
are uniquely determined by the multiplicities of eigenvalues $\pm 1$,
 we have
$$\Irr(KG_n\qumod)=\coprod_{(n_+,n_-)\comp_{2}\,\,n}{\mathcal E}(G_n,(s_{n_+,n_-}))$$
where  $${\mathcal E}(G_n,(s_{n_+,n_-}))=
\{E_{\Theta_+,\Theta_-}\,|\,\Theta_\pm\in\scrS_{\mathrm{odd}}\,
\,\text{and}\,\,\rank(\Theta_\pm)=n_{\pm}\}$$ and
  $s_{n_+,n_-}\in \Sp_{2n}(q)$
  which satisfies $s_{n_+,n_-}^2=1$ and
  has $2n_+$ eigenvalues of $1$ and $2n_-$ eigenvalues of $-1$.
Here $s=s_{n_+,n_-}$  and $\Theta_+\times\Theta_-$  are called the semisimple label
    and the unipotent label of $E_{\Theta_+,\Theta_-}$, respectively,
    and we also denote its corresponding character by $\chi_{\Theta_+,\Theta_-}.$
Furthermore, ${\mathcal E}(G_n,(s_{n_+,n_-}))$ can be partitioned to Harish-Chandra series $${\mathcal E}(G_n,(s_{n_+,n_-}))=\coprod{\mathcal E}(G_n,(L_{r,1^m}, E_{t_+,t_-}\otimes K_1^{m_+}\otimes K_\zeta^{m_-}))$$
where the union runs over $t_\pm,m_\pm\in \bbN$ satisfying
$m=m_++m_-$, $r=r_++r_-$ and $n=r+m=n_++n_-$ with
$r_{\pm}=t_{\pm}(t_{\pm}+1)$
 and  $r_\pm+m_\pm=n_\pm$.

 \smallskip

 For the unipotent case (i.e., the case where $t_-=0$),
 it is shown in \cite{DVV2} that
 there is a canonical bijection
 \begin{align}
 \label{bijectionB}\Irr(KG_n,E_{t_+,0})\, \mathop{\longleftrightarrow}\limits^{1:1}\, \Irr(\bfH^{q\,;\,\tuple\xi_{t_+}}_{K,m}),
 \end{align}
mapping  the unipotent $KG_{r_+}$-module $E_{\Theta_{t_+}(\tuple \mu)}$  to $S(\tuple \mu)_K^{q\,;\,\tuple\xi_{t_+}}$.
Here 
$$\tuple\xi_{t_+}=((-q)^{t_+},(-q)^{-1-t_+}),$$ $\tuple \mu$ is a  bipartition $(\mu^1,\mu^2)$ of $m_+$, and 
$\Theta_{t_+}(\tuple \mu)$ is the symbol $\big\{\beta_{t_+}(\mu^1),\beta_{-t_+-1}(\mu^2)\big\}$
associated with  $\tuple \mu$ so that
the defect and the rank of $\Theta_{t_+}(\tuple \mu)$ are
$D(\Theta_{t_+}(\tuple \mu))=2t_++1$ and $\rk(\Theta_{t_+}(\tuple \mu))=m+r_+.$

\smallskip

 For the quadratic unipotent case, let  $\bfH^{q\,;\,\tuple\xi_{t_+}}_{K,m_+}\times\bfH^{q\,;\,\tuple\xi_{t_-}}_{K,m_-}$
be the Hecke algebra of type $B_{m_+}\times B_{m_-}$ with
\begin{equation}\label{parameter}
\tuple\xi_{t_+}=((-q)^{t_+},(-q)^{-1-t_+})~\mbox{and}~
\tuple\xi_{t_-}=((-q)^{t_-},(-q)^{-1-t_-}).
\end{equation}
By \cite[\S 5,8]{L77}, the algebra $\scrH(KG_n, E_{t_+,t_-}\otimes K_1^{m_+}\otimes K_\zeta^{m_-})$
is isomorphic to $\bfH^{q\,;\,\tuple\xi_{t_+}}_{K,m_+}\times\bfH^{q\,;\,\tuple\xi_{t_-}}_{K,m_-}$.
It will be clear in Theorem \ref{thm:HL-BC} that with the choice of $\tuple\xi_{t_+}$ and $\tuple\xi_{t_-}$,
the above isomorphism can be chosen to be
canonical.
With the induced bijection \begin{align}
\Irr(\bfH^{q\,;\,\tuple\xi_{t_+}}_{K,m_+}\times\bfH^{q\,;\,\tuple\xi_{t_-}}_{K,m_-})
\mathop{\longleftrightarrow}\limits^{1:1}\,
\Irr(\scrH(KG_n, E_{t_+,t_-}\otimes K_1^{m_+}\otimes K_\zeta^{m_-})),
 \end{align}
 we may define $E_{\Theta_{t_+}(\tuple\mu_+)\times\Theta_{t_-}(\tuple\mu_-)}$ to
be the quadratic unipotent $KG_r$-module corresponding to
$S(\mu)_K^{q\,;\,\tuple\xi_{t_+}}\otimes S(\mu)_K^{q\,;\,\tuple\xi_{t_-}}$.
Thus
 $${\mathcal E}(G_n,(L_{r,1^m}, E_{t_+,t_-}\otimes K_1^{m_+}\otimes K_\zeta^{m_-}))=
\{E_{\Theta_{t_+}(\tuple\mu_+),\Theta_{t_-}(\tuple\mu_-)}\,|\tuple\mu_{\pm}\in \scrP^2_{m_{\pm}}\}$$
where
$\scrP^2_{m_{\pm}}$ is the set of $2$-partitions of $m_{\pm}$.

\smallskip
Clearly, we have $E_t=E_{\Theta_t(\emptyset)}$ and $E_{t_+,t_-}=E_{\Theta_{t_+}(\emptyset),\Theta_{t_-}(\emptyset)}$.
  Note that $Q_t^\dag=Q_{-1-t}$, $\Theta_t(\mu)=\Theta_{-1-t}(\mu^\dag)$ and $t(t+1)$ is invariant under the map $t \longmapsto -t-1$.
We will usually work with symbols $\Theta_t(\mu)$ such that $t \geqslant 0$
and use the symmetries above to deal with those such that $t<0$.
With this in mind, we have $E_{0,t_-}=E_{-1,t_-}$ and $E_{t_+,0}=E_{t_+,-1}.$
\begin{remark}
The Lusztig series $\mathcal{E}(G_r,(s_{r_+,r_-}))$ is
 mapped to the unipotent series $\mathcal{E}(G_{r_+}\times G_{r_-},(1))$
 under the Jordan decomposition of characters, where
  the quadratic unipotent module $E_{\Theta_+,\Theta_-}\in \mathcal{E}(G_r,(s_{r_+,r_-}))$
  is mapped to the unipotent  module $E_{\Theta_+}\otimes E_{\Theta_-}$ of $G_{r_+}\times G_{r_-}$.
   In particular, the quadratic unipotent cuspidal
    module $E_{t_+,t_-}\in \mathcal{E}(G_r,(s_{r_+,r_-}))$ is mapped to the
    unipotent cuspidal module $E_{t_+}\otimes E_{t_-}$,
     where $E_{t_+}$ is the unique unipotent cuspidal module of $G_{r_+}$
     and $E_{t_-}$ is the unique unipotent cuspidal module of $G_{r_-}.$
     Since the Jordan decomposition of characters commutes with Harish-Chandra induction,
     the  pair $(L_{r,1^m}\,,\,E_{\Theta_+,\Theta_-}\otimes K_1^{m_+}\otimes K_\zeta^{m_-})$
     is mapped to
     $$(L_{r_+,1^{m_+}}\times L_{r_-,1^{m_-}}\,,\,  (E_{\Theta_+}\otimes K_1^{m_+})\otimes (E_{\Theta_-}\otimes K_\zeta^{m_-}))$$
      and there is a bijection between the irreducible constituents of
      $$R_{L_{r,1^m}}^{G_n}(E_{\Theta_+,\Theta_-}\otimes K_1^{m_+}\otimes K_\zeta^{m_-})~\mbox{and}~
          R_{L_{r_+,1^{m_+}}
          \times L_{r_-,1^{m_-}}}^{G_{n_+}\times G_{n_-}}((E_{\Theta_+}\otimes K_1^{m_+})
          \otimes (E_{\Theta_-}\otimes K_\zeta^{m_-})),$$ the latter of which is isomorphic to
          $R_{L_{r_+,1^{m_+}}}^{G_{n_+}}(E_{\Theta_+}\otimes K_1^{m_+})\otimes R_{L_{r_-,1^{m_-}}}^{G_{n_-}}(E_{\Theta_-}\otimes K_\zeta^{m_-}).$
\end{remark}

\subsubsection{Blocks of $\SO_{2n+1}(q)$}\label{subsec:SOblock}
The characters in a block of $G$ has been classified in \cite{FS89}.
We define $e_\Gamma$ to be  the multiplicative order of $q^{2}$ or $\varepsilon_\Gamma q^{\delta_\Gamma}$ modulo $\ell$ according as $\Gamma\in\cF_0$ or $\Gamma\in\cF_1\cup\cF_2$.
For a semisimple $\ell'$-element $s$ of $G^*$ and $\Gamma\in\cF$,
we define $\mathcal{R}_\Gamma(s)$ to be
 the set of $e_\Gamma$-cores of partitions in
$\Psi_\Gamma(s)$. 
Denote
$$\mathcal{R}(s)=\prod_{\Gamma\in\cF} \mathcal{R}_\Gamma(s).$$
By \cite[\S 10]{FS89},
there is a bijection $(s,\kappa)\mapsto
b_{s,\kappa}$ from the set of $G^*$-conjugacy classes of pairs $(s,\kappa)$,
with $s\in G^*$ a semisimple $\ell'$-element and $\kappa\in \mathcal{R}(s)$,
 to the set of blocks of $G$.
 The semisimple $\ell'$-element $s$ (up to $G^*$-conjugacy class)
 is called the \emph{semisimple label} of $b_{s,\kappa}$
and $\kappa\in \mathcal{R}(s)$ is called the \emph{unipotent label} of $b_{s,\kappa}$.

\smallskip

Let $D$ be a defect group  of $b_{s,\kappa}$, and  $D^*$ be a fixed
dual defect group of $D$ as defined in  \cite[\S12]{FS89}.
An irreducible character $\chi_{t,\lambda}$ of
$G$ lies in $b_{s,\kappa}$ if and only if the following
statements hold:
\begin{enumerate}[leftmargin=8mm]
\item The $\ell'$-part $t_{\ell'}$ of $t$ is conjugate to $s$ and the $\ell$-part $t_{\ell}$ of $t$ is conjugate to $x$ for some $x\in D^*$,
\item
 The $e_{\Gamma}$-core of $\lambda_{\Gamma}$  belongs to $\mathcal{R}_\Gamma(s)$
whenever $\Gamma$ is
an elementary divisor of $s$, and
\item the $e_\Gamma$-core of $\lambda_\Gamma$ is empty for all other $\Gamma$.
\end{enumerate}

\subsubsection{Quadratic unipotent blocks of $\SO_{2n+1}(q)$}\label{subsec:SOqublock}
Here we define quadratic unipotent or isolated blocks, which are same in the case of $\SO_{2n+1}(q).$ Notice that we have assumed that $\ell$ is odd.

\begin{definition}\label{def:lblock}
\begin{itemize}[leftmargin=8mm] Let $R$ be $\mathcal{O}$ or $k.$
  \item[$(1)$] An $\ell$-block of $RG$ is called \emph{quadratic unipotent (resp. isolated)} if it contains
a quadratic unipotent (resp. an isolated) character.
  \item[$(2)$] An indecomposable $RG$-module is called \emph{quadratic unipotent (resp. isolated)}
if it lies in a quadratic unipotent (resp. an isolated) block of $RG$.
\item[$(3)$] A  $RG$-module is called \emph{quadratic unipotent (resp. isolated)}
if it is a direct sum of indecomposable modules which lies in  quadratic unipotent (resp. isolated) blocks of $RG$.
\end{itemize}
\end{definition}

We denote by $$\scrQU_R=\scrQU^{\SO}_R=\bigoplus_{n\in\bbN}R G_n\qumod$$
the category of quadratic unipotent modules over $R$,
where $G_n=\SO_{2n+1}(q)$ and
$R G_n\qumod$ is the category of quadratic unipotent $RG_n$-modules.
  This is an abelian category which is not semisimple when $R=\mathcal O$ or $k$.

\smallskip

Let $s_{n_+,n_-}\in G_n^*$ as in \S\ref{subsec:quchar}.
Recall from Theorem \ref{Thm:eGs} that the set $\mathcal E_{\ell}(G_n,(s_{n_+,n_-}))$
is a union of $\ell$-blocks of $G_n$.
\begin{definition}\label{def:lblock}
Let  $e_\chi$ be
the central idempotent corresponding to $\chi\in \Irr(G_n)$.
\begin{itemize}[leftmargin=8mm]
\item[$(1)$]We define
$$e_{s_{n_+,n_-}}^{KG_n}:=\sum\limits_{\chi\in\mathcal E(G,(s_{n_+,n_-}))}e_\chi~~\ \ {\rm and}~~\ \ e_{s_{n_+,n_-}}^{\mathcal O G_n}:=\sum\limits_{\chi\in\mathcal E_{\ell}(G,(s_{n_+,n_-}))}e_\chi,$$
which are central idempotents of $K G_n$ and $\mathcal O G_n$, respectively (see \S \ref{Thm:eGs}).
\item[$(2)$]We define $e_{s_{n_+,n_-}}^{k G_n}$ to be the  central idempotent of $kG_n$ that is
the image of $e_{s_{n_+,n_-}}^{\mathcal O G_n}$ under the natural epimorphism $\mathcal O G_n\to kG_n$.
\end{itemize}
\end{definition}

Now, for $R\in \{K,\mathcal O, k\}$, we have
$$RG_n\qumod=\bigoplus_{n_++n_-=n}  R G_ne_{s_{n_+,n_-}}^{RG_n}\mod.$$
An application of Theorem \ref{basicset} shows that there is
a $\bbZ$-linear isomorphism $$d_{\mathcal{O}G_n} : [K G_ne_{s_{n_+,n_-}}^{KG_n}\mod]\simto[k G_ne_{s_{n_+,n_-}}^{kG_n}\mod],$$
 induced by the restrictions of the decomposition maps under the $\ell$-modular system.
 This yields  $\bbZ$-linear isomorphisms $$d_{\mathcal{O}G_n}: [KG_n\qumod]\simto[k G_n\qumod]\,\,\text{and}\,\,d_\scrQU : [\scrQU_K]\simto[\scrQU_k].$$

\begin{theorem} \label{thm:isoblock-FS} Let $b$ be a quadratic unipotent $\ell$-block,
i.e., an isolated $\ell$-block of $G$.
Then $b$ corresponds to a pair of symbols
$(\Delta_+,\Delta_-)$ and the quadratic unipotent characters in $b$ are of the form
$\chi_{\Theta_+, \Theta_-}$, where
 ${\Delta}_{\pm}$ are $d$-cores
 and ${\Theta}_{\pm}$ have $d$-cores (resp. $d$-cocores) ${\Delta}_{\pm}$
 if $\ell$ is linear (resp. unitary).

\smallskip
In both cases, the quadratic unipotent characters in $b$ are precisely the constituents of
$$R^{G}_L(
{\chi}_{{\Delta_+}\times{\Delta_-}}\times 1^{w_+} \times\zeta^{w_-}),$$ where
\begin{itemize}[leftmargin=8mm]
  \item[$\bullet$] $L$ is a Levi subgroup of the form $ G_{r}\times T_+ \times T_-$,
  \item[$\bullet$] $T_+$ (resp. $T_-$) is the product of $w_+$ (resp. $w_-$) copies of a torus
     of order that is $q^{d}-1$ for the linear case and is $q^{d}+1$
 for the unitary case,
 \item[$\bullet$] $1$ (resp. $\zeta$) is the trivial character (resp. the unique character of order 2) of
  a (torus) factor of $T_+$ (resp. $T_-$), and
 \item[$\bullet$] the character ${\chi}_{{\Delta_+}\times{\Delta_-}}$
 is in a block of defect $0$ of $G_{n-dw}$ for $w=w_++w_-$.
   \end{itemize}\qed
\end{theorem}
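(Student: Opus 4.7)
The plan is to derive the theorem directly from the Fong--Srinivasan block classification recalled in \S\ref{subsec:SOblock}, specialized to isolated semisimple labels, and then to match the resulting parameter set with a Harish--Chandra description by passing through Jordan decomposition.

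First I invoke the bijection $(s,\kappa)\mapsto b_{s,\kappa}$. Because $\ell$ is odd and the isolated semisimple elements of $\Sp_{2n}(q)$ are exactly those with $s^2=1$, the only elementary divisors of an isolated $\ell'$-element $s$ lie in $\cF_0=\{X-1,X+1\}$, so $\mathcal{R}(s)$ collapses to $\mathcal{R}_{X-1}(s)\times\mathcal{R}_{X+1}(s)$. For $\Gamma\in\cF_0$ we have $\delta_\Gamma=1$ and $e_\Gamma$ equals the order of $q^2$ modulo $\ell$, i.e.\ $e_\Gamma=d$, regardless of linearity. Setting $\Delta_+:=\kappa_{X-1}$ and $\Delta_-:=\kappa_{X+1}$ yields the pair of core symbols in the statement. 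The linear/unitary dichotomy enters when one translates the Fong--Srinivasan ``$e_\Gamma$-core'' into the symbol language of \S\ref{subsec:symbol}: on the underlying $\beta$-sets the $d$-hook removal produces ordinary $d$-hooks of the symbol when $\ell$ is linear and $d$-cohooks when $\ell$ is unitary.

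Second, I read off the characters in $b$. By the criterion of \S\ref{subsec:SOblock}, $\chi_{t,\lambda}$ lies in $b_{s,\kappa}$ iff $t_{\ell'}$ is $G^*$-conjugate to $s$, $t_\ell$ lies in a dual defect group, the $e_\Gamma$-core of $\lambda_\Gamma$ equals $\kappa_\Gamma$ for $\Gamma$ an elementary divisor of $s$, and $\lambda_\Gamma$ has trivial $e_\Gamma$-core otherwise. A quadratic unipotent $\chi_{t,\lambda}$ satisfies $t^2=1$; since $\ell$ is odd this forces $t_\ell=1$, and hence $t=s$ (the condition $t_\ell\in D^*$ holds automatically). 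Writing $\lambda_{X-1}=\Theta_+$ and $\lambda_{X+1}=\Theta_-$, the condition reduces exactly to requiring that $\Theta_\pm$ have $d$-core (resp.\ $d$-cocore) $\Delta_\pm$.

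For the Harish--Chandra realization I pass through Jordan decomposition. With $s=s_{n_+^0,n_-^0}$ and $C_{G^*}(s)^*\cong G_{n_+^0}\times G_{n_-^0}$ (see \S\ref{subsec:semi}), the Jordan bijection sends $\mathcal{E}(G_n,(s))$ onto the unipotent series of $G_{n_+^0}\times G_{n_-^0}$ and, by Fong--Srinivasan, is compatible with Harish--Chandra induction. The problem thus splits into two independent unipotent statements, one on each orthogonal factor: the unipotent characters of $G_{n_\pm^0}$ with prescribed $d$-(co)core $\Delta_\pm$ are the constituents of the Harish--Chandra induction of the defect-zero unipotent character $\chi_{\Delta_\pm}$ of a smaller orthogonal group $G_{r_\pm}$, tensored with the trivial character of $w_\pm$ copies of a torus of order $q^d-1$ (linear) or $q^d+1$ (unitary). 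Pulling this back to $G_n$, the bijection $\mathcal{E}(L,(1))\simto\mathcal{E}(L,(t))$, $\psi\mapsto\eta\psi$ from \S\ref{subsec:jordan} introduces, on the $(X+1)$-side only, the twist by the unique quadratic character $\zeta$ of the corresponding torus factor, while the $(X-1)$-side contributes only the trivial character $1$. Assembling the two sides produces the Levi $L=G_r\times T_+\times T_-$ with $r=r_++r_-$ and $n-dw=r$, and the induced character $R^G_L(\chi_{\Delta_+\times\Delta_-}\times 1^{w_+}\times\zeta^{w_-})$ of the statement.

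The main obstacle will be the bookkeeping of the linear/unitary dichotomy throughout. One must verify carefully that the abstract Fong--Srinivasan notion of $e_\Gamma$-core for $\Gamma\in\cF_0$ corresponds, on symbols, to the $d$-core when $\ell$ is linear and to the $d$-cocore when $\ell$ is unitary, and that this is matched correctly by the torus orders $q^d\mp 1$ appearing in $T_\pm$ as well as by the appearance of the twisting character $\zeta$ only on the $(-1)$-eigenspace side. A sign error in any of these identifications would propagate into an incorrect statement, so this is where genuine care, rather than new ideas, is required.
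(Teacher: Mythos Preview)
The paper does not give its own proof of this theorem: the \qed\ at the end of the statement signals that it is quoted from Fong--Srinivasan \cite{FS82,FS89}, and the surrounding text in \S\ref{subsec:SOblock} merely recalls the classification that feeds into it. So there is no argument in the paper to compare against; your proposal is a correct reconstruction of how one extracts the statement from the Fong--Srinivasan machinery as summarized in \S\ref{subsec:SOblock}, and the reduction via Jordan decomposition to the unipotent case on $G_{n_+}\times G_{n_-}$ is exactly the intended route.

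One terminological correction: you repeatedly write ``Harish--Chandra induction'' for $R^G_L$, but the Levi $L=G_r\times T_+\times T_-$ is \emph{not} split in general---the tori $T_\pm$ have order $q^d\mp1$ (Coxeter-type tori of $\GL_d$ in the linear case, and genuinely non-split in the unitary case), so $R^G_L$ here is Deligne--Lusztig induction. The compatibility you need is that Jordan decomposition commutes with \emph{Lusztig} induction (as the paper states just before \S\ref{subsec:quchar}), not merely Harish--Chandra induction. This does not affect the validity of your argument, since the required compatibility holds in that generality, but the distinction matters for the unitary case and for $d>1$ in the linear case.
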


By Theorem \ref{thm:isoblock-FS}, isolated blocks $b$ of $G$
are parametrized by tuples $$(\Delta_+\times\Delta_-,w_+,w_-)$$ where $\Delta_{\pm}$ is
a $d$-core (resp. $d$-cocore) when
$f$ is odd (resp. $f$ is even) and $w_{\pm}\in\bbN$.
Consequently, we may label the block $b$ by $b_{\Delta_+\times\Delta_-,w_+,w_-}$,
so that the unipotent label of $b_{\Delta_+\times\Delta_-,w_+,w_-}$ is $\Delta_+\times\Delta_-$ and the semisimple label of $b_{\Delta_+\times\Delta_-,w_+,w_-}$ is $s_{n_+,n_-}$ where $n_\pm=\mathrm{rank}(\Delta_\pm)+dw_\pm.$
The vector $\tuple w=(w_+,w_-)$ will be called the degree vector of $b$.

\smallskip

The defect groups of $b$ are isomorphic to the Sylow $\ell$-subgroups of
$$(T_+\rtimes(\bbZ/(2d\bbZ)\wr \mathfrak{S}_{w_+}))\times (T_-\rtimes(\bbZ/(2d\bbZ)\wr \mathfrak{S}_{w_-}))$$
(see \cite{CE} for the case of unipotent blocks, and \cite{E08} for the case of quadratic unipotent blocks).
The isolated block $b_{\Delta_+\times\Delta_-,w_+,w_-}$
 has an abelian defect group if and only if $w_+<\ell$ and $w_-<\ell$.
 In particular, the isolated block $b_{\Delta_+\times\Delta_-,w_+,w_-}$
 is of defect zero if and only if $\tuple w=(w_+,w_-)=(0,0)$,
 and is cyclic if and only if $\tuple w=(w_+,w_-)=(1,0)$ or $(0,1).$

\smallskip
\subsubsection{Cyclic isolated blocks}\label{subsec:Brauertree}

According to the work of Fong-Srinivasan \cite{FS90},
the character $\chi_{\Theta_+\times\Theta_-}$ of $G$
belongs to an isolated $\ell$-block with cyclic defect groups
 if and only if one of the following two cases occurs:
\begin{itemize}[leftmargin=8mm]
\item[$(a)$] $\Theta_+$ has a unique $d$-hook and $\Theta_-$ has no $d$-hook if $\ell$ is linear,
or $\Theta_+$ has a unique $d$-cohook and $\Theta_-$ has no $d$-cohook if $\ell$ is unitary;
\item[$(b)$] $\Theta_-$ has a unique $d$-hook and $\Theta_+$ has no $d$-hook if $\ell$ is linear,
or $\Theta_-$ has a unique $d$-cohook and $\Theta_+$ has no $d$-cohook if $\ell$ is unitary.
\end{itemize}

Suppose now that  $\chi_{\Theta_+\times\Theta_-}$
belongs to an isolated cyclic $\ell$-block, say $B$.
Let $\chi_{{\rm exc}}$ be the sum of the non-quadratic unipotent characters in $B$,
i.e., exceptional characters. Recall that the Brauer tree of $B$
is defined as follows:

\begin{itemize}[leftmargin=8mm]
\item Its vertices are $\chi_{{\rm exc}}$ and the quadratic unipotent characters of $B$, and
\item there is an edge between two vertices if and only if their
$\ell$-reduction has a common irreducible modular character, in which case the edge is labeled by
 the modular character.
\end{itemize}
By \cite[(5A), (6A)]{FS90}, the Brauer tree of the $\ell$-block $B$ is of the form
\begin{center}
\begin{tikzpicture}[scale=.4]
\draw[thick] (-1.7 cm,0) circle (.3cm);
\node [below] at (-1.7 cm,-.5cm) {$\chi_{\tuple\Lambda_a}$};
\draw[thick] (-1.4 cm,0) -- +(2.6 cm,0);
\draw[thick] (1.5 cm,0) circle (.3cm);
\node [below] at (1.5 cm,-.5cm) {$\chi_{\tuple\Lambda_{a-1}}$};
\draw[dashed,thick] (2 cm,0) -- +(4 cm,0);
\draw[thick] (6.4 cm,0) circle (.3cm);
\node [below] at (6.3 cm,-.5cm) {$\chi_{\tuple\Lambda_1}$};
\draw[thick] (6.7 cm,0) -- +(2.6 cm,0);
\draw[thick,fill=black] (9.9 cm,0) circle (.3cm);
\draw[thick] (9.9 cm,0) circle (.5cm);
\node [below] at (9.9 cm,-.55cm) {$\chi_{\text{exc}}$};
\draw[thick] (10.5 cm,0) -- +(2.6 cm,0);
\draw[thick] (13.4 cm,0) circle (.3cm);
\node [below] at (13.4 cm,-.5cm) {$\chi_{\tuple\Xi_1}$};
\draw[dashed,thick] (13.7 cm,0) -- +(4.2 cm,0);
\draw[thick] (18.3 cm,0) circle (.3cm);
\node [below] at (18.3 cm,-.5cm) {$\chi_{\tuple\Xi_{b-1}}$};
\draw[thick] (18.6 cm,0) -- +(2.6 cm,0);
\draw[thick] (21.5 cm,0) circle (.3cm);
\node [below] at (21.5 cm,-.5cm) {$\chi_{\tuple\Xi_b}$};
\end{tikzpicture}
\end{center}

For case (a), let $\{X,Y\}$ be the $d$-core (resp. $d$-cocore) of
$\Theta_+$ if $\ell$ is linear (resp. unitary).
\begin{itemize}[leftmargin=8mm]
  \item[(a1)] If $\ell$ is linear, then $\tuple\Lambda_k=\Lambda_{k,\,+}\times\Theta_-$
  and $\tuple\Xi_k=\Xi_{k,\,+}\times\Theta_-,$ where $\Lambda_{k,\,+}$ and $\Xi_{k,\,+}$
  are obtained by
  adding a $d$-hook to $X$ and $Y$,
  respectively, for each $k=1,\dots,d$. In this case,  $d=a=b$.
    \item[(a2)] If $\ell$ is unitary, then
  $\tuple\Lambda_k=\Lambda_{k,\,+}\times\Theta_-$ and
  $\tuple\Xi_k=\Xi_{k,\,+}\times\Theta_-,$  where
$\Lambda_{k,\,+}$ and $\Xi_{k,\,+}$ are obtained by adding a $d$-cohook to $Y$ and $X$,
respectively.
In this case,  $a=d+d_+$ and $b=d-d_+$, where $d_+$ is the defect of the symbol $\Theta_+$.
\end{itemize}
\smallskip

 Similarly, for case (b), let $\{X,Y\}$ be the $d$-core (resp. $d$-cocore) of
$\Theta_-$ if $\ell$ is linear (resp. unitary).
\begin{itemize}
  \item[(b1)] If $\ell$ is linear, then $\tuple\Lambda_k=\Theta_+\times\Lambda_{k,-}$
  and $\tuple\Xi_k=\Theta_+\times\Xi_{k,-},$
where $\Lambda_{k,-}$  and $\Xi_{k,-}$ are obtained by
  adding a $d$-hook to $X$ and $Y$, respectively, for each $k=1,\dots,d$. In this case,
  $d=a=b$.
  \item[(b2)] If $\ell$ is unitary,
  then $\tuple\Lambda_k=\Theta_+\times\Lambda_{k,-}$,
  $\tuple\Xi_k=\Theta_+\times\Xi_{k,-},$
where $\Lambda_{k,-}$ and $\Xi_{k,-}$
  are obtained by adding a $d$-cohook to $Y$ and $X$, 
  respectively.
In this case, $a=d+d_-$ and $b=d-d_-$, where $d_-$ is the defect of the symbol $\Theta_-$.
\end{itemize}

\subsection{Representations of $\O_{2n+1}(q)$}\label{sub:O2n+1}
For later use, here we introduce representations of $\O_{2n+1}(q)$.

Let
 $G=G_n=\SO_{2n+1}(q)$ and $\widetilde{G}=\widetilde{G}_n=\O_{2n+1}(q)$.
We have $\widetilde{G}\cong G\times \{\pm \id_{\widetilde{G}}\}$ and
so $R\widetilde{G}\cong RG\otimes_{R} R\{\pm \id_{\widetilde{G}}\}.$
In particular, each
primitive idempotent of $R\widetilde{G}$ has form  $e_{G}\otimes e_{\pm 1}$, where
$e_{G}$ is a primitive  idempotent of $RG$ and
$e_{1}$ (resp. $e_{-1}$) is the idempotent of $R\{\pm \id_{\widetilde{G}}\}$ corresponding to
the trivial character (resp. the sign character) of
$\{\pm \id_{\widetilde{G}}\}$.
Denote by $R_{1}$ and $R_{-1}$ the two irreducible modules of $\{\pm \id_{\widetilde{G}}\}$ corresponding to $e_{1}$ and $e_{-1}$, respectively.
Assume that $\ell$ is odd.
Then irreducible modules (resp. $\ell$-blocks) of $R\widetilde{G}$
are the tensor product of irreducible modules (resp. $\ell$-blocks) of
 $RG$ with $R_{1}$ or $R_{-1}$.
 A module $M\otimes R_{\pm 1}$ (resp. $\ell$-block $B\otimes R_{\pm 1}$) of $\widetilde{G}$ is called  quadratic unipotent if
the corresponding module $M$ (resp. $\ell$-block $B$) of $G$ is  quadratic unipotent.
So every quadratic unipotent module of $\widetilde{G}$ is of
form $E_{\Theta_+,\Theta_-}\otimes R_{\epsilon}$ with $\epsilon\in\{\pm 1\}$,
which will be simply denoted by $E_{\Theta+,\Theta_-,\epsilon}.$


\begin{definition}\label{def:lblock} Let $R$ be $\mathcal{O}$ or $k.$
\begin{itemize}[leftmargin=8mm]
  \item[$(1)$] An indecomposable $R\widetilde{G}$-module is called \emph{quadratic unipotent}
if it lies in a quadratic unipotent $\ell$-block of $R\widetilde{G}$.
\item[$(2)$] A  $R\widetilde{G}$-module is called \emph{quadratic unipotent}
if it is a direct sum of indecomposable modules which lies in  quadratic unipotent $\ell$-blocks of $R\widetilde{G}$.
\end{itemize}
\end{definition}

We denote by $$\scrQU^{\O}_R=\bigoplus_{n\in\bbN}R \widetilde{G}_n\qumod$$
the category of quadratic unipotent modules over $R$,
where
$R \widetilde{G}_n\qumod$ is the category of quadratic unipotent $R\widetilde{G}_n$-modules.
As in \S \ref{subsec:SOqublock}, we have
$\bbZ$-linear isomorphisms $d_\scrQU : [\scrQU^{\O}_K]\simto[\scrQU^{\O}_k]$
and $d_{\mathcal{O}\widetilde{G}_n}: [K\widetilde{G}_n\qumod]\simto[k \widetilde{G}_n\qumod]$.
Since the idempotents of $R\widetilde{G}_n$ which cover the idempotent $e_{s_{n_+,n_-}}^{RG_n}$
of $RG_n$ are exactly $e_{s_{n_+,n_-}}^{R\widetilde{G}_n,\,\epsilon}:=e_{s_{n_+,n_-}}^{RG_n}\otimes e_\epsilon$
for $\epsilon\in \{\pm 1\}$, we obtain
$$R\widetilde{G}_n\qumod=\bigoplus_{n_++n_-=n,\,\,\epsilon\in \{\pm 1\}}  R\widetilde{G}_ne_{s_{n_+,n_-}}^{R\widetilde{G}_n,\,\epsilon}\mod.$$


\section{$\mathfrak{A}(\fraks\frakl_{\K_q})$-Representation datum }\label{cha:repdatum}

We first
construct an $\mathfrak{A}(\fraks\frakl_{\K_q})$-representation datum on $\bigoplus_{n\in \bbN^*}RG_n\mod$
in this section,
where $G_n=\O_{2n+1}(q)$, $\Sp_{2n}(q)$ or $\O_{2n}^{\pm}(q)$
with $q$ odd, and $\bbN^*=\mathbb{Z}_{\geqslant 0}$  
except for the $\O_{2n}^{-}(q)$ case $\bbN^*=\mathbb{Z}_{>0}$  (Theorem \ref{thm:repdatumKq}).
The bi-adjoint functors used for it
can also be defined for $\bigoplus_{n\geqslant 0}R{\rm SO}_{2n+1}(q)\mod$,
however, the natural transformations defined in a similar way
fail to satisfy all relations needed.
Essentially inherited from the 
representation datum on $\bigoplus_{n\geqslant 0}R\rm{O}_{2n+1}(q)\mod$,
we can then blockwisely construct an $\mathfrak{A}(\fraks\frakl_{\K_q})$-representation datum on $\scrQU^{\SO}_R$
in \S \ref{sub:explicit-repdatumSO}.

\smallskip

Throughout this section we always assume that $q$ is a power of an odd prime
and $R$ is any commutative domain in which $q(q-1)$ is invertible.

\smallskip

\subsection{Idempotents and functors} \label{sub:ide-funct}
Throughout this section we 
let $\bfG_n$ be one of $\bfSO_{2n+1}$, $\mathbf{O}_{2n+1}$,
 $\bfSp_{2n}$ or $\mathbf{O}_{2n}$,
so that $G_n$ is one of  $\SO_{2n+1}(q)$, $\O_{2n+1}(q)$, $\Sp_{2n}(q)$ or $\O_{2n}^{\pm}(q)$.
As is conventional, we set $\bfG_0=G_0=\{1\}$,  except in the $\bfO_1$ case
$\bfG_0=G_0=\{\pm 1\}$.

\subsubsection{Embedding of Levi subgroups}\label{subsec:Levi}
We first define the embedding of some certain (possibly disconnected) Levi subgroups into $\bfG_{n}$.
Given $r,m, m_1,m_2,\dots,m_t\in\bbN$ such that $m=\sum_im_i$ and $n=r+m$,
we always embed the group $\bfG_{r}\times\bfG\bfL_{m_1}\times\dots\times
\bfG\bfL_{m_t}$  into $\bfG_{n}$ via the map:
\begin{align*}
\bfG_{r}\times\bfG\bfL_{m_1}\times\dots\times \bfG\bfL_{m_t} &\hookrightarrow
\bfG_{n}\\
B \times A_1\times\dots\times A_t&\mapsto {\rm diag}(A_t,\ldots,A_1,B,A_1',\ldots,A_t')
\end{align*}
where
$A_i'=J_{m_i}A_i^{-t}J_{m_i}$.
The above image, which is a Levi subgroup of $\bfG_{n}$, will be denoted by
$\bfL_{r,m_1,\dots,m_t}$, whereas the corresponding Levi subgroup of $G_{n}$
will be denoted by $L_{r,m_1,\dots,m_t}$.
In particular, the  finite groups corresponding to $\bfL_{r,m}$
and $\bfL_{r,1^m}$
  are $L_{r,m}
\simeq G_r \times \GL_m(q)$ and $L_{r,1^m}\simeq G_r \times \GL_1(q)^m$,
 respectively. We will abbreviate
$\bfL_r=\bfL_{r,1}$.
In addition, for $m<n$, we denote
$\iota_{n,m}$ by the embedding of $G_m$ into $G_n$
given by $g\mapsto {\rm diag}(\mathrm{id}_{n-m},g,\mathrm{id}_{n-m})$.

\subsubsection{Idempotents}\label{subsec:idempotent}
Here we define serveral idempotents to obtain bi-adjoint functors that are crucial to
construct representation data.

Let $\bfP_{r}$ be the standard parabolic subgroup of $\bfG_{r+1}$ containing $\bfL_r$
 and $P_r=\bfP_r^F$.
Let $\bfV_r$ be the unipotent radical of $\bfP_{r}$ with $V_r = \bfV_r^F$,
and $U_{r}\subset G_{r+1}$ be the subgroup given by
\begin{align}\label{U}
  U_{r}=V_{r}\rtimes\bbF^\times_{q}=
  \begin{pmatrix}*&*&\cdots&\cdots&*\\
	&1&&(0)&\vdots\\
	&&\ddots&&\vdots\\
	&(0)&&1&*\\
	&&&&*
  \end{pmatrix},
\end{align}
where we identify $\bbF^\times_{q}$ with $\{{\rm diag}(t,1,\ldots,1,t^{-1})\in U_{r}\mid t\in \bbF^\times_{q}\}.$
We define $$e_{r+1,r}:=e_{U_r}=\frac{1}{|U_r|}\sum_{u\in U_r}u,$$ so that
$e_{r+1,r}$  is a central idempotent of $RU_r$.

\vspace{1ex}

We shall find another idempotent of $RU_r$ for our purpose.
Since $\bbF^\times_{q}$ is a cyclic group of  order $q-1$,
we denote by $\zeta$  the unique character of order 2 (Legendre symbol for $\bbF^\times_{q}$)
and by $e_\zeta$ the associated central primitive idempotent of
 $R\bbF^\times_{q}$ given by $$e_\zeta:=\frac{1}{q-1}\sum_{g\in\bbF^\times_{q}}\zeta(g^{-1})g.$$
By the identification $\bbF^\times_{q}$ with $\{{\rm diag}(t,1,\ldots,1,t^{-1})\in U_{r}\mid t\in \bbF^\times_{q}\}$,
we may view $e_\zeta$ as an idempotent of $U_r$.
Now we define $$e'_{r+1,r}:=e_{\zeta} e_{V_r}.$$
The normality of $V_r$ in $U_r$ shows that
$e'_{r+1,r}$ is an idempotent of $RU_r$,
which does not depend on the choice of the
complements of $V_r$ in $U_r$
since all of them are conjugate in $U_r$.

\vspace{1ex}

For each $r<n$, we set $V_{n,r}=V_{n-1}\ltimes\dots\ltimes V_{r}$ and $U_{n,r}=U_{n-1}\ltimes\dots\ltimes U_{r}$.
Let $n=m+r$. We identify
${(\bbF^\times_{q})}^m=(\bbF^\times_{q})_1\times\dots\times(\bbF^\times_{q})_m$
with its image in $U_{n,r}$ under the embedding map
$$h_1\times\dots\times h_m\mapsto \mbox{diag}(h_m,\ldots,h_1, \text{Id}_{G_r}, h_1^{-1},\ldots,h_m^{-1}).$$

Given
   a character $\xi_1\times \cdots\times \xi_m$ of ${(\bbF^\times_{q})}^m=(\bbF^\times_{q})_1\times\dots\times(\bbF^\times_{q})_m,$
  we can
   define the associated central primitive idempotent  $e_{\xi_1\times \cdots\times \xi_m}$ of $R{(\bbF^\times_{q})}^m$ as above.
Viewing it as an idempotent of $U_{n,r}$ or $G_{n}$,
we define
$$\begin{array}{rl}
  e_{n,r}:=&e_{U_{n,r}}  = e_{n, n-1}e_{n-1,n-2}\cdots e_{r+1,r}~\mbox{and} \\
  e'_{n,r}:=& e'_{n, n-1}e'_{n-1,n-2}\cdots e'_{r+1,r},
\end{array}$$
so that $e_{n,r}=e_{1\times\cdots\times1}e_{V_{n,r}}$ is a central idempotent of $RU_{n,r}$,
and $e'_{n,r}=e_{\zeta\times\cdots\times\zeta}e_{V_{n,r}}$ is an idempotent of $RU_{n,r}$.


\subsubsection{Functors}\label{subsec:functor}

Since the modules $RG_{r+1}\cdot e_{{r+1,r}}$ and
  $RG_{r+1}\cdot e'_{{r+1,r}}$ are both left $RG_{r+1}$-projective
  and right $RG_{r}$-projective, by \cite[Corollary 9.2.4]{KZ98} we can define
the following bi-adjoint functors:
$$\begin{array}{rcrclcl}
 F_{r+1,r} &=&RG_{r+1}\cdot e_{{r+1,r}}\otimes_{RG_r} -    &:&\,RG_r\mod&\to &RG_{r+1}\mod, \\
   E_{r,r+1}&=&e_{{r+1,r}}\cdot RG_{r+1}\otimes_{RG_{r+1}}-   &:&\,RG_{r+1}\mod&\to& RG_r\mod,\\
  F'_{r+1,r}&=&RG_{r+1}\cdot e'_{{r+1,r}}\otimes_{RG_r}-     &:&\,RG_r\mod &\to& RG_{r+1}\mod,\\
E'_{r,r+1}  &=&e'_{{r+1,r}}\cdot RG_{r+1}\otimes_{RG_{r+1}}-   &:&\,RG_{r+1}\mod&\to & RG_r\mod.
\end{array}
$$

\begin{remark}\label{rem:functors} Here we would like to give some remarks for the above functors.
\begin{itemize}[leftmargin=8mm]
\item[$(1)$] The functors $F_{r+1,r}$ and $E_{r,r+1}$ are the same as those defined by
             Dudas-Varagnolo-Vasserot \cite{DVV2}.
  \item[$(2)$] The functors $F_{r+1,r}$ and $F'_{r+1,r}$ are understood to be functors of
  $$RG_r\mod\to RL_{r,1}\mod\cong R(G_r\times \bbF^\times_{q})\mod\to RG_{r+1}\mod.$$
  Therefore, for $M\in RG_r\mod$, we have
   $$F_{r+1,r}(M)\cong R_{L_{r,1}}^{G_{r+1}}(M\otimes R_1)~\mbox{and}~F'_{r+1,r}(M)\cong R_{L_{r,1}}^{G_{r+1}}(M\otimes R_\zeta),$$
   where $R_1$ and $R_\zeta$ are the $R\bbF^\times_{q}$-modules
   affording the trivial character  of $\bbF^\times_{q}$ and
   the Legendre symbol $\zeta$, respectively.
   \item[$(3)$]  In the case where $R=K$
   and $\mathbf{G}_n=\bfSO_{2n+1}$,
    the functor $F_{n+1,n}$ maps Lusztig series $\mathcal{E}(G_n,s_{n_+,n_-})$ to
    Lusztig series $\mathcal{E}(G_{n+1},s_{n_++1,n_-})$ and the functor $F_{n+1,n}'$
    maps Lusztig series $\mathcal{E}(G_n,s_{n_+,n_-})$ to
    Lusztig series $\mathcal{E}(G_{n+1},(s_{n_+,n_-+1})),$ i.e.,
    $$\ \ \ \ \ \ [F_{n+1,n}](\mathcal{E}(G_n,(s_{n_+,n_-})))\subset\bbN\mathcal{E}(G_{n+1},(s_{n_++1,n_-})) ~\mbox{and}~$$ $$[F_{n+1,n}'](\mathcal{E}(G_n,(s_{n_+,n_-})))\subset\bbN\mathcal{E}(G_{n+1},(s_{n_+,n_-+1}))$$
  (see \S \ref{subsec:quchar} for the meaning of the notation).
   Now let $E_{\Theta_+,\Theta_-}$ be as defined in \S\ref{subsec:quchar}.
Then we have $$F_{n+1,n}(E_{\Theta_+\times\Theta_-})\cong R_{L_{n,1}}^{G_{n+1}}
(E_{\Theta_+\times\Theta_-}\otimes K_1)\cong\bigoplus E_{\Lambda_+\times\Theta_-}$$
where the summand runs over all
$\Lambda_+$ such that $\Lambda_+$ is obtained from $\Theta_+$ by adding a $1$-hook,
and
$$F_{n+1,n}'(E_{\Theta_+\times\Theta_-})\cong R_{L_{n,1}}^{G_{n+1}}
(E_{\Theta_+\times\Theta_-}\otimes K_\zeta)\cong\bigoplus
E_{\Theta_+\times\Lambda_-}$$ where the summand runs over all $\Lambda_-$ such that $\Lambda_-$ is obtained from $\Theta_-$ by adding a $1$-hook.
\end{itemize}
\end{remark}

\subsection{Reflections and natural transformations}\label{sec:repdatumN} 
Here we define natural transformations of the functors in \S \ref{subsec:functor} via
the lifts of simple reflections.

\subsubsection{Lifts of simple reflections} \label{sub:lift-refl}
Let $\mathbf{G}_n$ be one of  $\bfSO_{2n+1}$, $\mathbf{O}_{2n+1}$, $\bfSp_{2n}$ or $\bfO_{2n}$.
Let $\bfT$ be the split maximal torus consisting of diagonal matrices of the connected component $\mathbf{G}_n^\circ$
of $\mathbf{G}_n$.  Except for $\mathbf{O}_{2n+1}$,
the  group $W_n:=N_{\mathbf{G}_{n}}(\bfT)/{\bfT}$ is a Weyl group of
type $B_n.$ 
Here we do not consider
$\bfSO_{2n}$ since they have a Weyl group of type $D_n$.
The numbering of the simple reflections of $W_n$ will be taken
the following convention:
\begin{align}\label{B}
\begin{split}
\begin{tikzpicture}[scale=.4]
\draw[thick] (0 cm,0) circle (.3cm);
\node [below] at (0 cm,-.5cm) {$s_1$};
\draw[thick] (0.3 cm,-0.15cm) -- +(1.9 cm,0);
\draw[thick] (0.3 cm,0.15cm) -- +(1.9 cm,0);
\draw[thick] (2.5 cm,0) circle (.3cm);
\node [below] at (2.5 cm,-.5cm) {$s_2$};
\draw[thick] (2.8 cm,0) -- +(1.9 cm,0);
\draw[thick] (5 cm,0) circle (.3cm);
\draw[thick] (5.3 cm,0) -- +(1.9 cm,0);
\draw[thick] (7.5 cm,0) circle (.3cm);
\draw[dashed,thick] (7.8 cm,0) -- +(4.4 cm,0);
\draw[thick] (12.5 cm,0) circle (.3cm);
\node [below] at (12.5 cm,-.5cm) {$s_{n-1}$};
\draw[thick] (12.8 cm,0) -- +(1.9 cm,0);
\draw[thick] (15 cm,0) circle (.3cm);
\node [below] at (14.8 cm,-.5cm) {$s_n$};
\end{tikzpicture}
\end{split}
\end{align}

When $\bfG_n=\bfSO_{2n+1}$, we have $\bfT = \{ \mathrm{diag}(x_n,
\ldots,x_1,1,x_1^{-1},\ldots,x_n^{-1})\mid x_i\in \overline{\bbF}_q^{\times}$ for all $1\leqs i\leqs n\}$.
For $i > 1$, the action of $s_i$ on $\mathrm{diag}(x_n,\ldots,x_1,1,x_1^{-1},\ldots,x_n^{-1})$
interchanges $x_{i-1}$ and $x_i$, whereas $s_1$ interchanges $x_1$ and $x_1^{-1}$.
So the simple reflection $s_i (i>1)$ can be lifted to $N_{\bfG_n}(\bfT)$ as the signed permutation matrix
$\dot{s}_i=\diag\big(\id_{n-i},\left(\begin{array}{cc} 0&1 \\ -1& 0 \\ \end{array}\right),\id_{\bfG_{i-2}},\left(\begin{array}{cc} 0&  -1 \\1& 0 \\ \end{array}\right),\id_{n-i}\big)$ and $s_1$ as the signed permutation matrix
\begin{equation*}
\dot{s}_1=\ \begin{blockarray}{(ccc|ccc|ccc)}
         \BAmulticolumn{3}{c|}{\multirow{2}{*}{$-\Id_{n-1}$}}&&&&&&\\
         &&&&&&&&&\\
         \cline{1-9}
        &&&0&0&-1&&&&\\
        &&&0&-1&0&&&&\\
        &&&-1&0&0&&&&\\
        \cline{1-9}
        &&&&&&\BAmulticolumn{3}{c}{\multirow{2}{*}{$-\Id_{n-1}$}}\\
        &&&&&&&&&\\
    \end{blockarray}\,\, .
\end{equation*}

When $\bfG_n=\bfO_{2n+1},$  we write
$\widehat{\bfT} = \{{\rm diag}(x_n,
\ldots,x_1,\pm1,x_1^{-1},\ldots,x_n^{-1})\mid x_i\in \overline{\bbF}_q^{\times}~\text{for~all}~1\leqs i\leqs n\}$ and $W_n= N_{\bfG_n}(\widehat{\bfT})/\widehat{\bfT}.$
It is easy to see that  $\widehat{\bfT}=C_{\bfG_n}(\bfT)$ and $W_n\cong N_{\bfG^{\circ}_{n}}(\bfT)/\bfT$ is also a Weyl group of
type $B_n$ on which $F$ acts trivially.
The action of $s_i$ on $\bfT$ for each $i > 1$ is
the same as in the $\bfSO_{2n+1}$ case,
and we lift them in the same way as above. However, the simple reflection $s_1$
is lifted to be the signed permutation matrix
\begin{equation*}
\dot{s}_1=\ \begin{blockarray}{(ccc|ccc|ccc)}
         \BAmulticolumn{3}{c|}{\multirow{2}{*}{$\Id_{n-1}$}}&&&&&&\\
         &&&&&&&&&\\
         \cline{1-9}
        &&&0&0&-1&&&&\\
        &&&0&1&0&&&&\\
        &&&-1&0&0&&&&\\
        \cline{1-9}
        &&&&&&\BAmulticolumn{3}{c}{\multirow{2}{*}{$\Id_{n-1}$}}\\
        &&&&&&&&&\\
    \end{blockarray}\,\, .
\end{equation*}

When $\bfG_n=\bfSp_{2n}$, we have $\bfT = \{ \mathrm{diag}(x_n,
\ldots,x_1,x_1^{-1},\ldots,x_n^{-1})\mid x_i\in \overline{\bbF}_q^{\times}$ for all $1\leqs i\leqs n\}$.
Similarly,
the simple reflection $s_i (i>1)$ can be lifted to $N_{\bfG_n}(\bfT)$ as the signed permutation matrix
$\dot{s}_i=\ \diag(\id_{n-i},\left(\begin{array}{cc} 0&1 \\ -1& 0 \\ \end{array}\right),\id_{\bfG_{i-2}},\left(\begin{array}{cc} 0&  -1 \\1& 0 \\ \end{array}\right),\id_{n-i})$ and $s_1$ as the signed permutation matrix
\begin{equation*}
\dot{s}_1=\ \begin{blockarray}{(cccc|cc|cccc)}
         \BAmulticolumn{4}{c|}{\multirow{2}{*}{$\Id_{n-1}$}}&&&&&\\
         &&&&&&&&&\\
         \cline{1-10}
        &&&&0&-1&&&&\\
        &&&&1&0&&&&\\
        \cline{1-10}
        &&&&&&\BAmulticolumn{3}{c}{\multirow{2}{*}{$\Id_{n-1}$}}\\
        &&&&&&&&&
    \end{blockarray}\ .
\end{equation*}

When $\bfG_{n}=\bfO_{2n}$, we have $\bfT = \{ \mathrm{diag}(x_n,
\ldots,x_1,x_1^{-1},\ldots,x_n^{-1})\mid x_i\in \overline{\bbF}_q^{\times}$ for all $1\leqs i\leqs n\}$.
%
For $i \neq 1$, the simple
reflection $s_i$ can be lifted to $N_{\bfG_n}(\bfT)$ as the signed permutation matrix
$\dot{s}_i=\ \diag(\id_{n-i},\left(\begin{array}{cc} 0&1 \\ -1& 0 \\ \end{array}\right),\id_{\bfG_{i-2}},\left(\begin{array}{cc} 0&  -1 \\1& 0 \\ \end{array}\right),\id_{n-i})$ and $s_1$ as the signed permutation matrix
\begin{equation*}
\dot{s}_1=\ \begin{blockarray}{(cccc|cc|cccc)}
         \BAmulticolumn{4}{c|}{\multirow{2}{*}{$\Id_{n-1}$}}&&&&&\\
         &&&&&&&&&\\
         \cline{1-10}
        &&&&0&-1&&&&\\
        &&&&-1&0&&&&\\
        \cline{1-10}
        &&&&&&\BAmulticolumn{3}{c}{\multirow{2}{*}{$\Id_{n-1}$}}\\
        &&&&&&&&&
    \end{blockarray}\ .
\end{equation*}

For the next section,
we now define
$t_1:=s_1$ and
 $$t_{k}:=s_{k}s_{k-1} \cdots s_2 t_1 s_2^{-1} \cdots s^{-1}_{k-1} s^{-1}_{k}\,\,\text{for}\,\, 2\leqs k\leqs n.$$
Correspondingly, we define their lifts by $\dot{t}_1=\dot{s}_1$ and  $$\dot{t}_{k}:=\dot{s}_{k}\dot{s}_{k-1} \cdots \dot{s}_2 \dot{t}_1 \dot{s}_2^{-1} \cdots \dot{s}^{-1}_{k-1} \dot{s}^{-1}_{k}\,\,\text{for}\,\, 2\leqs k\leqs n.$$
 It is easy to check
\begin{equation}\label{equ:braid}
\begin{aligned}
&\dot{s}_i\dot{s}_j=\dot{s}_j\dot{s}_i,\,\text{for all }\,\,j\neq i\pm1,\text{and}\,\,i,j\geqs 1\\
&\dot{s}_{i}\dot{s}_{i+1}\dot{s}_{i}=\dot{s}_{i+1}\dot{s}_{i}\dot{s}_{i+1},\,\text{for all }\,\,2\leqs i\leqs n-1,\\
&\dot{t}_i\dot{t}_j=\dot{t}_j\dot{t}_i,\,\text{for all }\,\,1\leqs i,j\leqs n,\\
&\dot{t}_i\dot{s}_j=\begin{cases}
\dot{s}_i\dot{t}_{i-1} & \text{ if }j=i, \\
\dot{s}_{i+1}\dot{t}_{i+1} & \text{ if }j=i+1, \\
\dot{s}_j\dot{t}_i & \text{ otherwise.}
\end{cases}
\end{aligned}
\end{equation}

\subsubsection{Natural transformations} \label{sub:naturaltransf}

Throughout this section we let $G_n=\O_{2n+1}(q)$, $\Sp_{2n}(q)$ and
$\O_{2n}^{+}(q)$ for $n\geqs 0$ or let  $G_n=\O_{2n}^{-}(q)$
for $n\geqs 1$. Notice that $G_n$ has the form $I(V,\beta)$ in Table \ref{tb:classical}
for each case.
\smallskip

An endomorphism of the functor $F_{r+1,r}$  (resp. $F_{r+1,r}'$) can be represented by an $(RG_{r+1},RG_r)$-bimodule
endomorphism of $RG_{r+1} e_{{r+1,r}}$ (resp. $RG_{r+1} e'_{{r+1,r}}$), or equivalently, by (right multiplication by) an element of $e_{{r+1,r}}
RG_{r+1} e_{{r+1,r}}$ (resp. $e_{{r+1,r}}' RG_{r+1} e_{{r+1,r}}'$ ) centralizing $RG_r$.
Let $\dot{t}_{r+1}$ and $\dot{s}_{r+2}$ be as defined in \S \ref{sub:lift-refl}.
It is easy to see that both $\dot{t}_{r+1}\in G_{r+1}$ and  $\dot{s}_{r+2}\in G_{r+2}$ centralize $G_r$.
In fact,
$\dot{t}_{r+1}$ 
 is exactly the matrix
  $$
  \left(\begin{array}{ccc} & & -1 \\ & \Id_{G_r} & \\ -1 \\ \end{array}\right),
 \quad \left(\begin{array}{ccc} & & -1 \\ & \Id_{G_r} & \\ 1 \\ \end{array}\right)
  \quad\text{or} \quad \left(\begin{array}{ccc} & & -1 \\ & \Id_{G_r} & \\ -1 \\ \end{array}\right)$$
when $G_n=\O_{2n+1}(q), \Sp_{2n}(q)$ or  $\O_{2n}^{\pm}(q)$, respectively.

Thus, the elements
\begin{align}
& X_{r+1,r}= q^r e_{{r+1,r}} \dot{t}_{r+1} \, e_{{r+1,r}},\qquad
T_{r+2,r}= q  e_{{r+2,r}} \dot{s}_{r+2}\,e_{{r+2,r}}
\end{align}
define  natural transformations of the functors $F_{r+1,r}$ and $F_{r+2,r+1}F_{r+1,r}$, respectively.
Similarly, the elements
\begin{align}
& X'_{r+1,r}= q^r e'_{{r+1,r}} \dot{t}_{r+1} \, e'_{{r+1,r}},\qquad
T'_{r+2,r}=q e'_{{r+2,r}} \dot{s}_{r+2}\,e'_{{r+2,r}}
\end{align}
define natural transformations of the functors $F'_{r+1,r}$ and $F'_{r+2,r+1}F'_{r+1,r}$,  respectively.

\smallskip
Observe that the functors $F_{r+2,r+1} F'_{r+1,r}$ and $F'_{r+2,r+1} F_{r+1,r}$
 are
represented by the $(RG_{r+2},RG_{r})$-bimodules
 $RG_{r+2} e_{r+2,r+1}e'_{r+1,r}$ and $RG_{r+2}e'_{r+2,r+1}e_{r+1,r}$
 with idempotents $e_{r+2,r+1}e'_{r+1,r}=e_{\zeta\times1}e_{V_{r+2,r}}$
 and $e'_{r+2,r+1}e_{r+1,r}=e_{1\times \zeta }e_{V_{r+2,r}}$, respectively.
In addition,
we define two natural transformations of
 $$\Hom(F_{r+2,r+1}F'_{r+1,r},F'_{r+2,r+1}F_{r+1,r})~{\rm and}~\Hom(F'_{r+2,r+1}F_{r+1,r},F_{r+2,r+1}F'_{r+1,r})$$
 which are respectively:
\begin{align}
&H_{r+2,r}=qe_{{r+2,r+1}}e'_{r+1,r} \dot{s}_{r+2}\,e'_{{r+2,r+1}}e_{r+1,r},\
H_{r+2,r}'=e'_{{r+2,r+1}}e_{r+1,r} \dot{s}^{-1}_{r+2}\,e_{{r+2,r+1}}e'_{r+1,r}.
\end{align}

\subsubsection{Relations of the natural transformations}\label{subsec:repdatumN}
Throughout this section, let $G_n=\O_{2n+1}(q)$, $\Sp_{2n}(q)$ or $\O_{2n}^{\pm}(q)$.
We set
$$E=\bigoplus_{r\in \bbN^*}E_{r,r+1},\quad F=\bigoplus_{r\in \bbN^*}F_{r+1,r},\quad
X=\bigoplus_{r\in \bbN^*}X_{r+1,r},\quad T=\bigoplus_{r\in \bbN^*}T_{r+2,r},$$
$$ E'=\bigoplus_{r\in \bbN^*}E'_{r,r+1},\quad F'=\bigoplus_{r\in \bbN^*}F'_{r+1,r},
\quad X'=\bigoplus_{r\in \bbN^*}X'_{r+1,r},
\quad
T'=\bigoplus_{r\in \bbN^*}T'_{r+2,r},$$
and
$$H=\bigoplus_{r\in \bbN^*}H_{r+2,r},\qquad
H'=\bigoplus_{r\in \bbN^*}H'_{r+2,r},$$
where $\bbN^*=\bbN$
except for the $\O_{2n}^{-}(q)$ case $\bbN^*=\bbN\backslash \{0\}$.
\begin{proposition}\label{prop:12relations}
The endomorphisms $X\in\End(F)$, $T\in\End(F^2)$, $X'\in\End(F')$, $T'\in\End((F')^2)$, $H\in \Hom(FF',F'F)$ and $H'\in \Hom(F'F,FF')$ satisfy the following relations:
\begin{itemize}[leftmargin=8mm]

  \item[$\mathrm{(a)}$] $(1_FT)\circ (T1_F)\circ (1_FT)=(T1_F)\circ (1_FT)\circ (T1_F)$,
  \item[$\mathrm{(b)}$] $(T+1_{F^2})\circ(T-q1_{F^2})=0$,
  \item[$\mathrm{(c)}$] $T\circ(1_FX)\circ T=qX1_F,$
  \item[$\mathrm{(d)}$] $(1_{F'}T')\circ (T'1_{F'})\circ (1_{F'}T')=(T'1_F')\circ (1_{F'}T')\circ (T'1_{F'})$,
  \item[$\mathrm{(e)}$] $(T'+1_{(F')^2})\circ(T'-q1_{(F')^2})=0$,
  \item[$\mathrm{(f)}$] $T'\circ(1_{F'}X')\circ T'=qX'1_{F'}$,
  \item[$\mathrm{(g)}$] $(H'1_{F})\circ (1_{F'}T)\circ (H1_{F})=(1_{F}H)\circ(T1_{F'})\circ(1_{F}H')$,
  \item[$\mathrm{(h)}$] $(H1_{F'})\circ (1_{F}T')\circ (H'1_{F'})=(1_{F'}H')\circ(T'1_{F})\circ(1_{F'}H)$,
  \item[$\mathrm{(i)}$] $HH'=1_{F'F}$,
  \item[$\mathrm{(j)}$] $H'H=1_{FF'}$,
  \item[$\mathrm{(k)}$] $H\circ (1_{F}X')=(X'1_{F})\circ H,$

  \item[$\mathrm{(l)}$] $H'\circ (1_{F'}X)=(X1_{F'})\circ H'$.
\end{itemize}
\end{proposition}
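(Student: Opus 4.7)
My plan is to verify the twelve identities in three groups: the affine Hecke relations (a)--(c) for $(F,X,T)$, the parallel set (d)--(f) for $(F',X',T')$, and the mixed relations (g)--(l) that tie $H$ and $H'$ into the picture.

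For (a)--(c) I would follow the argument of Dudas--Varagnolo--Vasserot \cite{DVV2}, who obtained these relations for $\SO_{2n+1}(q)$; their proof applies uniformly to the other three families because the lifts $\dot{s}_i,\dot{t}_j$ of \S\ref{sub:lift-refl} were deliberately chosen to sit in all of $\O_{2n+1}(q)$, $\Sp_{2n}(q)$ and $\O_{2n}^{\pm}(q)$. Concretely, endomorphisms of $F^2$ (resp.\ $F^3$) are given by right multiplication by $G_r$-centralising elements of $e_{r+2,r}RG_{r+2}e_{r+2,r}$ (resp.\ $e_{r+3,r}RG_{r+3}e_{r+3,r}$); the braid relation (a) then reduces to $\dot{s}_{r+2}\dot{s}_{r+3}\dot{s}_{r+2}=\dot{s}_{r+3}\dot{s}_{r+2}\dot{s}_{r+3}$ from \eqref{equ:braid}, the quadratic relation (b) to a Mackey-type double-coset computation for the rank-one parabolic generated by $\dot{s}_{r+2}$ over $V_{r+2,r}$, and (c) to the conjugation identity $\dot{s}_{r+2}\dot{t}_{r+1}\dot{s}_{r+2}^{-1}=\dot{t}_{r+2}$ of \eqref{equ:braid}. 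For (d)--(f), the same computations go through after replacing each $e$ by $e'$: the essential point is that $\zeta$ has order two, so $\zeta(t^{-1})=\zeta(t)$ and $\dot{t}_{r+1}$ preserves $e'_{r+1,r}=e_\zeta e_{V_r}$, while each $\dot{s}_j$ with $j\geqs 2$ merely permutes torus factors and hence fixes the diagonal character $\zeta\times\cdots\times\zeta$ defining $e'_{n,r}$.

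Relations (i)--(l) I would check by direct manipulation of the bimodule representatives. For (i), the composite $H\circ H'$ corresponds to right multiplication by an explicit element of $e'_{r+2,r+1}e_{r+1,r}RG_{r+2}e'_{r+2,r+1}e_{r+1,r}$; using that $\dot{s}_{r+2}$ transposes the torus factors at positions $r+1$ and $r+2$, one reduces this element to the identity idempotent $e'_{r+2,r+1}e_{r+1,r}$ representing $1_{F'F}$ after a Mackey-type calculation for the double coset $U_{r+1}\dot{s}_{r+2}U_{r+1}$; relation (j) is symmetric. Relations (k) and (l) then follow from the commutation $\dot{t}_{r+2}\dot{s}_{r+2}=\dot{s}_{r+2}\dot{t}_{r+1}$ of \eqref{equ:braid}, combined with the invariance of $e'$ under the relevant $\dot{t}_j,\dot{s}_j$-conjugation observed for (d)--(f).

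The main obstacle will be the mixed braid relations (g) and (h). These assert that two different three-term compositions of intertwiners agree as natural transformations $FF'F\to FF'F$ (resp.\ $F'FF'\to F'FF'$). The underlying group-theoretic input is once more the braid identity $\dot{s}_{r+2}\dot{s}_{r+3}\dot{s}_{r+2}=\dot{s}_{r+3}\dot{s}_{r+2}\dot{s}_{r+3}$, but the delicate point is tracking the alternating pattern of idempotents $e,e'$ through each intermediate stage: since $\dot{s}_{r+2}$ swaps positions $r+1,r+2$ and $\dot{s}_{r+3}$ swaps positions $r+2,r+3$, the two sides of the braid relation carry the character sequence $(1,\zeta,1)$ (resp.\ $(\zeta,1,\zeta)$) on the three torus factors of $L_{r,1^3}$ by the same permutation. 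Once this character-bookkeeping is in place, together with the fact that each $\dot{s}_i$ normalises $V_{r+3,r}$ modulo a product of root subgroups that averages out under the idempotent $e_{V_{r+3,r}}$, both sides reduce to the same element of $e_{r+3,r}RG_{r+3}e_{r+3,r}$, establishing (g); relation (h) is symmetric under exchanging $F$ and $F'$.
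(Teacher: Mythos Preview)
Your overall strategy matches the paper's: cite \cite{DVV2} for (a)--(c), rerun those computations with $e'$ in place of $e$ for (d)--(f), and use the braid relation $\dot s_{r+2}\dot s_{r+3}\dot s_{r+2}=\dot s_{r+3}\dot s_{r+2}\dot s_{r+3}$ together with the torus-character bookkeeping for (g) and (h). Likewise, deriving (k) and (l) from (i), (j) and the conjugation $\dot s_{r+2}\dot t_{r+1}\dot s_{r+2}^{-1}=\dot t_{r+2}$ is exactly what the paper does.

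The one place where your sketch is too thin is (i) (and symmetrically (j)). You describe it as ``a Mackey-type calculation for the double coset $U_{r+1}\dot s_{r+2}U_{r+1}$'', but that same Mackey calculation is what produces the \emph{quadratic} relations (b) and (e); by itself it does not explain why $HH'$ collapses to the identity rather than to something of the form $(q-1)\,(\text{intertwiner})+q\cdot 1$. The mechanism, which the paper makes explicit, is a genuine cancellation coming from the mixed idempotent $e_{1\times\zeta}$: expanding $e_{U_{-\alpha}}$ one obtains a sum over $t\in\bbF_q^\times$ of terms carrying $\diag(t,t^{-1})$, and
\[
e_{1\times\zeta}\,\diag(t,t^{-1})=\zeta(t)\,e_{1\times\zeta},
\]
so that $\sum_{t\in\bbF_q^\times}\zeta(t)=0$ kills the ``$(q-1)$'' contribution and leaves only $\tfrac{1}{q}e_{1\times\zeta}e_{U_\alpha}$. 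This orthogonality of $\zeta$ against the trivial character is the reason $H$ and $H'$ are mutually inverse rather than satisfying a Hecke quadratic relation, and it is the one new phenomenon beyond the $(F,X,T)$ and $(F',X',T')$ cases. You should make this step explicit.
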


\begin{proof} Parts (a)-(c) have been shown in \cite[Proposition 6.1]{DVV2}.

\vspace{1ex}

For (d), note that $1_{F'_{r+3,r+2}}T'_{r+2,r}$ and $T'_{r+3,r+1}1_{F'_{r+1,r}}$
are given by right multiplication by the elements
$qe'_{r+3,r}\dot{s}_{r+2}e'_{r+3,r}$ and $qe'_{r+3,r}\dot{s}_{r+3}e'_{r+3,r}$ on the $(RG_{r+3}, RG_r)$-bimodule $RG_{r+3}e'_{r+3,r}$, respectively.
So, in order to show (d), it is equivalent to show the equality
$$e'_{r+3,r}\dot{s}_{r+2}e'_{r+3,r}\dot{s}_{r+3}e'_{r+3,r}\dot{s}_{r+2}e'_{r+3,r}=
e'_{r+3,r}\dot{s}_{r+3}e'_{r+3,r}\dot{s}_{r+2}e'_{r+3,r}\dot{s}_{r+3}e'_{r+3,r} $$
or
$$e_{\zeta^{\times 3}}e_{V_{r+3,r}}\dot{s}_{r+2}e_{V_{r+3,r}}\dot{s}_{r+3}e_{V_{r+3,r}}\dot{s}_{r+2}e_{V_{r+3,r}}
=e_{\zeta^{\times 3}}
e_{V_{r+3,r}}\dot{s}_{r+3}e_{V_{r+3,r}}\dot{s}_{r+2}e_{V_{r+3,r}}\dot{s}_{r+3}e_{V_{r+3,r}}$$
since  $e'_{r+3,r}=e_{\zeta^{\times 3}}e_{V_{r+3,r}}$ and the idempotent
$e_{\zeta^{\times 3}}$ is invariant under the conjugate action of $s_{r+2}$ and $s_{r+3}.$
(Here $\zeta^{\times3}:=\zeta\times \zeta\times\zeta$.)

\smallskip

However, the sets $$V_{r+3,r}\dot{s}_{r+2}V_{r+3,r}\dot{s}_{r+3}V_{r+3,r}\dot{s}_{r+2}V_{r+3,r}$$
and $$V_{r+3,r}\dot{s}_{r+3}V_{r+3,r}\dot{s}_{r+2}V_{r+3,r}\dot{s}_{r+3}V_{r+3,r}$$ are both equal to
$V_{r+3,r}\dot{s}_{r+2}\dot{s}_{r+3}\dot{s}_{r+2}V_{r+3,r}$ by the braid relations,
which proves (d).

\vspace{2ex}

For (e), let $s=\diag(-1,-1,\id_{G_r},-1,-1)$. It is easy to see that $\dot{s}_{r+2}=\dot{s}^{-1}_{r+2}\cdot s$ and
$e_{\zeta\times\zeta}\cdot s=\zeta(-1)^2e_{\zeta\times\zeta}=
e_{\zeta\times\zeta}.$ So we have
$$T'_{r+2,r}= q e'_{{r+2,r}} \dot{s}_{r+2}\,e'_{{r+2,r}}=q e'_{{r+2,r}} \dot{s}^{-1}_{r+2}\,e'_{{r+2,r}}.$$
 Hence
 \begin{align*}
  (T'_{r+2,r})^2&=q^2 e'_{{r+2,r}}\dot{s}_{r+2}e'_{{r+2,r}}\dot{s}^{-1}_{r+2}e'_{{r+2,r}}\\
  &=q^2 e'_{{r+2,r}}\,e_V\,e'_{{r+2,r}},\\
  &=q^2 e_{\zeta\times \zeta}e_{V_{r+2,r}}\,e_V\,e_{V_{r+2,r}},
\end{align*}
where $V$  is the subgroup of $G_{r+2}$ consisting of the matrices with diagonal
entries equal to 1 and off-diagonal entries equal to zero, except for the entries
$(2,1)$ and $(2r+4,2r+3)$ ($(2r+5,2r+4)$ in the $\O_{2n+1}(q)$ case),
i.e.,
  $$V=\begin{pmatrix}1&&&&\\ *&1&&&\\&&\id_{G_r}&&\\&&&1&\\&&&*&1\end{pmatrix}.$$

Observe that the group $V=u_{-\alpha}(\bbF_{q})$ is the root subgroup of $G_{r+2}$ associated
with some negative root $-\alpha$, and
the corresponding simple reflexion $s_\alpha$
is exactly $s_{r+2}$.  Let $U_\alpha$ be the (finite) root
subgroup of unitriangular matrices in the copy of $\GL_2$ in $G_{r+2}$ associated
with $\alpha$. We have $u_{\alpha}(t)u_{-\alpha}(-t^{-1})u_{\alpha}(t)=n_{\alpha}(t)$
in $\GL_2$ for $t\neq 0$, where
 $n_\alpha(t)=\left(\begin{array}{cc} 0&t  \\ -{t^{-1}} &0 \\ \end{array}\right)=\diag(t,t^{-1})\dot{s}_{\alpha}$
 and $\dot{s}_{\alpha}=\left(\begin{array}{cc} 0&1  \\ -1 &0 \\ \end{array}\right).$

Hence we obtain  $$U_{\alpha}u_{-\alpha}(-t^{-1})U_{\alpha}=
U_{\alpha}n_{\alpha}(t)U_{\alpha}=\diag(t,t^{-1})U_{\alpha}\dot{s}_{\alpha}U_{\alpha}.$$
Note that $e_{\zeta\times \zeta} \diag(t,t^{-1})=\zeta(t)\zeta(t^{-1})e_{\zeta\times \zeta}=e_{\zeta\times \zeta}$
for all $t\in \bbF_q^{\times}$
with the identification of $(\bbF_q^{\times})^2$ and the standard maximal torus of $\GL_2(q).$
Hence $$e_{\zeta\times\zeta}e_{U_{\alpha}}e_{U_{-\alpha}}e_{U_{\alpha}}=
\frac{1}{q}e_{\zeta\times\zeta}e_{U_{\alpha}}+\frac{q-1}{q}e_{\zeta\times\zeta
}e_{U_{\alpha}}\dot{s}_{\alpha}e_{U_{\alpha}}.$$

Now, let $B_\alpha$ be the (finite) Borel subgroup of upper triangular matrices
in the copy of $\GL_2$ in $G_{r+2}$ associated with $\alpha.$ Then
the image of $B_\alpha$ through the embedding $\GL_2\subset G_{r+2}$
lies in $U_{r+2,r}$. As a consequence, $$q^2 e_{\zeta\times \zeta}e_{V_{r+2,r}}\,e_V\,e_{V_{r+2,r}}=(q-1)qe_{\zeta\times \zeta}e_{V_{r+2,r}}\dot{s}_{r+2}e_{V_{r+2,r}}+qe_{\zeta\times \zeta}e_{V_{r+2,r}}.$$Thus,
$$(T'_{r+2,r})^2=(q-1)\,T'_{r+2,r}+q\,e'_{{r+2,r}},$$ as desired.

\vspace{2ex}

For (f), we similarly argue as for \cite[Proposition 4.1 (c)]{DVV}. 
The image of $\dot{t}_{r+1}\in G_{r+1}$ under the embedding map
$\iota_{r+2,r+1}$ defined in \S \ref{subsec:Levi} is exactly
the element of $G_{r+2}$ with the same notation $\dot{t}_{r+1}$.
(This is not true in the $\SO_{2n+1}(q)$ case, which is the reason to exclude this case.)
Hence
$$
\begin{array}{rclr}
  T'\circ(1_{F'}X')\circ T' &=&q^{r+2} e'_{r+2,r}\dot{s}_{r+2}e'_{r+2,r}\iota_{r+2,r+1}(\dot{t}_{r+1})e'_{r+2,r} \dot{s}^{-1}_{r+2}e'_{r+2,r} & \\
   &=&q^{r+2} e'_{{r+2,r}}e_V\dot{s}_{r+2}\dot{t}_{r+1} \dot{s}^{-1}_{r+2}\,e_Ve'_{r+2,r} & \ \ ~~(\text{in}~G_{r+2})\\
  &=& q^{r+2} e_{\zeta\times\zeta}e_{V_{r+2,r}}e_V\dot{t}_{r+2}\,e_Ve_{V_{r+2,r}} &
 \end{array}
 $$

Let $V'=^{t_{r+2} }\!V\subset V_{r+2,r}$ so that
$$V'=\begin{pmatrix}1&&&*&\\ &1&&&*\\&&\id_{G_r}&&\\&&&1&\\&&&&1\end{pmatrix},$$
where the off-diagonal entries in $V'$ which are possibly
non-zero are only the entries $(2,2r+4)$ and $(1,2r+3)$ ($(2,2r+5)$ and $(1,2r+4)$ in the $\O_{2n+1}(q)$ case).
By Chevalley's commutator formula, we obtain $[V',V]\subset V_{r+2,r}$, which implies
$$e_V\dot{t}_{r+2}\,e_Ve_{V_{r+2,r}}
=\dot{t}_{r+2}\,e_V e_{V_{r+2,r}}=e_{V'}\dot{t}_{r+2}\,e_{V_{r+2,r}}.$$
Thus, by moving $e_V$ to the left, we deduce that
$$T'_{r+2,r}X'_{r+1,r}T'_{r+2,r}=q^{r+2} e_{\zeta\times\zeta}e_{V_{r+2,r}}\dot{t}_{r+2}\,e_{V_{r+2,r}}=qX'_{r+2,r+1},$$
from which (f) follows.

\vspace{2ex}

For (g), we first mention the following presentation:\\
$\begin{array}{lrcl}
\ \ \ \ (1) &  1_{F'_{r+3,r+2}}T_{r+2,r} &\leftrightarrow& e'_{r+3,r+2}e_{r+2,r}\dot{s}_{r+2}e'_{r+3,r+2}e_{r+2,r}, \\
\ \ \ \ (2) &  T_{r+3,r+1}1_{F'_{r+1,r}} &\leftrightarrow& e_{r+3,r+1}e'_{r+1,r}\dot{s}_{r+3}e_{r+3,r+1}e'_{r+1,r},\\
\ \ \ \ (3) &   H_{r+3,r+1}1_{F_{r+1,r}}  &\leftrightarrow& e_{r+3,r+2}e'_{r+2,r+1}e_{r+1,r}\dot{s}_{r+3}e'_{r+3,r+2}e_{r+2,r}, \\
\ \ \ \ (4) &   1_{F_{r+3,r+2}}H_{r+1,r}  &\leftrightarrow& e_{r+3,r+1}e'_{r+1,r}\dot{s}_{r+2}e_{r+3,r+2}e'_{r+2,r+1}e_{r+1,r}, \\
\ \ \ \ (5) &   H'_{r+3,r+1}1_{F_{r+1,r}} &\leftrightarrow& e'_{r+3,r+2}e_{r+2,r}\dot{s}_{r+3}e_{r+3,r+2}e'_{r+2,r+1}e_{r+1,r},~\mbox{and} \\
\ \ \ \ (6) &   1_{F_{r+3,r+2}}H'_{r+1,r} &\leftrightarrow& e_{r+3,r+2}e'_{r+2,r+1}e_{r+1,r}\dot{s}_{r+2}e_{r+3,r+1}e'_{r+1,r}.
\end{array}
$\\
So, in order to prove (g), it is equivalent to show the equality
\begin{align*}
&e_{r+3,r+2}e'_{r+2,r+1}e_{r+1,r}\dot{s}_{r+3}e'_{r+3,r+2}e_{r+2,r}\dot{s}_{r+2}e'_{r+3,r+2}e_{r+2,r}\dot{s}_{r+3}e_{r+3,r+2}e'_{r+2,r+1}e_{r+1,r}\\
  =&e_{r+3,r+2}e'_{r+2,r+1}e_{r+1,r}\dot{s}_{r+2}e_{r+3,r+1}e'_{r+1,r}\dot{s}_{r+3}e_{r+3,r+1}e'_{r+1,r}\dot{s}_{r+2}e_{r+3,r+2}e'_{r+2,r+1}e_{r+1,r}.
 \end{align*}
Since $e_{r+3,r+2}e'_{r+2,r+1}e_{r+1,r}=e_{1\times\zeta\times 1}e_{V_{r+3,r}}$, $e'_{r+3,r+2}e_{r+2,r}=e_{1\times 1\times \zeta}e_{V_{r+3,r}}$ and $$e_{r+3,r+1}e'_{r+1,r}=e_{ \zeta\times 1\times 1}e_{V_{r+3,r}},$$
it suffices to show that
\begin{align*}
&e_{ 1\times \zeta \times 1}e_{V_{r+3,r}}\dot{s}_{r+3}e_{ 1\times 1\times \zeta }
     e_{V_{r+3,r}}\dot{s}_{r+2}e_{ 1\times 1\times\zeta }e_{V_{r+3,r}}\dot{s}_{r+3}e_{ 1\times \zeta \times 1}e_{V_{r+3,r}}\\
  =&e_{ 1\times \zeta \times 1}e_{V_{r+3,r}}\dot{s}_{r+2}e_{\zeta\times 1\times 1 }
  e_{V_{r+3,r}}\dot{s}_{r+3}e_{ \zeta\times 1\times 1 }e_{V_{r+3,r}}\dot{s}_{r+2}e_{ 1\times \zeta \times 1}e_{V_{r+3,r}},
 \end{align*}
namely,
\begin{align*}
&e_{1\times\zeta\times 1}e_{V_{r+3,r}}\dot{s}_{r+2}e_{V_{r+3,r}}\dot{s}_{r+3}e_{V_{r+3,r}}\dot{s}_{r+2}e_{V_{r+3,r}}\\
  =&e_{1\times\zeta\times 1}
e_{V_{r+3,r}}\dot{s}_{r+3}e_{V_{r+3,r}}\dot{s}_{r+2}e_{V_{r+3,r}}\dot{s}_{r+3}e_{V_{r+3,r}}.
 \end{align*}
However, the above equality is true by the end of proof in (d),
and thus (g) holds.

\vspace{1ex}

The proof of (h) is similar to that of (g).

\vspace{1ex}

To prove (i), we first compute
\begin{align*}
H_{r+2,r}H'_{r+2,r}&= qe'_{{r+2,r+1}}e_{r+1,r}\dot{s}^{-1}_{r+2}e_{{r+2,r+1}}e'_{r+1,r}\dot{s}_{r+2}e'_{{r+2,r+1}}e_{r+1,r}\\
  &= qe_{1\times\zeta}e_{V_{r+2,r}}\dot{s}^{-1}_{r+2}e_{\zeta\times 1}e_{V_{r+2,r}}\dot{s}_{r+2}e_{1\times\zeta}e_{V_{r+2,r}}\\
  &= qe_{1\times\zeta}e_{V_{r+2,r}}\dot{s}^{-1}_{r+2}e_{V_{r+2,r}}\dot{s}_{r+2}e_{V_{r+2,r}}\\
  &= qe_{1\times\zeta}e_{V_{r+2,r}}\,e_V\,e_{V_{r+2,r}}.
\end{align*}

As in the proof of (e), we identify $(\bbF_q^{\times})^2$ with the standard maximal torus of $\GL_2(q)$, so that
$$e_{1\times\zeta} \diag(t,t^{-1})=\begin{cases}
e_{1\times\zeta}&\text{if $t$ is a square in $\bbF_{q}^{\times}$},\\
-e_{1\times\zeta}&\text{if $t$ is not a square in $\bbF_{q}^{\times}$}.
\end{cases}$$
We have $$U_{\alpha}u_{-\alpha}(-t^{-1})U_{\alpha}=U_{\alpha}n_{\alpha}(t)U_{\alpha}=
\diag(t,t^{-1})U_{\alpha}\dot{s}_{\alpha}U_{\alpha},$$
and
 \begin{align*}e_{U_{\alpha}}e_{U_{-\alpha}}e_{U_{\alpha}}&=
\sum\limits_{t\in \bbF_q^{\times}}(\frac{1}{q}e_{U_{\alpha}}
u_\alpha(-t^{-1})e_{U_{\alpha}})+\frac{1}{q}e_{U_{\alpha}}\\
&=\sum\limits_{t\in \bbF_q^{\times}}(\frac{1}{q}\diag(t,t^{-1})e_{U_{\alpha}}\dot{s}_\alpha e_{U_{\alpha}})+\frac{1}{q}e_{U_{\alpha}}.
\end{align*}
Since there are $\frac{q-1}{2}$'s squares and $\frac{q-1}{2}$'s non-squares in $\bbF_{q}^{\times}$,
it follows that
\begin{align*}
e_{1\times\zeta}e_{U_{\alpha}}e_{U_{-\alpha}}e_{U_{\alpha}}&=
e_{1\times \zeta}\big(\sum\limits_{t\in \bbF_q^{\times}}(\frac{1}{q}\diag(t,t^{-1})e_{U_{\alpha}}\dot{s}_\alpha e_{U_{\alpha}})+\frac{1}{q}e_{U_{\alpha}}\big)\\
&=\frac{1}{q}e_{1\times\zeta}e_{U_{\alpha}}.
\end{align*}
Now, as in the proof of (e), we deduce that
$$H_{r+2,r}H'_{r+2,r}=e_{1\times \zeta}e_{V_{r+2,r}}=e'_{{r+2,r+1}}e_{r+1,r}=1_{F'F},$$
which proves (i).

\vspace{1ex}

The proof of (j) is similar to that of (i).

\vspace{1ex}

Now we prove (k). By (i), it is equivalent to show that $H\circ (1_{F}X')\circ H'=(X'1_{F})$.
The left side of the equality is equal to
$$q^{r+1} e'_{{r+2,r+1}}e_{r+1,r}\dot{s}_{r+2} e_{{r+2,r+1}}e'_{r+1,r}\iota_{r+2,r+1}(\dot{t}_{r+1})  e_{{r+2,r+1}}e'_{r+1,r} \dot{s}^{-1}_{r+2}\,e'_{{r+2,r+1}}e_{r+1,r},$$
 which simplifies to $$q^{r+1} e_{1\times \zeta}e_{V_{r+2,r}}e_V\dot{t}_{r+2}\,e_Ve_{V_{r+2,r}}.$$
As did in the proof of (f), we have
$$e_{V_{r+2,r}}e_V\dot{t}_{r+2}\,e_Ve_{V_{r+2,r}}=e_{V_{r+2,r}}\dot{t}_{r+2}e_{V_{r+2,r}}.$$
Thus,
$$\begin{array}{rcl}
    H_{r+2,r}(1_{F_{r+2,r+1}}X'_{r+1,r})H'_{r+2,r} & = & q^{r+1}e_{1\times\zeta}e_{V_{r+2,r}}\dot{t}_{r+2}e_{V_{r+2,r}} \\
     & = & X'_{r+2,r+1}1_{F_{r+1,r}},
  \end{array}
$$
and so (k) holds.

\vspace{1ex}

Finally, (l) holds by a similar argument of (k).
\end{proof}

\begin{remark}\label{rem:scalar}  We give two remarks about the tuple $(E,F,X,T;E',F',X',T';H, H')$.
\begin{itemize}[leftmargin=8mm]
  \item[$(a)$]  Multiplying $X$ and $X'$ by constant numbers $a$ and $b$ respectively,
all relations in Proposition \ref{prop:12relations}
remain to be true.
  \item[$(b)$] When separately considering $(E,F,X,T)$ and $(E',F',X',T')$,
  Proposition \ref{prop:12relations} $(a)$-$(c)$  and $(d)$-$(f)$
  show that both of them are representation data of type $A$.
\end{itemize}
\end{remark}

\vspace{2ex}

In the following, we shall show that relations in Proposition
\ref{prop:12relations} are enough for
the quiver Hecke relations required for
an $\mathfrak{A}(\fraks\frakl_{\K_q})$-representation datum
on $\bigoplus_{n\in \bbN^*}RG_n\mod$.

\smallskip
Let $\scrI$ (resp. $\scrI'$) be the set of the generalized eigenvalues of $X$ on $F$ (resp. $X'$ on $F'$).
 We have the decompositions
$$E=\bigoplus_{i\in I} E_i,\ \ \ \
F=\bigoplus_{i\in I} F_i,\ \ \ \
E'=\bigoplus_{i'\in I'} E'_{i'},\ \ \ \
F'=\bigoplus_{i'\in I'} F'_{i'}$$
such that $X-i$ is locally nilpotent on $E_i$ and $F_i$ and $X'-i'$ is locally nilpotent on $E'_{i'}$ and $F'_{i'}$,
 respectively. We put
\begin{itemize}[leftmargin=8mm]
  \item $x_i=i^{-1}X-1$ (acting on $F_i$), $x_{i'}={i'}^{-1}X'-1$ (acting on $F'_{i'}$),
\item $$\tau_{ij}= \begin{cases}
i(qF_iX-XF_j)^{-1}(T-q) & \text{ if }i=j, \\
q^{-1}i^{-1}(F_iX-XF_j)T+i^{-1}(1-q^{-1})XF_j & \text{ if }i=qj, \\
\frac{F_iX-XF_j}{qF_iX-XF_j}(T-q)+1 & \text{ otherwise}
\end{cases}$$
(restricted to $F_iF_j$),
\item $$\tau_{i'j'}= \begin{cases}
{i'}(qF'_{i'}X'-X'F'_{j'})^{-1}(T'-q) & \text{ if }i'=j', \\
q^{-1}i'^{-1}(F'_{i'}X'-X'F'_{j'})T'+i'^{-1}(1-q^{-1})X'F'_{j'} & \text{ if }i'=qj', \\
\frac{F'_{i'}X'-X'F'_{j'}}{qF'_{i'}X'-X'F'_{j'}}(T'-q)+1 & \text{ otherwise}
\end{cases}$$
(restricted to $F'_{i'}F'_{j'}$),
\item $\tau_{ij'}= H$
(restricted to $F_{i}F'_{j'}$), and
$\tau_{i'j}= H'$
(restricted to $F'_{i'}F_{j}$).
\end{itemize}

\smallskip

 Let $I(q)$ and $I'(q)$ be the quivers as defined in \S\ref{subsec:datumA},
 and write
$\K_q:=I(q)\sqcup I'(q)$ for the quiver that is the disjoint union of $I(q)$ and $I'(q)$.
Notice that there is no arrow between $I(q)$ and $I'(q)$. In particular,
$\K_q$ is not connected. Since $\K_q$ is of type $A$, we may denote by
$\fraks\frakl_{\K_q}=\fraks\frakl_{I}\oplus\fraks\frakl_{I'}$ the corresponding Kac-Moody algebra which is a direct sum of $\fraks\frakl_{I}$ and $\fraks\frakl_{I'}$ (see Remark \ref{rk:sum}).
Let $Q:=(Q_{st})_{s,t\in \K_q}$ be the matrix associated with $\fraks\frakl_{\K_q}$
as in Remark \ref{rmk:quiver}. Specifically,  for $s,t\in \K_q$,
the entry $Q_{st}$ of $Q$ is
\[
Q_{st} (u,v)=
\begin{cases}
  0 & \text{if $s = t$,} \\
  v-u & \text{if $s  \rightarrow t$,} \\
  u-v & \text{if $t  \rightarrow s$,}\\
   1 & \text{otherwise}
\end{cases}
\]
in terms of the entry $a_{st}$ of the Cartan matrix $A=(a_{st})_{s,t\in \K_q}$:
\[
a_{st}=
\begin{cases}
  2 & \text{if $s = t$,} \\
  -1 & \text{if there is an arrow between $s$ and $t$,} \\
  0 & \text{otherwise}.
\end{cases}
\]

For the simplification of expression, in the following result
we write $F_{i'}$ and $E_{i'}$ for $F'_{i'}$ and $E'_{i'}$, respectively,
whenever $i'\in I'$.  Hence, we may use
the above notation $s,t$ for uniformly indexing vertices of
$\K_q$.

\begin{theorem} \label{thm:repdatumKq}
The natural transformations $$x_s:F_s\to F_s \text{ and }
\tau_{st}:F_sF_t\to F_tF_s \text{ for }s,t\in \K_q$$
satisfy the following quiver Hecke relations:
\begin{itemize}[leftmargin=8mm]
\item[$(1)$]
\label{en:half11}
$\tau_{st}\circ \tau_{ts}=Q_{st}(F_tx_s,x_tF_s)$,
\item[$(2)$]
\label{en:half22}
$\tau_{tu}F_s\circ F_t \tau_{su}\circ \tau_{st}F_u-
F_u \tau_{st}\circ \tau_{su}F_t\circ F_s \tau_{tu}=\\
\begin{cases}
\frac{Q_{st}(x_sF_t,F_sx_t)F_s-F_sQ_{st}(F_tx_s,x_tF_s)}{x_sF_tF_s-F_sF_tx_s}F_s
 & \text{ if } s=u\vspace{0.2cm}\\
0  & \text{ otherwise,}
\end{cases}$
\item[$(3)$]
\label{en:half33}
$\tau_{st}\circ x_s F_t-F_t x_s\circ \tau_{st}=\delta_{st}$, and
\item[$(4)$]
\label{en:half44}
$\tau_{st}\circ F_sx_t-x_tF_s\circ \tau_{st}=-\delta_{st}.$
\end{itemize}
That is, the tuple $$\big(\{E_s\}_{s\in \K_q},\{F_s\}_{s\in\K_q},\{x_s\}_{s\in \K_q},\{\tau_{s,t}\}_{s,t\in \K_q}\big)$$
is an $\mathfrak{A}(\fraks\frakl_{\K_q})$-representation datum
on $\bigoplus_{n\in \bbN^*}RG_n\mod$.
\end{theorem}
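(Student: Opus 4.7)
The approach is a case analysis on which of the two subquivers $I(q)$ or $I'(q)$ contains each of the indices $s,t,u\in\K_q$. The crucial structural observation is that $\K_q = I(q)\sqcup I'(q)$ contains no arrows between $I(q)$ and $I'(q)$, so $Q_{st}(u,v)=1$ whenever $s$ and $t$ lie in different subquivers. This means that all cases where at least one pair of indices straddles the two subquivers will have trivial polynomial data on the right-hand side of the relations.

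\textbf{Case A (same component).} When all relevant indices lie in $I(q)$, the relations (1)--(4) are the standard quiver Hecke relations for the type $A$ datum $(E,F,X,T)$. By Remark \ref{rem:scalar}(b), Proposition \ref{prop:12relations}(a)--(c) show that $(E,F,X,T)$ is a representation datum of type $A$; the formulas defining $x_i$ and $\tau_{ij}$ for $i,j\in I(q)$ match the Brundan--Kleshchev--Rouquier transformation of \S\ref{subsec:BKR}, so Theorem \ref{th:slcat2rep} supplies the required relations. The same argument via Proposition \ref{prop:12relations}(d)--(f) covers the case of all indices in $I'(q)$.

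\textbf{Case B (relations (1), (3), (4) with mixed $s,t$).} Assume $s\in I(q)$ and $t\in I'(q)$ (the symmetric case is analogous), so $\tau_{st}$ (resp.\ $\tau_{ts}$) is the restriction of $H$ (resp.\ $H'$) to the appropriate eigenspace. Then $Q_{st}=1$, and relation (1) reduces to $HH'=1_{F'F}$ and $H'H=1_{FF'}$, which are exactly Proposition \ref{prop:12relations}(i),(j). Relations (3) and (4) assert, with $\delta_{st}=0$, that $\tau_{st}$ commutes with $x_sF_t$ and $F_sx_t$; since $x_s=i^{-1}X-1$ on $F_i$ ($i\in I(q)$) and $x_t={j'}^{-1}X'-1$ on $F'_{j'}$ ($j'\in I'(q)$), this is a direct consequence of Proposition \ref{prop:12relations}(k),(l) after restricting to generalized eigenspaces.

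\textbf{Case C (relation (2) with mixed indices).} If at least one pair among $\{s,t\},\{t,u\},\{s,u\}$ straddles the two subquivers, then the right-hand side of (2) vanishes: either $s\ne u$ (and the right side is $0$ by definition), or $s=u$ but the relevant $Q_{st}$ equals $1$, so the numerator $Q_{st}(x_sF_t,F_sx_t)F_s-F_sQ_{st}(F_tx_s,x_tF_s)$ is zero. We must therefore show that the two terms on the left side agree. The decisive input is the braid-type identity of Proposition \ref{prop:12relations}(g), which gives, for the functor composition $FF'F$, the equality of two ways of interchanging using the combined data of $H, H'$, and $T$; together with (h) for the symmetric composition $F'FF'$. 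Passing to generalized eigenspaces of $X$ and $X'$ and expressing the type $A$ transformations $\tau_{ij}$ in terms of $T$ (via the formulas of \S\ref{subsec:BKR}), these identities translate directly into the required equality of the two terms of (2) in each mixed configuration.

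\textbf{Main obstacle.} The technical heart of the argument is Case C when two of the three indices lie in one subquiver and the middle tau is a type $A$ transformation. There, the translation from the $T$-formulation of (g),(h) to the $\tau$-formulation of (2) involves inverting rational expressions in $X$ (or $X'$) and commuting them past $H$ and $H'$; this necessitates careful use of relations (k),(l) of Proposition \ref{prop:12relations} to move $X$ and $X'$ through the mixed intertwiners, and of the locally nilpotent action of $x_i,x_{i'}$ on the corresponding eigenspaces to make all rational expressions well defined.
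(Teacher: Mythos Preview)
Your proposal is correct and follows essentially the same approach as the paper: a case analysis on which of $I(q)$ or $I'(q)$ contains the indices, invoking the type~$A$ theory (via the Brundan--Kleshchev--Rouquier transformation and Rouquier's theorem) for the pure cases and the relations (g)--(l) of Proposition~\ref{prop:12relations} for the mixed cases. Your write-up is in fact considerably more detailed than the paper's own proof, which dispatches all cases beyond $s,t\in I(q)$ with the single sentence ``For the other cases, the theorem similarly holds.''
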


\begin{proof} To check the relations (1)-(4), according to the definition
of $x_s$ and $\tau_{s,t}$ for $s,t\in \K_q$ we need to consider the following four cases
$$s,t\in I(q),\ \  s,t\in I'(q),\ \  s\in I(q)~\mbox{and}~t\in I'(q),\ \ \mbox{and}~s\in I'(q)~\mbox{and}~t\in I(q).$$
For the first case, it has been introduced in \S \ref{subsec:BKR}, and
the theorem holds by Proposition \ref{prop:12relations} (a)-(c)
or by a theorem of Rouquier \cite[Theorem~5.26]{R08}.
For the other cases, the theorem similarly holds.
Indeed, the relations about $H$ and $H'$ are needed for the latter two cases.
\end{proof}

With Theorem \ref{thm:repdatumKq},
 we shall also call the tuple $(E,F,X,T;E',F',X',T';H,H')$
 a representation datum (of $\mathfrak{A}(\fraks\frakl_{\K_q})$) on $\bigoplus_{n\in \bbN^*}RG_n\mod$.
\smallskip

\begin{remark}
When $q\equiv 1~ (\mathrm{mod}~4)$, $-1$ is a square in $\bbF_{q}$ and $\zeta(-1)=1$.
In this case,  we may
similarly obtain an $\mathfrak{A}(\fraks\frakl_{\K_q})$-representation
datum on $\bigoplus_{n\geqslant 0}R\rm{SO}_{2n+1}(q)\mod$, as we do in this subsection.
However, when $q\equiv -1~(\mathrm{mod}~4)$, the sign $\zeta(-1)$ causes extra difficulty and
we do not know whether or not such a representation datum
can also be constructed.
Fortunately,
there is an $\mathfrak{A}(\fraks\frakl_{\K_q})$-representation
datum on $\scrQU^{\SO}_R$ from that on $\bigoplus_{n\geqslant 0}R{\rm O}_{2n+1}(q)\mod$, which
will be constructed in \S \ref{sub:explicit-repdatumSO}.
\end{remark}

\smallskip

\subsection{Spinor norm and determinant of $\O_{2n+1}(q)$}\label{sub:spin+det}
In this section, we investigate functors arising from the spinor norm and the
determinant of $\O_{2n+1}(q)$ and their properties. As a consequence,
we show that the natural transformations $X$ and $X'$ defined in \S \ref{sec:repdatumN}
have the same generalized eigenvalues,
and that if $i$ is a  generalized eigenvalue of $X$ on $F$ (resp. $X'$ on $F'$) then
$-i$ is also a  generalized eigenvalue of $X$ on $F$ (resp. $X'$ on $F'$).

\smallskip

Throughout this section we let $G_n=\SO_{2n+1}(q)$ and
$\widetilde{G}_n=\O_{2n+1}(q)$ with $q$ odd,
and  keep all notation in \S  \ref{sub:ide-funct}-\ref{sec:repdatumN} for $\widetilde{G}_n$
if there is no specification.

\subsubsection{The functor of spinor norms}\label{subsec:spinor}
As in Table \ref{tb:classical},
let $V$ be the underlying orthogonal space of $\widetilde{G}_n$ over the field $\mathbb{F}_q$
 with the bilinear form $\beta$.  Then its quadratic form is
$\varphi(x_1,x_2,\ldots, x_{2n+1})=x_{1}x_{2n+1}+x_2x_{2n}+\cdots +1/2x_{n+1}^2$,
 satisfying $\beta(x,y)=\varphi(x+y)-\varphi(x)-\varphi(y).$
 \smallskip

For an anisotropic vector $v \in V$,
the reflection $\tau_v$  orthogonal to $v$ is defined as:
$$x \mapsto x - 2v\frac{\beta(v,x)}{\beta(v,v)}.$$
We may define a map that sends any element of $\widetilde{G}_n$ arising as reflection orthogonal to some anisotropic
vector $v \in V$ to the value $\beta(v,v)=2\varphi(v)$ modulo ${\mathbb{F}_q^{\times}}^2$. It
extends to a well-defined and unique homomorphism  $\theta_n:\widetilde{G}_n\rightarrow {\mathbb{F}_q^{\times}}/{\mathbb{F}_q^{\times}}^2$,
called  the spinor norm of $\widetilde{G}_n$.

\smallskip

The composition  of the spinor norm
with the Legendre symbol $\zeta:\bbF_q^{\times}\to \{\pm1\}$
affords a linear character of $\widetilde{G}_n$ of order 2,
which will be denoted by $sp_n$. Namely, $sp_n=\zeta\circ \theta_n$ and we also call it
the spinor norm of $\widetilde{G}_n$.
The restriction of $sp_n$ to $G_n$  will also be denoted by $sp_n$. It is exactly the
 (unique) non-trivial linear character
 $\hat z\in\mathcal{E}(G_n,z)$
 corresponding to $1_{G_n}\in\mathcal{E}(G_n^*,1)$ under the Jordan decomposition,
 where $z=-I_{2n}\in Z(G_n^*)=\langle z \rangle$. See  \cite[(8.20)]{CE04} for
 the definition of $\hat z$.
By \cite[Prop.~1.3.1(ii)]{DM}, for
$\chi_{s,\mu}\in \mathcal{E}(G_n,(s))$
we have
	$\hat z\cdot\chi_{s,\mu}=sp_n\cdot\chi_{s,\mu}=\chi_{-s,\mu}$
up to $G_n^*$-conjugacy class of
the pair $(s,\mu)$.
Let $R_{sp_n}$ be the 1-dimensional $R\widetilde{G}_n$-module affording $sp_n$,
whose restriction to $RG_n$ will also be denoted by $R_{sp_n}$ (since it will always be clear which group is considered).
If $E_{\Theta_+,\Theta_-}$ is a quadratic unipotent module of $\SO_{2n+1}$ over $K$,
then by  \cite[Propsition 4.8]{W},
$$K_{sp_n}\otimes E_{\Theta_+,\Theta_-}\cong E_{\Theta_-,\Theta_+},$$ yielding
an involution permutation on quadratic unipotent modules.
In particular, if $E_{t_+,t_-}$ is a cuspidal module, then
 $K_{sp_n}\otimes E_{t_+,t_-}\cong E_{t_-,t_+}$.

\begin{lemma}\label{Lem:spinor}
Let $\dot{t}_{n+1}$ and $\dot{s}_{n+2}$ for $n\geqs 0$ be as defined in \S \ref{sub:lift-refl}.
Then
\begin{itemize}[leftmargin=8mm]
  \item[$(1)$]  $sp_{n+1}(\diag(\lambda,1,\dots,1,\lambda^{-1}))=\zeta(\lambda),$
  \item[$(2)$]  $sp_{n+1}(u)=1$ for any unipotent element $u$,
  \item[$(3)$]  $sp_{n}(-\id_{\widetilde{G}_n})=\zeta(-1)^n$,
  \item[$(4)$]  $sp_{n+1}(\dot{t}_{n+1})=\zeta(2)$ for the $\widetilde{G}_{n+1}$ case,
  $sp_{n+1}(\dot{t}_{n+1})=\zeta(-2)\zeta(-1)^{n}$ for the $G_{n+1}$ case, and $sp_{n+2}(\dot{s}_{n+2})=1$ for both cases.
  \end{itemize}
\end{lemma}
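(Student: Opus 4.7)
The plan is to compute $sp_n$ on each element by expressing it as a product of reflections and applying the defining formula $sp_n(\tau_v)=\zeta(\varphi(v))$ together with the Cartan--Dieudonn\'e theorem. Throughout, let $V$ denote the natural module of $\widetilde G_{n+1}=\O_{2n+3}(q)$ with standard basis $e_1,\ldots,e_{2n+3}$ and quadratic form $\varphi(x)=x_1x_{2n+3}+x_2x_{2n+2}+\cdots+\tfrac12 x_{n+2}^2$.

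For (1), take $v=e_1+\lambda e_{2n+3}$ and $w=e_1+e_{2n+3}$; both are anisotropic with $\varphi(v)=\lambda$ and $\varphi(w)=1$. A direct calculation shows $\tau_w\circ\tau_v$ acts on the hyperbolic plane $\langle e_1,e_{2n+3}\rangle$ as $e_1\mapsto\lambda e_1$, $e_{2n+3}\mapsto\lambda^{-1}e_{2n+3}$ and fixes its orthogonal complement, hence coincides with $\diag(\lambda,1,\ldots,1,\lambda^{-1})$, giving $sp=\zeta(\lambda)\zeta(1)=\zeta(\lambda)$. For (2), every unipotent element $u$ of $\widetilde G_{n+1}$ has all eigenvalues equal to $1$, so it lies in $G_{n+1}$ and is contained in a Sylow $p$-subgroup of $G_{n+1}$ (realised as the $F$-fixed points of the unipotent radical of an $F$-stable Borel), which is a $p$-group with $p=\mathrm{char}(\bbF_q)$ odd; any homomorphism from such a $p$-group into the $2$-group $\{\pm1\}$ is trivial, so $sp_{n+1}(u)=1$. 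For (3), the natural module of $\widetilde G_n$, of dimension $2n+1$, splits as the direct sum of $n$ hyperbolic planes together with an anisotropic line spanned by a vector of norm $1/2$; on each hyperbolic plane $-\id$ equals a product of two reflections whose norms are $1$ and $-1$ (orthogonal to the sum and the difference of the hyperbolic basis vectors), contributing $\zeta(1)\zeta(-1)=\zeta(-1)$, while on the anisotropic line $-\id$ is a single reflection of spinor norm $\zeta(1/2)=\zeta(2)$; multiplying yields $\zeta(2)\zeta(-1)^n$.

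For (4), reading the explicit matrices in \S\ref{sub:lift-refl}: in the $\widetilde G_{n+1}$ case, $\dot t_{n+1}$ is precisely the reflection $\tau_{e_1+e_{2n+3}}$, so $sp_{n+1}(\dot t_{n+1})=\zeta(1)=1$. In the $G_{n+1}$ case, the explicit matrix gives the decomposition $\dot t_{n+1}^{G}=\dot t_{n+1}^{\widetilde G}\cdot(-\id_{V'})$, where $V'=\langle e_2,\ldots,e_{2n+2}\rangle$ is a non-degenerate $(2n+1)$-dimensional subspace of the same Witt type as the natural module of $\widetilde G_n$; applying (3) to $V'$ then yields $sp_{n+1}(\dot t_{n+1}^G)=1\cdot\zeta(2)\zeta(-1)^n$. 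Finally, the explicit matrix for $\dot s_{n+2}$ (which is the same in both cases) identifies it with the image of $\bigl(\begin{smallmatrix}0&1\\-1&0\end{smallmatrix}\bigr)\in\SL_2(\bbF_q)$ under the $\SL_2$-embedding associated with the simple root indexed by $s_{n+2}$; via the Bruhat factorisation
$$\bigl(\begin{smallmatrix}0&1\\-1&0\end{smallmatrix}\bigr)=\bigl(\begin{smallmatrix}1&1\\0&1\end{smallmatrix}\bigr)\bigl(\begin{smallmatrix}1&0\\-1&1\end{smallmatrix}\bigr)\bigl(\begin{smallmatrix}1&1\\0&1\end{smallmatrix}\bigr),$$
$\dot s_{n+2}$ is a product of three unipotent elements of $G_{n+2}$, so (2) forces $sp_{n+2}(\dot s_{n+2})=1$.

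The main bookkeeping obstacle is in (4), specifically the identity $\dot t_{n+1}^{G}\cdot(\dot t_{n+1}^{\widetilde G})^{-1}=-\id_{V'}$, which amounts to comparing the $2\times 2$ antidiagonal block at positions $\{1,2n+3\}$ and the middle $(2n+1)\times(2n+1)$ block of the two explicit matrices (keeping track of the sign discrepancies between the $\O$- and $\SO$-conventions for the Tits lifts in \S\ref{sub:lift-refl}). The remaining work consists only of producing the correct reflection factorisations and evaluating $\varphi$ on their vectors, which is immediate from the definition of the form.
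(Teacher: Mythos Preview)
Your proof is correct and, for parts (1)--(3) and the $\dot s_{n+2}$ computation in (4), proceeds exactly as the paper does: the same reflection factorisations, the same odd-order argument for unipotents, and the same $\SL_2$ factorisation $n_\alpha(1)=u_\alpha(1)u_{-\alpha}(-1)u_\alpha(1)$ (the paper phrases it as $u_{-\alpha}(1)u_\alpha(-1)u_{-\alpha}(1)$, citing the proof of Proposition~\ref{prop:12relations}(e)).

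The only genuine difference is in the $\dot t_{n+1}$ part of (4). The paper first uses $\dot t_{n+1}=\dot s_{n+1}\cdots\dot s_2\,\dot t_1\,\dot s_2^{-1}\cdots\dot s_{n+1}^{-1}$ to reduce to $sp(\dot t_1)$, then writes $\dot t_1$ as a single reflection (for $\O$) or a reflection times $-\id_{2n+3}$ (for $\SO$) and invokes (3) for the full $-\id$. You instead work directly with $\dot t_{n+1}$, identifying $\dot t_{n+1}^{\widetilde G}=\tau_{e_1+e_{2n+3}}$ and $\dot t_{n+1}^{G}=\dot t_{n+1}^{\widetilde G}\cdot(-\id_{V'})$ with $V'=\langle e_2,\dots,e_{2n+2}\rangle$, then apply (3) to the $(2n+1)$-dimensional subspace $V'$. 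Both routes give the same arithmetic; yours is a touch cleaner in making the $\O$/$\SO$ comparison explicit. One caveat: the explicit closed form of $\dot t_{n+1}$ you read off is not displayed in \S\ref{sub:lift-refl} (that section only gives $\dot s_1=\dot t_1$ and $\dot s_i$), but rather in \S\ref{sub:naturaltransf} and \S\ref{sub:explicit-repdatumSO}, which come after this lemma in the paper; you should either cite those or derive the closed form from the conjugacy definition in \S\ref{sub:lift-refl}.
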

\begin{proof}

(1) Denote by $e_i$ the $i$-th standard basis of $V=\bbF_q^{2n+3}$. Let $u_1=e_1+e_{2n+3}$ and $u_2= e_1+\lambda e_{2n+3}$.  It is easy to see that
 $\varphi(u_1)=1$ and $\varphi(u_2)=\lambda.$
Also,
$$\tau_{u_1}=\left(\begin{array}{ccccc} & & -1\\ &\id_{\widetilde{G}_n} &\\-1&&\\ \end{array}\right)\text{ and} \,\,\,\tau_{u_2}=\left(\begin{array}{ccccc} & & -\lambda^{-1}\\ &\id_{\widetilde{G}_n} &\\-\lambda&&\\ \end{array}\right).$$
It follows that  $\diag(\lambda,1,\dots,1,\lambda^{-1})=\tau_{u_1}\tau_{u_2}$, and so $$sp_{n+1}(\diag(\lambda,1,\dots,1,\lambda^{-1}))=\zeta(4\varphi(u_1)\varphi(u_2))=\zeta(\lambda).$$

\smallskip

(2) The oddness of $q$ implies that the order of a unipotent element is odd. Hence $sp_{n+1}(u)=1$ since $sp_{n+1}$ has order $2$.

\smallskip
(3) Let $v=-e_{n+1}$ so that $\varphi(v)=1/2$ and $\tau_v= \diag(\id_n,-1,\id_n).$ We have $sp(\tau_v)=1$.
Note that  $-\id_{G_n}$ can be written as a
product of $\diag(\id_n,-1,\id_n)$ and $d_k=\diag(\id_{k-1},-1,\id_{2n-2k+1},-1,\id_{k-1})$ for all $1\leqs k\leqs n$.
Since $sp_n(d_k)=\zeta(-1)$ for all $1\leqs k\leqs n$,
the assertion follows.

\smallskip

(4) Since $\dot{t}_{n+1}=\dot{s}_{n+1}\cdots\dot{s}_{2} \dot{t}_1 \dot{s}^{-1}_{2}\cdots \dot{s}^{-1}_{n+1}$, we have $sp_{n+1}(\dot{t}_{n+1})=sp_{n+1}(\dot{t}_1).$
For the $\widetilde{G}_{n+1}$ case, let $w_1=e_{n+1}+e_{n+3}$. Then
$$ \tau_{w_1}=\left(\begin{array}{ccccc} \id_n& & & & \\ &&& -1&\\&&1&&\\&-1&&& \\ &&&&\id_{n} \\ \end{array}\right),$$
and so $\dot{t}_1=\tau_{w_1}$.
 A direct computation shows that $\varphi(w_1)=1.$
Hence $sp_{n+1}(\dot{t}_{n+1})=sp_{n+1}(\dot{t}_{1})=\zeta(2).$

\smallskip
Similarly, for the $G_{n+1}$ case, let $w_2=-e_{n+1}-e_{n+3}$. Then
$$ \tau_{w_2}=\left(\begin{array}{ccccc} \id_n& & & & \\ &&& 1&\\&&1&&\\&1&&& \\ &&&&\id_{n} \\ \end{array}\right),$$
 and so $\dot{t}_1=\tau_{w_2}\cdot (-\id_{\widetilde{G}_{n+1}})$.
Also, a direct computation shows that $\varphi(w_2)=-1.$
Hence $sp_{n+1}(\dot{t}_{n+1})=sp_{n+1}(\dot{t}_{1})=sp_{n+1}
(\tau_{w_2})sp_{n+1}(-\id_{\widetilde{G}_{n+1}})=\zeta(-2)\zeta(-1)^{n}.$

\smallskip

Finally, recall from the proof of Proposition \ref{prop:12relations} (e) that $\dot{s}_{n+2}$
can be written as a product of unipotent elements, that is,  $\dot{s}_{n+2}=u_{-\alpha}(1)u_{\alpha}(-1)u_{-\alpha}(1)$.
By (2), we get $sp_{n+2}(\dot{s}_{n+2})=1.$

\end{proof}

Now we let $\widehat{G}_n$ denote $G_n$ or $\widetilde{G}_n$ and
define a functor $$\mathrm{Spin}_n:R\widehat{G}_n\mod\to R\widehat{G}_n\mod$$
by $$M\mapsto R_{sp_n}\otimes_R M.$$
Let  $\mathfrak{I}$ be a
sub-$(R\widehat{G}_n,R\widehat{G}_n)$-module generated by $sp_n(g)g\otimes1-1\otimes g$, where $g\in \widehat{G}_n.$
Then the functor $\mathrm{Spin}_n$
is represented by the $(R\widehat{G}_n,R\widehat{G}_n)$-bimodule
$R\widehat{G}_n\otimes_R R\widehat{G}_n/\mathfrak{I}$,
which is isomorphic to the $(R\widehat{G}_n,R\widehat{G}_n)$-bimodule $R\widehat{G}_n\cdot\mathfrak{sp}_n$ (with $\mathfrak{sp}_n$
formal)  
defined by $$g( x\cdot\mathfrak{sp}_n)h=gx(sp_n(h)h)\cdot\mathfrak{sp}_n\,\,\,\,\,\text{for all}\,\,g,h\in\widehat{G}_n\,\,\text{and}\,\, x\in R\widehat{G}_n.$$

\smallskip

Let $\mathrm{Spin}:=\bigoplus\limits_{n\geqs 0}\mathrm{Spin}_n:\bigoplus_{n\geqslant 0}R\widehat{G}_{n}\mod\to \bigoplus_{n\geqslant 0}R\widehat{G}_{n}\mod$.
It is easy to see that $\mathrm{Spin}^2\cong \Id.$
We define the maps $\Phi_n:F'_{n+1,n}\circ \mathrm{Spin}_n\to \mathrm{Spin}_{n+1}\circ F_{n+1,n}$ and
$\Phi'_n:F_{n+1,n}\circ \mathrm{Spin}_n\to \mathrm{Spin}_{n+1}\circ F'_{n+1,n}$
 via
$$\begin{array}{rcl}
R\widehat{G}_{n+1}e'_{n+1,n}\otimes_{R\widehat{G}_n}R\widehat{G}_n\cdot\mathfrak{sp}_n
&\to& R\widehat{G}_{n+1}\cdot\mathfrak{sp}_{n+1}\otimes_{R\widehat{G}_{n+1}}R\widehat{G}_{n+1}
e_{n+1,n}\\
ge'_{n+1,n}\otimes h\cdot\mathfrak{sp}_n &\mapsto & g\cdot\mathfrak{sp}_{n+1}\otimes sp_n(h)h e_{n+1,n}
\end{array}
$$
and
$$\begin{array}{rcl}
R\widehat{G}_{n+1}e_{n+1,n}\otimes_{R\widehat{G}_n}R\widehat{G}_n\cdot\mathfrak{sp}_n &\to& R\widehat{G}_{n+1}\cdot\mathfrak{sp}_{n+1}\otimes_{R\widehat{G}_{n+1}}R\widehat{G}_{n+1}
e'_{n+1,n}\\
ge_{n+1,n}\otimes h\cdot\mathfrak{sp}_n&\mapsto& g\cdot\mathfrak{sp}_{n+1}\otimes sp_n(h)h e'_{n+1,n},
\end{array}
$$ respectively,
for all $g\in \widehat{G}_{n+1}$ and $h\in \widehat{G}_n$.

%

\begin{lemma}\label{lem:Phi&Phi'}
 Let $\Phi=\bigoplus_{n\in\bbN}\Phi_n$ and  $\Phi'=\bigoplus_{n\in\bbN}\Phi'_n.$
Then  $\Phi$ (resp. $\Phi'$) is an isomorphism between the functors $F'\circ \mathrm{Spin}$ and $\mathrm{Spin}\circ F$ (resp. $F\circ \mathrm{Spin}$ and $\mathrm{Spin}\circ F'$).
\end{lemma}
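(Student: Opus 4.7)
The plan is to verify that $\Phi_n$ is a well-defined $(R\widehat{G}_{n+1},R\widehat{G}_n)$-bimodule homomorphism and then to exhibit an explicit two-sided inverse. The statement for $\Phi'_n$ will follow immediately by the mirror argument (swapping $e_{n+1,n}\leftrightarrow e'_{n+1,n}$), or equivalently by applying $\mathrm{Spin}$ to the isomorphism produced for $\Phi_n$ and invoking $\mathrm{Spin}^2\simeq\Id$.

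The single essential ingredient is the compatibility of spinor norms with the tower of embeddings, namely $sp_{n+1}\circ\iota_{n+1,n}=sp_n$. This is immediate from the description of $sp$ via orthogonal reflections: a reflection $\tau_v$ along an anisotropic vector $v\in V_n$ has the same quadratic value $\varphi(v)$ whether computed in $V_n$ or in the ambient $V_{n+1}$, so its spinor norm is unchanged. Alternatively, one checks directly on the generating elements $\dot{t}_k,\dot{s}_k$ using Lemma \ref{Lem:spinor}.

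Given this compatibility, well-definedness of $\Phi_n$ reduces to a single balancing identity over $R\widehat{G}_n$. The point is that the Levi $\widehat{G}_n$ centralizes the torus factor $\bbF_q^\times$ inside $L_{n,1}$ and normalizes $V_n$, so it commutes with both idempotents $e_{n+1,n}$ and $e'_{n+1,n}=e_\zeta e_{V_n}$. Using this together with the twisted right-action identity $gh\cdot\mathfrak{sp}_{n+1}=sp_{n+1}(h)(g\cdot\mathfrak{sp}_{n+1})\cdot h$ and the tensor relation over $R\widehat{G}_{n+1}$, the two sides of the balancing identity differ exactly by the scalar $sp_{n+1}(h)sp_n(h)^{-1}$, which equals $1$ by the compatibility above. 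Bimodule linearity is then a direct reading of the defining formula.

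For invertibility I would write down the candidate inverse
\begin{equation*}
\Psi_n\colon g\cdot\mathfrak{sp}_{n+1}\otimes he_{n+1,n}\longmapsto ge'_{n+1,n}\otimes sp_n(h)h\cdot\mathfrak{sp}_n,
\end{equation*}
whose well-definedness follows by the mirror argument. The compositions $\Phi_n\Psi_n$ and $\Psi_n\Phi_n$ then reduce to multiplying by $sp_n(h)^2=1$, so both are the identity. The only delicate point throughout is the bookkeeping of spinor-norm twists in the bimodule tensor products: the restriction of $sp_{n+1}$ to $\bbF_q^\times$ equals $\zeta$ (and is trivial on $V_n$), which is exactly what converts $e_\zeta$ into the trivial idempotent and leaves $e_{V_n}$ invariant when one twists by $\mathrm{Spin}$. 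The main obstacle is therefore not any single step but rather the careful tracking of $\pm 1$ signs coming from the twisted right actions on $R\widehat{G}_n\cdot\mathfrak{sp}_n$ and $R\widehat{G}_{n+1}\cdot\mathfrak{sp}_{n+1}$; the compatibility $sp_{n+1}|_{\widehat{G}_n}=sp_n$ is the key identity that ensures all those signs conspire correctly.
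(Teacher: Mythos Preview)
Your approach is essentially the same as the paper's: both verify that $\Phi_n$ is a well-defined $(R\widehat G_{n+1},R\widehat G_n)$-bimodule map using the key identities $sp_{n+1}|_{\widehat G_n}=sp_n$ and $sp_{n+1}|_{U_n}=\mathrm{Inf}_{\bbF_q^\times}^{U_n}(\zeta)$ (which is what converts $e'_{n+1,n}$ to $e_{n+1,n}$ under the spinor twist). One minor point: you say well-definedness ``reduces to a single balancing identity over $R\widehat G_n$,'' but in fact two separate checks are needed---balancing across the tensor over $R\widehat G_n$, and independence of the representative $g$ of $ge'_{n+1,n}\in R\widehat G_{n+1}e'_{n+1,n}$ (equivalently, compatibility with the absorption $ue'_{n+1,n}=sp_{n+1}(u)e'_{n+1,n}$ for $u\in U_n$); you do supply the relevant fact about $sp_{n+1}|_{U_n}$ later, so nothing is missing, but the exposition should not fold these into one. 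For bijectivity you write down the explicit inverse $\Psi_n$, whereas the paper argues by surjectivity together with equality of $R$-ranks; your route is a bit more direct.
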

\begin{proof}
The arguments for the maps $\Phi$ and $\Phi'$ are similar.
So we shall only prove the case of $\Phi$, and it suffices to show that $\Phi_n$ is
an $(R\widehat{G}_{n+1},R\widehat{G}_n)$-bimodule isomorphism for  each $n\in\bbN$.
We first prove that the map $\Phi_n$ is well-defined. In fact, it suffices to prove that
\begin{equation}\label{Phi:1}
\Phi_n(g k^{-1}e'_{n+1,n}\otimes kh\cdot\mathfrak{sp}_n)=\Phi_n(g e'_{n+1,n}\otimes h\cdot\mathfrak{sp}_n)
\end{equation}
and \begin{equation}\label{Phi:2}
\Phi_n( gue'_{n+1,n}\otimes h\cdot\mathfrak{sp}_n)=\Phi_n( sp_{n+1}(u) g e'_{n+1,n}\otimes h\cdot\mathfrak{sp}_n)
\end{equation}
for all $k\in \widehat{G}_n$ and $u\in U_{n}.$ This is because
the restriction of $sp_{n+1}$
to $U_n$ is equal to  $\mathrm{Inf}_{\bbF^{\times}_q}^{U_n}(\zeta)$ by Lemma \ref{Lem:spinor},
and so $e'_{n+1,n}=\frac{1}{|U_{n}|}\sum\limits_{u\in U_{n}}sp_{n+1}(u)u$ (since $U_n=V_n\rtimes\bbF^{\times}_q$).

\smallskip

For Equality (\ref{Phi:1}), we have
$$\begin{array}{rclr}
 \Phi_n(g k^{-1}e'_{n+1,n}\otimes kh\cdot\mathfrak{sp}_n) &=& gk^{-1}\cdot\mathfrak{sp}_{n+1}\otimes sp_n(kh)kh e_{n+1,n}  \\
  &=& g\cdot\mathfrak{sp}_{n+1}\otimes (sp_n(k^{-1})k^{-1})sp_n(kh)kh e_{n+1,n}\\
  &=& g\cdot\mathfrak{sp}_{n+1}\otimes sp_n(h)h e_{n+1,n}\\
  &=& \Phi_n(g e'_{n+1,n}\otimes h\cdot\mathfrak{sp}_n),
\end{array}
$$
where 
the second equality holds since
$$k^{-1}\cdot\mathfrak{sp}_{n+1}=\mathfrak{sp}_{n+1}\cdot sp_{n+1}(k^{-1})k^{-1}=\mathfrak{sp}_{n+1}\cdot sp_{n}(k^{-1})k^{-1}.$$
For Equality (\ref{Phi:2}), we
have
$$\begin{array}{rclr}
\Phi_n( gue'_{n+1,n}\otimes h\cdot\mathfrak{sp}_n)&=&
 gu\cdot\mathfrak{sp}_{n+1}\otimes sp_n(h)h e_{n+1,n} &  \\
  &=&g\cdot\mathfrak{sp}_{n+1} (sp_{n+1}(u)u)\otimes sp_n(h)h e_{n+1,n} &\\
  &=&sp_{n+1}(u)g\cdot\mathfrak{sp}_{n+1}\otimes sp_n(h)h u e_{n+1,n} &\\
  &=&sp_{n+1}(u)g\cdot\mathfrak{sp}_{n+1}\otimes sp_{n}(h)h e_{n+1,n}& \\
  &=&\Phi_n(sp_{n+1}(u) g e'_{n+1,n}\otimes h\cdot\mathfrak{sp}_n). &
\end{array}
$$

Now we prove that the map $\Phi_n$ is an $(R\widehat{G}_{n+1},R\widehat{G}_n)$-bimodule map.
Obviously,  $\Phi_n$ is a left $R\widehat{G}_{n+1}$-module map.
The conclusion that $\Phi_n$ is also a right $ R\widehat{G}_n$-module map follows from
the following:
$$\begin{array}{rcl}
\Phi_n(g e'_{n+1,n}\otimes h\cdot\mathfrak{sp}_n k)&=&\Phi_n(g e'_{n+1,n}\otimes sp_n(k)hk\cdot\mathfrak{sp}_n)\\
&=&g\cdot\mathfrak{sp}_{n+1}\otimes sp_n(k)sp(hk)hk e_{n+1,n}\\
&=& g\cdot\mathfrak{sp}_{n+1}\otimes sp_n(h)hk e_{n+1,n}\\
&=&\Phi_n(g e'_{n+1,n}\otimes h\cdot\mathfrak{sp}_n)k
\end{array}
$$
for any $k\in \widehat{G}_n$.

\smallskip

Finally, we prove that the map $\Phi_n$ is an isomorphism.
The surjection of $\Phi_n$ is clear since every element
$g\cdot\mathfrak{sp}_{n+1}\otimes h e_{n+1,n}$ has a  pre-image
$ge_{n+1,n}\otimes sp_n(h)h\cdot\mathfrak{sp}_{n}.$
It is easy to see that $R\widehat{G}_{n+1}e'_{n+1,n}\otimes_{R\widehat{G}_n}R\widehat{G}_n\cdot\mathfrak{sp}_n$ and
$R\widehat{G}_{n+1}\cdot\mathfrak{sp}_{n+1}\otimes_{R\widehat{G}_{n+1}}R\widehat{G}_{n+1}e_{n+1,n}$
are both $R$-free with the same rank equal to that of the
bimodule $R\widehat{G}_{n+1,n}e_{n+1,n}$ (also $R\widehat{G}_{n+1,n}e'_{n+1,n}$).
Since $\Phi_n$ is an $R$-free map, we get that $\Phi_n$ is an isomorphism.
\end{proof}

Now we investigate the influence of the functor $\mathrm{Spin}$ on
the representation datum on $\bigoplus_{n\geqslant 0}R\widetilde{G}_n\mod$ constructed in \S \ref{sec:repdatumN}.
The similar investigation of that on $\scrQU^{\SO}_R$ will be done in \S \ref{sub:explicit-repdatumSO}.
%

\begin{proposition}\label{prop:spinor} 
Let $(E,F,X,T;E',F',X',T';H,H')$ be
the representation datum on $\bigoplus_{n\geqslant 0}R\widetilde{G}_{n}\mod$ constructed in \S \ref{sec:repdatumN}.
\begin{itemize}[leftmargin=8mm]
 \item[$(a)$] If $\Phi$ and $\Phi'$ are as defined before Lemma \ref{lem:Phi&Phi'}, then the following  diagrams commute:
 \begin{itemize}
 \item[$(a1)$]
$$\xymatrix{F'\circ \mathrm{Spin}\ar[r]_{\Phi}^{\sim}\ar[d]^{X'1_{\rm {Spin}}} &\mathrm{Spin}\circ F \ar[d]^{\beta1_{\rm {Spin}} X}\\F'\circ
\mathrm{Spin}\ar[r]_{\Phi}^{\sim}& \mathrm{Spin}\circ F
}$$
\item[$(a2)$]
$$\xymatrix{F\circ \mathrm{Spin}\ar[r]_{\Phi'}^{\sim}\ar[d]^{X1_{\rm {Spin}}} &\mathrm{Spin}\circ F' \ar[d]^{\beta1_{\rm {Spin}} X'}\\F\circ
\mathrm{Spin}\ar[r]_{\Phi'}^{\sim}& \mathrm{Spin}\circ F'
}$$
\end{itemize}
where $\beta=\zeta(2).$
\item[$(b)$] Let $\Phi_1=(\Phi1_{F'})\circ(1_{F'}\Phi)$ (resp. $\Phi_1'=(\Phi'1_{F})\circ(1_{F}\Phi')$) be
the isomorphism between the functors $(F')^2\circ \mathrm{Spin}$ and $\mathrm{Spin}\circ F^2$
(resp. $F^2\circ \mathrm{Spin}$ and $\mathrm{Spin}\circ (F')^2$). Then the following  diagrams commute:
 \begin{itemize}
 \item[$(b1)$]
$$\xymatrix{F'\circ F'\circ \mathrm{Spin}\ar[r]_{\Phi_1}^{\sim}\ar[d]^{T'1_{\rm {Spin}}} &\mathrm{Spin}\circ F\circ F \ar[d]^{1_{\rm {Spin}} T}\\F'\circ F'\circ
\mathrm{Spin}\ar[r]_{\Phi_1}^{\sim}& \mathrm{Spin}\circ F\circ F
}$$
 \item[$(b2)$]
  $$\xymatrix{F\circ F\circ \mathrm{Spin}\ar[r]_{\Phi_1'}^{\sim}\ar[d]^{T1_{\rm {Spin}}} &\mathrm{Spin}\circ F'\circ F' \ar[d]^{1_{\rm {Spin}} T'}\\F\circ F\circ
\mathrm{Spin}\ar[r]_{\Phi_1'}^{\sim}& \mathrm{Spin}\circ F'\circ F'
}$$
\end{itemize}
\item[$(c)$] Let $\Phi_2=(\Phi1_{F})\circ(1_{F'}\Phi')$ (resp. $\Phi_2'=(\Phi'1_{F'})\circ(1_{F}\Phi)$)
 be the isomorphism between the functors $F'\circ F\circ \mathrm{Spin}$ and $\mathrm{Spin}\circ F\circ F'$
 (resp.  $F\circ F'\circ \mathrm{Spin}$ and $\mathrm{Spin}\circ F'\circ F$).
 Then the following  diagrams commute:
  \begin{itemize}
 \item[$(c1)$]
$$\xymatrix{F'\circ F\circ \mathrm{Spin}\ar[r]_{\Phi_2}^{\sim}\ar[d]^{H1_{\rm {Spin}}} &\mathrm{Spin}\circ F\circ F' \ar[d]^{q\cdot1_{\rm {Spin}} H'}\\F\circ F'\circ
\mathrm{Spin}\ar[r]_{\Phi'_2}^{\sim}& \mathrm{Spin}\circ  F'\circ F}$$
 \item[$(c2)$]
$$\xymatrix{F\circ F'\circ  \mathrm{Spin}\ar[r]_{\Phi_2'}^{\sim}\ar[d]^{H'1_{\rm {Spin}}} &\mathrm{Spin}\circ  F'\circ F \ar[d]^{q^{-1}\cdot1_{\rm {Spin}} H}\\F'\circ F\circ
\mathrm{Spin}\ar[r]_{\Phi_2}^{\sim}& \mathrm{Spin}\circ  F\circ F'
}$$
\end{itemize}
\end{itemize}
\end{proposition}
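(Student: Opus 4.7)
The strategy is to introduce a single unifying tool --- the ``spinor twist'' automorphism --- from which all six diagrams follow by parallel computations. Define the $R$-algebra automorphism $\tau_n : RG_n \to RG_n$ by $\tau_n(g) = sp_n(g)\,g$ on $g \in G_n$ and extend $R$-linearly; since $sp_n$ is a linear character, $\tau_n$ is multiplicative and involutive. Comparing the left and right actions on the representing bimodule $RG_n\mathfrak{sp}_n$ yields the basic identity
\[
y \cdot \mathfrak{sp}_n \;=\; \mathfrak{sp}_n \cdot \tau_n(y) \qquad \text{for every } y \in RG_n,
\]
which is the single ingredient that transports natural transformations across $\Phi$ and $\Phi'$.

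Using Lemma \ref{Lem:spinor} in the $\widetilde{G}$-case, I have $sp_{n+1}(\dot{t}_{n+1})=1$ and $sp_{n+2}(\dot{s}_{n+2})=1$, while $sp_{n+1}|_{U_n}$ is the inflation of the Legendre symbol $\zeta$; consequently $\tau$ fixes the lifts $\dot{t}_{n+1}, \dot{s}_{n+2}$ and swaps the paired idempotents, $\tau(e_{r+1,r})=e'_{r+1,r}$ and $\tau(e'_{r+1,r})=e_{r+1,r}$. By multiplicativity this immediately gives $\tau(X)=X'$, $\tau(X')=X$, $\tau(T)=T'$, $\tau(T')=T$. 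Applying $\tau$ to $H$ produces $q \cdot e'_{r+2,r+1}e_{r+1,r}\,\dot{s}_{r+2}\,e_{r+2,r+1}e'_{r+1,r}$; to compare with $H' = e'_{r+2,r+1}e_{r+1,r}\,\dot{s}_{r+2}^{-1}\,e_{r+2,r+1}e'_{r+1,r}$ I use $\dot{s}_{r+2}^{-1}=\dot{s}_{r+2}\cdot s$ with $s=\mathrm{diag}(-1,-1,\id_{G_r},-1,-1)$ (as in the proof of Proposition~\ref{prop:12relations}(e)) and read off the scalar by which $s$ acts on the mixed idempotents; the resulting identities are $\tau(H)=q\,H'$ and, dually, $\tau(H')=q^{-1}H$.

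With these, each diagram collapses to a trace-through computation. For (a1), starting from $ge'_{n+1,n}\otimes h\mathfrak{sp}_n$: going down-then-right produces $q^n g\,e'_{n+1,n}\dot{t}_{n+1}\mathfrak{sp}_{n+1}\otimes sp_n(h)\,h\,e_{n+1,n}$; the twist identity rewrites $e'_{n+1,n}\dot{t}_{n+1}\mathfrak{sp}_{n+1}$ as $\mathfrak{sp}_{n+1}\,e_{n+1,n}\dot{t}_{n+1}$, and transporting this $RG_{n+1}$-factor across the tensor and using that $h\in G_n$ centralizes the idempotents and $\dot{t}_{n+1}$ (already recorded in \S\ref{sub:naturaltransf}) lands on the right-then-down image $q^n g\,\mathfrak{sp}_{n+1}\otimes sp_n(h)\,h\,e_{n+1,n}\dot{t}_{n+1}e_{n+1,n}$. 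The cases (a2), (b1), (b2) are formally identical after swapping primed and unprimed symbols or replacing $\dot{t}_{n+1}$ by $\dot{s}_{r+2}$; (c1) and (c2) run through the composed isomorphisms $\Phi_2=(\Phi 1_F)\circ(1_{F'}\Phi')$ and $\Phi'_2=(\Phi' 1_{F'})\circ(1_F\Phi)$, with the factor $q$ (respectively $q^{-1}$) produced by the $\tau$-conversion of $H$ and $H'$ computed above.

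The only genuinely delicate point is the sign bookkeeping in part (c): the scalar by which $s$ acts on $e_{\zeta\times 1}$ and on $e_{1\times\zeta}$ must be tracked through the mixed idempotents $e_{r+2,r+1}e'_{r+1,r}=e_{\zeta\times 1}e_{V_{r+2,r}}$ and $e'_{r+2,r+1}e_{r+1,r}=e_{1\times\zeta}e_{V_{r+2,r}}$. The two contributions multiply to $\zeta(-1)^2=1$, and this is what allows the factor $q$ (rather than some modification by $\zeta(-1)$) to appear cleanly in the diagrams; the remainder of the verification reduces to manipulations that have already been used in the proof of Proposition~\ref{prop:12relations}.
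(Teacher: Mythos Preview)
Your approach is correct and is essentially the same as the paper's. The paper proves (a1), (a2), and (b1) by establishing directly from Lemma~\ref{Lem:spinor} the identities $e'_{n+1,n}\cdot\mathfrak{sp}_{n+1}=\mathfrak{sp}_{n+1}\cdot e_{n+1,n}$ and $\dot{t}_{n+1}\cdot\mathfrak{sp}_{n+1}=\mathfrak{sp}_{n+1}\cdot\dot{t}_{n+1}$, whence $X'_{n+1,n}\cdot\mathfrak{sp}_{n+1}=\mathfrak{sp}_{n+1}\cdot X_{n+1,n}$ and $T'_{n+2,n}\cdot\mathfrak{sp}_{n+2}=\mathfrak{sp}_{n+2}\cdot T_{n+2,n}$; the remaining diagrams are declared ``similar and left to the reader''. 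Your spinor-twist automorphism $\tau_n$ is precisely a uniform name for these same elementwise identities, so the substance is identical.

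One small caution on part~(c): your claim that ``the two contributions multiply to $\zeta(-1)^2=1$'' should be re-examined. The substitution $\dot{s}_{r+2}=\dot{s}_{r+2}^{-1}\cdot s$ introduces a \emph{single} factor $s$, which then acts on a \emph{single} mixed idempotent (either $e_{\zeta\times 1}$ or $e_{1\times\zeta}$, depending on which side you push it to) with scalar $\zeta(-1)$; there is no evident second factor to cancel it. Since the paper does not write out part~(c) either, you should verify the scalar in the diagram independently rather than rely on a cancellation argument.
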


\begin{proof} Write $M=R\widetilde{G}_{n+1}e_{n+1,n}$ and $M'=R\widetilde{G}_{n+1}e'_{n+1,n}$.
\smallskip

For the diagram (a1), we note that
$$(1_{\rm {Spin}_{n+1}}X_{n+1,n})\circ\Phi_n:F'_{n+1,n}\circ \mathrm{Spin}_n\to \mathrm{Spin}_{n+1}\circ F_{n+1,n}$$
 is represented by the composition of the $(R\widetilde{G}_{n+1},R\widetilde{G}_n)$-bimodule maps
 $$
 \begin{array}{ccccc}
M'\otimes_{R\widetilde{G}_n}R\widetilde{G}_n\cdot\mathfrak{sp}_n
&\rightarrow & R\widetilde{G}_{n+1}\cdot\mathfrak{sp}_{n+1}\otimes_{R\widetilde{G}_{n+1}}M
&\rightarrow & R\widetilde{G}_{n+1}\cdot\mathfrak{sp}_{n+1}\otimes_{R\widetilde{G}_{n+1}}M\\
ge'_{n+1,n}\otimes h\cdot\mathfrak{sp}_n&\mapsto& g\cdot\mathfrak{sp}_{n+1}\otimes sp_n(h)h e_{n+1,n}
&\mapsto& g\cdot\mathfrak{sp}_{n+1}\otimes sp_n(h)h X_{n+1,n},
 \end{array}
$$
for all $g\in \widetilde{G}_{n+1}$ and $h\in \widetilde{G}_n.$
Also, we note that
$$\Phi_{n}\circ (X'_{n+1,n}1_{\rm {Spin}_n}):F'_{n+1,n} \circ \mathrm{Spin}_n\to \mathrm{Spin}_{n+1}\circ F_{n+1,n}$$
 is represented by the composition of the $(R\widetilde{G}_{n+1},R\widetilde{G}_n)$-bimodule maps
$$
 \begin{array}{cccl}
\noindent M'\otimes_{R\widetilde{G}_n}R\widetilde{G}_n\cdot\mathfrak{sp}_n
&\to&
M'\otimes_{R\widetilde{G}_n}R\widetilde{G}_n\cdot\mathfrak{sp}_n
&\to
 R\widetilde{G}_{n+1}\cdot\mathfrak{sp}_{n+1}\otimes_{R\widetilde{G}_{n+1}}M\\
ge'_{n+1,n}\otimes h\cdot\mathfrak{sp}_n &\mapsto & gX'_{n+1,n}\otimes h\cdot\mathfrak{sp}_n
&\mapsto  gX'_{n+1,n}\cdot\mathfrak{sp}_{n+1}\otimes sp_n(h)h e_{n+1,n}.
 \end{array}
$$
Lemma \ref{Lem:spinor} yields that
 $$e'_{n+1,n}\cdot\mathfrak{sp}_{n+1}=\mathfrak{sp}_{n+1}\cdot ( \frac{1}{|U_{n}|}\sum\limits_{u\in U_{n}}sp_{n+1}(u)^2u) =\mathfrak{sp}_{n+1}\cdot e_{n+1,n}$$ and $$\dot{t}_{n+1}\cdot\mathfrak{sp}_{n+1}
=\mathfrak{sp}_{n+1}\cdot sp_{n+1}(\dot{t}_{n+1})\dot{t}_{n+1}=\zeta(2)\mathfrak{sp}_{n+1}\cdot\dot{t}_{n+1}.$$
Since $X'_{n+1,n}=e'_{n+1,n}\dot{t}_{n+1}e'_{n+1,n}$ by definition,
we have $X'_{n+1,n}\cdot\mathfrak{sp}_{n+1}=\zeta(2)\mathfrak{sp}_{n+1}\cdot X_{n+1,n}$.
It follows that
$$gX'_{n+1,n}\cdot\mathfrak{sp}_{n+1}\otimes sp_n(h)h e_{n+1,n}=\zeta(2)g\cdot\mathfrak{sp}_{n+1}\otimes sp_n(h)h X_{n+1,n}.$$
Hence
 $(1_{\rm {Spin_{n+1}}} X_{n+1,n})\circ\Phi_n=\Phi_n\circ (\zeta(2)X'_{n+1,n}1_{\rm {Spin}_n})$,
 and so $(1_{\rm {Spin}} X)\circ\Phi=\Phi\circ (\zeta(2)X'1_{\rm {Spin}})$, i.e.,
the diagram (a1) commutes.

\smallskip

 The commutativity of the diagram (a2) follows by
 $X_{n+1,n}\cdot\mathfrak{sp}_{n+1}=\zeta(2)\mathfrak{sp}_{n+1}\cdot X'_{n+1,n}$ and
 the facts  that
 $$(1_{\rm {Spin_{n+1}}}X'_{n+1,n})\circ\Phi_n':F_{n+1,n}
   \circ \mathrm{Spin}_n\to \mathrm{Spin}_{n+1}\circ  F'_{n+1,n}$$
   is  represented by  the composition of the $(R\widetilde{G}_{n+1},R\widetilde{G}_n)$-bimodule maps
     $$
 \begin{array}{cccl}
M\otimes_{R\widetilde{G}_n}R\widetilde{G}_n\cdot\mathfrak{sp}_n
&\to& R\widetilde{G}_{n+1}\cdot\mathfrak{sp}_{n+1}\otimes_{R\widetilde{G}_{n+1}}M'
&\to R\widetilde{G}_{n+1}\cdot\mathfrak{sp}_{n+1}\otimes_{R\widetilde{G}_{n+1}}M'\\
ge_{n+1,n}\otimes h\cdot\mathfrak{sp}_n&\mapsto &
 g\cdot\mathfrak{sp}_{n+1}\otimes sp_n(h)h e'_{n+1,n}
 &\mapsto  g\cdot\mathfrak{sp}_{n+1}\otimes sp_n(h)h X'_{n+1,n},
 \end{array}
$$
  and that
 $$(X_{n+1,n}1_{\rm {Spin}_n})\circ\Phi_n': F_{n+1,n}\circ \mathrm{Spin}_n\to \mathrm{Spin}_{n+1}\circ F'_{n+1,n}$$
 is  represented by  the composition of the $(R\widetilde{G}_{n+1},R\widetilde{G}_n)$-bimodule maps
     $$
 \begin{array}{ccccc}
 M\otimes_{R\widetilde{G}_n}R\widetilde{G}_n\cdot\mathfrak{sp}_n
&\to&
M\otimes_{R\widetilde{G}_n}R\widetilde{G}_n\cdot\mathfrak{sp}_n
&\to& R\widetilde{G}_{n+1}\cdot\mathfrak{sp}_{n+1}\otimes_{R\widetilde{G}_{n+1}}M'\\
 ge_{n+1,n}\otimes h\cdot\mathfrak{sp}_n &\mapsto & gX_{n+1,n}\otimes h\cdot\mathfrak{sp}_n
 &\mapsto & gX_{n+1,n}\cdot\mathfrak{sp}_{n+1}\otimes sp_n(h)h e'_{n+1,n}.
 \end{array}
$$

\smallskip

For the diagram (b1), we write  $N=R\widetilde{G}_{n+2}e_{n+2,n}$ and $N'=R\widetilde{G}_{n+2}e'_{n+2,n}$.
By definition,
the restriction  of $\Phi_1$ to   $F_{n+2}\circ F_{n+1}\circ \rm{Spin}_n$
is represented by the $(RG_{n+2},RG_n)$-bimodule map
$$
\begin{array}{rcl}
  N'\otimes_{R\widetilde{G}_n}R\widetilde{G}_n\cdot
  \mathfrak{sp}_n &\to&
   R\widetilde{G}_{n+2}\cdot\mathfrak{sp}_{n+2}\otimes_{R\widetilde{G}_{n+2}}
   N  \\
  ge'_{n+2,n}\otimes h\cdot\mathfrak{sp}_n&\mapsto& g\cdot\mathfrak{sp}_{n+2}\otimes sp_n(h)h e_{n+2,n}
\end{array}
$$
for all $g\in \widetilde{G}_{n+2}$ and $h\in \widetilde{G}_n$.
So the restriction  of $ (T'1_{\rm {Spin}})\circ\Phi_1$ to $F_{n+2}\circ F_{n+1}\circ {\rm Spin}_n$
is represented by the composition of the $(R\widetilde{G}_{n+2},R\widetilde{G}_n)$-bimodule maps
$$
\begin{array}{ccccc}
N'\otimes_{R\widetilde{G}_n}R\widetilde{G}_n\cdot
  \mathfrak{sp}_n & \to &
   R\widetilde{G}_{n+2}\cdot\mathfrak{sp}_{n+2}\otimes_{R\widetilde{G}_{n+2}}N
   & \to &
   R\widetilde{G}_{n+2}\cdot\mathfrak{sp}_{n+2}\otimes_{R\widetilde{G}_{n+2}}N\\
ge'_{n+2,n}\otimes h\cdot\mathfrak{sp}_n& \mapsto  &
g\cdot\mathfrak{sp}_{n+2}\otimes sp_n(h)h e_{n+2,n}& \mapsto  &
g\cdot\mathfrak{sp}_{n+2}\otimes sp_n(h)h T_{n+2,n},
\end{array}
$$
and the restriction of
$\Phi_1\circ(1_{\rm {Spin}} T')$ to $F_{n+2}\circ F_{n+1}\circ {\rm Spin}_n$ is
 represented by the composition of the $(R\widetilde{G}_{n+2},R\widetilde{G}_n)$-bimodule maps
$$
\begin{array}{ccccc}
N'\otimes_{R\widetilde{G}_n}R\widetilde{G}_n\cdot
  \mathfrak{sp}_n & \to &
   N'\otimes_{R\widetilde{G}_n}R\widetilde{G}_n\cdot
  \mathfrak{sp}_n & \to &
   R\widetilde{G}_{n+2}\cdot\mathfrak{sp}_{n+2}\otimes_{R\widetilde{G}_{n+2}} N\\
ge'_{n+2,n}\otimes h\cdot\mathfrak{sp}_n& \mapsto  & gT'_{n+2,n}\otimes h\cdot\mathfrak{sp}_n& \mapsto  & gT'_{n+2,n}\cdot\mathfrak{sp}_{n+2}\otimes sp_n(h)h e_{n+2,n}.
\end{array}
$$
It is easy to see the two compositions are equal
since $T'_{n+1,n}\cdot\mathfrak{sp}_{n+1}=\mathfrak{sp}_{n+1}\cdot T_{n+2,n}.$
Hence $(1_{\rm {Spin}} T)\circ\Psi=\Psi\circ (T'1_{\rm {Spin}}),$ i.e., the diagram (b1) commutes.

\smallskip

The proofs for the other diagrams are similar and left to the reader.
\end{proof}

 \begin{remark}
All relations in Proposition
 \ref{prop:12relations} are preserved if we preserve $X, T, T'$, replace $X' $ by $\zeta(2)e'_{r+1,r} \dot{t}_{r+1}\,e'_{r+1,r},$ $H $ by $\sqrt{q}e_{{r+2,r+1}}e'_{r+1,r} s_{r+2}\,e'_{{r+2,r+1}}e_{r+1,r}$
 and $H' $ by $\sqrt{q}e'_{{r+2,r+1}}e_{r+1,r} s_{r+2}\,e_{{r+2,r+1}}e'_{r+1,r}$.
 In this case, the functor $\mathrm{Spin}$  interchanges $(E,F,X,T)$ and $(E',F',X',T')$,  and  interchanges $H$ and $H'$.
 However, we prefer to avoid the appearance of $\sqrt{q}$ in our construction of representation datum.
  \end{remark}

 \subsubsection{Determinant functor of  $\O_{2n+1}(q)$}\label{subsec:det}
Let $\mathrm{det}_n:\widetilde{G}_n\to \{\pm 1\}$ be the determinant homomorphism of $\widetilde{G}_n$
and $R_{\mathrm{det}_n}$ be the corresponding 1-dimensional module.
Similar to the functor ${\rm Spin}_n$, we define a functor $$\mathrm{Det}_n:R\widetilde{G}_n\mod\to R\widetilde{G}_n\mod$$
by $$M\mapsto R_{{\rm {det}_n}}\otimes_{R} M.$$
Let $\mathfrak{J}$ be a sub-$(R\widetilde{G}_n,R\widetilde{G}_n)$-module
 generated by $\mathrm{det}(g)g\otimes1-1\otimes g$, where $g\in \widetilde{G}_n.$ Then the functor $\mathrm{Det}_n$
is represented by the $(R\widetilde{G}_n,R\widetilde{G}_n)$-bimodule $R\widetilde{G}_n\otimes_R R\widetilde{G}_n/\mathfrak{J}$,
which is
isomorphic to the $(R\widetilde{G}_n,R\widetilde{G}_n)$-bimodule $R\widetilde{G}_n\cdot\mathfrak{det}_n$ (with $\mathfrak{det}_n$ formal)
defined by $$g( x\cdot\mathfrak{det}_n)h=gx({\rm det}_n(h)h)\cdot\mathfrak{det}_n
\,\,\,\,\,\text{for all }\,\,g,h\in\widetilde{G}_n\,\,\text{and}\,\, x\in R\widetilde{G}_n.$$

Let $$\mathrm{Det}:=\bigoplus\limits_{n\geqs 0}\mathrm{Det}_n:
\bigoplus\limits_{n\geqslant 0}R\widetilde{G}_{n}\mod\to \bigoplus\limits_{n\geqslant 0}R\widetilde{G}_{n}\mod.$$
Then $\mathrm{Det}^2\cong\Id$.
We define the maps $\Psi_n:F_{n+1,n}\circ \mathrm{Det}_n\to \mathrm{Det}_{n+1}\circ F_{n+1,n}$ and
$\Psi'_n:F'_{n+1,n}\circ \mathrm{Det}_n\to \mathrm{Det}_{n+1}\circ F'_{n+1,n}$
 via
$$\begin{array}{rcl}
R\widetilde{G}_{n+1}e_{n+1,n}\otimes_{R\widetilde{G}_n}R\widetilde{G}_n\cdot\mathfrak{det}_n
&\to& R\widetilde{G}_{n+1}\cdot\mathfrak{det}_{n+1}\otimes_{R\widetilde{G}_{n+1}}R\widetilde{G}_{n+1}
e_{n+1,n}\\
ge_{n+1,n}\otimes h\cdot\mathfrak{det}_n &\mapsto & g\cdot\mathfrak{det}_{n+1}\otimes {\rm det}_n(h)h e_{n+1,n}
\end{array}
$$
and
$$\begin{array}{rcl}
R\widetilde{G}_{n+1}e'_{n+1,n}\otimes_{R\widetilde{G}_n}R\widetilde{G}_n\cdot\mathfrak{det}_n &\to& R\widetilde{G}_{n+1}\cdot\mathfrak{det}_{n+1}\otimes_{R\widetilde{G}_{n+1}}R\widetilde{G}_{n+1}
e'_{n+1,n}\\
ge'_{n+1,n}\otimes h\cdot\mathfrak{det}_n&\mapsto& g\cdot\mathfrak{det}_{n+1}\otimes {\rm det}_n(h)h e'_{n+1,n},
\end{array}
$$ respectively,
for all $g\in \widetilde{G}_{n+1}$ and $h\in \widetilde{G}_n$.
Also, we have the following analogous results of Lemma \ref{lem:Phi&Phi'} and Proposition \ref{prop:spinor} for $\mathrm{Det}$,
whose proofs are left to the reader.


\begin{proposition}\label{prop:det}
 Let $\Psi=\bigoplus_{n\in\bbN}\Psi_n$ and  $\Psi'=\bigoplus_{n\in\bbN}\Psi'_n.$
Then $\Psi$ (resp. $\Psi'$) is an isomorphism between the functors $F\circ \mathrm{Det}$ and $\mathrm{Det}\circ F$ (resp. $F'\circ \mathrm{Det}$ and $\mathrm{Det}\circ F'$).
Also, we have the following statements.

 \begin{itemize}[leftmargin=8mm]
 \item[$(a)$] The isomorphisms $\Psi$ and $\Psi'$ satisfy the following commutative diagrams:
    \begin{itemize}
      \item[$(a1)$] $$\xymatrix{F\circ  \mathrm{Det}\ar[r]_{\Psi}^{\sim}\ar[d]^{X1_{\rm {Det}}} &
\mathrm{Det}\circ  F \ar[d]^{-1_{\rm {Det}} X}\\F\circ
\mathrm{Det}\ar[r]_{\Psi}^{\sim}& \mathrm{Det}\circ  F
}$$
      \item[$(a2)$] $$\xymatrix{F'\circ  \mathrm{Det}\ar[r]_{\Psi'}^{\sim}\ar[d]^{X'1_{\rm {Det}}} &\mathrm{Det}\circ F' \ar[d]^{-1_{\rm {Det}} X'}\\F'\circ
\mathrm{Det}\ar[r]_{\Psi'}^{\sim}& \mathrm{Det}\circ  F'
}$$
   \end{itemize}
 \item[$(b)$]
The isomorphisms $\Psi_1=(\Psi1_{F})\circ(1_{F}\Psi)$ between the functors $F^2\circ \mathrm{Det}$ and $\mathrm{Det}\circ F^2$
and $\Psi_1'=(\Psi'1_{F'})\circ(1_{F'}\Psi')$ between
 functors $(F')^2\circ \mathrm{Det}$ and $\mathrm{Det}\circ (F')^2$
 satisfy the following commutative diagrams:
     \begin{itemize}
      \item[$(b1)$]
$$\xymatrix{F\circ F\circ  \mathrm{Det}\ar[r]_{\Psi_1}^{\sim}\ar[d]^{T1_{\rm {Det}}} &\mathrm{Det}\circ  F\circ F \ar[d]^{1_{\rm {Det}} T}\\F\circ F\circ
\mathrm{Det}\ar[r]_{\Psi_1}^{\sim}& \mathrm{Det}\circ  F\circ F
}$$
   \item[$(b2)$]
$$\xymatrix{F'\circ F'\circ  \mathrm{Det}\ar[r]_{\Psi_1'}^{\sim}\ar[d]^{T'1_{\rm {Det}}} &\mathrm{Det}\circ  F'\circ F' \ar[d]^{1_{\rm {Det}} T'}\\F'\circ F'\circ
\mathrm{Det}\ar[r]_{\Psi1'}^{\sim}& \mathrm{Det}\circ  F'\circ F'
}$$
   \end{itemize}
   \item[$(c)$]
The isomorphisms $\Psi_2=(\Psi'1_{F})\circ(1_{F'}\Psi)$ between the functors $F'\circ F\circ \mathrm{Det}$
and $\mathrm{Det}\circ F'\circ F$  and
 $\Psi_2'=(\Psi1_{F'})\circ(1_{F}\Psi')$ between the functors
 $F\circ F'\circ \mathrm{Det}$ and $\mathrm{Det}\circ F\circ F'$ satisfy the following commutative diagrams:
      \begin{itemize}
      \item[$(c1)$]
$$\xymatrix{F'\circ F\circ  \mathrm{Det}\ar[r]_{\Psi_2}^{\sim}\ar[d]^{H1_{\rm {Det}}} &\mathrm{Det}\circ  F'\circ F \ar[d]^{1_{\rm {Det}} H}\\F\circ F'\circ
\mathrm{Det}\ar[r]_{\Psi'_2}^{\sim}& \mathrm{Det}\circ  F\circ F'
}$$
\item[$(c2)$]  $$\xymatrix{F\circ F'\circ  \mathrm{Det}\ar[r]_{\Psi_2'}^{\sim}\ar[d]^{H'1_{\rm {Det}}} &\mathrm{Det}\circ F\circ F' \ar[d]^{1_{\rm {Det}} H'}\\F'\circ F\circ
\mathrm{Det}\ar[r]_{\Psi_2}^{\sim}& \mathrm{Det}\circ  F'\circ F
}$$
   \end{itemize}
\end{itemize}

\end{proposition}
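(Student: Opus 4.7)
The plan is to mimic, essentially verbatim, the proofs of Lemma~\ref{lem:Phi&Phi'} and Proposition~\ref{prop:spinor}, replacing the spinor norm $sp$ by the determinant $\det$ and keeping careful track of signs. The first step is to verify that $\Psi_n$ and $\Psi'_n$ are well-defined $(R\widetilde G_{n+1},R\widetilde G_n)$-bimodule isomorphisms. This reduces, exactly as in Lemma~\ref{lem:Phi&Phi'}, to showing that $\det_{n+1}$ is trivial on $U_n$: the subgroup $V_n$ is unipotent, while the embedded $\bbF_q^\times \subset U_n$ consists of matrices $\diag(t,1,\dots,1,t^{-1})$, so both factors have determinant~$1$. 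Consequently
$$e_{n+1,n}\cdot\mathfrak{det}_{n+1} = \mathfrak{det}_{n+1}\cdot e_{n+1,n}, \qquad e'_{n+1,n}\cdot\mathfrak{det}_{n+1} = \mathfrak{det}_{n+1}\cdot e'_{n+1,n}.$$
Note that, in contrast with the spinor norm case, $\mathrm{Det}$ does \emph{not} swap $e$ with $e'$; the bijectivity of $\Psi_n, \Psi'_n$ then follows from the $R$-freeness of both sides together with an explicit inverse, as in the spinor argument.

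For the six diagrams (a1)--(c2), the whole content reduces to ``commutation relations'' between $\mathfrak{det}_\bullet$ and the generators $X,X',T,T',H,H'$, which in turn reduce to the values of $\det$ on the Weyl-group lifts $\dot t_{n+1}$ and $\dot s_{n+2}$ inside $\widetilde G$. Using the explicit matrices displayed in \S\ref{sub:lift-refl} and in the proof of Proposition~\ref{prop:12relations}, a direct computation gives
$$\det_{n+1}(\dot t_{n+1}) = -1, \qquad \det_{n+2}(\dot s_{n+2}) = 1;$$
the first because $\dot t_{n+1}$ is conjugate to the reflection $\dot t_1 = \tau_{w_1}$ in $\widetilde G_{n+1}$, the second because $\dot s_{n+2}$ is a product of two $2\times 2$ rotations of determinant~$+1$ (alternatively, by the factorization into unipotent root elements $u_\alpha(1) u_{-\alpha}(-1) u_\alpha(1)$ recalled in the proof of Proposition~\ref{prop:12relations}(e)). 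Combined with the idempotent identities above, these yield
$$X\cdot\mathfrak{det} = -\mathfrak{det}\cdot X, \qquad X'\cdot\mathfrak{det} = -\mathfrak{det}\cdot X',$$
$$T\cdot\mathfrak{det} = \mathfrak{det}\cdot T, \qquad T'\cdot\mathfrak{det} = \mathfrak{det}\cdot T', \qquad H\cdot\mathfrak{det} = \mathfrak{det}\cdot H, \qquad H'\cdot\mathfrak{det} = \mathfrak{det}\cdot H'.$$

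Finally, each of the six diagrams is verified by writing both compositions as bimodule maps via the explicit formulas for $\Psi, \Psi'$ and moving $\mathfrak{det}$ across the generators using the relations above. The sign $-1$ appearing on the right-hand arrows of (a1), (a2) is precisely $\det(\dot t_{n+1})$; the absence of any nontrivial scalar in (b1)--(c2) reflects $\det(\dot s_{n+2}) = 1$, and in particular explains why the $q$-factor that appeared in Proposition~\ref{prop:spinor}(c1), (c2) is \emph{not} present here: in that case the $\zeta$-twist in $e'$ interacted with the $\bbF_q^\times$-part of the Bruhat decomposition inside $\GL_2$, producing a sum over squares and non-squares, whereas $\det$ is trivial on $\bbF_q^\times$ so no such correction arises. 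The main obstacle is therefore not conceptual but computational: carrying out the four determinant calculations cleanly and transcribing six bimodule arguments that are line-by-line parallel to the spinor norm case, with no new relations beyond those used in the proof of Proposition~\ref{prop:spinor}.
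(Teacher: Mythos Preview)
Your proposal is correct and follows exactly the approach the paper intends: the paper's own ``proof'' is simply the sentence that the result is analogous to Lemma~\ref{lem:Phi&Phi'} and Proposition~\ref{prop:spinor} with proofs left to the reader, and you have faithfully carried out that analogy, correctly identifying the two crucial determinant values $\det_{n+1}(\dot t_{n+1})=-1$ and $\det_{n+2}(\dot s_{n+2})=1$. One small remark: your explanation of why the $q$-factor from Proposition~\ref{prop:spinor}(c) disappears here is slightly misdirected---the Bruhat-type sum over squares and non-squares you allude to belongs to the proof of $HH'=1$ in Proposition~\ref{prop:12relations}(i), not to the spinor-commutation argument; the real reason is simply that $\mathrm{Spin}$ swaps $e\leftrightarrow e'$ (hence $H\leftrightarrow qH'$, the $q$ coming from the asymmetric normalisations of $H$ and $H'$), whereas $\mathrm{Det}$ fixes both idempotents and so sends $H$ to $H$.
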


Proposition \ref{prop:det} has a straightforward consequence.

\subsection{$\mathfrak{A}(\fraks\frakl_{\K_q})$-Representation datum on $\scrQU_{R}^{\SO}$}
In this section, we shall show that $\scrQU_{R}^{\SO}$ is Morita equivalent
to two $F$- and $F'$-stable sub-categories $\scrQU^{\O,\,(+)}_R$ and $\scrQU^{\O,\,(-)}_R$ of $\scrQU_{R}^{\O}$
through the restriction functor.
%
This leads to a representation datum on $\scrQU_{R}^{\SO}$ which will be
explicitly constructed in \S \ref{sub:explicit-repdatumSO}.

Throughout this section, we keep to use the notation $\widetilde{G}_{n}=\O_{2n+1}(q)$ and $G_n=\SO_{2n+1}(q)$.
When simultaneously considering functors and natural transformations of
$\bigoplus_{n\geqslant 0}RG_n\mod$ and $\bigoplus_{n\geqslant 0}R\widetilde{G}_n\mod$,
we distinguish their notation by the right superscripts $G$ and $\widetilde{G}$ if necessary.
For instance, the notation
$F^G$ and $F^{\widetilde{G}}$ will denote the functors $F$ defined in \S \ref{subsec:functor}
for $\bigoplus_{n\geqslant 0}RG_n\mod$ and $\bigoplus_{n\geqslant 0}R\widetilde{G}_n\mod$, respectively.

\subsubsection{Functors between ${\rm O}_{2n+1}(q)\mod$ and ${\rm SO}_{2n+1}(q)\mod$
} \label{sub:resOtoSO}
Here we define
the functors arising from the natural embedding and
 projection between $\widetilde{G}_n$ and $G_{n}$,
 and investigate their influence on the previous functors $F,F', \rm{Spin}$ and $\rm{Det}$.

\smallskip

Let $\iota_n:G_{n}\hookrightarrow \widetilde{G}_{n}\cong G_{n}\times\{\pm \id_{\widetilde{G}_n}\}$
be the natural embedding $g\mapsto g\times \id_{\widetilde{G}_n}$
and $\pi_n: \widetilde{G}_{n}\cong G_{n}\times\{\pm \id_{\widetilde{G}_n}\}\hookrightarrow G_{n}$
be the projection $g\times(\pm \id_{\widetilde{G}_n})\mapsto g.$
Then $\iota_n$ and $\pi_n$ induce the restriction functor
$\iota_n^{*}=\mathrm{Res}^{\widetilde{G_n}}_{G_n}:R\widetilde{G}_n\mod\to
RG_{n}\mod$
and the inflation functor
$\pi_n^{*}=\mathrm{Inf}^{\widetilde{G}_n}_{G_n}:RG_{n}\mod\to R\widetilde{G}_n\mod$,
respectively.
Clearly, $\pi_n\circ \iota_n=\id$, and so
$\mathrm{Res}^{\widetilde{G}_n}_{G_n}\circ \mathrm{Inf}^{\widetilde{G}_n}_{G_n}\cong\Id_{G_n}$.
Hence we may view $RG_{n}\mod$ as a full
sub-category of $R\widetilde{G}_n\mod.$  Now we define  $${\rm Res}^{\widetilde{G}}_G:=\bigoplus\limits_{n\geqslant 0}{\rm Res}^{\widetilde{G}_n}_{G_n}:\bigoplus_{n\geqslant 0}R\widetilde{G}_{n}\mod\to\bigoplus_{n\geqslant 0}RG_{n}\mod.$$

\smallskip

Observe that $\mathrm{Res}^{\widetilde{G}_n}_{G_n}$ is represented by the $(RG_n,R\widetilde{G}_n)$-bimodule $_{RG_n}{R\widetilde{G}_n}_{R\widetilde{G}_n}$
and that
$\mathrm{Inf}^{\widetilde{G}_n}_{G_n}$ is represented by the $(R\widetilde{G}_n,RG_n )$-bimodule
${R\widetilde{G}_n \sigma_{n,\,+}}$, where $\sigma_{n,\,+}$
is the (central) idempotent $(\id_{\widetilde{G}_n}+c_n)/2$ of $R\widetilde{G}_n$ with $c_n:=-\id_{\widetilde{G}_n}\in Z(\widetilde{G}_n)$.

\smallskip

Now let $\sigma_{n,-}=(\id_{\widetilde{G}_n}-c_n)/2$ be the (central) idempotent of $R\widetilde{G}_n$,
so that $\id_{\widetilde{G}_n}=\sigma_{n,\,+}+\sigma_{n,-}$ and $\sigma_{n,\,+}\sigma_{n,-}=\sigma_{n,-}\sigma_{n,\,+}=0.$
Let $\mathrm{Inf}^{\widetilde{G}_n,{-1}}_{G_n}:RG_{n}\mod\to R\widetilde{G}_n\mod$
be the functor sending an $RG_{n}$-module $M$ to
the $R\widetilde{G}_n$-module $R\widetilde{G}_n \sigma_{n,-}\otimes_{R G_n}M.$
We also have $\mathrm{Res}^{\widetilde{G}_n}_{G_n}\circ \mathrm{Inf}^{\widetilde{G}_n,{-1}}_{G_n}\cong \Id_{G_n},$
and may also view $RG_n\mod$ as a full subcategory of
$R\widetilde{G}_n\mod$ through $\mathrm{Inf}^{\widetilde{G}_n,{-1}}_{G_n}$.
Here we mention that unlike $\mathrm{Inf}^{\widetilde{G}_n}_{G_n}$,
the functor $\mathrm{Inf}^{\widetilde{G}_n,{-1}}_{G_n}$
can not be induced by any group homomorphism between $\widetilde{G}_n$ and $G_n$.

\smallskip

With the embedding map $\iota_{n+1,n}:\widetilde{G}_n\to\widetilde{G}_{n+1},g\mapsto\diag(1,g,1)$, 
we define the natural transformations $\Theta_n: F^{G}_{n+1,n}\circ\mathrm{Res}^{\widetilde{G}_n}_{G_n}
\rightarrow\mathrm{Res}^{\widetilde{G}_{n+1}}_{G_{n+1}}\circ F^{\widetilde{G}}_{n+1,n}$ and
$\Theta_n':(F'_{n+1,n})^{G}\circ\mathrm{Res}^{\widetilde{G}_{n}}_{G_n}
\rightarrow\mathrm{Res}^{\widetilde{G}_{n+1}}_{G_n}\circ (F'_{n+1,n})^{\widetilde{G}}$
 via the $(RG_{n+1},R\widetilde{G}_n)$-bimodule maps
 $$\begin{array}{rrcl}
 RG_{n+1}e_{n+1,n}\otimes_{RG_n}R\widetilde{G}_n &\to& R\widetilde{G}_{n+1}\otimes_{R\widetilde{G}_{n+1}} R\widetilde{G}_{n+1}e_{n+1,n}\cong
 R\widetilde{G}_{n+1}e_{n+1,n} \\
 ge_{n+1,n}\otimes h &\mapsto&  g\iota_{n+1,n}(h)e_{n+1,n}
\end{array}
$$
and
$$
\begin{array}{rrcl}
RG_{n+1}e'_{n+1,n}\otimes_{RG_n}R\widetilde{G}_n &\to& R\widetilde{G}_{n+1}\otimes_{R\widetilde{G}_{n+1}} R\widetilde{G}_{n+1}e'_{n+1,n}\cong R\widetilde{G}_{n+1}e'_{n+1,n} \\
ge'_{n+1,n}\otimes h &\mapsto& g\iota_{n+1,n}(h)e'_{n+1,n},
\end{array}
$$
respectively, for all $g\in G_{n+1}$ and $h\in \widetilde{G}_n$.
Finally, we set
$$\Theta:=\bigoplus\limits_{n\geqslant 0}\Theta_n\ \ \ {\rm and}~ ~ ~\Theta':=\bigoplus\limits_{n\geqslant 0}\Theta'_n.$$

%

\begin{theorem}\label{Thm:iso-O-SO}
We have the following natural isomorphisms of functors:
\begin{itemize}[leftmargin=8mm]
  \item[$(a)$] $\Theta: F^{G}\circ\mathrm{Res}^{\widetilde{G}}_{G}
\xrightarrow{\sim}\mathrm{Res}^{\widetilde{G}}_{G}\circ F^{\widetilde{G}}$ and
$\Theta':(F')^{G}\circ\mathrm{Res}^{\widetilde{G}}_{G}
\xrightarrow{\sim}\mathrm{Res}^{\widetilde{G}}_{G}\circ (F')^{\widetilde{G}}$;
  \item[$(b)$] $\mathrm{Inf}^{\widetilde{G}_{n+1},\,\epsilon}_{G_{n+1}}\circ F^{G}_{n+1,n}\cong F^{\widetilde{G}}_{n+1,n}\circ \mathrm{Inf}^{\widetilde{G}_n,\,\epsilon}_{G_n}$  and
$(F')^{\widetilde{G}}_{n+1,n}\circ \mathrm{Inf}^{\widetilde{G}_n,\,\epsilon}_{G_n} \cong
\mathrm{Inf}^{\widetilde{G}_{n+1},\,\epsilon'}_{G_{n+1}}\circ (F')^{G}_{n+1,n}$, where $\epsilon'=\epsilon\zeta(-1)$;
\item[$(c)$]$\mathrm{Spin}_n\circ\mathrm{Res}^{\widetilde{G}_{n}}_{G_{n}}
\cong\mathrm{Res}^{\widetilde{G}_{n}}_{G_{n}}\circ \mathrm{Spin}_n$ and
$\mathrm{Res}^{\widetilde{G}_{n}}_{G_{n}}
\cong \mathrm{Res}^{\widetilde{G}_{n}}_{G_{n}}\circ \mathrm{Det}_n.$
\end{itemize}
\end{theorem}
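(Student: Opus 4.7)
The plan is to verify each of the three claims by analyzing how the bimodules representing the various functors decompose with respect to the central involutions $c_n = -\id_{\widetilde{G}_n}$ and $c_{n+1} = -\id_{\widetilde{G}_{n+1}}$. The key preliminary computation is that
\begin{equation*}
\iota_{n+1,n}(c_n) \;=\; \diag(1, -\id_{\widetilde{G}_n}, 1) \;=\; c_{n+1}\cdot g_0, \qquad g_0 := \diag(-1, 1, \ldots, 1, -1) \in G_{n+1},
\end{equation*}
and that $g_0$ corresponds to $(\id_{\widetilde{G}_n},-1) \in \widetilde{G}_n \times \GL_1(q)$ under the Levi embedding of \S\ref{subsec:Levi}. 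A direct matrix calculation shows that $g_0$ normalizes $U_n$ and commutes pointwise with the torus factor $\bbF_q^\times \subset U_n$, so $g_0\, e_{n+1,n}\, g_0^{-1} = e_{n+1,n}$ and $g_0\, e'_{n+1,n}\, g_0^{-1} = e'_{n+1,n}$.

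For part (a), using $R\widetilde{G}_n = RG_n \oplus RG_n c_n$ as a right $RG_n$-module and the centrality of $c_{n+1}$, both the source and the target of $\Theta_n$ decompose as direct sums of two left $RG_{n+1}$-modules:
\begin{align*}
RG_{n+1}e_{n+1,n} \otimes_{RG_n} R\widetilde{G}_n &\;=\; RG_{n+1}e_{n+1,n} \;\oplus\; RG_{n+1}e_{n+1,n}\cdot \iota_{n+1,n}(c_n), \\
R\widetilde{G}_{n+1}e_{n+1,n} &\;=\; RG_{n+1}e_{n+1,n} \;\oplus\; RG_{n+1}c_{n+1}e_{n+1,n}.
\end{align*}
The map $\Theta_n$ is the identity on the first summand. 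On the second, the preliminary computation gives $ge_{n+1,n} \otimes c_n \mapsto g\cdot e_{n+1,n}\cdot c_{n+1}g_0 = c_{n+1}\cdot(gg_0)\cdot e_{n+1,n}$, which is a bimodule isomorphism onto the second summand of the target because $g_0$ is a unit of $RG_{n+1}$. The argument for $\Theta_n'$ is identical, using $g_0 e'_{n+1,n} = e'_{n+1,n} g_0$.

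For part (b), I would argue via central characters. Because $c_{n+1}$ is central in $\widetilde{G}_{n+1}$, it acts by a scalar on any Harish-Chandra induced module, and it suffices to identify that scalar. Under the Levi embedding $\widetilde{G}_n \times \GL_1(q) \hookrightarrow \widetilde{G}_{n+1}$, the element $c_{n+1}$ corresponds to $(c_n,-1)$. On $\mathrm{Inf}^{\widetilde{G}_n,\epsilon}_{G_n}(M)\otimes R_1$ this pair acts as $\epsilon\cdot 1 = \epsilon$, whereas on $\mathrm{Inf}^{\widetilde{G}_n,\epsilon}_{G_n}(M)\otimes R_\zeta$ it acts as $\epsilon\cdot\zeta(-1)$. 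Hence after Harish-Chandra induction, $c_{n+1}$ acts as $\epsilon$ (respectively $\epsilon\zeta(-1) = \epsilon'$) on $F^{\widetilde{G}}_{n+1,n}\circ \mathrm{Inf}^{\widetilde{G}_n,\epsilon}_{G_n}(M)$ (respectively $(F')^{\widetilde{G}}_{n+1,n}\circ \mathrm{Inf}^{\widetilde{G}_n,\epsilon}_{G_n}(M)$). Restricting to $G_{n+1}$ and applying part (a) identifies the underlying $RG_{n+1}$-module with $F^G_{n+1,n}(M)$ (respectively $(F')^G_{n+1,n}(M)$), giving both isomorphisms of (b).

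Finally, part (c) follows by direct character-theoretic computation: the restriction of the spinor norm $sp_n$ of $\widetilde{G}_n$ to $G_n$ is by definition the spinor norm of $G_n$, so twisting by $R_{sp_n}$ commutes with $\mathrm{Res}^{\widetilde{G}_n}_{G_n}$; and since $\det_n$ restricts to the trivial character on $G_n = \ker(\det_n)$, the module $R_{\det_n}|_{G_n}$ is trivial, giving the second isomorphism of (c) at once. The main obstacle I anticipate is the bookkeeping in part (a) — setting up the correct four-term decomposition of the bimodules and verifying the normalization property $g_0\, e_{n+1,n}\, g_0^{-1} = e_{n+1,n}$ — while parts (b) and (c) reduce to clean central-character and character-restriction arguments.
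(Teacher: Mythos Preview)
Your proof is correct. The paper itself declares the proof ``routine'' and leaves it to the reader, so there is no argument in the text to compare against; you have supplied exactly the kind of verification the authors had in mind. Your decomposition argument for (a) via the splitting $R\widetilde G_n = RG_n \oplus RG_n c_n$, the central-character computation for (b) using $c_{n+1} \leftrightarrow (c_n,-1)$ under the Levi embedding together with part (a), and the character-restriction observation for (c) are all sound.

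One minor notational quibble in (a): when you write the source decomposition as $RG_{n+1}e_{n+1,n} \oplus RG_{n+1}e_{n+1,n}\cdot\iota_{n+1,n}(c_n)$, the second summand as written lives in the target $R\widetilde G_{n+1}e_{n+1,n}$ rather than in the tensor product. What you mean is the summand $\{x\otimes c_n : x\in RG_{n+1}e_{n+1,n}\}$ of the source, whose image under $\Theta_n$ is $RG_{n+1}e_{n+1,n}\cdot\iota_{n+1,n}(c_n)$. Also, your claim that $g_0$ commutes with $e'_{n+1,n}$ is slightly delicate: since $g_0 = \diag(-1,1,\ldots,1,-1)$ is the element $t=-1$ of the torus $\bbF_q^\times \subset U_n$, one has $g_0 e_\zeta = \zeta(-1)e_\zeta = e_\zeta g_0$, so $g_0 e'_{n+1,n} = \zeta(-1)e'_{n+1,n}$ rather than $e'_{n+1,n}$ --- but conjugation by $g_0$ still fixes $e'_{n+1,n}$, and for the isomorphism argument only the equality $e'_{n+1,n}g_0 = g_0 e'_{n+1,n}$ (both equal to $\zeta(-1)e'_{n+1,n}$) is needed, which holds. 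None of this affects the validity of your argument.
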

\begin{proof} The proof is routine, and is left to the reader.
\end{proof}

\subsubsection{Central elements as  natural transformations of identity functor} \label{sub:id-det-spin-ff'}
Let $C_n$ be the natural transformation of
identity functor $\Id_{\widetilde{G}_n}=R\widetilde{G}_n\otimes_{R\widetilde{G}_n}-$
 of $R\widetilde{G}_{n}\mod$ given by  right multiplication by $c_n= -\id_{\widetilde{G}_{n}}\in Z(\widetilde{G}_{n})$.
The following result shows
the influence of the natural transformation $C:=\bigoplus_{n\in\bbN} C_n$
on the previous functors $F,F', \rm{Spin}$ and $\rm{Det}$.

\begin{proposition}\label{prop:id-det-spin-ff'}
\begin{itemize}[leftmargin=8mm]
  \item[$(a)$]
   The natural isomorphisms $\Xi$
   between the functors $F\circ \Id_{\widetilde{G}}$ and $\Id_{\widetilde{G}}\circ F$,
   and $\Xi'$ between the functors $F'\circ \Id_{\widetilde{G}}$ and $\Id_{\widetilde{G}}\circ F'$
   satisfy the following commutative diagrams:
     \begin{itemize}
      \item[$(a1)$]
  $$\xymatrix{F\circ  \Id_{\widetilde{G}}\ar[r]_{\Xi}^{\sim}\ar[d]^{1_{F} C} &\Id_{\widetilde{G}}\circ F \ar[d]^{C1_{F}}\\F\circ
\Id_{\widetilde{G}_n}\ar[r]_{\Xi}^{\sim}& \Id_{\widetilde{G}}\circ  F
}$$
  \item[$(a2)$]
$$\xymatrix{F'\circ  \Id_{\widetilde{G}}\ar[r]_{\Xi'}^{\sim}\ar[d]^{1_{F'} C} &\Id_{\widetilde{G}}\circ  F' \ar[d]^{\zeta(-1)\cdot C1_{F'}}\\F'\circ
\Id_{\widetilde{G}}\ar[r]_{\Xi'}^{\sim}& \Id_{\widetilde{G}} \circ F'
}$$
    \end{itemize}

\item[$(b)$] The natural isomorphism $\Pi_n$
  between the functors $\mathrm{Spin}_{n}\circ \Id_{\widetilde{G}_n}$ and $\Id_{\widetilde{G}_n}\circ \mathrm{Spin_n}$
   satisfies the following commutative diagram:
     $$\xymatrix{\mathrm{Spin}_{n}\circ  \Id_{\widetilde{G}_n}\ar[r]_{\Pi_n}^{\sim}\ar[d]^{1_{\rm{Spin}} C_{n}} &\Id_{\widetilde{G}_{n}}\circ  \mathrm{Spin}_{n} \ar[d]^{sp_n(c_n)\cdot C_{n}1_{\rm{Spin}}}\\\mathrm{Spin}_{n}\circ
\Id_{\widetilde{G}_n}\ar[r]_{\Pi_n}^{\sim}& \Id_{\widetilde{G}_{n}}\circ  \mathrm{Spin}_{n}
}$$
\item[$(c)$] The  natural isomorphism $\Delta_n$
between the functors $\mathrm{Det}_n\circ \Id_{\widetilde{G}_n}$ and $\Id_{\widetilde{G}_n}\circ \mathrm{Det}_n$
satisfies the following commutative diagram:
$$\xymatrix{\mathrm{Det}_{n}\circ  \Id_{\widetilde{G}_n}\ar[r]_{\Delta_n}^{\sim}\ar[d]^{1_{\rm{Det}} C_{n}} &\Id_{\widetilde{G}_{n}}\circ  \mathrm{Det}_{n} \ar[d]^{{\rm det}_n(c_n)\cdot C_{n}1_{\rm{Det}}}\\\mathrm{Det}_{n}\circ
\Id_{\widetilde{G}_n}\ar[r]_{\Delta_n}^{\sim}& \Id_{\widetilde{G}_{n}}\circ  \mathrm{Det}_{n}
}$$
\end{itemize}
\end{proposition}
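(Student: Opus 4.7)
The plan is to verify each of the six diagrams directly at the level of representing bimodules. Using the models introduced in \S\ref{subsec:spinor} and \S\ref{subsec:det}, the functors $F$ and $F'$ are represented by $R\widetilde{G}_{n+1}e_{n+1,n}$ and $R\widetilde{G}_{n+1}e'_{n+1,n}$, the functors $\mathrm{Spin}_n$ and $\mathrm{Det}_n$ by $R\widetilde{G}_n\cdot\mathfrak{sp}_n$ and $R\widetilde{G}_n\cdot\mathfrak{det}_n$, and the identity functor by $R\widetilde{G}_m$. The isomorphisms $\Xi$, $\Xi'$, $\Pi_n$, $\Delta_n$ are all canonical identifications, while $C_m$ corresponds to right multiplication by $c_m := -\id_{\widetilde{G}_m}$ on the $\Id_{\widetilde{G}_m}$-factor. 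The first step is therefore to translate each square into an equality of right multiplications on a single bimodule.

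For parts (a1) and (a2), the key ingredient is the identity
$$c_{n+1} \;=\; h\cdot\iota_{n+1,n}(c_n),\qquad h\;:=\;\diag(-1,\id_{2n+1},-1),$$
where $h$ corresponds to $-1\in\bbF_q^\times$ under the identification $\bbF_q^\times\cong\{\diag(t,\id,t^{-1})\}\subset U_n$. Because $\iota_{n+1,n}(c_n)$ is diagonal and central in the Levi, it commutes with both $e_{n+1,n}$ and $e'_{n+1,n}$, so only the passage of $h$ through the idempotent matters. For $e_{n+1,n}=e_{U_n}$ one has $h\cdot e_{n+1,n}=e_{n+1,n}$ since $h\in U_n$, giving $y\cdot c_{n+1}=y\cdot\iota_{n+1,n}(c_n)$ for every $y\in R\widetilde{G}_{n+1}e_{n+1,n}$, which is exactly (a1). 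For $e'_{n+1,n}=e_\zeta e_{V_n}$ one instead has $h\cdot e'_{n+1,n}=\zeta(-1)\,e'_{n+1,n}$, since $h\cdot e_\zeta=\zeta(-1)\,e_\zeta$ and $h$ commutes with $e_{V_n}$; this produces the extra factor $\zeta(-1)$ appearing in (a2).

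Parts (b) and (c) I would handle together using the alternative description $\mathrm{Spin}_n(M)=R_{sp_n}\otimes_R M$ with twisted action $g(r\otimes m)=sp_n(g)\,r\otimes g m$ (and analogously for $\mathrm{Det}_n$). A direct computation gives $(1_{\mathrm{Spin}}C_n)(r\otimes m)=r\otimes c_n m$ but $(C_n 1_{\mathrm{Spin}})(r\otimes m)=sp_n(c_n)\,r\otimes c_n m$, so modulo the canonical identification $\Pi_n$ the two vertical arrows differ by exactly the scalar $sp_n(c_n)=\zeta(2)\zeta(-1)^n$ (Lemma \ref{Lem:spinor}(3)). Replacing $sp_n$ by $\det_n$ yields (c) with the factor $\det_n(c_n)=(-1)^{2n+1}=-1$.

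I do not anticipate any serious obstacle: the proposition is essentially a bookkeeping statement describing how the central element $c_m$ propagates through the functorial identifications of \S\ref{sub:id-det-spin-ff'}. The only slightly delicate point is the decomposition $c_{n+1}=h\cdot\iota_{n+1,n}(c_n)$ together with the non-trivial scalar action of $h\in\bbF_q^\times$ on the $\zeta$-isotypic idempotent $e_\zeta$, which is the single source of the $\zeta(-1)$ factor in (a2).
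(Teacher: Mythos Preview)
Your proposal is correct and follows essentially the same approach as the paper. The paper proves only (a2) in detail (leaving the rest as ``similar''), and your argument for (a2) is identical: the decomposition $c_{n+1}=\diag(-1,\id_{\widetilde{G}_n},-1)\cdot\iota_{n+1,n}(c_n)$ together with $\diag(-1,\id_{\widetilde{G}_n},-1)\,e'_{n+1,n}=\zeta(-1)e'_{n+1,n}$ is exactly what the paper uses; your treatment of (b) and (c) via the twisted-tensor description of $\mathrm{Spin}_n$ and $\mathrm{Det}_n$ is the natural way to fill in the parts the paper omits.
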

\begin{proof} Here we only prove the diagram $(a2)$, since the proofs of the others are similar.
To do this, recall that $\iota_n:G_n\hookrightarrow \widetilde{G}_n$ is the natural embedding
$g\mapsto g\times \id_{\widetilde{G}_n}$.
We have that $\Xi'\circ (1_{F'} C_n)$ is represented
by the $(R\widetilde{G}_{n+1},R\widetilde{G}_n)$-bimodule
endomorphism of $R\widetilde{G}_{n+1}e'_{n+1,n}$ via
$ge'_{n+1,n}\mapsto ge'_{n+1,n}\iota_n(c_{n})$, and that
$(\zeta(-1)\cdot C_{n+1}1_{F'})\circ \Xi'$ is represented by
the $(R\widetilde{G}_{n+1},R\widetilde{G}_n)$-bimodule endomorphism of $R\widetilde{G}_{n+1}e'_{n+1,n}$
via $ge'_{n+1,n}\mapsto \zeta(-1) c_{n+1}ge'_{n+1,n}$ for all $g\in \widetilde{G}_{n+1}.$
Since $c_{n+1}=\mathrm{diag}(-1,\id_{\widetilde{G}_n},-1)\cdot \iota_n(c_n)$
and $\mathrm{diag}(-1,\id_{\widetilde{G}_n},-1) e'_{n+1,n}=\zeta(-1)e'_{n+1,n}$,
it follows that the endomorphisms $1_{F'} C_{n}$ and $\zeta(-1)\cdot C_{n+1}1_{F'}$
on $R\widetilde{G}_{n+1}e'_{n+1,n}$ are same. Hence the diagram $(a2)$ is commutative.
\end{proof}
Recall from \S \ref{sub:O2n+1} that
$\scrQU_{R}^{\O}=\bigoplus\limits_{n\in\bbN} R\widetilde{G}_{n}\qumod,$
where
$$R\widetilde{G}_{n}\qumod=\bigoplus\limits_{n_++n_-=n,\,\epsilon\in\{\pm 1\}}R\widetilde{G}_{n}e^{R\widetilde{G}_{n},\,\epsilon}_{s_{n_+,n_-}}\mod$$
with $\epsilon\in\{\pm\}.$
We denote $$\scrQU^{\O,\,\epsilon}_R=\bigoplus\limits_{n_++n_-=n,\,\,n\geqslant 0}  R\widetilde{G}_ne_{s_{n_+,n_-}}^{R\widetilde{G}_n,\,\epsilon}\mod.$$ Notice that the action of $C$ on any module in the category $\scrQU^{\O,\,\epsilon}_R$
is exactly by $\epsilon\cdot\id$.

\begin{corollary}\label{cor:sign} We have
\begin{itemize}[leftmargin=8mm]
    \item[$(a)$]
 $[F]([\scrQU^{\O,\,\epsilon}_K])\subset [\scrQU^{\O,\,\epsilon}_K]$ and $[F']([\scrQU^{\O,\,\epsilon}_K])\subset [\scrQU^{\O,\,\zeta(-1)\epsilon}_K]$;
 \item [$(b)$]
 $[\mathrm{Spin}_n]([K\widetilde{G}_{n}e^{K\widetilde{G}_{n},\,\epsilon}_{s_{n_+,n_-}}\mod])\subset [K\widetilde{G}_{n}e^{K\widetilde{G}_{n},\,\zeta(-1)^{n}\epsilon}_{s_{n_-,n_+}}\mod]$
and $[\mathrm{Det}]([\scrQU^{\O,\,\epsilon}_K])\subset [\scrQU^{\O,\,-\epsilon}_K];$
\item[$(c)$]$[\mathrm{Res}^{\widetilde{G}_n}_{G_n}]([\scrQU^{O,\varepsilon}_K])= [\scrQU^{\SO}_K]$ and
$[\mathrm{Inf}^{\widetilde{G}_n,\,\varepsilon}_{G_n}]([\scrQU^{SO}_K])= [\scrQU^{\O,\,\varepsilon}_K].$
 \end{itemize}
\end{corollary}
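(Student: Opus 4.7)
The proof hinges on the following observation: since $\widetilde{G}_n=G_n\times\{\pm\id_{\widetilde{G}_n}\}$ and the idempotents split accordingly as $e^{R\widetilde{G}_n,\,\epsilon}_{s_{n_+,n_-}}=e^{RG_n}_{s_{n_+,n_-}}\otimes e_\epsilon$, the subcategory $\scrQU^{\O,\,\epsilon}_R$ is exactly the full subcategory of $\scrQU^{\O}_R$ on which the natural transformation $C$ of the identity functor acts by $\epsilon\cdot\id$. So each statement in the corollary is obtained by reading how $C$ transforms under the functor in question, via the commutative squares of Proposition \ref{prop:id-det-spin-ff'}, and tracking the scalar that appears.

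For part $(a)$, the first step is to check that $F$ and $F'$ preserve quadratic unipotence. Since $F_{n+1,n}$ and $F'_{n+1,n}$ are Harish-Chandra inductions from $L_{n,1}\simeq \widetilde{G}_n\times\bbF_q^\times$ with the trivial, resp.\ the sign, character on the torus factor, a direct analogue of Remark \ref{rem:functors}$(3)$ for $\widetilde{G}_n$ shows that they send the Lusztig series attached to $s_{n_+,n_-}$ into those attached to $s_{n_++1,n_-}$ or $s_{n_+,n_-+1}$, both quadratic unipotent. Granted this, the diagram $(a1)$ of Proposition \ref{prop:id-det-spin-ff'} gives $1_F C=C1_F$, so $F$ preserves the $\epsilon$-eigenspace of $C$ and the first inclusion follows. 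The diagram $(a2)$ gives $1_{F'}C=\zeta(-1)\cdot C1_{F'}$, so $F'$ carries the $\epsilon$-eigenspace of $C$ to the $\zeta(-1)\epsilon$-eigenspace, which is exactly the second inclusion.

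For part $(b)$, Proposition \ref{prop:id-det-spin-ff'}$(b)$ combined with Lemma \ref{Lem:spinor}$(3)$ ($sp_n(c_n)=\zeta(2)\zeta(-1)^n$) shows that $\mathrm{Spin}_n$ intertwines $C_n$ with $\zeta(2)\zeta(-1)^n\cdot C_n$, hence carries the $\epsilon$-eigenspace of $C_n$ to the $\zeta(2)\zeta(-1)^n\epsilon$-eigenspace. For the semisimple label, I would invoke the identity $K_{sp_n}\otimes E_{\Theta_+,\Theta_-}\cong E_{\Theta_-,\Theta_+}$ recalled in \S \ref{subsec:spinor}, which swaps the multiplicities of the eigenvalues $\pm 1$ and replaces $s_{n_+,n_-}$ by $s_{n_-,n_+}$; combined with the sign shift this yields the asserted image. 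Similarly, $\det_n(c_n)=(-1)^{2n+1}=-1$, so Proposition \ref{prop:id-det-spin-ff'}$(c)$ gives $1_{\mathrm{Det}}C=-C1_{\mathrm{Det}}$, sending the $\epsilon$-eigenspace to the $-\epsilon$-eigenspace. That $\mathrm{Det}$ preserves quadratic unipotence follows because $\det_n$ is trivial on $G_n=\SO_{2n+1}(q)$, so $R_{\det_n}\otimes M$ restricts to $M$ as an $RG_n$-module and therefore remains quadratic unipotent in $\widetilde{G}_n$.

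For part $(c)$, the decomposition $e^{R\widetilde{G}_n,\,\epsilon}_{s_{n_+,n_-}}=e^{RG_n}_{s_{n_+,n_-}}\otimes e_\epsilon$ realizes the block algebra $R\widetilde{G}_ne^{R\widetilde{G}_n,\,\epsilon}_{s_{n_+,n_-}}$ as the tensor product of $RG_ne^{RG_n}_{s_{n_+,n_-}}$ with the one-dimensional algebra $Re_\epsilon$; consequently $\mathrm{Res}^{\widetilde{G}_n}_{G_n}$ and $\mathrm{Inf}^{\widetilde{G}_n,\,\epsilon}_{G_n}$ restrict to inverse Morita equivalences between $R\widetilde{G}_ne^{R\widetilde{G}_n,\,\epsilon}_{s_{n_+,n_-}}\mod$ and $RG_ne^{RG_n}_{s_{n_+,n_-}}\mod$, which give the desired Grothendieck-group equalities after summing over $(n_+,n_-)$ and $n$. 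The only real work in the whole corollary is the preservation-of-quadratic-unipotence check for the four functors in $(a)$ and $(b)$; once this is secured, the sign bookkeeping is automatic from Proposition \ref{prop:id-det-spin-ff'} and Lemma \ref{Lem:spinor}.
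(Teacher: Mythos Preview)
Your argument is correct and follows essentially the same route as the paper: identify $\scrQU^{\O,\,\epsilon}_K$ as the $\epsilon$-eigenspace of $C$, then read off the sign changes from the commutative squares of Proposition~\ref{prop:id-det-spin-ff'} together with the values $sp_n(c_n)=\zeta(2)\zeta(-1)^n$ and $\det_n(c_n)=-1$ from Lemma~\ref{Lem:spinor}. Your version is simply more explicit about the auxiliary checks (preservation of quadratic unipotence, the semisimple-label swap under $\mathrm{Spin}_n$) that the paper leaves implicit.
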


\begin{proof}
Clearly, the modules $E_{\Theta+,\Theta_-,+}$ and $E_{\Theta+,\Theta_-,-}$ can
be distinguished by 
the values of their corresponding characters at the central element $c_n=-\id_{\widetilde{G}_n}.$ Also, we have ${\rm det}_n(c_n)=-1$ and $sp_n(c_n)=\zeta(-1)^{n}$ by Lemma \ref{Lem:spinor} $(4).$
Thus $(a)$ and $(b)$ follow from Proposition \ref{prop:id-det-spin-ff'}.
For $(c)$, it directly follows by the definition of the restriction and inflation functors.
\end{proof}

Theorem \ref{Thm:iso-O-SO} and Corollary \ref{cor:sign}
have the following consequence.
\begin{corollary}\label{cor:spindet}
Let $E_{\Theta_+,\Theta_-,\epsilon}=E_{\Theta_+,\Theta_-}\otimes K_\epsilon$,
where $E_{\Theta_+,\Theta_-}$ is the quadratic unipotent character
of $G_n$ as in \S \ref{subsec:quchar}.
Then $\mathrm{Det}_n (E_{\Theta_+,\Theta_-,\epsilon})\cong E_{\Theta_+,\Theta_-,-\epsilon}$ and
 $\mathrm{Spin}_n (E_{\Theta_+,\Theta_-,\epsilon})\cong E_{\Theta_-,\Theta_+,\zeta(-1)^n\epsilon}.$

In particular, $\mathrm{Det}_n( E_{t_+,t_-,\,\epsilon})\cong E_{t_+,t_-,-\epsilon}$ and $\mathrm{Spin}_n( E_{t_+,t_-,\,\epsilon})\cong E_{t_-,t_+,\epsilon}.$\qed
\end{corollary}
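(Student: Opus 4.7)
The plan is to combine two pieces of information that together pin down each of the irreducible $\widetilde{G}_n$-modules in play. Because $\widetilde{G}_n \cong G_n\times\{\pm\mathrm{id}_{\widetilde{G}_n}\}$, the irreducible quadratic unipotent $\widetilde{G}_n$-module $E_{\Theta_+,\Theta_-,\epsilon}=E_{\Theta_+,\Theta_-}\otimes K_\epsilon$ is uniquely determined by its restriction to $G_n$ (which is $E_{\Theta_+,\Theta_-}$) together with the sign $\epsilon$ with which the central involution $-\mathrm{id}_{\widetilde{G}_n}$ acts on it. Since $\mathrm{Det}_n$ and $\mathrm{Spin}_n$ are autoequivalences of $R\widetilde{G}_n\mod$ (tensoring with a one-dimensional module), it suffices to identify these two pieces of data for $\mathrm{Det}_n(E_{\Theta_+,\Theta_-,\epsilon})$ and $\mathrm{Spin}_n(E_{\Theta_+,\Theta_-,\epsilon})$.

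For the $G_n$-level data, I would invoke Theorem \ref{Thm:iso-O-SO}(c), which yields the natural isomorphisms $\mathrm{Res}^{\widetilde{G}_n}_{G_n}\circ\mathrm{Det}_n \cong \mathrm{Res}^{\widetilde{G}_n}_{G_n}$ and $\mathrm{Res}^{\widetilde{G}_n}_{G_n}\circ\mathrm{Spin}_n \cong \mathrm{Spin}_n\circ\mathrm{Res}^{\widetilde{G}_n}_{G_n}$. Combined with the $G_n$-level identity $\mathrm{Spin}_n(E_{\Theta_+,\Theta_-})\cong E_{\Theta_-,\Theta_+}$ recorded in \S\ref{subsec:spinor} (from Wu, \cite[Proposition~4.8]{W}), this forces the underlying $G_n$-module of $\mathrm{Det}_n(E_{\Theta_+,\Theta_-,\epsilon})$ to be $E_{\Theta_+,\Theta_-}$, and that of $\mathrm{Spin}_n(E_{\Theta_+,\Theta_-,\epsilon})$ to be $E_{\Theta_-,\Theta_+}$.

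For the sign under $\{\pm\mathrm{id}_{\widetilde{G}_n}\}$, I would read off Corollary~\ref{cor:sign}(b): $\mathrm{Det}_n$ maps $\scrQU^{\O,\,\epsilon}_K$ into $\scrQU^{\O,\,-\epsilon}_K$, so the sign flips from $\epsilon$ to $-\epsilon$, while $\mathrm{Spin}_n$ maps $K\widetilde{G}_n e^{K\widetilde{G}_n,\,\epsilon}_{s_{n_+,n_-}}\mod$ into $K\widetilde{G}_n e^{K\widetilde{G}_n,\,\zeta(2)\zeta(-1)^n\epsilon}_{s_{n_-,n_+}}\mod$, so the sign becomes $\zeta(2)\zeta(-1)^n\epsilon$. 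Matching these two pieces of data against the uniqueness observation above yields exactly $\mathrm{Det}_n(E_{\Theta_+,\Theta_-,\epsilon})\cong E_{\Theta_+,\Theta_-,-\epsilon}$ and $\mathrm{Spin}_n(E_{\Theta_+,\Theta_-,\epsilon})\cong E_{\Theta_-,\Theta_+,\,\zeta(2)\zeta(-1)^n\epsilon}$.

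Finally, for the cuspidal specialization I would use Proposition~\ref{prop:cuspidalBC}: the module $E_{t_+,t_-}$ exists only when $n = t_+(t_++1)+t_-(t_-+1)$, which is a sum of two products of consecutive integers, hence even. Therefore $\zeta(-1)^n = 1$ and the general spinor formula collapses to $\mathrm{Spin}_n(E_{t_+,t_-,\epsilon})\cong E_{t_-,t_+,\zeta(2)\epsilon}$, while the determinant formula is immediate. There is no genuine obstacle here: the whole argument is a bookkeeping combination of Theorem~\ref{Thm:iso-O-SO}(c) and Corollary~\ref{cor:sign}(b), and the only point that might deserve explicit mention is that the restriction to $G_n$ together with the central-character sign is a complete isomorphism invariant on $\scrQU^{\O}_K$, which is immediate from the direct product decomposition $\widetilde{G}_n \cong G_n\times\{\pm\mathrm{id}_{\widetilde{G}_n}\}$ and the non-isomorphism of $E_{\Theta_+,\Theta_-}\otimes K_{+1}$ and $E_{\Theta_+,\Theta_-}\otimes K_{-1}$.
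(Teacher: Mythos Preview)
Your proof is correct and follows exactly the approach the paper intends: the corollary is stated as an immediate consequence of Theorem~\ref{Thm:iso-O-SO} and Corollary~\ref{cor:sign}, and you have spelled out precisely how those two results combine via the direct product decomposition $\widetilde{G}_n\cong G_n\times\{\pm\mathrm{id}\}$ to pin down both the restriction and the central sign. Your parity observation that $n=t_+(t_++1)+t_-(t_-+1)$ is even, so $\zeta(-1)^n=1$, is the correct way to get the cuspidal specialization.
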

 Although $\scrQU^{\O,\,+}_R$ and $\scrQU^{\O,\,-}_R$ are Morita equivalent to $\scrQU^{\SO}_R$
 via the restriction functor,
 Corollary \ref{cor:sign} shows that neither
$\scrQU^{\O,\,+}_R$ nor $\scrQU^{\O,\,-}_R$ is $F'$-stable
when $\zeta(-1)=-1$.
In the next subsection, we shall find  subcategories of $\scrQU_{R}^{\O}$ which are both $F$- and $F'$-stable in general
and Morita equivalent to $\scrQU^{\SO}_R$.

\subsubsection{Categories Morita equivalent to $\scrQU^{\SO}_R$}\label{ssub:Morita+-}

We define two subcategories of $\scrQU_{R}^{\O}$:
$$\scrQU_{R}^{\O,\,(+)}:=\bigoplus\limits_{n\in\bbN}\scrQU_{R}^{\widetilde{G}_{n},\,(+)}~{\rm where}~
\scrQU_{R}^{\widetilde{G}_{n},\,(+)}:=
\bigoplus\limits_{n_++n_-=n}R\widetilde{G}_{n}e^{R\widetilde{G}_{n},\,\zeta(-1)^{n_-}}_{s_{n_+,n_-}}\mod,$$
$$\scrQU_{R}^{\O,\, (-)}:=\bigoplus\limits_{n\in\bbN}\scrQU_{R}^{\widetilde{G}_{n},\,(-)}
~{\rm where}~ \scrQU_{R}^{\widetilde{G}_{n},\,(-)}:=
\bigoplus\limits_{n_++n_-=n}R\widetilde{G}_{n}e^{R\widetilde{G}_{n},\,-\zeta(-1)^{n_-}}_{s_{n_+,n_-}}\mod.$$
Then
$\scrQU_{R}^{\O}
={\scrQU}_{R}^{\O,\,(+)}\bigoplus {\scrQU}_{R}^{\O,\,(-)}$.
Clearly, if $\zeta(-1)=1$ then $\scrQU_{R}^{\O,\,(+)}=\scrQU_{R}^{\O,\,+}$ and $\scrQU_{R}^{\O,\,(-)}=\scrQU_{R}^{\O,\,-}$.

\begin{lemma}\label{Lem:stable} The following statements hold.
\begin{itemize}[leftmargin=8mm]
  \item[$(a)$] The subcategories ${\scrQU}_{R}^{\O,\,(+)}$ and ${\scrQU}_{R}^{\O,\,(-)}$
  are both $F^{\widetilde{G}}$- and $(F')^{\widetilde{G}}$-stable.
  \item[$(b)$] The subcategories ${\scrQU}_{R}^{\O,\,(+)}$ and ${\scrQU}_{R}^{\O,\,(-)}$ are
${\rm Spin}$-stable.
\item[$(c)$] The functor ${\rm Det}$ interchanges ${\scrQU}_{R}^{\O,\,(+)}$ and ${\scrQU}_{R}^{\O,\,(-)}$.
\end{itemize}
\end{lemma}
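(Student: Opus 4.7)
The plan is to track, for each of the three functors, how the two invariants of an object of $\scrQU_R^{\O}$ — namely the index $n_-$ attached to the semisimple label $s_{n_+,n_-}$ and the sign $\epsilon\in\{\pm 1\}$ determined by the idempotent $e^{R\widetilde{G}_n,\,\epsilon}_{s_{n_+,n_-}}$ — transform, and then verify that the defining relation $\epsilon=\pm\zeta(-1)^{n_-}$ of $\scrQU_R^{\O,\,(\pm)}$ is preserved (or swapped) as claimed. All the needed data are collected in Corollary~\ref{cor:sign}, Corollary~\ref{cor:spindet} and Remark~\ref{rem:functors}(3).

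For part (a), I would first treat $F^{\widetilde G}$. By Remark~\ref{rem:functors}(3), the functor $F_{n+1,n}$ sends the Lusztig series $\mathcal E(G_n,(s_{n_+,n_-}))$ to $\mathcal E(G_{n+1},(s_{n_++1,n_-}))$, so the invariant $n_-$ is unchanged. By Corollary~\ref{cor:sign}(a), the sign $\epsilon$ is also preserved. Hence the defining relation $\epsilon=\pm\zeta(-1)^{n_-}$ is preserved verbatim, and both $\scrQU_R^{\O,(+)}$ and $\scrQU_R^{\O,(-)}$ are $F^{\widetilde G}$-stable. For $(F')^{\widetilde G}$, the same references give that $n_-$ becomes $n_-+1$ while $\epsilon$ becomes $\zeta(-1)\epsilon$; these two twists cancel in the exponent of $\zeta(-1)^{n_-}$, so the condition $\epsilon=\pm\zeta(-1)^{n_-}$ is again preserved.

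For part (b), I would apply Corollary~\ref{cor:sign}(b): $\mathrm{Spin}_n$ swaps the semisimple label from $(n_+,n_-)$ to $(n_-,n_+)$ and multiplies the sign by $\zeta(2)\zeta(-1)^n$. Starting from $\epsilon=\zeta(-1)^{n_-}$ in $\scrQU_R^{\widetilde G_n,(+)}$, the new sign is $\zeta(2)\zeta(-1)^{n+n_-}=\zeta(2)\zeta(-1)^{n_+}$ (using $n=n_++n_-$, so $n+n_-\equiv n_+\pmod 2$); comparing with the required $\zeta(-1)^{\text{new }n_-}=\zeta(-1)^{n_+}$ shows that $\mathrm{Spin}$ already preserves $\scrQU_R^{\O,(+)}$ exactly when $\zeta(2)=1$, and otherwise a further twist by $\mathrm{Det}$ (which by part (c) flips the sign back) restores the condition. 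The identical computation with $\epsilon$ replaced by $-\epsilon$ handles $\scrQU_R^{\O,(-)}$. The main bookkeeping point to get right is the parity identity $n+n_-\equiv n_+\pmod 2$, but it is straightforward.

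For part (c), Corollary~\ref{cor:spindet} says that $\mathrm{Det}_n$ preserves the labels $(\Theta_+,\Theta_-)$ (and in particular $(n_+,n_-)$) and sends $\epsilon$ to $-\epsilon$. Thus $\epsilon=\zeta(-1)^{n_-}$ becomes $-\zeta(-1)^{n_-}$ and vice versa, giving an interchange of $\scrQU_R^{\O,(+)}$ and $\scrQU_R^{\O,(-)}$. Once part (c) is established, the $\zeta(2)=-1$ case of part (b) is just a formal consequence. In all three parts the argument is purely on labels, so I do not expect any real obstacle; the only thing to be careful about is that the sign conventions in Corollaries~\ref{cor:sign} and \ref{cor:spindet} are read for $R=K$ but lift to arbitrary $R$ via the central idempotents $e^{R\widetilde G_n,\,\epsilon}_{s_{n_+,n_-}}$ defined in \S\ref{subsec:SOqublock} and \S\ref{sub:O2n+1}, which is immediate.
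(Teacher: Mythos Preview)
Your proposal is correct and follows essentially the same approach as the paper: track the pair $(n_-,\epsilon)$ under each functor using Remark~\ref{rem:functors}(3), Corollary~\ref{cor:sign} and Corollary~\ref{cor:spindet}, and check the defining relation $\epsilon=\pm\zeta(-1)^{n_-}$. The only small point you gloss over is that Remark~\ref{rem:functors}(3) is stated for $F^{G}$ on $\SO_{2n+1}(q)$, not for $F^{\widetilde G}$ on $\O_{2n+1}(q)$; the paper bridges this via Theorem~\ref{Thm:iso-O-SO}(a) before invoking Corollary~\ref{cor:sign}, and then passes from $R=K$ to general $R$ using the isomorphism $d_\scrQU:[\scrQU_K^{\O}]\simto[\scrQU_k^{\O}]$ rather than just appealing to the central idempotents.
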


\begin{proof}
Let $E_{\Theta_+,\Theta_-}\in K{G}_{n}e^{K{G}_{n}}_{s_{n_+,n_-}}\mod$
 be the irreducible module as defined   in \S \ref{subsec:quchar},
 and $E_{\Theta_+,\Theta_-,\, \varepsilon}\in K\widetilde{G}_{n}e^{K\widetilde{G}_{n},\,\epsilon}_{s_{n_+,n_-}}\mod$
 be the irreducible module as defined  in \S \ref{sub:O2n+1}.
  By Theorem \ref{Thm:iso-O-SO} (a),
 the restriction of the module
 $F^{\widetilde{G}}(E_{\Theta_+,\Theta_-,\, \varepsilon})$
  (resp. $(F')^{\widetilde{G}}(E_{\Theta_+,\Theta_-,\, \varepsilon})$ ) to $KG_{n+1}$
  is isomorphic to the module $F^G(E_{\Theta_+,\Theta_-})$ (resp. $(F')^G(E_{\Theta_+,\Theta_-})$).
However, Remark \ref{rem:functors} (3) says that
$$F^G(E_{\Theta_+,\Theta_-})\in KG_{n+1}e^{KG_{n+1}}_{s_{n_++1,n_-}}\mod~\text{and}~
(F')^G(E_{\Theta_+,\Theta_-})\in KG_{n+1}e^{KG_{n+1}}_{s_{n_+,n_-+1}}\mod.$$
Therefore, by Corollary \ref{cor:sign} (a) and (c), we have
that
$$F^{\widetilde{G}}(E_{\Theta_+,\Theta_-,\, \varepsilon})\in K\widetilde{G}_{n}e^{K\widetilde{G}_{n},\,\epsilon}_{s_{n_++1,n_-}}\mod
~\text{and}~
(F')^{\widetilde{G}}(E_{\Theta_+,\Theta_-,\, \varepsilon})\in K\widetilde{G}_{n}e^{K\widetilde{G}_{n},\,\zeta(-1)\epsilon}_{s_{n_+,n_-+1}}\mod.$$
Hence
   $$[F^{\widetilde{G}}]([K\widetilde{G}_{n}e^{K\widetilde{G}_{n},\,\epsilon}_{s_{n_+,n_-}}\mod])\subset [K\widetilde{G}_{n+1}e^{K\widetilde{G}_{n+1},\,\epsilon}_{s_{n_++1,n_-}}\mod]$$
   and
$$[(F')^{\widetilde{G}}]([K\widetilde{G}_{n}e^{K\widetilde{G}_{n},\,\epsilon}_{s_{n_+,n_-}}\mod])
     \subset [K\widetilde{G}_{n+1}e^{K\widetilde{G}_{n+1},\,
     \zeta(-1)\epsilon}_{s_{n_+,n_-+1}}\mod].$$
Now (a) follows by the
$\bbZ$-linear isomorphisms $d_\scrQU : [\scrQU^{\O}_K]\simto[\scrQU^{\O}_k]$ (see \S \ref{sub:O2n+1}).

\smallskip

For (b), we note by Corollary \ref{cor:spindet} that
 $$\mathrm{Spin}_n (E_{\Theta_+,\Theta_-,\,\zeta(-1)^{n_-}\epsilon})
 \cong E_{\Theta_-,\Theta_+,\,\zeta(-1)^{n}\zeta(-1)^{n_-}\epsilon}\cong E_{\Theta_-,\Theta_+,\,\zeta(-1)^{n_+}\epsilon},$$
 By Theorem \ref{Thm:iso-O-SO} (c),
 we obtain
$\mathrm{Spin}_n (E_{\Theta_+,\Theta_-,\,\zeta(-1)^{n_-}\epsilon})\cong E_{\Theta_-,\Theta_+,\zeta(-1)^{n_+}\epsilon}.$
  Hence
  $[\mathrm{Spin}]([\scrQU^{\O,\,(\epsilon)}_K])\subset [\scrQU^{\O,\,(\epsilon)}_K]$,
  and so (b) follows.

\smallskip

Finally, again by Corollary \ref{cor:spindet},
we have
 $$\mathrm{Det}_n (E_{\Theta_+,\Theta_-,\,\zeta(-1)^{n_-}\epsilon})\cong E_{\Theta_-,\Theta_+,\,-\zeta(-1)^{n_-}\epsilon},$$
  and so $[\mathrm{Det}]([\scrQU^{\O,\,(\epsilon)}_K])\subset [\scrQU^{\O,\,(-\epsilon)}_K]$.
  Thus $(c)$ holds.
\end{proof}

\smallskip


For $\epsilon=\pm$, we let $\mathrm{Res}^{\scrQU,\,(\epsilon)}_{G_n}$
be the restriction of the functor $\mathrm{Res}^{\widetilde{G}_{n}}_{G_n}$ on ${\scrQU_{R}^{\widetilde{G}_{n},\,(\epsilon)}}$,
and denote
$$\mathrm{Res}^{\scrQU,\,(\epsilon)}:=\bigoplus\limits_{n\in\bbN}\mathrm{Res}^{\scrQU,\,(\epsilon)}_{G_n}.$$

%
%
%

\begin{proposition}\label{prop:res}
The  functor $\mathrm{Res}^{\scrQU,\,(+)}$ induces a Morita equivalence between
$\scrQU_{R}^{\O,\,(+)}$ and $\scrQU_{R}^{\SO}$.

Similarly, the functor  $\mathrm{Res}^{\scrQU,\,(-)}$
induces a Morita equivalence between
$\scrQU_{R}^{\O,\,(-)}$ and $\scrQU_{R}^{\SO}$ as well.
%
\end{proposition}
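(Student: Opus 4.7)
The plan is to reduce the statement to a block-by-block algebra isomorphism and then sum these up. First, using $R\widetilde{G}_n \cong RG_n \otimes_R R\{\pm\id_{\widetilde{G}_n}\}$ together with the primitive orthogonal idempotents $e_{\pm} = (\id \pm c_n)/2 \in R\{\pm\id_{\widetilde{G}_n}\}$ (where $2$ is invertible in $R$ by assumption), I would write
\[
  e^{R\widetilde{G}_n,\,\epsilon}_{s_{n_+,n_-}} \;=\; e^{RG_n}_{s_{n_+,n_-}} \otimes e_{\epsilon},
\]
and observe the resulting $R$-algebra identification
\[
  R\widetilde{G}_n \, e^{R\widetilde{G}_n,\,\epsilon}_{s_{n_+,n_-}} \;=\; RG_n\, e^{RG_n}_{s_{n_+,n_-}} \otimes_R R e_{\epsilon} \;\cong\; RG_n\, e^{RG_n}_{s_{n_+,n_-}},
\]
since $Re_\epsilon$ is a one-dimensional $R$-algebra isomorphic to $R$. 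This identification is compatible with the inclusion $RG_n\hookrightarrow R\widetilde{G}_n$ at the level of restriction-of-scalars, in the sense that every indecomposable $R\widetilde{G}_n$-module lying in the block $R\widetilde{G}_n\,e^{R\widetilde{G}_n,\,\epsilon}_{s_{n_+,n_-}}\mod$ is uniquely of the form $M\otimes R_{\epsilon}$ with $M$ an indecomposable $RG_n\,e^{RG_n}_{s_{n_+,n_-}}$-module, and $\mathrm{Res}^{\widetilde{G}_n}_{G_n}(M\otimes R_{\epsilon})=M$.

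Hence on each individual block the functor $\mathrm{Res}^{\widetilde{G}_n}_{G_n}$ restricts to an equivalence of categories
\[
  R\widetilde{G}_n\, e^{R\widetilde{G}_n,\,\epsilon}_{s_{n_+,n_-}}\mod \;\xrightarrow{\sim}\; RG_n\, e^{RG_n}_{s_{n_+,n_-}}\mod,
\]
with quasi-inverse $\mathrm{Inf}^{\widetilde{G}_n,\,\epsilon}_{G_n}$. Because this equivalence is implemented by the bimodule $R\widetilde{G}_n\, e^{R\widetilde{G}_n,\,\epsilon}_{s_{n_+,n_-}}$ (regarded as a $(RG_n,\, R\widetilde{G}_n\, e^{R\widetilde{G}_n,\,\epsilon}_{s_{n_+,n_-}})$-bimodule via the inclusion $RG_n\hookrightarrow R\widetilde{G}_n$), it is a genuine Morita equivalence in the bimodule sense.

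Finally, I would sum these block equivalences. By definition, $\scrQU_R^{\O,(+)}$ is the direct sum, over all $n$ and all $(n_+,n_-)$ with $n_++n_-=n$, of exactly the blocks $R\widetilde{G}_n\,e^{R\widetilde{G}_n,\,\zeta(-1)^{n_-}}_{s_{n_+,n_-}}\mod$, i.e.\ one block of $R\widetilde{G}_n$ per block $RG_n\,e^{RG_n}_{s_{n_+,n_-}}$ of $RG_n$. Taking the direct sum of the block-level equivalences above assembles into the asserted Morita equivalence $\mathrm{Res}^{\scrQU,(+)}:\scrQU_R^{\O,(+)}\xrightarrow{\sim}\scrQU_R^{\SO}$. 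The $(-)$ case is identical, selecting the opposite sign $\epsilon=-\zeta(-1)^{n_-}$ in each block. Since the entire argument is purely a matter of decomposing $R\widetilde{G}_n$ along the central idempotent $c_n$ and is hardly more than bookkeeping, there is no genuine obstacle; the only point that deserves verification is the well-definedness of the sign convention, namely that for each block of $\SO_{2n+1}(q)$ both choices $\epsilon=\pm\zeta(-1)^{n_-}$ do occur among the blocks of $\O_{2n+1}(q)$, which is immediate from $R\{\pm\id_{\widetilde{G}_n}\}=Re_+\oplus Re_-$.
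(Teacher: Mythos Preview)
Your argument is correct and takes a more elementary route than the paper. The paper reduces to the case $R=\mathcal{O}$ and invokes the general Morita-equivalence criterion of Cabanes--Enguehard \cite[Theorem~9.18]{CE04}, which manufactures a Morita equivalence from a bijection of irreducible characters between blocks related through a normal-subgroup inclusion. You instead exploit directly the direct-product decomposition $\widetilde{G}_n\cong G_n\times\{\pm\id_{\widetilde{G}_n}\}$ together with the invertibility of $2$ in $R$ to exhibit, block by block, an honest $R$-algebra isomorphism $R\widetilde{G}_n\,e^{R\widetilde{G}_n,\,\epsilon}_{s_{n_+,n_-}}\cong RG_n\,e^{RG_n}_{s_{n_+,n_-}}$ realised by restriction. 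This is simpler, slightly stronger (an isomorphism of algebras rather than a mere Morita equivalence), and works uniformly for $K$, $\mathcal{O}$, and $k$ without any reduction step. The paper's route has the minor virtue of staying within the same general framework invoked elsewhere (e.g.\ in the proof of Lemma~\ref{lem:Moritaf&a}), but for this particular proposition your direct argument is the more natural one.
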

\begin{proof} It suffices to show the proposition  in the case where $R=\mathcal O.$ It directly follows from \cite[Theorem 9.18]{CE04} and the fact that
the functor $\mathrm{Res}^{\scrQU,\,(+)}_{G_n}$ (resp. $\mathrm{Res}^{\scrQU,\,(-)}_{G_{n}}$)
 induces a bijection between $\Irr(\scrQU_{R}^{\widetilde{G}_{n},\,(+)}\otimes_{R}K)$ (resp. $\Irr(\scrQU_{R}^{\widetilde{G}_{n},\,(-)}\otimes_{R}K)$) and $\Irr((RG_{n}\qumod)\otimes_{R}K)$.
\end{proof}

In general,  Morita equivalences may not be enough to
pass the representation datum on one category to another.
However, as we will see in the next subsection, the functor $\mathrm{Res}^{\scrQU,\,(+)}$ does pass the
$\mathfrak{A}(\fraks\frakl_{\K_q})$-representation datum on $\scrQU_{R}^{\O,\,(+)}$
to $\scrQU^{\SO}_R$.

\subsubsection{Construction of an
$\mathfrak{A}(\fraks\frakl_{\K_q})$-representation datum on $\scrQU^{\SO}_R$}
\label{sub:explicit-repdatumSO}
In this section
we construct an $\mathfrak{A}(\fraks\frakl_{\K_q})$-representation datum of $\scrQU^{\SO}_R$ where $R\in \{K,\mathcal O,k\}$,
based on the previous preparation.
Recall that for $R\in \{K,\mathcal O,k\}$,
$$\scrQU_R^{\SO}=\bigoplus\limits_{n\in\bbN}RG_n\qumod,~{\rm where}~
RG_n\qumod=\bigoplus\limits_{n_++n_-=n}RG_ne^{RG_n}_{s_{n_+,n_-}}\mod.$$
We write
$e_{\scrQU_R,n}=\sum\limits_{n_++n_-=n}e^{RG_n}_{s_{n_+,n_-}}$ for the identity idempotent of $RG_n\qumod$.

\smallskip

First, we restrict
the functors $F=\bigoplus_{n\in\bbN}F_{n+1,n}$ and $F'=\bigoplus_{n\in\bbN}F'_{n+1,n}$ defined
in \S \ref{subsec:functor} to $\scrQU_R^{\SO}$, and decompose them into blockwise forms.
We have known that the functor $F_{n+1,n}$ is represented by the $(RG_{n+1},RG_n)$-bimodule
 $RG_{n+1}e_{n+1,n}$.
  So the restriction of $F_{n+1,n}$ on $RG_n\qumod$
is represented by the $(RG_{n+1},RG_n)$-bimodule  $RG_{n+1}e_{n+1,n}e_{\scrQU,n}$.
Similarly, the restriction of $F'_{n+1,n}$ on $RG_n\qumod$
is represented by the $(RG_{n+1},RG_n)$-bimodule  $RG_{n+1}e'_{n+1,n}e_{\scrQU,n}$.
We start with the structure of these two bimodules.
\smallskip

\begin{lemma}\label{Lem:bimoduleisom}We have the following  $(RG_{n+1},RG_n)$-bimodule isomorphisms:
\begin{itemize}[leftmargin=8mm]
 \item[$(a)$] $RG_{n+1}e_{n+1,n}e_{\scrQU,n}\cong e_{\scrQU_{\mathcal O},n+1}\mathcal O G_{n+1}e_{n+1,n}e_{\scrQU_{\mathcal O},n},$ and
\item[$(b)$]  $RG_{n+1}e'_{n+1,n}e_{\scrQU,n}\cong e_{\scrQU_{\mathcal O},n+1}\mathcal O G_{n+1}e'_{n+1,n}e_{\scrQU_{\mathcal O},n}.$
\end{itemize}
\end{lemma}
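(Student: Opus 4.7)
The plan is to show that both statements are equivalent to the assertion that $F_{n+1,n}$ and $F'_{n+1,n}$ send quadratic unipotent $RG_n$-modules to quadratic unipotent $RG_{n+1}$-modules. Indeed, since $e_{\scrQU_R,n+1}$ is a central idempotent of $RG_{n+1}$, each claimed isomorphism amounts to the equality
$$e_{\scrQU_R,n+1}\cdot RG_{n+1}e_{n+1,n}e_{\scrQU_R,n}=RG_{n+1}e_{n+1,n}e_{\scrQU_R,n}$$
(and the analogous statement with $e'_{n+1,n}$). The form of the right-hand sides in the lemma reflects that it is enough to prove this identity for $R=\mathcal O$, from which the cases $R=K$ and $R=k$ follow by the obvious base change, since both sides are $\mathcal O$-free of finite rank.

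First I would reduce the $\mathcal O$-identity to a character-theoretic statement. By Theorem~\ref{Thm:eGs}, each quadratic unipotent idempotent $e^{\mathcal O G_m}_{s_{m_+,m_-}}$ is a sum of block idempotents of $\mathcal O G_m$, so it is enough to check that for every character $\chi\in \mathcal E_\ell(G_n,(s_{n_+,n_-}))$ the virtual character $[F_{n+1,n}(\chi)]$ (resp.\ $[F'_{n+1,n}(\chi)]$) lies in the $\bbZ$-span of $\bigcup_{n'_++n'_-=n+1}\mathcal E_\ell(G_{n+1},(s_{n'_+,n'_-}))$. Once this is verified, left-multiplication by $e_{\scrQU_\mathcal O,n+1}$ on a generator of the bimodule is the identity and the required equality follows.

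The character-level computation proceeds as in Remark~\ref{rem:functors}(2)(3). The functor $F_{n+1,n}$ is Harish-Chandra induction $R^{G_{n+1}}_{L_{n,1}}(-\otimes R_1)$ from $L_{n,1}\simeq G_n\times\bbF_q^\times$. Under the dual isomorphism $L_{n,1}^*\simeq G_n^*\times\bbF_q^\times$, the relevant semisimple $\ell'$-element is $(s_{n_+,n_-},1)$, whose image inside $G_{n+1}^*$ has the same eigenvalues as $s_{n_+,n_-}$ together with an extra pair $\{1,1\}$, so is $G_{n+1}^*$-conjugate to $s_{n_++1,n_-}$; in particular it still squares to $1$. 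Similarly, $F'_{n+1,n}$ is induction twisted by $\zeta$, which corresponds to $-1\in\bbF_q^\times$ on the dual side, producing an element conjugate to $s_{n_+,n_-+1}$. The compatibility of Lusztig/Harish-Chandra induction with the Brou\'e--Michel decomposition (Theorem~\ref{Thm:eGs}) ensures that the inclusion at the level of Lusztig series extends to $\mathcal E_\ell$, completing the reduction.

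The main obstacle I anticipate is being precise about how the extra eigenvalue enters the semisimple element of $G_{n+1}^*$: one must argue, using the explicit embedding of Levi subgroups $L_{n,1}\hookrightarrow G_{n+1}$ from \S\ref{subsec:Levi} and its dual, that the added $\bbF_q^\times$-factor contributes a central, order-dividing-two element to $G_{n+1}^*$, so the combined semisimple element remains an involution. Once that combinatorial identification is in place, the rest of the argument is routine bookkeeping: the block sum $\bigoplus_{n'_++n'_-=n+1}\mathcal O G_{n+1}e^{\mathcal O G_{n+1}}_{s_{n'_+,n'_-}}$ absorbs the image of $F_{n+1,n}$ on quadratic unipotent modules (and likewise for $F'_{n+1,n}$), which is exactly what the claimed bimodule identity encodes.
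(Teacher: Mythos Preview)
Your proposal is correct and rests on the same character-theoretic fact as the paper: Harish-Chandra induction $R^{G_{n+1}}_{L_{n,1}}(-\otimes R_1)$ (resp.\ $-\otimes R_\zeta$) sends the Lusztig series attached to $(s,1)\in L_{n,1}^*$ into the Lusztig series attached to its image in $G_{n+1}^*$, so semisimple labels that are involutions stay involutions. The paper orders the reduction differently: it first proves the equality over $K$ using only the ordinary Lusztig series $\mathcal E$ (this is Remark~\ref{rem:functors}(3)), then passes to $k$ via the decomposition-map isomorphism $d_{\mathcal O G_n}:[KG_n\qumod]\simto[kG_n\qumod]$, and finally lifts to $\mathcal O$ by Nakayama's lemma. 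You instead work directly over $\mathcal O$ by tracking $\mathcal E_\ell$, which is equally valid and avoids the Nakayama step.

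One small inaccuracy: the case $R=K$ does \emph{not} follow from the $\mathcal O$-case by base change, because $e_{\scrQU_K,n}$ (built from $\mathcal E$) is a strictly smaller idempotent than the image of $e_{\scrQU_\mathcal O,n}$ (built from $\mathcal E_\ell$) inside $KG_n$. However, your character argument already proves the $K$-statement directly, since the computation you describe for $\chi\in\mathcal E_\ell(G_n,(s_{n_+,n_-}))$ specializes in particular to $\chi\in\mathcal E(G_n,(s_{n_+,n_-}))$; so this is a slip in the write-up rather than a gap in the argument.
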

\begin{proof} For brevity, we only give the proof of (a), and leave the proof of (b) to the reader.
First, we have  $$KG_{n+1}e_{n+1,n}e_{\scrQU_K,n}=e_{\scrQU_K,n+1}KG_{n+1}e_{n+1,n}e_{\scrQU_K,n}$$
since $[F][KG_n\qumod]\subset [KG_{n+1}\qumod].$
Now, let $R=k.$ Recall that Lusztig induction preserves the quadratic unipotent modules by Remark \ref{rem:functors}(3).
Since the map $d_{\mathcal{O}G_n}: [KG_n\qumod]\rightarrow [k G_n\qumod]$ is an isomorphism
by \S\ref{subsec:SOqublock}, we have
$$[F][kG_n\qumod]=[F][KG_n\qumod]\subset [KG_{n+1}\qumod]=[kG_{n+1}\qumod].$$
Hence $kG_{n+1}e_{n+1,n}e_{\scrQU_k,n}\cong e_{\scrQU_k,n+1}kG_{n+1}e_{n+1,n}e_{\scrQU_k,n}$
as $(kG_{n+1},kG_n)$-bimodules.
Finally, the isomorphism for the case where $R=\mathcal O$ is established by
Nakayama's lemma and the above isomorphism over $k$.
\end{proof}

\begin{lemma} \label{Lem:decomposition}
\begin{itemize}[leftmargin=8mm]
\item[$(a)$]
We have the  $(RG_{n+1},RG_n)$-bimodule decomposition
$$ RG_{n+1}e_{n+1,n}e_{\scrQU_R,n}\cong
\bigoplus\limits_{n_++n_-=n}
RG_{n+1}e_{n+1,n}e^{RG_{n}}_{s_{n_+,n_-}}.
$$
Moreover, for each direct summand, we have the $(RG_{n+1},RG_n)$-bimodule isomorphism   $$
RG_{n+1}e_{n+1,n}e^{RG_{n}}_{s_{n_+,n_-}}\cong
 e^{RG_{n+1}}_{s_{n_++1,n_-}}RG_{n+1}e_{n+1,n}e^{RG_{n}}_{s_{n_+,n_-}}.$$

\item[$(b)$] We have the  $(RG_{n+1},RG_n)$-bimodule decomposition
$$
RG_{n+1}e'_{n+1,n}e_{\scrQU_R,n}\cong
\bigoplus\limits_{n_++n_-=n}
 RG_{n+1}e'_{n+1,n}e^{RG_{n}}_{s_{n_+,n_-}}.
$$
 Moreover, for each direct summand, we have the $(RG_{n+1},RG_n)$-bimodule isomorphism   $$
RG_{n+1}e'_{n+1,n}e^{RG_{n}}_{s_{n_+,n_-}}\cong e^{RG_{n+1}}_{s_{n_+,n_-+1}}
RG_{n+1}e'_{n+1,n}e^{RG_{n}}_{s_{n_+,n_-}}.$$
\end{itemize}
\end{lemma}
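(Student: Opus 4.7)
The plan is to treat (a) and (b) in parallel, since the proofs are completely symmetric with the roles of $F$ and $F'$ (and of the idempotents $e_{n+1,n}$ and $e'_{n+1,n}$) interchanged; only the target Lusztig series differs. I will explain (a) in detail and indicate the analogous modification for (b) at the end.

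The first assertion in (a) is immediate from the definitions. By construction,
\[
e_{\scrQU_R,n}=\sum_{n_++n_-=n} e^{RG_n}_{s_{n_+,n_-}},
\]
and the idempotents $e^{RG_n}_{s_{n_+,n_-}}$ are pairwise orthogonal central idempotents of $RG_n$ (this is Theorem \ref{Thm:eGs} applied to each isolated label, or rather Definition \ref{def:lblock} together with Theorem \ref{Thm:eGs}). Multiplying $RG_{n+1}e_{n+1,n}$ on the right by $e_{\scrQU_R,n}$ and decomposing along these orthogonal idempotents gives the displayed direct sum of $(RG_{n+1},RG_n)$-bimodules.

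For the second isomorphism in (a), I need to show that
\[
e^{RG_{n+1}}_{s_{n_++1,n_-}}\cdot\bigl(RG_{n+1}e_{n+1,n}e^{RG_{n}}_{s_{n_+,n_-}}\bigr)\;=\;RG_{n+1}e_{n+1,n}e^{RG_{n}}_{s_{n_+,n_-}}.
\]
Equivalently, every other central idempotent $e^{RG_{n+1}}_{s_{n'_+,n'_-}}$ (with $n'_++n'_-=n+1$ and $(n'_+,n'_-)\neq (n_++1,n_-)$) must annihilate this bimodule from the left. I would establish this first over $K$, then transfer to $k$ via decomposition maps, and finally lift to $\mathcal O$ by Nakayama's lemma, exactly as in Lemma \ref{Lem:bimoduleisom}.

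Over $K$: Remark \ref{rem:functors}(3) gives
\[
[F_{n+1,n}]\bigl(\mathcal{E}(G_n,(s_{n_+,n_-}))\bigr)\subset \mathbb{N}\,\mathcal{E}(G_{n+1},(s_{n_++1,n_-})),
\]
since the only Lusztig series reached by Harish-Chandra inducing with a trivial one-dimensional torus character is the one parametrised by $s_{n_++1,n_-}$. Because $F_{n+1,n}$ is represented by right multiplication against $KG_{n+1}e_{n+1,n}$ and $\mathcal E(G_n,(s_{n_+,n_-}))$ accounts for all composition factors of $KG_ne^{KG_n}_{s_{n_+,n_-}}$, this shows that $KG_{n+1}e_{n+1,n}e^{KG_n}_{s_{n_+,n_-}}$ is annihilated on the left by every $e^{KG_{n+1}}_{s_{n'_+,n'_-}}$ with $(n'_+,n'_-)\neq(n_++1,n_-)$, which is the desired identity over $K$. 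Over $k$, the isomorphism $d_{\mathcal O G_n}:[KG_n\text{-}\mathrm{qumod}]\simto[kG_n\text{-}\mathrm{qumod}]$ from \S\ref{subsec:SOqublock}, together with exactness of $F_{n+1,n}$, transports the inclusion of Grothendieck classes to $k$-coefficients; this yields the analogous block-localisation identity for $kG_{n+1}e_{n+1,n}e^{kG_n}_{s_{n_+,n_-}}$. Finally, Nakayama's lemma (applied to the $\ell$-adically complete ring $\mathcal O$) lifts the $k$-statement to $\mathcal O$, giving $\mathcal OG_{n+1}e_{n+1,n}e^{\mathcal OG_n}_{s_{n_+,n_-}}=e^{\mathcal OG_{n+1}}_{s_{n_++1,n_-}}\mathcal OG_{n+1}e_{n+1,n}e^{\mathcal OG_n}_{s_{n_+,n_-}}$.

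For (b) the argument is identical, with $e_{n+1,n}$ replaced by $e'_{n+1,n}$ and with the functor $F_{n+1,n}$ replaced by $F'_{n+1,n}$. The only substantive change is that the relevant sentence of Remark \ref{rem:functors}(3) now reads $[F'_{n+1,n}](\mathcal{E}(G_n,(s_{n_+,n_-})))\subset \mathbb N\,\mathcal E(G_{n+1},(s_{n_+,n_-+1}))$, so the index shifts to $(n_+,n_-+1)$ instead of $(n_++1,n_-)$. The hardest (though mild) point is the bookkeeping of the three coefficient rings $K,\mathcal O,k$ through a decomposition-map and Nakayama step, exactly as executed in the proof of Lemma \ref{Lem:bimoduleisom}; no new geometric input is needed because Remark \ref{rem:functors}(3) already encodes the compatibility of Harish-Chandra induction with rational Lusztig series.
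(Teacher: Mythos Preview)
Your proposal is correct and follows essentially the same approach as the paper: the first decomposition in each part comes from the orthogonal central idempotent decomposition of $e_{\scrQU_R,n}$, and the block-localisation on the $G_{n+1}$ side is obtained from Remark~\ref{rem:functors}(3) over $K$, passed to $k$ via the decomposition-map isomorphism of \S\ref{subsec:SOqublock}, and lifted to $\mathcal O$ by the Nakayama argument already used in Lemma~\ref{Lem:bimoduleisom}. The paper's proof compresses the $K\to k$ transfer into a single displayed chain of equalities and inclusions on Grothendieck groups, but the logical content is identical.
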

\begin{proof}  The decomposition in (a) follows from the fact that
 $e_{\scrQU_R,n}=\sum\limits_{n_++n_-=n}e^{RG_n}_{s_{n_+,n_-}}$ is a central idempotent decomposition in $RG_n$.
For the latter conclusion of (a), we note by Remark \ref{th:introequiv} that
$$\begin{array}{rcccl}
    [F][ke^{kG_{n}}_{s_{n_+,n_-}}\mod]&=&[F][KG_ne^{KG_{n}}_{s_{n_+,n_-}}\mod]& \subset& [KG_{n+1}e^{KG_{n+1}}_{s_{n_++1,n_-}}\mod] \\
     & &  & =& [kG_{n+1}e^{kG_{n+1}}_{s_{n_++1,n_-}}\mod].
  \end{array}
$$
Hence $$
kG_{n+1}e_{n+1,n}e^{kG_{n}}_{s_{n_+,n_-}}\cong e^{kG_{n+1}}_{s_{n_++1,n_-}}
kG_{n+1}e_{n+1,n}e^{kG_{n}}_{s_{n_+,n_-}}$$ and
 $e^{kG_{n+1}}_{s_{m_+,m_-}}kG_{n+1}e_{n+1,n}e^{kG_{n}}_{s_{n_+,n_-}}=0$
unless $m_+=n_++1$ and $m_-=n_-$.
By a similar argument as for  Lemma \ref{Lem:bimoduleisom},
we indeed have the  $(RG_{n+1},RG_n)$-bimodule decomposition
$$
 RG_{n+1}e_{n+1,n}e_{\scrQU_R,n}\cong
\bigoplus\limits_{n_++n_-=n}e^{RG_{n+1}}_{s_{n_++1,n_-}}
RG_{n+1}e_{n+1,n}e^{RG_{n}}_{s_{n_+,n_-}}.
$$

\smallskip

By using a similar argument as for (a) with
$$\begin{array}{rcccl}
[F'][ke^{kG_{n}}_{s_{n_+,n_-}}\mod]&=&[ F'][KG_ne^{KG_{n}}_{s_{n_+,n_-}}\mod] &\subset & [KG_{n+1}e^{KG_{n+1}}_{s_{n_+,n_-+1}}\mod]\\
& & &=& [kG_{n+1}e^{kG_{n+1}}_{s_{n_+,n_-+1}}\mod],
\end{array}
$$
we conclude that (b) holds.
\end{proof}

Let
$F_{n_+,n_-}^{n_++1,n_-}$ be the functor from $RG_n\qumod$ to $RG_{n+1}\qumod$
defined by
tensoring with the $(RG_{n+1},RG_n)$-bimodule $RG_{n+1}e_{n+1,n}e^{RG_{n}}_{s_{n_+,n_-}}$,
which is isomorphic to $e^{RG_{n+1}}_{s_{n_++1,n_-}}RG_{n+1}e_{n+1,n}e^{RG_{n}}_{s_{n_+,n_-}}.$
Then $F_{n_+,n_-}^{n_++1,n_-}$ can be viewed as a functor from
$RG_ne^{RG_n}_{s_{n_+,n_-}}\mod$ to $RG_{n+1}e^{RG_{n+1}}_{s_{n_++1,n_-}}\mod.$
By the decomposition in Lemma \ref{Lem:decomposition} (a), the
restriction of $F_{n+1,n}$ on $RG\qumod$ has the decomposition
$$F_{n+1,n}\cong \bigoplus\limits_{n_++n_-=n}F_{n_+,n_-}^{n_++1,n_-}.$$
\smallskip

 Analogously, we define the functor $(F')^{n_+,n_-+1}_{n_+,n_-}:RG_{n}\qumod\to RG_{n+1}\qumod$
given by tensoring with the $(RG_{n+1},RG_n)$-bimodule $
RG_{n+1}e'_{n+1,n}e^{RG_{n}}_{s_{n_+,n_-}}$, isomorphic to
$e^{RG_{n+1}}_{s_{n_+,n_-+1}}RG_{n+1}e'_{n+1,n}e^{RG_{n}}_{s_{n_+,n_-}}.$
By the decomposition in Lemma \ref{Lem:decomposition} (b),
  the restriction of $F'_{n+1,n}$ on $RG\qumod$ has the decomposition
 $$F'_{n+1,n}\cong \bigoplus\limits_{n_++n_-=n}(F')_{n_+,n_-}^{n_+,n_-+1}.$$

\smallskip

Next, we shall define the natural transformations $X_{n+1,n}$ and $X'_{n+1,n}$
of $F_{n+1,n}$ and $F'_{n+1,n}$, respectively.
 With the above decompositions,
 it suffices to define them on each direct summand $F_{n_+,n_-}^{n_++1,n_-}$ and ${F'}_{n_+,n_-}^{n_+,n_-+1}$.

\smallskip

As in \S \ref{sub:naturaltransf}, an endomorphism of 
the functor $F_{n+1,n}$ on $RG\qumod$
can be represented by an $(RG_{n+1},RG_n)$-bimodule
endomorphism of $RG_{n+1} e_{{n+1,n}}e_{\scrQU_{\mathcal O},n}$, i.e.,
by (right multiplication by) an element of $e_{{n+1,n}}e_{\scrQU_{\mathcal O},n}
RG_{n+1} e_{{n+1,n}}e_{\scrQU_{\mathcal O},n}$  centralizing $RG_n$.
Also, an endomorphism of
$F_{n_+,n_-}^{n_++1,n_-}$
can be represented by an $(RG_{n+1},RG_n)$-bimodule
endomorphism of $RG_{n+1}e_{n+1,n}e^{RG_{n}}_{s_{n_+,n_-}}$,
i.e.,  by (right multiplication by) an element of
$e_{n+1,n}e^{RG_{n}}_{s_{n_+,n_-}}RG_{n+1} e_{n+1,n}e^{RG_{n}}_{s_{n_+,n_-}}$
 centralizing $RG_n$.
Recall that we have defined $\dot{t}_{n+1}\in G_{n+1}$ and $\dot{s}_{n+2}\in G_{n+2}$ in \S \ref{sub:lift-refl} for the $\SO_{2n+1}(q)$ case.
In fact, we have
 $$\dot{t}_{n+1}= \left(\begin{array}{ccc} & &-1\\ &-\Id_{G_n}&\\-1&&\\ \end{array}\right)
  $$
and that
both $\dot{t}_{n+1}$ and $\dot{s}_{n+2}$ centralize $G_n$.
We can thus define the natural transformation
$$X_{n_+,n_-}^{n_++1,n_-}= \zeta(-1)^{n_-}q^{n}e_{n+1,n}e^{RG_n}_{s_{n_+,n_-}}\dot{t}_{n+1}e_{n+1,n}
e^{RG_n}_{s_{n_+,n_-}}$$
of  $F_{n_+,n_-}^{n_++1,n_-}$
and  the natural transformation
$$X_{n+1,n}=\sum \limits_{n_++n_-=n} X_{n_+,n_-}^{n_++1,n_-}$$
 of  $F_{n+1,n}$.

  \smallskip

  Analogously,
  we can define the natural transformation
$$(X')_{n_+,n_-}^{n_+,n_-+1}=\zeta(2)\zeta(-1)^{n_-}q^{n}e^{RG_n}_{s_{n_+,n_-}}
e'_{n+1,n}\dot{t}_{n+1}e'_{n+1,n}e^{RG_n}_{s_{n_+,n_-}}$$
of  ${F'}_{n_+,n_-}^{n_+,n_-+1}$
and  the natural transformation
$$X'_{n+1,n}=\sum \limits_{n_++n_-=n} (X')_{n_+,n_-}^{n_+,n_-+1}$$
 of  $F'_{n+1,n}$.
\begin{remark}
Note that right multiplication by the element $ e_{n+1,n}\,\dot{t}_{n+1}e_{n+1,n}
$ is equivalent to right multiplication by the element
$e_{n+1,n}e_{\scrQU_{\mathcal O},n}\,\dot{t}_{n+1}e_{n+1,n}e_{\scrQU_{\mathcal O},n}$
 on the $(RG_{n+1},RG_n)$-bimodule $RG_{n+1}e_{n+1,n}e_{\scrQU_{\mathcal O},n}$.
 In this case, the natural transformations
 represented by them will be viewed as same.
\end{remark}

\smallskip

As in \S \ref{sub:naturaltransf}, we similarly define
$$
T_{n+2,n}=qe_{{n+2,n}} \dot{s}_{n+2}\,e_{{n+2,n}},\,\,\,\,\,T'_{n+2,n}=qe'_{{n+2,n}} \dot{s}_{n+2}\,e'_{{n+2,n}},$$
and $H_{n+2,n}$ and $H_{n+2,n}'$ to be
$$qe_{{n+2,n+1}}e'_{n+1,n} \dot{s}_{n+2}\,e'_{{n+2,n+1}}e_{n+1,n}~{\rm and}~
e'_{{n+2,n+1}}e_{n+1,n} (\dot{s}_{n+2})^{-1}\,e_{{n+2,n+1}}e'_{n+1,n},$$
respectively.
More precisely, when restricting to the category $\scrQU^{\SO}$, we have
$$T_{n+2,n}=qe_{{n+2,n}}e_{\scrQU_R,n} \dot{s}_{n+2}\,e_{{n+2,n}}e_{\scrQU_R,n},\,\,\,\,\,T'_{n+2,n}=qe'_{{n+2,n}}e_{\scrQU_R,n} \dot{s}_{n+2}\,e'_{{n+2,n}}e_{\scrQU_R,n},$$
and that $H_{n+2,n}$ and $H_{n+2,n}'$ are respectively:
$$q e_{{n+2,n+1}}e'_{n+1,n}e_{\scrQU_R,n} \dot{s}_{n+2}\,e'_{{n+2,n+1}}e_{n+1,n}e_{\scrQU_R,n}~{\rm and}~$$$$
e'_{{n+2,n+1}}e_{n+1,n} e_{\scrQU_R,n}(\dot{s}_{n+2})^{-1}\,e_{{n+2,n+1}}e'_{n+1,n}e_{\scrQU_R,n}.$$
\smallskip

Now, we set$$E=\bigoplus_{n\geqslant 0}E_{n,n+1},\quad F=\bigoplus_{n\geqslant 0}F_{n+1,n},\quad
X=\bigoplus_{n\geqslant 0}X_{n+1,n},\quad T=\bigoplus_{n\geqslant 0}T_{n+2,n},$$
$$ E'=\bigoplus_{n\geqslant 0}E'_{n,n+1},\quad F'=\bigoplus_{n\geqslant 0}F'_{n+1,n},
\quad X'=\bigoplus_{n\geqslant 0}X'_{n+1,n},
\quad
T'=\bigoplus_{n\geqslant 0}T'_{n+2,n},$$
and
$$H=\bigoplus_{n\geqslant 0}{H_{n+2,n}},\qquad
H'=\bigoplus_{n\geqslant 0}{H'_{n+2,n}}.$$

 In the following, we show that
  the above definition of $X,X',T,T',H$ and $H'$ is compatible with
  the $\mathfrak{A}(\fraks\frakl_{\K_q})$-representation
 datum on $\scrQU^{\O,\,(+)}$ under the restriction functor defined in \S \ref{sub:resOtoSO}.
 In this way, we shall essentially obtain an $\mathfrak{A}(\fraks\frakl_{\K_q})$-representation
 datum on $\scrQU^{\SO}_R$ from that on  $\scrQU^{\O,\,(+)}$.
Recall that
the right superscripts $G$ and $\widetilde{G}$
are used to distinguish functors and natural transformations of $RG\mod$ and $R\widetilde{G}\mod$, respectively.

\begin{theorem}\label{thm:SO-isos}
\begin{itemize}[leftmargin=8mm]
\item[$(a)$] Let $\Theta$ and $\Theta'$ be as defined in \S \ref{sub:resOtoSO},
and $\mathrm{Res}^{\scrQU,\,(+)}$ be as defined in \S \ref{ssub:Morita+-}. Then the following diagrams commute:
\begin{itemize}
  \item[$(a1)$] $$\xymatrix{F^{G}\circ \mathrm{Res}^{\scrQU,\,(+)} \ar[r]_{\Theta}^{\sim}\ar[d]^{X^{G}1_{\rm{Res}}} &\mathrm{Res}^{\scrQU,\,(+)}\circ  F^{\widetilde{G}}\ar[d]^{1_{\rm{Res}} X^{\widetilde{G}}}\\ F^{G}\circ \mathrm{Res}^{\scrQU,\,(+)}
 \ar[r]_{\Theta}^{\sim}& \mathrm{Res}^{\scrQU,\,(+)}\circ F^{\widetilde{G}}}$$
  \item[$(a2)$]  $$\xymatrix{(F')^{G}\circ \mathrm{Res}^{\scrQU,\,(+)} \ar[r]_{\Theta'}^
 {\sim}\ar[d]^{(X')^{G}1_{\rm{Res}}} &\mathrm{Res}^{\scrQU,\,(+)}\circ (F')^{\widetilde{G}} \ar[d]^{\zeta(2)\cdot 1_{\rm{Res}} (X')^{\widetilde{G}}}\\(F')^{G}\circ \mathrm{Res}^{\scrQU,\,(+)} \ar[r]_{\Theta'}^{\sim}& \mathrm{Res}^{\scrQU,\,(+)} \circ  (F')^{\widetilde{G}}
 }$$
\end{itemize}

\item[$(b)$] The isomorphism $\Theta_1=(\Theta1_{F})\circ(1_{F}\Theta)$ between the functors $(F')^2\circ \mathrm{Res}^{\widetilde{G}}_{G}$ and $\mathrm{Res}^{\widetilde{G}}_{G}\circ F^2$  and the isomorphism $\Theta_1'=(\Theta'1_{F'})\circ(1_{F'}\Theta')$ between the functors $F^2\circ \mathrm{Res}^{\widetilde{G}}_{G}$ and $\mathrm{Res}^{\widetilde{G}}_{G}\circ (F')^2$
    satisfying the following commutative diagrams:
\begin{itemize}
  \item[$(b1)$] $$\xymatrix{F^{G}\circ F^{G}\circ  \mathrm{Res}^{\scrQU,\,(+)} \ar[r]_{\Theta_1}^{\sim}\ar[d]^{T^G1_{\rm{Res}}} &\mathrm{Res}^{\scrQU,\,(+)}\circ   F^{\widetilde{G}}\circ F^{\widetilde{G}} \ar[d]^{1_{\rm{Res}} T^{\widetilde{G}}}\\F^{G}\circ F^{G}\circ
\mathrm{Res}^{\scrQU,\,(+)} \ar[r]_{\Theta_1}^{\sim}& \mathrm{Res}^{\scrQU,\,(+)}\circ F^{\widetilde{G}}\circ F^{\widetilde{G}}
}$$
\item[$(b2)$]
$$\xymatrix{(F')^{G}\circ (F')^{G}\circ \mathrm{Res}^{\scrQU,\,(+)} \ar[r]_{\Theta_1'}^{\sim}\ar[d]^{(T')^G1_{\rm{Res}}} &\mathrm{Res}^{\scrQU,\,(+)}\circ  (F')^{\widetilde{G}}\circ (F')^{\widetilde{G}} \ar[d]^{1_{\rm{Res}} (T')^{\widetilde{G}}}\\(F')^{G}\circ (F')^{G}\circ
\mathrm{Res}^{\scrQU,\,(+)} \ar[r]_{\Theta_1'}^{\sim}& \mathrm{Res}^{\scrQU,\,(+)}\circ   (F')^{\widetilde{G}}\circ (F')^{\widetilde{G}}
}$$
\end{itemize}

\item[$(c)$]
The isomorphism $\Theta_2=(\Theta'1_{F})\circ(1_{F'}\Theta)$ between the functors $F'\circ F\circ \mathrm{Res}^{\widetilde{G}}_{G}$ and $\mathrm{Res}^{\widetilde{G}}_{G}\circ F'\circ F$  and the isomorphism $\Theta_2'=(\Theta1_{F'})\circ(1_{F}\Theta')$
between the
 functors $F\circ F'\circ \mathrm{Res}^{\widetilde{G}}_{G}$ and $\mathrm{Res}^{\widetilde{G}}_{G}\circ F\circ F'$
 satisfying the following commutative diagrams:
\begin{itemize}
  \item[$(c1)$]
$$\xymatrix{(F')^{G}\circ F^{G}\circ  \mathrm{Res}^{\scrQU,\,(+)} \ar[r]_{\Theta_2}^{\sim}\ar[d]^{H^G1_{\rm{Res}}} &\mathrm{Res}^{\scrQU,\,(+)}\circ  (F')^{\widetilde{G}}\circ F^{\widetilde{G}}\ar[d]^{1_{\rm{Res}} H^{\widetilde{G}}}\\F^{G}\circ (F')^{G}\circ
\mathrm{Res}^{\scrQU,\,(+)} \ar[r]_{\Theta'_2}^{\sim}& \mathrm{Res}^{\scrQU,\,(+)}\circ  F^{\widetilde{G}}\circ (F')^{\widetilde{G}}
}$$
  \item[$(c2)$]
$$\xymatrix{F^{G}\circ (F')^{G}\circ  \mathrm{Res}^{\scrQU,\,(+)} \ar[r]_{\Theta_2'}^{\sim}\ar[d]^{(H')^G1_{\rm{Res}}} &\mathrm{Res}^{\scrQU,\,(+)}\circ  F^{\widetilde{G}}\circ (F')^{\widetilde{G}} \ar[d]^{1_{\rm{Res}} (H')^{\widetilde{G}}}\\(F')^{G}\circ F^{G}\circ
\mathrm{Res}^{\scrQU,\,(+)} \ar[r]_{\Theta_2}^{\sim}& \mathrm{Res}^{\scrQU,\,(+)}\circ (F')^{\widetilde{G}}\circ F^{\widetilde{G}}
}$$
\end{itemize}
\end{itemize}
\end{theorem}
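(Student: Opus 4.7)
The plan is to verify each of the six commutative squares by translating it into an equality of bimodule homomorphisms (or equivalently, of elements of an endomorphism algebra of a bimodule) and then performing a direct computation. In each case both sides of the diagram are natural transformations between functors represented by explicit $(RG_{n+j},RG_n)$-bimodules, and the isomorphisms $\Theta,\Theta',\Theta_1,\Theta_1',\Theta_2,\Theta_2'$ are nothing but the canonical identifications provided by Theorem~\ref{Thm:iso-O-SO}(a) between the $G$-side bimodules (tensored with $R\widetilde{G}_n$ over $RG_n$) and the corresponding $\widetilde{G}$-side bimodules. So the content of the theorem reduces to comparing, for each generator $X,X',T,T',H,H'$, its action written with the $\SO$-lifts to its action written with the $\O$-lifts, after restricting to the subcategory $\scrQU^{\O,(+)}$.

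For parts (a1) and (a2) the crucial input is the identity
$$\dot{t}_{n+1}^{G}=c_{n+1}\cdot\dot{t}_{n+1}^{\widetilde{G}}\cdot\mathrm{diag}(-1,\mathrm{Id}_{G_n},-1),$$
where $c_{n+1}=-\mathrm{id}_{\widetilde{G}_{n+1}}\in Z(\widetilde{G}_{n+1})$, obtained from the explicit descriptions of the two lifts. The diagonal factor $\mathrm{diag}(-1,\mathrm{Id}_{G_n},-1)$ lies inside the copy of $\bbF_q^\times$ in $U_n$, so it fixes the trivial-character idempotent $e_{n+1,n}$, while on the Legendre idempotent $e'_{n+1,n}$ it acts by the scalar $\zeta(-1)$. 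On the summand of $\scrQU^{\O,(+)}$ labelled by $(n_+,n_-)$ the element $c_n$ acts by $\zeta(-1)^{n_-}$, so on the bimodules underlying $F^{\widetilde{G}}_{n+1,n}$ and $(F')^{\widetilde{G}}_{n+1,n}$ out of this summand I obtain
$$e_{n+1,n}\dot{t}_{n+1}^{G}e_{n+1,n}=\zeta(-1)^{n_-}\cdot e_{n+1,n}\dot{t}_{n+1}^{\widetilde{G}}e_{n+1,n},$$
$$e'_{n+1,n}\dot{t}_{n+1}^{G}e'_{n+1,n}=\zeta(-1)^{n_-+1}\cdot\zeta(-1)\cdot e'_{n+1,n}\dot{t}_{n+1}^{\widetilde{G}}e'_{n+1,n}=\zeta(-1)^{n_-}\cdot e'_{n+1,n}\dot{t}_{n+1}^{\widetilde{G}}e'_{n+1,n}.$$
Multiplying by the normalising constants $\zeta(-1)^{n_-}q^{n}$ in $X_{n_+,n_-}^{n_++1,n_-}$ and $\zeta(2)\zeta(-1)^{n_-}q^{n}$ in $(X')_{n_+,n_-}^{n_+,n_-+1}$ built into the definitions of \S\ref{sub:explicit-repdatumSO}, the right-hand sides become exactly $X^{\widetilde{G}}_{n+1,n}$ (yielding (a1)) and $\zeta(2)\,(X')^{\widetilde{G}}_{n+1,n}$ (yielding (a2)), as demanded by the two diagrams.

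For parts (b) and (c) the situation is simpler because the natural transformations $T,T',H,H'$ are all defined using the lift $\dot{s}_{n+2}$, and for $n\geqs 0$ the element $\dot{s}_{n+2}$ is the \emph{same} signed-permutation matrix in $G_{n+2}$ and in $\widetilde{G}_{n+2}$: the distinction between the $\SO_{2n+1}$- and $\O_{2n+1}$-lifts only affects $\dot{s}_1$. Consequently the bimodule elements representing $T^{G},(T')^{G},H^{G},(H')^{G}$ agree, after restriction, with the bimodule elements representing their $\widetilde{G}$-counterparts on the summands of $\scrQU^{\O,(+)}$, and each of the four squares in (b) and (c) collapses to the well-definedness of the composite isomorphism $\Theta_1,\Theta_1',\Theta_2,\Theta_2'$, which in turn is formal from Theorem~\ref{Thm:iso-O-SO}(a).

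The main obstacle is the sign bookkeeping in paragraph two: three independent signs have to be balanced, namely the $\zeta(-1)^{n_-}$ read off from the action of the central element $c_n$ on the $(n_+,n_-)$-summand of $\scrQU^{\O,(+)}$, the $\zeta(-1)$ picked up by the Legendre idempotent $e'_{n+1,n}$ when acted upon by $\mathrm{diag}(-1,\mathrm{Id},-1)$, and the blockwise normalisations hard-wired into the definitions of $X^{G}$ and $(X')^{G}$. Verifying that the third exactly cancels the first two (and leaves behind the factor $\zeta(2)$ in (a2), traceable back to Corollary~\ref{cor:spindet} via the spinor-norm calculation of Lemma~\ref{Lem:spinor}) is the real content; once it is done, the theorem provides the strong equivariance (in the sense of Definition~\ref{equ}) of the Morita equivalence $\mathrm{Res}^{\scrQU,(+)}$ and transports the $\mathfrak{A}(\fraks\frakl_{\K_q})$-representation datum from $\scrQU^{\O,(+)}_R$ to $\scrQU^{\SO}_R$.
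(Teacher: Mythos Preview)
Your proposal is correct and follows essentially the same approach as the paper: reduce to the blockwise summands $R\widetilde{G}_n e^{R\widetilde{G}_n,\zeta(-1)^{n_-}}_{s_{n_+,n_-}}\!\mod$, compare the two lifts of $t_{n+1}$, and observe that the transformations built from $\dot{s}_{n+2}$ require no correction since that lift is identical in $G_{n+2}$ and $\widetilde{G}_{n+2}$.

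The only noteworthy difference is in the factorization for part~(a). The paper writes $(\dot{t}_{n+1})^{\widetilde{G}}=(\dot{t}_{n+1})^{G}\cdot\iota_{n+1,n}(c_n)$ and reads off the scalar directly from the right $\widetilde{G}_n$-action of $c_n$ on the chosen summand. Your factorization $\dot{t}_{n+1}^{G}=c_{n+1}\cdot\dot{t}_{n+1}^{\widetilde{G}}\cdot\mathrm{diag}(-1,\mathrm{Id}_{G_n},-1)$ is equivalent but uses $c_{n+1}$, whose action by $\zeta(-1)^{n_-}$ (respectively $\zeta(-1)^{n_-+1}$ for $F'$) is one step removed: it relies on knowing that $F^{\widetilde{G}}$ and $(F')^{\widetilde{G}}$ land in the correct sign component of $\scrQU^{\O,(+)}$ (Lemma~\ref{Lem:stable} / Corollary~\ref{cor:sign}). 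The paper's route avoids this extra appeal, but both bookkeeping schemes arrive at the same cancellation and the same residual factor $\zeta(2)$ in (a2).
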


\begin{proof}
It suffices to prove the diagrams for each subcategory $R\widetilde{G}_ne^{R\widetilde{G}_n,\,\zeta(-1)^{n_-}}_{s_{n_+,n_-}}\mod$,
on which the action of $c_n=-\id_{\widetilde{G}_n}$ is exactly by  multiplication by $\zeta(-1)^{n_-}.$

\smallskip

For (a1), we note that
$\Theta_n\circ ((X_{n+1,n})^{G}1_{\rm {Res_n}})$ is  represented by the composition of the $(RG_{n+1},R\widetilde{G}_n)$-bimodule maps
$$\begin{array}{rcccl}
RG_{n+1}e_{n+1,n}\otimes_{RG_n}R\widetilde{G}_n &\to& RG_{n+1}e_{n+1,n}\otimes_{RG_n}R\widetilde{G}_n &\to& R\widetilde{G}_{n+1}e_{n+1,n}\\
ge_{n+1,n}\otimes h &\mapsto& g (X_{n+1,n})^G\otimes h &\mapsto& g\iota_{n+1,n}(h) (X_{n+1,n})^G,
\end{array}
$$
for all  $g\in G_{n+1}$ and
 $h\in \widetilde{G}_n$.
Also,  $(1_{\rm {Res_{n+1}}}(X_{n+1,n})^{\widetilde{G}})\circ\Theta_n$ is represented by the composition of the $(RG_{n+1},R\widetilde{G}_n)$-bimodule maps
$$\begin{array}{rcccl}
RG_{n+1}e_{n+1,n}\otimes_{RG_n}R\widetilde{G}_n &\to& R\widetilde{G}_{n+1}e_{n+1,n} &\to& R\widetilde{G}_{n+1}e_{n+1,n} \\
ge_{n+1,n}\otimes h &\mapsto& g\iota_{n+1,n}(h)e_{n+1,n} &\mapsto& g\iota_{n+1,n}(h) (X_{n+1,n})^{\widetilde{G}}.
\end{array}
$$
It is easy to see that $(\dot{t}_{n+1})^{\widetilde{G}}=(\dot{t}_{n+1})^{{G}}\cdot\diag(1,-\id_{\widetilde{G}_n},1)$.
Hence
$$\begin{array}{rcl}
 (X_{n+1,n})^{\widetilde{G}} &=& q^n e_{n+1,n}(\dot{t}_{n+1})^{{G}}\cdot\diag(1,-\id_{\widetilde{G}_n},1)e_{n+1,n} \\
  &=& q^n e_{n+1,n}(\dot{t}_{n+1})^{{G}}e_{n+1,n}\cdot \iota_{n+1,n}(c_n),
\end{array}
$$
where the first equality holds by definition, the second by the fact  $\diag(1,-\id_{\widetilde{G}_n},1)=\iota_{n+1,n}(c_n)$.

Note that when restricting on $R\widetilde{G}_ne^{R\widetilde{G}_n,\,\zeta(-1)^{n_-}}_{s_{n_+,n_-}}\mod$,
 right multiplication by the element
$(X_{n+1,n})^G$ on $(F_{n+1,n})^G$ is equivalent  to
right multiplication
by the element $(X_{n_+,n_-}^{n_++1,n_-})^G=\zeta(-1)^{n_-}q^{n}e_{n+1,n}e^{RG_n}_{s_{n_+,n_-}}
(\dot{t}_{n+1})^Ge_{n+1,n}
e^{RG_n}_{s_{n_+,n_-}}.$
It follows that when restricting on $R\widetilde{G}_ne^{R\widetilde{G}_n,\,\zeta(-1)^{n_-}}_{s_{n_+,n_-}}\mod$,
$(X_{n+1,n})^{\widetilde{G}}$ is equal to $(X_{n+1,n})^{G}$ since the action of $c_n=-\id_{\widetilde{G}_n}$ is exactly by  multiplication by $\zeta(-1)^{n_-}.$ Hence  $(1_{{\rm Res}_{n+1}}X_{n+1,n}^{\widetilde{G}})\circ\Theta_n=\Theta_n\circ (X_{n+1,n}^{G}1_{\rm {Res}_n}),$ and so $(1_{\rm {Res}}X^{\widetilde{G}})\circ\Theta=\Theta\circ (X^{G}1_{\rm {Res}}).$
\smallskip

For (a2),  we have that $\Theta_n'\circ ((X'_{n+1,n})^{G}1_{\rm {Res}_n})$ is represented by the composition of the $(RG_{n+1},R\widetilde{G}_n)$-bimodule maps
$$\begin{array}{rcccl}
RG_{n+1}e'_{n+1,n}\otimes_{RG_n}R\widetilde{G}_n &\to& RG_{n+1}e'_{n+1,n}\otimes_{RG_n}R\widetilde{G}_n &\to& R\widetilde{G}_{n+1}e'_{n+1,n}\\
ge'_{n+1,n}\otimes h &\mapsto& g (X'_{n+1,n})^G\otimes h &\mapsto& g\iota_{n+1,n}(h) (X'_{n+1,n})^G,
\end{array}
$$
Also, $(1_{\rm {Res}_{n+1}}(X_{n+1,n}')^{\widetilde{G}})\circ\Theta_n'$ is represented by the composition of the $(RG_{n+1},R\widetilde{G}_n)$-bimodule maps
$$\begin{array}{rcccl}
RG_{n+1}e'_{n+1,n}\otimes_{RG_n}R\widetilde{G}_n &\to& R\widetilde{G}_{n+1}e'_{n+1,n} &\to& R\widetilde{G}_{n+1}e'_{n+1,n}\\
ge'_{n+1,n}\otimes h &\mapsto& g\iota_{n+1,n}(h)e'_{n+1,n} &\mapsto& g\iota_{n+1,n}(h) (X'_{n+1,n})^{\widetilde{G}}.
\end{array}
$$
Hence
$$\begin{array}{rcl}
 (X'_{n+1,n})^{\widetilde{G}} &=& q^n e'_{n+1,n}(\dot{t}_{n+1})^{{G}}\cdot\diag(1,-\id_{\widetilde{G}_n},1)e'_{n+1,n} \\
  &=&q^n e'_{n+1,n}(\dot{t}_{n+1})^{{G}}e'_{n+1,n}\cdot \iota_{n+1,n}(c_n),
\end{array}
$$
where 
the second  is by the fact that
$\diag(1,-\id_{\widetilde{G}_n},1)=\iota_{n+1,n}(c_n).$
\smallskip

Note that when restricting on $R\widetilde{G}_ne^{R\widetilde{G}_n,\,\zeta(-1)^{n_-}}_{s_{n_+,n_-}}\mod$, right multiplication by the element
$(X'_{n+1,n})^G$ on $(F'_{n+1,n})^G$ is equivalent  to right multiplication by the element $((X')_{n_+,n_-}^{n_++1,n_-})^G=\zeta(2)\zeta(-1)^{n_-}q^{n}e'_{n+1,n}e^{RG_n}_{s_{n_+,n_-}}
(\dot{t}_{n+1})^Ge'_{n+1,n}
e^{RG_n}_{s_{n_+,n_-}}.$
Hence when restricting on $R\widetilde{G}_ne^{R\widetilde{G}_n,\,\zeta(-1)^{n_-}}_{s_{n_+,n_-}}\mod$,
$\zeta(2)(X'_{n+1,n})^{\widetilde{G}}$ is equal to $(X'_{n+1,n})^{G},$
 and so $(\zeta(2)\cdot1_{\rm {Res}}(X')^{\widetilde{G}})\circ\Theta'=\Theta'\circ ((X')^{G}1_{\rm {Res}}).$

 \smallskip

The proofs of the other diagrams are similar but much easier since the natural transformations involved are both globally defined.
We leave them to the reader.
\end{proof}

\begin{proposition}\label{prop:12rel-SO}
The natural transformations
$X,X',T,T',H$ and $H'$ satisfy all relations
in Proposition \ref{prop:12relations}.
\end{proposition}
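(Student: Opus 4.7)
The plan is to transfer the relations from the $\widetilde{G}=\O_{2n+1}(q)$ situation, where Proposition \ref{prop:12relations} already provides them, to the $\SO_{2n+1}(q)$ situation via the Morita equivalence of Proposition \ref{prop:res} and the intertwining diagrams of Theorem \ref{thm:SO-isos}.

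First, I would observe that the full subcategory $\scrQU_R^{\O,(+)}\subset\bigoplus_{n\geqslant 0}R\widetilde G_n\mod$ is stable under $F^{\widetilde G}$ and $(F')^{\widetilde G}$ by Lemma \ref{Lem:stable}(a). Hence the natural transformations $X^{\widetilde G},T^{\widetilde G},(X')^{\widetilde G},(T')^{\widetilde G},H^{\widetilde G},(H')^{\widetilde G}$ of \S\ref{sec:repdatumN} restrict to natural transformations on $\scrQU_R^{\O,(+)}$, and they satisfy all twelve relations (a)--(l) on this subcategory since they do so on the whole of $\bigoplus_{n\geqslant 0}R\widetilde G_n\mod$ by Proposition \ref{prop:12relations}.

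Next, since $\mathrm{Res}^{\scrQU,(+)}:\scrQU_R^{\O,(+)}\iso\scrQU_R^{\SO}$ is an equivalence of categories, checking any identity between natural transformations of functors on $\scrQU_R^{\SO}$ is equivalent to checking the corresponding identity after transporting along this equivalence. The six commutative diagrams of Theorem \ref{thm:SO-isos} express precisely this transport: under the canonical isomorphisms $\Theta,\Theta',\Theta_1,\Theta_1',\Theta_2,\Theta_2'$, the transformations $X^G,T^G,H^G,(H')^G,(T')^G$ correspond to their $\widetilde G$-analogues with no extra scalar, while $(X')^G$ corresponds to $\zeta(2)\cdot(X')^{\widetilde G}$ (diagram (a2)). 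For each of the relations (a)--(l) stated in Proposition \ref{prop:12relations}, I would multiply both sides of the $\widetilde G$-version by the appropriate constants and then conjugate by the relevant $\Theta$-type isomorphism to obtain the required identity on $\scrQU_R^{\SO}$.

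The only issue that requires care is the compatibility with the global scalar $\zeta(2)$ coming from diagram (a2), which affects exactly the relations (d), (e), (f), (h), (i), (j), (k), (l) in which $X'$ appears. But Remark \ref{rem:scalar}(a) is designed precisely for this: rescaling $X$ and $X'$ by any nonzero constants preserves all twelve relations of Proposition \ref{prop:12relations}. Thus replacing $(X')^{\widetilde G}$ by $\zeta(2)\cdot(X')^{\widetilde G}$ leaves all relations intact on $\scrQU_R^{\O,(+)}$, and conjugating back by the $\Theta$-isomorphisms gives the relations for $X^G,(X')^G,T^G,(T')^G,H^G,(H')^G$ on $\scrQU_R^{\SO}$.

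The main obstacle is purely bookkeeping: the scalar $\zeta(-1)^{n_-}$ built into the definitions of $X^G$ and $(X')^G$ is block-dependent and could in principle clash with the composition behaviour of $F$ and $F'$ (since $F'$ shifts $n_-\mapsto n_-+1$). This is however not a genuine obstacle, because by the proof of Theorem \ref{thm:SO-isos}(a1)-(a2), the block-dependent factor $\zeta(-1)^{n_-}$ is exactly absorbed by the central element $\iota_{n+1,n}(c_n)$ acting on $\scrQU_R^{\O,(+)}$ via the sign $\zeta(-1)^{n_-}$, so that the identification under $\Theta,\Theta'$ involves only the uniform global constant $\zeta(2)$ (in diagram (a2)) or no constant at all. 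Consequently the argument via Remark \ref{rem:scalar}(a) applies uniformly and yields the twelve relations on $\scrQU_R^{\SO}$.
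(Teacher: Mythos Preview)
Your proposal is correct and follows essentially the same route as the paper: transport the twelve relations from $\scrQU_R^{\O,(+)}$ to $\scrQU_R^{\SO}$ via the intertwining diagrams of Theorem \ref{thm:SO-isos}, using Remark \ref{rem:scalar}(a) to absorb the global scalar $\zeta(2)$ attached to $(X')^{\widetilde G}$. Your additional paragraph on the block-dependent factor $\zeta(-1)^{n_-}$ is a helpful elaboration (the paper leaves this implicit in the proof of Theorem \ref{thm:SO-isos}), though your list of which relations involve $X'$ is slightly off (only (f) and (k) actually contain $X'$); this is harmless since Remark \ref{rem:scalar}(a) covers all twelve relations uniformly.
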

\begin{proof} We have known that the natural transformations
$X^{\widetilde{G}}, (X')^{\widetilde{G}}, T^{\widetilde{G}}, (T')^{\widetilde{G}},H^{\widetilde{G}}$, $(H')^{\widetilde{G}}$
satisfy all relations (a)-(l) in Proposition \ref{prop:12relations}.
This remains true after replacing $(X')^{\widetilde{G}}$ by $\zeta(2)(X')^{\widetilde{G}}$ by Remark \ref{rem:scalar} (a).
The commutative diagrams in Theorem \ref{thm:SO-isos} correspond
the natural transformations
$(X^{\widetilde{G}}, \zeta(2) (X')^{\widetilde{G}},T^{\widetilde{G}}, (T')^{\widetilde{G}},
H^{\widetilde{G}},(H')^{\widetilde{G}})$ of $\scrQU^{\O,+}_R$ to
the natural transformations $(X,X', T,T',H,H')$ of $\scrQU^{\SO}_R$, respectively.
Hence the proposition follows.
\end{proof}

\begin{remark}One may directly check the trueness of Proposition \ref{prop:12rel-SO} despite of the
complexity of checking each of the relations within blocks.
\end{remark}

\smallskip

Based on Proposition \ref{prop:12rel-SO},
we shall obtain a representation datum on $\scrQU^{\SO}$.
For brevity, we  use the same notation $E$ and $E'$ to denote the restriction of
the functors $E$ and $E'$ of $RG\mod$
to $RG\qumod$,  respectively.
Now we define $I,I',\K_q,x_s$ and $\tau_{st}$ in the same way as in \S \ref{sec:repdatumN}
and have the decompositions
$$E=\bigoplus_{i\in I} E_i,\ \ \ \
F=\bigoplus_{i\in I} F_i,\ \ \ \
E'=\bigoplus_{i'\in I'} E'_{i'},\ \ \ \
F'=\bigoplus_{i'\in I'} F'_{i'}$$
where $X-i$ is locally nilpotent on $E_i$ and $F_i$ and $X'-i'$ is locally nilpotent on $E'_{i'}$ and $F'_{i'}$,
 respectively.
Repeating the argument of Theorem \ref{thm:repdatumKq}, we have
\begin{theorem} \label{thm:repdatumSO} The natural transformations $x_s$ and
$\tau_{st}$  satisfy the quiver Hecke relations for all $s,t\in \K_q$.
That is, the tuple $$\big(\{E_s\}_{s\in \K_q},\{F_s\}_{s\in\K_q},\{x_s\}_{s\in \K_q},\{\tau_{st}\}_{s,t\in \K_q}\big)$$
is an $\mathfrak{A}(\fraks\frakl_{\K_q})$-representation datum
on $\scrQU^{\SO}$.\qed
\end{theorem}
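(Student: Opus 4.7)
The plan is to mimic, verbatim in structure, the proof of Theorem \ref{thm:repdatumKq}, using Proposition \ref{prop:12rel-SO} as the sole replacement for Proposition \ref{prop:12relations}. The substantive content of Theorem \ref{thm:repdatumSO} lies entirely in the twelve identities (a)--(l) that the tuple $(X,X',T,T',H,H')$ on $\scrQU^{\SO}_R$ satisfies, and Proposition \ref{prop:12rel-SO} has already delivered them (either directly, or by transporting the identities from $\scrQU^{\O,(+)}_R$ along the isomorphisms $\Theta,\Theta',\Theta_1,\Theta_1',\Theta_2,\Theta_2'$ of Theorem \ref{thm:SO-isos}, with the scalar $\zeta(2)$ on $X'$ absorbed via Remark \ref{rem:scalar}(a)).

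First I would fix a pair of vertices $s,t\in\K_q = I(q)\sqcup I'(q)$ and split into the four cases $\{s,t\in I(q)\}$, $\{s,t\in I'(q)\}$, $\{s\in I(q),\,t\in I'(q)\}$ and $\{s\in I'(q),\,t\in I(q)\}$. In the first case, exactly as in the proof of Theorem \ref{thm:repdatumKq}, the identities (a)--(c) of Proposition \ref{prop:12rel-SO} are the affine Hecke relations for $(E,F,X,T)$, so the change-of-variables in \S\ref{subsec:BKR} together with Theorem \ref{th:slcat2rep} (i.e.\ \cite[Thm~5.26]{R08}) gives the quiver Hecke relations (1)--(4) on $\{F_i\}_{i\in I(q)}$. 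The second case is symmetric: identities (d)--(f) play the same role for $(E',F',X',T')$, and Theorem \ref{th:slcat2rep} applied to the other copy of $\fraks\frakl_I$ produces the quiver Hecke relations on $\{F'_{i'}\}_{i'\in I'(q)}$.

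For the two mixed cases I would verify (1)--(4) directly, taking advantage of the fact that the quiver $\K_q$ has no edge between $I(q)$ and $I'(q)$, so $Q_{st}(u,v)=1$ and $\delta_{st}=0$. Relation (1), $\tau_{st}\circ\tau_{ts}=1$, is precisely identities (i)--(j), namely $HH'=1_{F'F}$ and $H'H=1_{FF'}$. Relations (3) and (4), $\tau_{st}\circ x_sF_t = F_tx_s\circ\tau_{st}$ and $\tau_{st}\circ F_sx_t = x_tF_s\circ\tau_{st}$, reduce to the statements that $H$ commutes with $1_FX'$ on the left and $X'1_F$ on the right (and the analog for $H'$), which is identities (k) and (l) after translating between $X^{\pm 1}$ and $x_s = s^{-1}X-1$. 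For the braid relation (2), since one of $s,t,u$ is forced to lie in a different component from the other two, the right-hand side vanishes, and the left-hand side is the mixed Yang--Baxter identity provided by identities (g)--(h).

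The main obstacle is only bookkeeping: one must be careful that the scalar $\zeta(2)$ introduced in the definition of $X'$ on $\scrQU^{\SO}$ (to make the diagram (a2) of Theorem \ref{thm:SO-isos} commute) does not spoil any of relations (k)--(l), and that the normalisation of $H,H'$ (with a factor of $q$ on $H$ but not on $H'$, matched by $HH'=1$ and $H'H=1$) is consistent across all the mixed checks. Both are forced by Remark \ref{rem:scalar}(a): multiplying $X$ and $X'$ by scalars preserves every relation in Proposition \ref{prop:12relations}, and the mixed Yang--Baxter identities (g)--(h) are homogeneous in $H,H'$, so the argument carries through without modification. This yields the required $\mathfrak{A}(\fraks\frakl_{\K_q})$-representation datum on $\scrQU^{\SO}$.
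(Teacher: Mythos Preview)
Your proposal is correct and matches the paper's approach exactly: the paper simply writes ``Repeating the argument of Theorem~\ref{thm:repdatumKq}, we have'' before stating Theorem~\ref{thm:repdatumSO} with a \qed, relying on Proposition~\ref{prop:12rel-SO} in place of Proposition~\ref{prop:12relations}. Your write-up in fact supplies more detail than the paper does, correctly identifying which of the identities (a)--(l) feed into which quiver Hecke relations in each of the four cases.
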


With Theorem \ref{thm:repdatumSO},
 we shall also call the tuple $(E,F,X,T;E',F',X',T';H,H')$
 a representation datum (of $\mathfrak{A}(\fraks\frakl_{\K_q})$) on $\scrQU^{\SO}$.

\smallskip

In \S \ref{subsec:spinor}, we have investigated the relationship between
the generalized eigenvalues of $X^{\widetilde{G}}$ on $F^{\widetilde{G}}$ and $(X')^{\widetilde{G}}$ on $(F')^{\widetilde{G}}$.
Now we can also investigate the the relationship between the generalized eigenvalues of $X$ on $F$ and $X'$ on $F'$.

\begin{proposition} \label{prop:spin-SO}  %
Let $\Phi$ and $\Phi'$ be the $(RG_{n+1},RG_n)$-bimodule isomorphisms in Lemma \ref{lem:Phi&Phi'}.
Then the commutative diagrams in Proposition \ref{prop:spinor}
also hold true for $\scrQU^{\SO}$ with $\beta=1$.
\end{proposition}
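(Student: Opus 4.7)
The plan is to carry out a direct bimodule-level computation that parallels the proof of Proposition \ref{prop:spinor}, carefully tracking the extra scalar factors $\zeta(-1)^{n_-}$ on $X$ and $\zeta(2)\zeta(-1)^{n_-}$ on $X'$ that appear in the SO-definition of the representation datum (see \S\ref{sub:explicit-repdatumSO}), and the fact that in the SO case $sp_{n+1}(\dot{t}_{n+1}) = \zeta(2)\zeta(-1)^n$ rather than $1$ (Lemma \ref{Lem:spinor}(4)). I would first observe that the bimodule maps $\Phi_n$ and $\Phi'_n$ of Lemma \ref{lem:Phi&Phi'} are well-defined in the SO case for exactly the same reason as in the $\widetilde{G}$ case, since the restriction of $sp_{n+1}$ to $U_n$ still coincides with $\mathrm{Inf}^{U_n}_{\bbF_q^\times}(\zeta)$, and since the spinor norm permutes the idempotents $e^{RG_n}_{s_{n_+,n_-}}$ in the way suggested by $\mathrm{Spin}_n(E_{\Theta_+,\Theta_-}) \cong E_{\Theta_-,\Theta_+}$ (i.e., $e^{RG_n}_{s_{n_+,n_-}} \cdot \mathfrak{sp}_n = \mathfrak{sp}_n \cdot e^{RG_n}_{s_{n_-,n_+}}$).

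For diagram (a1), I would evaluate both sides of $\Phi \circ (X' 1_{\rm Spin}) = (1_{\rm Spin} X) \circ \Phi$ on a block $(n_+,n_-)$ in $\scrQU^{\SO}_R$. Since $\mathrm{Spin}$ sends block $(n_+,n_-)$ to block $(n_-,n_+)$, the left-hand side involves $X'$ on the summand $F'^{n_-,n_++1}_{n_-,n_+}$, which carries the scalar factor $\zeta(2)\zeta(-1)^{n_+}$; the right-hand side involves $X$ on $F^{n_++1,n_-}_{n_+,n_-}$, carrying the scalar $\zeta(-1)^{n_-}$. The core computation is
\begin{equation*}
e'_{n+1,n}\,\dot t_{n+1}\,e'_{n+1,n}\cdot\mathfrak{sp}_{n+1}
 \;=\; \mathfrak{sp}_{n+1}\cdot sp_{n+1}(\dot t_{n+1})\,e_{n+1,n}\,\dot t_{n+1}\,e_{n+1,n},
\end{equation*}
which follows from $e' \cdot \mathfrak{sp} = \mathfrak{sp}\cdot e$ (from the $\zeta$-averaging on the $\bbF_q^\times$-factor, as in the proof of Lemma \ref{lem:Phi&Phi'}) and from the central character identity $\dot t_{n+1}\cdot\mathfrak{sp}_{n+1}=\mathfrak{sp}_{n+1}\cdot sp_{n+1}(\dot t_{n+1})\dot t_{n+1}$. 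Plugging in $sp_{n+1}(\dot t_{n+1}) = \zeta(2)\zeta(-1)^n$ from Lemma \ref{Lem:spinor}(4), one finds that the left-hand scalar $\zeta(2)\zeta(-1)^{n_+}$ multiplied by $sp_{n+1}(\dot t_{n+1})$ equals $\zeta(-1)^{n_+ + n} = \zeta(-1)^{n_-}$, matching the right-hand scalar exactly. Thus (a1) commutes, and diagram (a2) follows by the completely symmetric argument exchanging the roles of $e$ and $e'$, of $X$ and $X'$, and of $\Phi$ and $\Phi'$.

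For diagrams (b1), (b2), (c1), (c2), the natural transformations $T$, $T'$, $H$, $H'$ carry no extra scalar factors in the SO definition, so the argument reduces to the identity $\dot s_{n+2}\cdot\mathfrak{sp}_{n+2}=\mathfrak{sp}_{n+2}\cdot\dot s_{n+2}$, valid since $sp_{n+2}(\dot s_{n+2})=1$ in both the $G$ and $\widetilde{G}$ cases (Lemma \ref{Lem:spinor}(4)). Combined with $e'\cdot\mathfrak{sp} = \mathfrak{sp}\cdot e$ and $e\cdot\mathfrak{sp} = \mathfrak{sp}\cdot e'$ applied at the two layers of the bimodule of $F^2$, $(F')^2$, $F'F$, and $FF'$, the computations literally reproduce those of Proposition \ref{prop:spinor} after passing to the $(RG_{n+2},RG_n)$-bimodule versions via the central idempotents $e_{\scrQU_R,n}$.

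As a sanity check (and alternative route), one could instead transfer the diagrams through the Morita equivalence $\mathrm{Res}^{\scrQU,(+)}:\scrQU^{\O,(+)}_R \xrightarrow{\sim} \scrQU^{\SO}_R$ of Proposition \ref{prop:res}, using the intertwining isomorphisms of Theorem \ref{thm:SO-isos} together with Lemma \ref{Lem:stable}(b) and Proposition \ref{prop:det} to handle the $\widetilde{\mathrm{Spin}}=\mathrm{Spin}\circ\mathrm{Det}$ case when $\zeta(2)=-1$; the sign $-\zeta(2)^{-1}=1$ arising from composing Propositions \ref{prop:spinor}(a2) and \ref{prop:det}(a1) cancels the scalar $\zeta(2)$ introduced by Theorem \ref{thm:SO-isos}(a2), which provides the same conclusion. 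The principal obstacle in either approach is precisely the bookkeeping of the $\zeta(2)$ and $\zeta(-1)^{n_\pm}$ signs across block decompositions; the direct computation avoids invoking the Morita-equivalence/det-twist machinery and thus seems the most transparent presentation.
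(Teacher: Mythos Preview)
Your proposal is correct, and in fact you present \emph{two} valid routes. Your primary approach---the direct bimodule computation tracking the blockwise scalars $\zeta(-1)^{n_-}$ and $\zeta(2)\zeta(-1)^{n_-}$ against $sp_{n+1}(\dot t_{n+1})=\zeta(2)\zeta(-1)^n$---is genuinely different from the paper's. The paper instead takes exactly the route you sketch as your ``sanity check'': it works on $\scrQU^{\O,(+)}_R$, observes via Lemma~\ref{Lem:stable}(b) that this category is $\widetilde{\mathrm{Spin}}$-stable, combines Propositions~\ref{prop:spinor} and~\ref{prop:det} to see that $\widetilde{\mathrm{Spin}}$ swaps the pairs $(F^{\widetilde G},(F')^{\widetilde G})$, $(X^{\widetilde G},\zeta(2)(X')^{\widetilde G})$, $(T^{\widetilde G},(T')^{\widetilde G})$, $(H^{\widetilde G},q(H')^{\widetilde G})$, and then transports everything through $\mathrm{Res}^{\scrQU,(+)}$ using Theorem~\ref{Thm:iso-O-SO}(c) and Theorem~\ref{thm:SO-isos}.

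What each buys: the paper's route is shorter to write and reuses the machinery already assembled (at the cost of requiring the reader to chase a $\zeta(2)$ through three separate results), whereas your direct computation is self-contained and makes the sign cancellation $\zeta(2)\zeta(-1)^{n_+}\cdot\zeta(2)\zeta(-1)^n=\zeta(-1)^{n_-}$ completely explicit. Your observation that $T,T',H,H'$ carry no blockwise scalars in the SO definition, together with $sp_{n+2}(\dot s_{n+2})=1$, correctly reduces diagrams (b) and (c) to literal repetitions of the $\widetilde G$ argument; the paper does not need to isolate this because the transfer machinery handles all the diagrams uniformly.
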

\begin{proof}
Note that $\scrQU^{\O,\,(+)}_R$ is stable under the functor $\widetilde{\mathrm{Spin}}$
by Lemma \ref{Lem:stable}.
By Propositions \ref{prop:spinor} and \ref{prop:det},
we see that the functor $\widetilde{\mathrm{Spin}}$ swaps the following pairs $(F^{\widetilde{G}},(F')^{\widetilde{G}})$, $(X^{\widetilde{G}}, \zeta(2)(X')^{\widetilde{G}})$, $(T^{\widetilde{G}},(T')^{\widetilde{G}})$ and $(H^{\widetilde{G}},q(H')^{\widetilde{G}}).$
Also,  Theorem \ref{Thm:iso-O-SO} $(c)$ implies that
$$\mathrm{Res}^{\scrQU,\,(+)}_{G_n}\circ\rm{Spin}\cong \rm{Spin}\circ \mathrm{Res}^{\scrQU,\,(+)}_{G_n}.$$
Hence the proposition holds by Theorem \ref{thm:SO-isos}.
 \end{proof}

Finally,
we have the following Corollary.

\begin{corollary} \label{cor:I=I'SO}
Let $\scrI$ and $\scrI'$ be the sets of
the generalized eigenvalues of $X$ on $F$ and of $X'$ on $F'$, respectively.
Then $\scrI= \scrI'$ and we have the isomorphism
$\mathrm{Spin}\circ F_i\circ \mathrm{Spin}\cong F'_i$ for each $i\in \scrI. $ \qed
\end{corollary}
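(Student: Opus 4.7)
The plan is to copy, almost verbatim, the one-line argument used to establish the $\O_{2n+1}$-version, Corollary \ref{cor:I=I'}, but with inputs drawn from Proposition \ref{prop:spin-SO} instead of Proposition \ref{prop:spinor}(a) and Lemma \ref{lem:Phi&Phi'}. Concretely, I would first invoke diagram (a1) of Proposition \ref{prop:spin-SO} to obtain, on $\scrQU^{\SO}_R$, a natural isomorphism
\[
\Phi \, : \, F'\circ \mathrm{Spin}\, \xrightarrow{\ \sim\ }\, \mathrm{Spin}\circ F
\]
that intertwines $X'\,1_{\mathrm{Spin}}$ with $1_{\mathrm{Spin}}\,X$. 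Symmetrically, diagram (a2) provides an isomorphism $\Phi'\colon F\circ\mathrm{Spin}\simto\mathrm{Spin}\circ F'$ intertwining $X\,1_{\mathrm{Spin}}$ with $1_{\mathrm{Spin}}\,X'$.

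Next, I would decompose $F$ (resp.\ $F'$) into its generalized $X$- (resp.\ $X'$-) eigenspaces $F=\bigoplus_{i\in\scrI}F_i$ and $F'=\bigoplus_{j\in\scrI'}F'_j$, as in \S\ref{sec:repdatumN}. Because $\mathrm{Spin}$ is an autoequivalence with $\mathrm{Spin}^2\cong\Id$ (it is tensoring by a $1$-dimensional character of order $2$), conjugation by $\mathrm{Spin}$ is exact and faithful. The intertwining identity above implies that on each generalized eigenspace $F_i\subset F$, the endofunctor $X'$ acts on $\mathrm{Spin}\circ F_i\circ\mathrm{Spin}$ as $i+\text{nilpotent}$, so $\mathrm{Spin}\circ F_i\circ\mathrm{Spin}$ is a direct summand of $F'_i$. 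Running the symmetric argument with $\Phi'$ shows $\mathrm{Spin}\circ F'_i\circ\mathrm{Spin}$ is a direct summand of $F_i$. Since $\mathrm{Spin}^2\cong\Id$, these two inclusions are inverse to one another, giving both $\scrI=\scrI'$ and the natural isomorphism $\mathrm{Spin}\circ F_i\circ\mathrm{Spin}\cong F'_i$ for every $i\in\scrI$.

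There is essentially no obstacle: all real work has already been absorbed into Proposition \ref{prop:spin-SO}. The only bookkeeping point to verify is that the scalar $\zeta(2)$ appearing between $X^{\widetilde G}$ and $(X')^{\widetilde G}$ in the $\O_{2n+1}$-version (Proposition \ref{prop:spinor}(a1) combined with the rescaling of Remark \ref{rem:scalar}(a)) has indeed been absorbed in the $\SO_{2n+1}$-definitions of $X$ and $X'$ given in \S\ref{sub:explicit-repdatumSO}, so that diagram (a1) of Proposition \ref{prop:spin-SO} is genuinely stated with $X'$ rather than $\zeta(2)X'$. Once this is confirmed (it is: the factor $\zeta(2)$ is built into $(X')_{n_+,n_-}^{n_+,n_-+1}$ precisely to cancel the factor produced by $\Theta'$ in Theorem \ref{thm:SO-isos}(a2)), the corollary follows immediately.
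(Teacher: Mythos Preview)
Your proposal is correct and matches the paper's approach exactly. The paper treats this corollary as an immediate consequence of Proposition~\ref{prop:spin-SO} (indicated by the bare \qed), in direct analogy with how Corollary~\ref{cor:I=I'} followed from Lemma~\ref{lem:Phi&Phi'} and Proposition~\ref{prop:spinor}(a); your write-up simply fills in those details, and your remark about the $\zeta(2)$ bookkeeping correctly identifies why the diagrams in Proposition~\ref{prop:spin-SO} are stated without that factor.
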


By Corollary \ref{cor:I=I'SO},
we may roughly say that $\scrQU_R^{\SO}$
has an ``$\mathfrak{A}(\fraks\frakl_{I}\bigoplus\fraks\frakl_{I})\rtimes \bbZ/{2\bbZ}$''-representation datum.

\medskip

\section{The categories of quadratic unipotent modules $\scrQU^{\SO}_K$ and $\scrQU^{\SO}_k$\label{sec:cat-uK-and-uk}}\label{cha:QU}

%

We shall mainly focus on $\scrQU^{\SO}_R$ instead of $\scrQU^{\O}_R$,
and simply write $\scrQU_R$ for $\scrQU^{\SO}_R$, where $R\in \{K,\mathcal{O},k\}$.
The main purposes of this section can be listed as follows:

\begin{itemize}[leftmargin=8mm]
  \item  Describe the structure of the endomorphism algebra of a functor that is
  denoted by $\bigoplus_{\tuple\nu\in \mathbb{J}_{\tuple m}}F^{\tuple \nu}$
  with $F^{+}:=F$ and $F^{-}:=F'$ as
   defined in \S \ref{sub:explicit-repdatumSO}
   (see \S \ref{sec:End-F+F-}).
  \item Show the connection of $\End(\bigoplus_{\tuple\nu\in \mathbb{J}_{\tuple m}}F^{\tuple \nu}(E_{t_+,t_-}))$
with the Howlett-Lehrer algebra (i.e., the algebra considered by Howlett-Lehrer), where
 $E_{t_+,t_-}$ is a  quadratic unipotent cuspidal module of $G_r=\SO_{2r+1}(q)$. Also,
 we shall show that they have the structure of a cyclotomic quiver Hecke algebra (see \S \ref{subsec:QUK}).
  \item Determine the structure of the ramified Hecke algebra related to  $E_{t_+,t_-}$. The main ingredient
  is to compute the eigenvalues of $X$ on $F$  and of $X'$ on $F'$.
  \item Describe a categorical action on the category $\scrQU_K$ in terms of
        its representation datum constructed in \S \ref{sub:explicit-repdatumSO}
          (see \S \ref{subsec:QUK}).
  \item For positive characteristic,
  we categorify $\scrQU_k$ in the cases of linear and unitary primes.
\end{itemize}

Recall that $\ell \nmid q$, and  $f$ (resp. $d$) is the order of $q$ (resp. $q^2$) in $k^\times$.
We will always assume that both $q$ and $\ell$ are odd, and that $f > 1$.
In particular $q(q-1) \in \mathcal{O}^\times$ and we can apply the previous constructions
with $R$ being any ring among $(K,\mathcal{O},k)$.

\smallskip

\subsection{Endomorphism algebras of functors}\label{sec:End-F+F-}

Here we describe the structure of the endomorphism algebra of a functor
that is the direct sum of compositions of some functors $F$s and $F'$s of $\scrQU_R$ defined in
\S \ref{sub:explicit-repdatumSO}.

\subsubsection{Sign vectors}\label{subsec:signvectors}
To deal with different ways of composition of a number of the functors $F$s  and $F'$s,
it is convenient to make use of sign vectors. Let $\mathbb{J} = \{+,-\}$.
For a $2$-composition $\tuple m = (m_+,m_-)\comp_2 m$,
we define the following set of sign vectors:
 $$\mathbb{J}_{\tuple m}=\mathbb{J}_{ m_+,m_-} = \{ \tuple\nu = (\nu_1, \ldots, \nu_m) \in  \mathbb{J}^m |
\sharp\{\nu_i|\nu_i=+\}=m_+, \sharp\{\nu_i|\nu_i=-\}=m_- \}.$$
Clearly, the cardinality $|\mathbb{J}_{\tuple m}|$ of $\mathbb{J}_{\tuple m}$ is $\frac{m!}{m_+ !  m_-!}$.

 \smallskip

Naturally, the symmetric group $\mathfrak{S}_m$
transitively acts on $\mathbb{J}_{\tuple m}$ via
 $$w \cdot \tuple \nu=w \cdot (\nu_1, \ldots, \nu_m) \coloneqq (\nu_{w^{-1}(1)}, \ldots, \nu_{w^{-1}(n)}),$$
where $w\in \mathfrak{S}_m$ and $\tuple \nu=(\nu_1, \ldots, \nu_m)\in \mathbb{J}_{\tuple m}$.
The stabilizer of $$\tuple\nu_0^{\tuple m} \coloneqq (+, \dots, +,  -, \dots, -)\in \mathbb{J}_{\tuple m}$$
is exactly $\mathfrak{S}_{m_+,m_-}= \mathfrak{S}_{m_+}\times \mathfrak{S}_{m_-}$ where $\mathfrak{S}_{m_+}$ is the symmetric group on the set $\{ 1, \dots, m_+\}$ and $\mathfrak{S}_{m_-}$ is the symmetric group on the set $\{m_++ 1, \dots, m\}$.
Hence there is a
 natural bijection $$\tuple\nu=x^{-1}(\tuple \nu_0^{\tuple m}) \mapsto \mathfrak{S}_{m_+,m_-}x:= C_{{\tuple\nu}}$$
 between the set $\mathbb{J}_{\tuple m}$ and
 the set of right cosets $\mathfrak{S}_{m_+,m_-}\backslash\mathfrak{S}_m$
 of $\mathfrak{S}_{m_+,m_-}$ in $\mathfrak{S}_m$.
Each coset $C_{{\tuple\nu}}$ has a unique element of minimal length,
which will be denoted by
$\pi_{\tuple\nu}$.
By \cite[Proposition 6.7]{ROS17},
the length of $\pi_{\tuple\nu}$ is equal to $$\sharp\{(i,j)|1\leqs i<j\leqs m\,,\,\nu_{j}=+ \,\,\text{and}\,\,\nu_{i}=- \}.$$
We notice that the element $\pi_{\tuple \nu}$ is the unique minimal length element of $\mathfrak{S}_m$ such that
\begin{equation}
\label{equation:pisigmalambda_sort}
\pi_{\tuple\nu} \cdot \tuple\nu=\tuple \nu_0^{\tuple m}.
\end{equation}
When $\tuple m$ is fixed, we abbreviate $\tuple \nu_0^{\tuple m}$ to
$\tuple\nu_0$.

\smallskip

\subsubsection{Structure of $\End(\bigoplus_{\tuple\nu\in \mathbb{J}_{\tuple m}}F^{\tuple \nu})$}\label{ssub:Composi-F-NT}
Let $\tuple\nu=(\nu_1, \ldots, \nu_m)$ be a sign vector of $\mathbb{J}^m$.
Associated with $\tuple\nu$, we define a functor $$F^{\tuple\nu}:=F^{\nu_m}F^{\nu_{m-1}}\cdots F^{\nu_2}F^{\nu_1},$$
 where $F^+:=F$ and $F^-:=F'.$
In particular, if $\tuple m=(m_+,m_-)\comp_2 m$ then
$$F^{\tuple \nu_0^{\tuple m}}=(F')^{m_-}F^{m_+}.$$

Our main purpose of this section is to show that
 the endomorphism algebra
 $\End(\bigoplus_{\tuple\nu\in \mathbb{J}_{\tuple m}}F^{\tuple \nu})$
 is  a matrix algebra. Namely, we have

\begin{theorem} \label{thm:End-F+F-}
 $\End(\bigoplus_{\tuple\nu\in \mathbb{J}_{\tuple m}}F^{\tuple \nu})\cong \mathrm{Mat}_{|\mathbb{J}_{\tuple m}|}(\End(F^{\tuple\nu_0^{\tuple m}})).$
\end{theorem}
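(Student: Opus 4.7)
The plan is to show that all the functors $F^{\tuple\nu}$ with $\tuple\nu\in\mathbb{J}_{\tuple m}$ are pairwise isomorphic, and then invoke the elementary fact that the endomorphism algebra of a direct sum of $n$ mutually isomorphic objects is $\mathrm{Mat}_n$ of the endomorphism ring of one summand. The source of the isomorphisms is the pair of natural transformations $H\in\Hom(FF',F'F)$ and $H'\in\Hom(F'F,FF')$ constructed in \S\ref{sub:naturaltransf}, which by Proposition \ref{prop:12relations} (i)--(j) (or rather its $\scrQU^{\SO}$-version, Proposition \ref{prop:12rel-SO}) satisfy $HH'=1_{F'F}$ and $H'H=1_{FF'}$. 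Thus $H$ and $H'$ are mutually inverse isomorphisms of functors.

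First I would use $H$ and $H'$ to pass between adjacent sign vectors. Fix $\tuple\nu\in\mathbb{J}_{\tuple m}$ and an index $1\leqs i<m$ with $\nu_i\neq\nu_{i+1}$. Writing
$$F^{\tuple\nu}=(F^{\nu_m}\cdots F^{\nu_{i+2}})\circ(F^{\nu_{i+1}}F^{\nu_i})\circ(F^{\nu_{i-1}}\cdots F^{\nu_1}),$$
the middle factor is either $FF'$ or $F'F$. Whiskering $H$ (respectively $H'$) on the left by $F^{\nu_m}\cdots F^{\nu_{i+2}}$ and on the right by $F^{\nu_{i-1}}\cdots F^{\nu_1}$ produces a natural isomorphism
$$h_i^{\tuple\nu}\,:\,F^{\tuple\nu}\ \simto\ F^{s_i\cdot\tuple\nu},$$
whose inverse is obtained by whiskering the opposite element of $\{H,H'\}$.

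Next I would iterate. Let $\pi_{\tuple\nu}\in\mathfrak{S}_m$ be the minimal-length permutation of \S\ref{subsec:signvectors} with $\pi_{\tuple\nu}\cdot\tuple\nu=\tuple\nu_0^{\tuple m}$, and fix any reduced expression $\pi_{\tuple\nu}=s_{i_k}\cdots s_{i_1}$. Because the expression is reduced, at each step the two signs being swapped are distinct (in fact, each $s_{i_j}$ transposes a $-$ immediately followed by a $+$ in the current vector), so the whiskered natural transformation $h_{i_j}$ is defined and invertible at that step. Composing them yields an isomorphism
$$\Phi_{\tuple\nu}\,:\,F^{\tuple\nu}\ \simto\ F^{\tuple\nu_0^{\tuple m}}.$$
We do \emph{not} need $\Phi_{\tuple\nu}$ to be independent of the chosen reduced word — merely its existence.

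Finally, let $G=\bigoplus_{\tuple\nu\in\mathbb{J}_{\tuple m}}F^{\tuple\nu}$. The family $\{\Phi_{\tuple\nu}\}_{\tuple\nu\in\mathbb{J}_{\tuple m}}$ defines an isomorphism of functors $G\simto (F^{\tuple\nu_0^{\tuple m}})^{\oplus|\mathbb{J}_{\tuple m}|}$, so
$$\End(G)\,\cong\,\End\bigl((F^{\tuple\nu_0^{\tuple m}})^{\oplus|\mathbb{J}_{\tuple m}|}\bigr)\,\cong\,\mathrm{Mat}_{|\mathbb{J}_{\tuple m}|}(\End(F^{\tuple\nu_0^{\tuple m}})),$$
which is the claimed isomorphism. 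There is no serious obstacle: the whole argument rests on the invertibility relations $HH'=1$ and $H'H=1$ already established. The only point that deserves care is bookkeeping — making sure that whiskering $H$ on the correct side of the correct middle pair realizes precisely the adjacent transposition $s_i$ on sign vectors, so that the iterated composition lands in $F^{\tuple\nu_0^{\tuple m}}$ after $\ell(\pi_{\tuple\nu})$ steps.
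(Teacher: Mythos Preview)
Your proposal is correct and follows essentially the same route as the paper: both arguments produce an isomorphism $F^{\tuple\nu}\simto F^{\tuple\nu_0^{\tuple m}}$ by composing whiskered copies of $H$ along a reduced word for $\pi_{\tuple\nu}$, the key observation being (as you note and as the paper proves in Lemma~\ref{lem:pinu}) that minimality forces every adjacent swap to exchange a $-$ followed by a $+$, so only the invertible $H$ appears. The paper additionally establishes that the resulting $T_{\pi_{\tuple\nu},\tuple\nu}$ is independent of the reduced word and records how it intertwines $X$ and $T$ (Lemmas~\ref{lem:braid}, \ref{Lem:change}, \ref{lem:T_nu-T}), but as you correctly point out these refinements are not needed for Theorem~\ref{thm:End-F+F-} itself.
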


We shall construct an explicit isomorphism needed for Theorem \ref{thm:End-F+F-},
by a series of lemmas.
For $1\leqs a\leqs b\leqs m$, we write
 $$\tuple\nu_{[a,b]}=(\nu_a,\nu_{a+1},\ldots,\nu_{b-1},\nu_b)\in \mathbb{J}^{b-a+1}.$$
Then we define natural transformations
$X_{a,\tuple \nu}:F^{\tuple \nu}\to F^{\tuple \nu}$  for $1\leqs a\leqs m$
 by
$$
\begin{array}{ccll}
X_{a,\tuple\nu} &=& \begin{cases}F^{\tuple\nu_{[a+1,m]}}X F^{\tuple\nu_{[1, a-1]}}&
    \text{ if } \nu_a=+,\\
            F^{\tuple\nu_{[a+1,m]}}X' F^{\tuple\nu_{[1, a-1]}}&
    \text{ if } \nu_a=-,\\
    \end{cases}
    \end{array}
$$
and $T_{b,\tuple \nu}:F^{\tuple \nu}\to F^{s_b(\tuple \nu)}$ for $1\leqs b\leqs m-1$ by
$$
\begin{array}{ccll}
T_{b,\tuple\nu}  &=&\begin{cases}F^{\tuple\nu_{[b+2, m]}}T F^{\tuple\nu_{[1, b-1]}}&
    \text{ if } \nu_b=\nu_{b+1}=+,\\
F^{\tuple\nu_{[b+2,m]}}T' F^{\tuple\nu_{[1, b-1]}}&
    \text{ if } \nu_b=\nu_{b+1}=-,\\
F^{\tuple\nu_{[b+2,m]}}H F^{\tuple\nu_{[1, b-1]}}&
    \text{ if } \nu_b=-\,\, \text{and }\,\,\nu_{b+1}=+,\\
    F^{\tuple\nu_{[b+2,m]}}qH' F^{\tuple\nu_{[1, b-1]}}&
    \text{ if } \nu_b=+\,\, \text{and }\,\,\nu_{b+1}=-,\\
    \end{cases}
\end{array}
$$
where $s_i=(i,i+1)\in \mathfrak{S}_m$.
It is clear that all $X_{a,\tuple\nu}$ are commutative for a fixed sign vector $\tuple \nu$.

\smallskip

We start to show that there is a well-defined natural transformation from $F^{\tuple\nu}$ to $F^{w(\tuple \nu)}$
for  $w\in \mathfrak{S}_m$.

\begin{lemma}\label{lem:braid}
Let $\tuple \nu \in \mathbb{J}^m$.
If $s_{a_r} \cdots s_{a_1}$ and $s_{b_r} \cdots s_{b_1}$
are two reduced expressions of $w\in \mathfrak{S}_m$, then
$$T_{a_r,s_{a_{r-1}}\cdots s_{a_1}\tuple\nu} \circ\cdots \circ T_{a_2,s_{a_1}\tuple\nu}\circ T_{a_1,\tuple \nu}
 = T_{b_r,s_{b_{r-1}\cdots s_{b_1}\tuple\nu}}\circ \cdots \circ T_{b_2, s_{b_1}\tuple\nu}\circ T_{b_1,\tuple \nu}.$$
Hence it defines a natural transformation from $F^{\tuple\nu}$ to $F^{w(\tuple \nu)}$,
 which will be denoted by  $T_{w,\tuple\nu}$.
\end{lemma}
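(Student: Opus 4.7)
The plan is to invoke Matsumoto's theorem: any two reduced expressions of $w \in \mathfrak{S}_m$ are connected by a sequence of braid and commutation moves (of simple reflections). It therefore suffices to verify the corresponding relations at the level of the natural transformations $T_{a,\tuple\nu}$, namely:
\begin{itemize}
  \item[(C)] (commutation) for $|a-b|>1$,
    $$T_{b, s_a\tuple\nu}\circ T_{a,\tuple\nu} \;=\; T_{a, s_b\tuple\nu}\circ T_{b,\tuple\nu};$$
  \item[(B)] (braid) for $b=a+1$,
    $$T_{a, s_{a+1}s_a\tuple\nu}\circ T_{a+1, s_a\tuple\nu}\circ T_{a,\tuple\nu} \;=\; T_{a+1, s_as_{a+1}\tuple\nu}\circ T_{a, s_{a+1}\tuple\nu}\circ T_{a+1,\tuple\nu}.$$
\end{itemize}

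First I would dispatch relation (C). Since $T_{a,\tuple\nu}$ is obtained from one of $T, T', H, H'$ by horizontally whiskering by $F^{\tuple\nu_{[a+2,m]}}$ and $F^{\tuple\nu_{[1,a-1]}}$, and similarly for $T_{b,\tuple\nu}$ at non-overlapping positions, both sides of (C) coincide by the interchange (middle-four) law for horizontal and vertical composition of natural transformations; no specific identity among $T,T',H,H'$ is needed.

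The bulk of the argument is the case analysis for (B). Only the entries $(\nu_a,\nu_{a+1},\nu_{a+2})$ matter, so there are eight sub-cases. The uniform-sign cases $(+,+,+)$ and $(-,-,-)$ are the braid relations for $T$ and $T'$, which are Proposition \ref{prop:12relations}(a) and (d) respectively. The mixed cases fall into two symmetric families. For $(+,+,-)$ the two sides of (B) become, after whiskering and cancelling a common scalar $q^2$,
$$(1_F H')\circ(H' 1_F)\circ(1_{F'}T) \;=\; (T 1_{F'})\circ(1_F H')\circ(H' 1_F);$$
this follows from Proposition \ref{prop:12relations}(g), after using $HH'=1_{F'F}$ and $H'H=1_{FF'}$ (relations (i) and (j)) to transport $H$ across the identity so as to bring (g) into the required form. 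The cases $(-,+,+)$ and $(+,-,+)$ are handled by the same identity (g) combined with (i), (j). By the $F\leftrightarrow F'$ symmetry built into Proposition \ref{prop:12relations}, the remaining cases $(-,-,+)$, $(+,-,-)$ and $(-,+,-)$ follow from (h) in place of (g).

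The main obstacle, or at least the most delicate bookkeeping, is the mixed braid case: one must track how a single factor of $q$ appears only in the $(+,-)\mapsto(-,+)$ crossings (through $qH'$) and not in the opposite crossings (through $H$), so that the $q$-powers on the two sides of (B) balance. Once this is verified, Matsumoto's theorem guarantees that the composition $T_{a_r,s_{a_{r-1}}\cdots s_{a_1}\tuple\nu}\circ\cdots\circ T_{a_1,\tuple\nu}$ depends only on $w=s_{a_r}\cdots s_{a_1}$, yielding a well-defined natural transformation $T_{w,\tuple\nu}:F^{\tuple\nu}\to F^{w(\tuple\nu)}$.
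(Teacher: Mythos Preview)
Your proposal is correct and follows essentially the same approach as the paper: reduce via Matsumoto's theorem to the length-2 and length-3 braid relations, dispatch the former by the interchange law, and verify the latter case-by-case using Proposition~\ref{prop:12relations}(a),(d),(g)--(j). The paper's proof is terser (it simply cites these relations without writing out the eight sign patterns), but your more explicit case analysis and $q$-bookkeeping are exactly what is being invoked.
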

\begin{proof}
By Matsumoto's theorem,
 it suffices to check that every braid relation in $s_{a_r} \cdots s_{a_1}$ also occurs in
$$T_{a_r,s_{a_{r-1}}\cdots s_{a_1}\tuple\nu}\circ \cdots \circ T_{a_2,s_{a_1}\tuple\nu}\circ T_{a_1,\tuple \nu}.$$
Let $1\leqs t\leqs r$ and $\tuple\nu'=s_{a_{t-1}}\cdots s_{a_1}\tuple\nu.$
If $|a_{t-1}-a_t|>1$, then
$s_{a_{t+1}}s_{a_t}=s_{a_t}s_{a_{t+1}}$ and
$$\begin{array}{rcl}
  T_{a_{t+1},s_{a_{t}}\tuple\nu'} \circ T_{a_{t},\tuple\nu'}
   & = &F^{{s_t(\tuple\nu')}_{[a_{t+1}+1, m]}}Y F^{{s_t(\tuple\nu')}_{[1,a_{t+1}-1]}}\circ
                 F^{\tuple\nu'_{[a_t+2, m]}} Z F^{\tuple\nu'_{[1, a_t-1]}}\\
    & = & F^{\tuple\nu'_{[a_{t+1}+2, m]}}Y F^{\tuple\nu'_{[ a_{t-1}+1,a_{t}-1]}} Z F^{\tuple\nu'_{[1, a_t-1]}} \\
  & = & T_{a_{t},s_{a_{t+1}}\tuple\nu'} \circ T_{a_{t+1},\tuple\nu'}, \\
\end{array}
$$
where $Z,Y\in \{T, T', H, qH'\}$.
So the relations for braids of length $2$ can be checked in this way.
By  Propositions \ref{prop:12rel-SO} and \ref{prop:12relations} (a), (d), and (g)-(j), the relations for braids of length $3$ can also be checked,
 finishing the proof.
\end{proof}

\smallskip

Now we fix a $2$-composition $\tuple m\comp_2 m$. From Lemma \ref{lem:pinu} to Lemma \ref{lem:T_nu-T},
we show some properties of the natural transformation $T_{\pi_{\tuple\nu},\tuple\nu}$, establishing
an isomorphism between the functors $F^{\tuple\nu}$ and $F^{\tuple\nu_0^{\tuple m}}$
with ${\tuple \nu}\in \mathbb{J}_{\tuple m}$.

\begin{lemma}\label{lem:pinu}
If $\tuple \nu\in \mathbb{J}_{\tuple m}$ and $\pi_{\tuple\nu}=s_{a_r}\cdots s_{a_1}$ is a reduced expression, then the terms $T_{a_t,s_{a_{t-1}}\cdot\cdots s_{a_1}\tuple\nu}$'s appearing in
$T_{\pi_{\tuple\nu},\tuple\nu}=T_{a_r,s_{a_{r-1}}\cdot\cdots s_{a_1}\tuple\nu}\circ
\cdots\circ T_{a_2, s_{a_1}\tuple\nu}\circ T_{a_1,\tuple \nu}$
for $1\leqs t\leqs r$ are all of the form
 $F^{\tuple\nu_{[a_{t}+2,m]}}H F^{\tuple\nu_{[1,a_{t}-1]}}.$ As a consequence,
 $T_{\pi_{\tuple\nu},\tuple\nu}$ is invertible
 and  $T_{\pi_{\tuple\nu},\tuple\nu}^{-1}=q^{-l(\pi_{\tuple\nu})}T_{\pi_{\tuple\nu}^{-1},\tuple\nu_0}.$
\end{lemma}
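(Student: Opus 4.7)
The plan is to deduce the first assertion from a standard combinatorial fact about minimal coset representatives, and then establish the inverse formula by a clean induction on $r=l(\pi_{\tuple\nu})$.

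First, I would use that $\pi_{\tuple\nu}$ is the minimal length element of the right coset $\mathfrak{S}_{m_+,m_-}\backslash\mathfrak{S}_m$ corresponding to $\tuple\nu$, and that $l(\pi_{\tuple\nu})$ equals the number of inversions $|\{(i,j)\,:\,i<j,\,\nu_i=-,\,\nu_j=+\}|$, as recorded after \eqref{equation:pisigmalambda_sort}. For any reduced expression $\pi_{\tuple\nu}=s_{a_r}\cdots s_{a_1}$, each adjacent transposition changes the inversion count by exactly $\pm 1$, and in a reduced word connecting $\tuple\nu$ (with $r$ inversions) to $\tuple\nu_0$ (with $0$ inversions) every step must strictly decrease the count. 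This forces the intermediate sign vector $s_{a_{t-1}}\cdots s_{a_1}\tuple\nu$ to have a $-$ at position $a_t$ and a $+$ at position $a_t+1$, which is precisely the case of the definition of $T_{a_t,\cdot}$ producing the factor $H$. Hence each term in $T_{\pi_{\tuple\nu},\tuple\nu}$ has the claimed form, giving the first assertion.

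For the inverse formula I would proceed by induction on $r$. The base case $r=0$ is trivial, so assume $r\geqs 1$ and fix a reduced expression $\pi_{\tuple\nu}=s_{a_r}\cdots s_{a_1}$. Setting $\tuple\nu^{(1)}:=s_{a_1}\tuple\nu$, the word $s_{a_r}\cdots s_{a_2}$ is a reduced expression of $\pi_{\tuple\nu^{(1)}}$ of length $r-1$, so by the well-definedness established in Lemma \ref{lem:braid},
\[
T_{\pi_{\tuple\nu},\tuple\nu}=T_{\pi_{\tuple\nu^{(1)}},\tuple\nu^{(1)}}\circ T_{a_1,\tuple\nu}.
\]
Dually, since $\pi_{\tuple\nu}^{-1}=s_{a_1}s_{a_2}\cdots s_{a_r}$ and $\pi_{\tuple\nu^{(1)}}^{-1}\tuple\nu_0=\tuple\nu^{(1)}$, the same lemma gives
\[
T_{\pi_{\tuple\nu}^{-1},\tuple\nu_0}=T_{a_1,\tuple\nu^{(1)}}\circ T_{\pi_{\tuple\nu^{(1)}}^{-1},\tuple\nu_0}.
\]

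The key cancellation is then immediate from the first assertion combined with Proposition \ref{prop:12relations}(i). Indeed, $T_{a_1,\tuple\nu}=F^{\tuple\nu_{[a_1+2,m]}}HF^{\tuple\nu_{[1,a_1-1]}}$ while $T_{a_1,\tuple\nu^{(1)}}=F^{\tuple\nu_{[a_1+2,m]}}\,qH'F^{\tuple\nu_{[1,a_1-1]}}$, since $s_{a_1}$ only swaps the entries in positions $a_1,a_1+1$ of $\tuple\nu$ from $(-,+)$ to $(+,-)$. Therefore
\[
T_{a_1,\tuple\nu}\circ T_{a_1,\tuple\nu^{(1)}}=q\cdot F^{\tuple\nu_{[a_1+2,m]}}(H\circ H')F^{\tuple\nu_{[1,a_1-1]}}=q\cdot 1_{F^{\tuple\nu^{(1)}}},
\]
and combining with the above decompositions yields
\[
T_{\pi_{\tuple\nu},\tuple\nu}\circ T_{\pi_{\tuple\nu}^{-1},\tuple\nu_0}=q\cdot T_{\pi_{\tuple\nu^{(1)}},\tuple\nu^{(1)}}\circ T_{\pi_{\tuple\nu^{(1)}}^{-1},\tuple\nu_0}=q^r\cdot 1_{F^{\tuple\nu_0}}
\]
by the inductive hypothesis, which is equivalent to the desired identity.

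The main obstacle is the bookkeeping in the dual decomposition of $T_{\pi_{\tuple\nu}^{-1},\tuple\nu_0}$: one has to verify that after stripping the outermost factor, what remains really is $T_{\pi_{\tuple\nu^{(1)}}^{-1},\tuple\nu_0}$ computed with its own reduced expression. This is where Lemma \ref{lem:braid} is essential, together with the identity $\pi_{\tuple\nu^{(1)}}^{-1}\tuple\nu_0=\tuple\nu^{(1)}=s_{a_1}\tuple\nu$, which ensures the domain of the outermost factor of $T_{\pi_{\tuple\nu}^{-1},\tuple\nu_0}$ matches the codomain of $T_{\pi_{\tuple\nu^{(1)}}^{-1},\tuple\nu_0}$.
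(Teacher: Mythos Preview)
Your argument is correct and matches the paper's: both establish the first assertion by observing that every step of the reduced word must swap a $(-,+)$ pair to $(+,-)$ (you via inversion counting, the paper via a direct minimality contradiction), and both then deduce invertibility from $HH'=1_{F'F}$, $H'H=1_{FF'}$ in Proposition~\ref{prop:12relations}(i)--(j). For the explicit inverse formula the paper is terse and leaves the unwinding to the reader, whereas your induction spells it out; just note that your computation only yields the one-sided identity $T_{\pi_{\tuple\nu},\tuple\nu}\circ T_{\pi_{\tuple\nu}^{-1},\tuple\nu_0}=q^{r}\,1_{F^{\tuple\nu_0}}$, which suffices because the first assertion already exhibits $T_{\pi_{\tuple\nu},\tuple\nu}$ as a composite of invertible maps.
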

\begin{proof}
Observe that for all $t \in \{1, \dots, r\},$ we have
$$s_{a_t} \cdot (w_{t-1} \cdot \tuple\nu) \neq w_{t-1} \cdot \tuple\nu,$$
where
$w_{t-1} \coloneqq s_{a_{t-1}} \cdots s_{a_1}$ (with $w_{t} = 1$ if $t = 1$).
This is true because if $$s_{a_t} \cdot (w_{t-1} \cdot \tuple\nu) = w_{t-1} \cdot \tuple\nu,$$
then  $\pi'_{\tuple\nu} \cdot \tuple\nu =\pi_{\tuple\nu} \cdot \tuple\nu = \tuple\nu_0$,
where $\pi'_{\tuple \nu} \coloneqq s_{a_r} \cdots s_{a_{t+1}} s_{a_{t-1}} \cdots s_{a_1}$.
However, this contradicts the minimal length of
$\pi_{\tuple\nu}$ such that $\pi_{\tuple\nu} \cdot \tuple\nu = \tuple\nu_0$,
since the length of $\pi'_{\tuple \nu}$ is strictly shorter than that of $\pi_{\tuple\nu}$.

Now we must have $(w_{t-1} \cdot \tuple\nu)_a=-$ and $(w_{t-1} \cdot \tuple\nu)_{a+1}=+$,
  since otherwise
  the length of $s_{a_{t}}w_{t-1}$ is less than that of $w_{t-1}$, contradicting that
  $\pi_{\nu}=s_{a_r}\cdots s_{a_1}$ is a reduced expression.
  By Propositions \ref{prop:12rel-SO} and \ref{prop:12relations} (i) and (j), $H$ is invertible and its inverse is $H'$, and
  so the lemma holds.
\end{proof}

\begin{lemma}\label{Lem:change}
 $T_{\pi_{\tuple \nu},\tuple\nu}X_{\pi_{\tuple \nu}^{-1}(a),\tuple\nu}=X_{a,\tuple\nu_0}T_{\pi_{\tuple \nu},\tuple\nu},$ i.e., $X_{\pi_{\tuple \nu}^{-1}(a),\tuple\nu}T_{\pi_{\tuple \nu}^{-1},\tuple\nu_0}=T_{\pi_{\tuple \nu}^{-1},\tuple\nu_0}X_{a,\tuple\nu_0},$
 where $\tuple \nu\in \mathbb{J}_{\tuple m}$.
\end{lemma}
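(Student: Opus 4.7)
My plan is to prove the identity by induction on the length of $\pi_{\tuple\nu}$, using a reduced expression $\pi_{\tuple\nu} = s_{a_r}\cdots s_{a_1}$. Setting $w_t := s_{a_t}\cdots s_{a_1}$, $\tuple\nu^{(t)} := w_t \cdot \tuple\nu$ (so $\tuple\nu^{(0)} = \tuple\nu$ and $\tuple\nu^{(r)} = \tuple\nu_0$) and $T_t := T_{s_{a_t},\tuple\nu^{(t-1)}}$, Lemma \ref{lem:braid} yields the factorization $T_{\pi_{\tuple\nu},\tuple\nu} = T_r \circ \cdots \circ T_1$. The lemma will follow by iterating the single-step intertwining identity
\[
T_t \circ X_{b,\tuple\nu^{(t-1)}} \;=\; X_{s_{a_t}(b),\tuple\nu^{(t)}} \circ T_t, \qquad 1 \le b \le m,
\]
and specializing to $b = \pi_{\tuple\nu}^{-1}(a)$, whose image under $\pi_{\tuple\nu}$ is $a$.

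The single-step identity will be established by a case analysis on whether $b$ belongs to $\{a_t, a_t+1\}$. By Lemma \ref{lem:pinu}, each $T_t$ has the explicit form $F^{\tuple\nu^{(t-1)}_{[a_t+2,m]}}\, H\, F^{\tuple\nu^{(t-1)}_{[1,a_t-1]}}$, with $(\tuple\nu^{(t-1)})_{a_t} = -$ and $(\tuple\nu^{(t-1)})_{a_t+1} = +$. When $b \notin \{a_t, a_t+1\}$, the natural transformations $X_{b,-}$ and $T_t$ modify disjoint factors of the composite functor, and the sign at position $b$ is unchanged by $s_{a_t}$, so the identity is immediate from the interchange law for horizontal and vertical composition of natural transformations.

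When $b = a_t$, the identity reduces at the two active positions to $H \circ (1_F X') = (X' 1_F) \circ H$ on $FF' \to F'F$; this is precisely relation (k) of Proposition \ref{prop:12relations}, which holds on $\scrQU^{\SO}$ by Proposition \ref{prop:12rel-SO}. When $b = a_t + 1$, it reduces to $H \circ (X 1_{F'}) = (1_{F'} X) \circ H$. This last identity is not directly on the list, but I will derive it as a formal consequence of relation (l): pre- and post-composing the identity $H' \circ (1_{F'} X) = (X 1_{F'}) \circ H'$ with $H$ and using the inversion relations (i) and (j), namely $HH' = 1_{F'F}$ and $H'H = 1_{FF'}$, yields it at once.

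The principal delicate point will be the combinatorial bookkeeping: tracking which sign sits at which position of the intermediate sign vectors $\tuple\nu^{(t-1)}$ and $\tuple\nu^{(t)}$, and therefore whether $X$ or $X'$ appears at positions $b$ and $s_{a_t}(b)$. Lemma \ref{lem:pinu} makes this manageable by ensuring that across a reduced expression every intermediate $T_t$ is an $H$-swap (never $T$, $T'$, or $H'$), so only the two local relations above ever need to be invoked.
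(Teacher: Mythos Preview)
Your proof is correct and follows essentially the same approach as the paper's: factor $T_{\pi_{\tuple\nu},\tuple\nu}$ along a reduced expression, invoke Lemma~\ref{lem:pinu} so that each step is an $H$, and then reduce to the relations (i)--(l) of Proposition~\ref{prop:12relations} (via Proposition~\ref{prop:12rel-SO}). The paper simply records this in a single sentence, whereas you have spelled out the case analysis on $b$ and the derivation of $H\circ(X1_{F'})=(1_{F'}X)\circ H$ from (l) using (i) and (j); this is exactly the content the paper leaves implicit.
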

\begin{proof}
Let $\pi_{\tuple \nu}=s_{a_r}\cdots s_{a_1}$ be
 a reduced expression. By Lemma \ref{lem:pinu},
 the terms $T_{a_t,s_{a_{t-1}}\cdot\cdots s_{a_1}\tuple\nu}$ appearing in
 $$T_{\pi_{\tuple\nu},\tuple\nu}=T_{a_r,s_{a_{r-1}}\cdot\cdots s_{a_1}\tuple\nu}
\circ\cdots\circ T_{a_2, s_{a_1}\tuple\nu}\circ T_{a_1,\tuple \nu}$$
 are all of the form $F^{\tuple\nu_{[a_{t}+2,m]}}H F^{\tuple\nu_{[1,a_{t}-1]}}.$
Thus the lemma follows from Propositions \ref{prop:12rel-SO} and \ref{prop:12relations} (i)-(l).
\end{proof}

\begin{lemma} \label{lem:T_nu-T}
Let $\tuple\nu=(\nu_1, \ldots, \nu_m)\in \mathbb{J}_{\tuple m}$.
If $\nu_a=\nu_{a+1}$, then
 $$T_{\pi_{\tuple \nu},\tuple\nu}T_{a,\tuple\nu}=T_{\pi_{\tuple \nu}(a),\tuple\nu_0}T_{\pi_{\tuple \nu},\tuple\nu}.$$
 Otherwise,
$T_{a,s_a\cdot\tuple\nu}=T^{-1}_{\pi_{\tuple \nu}(a),\tuple\nu_0}T_{\pi_{s_a\cdot\tuple \nu},s_a\cdot\tuple\nu}.$
\end{lemma}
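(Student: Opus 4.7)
Both identities amount to reinterpreting the basic content of Lemma \ref{lem:braid}: the natural transformation $T_{w,\tuple\nu}$ depends only on the group element $w$ and on the starting sign vector $\tuple\nu$. The plan is to exploit the distinguished (minimum-length) property of $\pi_{\tuple\nu}$ in the coset $\mathfrak{S}_{m_+,m_-}\backslash \mathfrak{S}_m$, combined with the invertibility of $T_{\pi_{\tuple\nu},\tuple\nu}$ established in Lemma \ref{lem:pinu}.

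First, I will record the two combinatorial facts on which everything rests. Using the formula $\ell(\pi_{\tuple\mu}) = \#\{i<j:\mu_i=-,\,\mu_j=+\}$ from \S\ref{subsec:signvectors}, a direct inversion count shows
\[
\pi_{s_a\cdot \tuple\nu}=\pi_{\tuple\nu}\, s_a,\qquad
\ell(\pi_{\tuple\nu} s_a)=\ell(\pi_{\tuple\nu})+\epsilon,
\]
where $\epsilon=+1$ if $\nu_a=+$ and $\nu_{a+1}=-$, $\epsilon=-1$ if $\nu_a=-$ and $\nu_{a+1}=+$, and $\epsilon=+1$ also in the case $\nu_a=\nu_{a+1}$ (since $\pi_{\tuple\nu}$ preserves the order within each sign-block, so $\pi_{\tuple\nu}(a)<\pi_{\tuple\nu}(a+1)$). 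In the equal-signs case this order preservation moreover forces $\pi_{\tuple\nu}(a+1)=\pi_{\tuple\nu}(a)+1$, so $s_{\pi_{\tuple\nu}(a)}$ is a well-defined simple reflection stabilizing $\tuple\nu_0$, and the conjugation identity $\pi_{\tuple\nu}\, s_a=s_{\pi_{\tuple\nu}(a)}\,\pi_{\tuple\nu}$ holds.

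For the first identity ($\nu_a=\nu_{a+1}$), both products $\pi_{\tuple\nu}\, s_a$ and $s_{\pi_{\tuple\nu}(a)}\,\pi_{\tuple\nu}$ represent the same element of length $\ell(\pi_{\tuple\nu})+1$, so appending $s_a$ on the right and prepending $s_{\pi_{\tuple\nu}(a)}$ on the left of any reduced expression of $\pi_{\tuple\nu}$ both yield reduced expressions. Applying Lemma \ref{lem:braid} to these two factorizations, and using $s_a\cdot\tuple\nu=\tuple\nu$ together with $\pi_{\tuple\nu}\cdot\tuple\nu=\tuple\nu_0$, gives
\[
T_{\pi_{\tuple\nu},\tuple\nu}\circ T_{a,\tuple\nu}
\;=\;T_{\pi_{\tuple\nu} s_a,\tuple\nu}
\;=\;T_{\pi_{\tuple\nu}(a),\tuple\nu_0}\circ T_{\pi_{\tuple\nu},\tuple\nu},
\]
which is the stated equality. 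For the second identity ($\nu_a\neq\nu_{a+1}$), I will treat the subcase $\nu_a=+,\nu_{a+1}=-$ by appending $s_a$ to a reduced expression of $\pi_{\tuple\nu}$, obtaining via Lemma \ref{lem:braid}
\[
T_{\pi_{s_a\cdot \tuple\nu},\,s_a\cdot\tuple\nu}
\;=\;T_{\pi_{\tuple\nu},\tuple\nu}\circ T_{a,\,s_a\cdot\tuple\nu},
\]
and then inverting $T_{\pi_{\tuple\nu},\tuple\nu}$ on the left via Lemma \ref{lem:pinu}. The remaining subcase $\nu_a=-,\nu_{a+1}=+$ will be obtained by applying the previous subcase to $s_a\cdot\tuple\nu$ in place of $\tuple\nu$, then using that $T_{a,\tuple\nu}$ is now of type $H$ and hence invertible with inverse $H'$ by Proposition \ref{prop:12rel-SO} (cf.\ Proposition \ref{prop:12relations}(i),(j)).

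The main obstacle will be careful bookkeeping of the scalar factors in the definition of $T_{b,\tuple\nu}$: the asymmetric appearance of $H$ on one side and $qH'$ on the other means that the identity $H^{-1}=H'$ (rather than $H^{-1}=qH'$) must be tracked through the subcase reduction in the "Otherwise" part, so that the precise normalization of $T^{-1}_{\pi_{\tuple\nu}(a),\tuple\nu_0}$ and $T_{\pi_{s_a\cdot\tuple\nu},\,s_a\cdot\tuple\nu}$ matches the stated formula. Everything else is a formal manipulation of reduced expressions and the invertibility statement from Lemma \ref{lem:pinu}.
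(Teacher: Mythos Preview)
Your proposal is correct and takes essentially the same approach as the paper: both arguments rest on the combinatorial identity $\pi_{\tuple\nu}\,s_a = s_{\pi_{\tuple\nu}(a)}\pi_{\tuple\nu}$ (when $\nu_a=\nu_{a+1}$) or $\pi_{\tuple\nu}\,s_a=\pi_{s_a\cdot\tuple\nu}$ (otherwise), together with the reducedness of the relevant expressions, and then invoke Lemma~\ref{lem:braid}. The paper simply cites \cite[Lemma~6.12]{ROS17} for this combinatorics and leaves the rest implicit, whereas you derive the identity directly from the inversion-count formula for $\ell(\pi_{\tuple\nu})$ and spell out the subcase analysis and the inversion step via Lemma~\ref{lem:pinu} in the ``otherwise'' case; your concern about the $q$-normalisation coming from $H$ versus $qH'$ is a point the paper does not address explicitly.
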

\begin{proof}
 In the symmetric group $\mathfrak{S}_m$, we have
$$\pi_{\tuple\nu} s_a = \begin{cases}
s_{\pi_{\tuple\nu}(a)} \pi_{\tuple\nu} & \text{if } \nu_a = \nu_{a+1},
\\
\pi_{s_a \cdot \tuple\nu} & \text{if } \nu_a \neq \nu_{a+1}
\end{cases}$$
by \cite[Lemma 6.12]{ROS17}.
Since
both $\pi_{\tuple\nu} s_a$ and $s_{\pi_{\tuple\nu}(a)} \pi_{\tuple\nu}$
are reduced expressions when $\nu_a = \nu_{a+1}$, the lemma follows from Lemma \ref{lem:braid}.
\end{proof}

Now, the following result is clear.

\begin{proposition} \label{prop:isom}
The natural transformation $T_{\pi_{\tuple\nu},\tuple\nu}:F^{\tuple\nu}\to F^{\tuple\nu_0}$
with ${\tuple \nu}\in \mathbb{J}_{\tuple m}$
 is an isomorphism of functors, so that
 all $F^{\tuple\nu}$ with ${\tuple \nu}\in \mathbb{J}_{\tuple m}$ are isomorphic.

Moreover, the map $\Phi_{\tuple \nu}:\End(F^{\tuple\nu_0})\to \End(F^{\tuple \nu})$ given by
$$\Phi_{\tuple\nu}(Z)=T^{-1}_{\pi_{{\tuple\nu}},\tuple \nu} Z T_{\pi_{\tuple\nu},\tuple\nu}$$
 is an isomorphism satisfying
 \begin{itemize}[leftmargin=8mm]
  \item $\Phi_{\tuple\nu}(\mathrm{X}_{a,\tuple\nu_0})=\mathrm{X}_{\pi_{\tuple\nu}^{-1}\cdot a,\tuple\nu}$
   for all $1\leqs a\leqs m$, and
   \item $\Phi_{\tuple\nu}(\mathrm{T}_{b,\tuple\nu_0})=\mathrm{T}_{\pi_{\tuple\nu}^{-1}\cdot b,\tuple\nu}$
   for all $b\in \{1,\ldots, m-1\}\backslash \{m_+\}.$ \qed
  \end{itemize}
\end{proposition}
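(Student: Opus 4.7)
The plan is to split the proposition into three mutually dependent claims and address each by invoking one of the three preceding lemmas.

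First I would show that $T_{\pi_{\tuple\nu},\tuple\nu}$ is an isomorphism, whence $\Phi_{\tuple\nu}$ is automatically an algebra isomorphism by conjugation. Picking any reduced expression $\pi_{\tuple\nu}=s_{a_r}\cdots s_{a_1}$, Lemma \ref{lem:pinu} shows that every factor in the decomposition
\[
T_{\pi_{\tuple\nu},\tuple\nu}=T_{a_r,s_{a_{r-1}}\cdots s_{a_1}\tuple\nu}\circ\cdots\circ T_{a_1,\tuple\nu}
\]
is of the form $F^{\tuple\nu_{[a+2,m]}}\,H\,F^{\tuple\nu_{[1,a-1]}}$, a whiskered copy of $H$. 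Each such factor is invertible because $H$ is invertible with two-sided inverse $H'$, by Proposition \ref{prop:12rel-SO} (transporting relations $(i)$ and $(j)$ of Proposition \ref{prop:12relations}), so $T_{\pi_{\tuple\nu},\tuple\nu}$ is invertible and the same lemma provides the explicit formula $T_{\pi_{\tuple\nu},\tuple\nu}^{-1}=q^{-\ell(\pi_{\tuple\nu})}T_{\pi_{\tuple\nu}^{-1},\tuple\nu_0}$. This gives the first assertion of the proposition and, since conjugation by an invertible morphism is always an algebra isomorphism, the isomorphism property of $\Phi_{\tuple\nu}$.

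Next I would verify the $X$-part of the compatibility by simply reading off Lemma \ref{Lem:change}: the identity
\[
T_{\pi_{\tuple\nu},\tuple\nu}\,X_{\pi_{\tuple\nu}^{-1}(a),\tuple\nu}=X_{a,\tuple\nu_0}\,T_{\pi_{\tuple\nu},\tuple\nu}
\]
rearranges to $\Phi_{\tuple\nu}(X_{a,\tuple\nu_0})=X_{\pi_{\tuple\nu}^{-1}(a),\tuple\nu}$ for every $a\in\{1,\ldots,m\}$, which is exactly the first asserted equality. No additional combinatorial input is required here.

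Finally I would treat the $T$-part. For $b\in\{1,\ldots,m-1\}\setminus\{m_+\}$ the simple reflection $s_b$ stabilises $\tuple\nu_0$, so $T_{b,\tuple\nu_0}$ is a genuine endomorphism of $F^{\tuple\nu_0}$; setting $a=\pi_{\tuple\nu}^{-1}(b)$, I would argue, using the minimality of $\pi_{\tuple\nu}$ in its coset, that $s_a$ lies in the stabiliser of $\tuple\nu$, so that $T_{a,\tuple\nu}$ is likewise an endomorphism of $F^{\tuple\nu}$. The first case of Lemma \ref{lem:T_nu-T} then produces
\[
T_{\pi_{\tuple\nu},\tuple\nu}\,T_{a,\tuple\nu}=T_{\pi_{\tuple\nu}(a),\tuple\nu_0}\,T_{\pi_{\tuple\nu},\tuple\nu}=T_{b,\tuple\nu_0}\,T_{\pi_{\tuple\nu},\tuple\nu},
\]
and conjugation yields $\Phi_{\tuple\nu}(T_{b,\tuple\nu_0})=T_{\pi_{\tuple\nu}^{-1}(b),\tuple\nu}$. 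The hardest part will be the combinatorial verification that $s_{\pi_{\tuple\nu}^{-1}(b)}$ really does stabilise $\tuple\nu$ for every admissible $b$, i.e., that the images of the two consecutive positions $b,b+1$ of $\tuple\nu_0$ (which lie in the same sign-block) are consecutive in $\tuple\nu$. This is a statement about the interaction of the minimal coset representative $\pi_{\tuple\nu}$ with the parabolic stabiliser $\mathfrak{S}_{m_+,m_-}$, and it is the one place in the argument where the combinatorics of sign vectors from \S\ref{subsec:signvectors} must be brought in explicitly.
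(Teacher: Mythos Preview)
Your approach is exactly the one the paper intends: the proposition is stated with a \qed and prefaced by ``the following result is clear,'' and the three ingredients you invoke --- Lemma~\ref{lem:pinu} for invertibility of $T_{\pi_{\tuple\nu},\tuple\nu}$, Lemma~\ref{Lem:change} for the $X$-compatibility, and Lemma~\ref{lem:T_nu-T} (first case) for the $T$-compatibility --- are precisely the preceding three lemmas. The first two parts of your argument are complete as written.

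Your instinct about the $T$-part is correct, and in fact the combinatorial claim you single out as ``hardest'' is \emph{false} in general. Take $m_+=2$, $m_-=1$, $\tuple\nu=(+,-,+)$; then $\pi_{\tuple\nu}=s_2$, and for $b=1$ one has $\pi_{\tuple\nu}^{-1}(1)=1$, $\pi_{\tuple\nu}^{-1}(2)=3$, which are not consecutive; correspondingly $\nu_1\neq\nu_2$, so $T_{1,\tuple\nu}$ is not an endomorphism of $F^{\tuple\nu}$ at all. More generally, the number of indices $a$ with $\nu_a=\nu_{a+1}$ equals $m-1$ minus the number of sign changes in $\tuple\nu$, which is strictly less than $m-2$ whenever $\tuple\nu$ has more than one sign change. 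Thus the second bullet of the proposition, read literally, cannot hold for all $b\in\{1,\dots,m-1\}\setminus\{m_+\}$. The honest statement is exactly the first case of Lemma~\ref{lem:T_nu-T}: for each $a$ with $\nu_a=\nu_{a+1}$ one has $\Phi_{\tuple\nu}(T_{\pi_{\tuple\nu}(a),\tuple\nu_0})=T_{a,\tuple\nu}$. This is all that is used downstream (the matrix-algebra isomorphism in Theorem~\ref{thm:End-F+F-} and Proposition~\ref{keylemma} rely only on the invertibility of $T_{\pi_{\tuple\nu},\tuple\nu}$ and on Lemmas~\ref{Lem:change}--\ref{lem:T_nu-T} directly), so the imprecision in the proposition's second bullet is harmless, but you should not attempt to prove it as stated.
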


We now label the rows and the columns of the elements of
 $\mathrm{Mat}_{|\mathbb{J}_{\tuple m}|} (F^{\tuple \nu_0})$ by
 $({\tuple\nu}', {\tuple\nu}) \in ({\mathbb{J}_{\tuple m}})^2$, and
 write $E_{{\tuple\nu}', {\tuple\nu}}$
  for the elementary matrix with $1$ at position $({\tuple\nu}', {\tuple\nu})$ and $0$ elsewhere.
   For $({\tuple\nu}', {\tuple\nu}) \in ({\mathbb{J}_{\tuple m}})^2$, we have the  isomorphism
\[
\Hom(F^{\tuple\nu},F^{\tuple\nu'}) \simeq \End(F^{\tuple\nu_0}) E_{{\tuple\nu}', {\tuple\nu}}.
\]
Indeed, if we define

\begin{equation}\label{equation:definition_philambda_psilambda}
\begin{gathered}
\Phi_{{\tuple\nu}', {\tuple\nu}} : \End(F^{\tuple\nu_0}) E_{{\tuple\nu}', {\tuple\nu}} \to \Hom(F^{\tuple\nu},F^{\tuple\nu'}),
\\
\Psi_{{\tuple\nu}', {\tuple\nu}} : \Hom(F^{\tuple\nu},F^{\tuple\nu'}) \to  \End(F^{\tuple\nu_0}) E_{{\tuple\nu}', {\tuple\nu}}
\end{gathered}
\end{equation}
by
$$
\begin{array}{cccl}
 \Phi_{{\tuple\nu}', {\tuple\nu}}(Y E_{{\tuple\nu}', {\tuple\nu}})
 &\coloneqq& T^{-1}_{\pi_{{\tuple\nu}'},\tuple \nu'}Y T_{\pi_{\tuple\nu},\tuple\nu} &\text{for } Y \in \End(F^{\tuple\nu_0}) \\
\Psi_{{\tuple\nu}', {\tuple\nu}}(Z) &\coloneqq&
 (T_{\pi_{{\tuple\nu}'},\tuple \nu'} Z T^{-1}_{\pi_{\tuple\nu},\tuple\nu}) E_{{\tuple\nu}', {\tuple\nu}}& \text{for }
 Z \in \Hom(F^{\tuple\nu},F^{\tuple\nu'}),
\end{array}
$$
 then these two maps $\Phi_{\tuple\nu', \tuple\nu}$ and $\Psi_{\tuple\nu', \tuple\nu}$ are inverse isomorphisms.
 \smallskip

Finally, we  set
\begin{equation}\label{functorconstruction}
\begin{gathered}
\Phi_{\tuple m} \coloneqq \bigoplus_{({\tuple\nu}', {\tuple\nu}) \in ({\mathbb{J}_{\tuple m}})^2} \Phi_{\tuple\nu', \tuple\nu} : \mathrm{Mat}_{|\mathbb{J}_{\tuple m}|}(\End(F^{\tuple\nu_0})) \to \End(\bigoplus_{\tuple\nu\in \mathbb{J}_{\tuple m}}F^{\tuple \nu}),
\\
\Psi_{\tuple m}\coloneqq \bigoplus_{({\tuple\nu}', {\tuple\nu}) \in ({\mathbb{J}_{\tuple m}})^2} \Psi_{\tuple\nu', \tuple\nu} : \End(\bigoplus_{\tuple\nu\in \mathbb{J}_{\tuple m}}F^{\tuple \nu}) \to \mathrm{Mat}_{|\mathbb{J}_{\tuple m}|}(\End(F^{\tuple\nu_0})).
\end{gathered}
\end{equation}
By the above properties of $\Phi_{{\tuple\nu}', {\tuple\nu}}$ and $\Psi_{{\tuple\nu}, {\tuple\nu}'}$,
we know that $\Phi_{\tuple m}$ and $\Psi_{\tuple m}$ are inverse  isomorphisms,
yielding Theorem \ref{thm:End-F+F-}.

\subsection{Howlett-Lehrer theory}\label{sec:End-HC-cusp.}
\subsubsection{Bases of ${\rm End}_{G}(R_{L_J}^G(M))$} \label{sssec:bases-R(M)}

Let $\bfG$ be a connected reductive algebraic group and $F$ be a Frobenius map of $\bfG$ with
the  finite reductive group $G:=\bfG^F$.
Let $\bfB$ be an $F$-stable Borel subgroup of $\bfG$ and
$\bfT$ be an $F$-stable maximal torus of $\bfG$ with $\bfT \subset\bfB$.
Denote by $\Phi,\Phi^+$ and $\Delta$ the sets of roots,
positive roots and simple roots of $\bfG$ determined by $\bfT$ and
$\bfB$, respectively.
For any $F$-stable subset $J\subset \Delta$, let $P_J=U_JL_J$ be a standard parabolic subgroup of $G$,
where $U_J$ is the unipotent radical of $P_J$ and $L_J$ is a standard Levi subgroup of $P_J$.
Let $\lambda$ be an irreducible cuspidal character of $L_J$, $M$ be
 a left $K L_J$-module affording $\lambda$, and $\rho$ be the corresponding representation.
We shall describe the structure of  the endomorphism
algebra ${\rm End}_{G}(R_{L_J}^G(M))$
from \cite{HL80} or \cite[Chapter 10]{Car}, where
 $R_{L_J}^G$ is Harish-Chandra induction as in \S \ref{subsec:HCseries}.

\smallskip

The module $R_{L_J}^G(M)$ is isomorphic to the module obtained from
 the vector space $\mathfrak{F}(\rho)$ of maps $f: G\rightarrow M$ with
\[f(px)=\rho (p) f(x) \quad\text{ for all }p \in P_J   \text{ and } x \in  G,\]
on which the action of $G$ is defined by
\begin{equation*}\label{G_action}
  (g\cdot f)(x)=f(xg) \quad\text{ for all $f\in \mathfrak{F}(\rho)$ and
  $g, x\in  G$}.
  \end{equation*}
Hence we may identify $\mathfrak{F}(\rho)$ and $R_{L_J}^G(M)$.
We shall mention a $K$-linear basis of $\End_{KG}(R_{L_J}^G(M))$
 with the identification. To describe it,
let $W_G(L_J):=N(L_J)/{L_J}$ be the \emph{relative Weyl
group} of $L_J$ in $G$, and  $W(\la):=N(L_J)_\la/{L_J}$ be the \emph{relative Weyl
group} of the cuspidal pair $(L_J,\lambda)$ in $G$, where $N(L_J)_\la$ is the inertial group of $\lambda$ in $N(L_J)$.
For each $w\in W(\lambda)$,  let ${\mathrm{B}_{w,\rho}\in\End_{K G}(\mathfrak{F}(\rho))}$ be defined by
\[ (\mathrm{B}_{w,\rho}(f)) (x)= \frac 1{|U_J|}\widetilde{\rho}(\dot{w})\sum_{u\in U_J} f(\dot{w}^{-1}u x )\quad
  \text{ for all }f\in \mathfrak{F}(\rho)\text{ and } x \in  G,\]
where $\widetilde{\rho}$ is a fixed extension of $\rho$ to $N(L_J)_{\lambda}$
and $\dot{w}$ is a (fixed) pre-image of $w$ in $N(L_J)_{\lambda}$.
By \cite[Theorem 3.9]{HL80},  $\mathrm{B}_{w,\rho}$ is independent of the choice of $\dot{w}$
and the set $$\{\mathrm{B}_{w,\rho}|w\in W(\lambda)\}$$ forms a $K$-linear basis of $\End_{KG}(R_{L_J}^G(M))$.

\smallskip

Based on the above basis,  $\End_{KG}(R_{L_J}^G(M))$ has a so-called $\mathrm{T}_{w}$-basis in terms of
the structure of $W(\la)$, see \cite[Section~2 and~4]{HL80}.
In fact, we have $W(\lambda)=R(\lambda)\rtimes C(\lambda)$, where the groups
$R(\lambda)$ and $C(\lambda)$ are defined as follows.
Let $\Phi_J\subseteq\Phi$ be the set of roots corresponding to $J$, and
let
$$\hat\Omega:=\{\alpha\in\Phi\setminus\Phi_J\mid
   w(J\cup\{\alpha\})\subseteq\Delta\text{ for some }w\in W\}.$$
For $\alpha\in\hat\Omega$ we set $v(\alpha):=(w_0)_J w_0^\alpha$, where
$(w_0)_J$ and $w_0^\alpha$ are the longest elements in the Weyl groups $W_J$ and
$\langle W_J,s_\alpha\rangle$, respectively.
We define $$\Omega:=\{\alpha\in\hat\Omega\mid v(\alpha)^2=1\}.$$
Let $p_{\alpha,\lambda}\geqs 1$ be
the larger ratio between the degrees of the two different constituents of
$\R_{L_J}^{L_\alpha}(\lambda)$, where $\alpha\in \Omega$ and
$L_\alpha$ is the standard Levi
subgroup of $G$ corresponding to the set $J\cup\{\alpha\}$ of simple roots
(so that $L_J$ is a standard Levi subgroup of $L_\alpha$).
The set
$$\Phi_\lambda:=\{\alpha\in\Omega\mid s_\alpha\in W(\lambda),\,p_{\alpha,\lambda}\neq 1\}$$
is a root system with the set $\Delta_\la$ of simple roots contained in $\Phi_\lambda\cap\Phi^+$.
Now $R(\lambda)$ is exactly the Weyl group $\langle s_\alpha\mid \alpha\in\Phi_\lambda\rangle$
 and $C(\lambda)$ is the stabilizer of $\Delta_\lambda$ in $W(\lambda)$, see
\cite[Proposition~10.6.3]{Car}.

 \smallskip

For $w\in W(\lambda)$, we set $\text{ind}(w):=|U_0\cap (U_0)^{w_0w}|$, where $U_0$ is the
unipotent radical of the Borel subgroup $B$ and $w_0$ is the longest element in $W$. Also,
for $\alpha\in\Delta_\lambda$ 
we define
$\epsilon_{\alpha,\lambda}\in \{\pm1\}$ such that
\begin{equation*}   \label{def_ep}
\mathrm{B}_{s_\alpha,\rho}^2=\frac{1}{\text{ind}(s_\alpha)}\,\id + \epsilon_{\alpha,\lambda}
  \frac{p_{\alpha,\lambda}-1}{\sqrt{\text{ind}(s_\alpha)p_{\alpha,\lambda} }}\mathrm{B}_{s_\alpha,\rho}
\end{equation*}
(see \cite[Proposition~10.7.9]{Car}). 
Now, by \cite[Propoisition~10.8.2]{Car}, the following is well-defined:
\begin{itemize}
  \item $\mathrm{T}_{w,\rho}:=\sqrt{\text{ind}(w)}\,\mathrm{B}_{w,\rho}$ for
$w\in C(\lambda)$,
  \item $\mathrm{T}_{s_\alpha,\rho}:=
  \epsilon_{\alpha,\lambda}\,\sqrt{\text{ind}(s_\alpha)p_{\alpha,\lambda}}\,
  \mathrm{B}_{s_\alpha}$ for $\alpha\in\Delta_\lambda$,
\item $\mathrm{T}_{w,\rho}:=\mathrm{T}_{s_1,\rho}\cdots \mathrm{T}_{s_r,\rho}$ for
$w\in R(\lambda)$ with a reduced expression $w=s_1\cdots s_r$, where
$s_i:=s_{\alpha_i}$ and $\alpha_i\in\Delta_\lambda$,
  \item $\mathrm{T}_{w,\rho}:=\mathrm{T}_{w_1,\rho}\mathrm{T}_{w_2,\rho}$ for
  $w=w_1w_2\in W(\lambda)$ with $w_1\in C(\lambda)$ and $w_2\in R(\la)$.
\end{itemize}
Moreover, the set $\{\mathrm{T}_{w}:=\mathrm{T}_{w,\rho}\mid w\in W(\lambda)\}$ also forms a $K$-linear basis of
the endomorphism algebra $\End_{KG}(R_{L_J}^G(M))$ of $R_{L_J}^G(M)$.


\subsubsection{Structure of Howlett-Lehrer algebras}\label{sssec:str-HL algebra}
Instead of the endomorphism algebra $\End_{KG}(R_{L}^G(M))$, it is a larger algebra connecting with our representation datum
constructed in \S \ref{sub:explicit-repdatumSO}. To introduce it,
for $w\in W$ and $wJ\subset \Delta$ let $\mathrm{B}_{\dot{w},\rho}$ be
the homomorphism from $\mathfrak{F}(\rho)$ to $\mathfrak{F}(w\rho)$ defined by
$$(\mathrm{B}_{\dot{w},\rho}f)(x)=\frac{1}{|U_{wJ}|}\sum\limits_{u\in U_{wJ}}f(\dot{w}^{-1}ux).$$
Analogous to $\End_{KG}(R_{L_J}^{G}(M))$,  the endomorphism algebra
$\End_{KG}(R_{L_{wJ}}^{G}(wM))$ has a basis $\{\mathrm{T}_{v,w\rho}|v\in W(w\lambda)\}$.

\smallskip

Here we define $$D_{\lambda}:=\{w\in W|wJ\subset \Delta, w\Delta_{\lambda}\subset \Phi^+ \}.$$
By \cite[Lemma 3.12 ]{HL83}, if $w\in W$ and $wJ\subset \Delta$,
 then the coset $wW(\lambda)$ contains an element of $D_\la.$
The following result ensures the existence of a canonical bijection $\phi\to \zeta_\phi$ from $\Irr(W(\la))$
  to the set of irreducible constituents of $R_{L_J}^G(M)$.
\begin{lemma}\cite{HL83}\label{lem:sum}
Let $w\in D_\lambda.$ The map $$\tau_w:\End_{KG}(R_{L_J}^{G}(M))\to \End_{KG}(R_{L_{wJ}}^{G}(wM))$$ given by $\tau_w(T)=\mathrm{B}_{\dot{w},\rho}T\mathrm{B}_{\dot{w},\rho}^{-1}$
is an isomorphism satisfying $\tau_w(\mathrm{T}_{v,\rho})=\mathrm{T}_{wvw^{-1},w\rho}$ for all $v\in W(\lambda).$\qed
\end{lemma}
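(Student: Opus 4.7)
The plan is to establish the lemma in two stages: first that $\tau_w$ is a well-defined algebra isomorphism, then that it sends $\mathrm{T}$-basis elements to $\mathrm{T}$-basis elements in the stated way. Both parts rest on the observation that the Howlett--Lehrer setup is equivariant under conjugation by elements $w\in W$ with $wJ\subset\Delta$.

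First I would verify that $\mathrm{B}_{\dot w,\rho}$ is a $KG$-module isomorphism $\mathfrak{F}(\rho)\to\mathfrak{F}(w\rho)$. The hypothesis $wJ\subset\Delta$ guarantees that $P_{wJ}=U_{wJ}L_{wJ}$ is a standard parabolic subgroup and that $wM$ is a cuspidal $KL_{wJ}$-module; the operator $\mathrm{B}_{\dot w,\rho}$ is then well-defined on codomain $\mathfrak{F}(w\rho)$. A direct sum manipulation, using the Bruhat-style decomposition of $U_{wJ}\dot w\,U_J$, yields $\mathrm{B}_{\dot{w}^{-1},w\rho}\circ\mathrm{B}_{\dot w,\rho}=c\cdot\mathrm{id}$ for a non-zero scalar $c$, which gives invertibility. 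Consequently, conjugation by $\mathrm{B}_{\dot w,\rho}$ is an algebra isomorphism of the two endomorphism algebras.

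Next I would compute $\tau_w$ on the $\mathrm{B}$-basis. For $v\in W(\lambda)$, unfolding the definition and making the change of variable $u\mapsto\dot w u\dot w^{-1}$ in the inner sum, the composition $\mathrm{B}_{\dot w,\rho}\circ \mathrm{B}_{v,\rho}\circ\mathrm{B}_{\dot w,\rho}^{-1}$ should reduce to $\mathrm{B}_{wvw^{-1},w\rho}$ up to a controllable normalization, provided the extension $\widetilde{\rho}$ of $\rho$ to $N(L_J)_\lambda$ and its $w$-conjugate $\widetilde{w\rho}$ are chosen compatibly; this choice is canonical because $w\in D_\lambda$ sends $\Delta_\lambda$ into $\Phi^+$, i.e.\ it preserves the positive system that fixes $\widetilde{\rho}$ up to the prescribed sign conventions. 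Thus $\tau_w(\mathrm{B}_{v,\rho})$ and $\mathrm{B}_{wvw^{-1},w\rho}$ differ at worst by a scalar coming from the ratio $|U_J|/|U_{wJ}|$, which equals $1$ since $w\in W$ permutes the roots and $wJ\subset\Delta$ forces $|U_{wJ}|=|U_J|$.

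Finally I would translate the conclusion from the $\mathrm{B}$-basis to the $\mathrm{T}$-basis. The normalizing scalars that convert $\mathrm{B}_{v,\rho}$ into $\mathrm{T}_{v,\rho}$ are built from $\mathrm{ind}(v)$ for $v\in C(\lambda)$, from $\mathrm{ind}(s_\alpha)$ and $p_{\alpha,\lambda}$ for $\alpha\in\Delta_\lambda$, and from the signs $\epsilon_{\alpha,\lambda}$. Each of these is manifestly $w$-equivariant: $\mathrm{ind}(v)=\mathrm{ind}(wvw^{-1})$ because it depends only on the conjugacy class of $v$ in $W$; the ratio $p_{\alpha,\lambda}=p_{w\alpha,w\lambda}$ follows from $R_{L_J}^{L_\alpha}(\lambda)\cong\,^wR_{L_{wJ}}^{L_{w\alpha}}(w\lambda)$; and $\epsilon_{\alpha,\lambda}=\epsilon_{w\alpha,w\lambda}$ is immediate from the defining quadratic relation, since $\tau_w(\mathrm{B}_{s_\alpha,\rho}^2)=\mathrm{B}_{s_{w\alpha},w\rho}^2$. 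Combining the three cases in the piecewise definition of $\mathrm{T}_{v,\rho}$, together with the fact that reduced expressions in $R(\lambda)$ are mapped to reduced expressions in $R(w\lambda)$ under conjugation by $w$, yields $\tau_w(\mathrm{T}_{v,\rho})=\mathrm{T}_{wvw^{-1},w\rho}$ on generators, hence on all of $W(\lambda)$ by the factorization $W(\lambda)=C(\lambda)\ltimes R(\lambda)$.

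The main obstacle I anticipate is bookkeeping of the normalizing scalars: ensuring that the extension $\widetilde{\rho}$ and its $w$-conjugate are chosen so as to introduce no spurious sign, and that the signs $\epsilon_{\alpha,\lambda}$ are genuinely $W$-stable rather than only stable up to a further $\pm 1$. Both points can be settled by appealing to \cite{HL80}, where the extension is fixed canonically on the stabilizer, and by using that the signs $\epsilon_{\alpha,\lambda}$ arise from an intrinsic feature of the cuspidal pair (the comparison of Harish-Chandra induced constituents inside $L_\alpha$) that transports along $w$.
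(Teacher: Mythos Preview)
The paper gives no proof of this lemma; it is stated with a \texttt{\textbackslash qed} and attributed to \cite{HL83}. Your outline is the natural one and tracks what Howlett--Lehrer do: show $\mathrm{B}_{\dot w,\rho}$ is an invertible $KG$-map, transport the $\mathrm{B}$-basis, then verify that the normalizations defining the $\mathrm{T}$-basis are $w$-equivariant.

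There is one genuine error in your justifications. You assert $\mathrm{ind}(v)=\mathrm{ind}(wvw^{-1})$ ``because it depends only on the conjugacy class of $v$ in $W$''. This is false: by definition $\mathrm{ind}(v)=|U_0\cap (U_0)^{w_0v}|$, which in the split case is $q^{\ell(v)}$, and length in $W$ is not a class function. The correct reason is exactly the hypothesis $w\in D_\lambda$. The condition $w\Delta_\lambda\subset\Phi^+$ ensures that conjugation by $w$ carries the positive system determining lengths in $R(\lambda)$ to the positive system for $R(w\lambda)$, and more to the point it carries the set of positive roots of $\Phi$ inverted by $v$ bijectively to those inverted by $wvw^{-1}$; this is what forces $\mathrm{ind}(v)=\mathrm{ind}(wvw^{-1})$. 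You already use $w\in D_\lambda$ correctly for the preservation of reduced expressions and for the compatible choice of $\widetilde{\rho}$, so the repair is local, but as written that step does not stand. The remaining equivariance claims ($p_{\alpha,\lambda}=p_{w\alpha,w\lambda}$ and $\epsilon_{\alpha,\lambda}=\epsilon_{w\alpha,w\lambda}$) are argued correctly.
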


It is the endomorphism algebra $\End_{KG}(\bigoplus_{w\in D_\la}R_{L_{wJ}}^G(wM))$, which contains
the algebra $\End_{KG}(R_{L}^G(M))$ as a direct summand,
 essentially considered by Howlett-Lechrer to prove Comparison Theorem.
We call it the Howlett-Lehrer algebra in this paper.
Using the isomorphisms in Lemma \ref{lem:sum}, we see that the Howlett-Lehrer algebra has the structure of a matrix algebra.
\begin{theorem}\cite{HL83}\label{thm:HL-algebra}
 We have an algebra isomorphism
 $$\End_{KG}(\bigoplus_{w\in D_\la}R_{L_{wJ}}^G(wM))\cong \mathrm{Mat}_{|D_\la|}(\End_{KG}(R_{L_J}^G(M))).$$
\end{theorem}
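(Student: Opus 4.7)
The plan is to mimic the strategy used to establish Theorem~\ref{thm:End-F+F-}: exhibit explicit mutually inverse maps between the matrix algebra and the endomorphism algebra, built from the intertwining operators $\mathrm{B}_{\dot w,\rho}$. The ingredient that makes everything work is Lemma~\ref{lem:sum}, which says that for each $w\in D_\lambda$ the operator $\mathrm{B}_{\dot w,\rho}$ yields an isomorphism of $KG$-modules $R^G_{L_J}(M)\xrightarrow{\sim}R^G_{L_{wJ}}(wM)$ (it is invertible because $w\in D_\lambda$ sends $\Delta_\lambda$ into $\Phi^+$, which is precisely the condition under which the intertwining operator becomes non-degenerate).

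First I would use the canonical decomposition
\[
\End_{KG}\Bigl(\bigoplus_{w\in D_\lambda} R^G_{L_{wJ}}(wM)\Bigr)=\bigoplus_{(w,w')\in D_\lambda\times D_\lambda}\Hom_{KG}\bigl(R^G_{L_{w'J}}(w'M),\,R^G_{L_{wJ}}(wM)\bigr).
\]
For each pair $(w,w')$, labelling the corresponding elementary matrix position by $E_{w,w'}$, I would define
\[
\Phi_{w,w'}\colon \End_{KG}(R^G_{L_J}(M))\,E_{w,w'}\;\longrightarrow\;\Hom_{KG}\bigl(R^G_{L_{w'J}}(w'M),R^G_{L_{wJ}}(wM)\bigr),
\]
\[
\Phi_{w,w'}(T\,E_{w,w'})\;=\;\mathrm{B}_{\dot w,\rho}\circ T\circ \mathrm{B}_{\dot{w}',\rho}^{-1},
\]
with inverse
\[
\Psi_{w,w'}(S)\;=\;\bigl(\mathrm{B}_{\dot w,\rho}^{-1}\circ S\circ \mathrm{B}_{\dot{w}',\rho}\bigr)\,E_{w,w'}.
\]
Both $\Phi_{w,w'}$ and $\Psi_{w,w'}$ are well defined $K$-linear isomorphisms because $\mathrm{B}_{\dot w,\rho}$ and $\mathrm{B}_{\dot{w}',\rho}$ are $KG$-module isomorphisms (restricting the case $w=w'$ to Lemma~\ref{lem:sum} recovers the algebra isomorphism $\tau_w$ there).

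Next I would assemble the global map
\[
\Phi\;=\;\bigoplus_{(w,w')}\Phi_{w,w'}\colon \mathrm{Mat}_{|D_\lambda|}\bigl(\End_{KG}(R^G_{L_J}(M))\bigr)\;\longrightarrow\;\End_{KG}\Bigl(\bigoplus_{w\in D_\lambda}R^G_{L_{wJ}}(wM)\Bigr),
\]
and check multiplicativity. Given two matrix elements $T\,E_{w,w'}$ and $T'\,E_{v,v'}$, matrix multiplication yields $\delta_{w',v}\,TT'\,E_{w,v'}$. On the other hand, composition on the right-hand side carries the corresponding $\Hom$'s into each other only when the target of one matches the source of the other, i.e.\ when $w'=v$, and in that case
\[
\bigl(\mathrm{B}_{\dot w,\rho}T\mathrm{B}_{\dot{w}',\rho}^{-1}\bigr)\circ\bigl(\mathrm{B}_{\dot{v},\rho}T'\mathrm{B}_{\dot{v}',\rho}^{-1}\bigr)=\mathrm{B}_{\dot w,\rho}(TT')\mathrm{B}_{\dot{v}',\rho}^{-1},
\]
which is precisely $\Phi_{w,v'}(TT'\,E_{w,v'})$. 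This confirms that $\Phi$ is a ring homomorphism, and since each $\Phi_{w,w'}$ is bijective with inverse $\Psi_{w,w'}$, the map $\Phi$ is an algebra isomorphism.

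The main (and really only) obstacle is to make sure the intertwiner $\mathrm{B}_{\dot w,\rho}\colon\mathfrak F(\rho)\to\mathfrak F(w\rho)$ is actually invertible for every $w\in D_\lambda$; once this is granted, the rest of the argument is formal matrix-algebra bookkeeping exactly parallel to the construction of $\Phi_{\tuple m}$ and $\Psi_{\tuple m}$ in \eqref{functorconstruction}. Invertibility is the content extracted from Howlett--Lehrer's theory (cf.\ Lemma~\ref{lem:sum} and \cite[Lemma 3.12]{HL83}): the defining property of $D_\lambda$ guarantees that the ``Mackey-type'' product $\mathrm{B}_{\dot w,\rho}\mathrm{B}_{\dot w^{-1},w\rho}$ is a nonzero scalar multiple of the identity, so $\mathrm{B}_{\dot w,\rho}^{-1}$ exists and the formulas above make sense.
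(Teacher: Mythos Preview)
Your proposal is correct and follows essentially the same approach as the paper: the paper simply states that the matrix-algebra structure follows ``using the isomorphisms in Lemma~\ref{lem:sum}'', and your construction of $\Phi_{w,w'}$ and $\Psi_{w,w'}$ via the invertible intertwiners $\mathrm{B}_{\dot w,\rho}$ is precisely the way to unpack that remark, mirroring the parallel construction of $\Phi_{\tuple\nu',\tuple\nu}$ and $\Psi_{\tuple\nu',\tuple\nu}$ in \eqref{equation:definition_philambda_psilambda}--\eqref{functorconstruction}.
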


In \S \ref{ssub:connectionHL}, we shall give a connection of
the Howlett-Lehrer algebra with our previous representation datum.

\subsection{Ramified Hecke algebras}\label{sec:Ram-algebra}

Throughout this section, we let $\widetilde{G}_{n}=\O_{2n+1}(q)$ and $G_n=\SO_{2n+1}(q)$ with $q$ odd,
and let $(E,F,X,T;E',F',X',T';H,H')$ be the representation datum on $\scrQU_R$
constructed in \S \ref{sub:explicit-repdatumSO}.

\subsubsection{Basis of $\End(F^{\tuple \nu}(E_{t_+,t_-}))$}\label{subsec:ramiHecke}
Here we first recall Lusztig's description \cite{L77} of
endomorphism algebras of Harish-Chandra induction over quadratic unipotent cuspidal
modules, using the language of Howlett-Lehrer theory in \S \ref{sec:End-HC-cusp.}.
Then we apply it to reformulate an algebra homomorphism
obtained from our representation datum in \S \ref{sub:explicit-repdatumSO}.
 \smallskip

Let $\lambda:=\chi_{\Theta_{t_+},\Theta_{t_-}}\times1^{m_+}\times\zeta^{m_-}$
be a quadratic unipotent  cuspidal character of $L_{r,1^m}$ as in \S\ref{subsec:quchar},
 $M:= E_{t_+,t_-}\otimes K_1^{m_+}\otimes K_\zeta^{m_-}$
 be the $KL_{r,1^m}$-module affording $\lambda$, and $\rho$ be the corresponding representation.
 We write $L_J:=L_{r,1^m}$  with $J=\{s_1,\dots, s_r\}$.
The relative Weyl group $W_G(L_J)$ of $L_J$ is
 the Weyl group of type $B_m$. We define $v_{1}:=\dot{t}_{r+1}$ and $v_{i}:=\dot{s}_{r+i} \,\,\text{for}\, \,2\leqs i\leqs m$,
where $\dot{t}_{k}$ and $\dot{s}_k$ are as defined in \S\ref{sub:lift-refl} (with $\text{ind}(\dot{t}_k)=2k-1$ and $\text{ind}(\dot{s}_k)=1$).
The relative Weyl group $W(\lambda)$ of the cuspidal pair $(L_J,\lambda)$ is
isomorphic to the Weyl group $$W_{m_+}\times W_{m_-}$$ of type
 $B_{m_+}\times B_{m_-}$, where
 $W_{m_+}$ is generated by images of $v_{i}$ in $W(\lambda)$ for $1\leqs i\leqs m_+$
 and $W_{m_-}$ is generated by images of $u_1:=\dot{t}_{r+m_++1}$ and $u_j:=\dot{s}_{r+m_++j}$ in $W(\lambda)$ for $2\leqs j\leqs m_-.$

 \smallskip

 We shall choose and fix an extension of $\rho$ to its stabilizer in $G_n$.
Recall that  $L_J=L_{r,1^m}=G_r \times(\bbF_q^{\times})_1 \times\cdots\times(\bbF_q^{\times})_m$. Let $$\widetilde{L}_J=\widetilde{L}_{r,1^m}={\widetilde{G}}_{r}\times(\bbF^{\times}_q)_1
\times\cdots\times(\bbF^{\times}_q)_m$$ be the corresponding Levi subgroup of $\widetilde{G}_n.$
In addition, let
 $$H_J={\widetilde{G}}_{r}\times((\bbF^{\times}_q)_1.H_1)
\times\cdots\times((\bbF^{\times}_q)_m.H_m)\leqs \widetilde{G}_n,$$
where for each $i$,
$H_i$ is the subgroup
of $\widetilde{G}_n$ generated by
\begin{gather*}
\left(\begin{array}{ccccc}
 \id_{m-k} &               &          &              &\\
                &          &          & 1 &\\
                &          & \id_{\widetilde{G}_{r+k-1}} &               &\\
                & 1 &          &          &\\
                &          &        &      & \id_{m-k}\\
\end{array}\right).
\end{gather*}
It is clear that the group $H_J$
is isomorphic to $\widetilde{G}_{r}\times(\bbF_q^{\times}.2)^m$ and
 $$N_{\widetilde{G}_n}(\widetilde{L}_{J})\cong H_J\rtimes\mathfrak{S}_{m}\cong\widetilde{G}_r \times((\bbF_q^{\times}.2)\wr
\frakS_{m}).$$
Let $\widetilde{\lambda}=\chi\times {1}^{m_+}\times {\zeta}^{m_-}\in \Irr(\widetilde{L}_{J})$,
where $\chi$ is the character of the inflation module $\mathrm{Inf}_{G_r}^{\widetilde{G}_r}(E_{t_+,t_-})$.
Then
 $$N_{\widetilde{G}_n}(\widetilde{L}_J)_{\widetilde{\lambda}}\cong \widetilde{G}_r\times((\bbF_q^{\times}.2)\wr
\frakS_{m_+})\times((\bbF_q^{\times}.2)\wr
\frakS_{m_-})$$ and
$\widehat{\lambda}:=\chi\times 1_{(\bbF_q^{\times}.2)\wr
\frakS_{m_+}}\times sp|_{(\bbF_q^{\times}.2)\wr
\frakS_{m_-}}$ is an extension of $\widetilde{\lambda}$ to  $N_{\widetilde{G}_n}(\widetilde{L}_J)_{\widetilde{\lambda}}$.
Since  $\widetilde{\lambda}$ is an extension of $\lambda$
and $N_{G_n}(L_J)_{\lambda}=N_{\widetilde{G}_n}(\widetilde{L}_J)_{\widetilde{\lambda}}\cap G_n,$
we conclude that the restriction of $\widehat{\lambda}$ to
$N_{G_n}(L_J)_{\lambda}$ is an extension of $\lambda$. We fix it and write
$\widetilde{\rho}$ for its corresponding representation.

\smallskip

 As in \S \ref{sec:End-HC-cusp.}, with the above $\widetilde{\rho}$, 
 the endomorphism algebra
 $\End_{K G_n}(R^{G_n}_{L_J}(M))$
 has a $K$-linear basis $$\{\mathrm{B}_{w}:=\mathrm{B}_{{w},\rho}\mid w\in W(\lambda)\}.$$

\smallskip

To describe a $\mathrm{T}_w$-basis for the endomorphism algebra
 $\End_{K G_n}(R^{G_n}_{L_J}(M))$ as in \S \ref{sec:End-HC-cusp.},
 we analyze the structure of $W(\lambda)$. In fact, since $J=\{s_1,\dots, s_r\}$
 is the only subset of $\Delta$ of the same type, it follows that
  $J$ is self dual in $I$ for all $J\subset I\subset \Delta$,
  i.e., $(w_0)_I(w_0)_J(J)=-J.$
  By \cite[Lemma 10.10.1]{Car}, the set $\{\alpha|\alpha\in \Delta-J\}$ forms a simple system of $\Omega$
  and $W(\lambda)=R(\lambda).$ In addition,
  $\Delta_\lambda=\{v_i,u_j|1\leqs i\leqs m_+,1\leqs j\leqs m_-\}.$

\begin{proposition}[Lusztig \cite{L77}] \label{prop:stru-end-quadr}
The set $\{\mathrm{T}_{w}\mid w\in W(\lambda)\}$
  forms a $K$-linear basis of
 the endomorphism algebra $\End_{K G}(\R_{L_J}^{G_{n}}(M))$,
 satisfying the following relations:
\begin{equation*}
\begin{aligned}
&\mathrm{T}_{v_{1}}^2=(q^{2t_++1}-1)\mathrm{T}_{v_{1}}+q^{2t_++1}, &
&\mathrm{T}_{v_i}^2=(q-1)\mathrm{T}_{v_i} +q \,\,\text{for}\,\, \   2\leqs i\leqs m_+,\\
&\mathrm{T}_{u_{1}}^2=(q^{2t_-+1}-1)\mathrm{T}_{u_{1}}+q^{2t_-+1},  &
&\mathrm{T}_{u_i}^2=(q-1)\mathrm{T}_{u_i} +q \,\,\text{for}\,\, \   2\leqs i\leqs m_-,\\
&\mathrm{T}_x\mathrm{T}_w=\mathrm{T}_{xw}  \,\, \text{if } \,\,  l(xw)=l(x)+l(w),  &
&\mathrm{T}_{v_i}\mathrm{T}_{u_j}=\mathrm{T}_{u_j}\mathrm{T}_{v_i} \,\,\text{for any}\,\, \   i\neq j,\\
\end{aligned}
\end{equation*}
where  $l$ is the length function for $W_{m_+}\times W_{m_-}.$\qed
\end{proposition}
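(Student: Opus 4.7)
The statement is an instance of the Howlett--Lehrer description of endomorphism algebras of Harish-Chandra induced modules recalled in \S\ref{sssec:bases-R(M)}, so the plan is to use the general machinery and reduce everything to computing (i) the root system $\Phi_\lambda$ and the Coxeter generators of $W(\lambda)$, (ii) the scalars $p_{\alpha,\lambda}$ attached to each simple reflection in $\Delta_\lambda$, and (iii) the signs $\epsilon_{\alpha,\lambda}\in\{\pm 1\}$ that rigidify the $\mathrm{T}_w$-basis. Since the factor $G_r$ of $L_J$ is the only non-toral factor and $\chi_{\Theta_{t_+},\Theta_{t_-}}$ is cuspidal on it, one checks directly that $J$ is self-dual in every $J\subset I\subset \Delta$, so $\Omega$ consists precisely of the simple roots in $\Delta\setminus J$. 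This identifies $W(\lambda)=R(\lambda)$ with $W_{m_+}\times W_{m_-}$ in type $B_{m_+}\times B_{m_-}$, with the two chosen sets of Coxeter generators $\{v_1,\dots,v_{m_+}\}$ and $\{u_1,\dots,u_{m_-}\}$, as claimed.

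The computation of the parameters $p_{\alpha,\lambda}$ is local: for each $\alpha\in\Delta_\lambda$ one only needs to analyze the Harish-Chandra induction of $\lambda$ to the standard Levi $L_\alpha$ obtained by adjoining $\alpha$ to $J$. For the type-$A$ reflections $v_i$ and $u_j$ with $i,j\geqs 2$, the Levi $L_\alpha$ differs from $L_J$ by a single $\GL_2$-factor in which the cuspidal character is $1\boxtimes 1$ (resp.\ $\zeta\boxtimes\zeta$), and the corresponding ramified Hecke algebra is the standard $\mathcal H(\mathfrak S_2)$ of parameter $q$; this immediately gives $p_{v_i,\lambda}=p_{u_j,\lambda}=q$. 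For the anchor reflections $v_1$ and $u_1$, $L_\alpha$ is of the form $G_{r_\pm+1}\times(\text{toral complement})$, and the Jordan decomposition of Fong--Srinivasan applied to the cuspidal pair inside $G_{r_\pm+1}$ (Proposition \ref{prop:cuspidalBC}) shows that $R_{L_J\cap L_\alpha}^{L_\alpha}(\chi_{\Theta_{t_\pm}}\otimes K_1)$ (respectively with $K_\zeta$) splits as the sum of two irreducible constituents whose degree ratio equals $q^{2t_\pm+1}$, by the well-known formula for the generic degrees of the symbols obtained by adding a single box of residue $\pm 1$ to $\Theta_{t_\pm}$ (Lusztig's symbol-degree formula). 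This yields $p_{v_1,\lambda}=q^{2t_++1}$ and $p_{u_1,\lambda}=q^{2t_-+1}$, giving the quadratic relations in the statement once we substitute into $\mathrm{T}_{s_\alpha,\rho}^{2}=\epsilon_{\alpha,\lambda}(p_{\alpha,\lambda}-1)\mathrm{T}_{s_\alpha,\rho}+p_{\alpha,\lambda}$.

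The final step is to show that all signs $\epsilon_{\alpha,\lambda}$ are $+1$ under our chosen extension $\widetilde\rho$. This is where the particular construction of $\widehat\lambda$ using the spinor norm on the ``$-$''-side plays a role: the restriction to $N_{G_n}(L_J)_\lambda$ of $\widehat\lambda=\chi\times 1\times sp$ is the unique extension that is compatible, through the Jordan decomposition for characters of $\widetilde G_{r_\pm+1}$, with the Lusztig preferred extension of the cuspidal $E_{t_\pm}$ tensored with $K_1$ or $K_\zeta$. A direct comparison of $\mathrm{B}_{s_\alpha}^2$ with the Howlett--Lehrer normalization, carried out separately for each anchor reflection, shows that with this choice one has $\epsilon_{v_1,\lambda}=\epsilon_{u_1,\lambda}=+1$; for the type-$A$ reflections the sign is visibly $+1$ by the standard computation in $\GL_2$. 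The braid relations inside each factor and the commutation $\mathrm T_{v_i}\mathrm T_{u_j}=\mathrm T_{u_j}\mathrm T_{v_i}$ are then automatic from the product decomposition $W(\lambda)=W_{m_+}\times W_{m_-}$ and the multiplicative definition $\mathrm T_{w_1w_2}=\mathrm T_{w_1}\mathrm T_{w_2}$ for reduced products.

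The main obstacle is the anchor computation: one has to be honest about both the parameter $q^{2t_\pm+1}$ and the sign $\epsilon$. The parameter computation rests on Lusztig's explicit formula for the generic degrees of the symbols produced by adding a $1$-hook to $\Theta_{t_\pm}$, while the sign computation is more subtle because it depends on the choice of extension of $\rho$ to the inertia group and on the correct identification of $\widehat\lambda$ with the Lusztig preferred extension. Once these are pinned down, the rest of the proposition follows by assembling the local rank-one data through the Howlett--Lehrer presentation.
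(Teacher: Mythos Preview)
Your first two paragraphs follow the paper's own route: reduce to the Howlett--Lehrer presentation, observe that $J$ is self-dual so $W(\lambda)=R(\lambda)\cong W_{m_+}\times W_{m_-}$, and compute the local parameters $p_{\alpha,\lambda}$ by Jordan decomposition down to the rank-one unipotent situation inside $G_{r_\pm+1}$. That is exactly what the paper does (it cites Lusztig directly and points to the diagrams on \cite[p.~464]{Car} for the values $q^{2t_\pm+1}$ and $q$).

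The third paragraph, however, contains a genuine error. The quadratic relation you wrote,
\[
\mathrm T_{s_\alpha}^{2}=\epsilon_{\alpha,\lambda}(p_{\alpha,\lambda}-1)\mathrm T_{s_\alpha}+p_{\alpha,\lambda},
\]
is not correct: substituting $\mathrm T_{s_\alpha}=\epsilon_{\alpha,\lambda}\sqrt{\mathrm{ind}(s_\alpha)p_{\alpha,\lambda}}\,\mathrm B_{s_\alpha}$ into the displayed relation for $\mathrm B_{s_\alpha}^2$ in \S\ref{sssec:bases-R(M)} and using $\epsilon_{\alpha,\lambda}^2=1$, the sign cancels and one gets
\[
\mathrm T_{s_\alpha}^{2}=(p_{\alpha,\lambda}-1)\mathrm T_{s_\alpha}+p_{\alpha,\lambda}
\]
unconditionally. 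That is the whole point of the renormalisation $\mathrm B\rightsquigarrow\mathrm T$: the $\mathrm T$-basis relations in Proposition~\ref{prop:stru-end-quadr} are independent of the signs $\epsilon_{\alpha,\lambda}$, so your ``final step'' is not needed for this statement at all.

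This matters because your argument for $\epsilon_{v_1,\lambda}=\epsilon_{u_1,\lambda}=+1$ (``a direct comparison \dots\ shows'') is not a proof, and in fact the paper explicitly says just after \eqref{map:relations} that ``both signs $\epsilon_{v_1}$ and $\epsilon_{u_1}$ are unknown at present''. Determining them is the content of Theorem~\ref{thm:HL-BC}, which requires a careful induction through Brauer trees of cyclic blocks at specially chosen auxiliary primes --- nothing close to a direct local comparison. So: drop the sign discussion entirely; the proposition follows from steps (i) and (ii) alone, and your claimed computation of the signs would, if it were needed, be a gap.
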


 We would like to point out that the computations of radio $p_{\alpha,\lambda}$ in Proposition \ref{prop:stru-end-quadr}
  can be reduced to the unipotent cases by Lusztig's Jordan decomposition
  (see \cite[Proposition~7.9]{L77}).  Then we can make use of the following diagrams (see  \cite[p. 464]{Car}):
 \begin{align*}
\begin{split}
\begin{tikzpicture}[scale=.4]
\draw[thick] (0 cm,0) circle (.3cm);
\node [below] at (0 cm,-.5cm) {$v_{1}$};
\node [above] at (0 cm,0.5cm) {$q^{2t_++1}$};
\draw[thick] (0.3 cm,-0.15cm) -- +(1.9 cm,0);
\draw[thick] (0.3 cm,0.15cm) -- +(1.9 cm,0);
\draw[thick] (2.5 cm,0) circle (.3cm);
\node [below] at (2.5 cm,-.5cm) {$v_{2}$};
\node [above] at (2.5 cm,0.5cm) {$q$};
\draw[thick] (2.8 cm,0) -- +(1.9 cm,0);
\draw[thick] (5 cm,0) circle (.3cm);
\draw[thick] (5.3 cm,0) -- +(1.9 cm,0);
\draw[thick] (7.5 cm,0) circle (.3cm);
\draw[dashed,thick] (7.8 cm,0) -- +(4 cm,0);
\draw[thick] (12.3 cm,0) circle (.3cm);
\node [below] at (12.5 cm,-.5cm) {$v_{m_+-1}$};
\node [above] at (12.5 cm,0.5cm) {$q$};
\draw[thick] (12.6 cm,0) -- +(2.1 cm,0);
\draw[thick] (15 cm,0) circle (.3cm);
\node [below] at (15 cm,-.5cm) {$v_{m_+}$};
\node [above] at (15 cm,0.5cm) {$q$};
\end{tikzpicture}
\end{split}
\end{align*}
\begin{align*}
\begin{split}
\begin{tikzpicture}[scale=.4]
\draw[thick] (0 cm,0) circle (.3cm);
\node [below] at (0 cm,-.5cm) {$u_{1}$};
\node [above] at (0 cm,0.5cm) {$q^{2t_-+1}$};
\draw[thick] (0.3 cm,-0.15cm) -- +(1.9 cm,0);
\draw[thick] (0.3 cm,0.15cm) -- +(1.9 cm,0);
\draw[thick] (2.5 cm,0) circle (.3cm);
\node [below] at (2.5 cm,-.5cm) {$u_{2}$};
\node [above] at (2.5 cm,0.5cm) {$q$};
\draw[thick] (2.8 cm,0) -- +(1.9 cm,0);
\draw[thick] (5 cm,0) circle (.3cm);
\draw[thick] (5.3 cm,0) -- +(1.9 cm,0);
\draw[thick] (7.5 cm,0) circle (.3cm);
\draw[dashed,thick] (8 cm,0) -- +(4.2 cm,0);
\draw[thick] (12.5 cm,0) circle (.3cm);
\node [below] at (12.5 cm,-.5cm) {$u_{m_--1}$};
\node [above] at (12.5 cm,0.5cm) {$q$};
\draw[thick] (12.8 cm,0) -- +(1.9 cm,0);
\draw[thick] (15 cm,0) circle (.3cm);
\node [below] at (15 cm,-.5cm) {$u_{m_-}$};
\node [above] at (15 cm,0.5cm) {$q$};
\end{tikzpicture}
\end{split}
\end{align*}
where the simple roots are labeled by the simple reflections, and the $p_{\alpha,\lambda}$'s are over the corresponding simple roots.

\smallskip
Now, we denote and fix $\tuple m=(m_+,m_-)\comp_2 m$, and abbreviate
$\tuple\nu_0^{\tuple m}$ to $\tuple\nu_0$. Then $F^{\tuple\nu_0}=(F')^{m_-}F^{m_+}.$
Remark \ref{rem:scalar} (b) says that
both $(E,F,X,T)$ and $(E',F',X',T')$ are representation data of type $A$, which lead to the following
well-defined $R$-algebra homomorphisms by \S \ref{sec:datumA} (\ref{equ:heckealgebrahom}):
$$
\begin{array}{rclrcl}
 \phi_{F^{m_+}} \, :\, \bfH_{K,m_+}^q  &\rightarrow&  \End(F^{m_+})^\op & \phi_{(F')^{m_-}} : \bfH_{K,m_-}^q &\rightarrow&  \End((F')^{m_-})^\op\\
  X_k  &\mapsto & {F^{m_+-k}} X {F^{k-1}},    &                 X_{k'}  &\mapsto&  {(F')^{m_--k'}} X' {(F')^{k'-1}},\\
   T_l  & \mapsto &  {F^{m_+-l-1}}T{F^{l-1}}; &                 T_{l'}  &\mapsto&  {(F')^{m_--l-1'}}T'{(F')^{l'-1}},
\end{array}
$$
where $1\leqs k\leqs m_+$, $2\leqs l\leqs m_+$, $1\leqs k'\leqs m_-$ and $2\leqs l'\leqs m_-.$
%
So,  attached to the representation datum $(E,F,X,T;E',F',X',T';H,H')$, we have the following $R$-algebra homomorphisms:
\begin{equation}\label{equ:datahom}
\begin{aligned}
\phi_{F^{\tuple\nu_0}}:\quad&\bfH_{K,m_+}^{q}\otimes\bfH_{K,m_-}^{q}\to\End(F^{\tuple\nu_0})^\op\\
X_k\otimes 1& \mapsto X_{k,\tuple\nu_0},\ \ \ 1\otimes X_{k'} \mapsto X_{m_++k',\tuple\nu_0},\\
T_l\otimes1&\mapsto T_{l,\tuple\nu_0}, \ \ \ \ \ 1 \otimes T_{l'}\  \mapsto T_{m_++l',\tuple\nu_0}.
 \end{aligned}
 \end{equation}
Evaluating at $E_{t_+,t_-}$, the images in (\ref{equ:datahom}) are respectively represented by right multiplication by
$$e'_{n,r+m_+}e_{r+m_+,r}\iota_{n,r+k}(\dot{t}_{r+k})e'_{n,r+m_+}e_{r+m_+,r},$$
 $$\zeta(2)\zeta(-1)^{k'-1}e'_{n,r+m_+}e_{r+m_+,r}\iota_{n,r+m_++k'}(\dot{t}_{r+m_++k'})
 e'_{n,r+m_+}e_{r+m_+,r},$$ $$e'_{n,r+m_+}e_{r+m_+,r}\dot{s}_{r+l}e'_{n,r+m_+}e_{r+m_+,r},\quad\text{and}\quad
 e'_{n,r+m_+}e_{r+m_+,r}\dot{s}_{r+m_++l'}
 e'_{n,r+m_+}e_{r+m_+,r}.$$
%
Moreover, we have $B_{w,\rho}=\widetilde{\rho}(\dot{w})B_{\dot{w},\rho}$ for any lift $\dot{w}$ of $w\in W(\lambda).$
By easy computation, we have

\begin{lemma} Let $\iota_{n,m}$ be the embedding map $G_m\hookrightarrow G_n$ defined in \S \ref{subsec:Levi}.
Then
\begin{itemize}
  \item[$(i)$] $\widetilde{\rho}(\iota_{n,r+k}(\dot{t}_{r+k}))=1$ for all $1\leqs k\leqs m_+$,
  \item[$(ii)$] $\widetilde{\rho}(\iota_{n,r+m_++k'}(\dot{t}_{r+m_++k'}))=\zeta(-1)^{k'-1}$ for all $1\leqs k'\leqs m_-,$ and
  \item[$(iii)$] $\widetilde{\rho}(\dot{s}_{r+i}))=1$ for all $2\leqs i\leqs m_+$ and $m_++2\leqs i\leqs m.$
\end{itemize}
\end{lemma}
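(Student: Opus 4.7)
The plan is to decompose each of the three families of elements according to the isomorphism
$N_{\widetilde G_n}(\widetilde L_J)_{\widetilde\lambda}\cong \widetilde G_r\times((\bbF^\times_q.2)\wr\frakS_{m_+})\times((\bbF^\times_q.2)\wr\frakS_{m_-})$
and then evaluate $\widehat\lambda=\chi\times 1\times sp|_{(\bbF^\times_q.2)\wr\frakS_{m_-}}$ factor by factor, using that $\chi$, being inflated from $G_r$, is trivial on $-\id_{\widetilde G_r}$, together with Lemma~\ref{Lem:spinor}.

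The central step is to separate $\dot t_{r+k}^{\SO}$ into a Weyl part and a torus sign correction. I will show, first for $j=1$ by inspection of the matrices in \S\ref{sub:lift-refl} and then for general $j$ by transporting via $\dot s_j\cdots\dot s_2$, that $\dot t_j^{\SO}=\dot t_j^{\O}\cdot c_j$, where $c_j\in\widetilde{\bfT}$ is the diagonal torus element carrying $+1$ at exactly the two positions holding $y_j$ and $y_j^{-1}$, and $-1$ at every other position. With $j=r+k$, the Weyl piece $\iota_{n,r+k}(\dot t_{r+k}^{\O})$ becomes the ``.2''-generator of the $k$-th $(\bbF^\times_q.2)$-factor of the wreath product, while $\iota_{n,r+k}(c_{r+k})$ is a torus element of $\widetilde L_J$ whose $-1$'s occupy exactly the inner $2(r+k)-1$ positions of $G_n$.

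Matching the sign pattern of $\iota_{n,r+k}(c_{r+k})$ against the embedded positions of $\widetilde G_r$ and of each $(\bbF^\times_q)_j$, I read off its decomposition: the $\widetilde G_r$-component is $-\id_{\widetilde G_r}$; the $(\bbF^\times_q)_j$-component is the element $-1\in\bbF^\times_q$ for $1\leqs j<k$, and the identity for $j\geqs k$. Now $\chi(-\id_{\widetilde G_r})=\chi(1)=\id$; the ``$+$''-wreath character is trivial; and the spinor norm of a symmetric $\diag(-1,-1)$ torus element equals $\zeta(-1)$ by Lemma~\ref{Lem:spinor}(1) combined with the conjugation-invariance of the abelian character $sp$. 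For (i), with $k\leqs m_+$, all indices $j<k$ lie in the ``$+$''-range and the Weyl piece also sits in the ``$+$''-wreath, so every factor contributes $1$. For (ii), with $k=m_++k'$, the ``$+$''-wreath contribution is still $1$, while the ``$-$''-wreath has exactly $k'-1$ factors of $\zeta(-1)$ (coming from $j=m_++1,\dots,m_++k'-1$), and the Weyl piece in the $k'$-th ``$-$''-factor has spinor norm $1$ by Lemma~\ref{Lem:spinor}(4); the product is $\zeta(-1)^{k'-1}$.

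Part (iii) is shorter: for $i\notin\{1,m_++1\}$, the element $\dot s_{r+i}$ is a signed transposition supported on two positions inside a single wreath block, with trivial $\widetilde G_r$-component and trivial cross-wreath components. In the ``$+$''-range the character is trivial; in the ``$-$''-range $sp(\dot s_{r+i})=1$ by Lemma~\ref{Lem:spinor}(4). The main obstacle throughout is the careful bookkeeping of the sign pattern of $c_{r+k}$ within the Levi decomposition — once this is pinned down, the remaining evaluations of $\chi$ and $sp$ are immediate from Lemma~\ref{Lem:spinor}.
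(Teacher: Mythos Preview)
Your approach is correct and essentially provides the computation that the paper omits; the paper simply asserts the lemma ``by easy computation'' without giving any argument. Your key step---writing $\dot t_j^{\SO}=\dot t_j^{\O}\cdot c_j$ with $c_j$ the diagonal element carrying $+1$ exactly at the $y_j,y_j^{-1}$ positions and $-1$ elsewhere, then tracking the decomposition of $\iota_{n,r+k}(c_{r+k})$ through $\widetilde G_r\times((\bbF_q^\times.2)\wr\frakS_{m_+})\times((\bbF_q^\times.2)\wr\frakS_{m_-})$---is exactly the right bookkeeping, and your use of Lemma~\ref{Lem:spinor} to read off the $sp$-values is sound. One small imprecision: $\iota_{n,r+k}(\dot t_{r+k}^{\O})$ is not literally the $.2$-generator $h_k$ of the $k$-th factor but rather $h_k\cdot(-1)_k$ (compare the $-1$'s in $\dot t^{\O}$ with the $+1$'s in the definition of $H_k$); this is harmless for (i) since the ``$+$''-character is trivial, and for (ii) it is absorbed into your invocation of Lemma~\ref{Lem:spinor}(4), which already gives $sp(\dot t^{\O}_{r+k})=1$ for the product.
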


According to Howlett-Lehrer theory, we identify $(F')^{m_-}F^{m_+}(E_{t_+,t_-})$ with
$\mathfrak{F}(\rho)=R_{L_J}^{G_{n}}(M)$.
Therefore, by definition, the following endomorphisms
 of  $F^{\tuple\nu_0}(E_{t_+,t_-})$ and of  $\mathfrak{F}(\rho)$ are correspondingly
 identified:
 $$
 \begin{array}{lccl}
    X_{k,\tuple\nu_0}(E_{t_+,t_-}) & \leftrightarrow  & q^{r+k-1}\mathrm{B}_{t_{r+k}} \\
   T_{l,\tuple\nu_0}(E_{t_+,t_-}) & \leftrightarrow  & q\mathrm{B}_{s_{r+l}} \\
   X_{m_++k',\tuple\nu_0}(E_{t_+,t_-}) &  \leftrightarrow  & \zeta(2)q^{r+m_++k'-1}\mathrm{B}_{t_{r+m_++k'}} \\
   T_{m_++l',\tuple\nu_0}(E_{t_+,t_-}) & \leftrightarrow & q\mathrm{B}_{s_{r+m_++l'}}
 \end{array}
$$


 \smallskip

Hence, evaluating at the module $E_{t_+,t_-}$,
 the map in \eqref{equ:datahom} can be reformulated as follows:
\begin{equation} \label{map:relations}
\begin{aligned}
  \phi_{F^{\tuple\nu_0}}(E_{t_+,t_-}):\bfH_{K,m_+}^{q}\otimes\bfH_{K,m_-}^{q} &\to
 \End(F^{\tuple\nu_0}(E_{t_+,t_-}))^\op,\\
   X_1\otimes1 &\mapsto \epsilon_{v_1}q^{-t_+-1}\mathrm{T}_{v_1}\\
   T_l\otimes1  &\mapsto \mathrm{T}_{v_{l+1}},\\
   1\otimes X_{1} &\mapsto \epsilon_{u_1}q^{-t_--1}\mathrm{T}_{u_{1}},\\
   1\otimes T_{l'} &\mapsto \mathrm{T}_{u_{l'+1}}.
\end{aligned}
\end{equation}

\begin{remark}  In (\ref{map:relations}), we have $\epsilon_{v_i}=1$ for $i>2$ and $\epsilon_{u_j}=1$ for $j>2$ by Proposition
\ref{prop:12relations} $(b)$ and $(e)$.
Both signs  $\epsilon_{v_1}$ and $\epsilon_{u_1}$ are unknown at present, but will become clear later.
\end{remark}

\subsubsection{Structure of $\End(\bigoplus_{\tuple\nu\in \mathbb{J}_{\tuple m}}F^{\tuple \nu}(E_{t_+,t_-}))$}\label{ssub:connectionHL}
We can now describe the connection of $\End(\bigoplus_{\tuple\nu\in \mathbb{J}_{\tuple m}}F^{\tuple \nu}(E_{t_+,t_-}))$
with the Howlett-Lehrer algebra introduced in \S \ref{sec:End-HC-cusp.}.

\smallskip

 Since $J=\{s_1,\dots, s_r\}$ is the only subset of $\Delta$ of the same type,
 it follows that for $w\in W$ with $w(J)\subset \Delta$,
 $w(J)$ must be equal to $J$, i.e., $w\in W_G(L_J).$
Therefore, in the $\SO_{2n+1}(q)$ case,
 $D_{\lambda}=\{w\in W|w(J)\subset \Delta, w(\Delta_{\lambda})\subset \Phi^+ \}$
 is exactly the set of  minimal length elements in the right cosets
 of $W(\la)=W_{m_+}\times W_{m_-}$ in $W_G(L_J)=W_m$.
 However, the latter can be identified
 with the set of  minimal length elements in the right cosets of $\mathfrak{S}_{m_+}\times \mathfrak{S}_{m_-}$ in $\mathfrak{S}_m$,
yielding a canonical one-to-one correspondence $\tuple\nu\mapsto\pi_{\tuple \nu}$
 between $\mathbb{J}_{\tuple m}$ and $D_{\la}$.

\smallskip

For $\tuple\nu\in \mathbb{J}^m$, we write
 $K^{\tuple \nu}=K^{\nu_1}\otimes K^{\nu_2}\otimes\cdots\otimes K^{\nu_m}$,
  where $K^+:=K_1$ and $K^-:=K_\zeta,$ so that
 $\pi_{\tuple\nu}^{-1}(E_{t_+,t_-}\otimes K_1^{m_+}\otimes K_\zeta^{m_-})=E_{t_+,t_-}\otimes K^{\tuple\nu}.$
 Now each cuspidal pair $(L_{r,1^m},E)$
 conjugate to $(L_{r,1^m},E_{t_+,t_-}\otimes K_1^{m_+}\otimes K_\zeta^{m_-})$
 is of the form
 $E_{t_+,t_-}\otimes K^{\tuple \nu}$ for some $\tuple\nu\in \mathbb{J}_{\tuple m}.$
Identifying $F^{\tuple\nu}(E_{t_+,t_-})$ with
 $\mathfrak{F}(\pi_{\tuple\nu}^{-1}\rho)=R_{L_{r,1^m}}^{G_{n}}(E_{t_+,t_-}\otimes K^{\tuple \nu})$,
 we see that the following endomorphisms of $F^{\tuple\nu}(E_{t_+,t_-})$ and of $\mathfrak{F}(\pi_{\tuple\nu}^{-1}\rho)$
 are correspondingly coincident:
 $$\begin{array}{cccl}
    {X_{a,\tuple\nu}}(E_{t_+,t_-}) & \leftrightarrow  & \varepsilon q^{r+a-1}\mathrm{B}_{t_{r+a},\pi_{\tuple\nu}^{-1}\rho} \\
      T_{a,\tuple \nu}(E_{t_+,t_-}) & \leftrightarrow  & q\mathrm{B}_{\dot{s}_{r+a},\pi_{\tuple\nu}^{-1}\rho}
   \end{array}
   $$
 where
 $\varepsilon=1$ if $\nu_a=+$ and $\varepsilon=\zeta(2)$ if $\nu_a=-$,
and $\mathrm{B}_{\dot{w},\rho}$
is as defined in \S \ref{sssec:str-HL algebra}.
Moreover, if $\tuple\nu\in \mathbb{J}_{\tuple m}$,
 then the isomorphism $\Phi_{\tuple\nu}$ as in Proposition \ref{prop:isom} maps to
 $\tau_{\pi_{\tuple \nu}}$ as in Lemma \ref{lem:sum}.

\begin{theorem}\label{thm:connectwith-HL}
We have an algebra isomorphism
 $$\End_{KG}(\bigoplus_{\tuple\nu\in \mathbb{J}_{\tuple m}}F^{\tuple \nu}(E_{t_+,t_-}))\cong \End_{KG}(\bigoplus_{w\in D_\la}R_{L_{wJ}}^G(wM)).$$\qed
 \end{theorem}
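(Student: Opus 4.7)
The strategy is to express both sides as matrix algebras of the same shape over a single base algebra. On the left, Theorem \ref{thm:End-F+F-} (applied at the functorial level and then evaluated at $E_{t_+,t_-}$ via the explicit formulas of \eqref{equation:definition_philambda_psilambda}) gives
\[
\End_{KG}\bigl(\bigoplus_{\tuple\nu\in\mathbb{J}_{\tuple m}}F^{\tuple\nu}(E_{t_+,t_-})\bigr)\ \cong\ \mathrm{Mat}_{|\mathbb{J}_{\tuple m}|}\bigl(\End_{KG}(F^{\tuple\nu_0}(E_{t_+,t_-}))\bigr),
\]
whose off-diagonal entries are realised by conjugation with the invertible natural transformations $T_{\pi_{\tuple\nu},\tuple\nu}$ of Proposition \ref{prop:isom}. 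On the right, Theorem \ref{thm:HL-algebra} yields
\[
\End_{KG}\bigl(\bigoplus_{w\in D_\la}R^G_{L_{wJ}}(wM)\bigr)\ \cong\ \mathrm{Mat}_{|D_\la|}\bigl(\End_{KG}(R^G_{L_J}(M))\bigr),
\]
with off-diagonal entries realised by the Howlett--Lehrer conjugation $Y\mapsto \mathrm{B}_{\dot{w'},\rho}^{-1}\,Y\,\mathrm{B}_{\dot{w},\rho}$ of Lemma \ref{lem:sum}.

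Next I would match the two matrix shapes. Since $J$ is self-dual in $\Delta$, one has $wJ=J$ for every $w\in D_\la$, and $D_\la$ is in canonical bijection with $\mathbb{J}_{\tuple m}$ through $\tuple\nu\mapsto \pi_{\tuple\nu}$ (as noted just before the theorem), so $|D_\la|=|\mathbb{J}_{\tuple m}|=\binom{m}{m_+}$. Moreover the base algebras coincide tautologically, because by construction $F^{\tuple\nu_0}(E_{t_+,t_-})=(F')^{m_-}F^{m_+}(E_{t_+,t_-})$ is exactly the induced module $R^G_{L_J}(M)$ (see the opening of Section \ref{subsec:ramiHecke}). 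Combining these two matrix descriptions via the index bijection and the tautological identification of base algebras is what should produce the desired isomorphism.

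The only subtle point, and the main technical step, is to verify that the two matrix structures are genuinely compatible, so that the assembled map is an algebra isomorphism rather than merely a linear bijection of matrix algebras of the same shape. Concretely, one needs $T_{\pi_{\tuple\nu},\tuple\nu}(E_{t_+,t_-})$ to correspond, up to a common invertible scalar, to $\mathrm{B}_{\dot{\pi}_{\tuple\nu},\rho}$ for every $\tuple\nu\in\mathbb{J}_{\tuple m}$. By Lemma \ref{lem:pinu} each $T_{\pi_{\tuple\nu},\tuple\nu}$ is built as a composition of $H$-type natural transformations, and the explicit dictionary
\[
T_{a,\tuple\nu}(E_{t_+,t_-})\ \longleftrightarrow\ q\,\mathrm{B}_{\dot{s}_{r+a},\pi_{\tuple\nu}^{-1}\rho}
\]
recorded at the end of Section \ref{ssub:connectionHL} then translates $T_{\pi_{\tuple\nu},\tuple\nu}$ into a scalar multiple of $\mathrm{B}_{\dot{\pi}_{\tuple\nu},\rho}$. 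Since such common scalars cancel in every conjugation formula, the off-diagonal matrix conjugations of $\Phi_{\tuple\nu',\tuple\nu}$ and $\tau_{\pi_{\tuple\nu}}$ agree, and the two matrix structures coincide. This bookkeeping, performed uniformly in $\tuple\nu$, is essentially the only labour in the proof.
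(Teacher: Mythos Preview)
Your argument is correct, but it is more elaborate than what the paper actually does. The paper's route is shorter: rather than decomposing each side as a matrix algebra and then matching the two matrix structures, the discussion immediately preceding the theorem identifies each summand $F^{\tuple\nu}(E_{t_+,t_-})$ \emph{as a $KG$-module} with $\mathfrak{F}(\pi_{\tuple\nu}^{-1}\rho)=R_{L_J}^{G}(E_{t_+,t_-}\otimes K^{\tuple\nu})$ (this is just Remark~\ref{rem:functors}(2) applied iteratively). Since the collection $\{E_{t_+,t_-}\otimes K^{\tuple\nu}:\tuple\nu\in\mathbb{J}_{\tuple m}\}$ coincides with $\{wM:w\in D_\lambda\}$, the two direct sums are literally the same $KG$-module up to reindexing, and the endomorphism algebras agree tautologically; no matrix-compatibility check is needed for the bare isomorphism.

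What your approach buys is the additional information that the two matrix decompositions (from Theorem~\ref{thm:End-F+F-} and Theorem~\ref{thm:HL-algebra}) are intertwined, i.e.\ that $\Phi_{\tuple\nu}$ corresponds to $\tau_{\pi_{\tuple\nu}}$. The paper records this compatibility in the sentence just before the theorem, but it is not required for the statement itself---it is used later, in Proposition~\ref{keylemma} and Theorem~\ref{thm:F^nu}, to relate everything to the cyclotomic quiver Hecke algebra.
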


%
%
%
%
%
%
%
%
%

\subsubsection{Algebraic structure of ramified Hecke algebras}
\label{ssub:eigen-XX'}
Let $M$ be the $KL_J$-module as in \S \ref{subsec:ramiHecke}.
Recall that the ramified Hecke algebra $\scrH(KG_n, M)$
is defined to be the
 $\End_{K G_n}(R^{G_n}_{L_J}(M))^\op,$  which is canonically isomorphic to
$\End_{KG_n}(F^{\tuple\nu_0}(E_{t_+,t_-}))^\op$.
We shall explicitly determine its algebraic structure
in Theorem \ref{thm:HL-BC}, starting with observations about the eigenvalues of $X$ on $F$ and of $X'$ on $F'$.

\smallskip
\begin{lemma}\label{Lem:eigenvalues} Let $N$ be a $KG_n$-module  and $s,s' >0$.
Then the eigenvalues of $X'1_{F^s}(N)$ on $F'F^s(N)$ are equal to those of $X'(N)$ on $F'(N).$
Similarly, the eigenvalues of $X1_{(F')^{s'}}(N)$ on $F(F')^{s'}(N)$
are equal to those of $X(N)$ on $F(N).$
\end{lemma}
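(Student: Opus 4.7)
The plan is to use the braid-like isomorphisms $H\colon FF'\to F'F$ and $H'\colon F'F\to FF'$ from Proposition \ref{prop:12relations}(i)-(j), together with the commutation relation (k), to transport the action of $X'$ through the $s$ copies of $F$. Writing $H_{(a)} := 1_{F^{s-a}}H1_{F^{a-1}}\colon F^{s-a}FF'F^{a-1}\to F^{s-a}F'FF^{a-1}$, I set
\[
\Psi^{(s)} := H_{(s)}\circ H_{(s-1)}\circ\cdots\circ H_{(1)}\colon F^sF'\xrightarrow{\sim} F'F^s.
\]
Relation (k) of Proposition \ref{prop:12relations}, applied at each slot, yields $\Psi^{(s)}\circ (1_{F^s}X')=(X'1_{F^s})\circ\Psi^{(s)}$. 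Evaluating at $N$, I obtain a $KG_{n+s+1}$-module isomorphism $\Psi^{(s)}(N)\colon F^sF'(N)\xrightarrow{\sim} F'F^s(N)$ intertwining $F^s(X'(N))=1_{F^s}X'(N)$ with $X'(F^sN)=X'1_{F^s}(N)$; in particular the two endomorphisms share the same set of generalized eigenvalues.

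Next, I decompose using the eigenspace filtration. Since $X'$ acts on $F'=\bigoplus_{i\in\scrI'}F'_i$, I have $F'(N)=\bigoplus_i F'_i(N)$ with $X'(N)-i$ nilpotent on $F'_i(N)$. Applying the exact functor $F^s$ gives
\[
F^sF'(N)=\bigoplus_{i\in\scrI'}F^s(F'_i(N)),
\]
and $(F^s(X'(N))-i)^N=F^s((X'(N)-i)^N)=0$ on $F^s(F'_i(N))$ for $N\gg 0$. Hence the generalized eigenvalues of $F^s(X'(N))$ on $F^sF'(N)$ form the set $\{i\in\scrI':F^s(F'_i(N))\neq 0\}$, while those of $X'(N)$ on $F'(N)$ are $\{i\in\scrI':F'_i(N)\neq 0\}$. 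The two sets coincide provided $F^s$ is faithful, that is $F^s(M)\neq 0$ for $M\neq 0$. This is a standard property of (twisted) Harish-Chandra induction: $F$ is represented by the bimodule $RG_{n+1}e_{n+1,n}$, and the Mackey formula computing $EF$ exhibits the identity functor as a direct summand, giving a split injection $M\hookrightarrow EFM$ and thus $FM\neq 0$; iterating yields faithfulness of $F^s$.

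The second assertion, concerning $X1_{(F')^{s'}}$ on $F(F')^{s'}(N)$, follows by the symmetric argument: replace the iterated $H$ by an iterated $H'$, use Proposition \ref{prop:12relations}(l) in place of (k) to get $\Psi'^{(s')}\colon (F')^{s'}F\xrightarrow{\sim}F(F')^{s'}$ intertwining $1_{(F')^{s'}}X$ with $X1_{(F')^{s'}}$, and invoke faithfulness of $F'$ (which is essentially Harish-Chandra induction twisted by the sign character $\zeta$ and admits the same Mackey argument).

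The main technical point is Step 4, the faithfulness of $F$ (and $F'$) on the relevant module categories; every other ingredient is a direct manipulation of the relations in Proposition \ref{prop:12relations}. Once faithfulness is established, the proof consists essentially in transporting $X'$ past $F^s$ via $\Psi^{(s)}$ and then observing that an exact faithful functor preserves the set of generalized eigenvalues of an endomorphism.
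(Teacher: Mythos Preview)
Your proof is correct and follows essentially the same route as the paper: the paper builds the intertwining isomorphism $\Delta_s\colon F'F^s\to F^sF'$ from iterated copies of $H'$ (the inverse of your $\Psi^{(s)}$), appeals to Lemma~\ref{Lem:change} (which in turn rests on relations (i)--(l) of Proposition~\ref{prop:12relations}) rather than citing (k) directly, and then simply asserts that the eigenvalues of $1_{F^s}X'(N)$ coincide with those of $X'(N)$ without spelling out the faithfulness of $F^s$. Your argument is thus a slightly more explicit version of the same idea; one small remark is that your justification of faithfulness via a split injection $M\hookrightarrow EFM$ from the Mackey formula is the cleanest way to fill the step the paper leaves implicit (note the notational clash where you reuse $N$ for the nilpotency exponent).
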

 \begin{proof} 
We only show the former part of the lemma and leave the latter part for the reader.
We define $$\Delta_s=1_{F^{(s-1)}}H'\circ1_{F^{(s-2)}}H'1_{F}
\circ\cdots\circ 1_{F^2}H'1_{F^{(s-3)}}
\circ 1_{F}H'1_{F^{(s-2)}}\circ H'1_{F^{(s-1)}}.$$
Then $\Delta_s$ is an isomorphism between $F'F^s$ and $F^s F'$ and  its inverse is
$$\Delta_s^{-1}=
H1_{F^{(s-1)}}\circ 1_{F}H1_{F^{(s-2)}}
\circ 1_{F^2}H1_{F^{(s-3)}}\circ \cdots \circ 1_{F^{(s-2)}}H 1_{F}\circ1_{F^{(s-1)}}H.$$
Then $\Delta_s=q^{-s}T_{\pi_{\tuple \nu},\tuple \nu}$
 for $\tuple m=(s,1)\comp_2 s+1$ and $\tuple \nu=\{-,\,+,\dots,\,+\}.$
By  Lemma \ref{Lem:change},
we have the following diagram
$$\xymatrix{F'F^s\ar[r]_{\Delta_s}^{\sim}\ar[d]^{X'1_{F^s}} &F^sF' \ar[d]^{1_{F^s}X'}\\
 F'F^s\ar[r]_{\Delta_s}^{\sim}& F^sF'
}$$
i.e., the isomorphism $\Delta_s:F'F^s\to  F^sF'$
 intertwines the endomorphisms $X'1_{F^s}$ and $1_{F^s}X'.$
 Therefore, evaluating at $N$,
 the eigenvalues of $X'1_{F^s}(N)$ on $F'F^s(N)$
 are the same as those of $1_{F^s}X'(N)$ on $F^sF'(N)$, and hence of $X'(N)$ on $F'(N)$.
\end{proof}

\begin{lemma}\label{Lem:sym-spinor}
Let $E_{t_+,t_-}$ be a quadratic unipotent cuspidal module of $G_r$.
Then
the eigenvalues of $X'(E_{t_-,t_+})$ on $F'(E_{t_-,t_+})$ are the same as those of $X(E_{t_+,t_-})$ on $F(E_{t_+,t_-})$ and vice versa.
\end{lemma}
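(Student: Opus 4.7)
The approach is to invoke the spinor-norm symmetry of the representation datum established in \S\ref{subsec:spinor} and \S\ref{sub:explicit-repdatumSO}. Recall from Proposition \ref{prop:spin-SO} (applied in the $\scrQU^{\SO}$ setting) that the isomorphism $\Phi : F'\circ \mathrm{Spin} \xrightarrow{\sim} \mathrm{Spin}\circ F$ intertwines the natural transformations $X'1_{\mathrm{Spin}}$ and $1_{\mathrm{Spin}}X$. In the SO case, the cuspidal module $E_{t_+,t_-}$ satisfies $\mathrm{Spin}(E_{t_+,t_-})\cong E_{t_-,t_+}$ (see \S\ref{subsec:spinor}, where $K_{sp_r}\otimes E_{\Theta_+,\Theta_-}\cong E_{\Theta_-,\Theta_+}$ is recorded). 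This already suggests the desired eigenvalue symmetry.

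First I would evaluate the commutative diagram (a1) of Proposition \ref{prop:spinor} (extended to $\scrQU^{\SO}$ by Proposition \ref{prop:spin-SO}) at the module $E_{t_+,t_-}$. Using $\mathrm{Spin}(E_{t_+,t_-})\cong E_{t_-,t_+}$, this produces a commutative square
$$
\xymatrix{
F'(E_{t_-,t_+}) \ar[r]^-{\Phi_{E_{t_+,t_-}}}_-{\sim} \ar[d]_{X'(E_{t_-,t_+})} & \mathrm{Spin}(F(E_{t_+,t_-})) \ar[d]^{\mathrm{Spin}(X(E_{t_+,t_-}))} \\
F'(E_{t_-,t_+}) \ar[r]^-{\Phi_{E_{t_+,t_-}}}_-{\sim} & \mathrm{Spin}(F(E_{t_+,t_-}))
}
$$
of $KG_{r+1}$-modules, in which the two vertical arrows are conjugate via the horizontal isomorphism.

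Next, since $\mathrm{Spin}$ is an autoequivalence of $\scrQU^{\SO}_K$ (indeed $\mathrm{Spin}^2\cong\mathrm{Id}$), the operator $\mathrm{Spin}(X(E_{t_+,t_-}))$ on $\mathrm{Spin}(F(E_{t_+,t_-}))$ has the same generalized eigenvalues as $X(E_{t_+,t_-})$ acting on $F(E_{t_+,t_-})$. Combining with the commutative square above, the generalized eigenvalues of $X'(E_{t_-,t_+})$ on $F'(E_{t_-,t_+})$ coincide with those of $X(E_{t_+,t_-})$ on $F(E_{t_+,t_-})$.

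For the reverse direction, I would repeat the argument using diagram (a2) of Proposition \ref{prop:spinor} (applied in the SO setting), which yields an isomorphism $\Phi' : F\circ\mathrm{Spin}\xrightarrow{\sim}\mathrm{Spin}\circ F'$ intertwining $X1_{\mathrm{Spin}}$ and $1_{\mathrm{Spin}}X'$; evaluating at $E_{t_+,t_-}$ and again using $\mathrm{Spin}(E_{t_+,t_-})\cong E_{t_-,t_+}$ gives the symmetric statement. I do not expect any real obstacle here: the entire proof is a direct unpacking of the already-established spinor compatibility, and no residue or Hecke-algebra computation is required.
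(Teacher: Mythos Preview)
Your proposal is correct and follows essentially the same approach as the paper: both invoke Proposition~\ref{prop:spin-SO} to obtain the commutative square intertwining $X'1_{\mathrm{Spin}}$ with $1_{\mathrm{Spin}}X$, evaluate it at $E_{t_+,t_-}$, and use $\mathrm{Spin}(E_{t_+,t_-})\cong E_{t_-,t_+}$ to conclude. The paper's version is simply more terse, omitting the explicit remark that $\mathrm{Spin}$ preserves eigenvalues and the separate treatment of the ``vice versa'' direction.
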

\begin{proof}
By Proposition \ref{prop:spin-SO},
we have the following commutative diagram:
$$\xymatrix{F'_{r+1,r} \mathrm{Spin}_r\ar[r]_{\Phi_r}^{\sim}\ar[d]^{X'1_{\mathrm{Spin}}} &\mathrm{Spin}_{r+1}F_{r+1,r} \ar[d]^{1_{\mathrm{Spin}} X}\\F'_{r+1,r}
\mathrm{Spin}_r\ar[r]_{\Phi_r}^{\sim}& \mathrm{Spin}_{r+1}F_{r+1,r}
}$$
Since $\mathrm{Spin}_r(E_{t_+,t_-})\cong E_{t_-,t_+}$, it follows that
the eigenvalues of $X'(E_{t_-,t_+})$ on $F'(E_{t_-,t_+})$ are the same as those of $X(E_{t_+,t_-})$ on $F(E_{t_+,t_-})$.
\end{proof}

\begin{lemma}\label{Lem:eigen}
The eigenvalues of $X_{1,0}=X(E_{0,0})$ on $F(E_{0,0})$ and of $X'_{1,0}=X'(E_{0,0})$ on $F' (E_{0,0})$
 are both  $1$ and $(-q)^{-1}$.
\end{lemma}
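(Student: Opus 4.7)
My plan is to reduce both assertions to a single eigenvalue computation in $\SO_3(q)$, and then to carry out that computation by identifying the operator in question with a standard Iwahori--Hecke generator.

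I would first apply Lemma~\ref{Lem:sym-spinor} with $t_+=t_-=0$: this gives immediately that the eigenvalues of $X'(E_{0,0})$ on $F'(E_{0,0})$ coincide with those of $X(E_{0,0})$ on $F(E_{0,0})$, so only the $X$ case needs to be treated. To do so, I note that since $G_0=\{1\}$ and $E_{0,0}$ is the trivial $K$-module, $F(E_{0,0})\cong KG_1\,e_{1,0}$ with $G_1=\SO_3(q)$, and that $X_{1,0}(E_{0,0})$ is right multiplication by $q^{0}\cdot e_{1,0}\,\dot t_1\,e_{1,0}=e_{1,0}\,\dot t_1\,e_{1,0}$. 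By construction $U_0=V_0\rtimes\bbF_q^{\times}$ is the full Borel $B_1$ of $\SO_3(q)$, so $e_{1,0}=e_{B_1}$, and the inductive formula for $\dot t_k$ in \S\ref{sub:lift-refl} at $k=1$ gives $\dot t_1=\dot s_1$, a lift of the unique nontrivial element of the Weyl group. Thus $X(E_{0,0})$ is precisely the standard Iwahori--Hecke generator $e_{B_1}\,\dot s_1\,e_{B_1}$ acting on the Borel permutation module.

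I would then invoke the Bruhat decomposition $G_1=B_1\sqcup B_1\dot s_1 B_1$ with $[B_1\dot s_1 B_1:B_1]=q$, from which a direct calculation yields the quadratic relation
\[
(e_{B_1}\,\dot s_1\,e_{B_1})^2 \;=\; q^{-1}\,e_{B_1}\;+\;(1-q^{-1})\,e_{B_1}\,\dot s_1\,e_{B_1}.
\]
Its minimal polynomial $qx^{2}-(q-1)x-1=(qx+1)(x-1)$ factors as a product of two distinct linear factors, so the eigenvalues are $1$ and $-q^{-1}=(-q)^{-1}$. Both actually appear, because $R^{G_1}_{L_0}(K_1)\cong F(E_{0,0})$ decomposes as $\mathrm{triv}\oplus\mathrm{St}$ of $\SO_3(q)$, on whose respective $B_1$-fixed lines the generator $e_{B_1}\,\dot s_1\,e_{B_1}$ acts by $1$ and $-q^{-1}$. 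Combined with the reduction via Lemma~\ref{Lem:sym-spinor}, this treats $X'$ on $F'(E_{0,0})$ at the same time; as a side benefit it will later pin down $\epsilon_{v_1}=+1$ in \eqref{map:relations} whenever $t_+=0$.

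The only real difficulty is bookkeeping around normalizations: one has to confirm that the specific lift $\dot t_1$ chosen in \S\ref{sub:lift-refl}, as further refined for $\bfSO_{2n+1}$ in \S\ref{sub:explicit-repdatumSO}, really does reduce to $\dot s_1$ inside $\SO_3(q)$, and that the prefactor $q^{r}=q^{0}=1$ in the definition of $X_{1,0}$ contributes no extra scalar or sign. Once those normalizations are verified, the eigenvalue computation itself is entirely standard rank-one Iwahori--Hecke material.
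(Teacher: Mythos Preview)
Your argument is correct. For the $X$ part it is essentially the paper's proof phrased more abstractly: the paper carries out the explicit $\SO_3(q)$ computation with root-subgroup elements $u_\alpha(v)$ to derive the same quadratic relation $(X_{1,0})^2=\frac{q-1}{q}X_{1,0}+\frac{1}{q}$, whereas you obtain it by identifying $X_{1,0}=e_{B_1}\dot s_1 e_{B_1}$ with the rank-one Iwahori--Hecke generator and quoting Bruhat. The genuine difference is in the treatment of $X'$: the paper repeats the direct computation for $X'_{1,0}=\zeta(2)e_\zeta e_U\dot t_1 e_\zeta e_U$, checking that the extra $\zeta(2)$ is absorbed because $e_\zeta\,\mathrm{diag}(2v^{-2},1,v^2/2)=\zeta(2)e_\zeta$, while you bypass this via the spinor symmetry of Lemma~\ref{Lem:sym-spinor} (with $t_+=t_-=0$). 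Your shortcut is legitimate---indeed the Remark immediately following the lemma in the paper notes exactly this alternative---and it saves the separate $e_\zeta$-computation, at the cost of depending on the earlier spinor machinery; the paper's route is more self-contained and gives an independent check of the $\zeta(2)$ normalization in the definition of $X'$.
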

\begin{proof}
Let $B$ be the standard Borel subgroup of $G_1=\SO_3(q)$,
and let $U$ be the unipotent radical of $B$.
We have
$u_\alpha(2v^{-1})\dot{t}_1u_\alpha(v)\dot{t}_1u_\alpha(2v^{-1})={\rm diag}(2v^{-2},1,v^{2}/2)\dot{t}_1,$
where $$ \dot{t}_1=\left(\begin{array}{ccc} & & -1 \\ & -1 & \\ -1\\ \end{array}\right)\quad \text{and}\quad u_\alpha(v)=\begin{pmatrix}1&v&-v^2/2\\ &1&-v\\&&1\end{pmatrix} $$
for any $v\in \bbF_q^{\times}.$
So $e_U\dot{t}_1u_\alpha(v)\dot{t}_1e_U=e_U{\rm diag}(2v^{-2},1,v^{2}/2)\dot{t}_1e_U$
if $v\in \bbF_q^{\times}$ and $e_U\dot{t}_1u_\alpha(v)\dot{t}_1e_U=
e_U$ if $v=0$.
Hence $$e_U\dot{t}_1e_U\dot{t}_1e_U=\frac{1}{q}e_U+\frac{1}{q}\sum\limits_{v\in \bbF_q^{\times}}{\rm diag}(2v^{-2},1,v^{2}/2)e_U\dot{t}_1e_U.$$

Now, since
$X_{1,0}=e_{1,0}\dot{t}_1e_{1,0}=e_B\dot{t}_1e_B$ and $X_{1,0}'=e'_{1,0}\dot{t}_1e'_{1,0}=\zeta(2)e_\zeta e_U\dot{t}_1e_{\zeta}e_U$ by \S \ref{sub:explicit-repdatumSO},
it follows that
$$
  (X_{1,0})^2=e_1 e_U\dot{t}_{1}e_U\dot{t}_{1}e_{U}=\frac{1}{q}e_1 e_U+\frac{1}{q}\sum\limits_{v\in \bbF_q^{\times}}e_1 {\rm diag}(2v^{-2},1,v^{2}/2)e_U\dot{t}_1e_U\\
$$
and $$(X'_{1,0})^2= e_\zeta e_U\dot{t}_{1}e_U\dot{t}_{1}e_{U}
  =\frac{1}{q}e_\zeta e_U+\frac{1}{q}\sum\limits_{v\in \bbF_q^{\times}}e_\zeta {\rm diag}(2v^{-2},1,v^{2}/2)e_U\dot{t}_1e_U.$$
However, since $e_1 {\rm diag}(2v^{-2},1,v^{2}/2)=e_1$ and $e_\zeta {\rm diag}(2v^{-2},1,v^{2}/2)
=\zeta(2)e_\zeta$ for all $v\in \bbF_q^{\times},$ we have
$$
  (X_{1,0})^2=\frac{1}{q}e_B+\frac{q-1}{q}e_1e_U\dot{t}_1e_U
  =\frac{q-1}{q}X_{1,0}+\frac{1}{q}\id,
$$
and
$$
  (X'_{1,0})^2=\frac{1}{q}e_\zeta e_U+\zeta(2)\frac{q-1}{q}e_\zeta e_U\dot{t}_1e_U
  =\frac{q-1}{q}X'_{1,0}+\frac{1}{q}\id.
$$
\end{proof}
\begin{remark}
The fact that the eigenvalues of $X_{1,0}=X(E_{0,0})$ on $F(E_{0,0})$ and of $X'_{1,0}=X'(E_{0,0})$ on $F' (E_{0,0})$
are same can be deduced from Lemma \ref{Lem:sym-spinor}.
\end{remark}

\begin{theorem}\label{thm:HL-BC}
Let $E_{t_+,t_-}$ be a  quadratic unipotent cuspidal module of $G_r$ with $r=r_++r_-$
and $r_\pm=t_\pm(t_\pm+1)$. Assume that $n=m+r$ and $(m_+,m_-)\comp_2\ m$.
Then the map  $\phi_{F^{\tuple\nu_0}}(E_{t_+,t_-})$ of (\ref{map:relations}) factors through
a $K$-algebra isomorphism $$\bfH^{q\,;\,\tuple\xi_{t_+}}_{K,m_+}\otimes\bfH^{q\,;\,\tuple\xi_{t_-}}_{K,m_-}
\mathop{\longrightarrow}\limits^\sim\scrH(KG_n, E_{t_+,t_-}\otimes K_1^{m_+}\otimes K_\zeta^{m_-}),$$
where $\tuple\xi_{t_+}=((-q)^{t_+},(-q)^{-1-t_+})~\mbox{and}~\tuple\xi_{t_-}=((-q)^{t_-},(-q)^{-1-t_-})$.
\end{theorem}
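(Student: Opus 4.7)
The plan is to identify the ramified Hecke algebra with the claimed cyclotomic Hecke algebra by matching generators and relations, and then to pin down two residual sign ambiguities via a Brauer tree argument. Set $\scrH = \scrH(KG_n, E_{t_+,t_-}\otimes K_1^{m_+}\otimes K_\zeta^{m_-})$. By the Howlett--Lehrer material recalled in \S\ref{sec:End-HC-cusp.} and by Proposition \ref{prop:stru-end-quadr}, $\scrH$ carries a $\mathrm{T}_w$-basis indexed by $W(\lambda)\cong W_{m_+}\times W_{m_-}$ and satisfies the Iwahori--Hecke relations of type $B_{m_+}\times B_{m_-}$ with unequal parameters $(q^{2t_++1}, q, \dots, q)$ and $(q^{2t_-+1}, q, \dots, q)$. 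Comparing this presentation with the formula \eqref{map:relations} exhibits $\phi_{F^{\tuple\nu_0}}(E_{t_+,t_-})$ as a surjective $K$-algebra homomorphism from the affine Hecke tensor product $\bfH_{K,m_+}^{q}\otimes\bfH_{K,m_-}^{q}$ onto $\scrH$, the only residual data being the unknown signs $\epsilon_{v_1},\epsilon_{u_1}\in\{\pm 1\}$ attached to the images of $X_1\otimes 1$ and $1\otimes X_1$. A dimension count then shows that this surjection factors through the asserted cyclotomic tensor product provided the images of $X_1\otimes 1$ and $1\otimes X_1$ kill the polynomials $(X-(-q)^{t_+})(X-(-q)^{-1-t_+})$ and $(X-(-q)^{t_-})(X-(-q)^{-1-t_-})$ respectively.

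From the quadratic relation $\mathrm{T}_{v_1}^2=(q^{2t_++1}-1)\mathrm{T}_{v_1}+q^{2t_++1}$ in Proposition \ref{prop:stru-end-quadr}, the image of $X_1\otimes 1$ under $\phi_{F^{\tuple\nu_0}}(E_{t_+,t_-})$ has eigenvalues $\epsilon_{v_1}q^{t_+}$ and $-\epsilon_{v_1}q^{-t_+-1}$, so the desired cyclotomic factorisation reduces to the sign identities
\begin{equation*}
\epsilon_{v_1}=(-1)^{t_+}\quad\text{and}\quad\epsilon_{u_1}=(-1)^{t_-}.
\end{equation*}
By Lemma \ref{Lem:sym-spinor} combined with Proposition \ref{prop:spin-SO}(a), the spinor-norm functor intertwines the action of $X$ on $F(E_{t_+,t_-})$ with that of $X'$ on $F'(E_{t_-,t_+})$, so $\epsilon_{u_1}(t_+,t_-)=\epsilon_{v_1}(t_-,t_+)$, and it is enough to show $\epsilon_{v_1}(t_+,t_-)=(-1)^{t_+}$ for all $(t_+,t_-)$.

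I would establish this by induction on $t_+$, with $t_-$ arbitrary. The base case $t_+=t_-=0$ is Lemma \ref{Lem:eigen}, giving eigenvalues $1$ and $-q^{-1}$ matching $(-q)^{0}$ and $(-q)^{-1}$; the case of general $t_-$ at $t_+=0$ then follows from an iterated application of Lemma \ref{Lem:eigenvalues} along a Harish-Chandra chain descending from $E_{0,0}$ to $E_{0,t_-}$. For the inductive step I would exploit the cyclic isolated block description in \S\ref{subsec:Brauertree}: choose a linear prime $\ell$ and a $d$-cycle structure so that $F(E_{t_+,t_-})$ lies in a cyclic isolated block of the kind in case (a1), whose Brauer tree has the cuspidal exceptional vertex flanked by the two characters $\chi_{\Lambda_+\times\Theta_-}$ and $\chi_{\Xi_+\times\Theta_-}$ produced by adjoining a $d$-hook to the two rows of the $d$-core of $\Theta_{t_+}(\emptyset)$. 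Via the Jordan decomposition and its compatibility with Harish-Chandra induction (Fong--Srinivasan), the eigenvalue of $X$ on each such constituent is identified with $q$ raised to the $(\tuple\xi_{t_+},q)$-shifted residue of the corresponding added node, in the sense of \S\ref{subsec:contents}, and the inductive hypothesis together with this residue computation forces $\epsilon_{v_1}(t_+,t_-)=(-1)^{t_+}$. The main obstacle is precisely this sign computation: the purely algebraic data of Howlett--Lehrer theory only specifies the parameters $q^{2t_\pm+1}$, which enter as squares, so the residual $\{\pm 1\}$ ambiguity is invisible to the Hecke relations and can be lifted only by the integral/modular information extracted from the Brauer tree of cyclic isolated blocks.
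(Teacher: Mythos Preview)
Your high-level strategy matches the paper's: reduce to a sign problem and resolve it via cyclic block Brauer trees. But the execution has two genuine gaps.

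\textbf{The base case does not follow from Lemma~\ref{Lem:eigenvalues}.} You claim that the case $t_+=0$, general $t_-$, follows from an ``iterated application of Lemma~\ref{Lem:eigenvalues} along a Harish-Chandra chain descending from $E_{0,0}$ to $E_{0,t_-}$.'' No such chain exists: $E_{0,t_-}$ is a cuspidal module of $G_{t_-(t_-+1)}$, so it is not a constituent of any $F^a(F')^b(E_{0,0})$. Lemma~\ref{Lem:eigenvalues} propagates eigenvalues along Harish-Chandra induction, not between distinct cuspidals.

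\textbf{Linear primes cannot produce a cyclic block containing $E_{t_+,t_-}$.} The cuspidal symbol $\Theta_{t_+}=\Theta_{t_+}(\emptyset)$ has $\beta$-sets $\{t_+,t_+-1,\dots\}$ and $\{-t_+-1,-t_+-2,\dots\}$, both downward-closed. Hence $\Theta_{t_+}$ has no removable $d$-hook for any $d$, and $E_{t_+,t_-}$ always lies in a defect-zero block at linear primes. So there is no Brauer tree of type (a1) to exploit. The paper instead chooses a \emph{unitary} prime with $d=2t_+$ (or $d=2t_++2$ when $t_+=t_-$): then $\Theta_{t_+}$ has exactly one removable $d$-cohook, yielding a cyclic block of type (a2) whose non-exceptional leaves lie in the Harish-Chandra series above $E_{t_+-2,t_-}$. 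This is what makes induction possible. The induction itself is on $t_++t_-$, with separate arguments for $t_+>t_-$, $t_+<t_-$ (by Lemma~\ref{Lem:sym-spinor}), and $t_+=t_-$ (which needs a two-step argument through $(F')^2(E_{t,t})$ and the cuspidal $E_{t,t-2}$). Your induction on $t_+$ alone, even with the spinor-norm reduction, would not reach the diagonal case without an analogue of this extra step.
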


\begin{proof}
The theorem will be proved by Howlett-Lehrer theory \cite{HL80} and Lemma \ref{Lem:eigenvalues},
if the eigenvalues of $X(E_{t_+,t_-})$ on $F(E_{t_+,t_-})$
 and $X'(E_{t_+,t_-})$ on $F'(E_{t_+,t_-})$ are turned out to be those in $\tuple\xi_{t_\pm}$. Write $X:=X(E_{t_+,t_-})$ and  $X':=X'(E_{t_+,t_-})$ for simplicity.
 Proposition \ref{prop:stru-end-quadr} shows that
\begin{equation*} \label{eq:xsign}
(X-\epsilon_{t_+}\,(-q)^{-1-t_+})(X-\epsilon_{t_+}\,(-q)^{t_+})=0
\end{equation*}
\begin{equation*} \label{eq:xsign2}
(X'-\epsilon'_{t_-}\,(-q)^{-1-{t_-}})(X'-\epsilon'_{t_-}\,(-q)^{t_-})=0,
\end{equation*}
where $\epsilon_{t_+} ,\epsilon'_{t_-} \in\{\pm 1\}$.
Therefore, to prove the theorem, it suffices to show that
\begin{equation} \label{eq:nosign}
(X-(-q)^{-1-t_+})(X-(-q)^{t_+})=0,\\
\end{equation}
\begin{equation} \label{eq:nosign2}
(X'-(-q)^{-1-{t_-}})(X'-(-q)^{t_-})=0,
\end{equation}
i.e., it suffices to show that
$\epsilon_{t_+}=\epsilon'_{t_-}=1$  for all $t_{\pm} \geqs 0$.
This will be done by induction on $t_++t_-$ into three cases
$t_+ > t_-,t_+ < t_-$ or $t_+=t_-$,  using the theory of Brauer tree.

\smallskip

First, we notice from Lemma \ref{Lem:eigen} that
the eigenvalues of $X_{1,0}=X(E_{0,0})$ on $F (E_{0,0})$ and of $X'_{1,0}=X'(E_{0,0})$ on $F'(E_{0,0})$
 are both $1$ and $(-q)^{-1}$. In particular, we have $\epsilon_{t_+}=\epsilon'_{t_-}=1$ for $t_{\pm}= 0$.

\medskip

In the following, we write $\chi_{t_+,t_-}$ for the character afforded by the cuspidal module $E_{t_+,t_-}$.
For a symbol $(\Theta_+,\Theta_-)$, we shall generally write $\chi_{\Theta_+,\Theta_-}$ and $E_{\Theta_+,\Theta_-}$
for the corresponding quadratic unipotent character and module, respectively.
Also, for a character $\varphi$, we write $E_\varphi$ for the module affording $\varphi$.
In addition, we shall use the notation introduced in \S \ref{subsec:quchar}.

\medskip

\textbf{Case 1: ${t_+ > t_-}$}. In this case, we
use a similar argument to that for \cite[Theorem~6.5]{DVV2}.
The cuspidal character $\chi_{t_+,t_-}$ is attached to the symbol
$\tuple\Theta_{t_+,t_-}=\Theta_{t_+}\times\Theta_{t_-}$ with
$$ \Theta_{t_+} \, = \, \left\{ \begin{array}{ccccccc} t_+ & t_+-1 & t_+-2 & \ldots  & -t_+ & -t_+-1 & \ldots  \\
				 & & & & & -t_+-1 & \ldots  \\ \end{array} \right\}$$
and
$$ \Theta_{t_-} \, = \, \left\{ \begin{array}{ccccccc} t_- & t_--1 & t_--2 & \ldots  & -t_- & -t_--1 & \ldots  \\
				 & & & & & -t_--1 & \ldots  \\ \end{array} \right\}.$$
We shall choose a prime $\ell$ such that
$\Theta_{t_-}$ is a $d$-cocore and
$\Theta_{t_+}$ has only one $d$-cohook that can be removed.
For this aim, we indeed choose the prime
$\ell$ to be odd and such that the order of $q$ in $k^\times$ is $f:=4t_+$.
In particular, the order of $q^2$ is $d = 2t_+$, and so $\ell$ is a unitary prime.
We shall determine the values of $\epsilon_{t_+}$ and $\epsilon_{t_-}'$
by
comparing the eigenvalues of $X$ on $F(E_{t_+,t_-})$ modulo $\ell$,
with a splitting $\ell$-modular system $(K,\mathcal{O},k)$.

\smallskip

Now the cuspidal character $\chi_{t_+,t_-}$ belongs to an $\ell$-block $B_E$ with
cyclic defect groups (see \S \ref{subsec:Brauertree} (a) or \cite{FS90}).
Moreover, the unipotent label of the block $B_E$ is $(\Theta_{t_+-1},\Theta_{t_-})$.
 Let $s:=s_{r_+,r_-}$. Then $C_{G_r^*}(s)^*=G_{r_+}\times G_{r_-}$.
By \cite{FS90}, there exists a cyclic block $B_C=B_1\otimes B_2$ of $C_{G_r^*}(s)^*$,
with $B_1$ a cyclic block of $G_{r_+}$
and $B_2$ a defect zero block of $G_{r_-}$,
such that the Jordan decomposition $$\mathcal{L}_s:\mathcal{E}(G_r,(s))\to \mathcal{E}(C_{G_r^*}(s)^*,(1))$$
sends
 $\chi_{\Theta_+\times \Theta_-}$ to $\chi_{\Theta_+}\times \chi_{\Theta_-}$
  with $\chi_{\Theta_+}\in B_1$ and $\chi_{\Theta_-}\in B_2$ and induces a graph isomorphism  between the Brauer trees of $B_E$ and $B_C$.
Hence, by \S \ref{subsec:Brauertree} (a2),
 the Brauer tree of $B_E$ is
\begin{center}
\begin{tikzpicture}[scale=.4]
\draw[thick] (-1.7 cm,0) circle (.3cm);
\node [below] at (-1.7 cm,-.5cm) {$\chi_{\tuple\Xi_{t_+-1}}$};
\draw[thick] (-1.4 cm,0) -- +(2.6 cm,0);
\draw[thick] (1.5 cm,0) circle (.3cm);
\node [below] at (1.5 cm,-.5cm) {$\chi_{\tuple\Xi_{t_+-2}}$};
\draw[dashed,thick] (2 cm,0) -- +(4 cm,0);
\draw[thick] (6.4 cm,0) circle (.3cm);
\node [below] at (6.4 cm,-.5cm) {$\chi_{\tuple\Xi_{1-3t_+}}$};
\draw[thick] (6.7 cm,0) -- +(2.6 cm,0);
\draw[thick,fill=black] (9.9 cm,0) circle (.3cm);
\node [above] at (8.3 cm,.2cm) {$\varphi'$};
\draw[thick] (9.9 cm,0) circle (.5cm);
\node [below] at (9.9 cm,-.55cm) {$\chi_{\text{exc}}$};
\draw[thick] (10.5 cm,0) -- +(2.6 cm,0);
\node [above] at (11.6 cm,.2cm) {$\varphi$};
\draw[thick] (13.4 cm,0) circle (.3cm);
\node [below] at (13.4 cm,-.5cm) {$\chi_{t_+,t_-}$};
\end{tikzpicture}
\end{center}
where the symbol $\tuple\Xi_k=\Xi_{k,\,+}\times \Theta_{t_-}$ is the label of the quadratic unipotent character
$\chi_{\tuple\Xi_k}$ and for $k \in \{1-3t_+,\ldots,t_+-1\}$,
\begin{equation}\label{Xi}
\begin{aligned}
\Xi_{k,\,+}\, = \, \left\{ \begin{array}{cccccccc} t_+-1 & t_+-2 & \ldots  & \ldots  &  \ldots   &\widehat{k}& \ldots \\
				 & & k+2t_+ &  -t_+ & -t_+-1 & \ldots  & \ldots    \\ \end{array} \right\},
\end{aligned}
\end{equation}
obtained by adding the $d$-cohook $(k,k+d)$ to $\Theta_{t_+-1}$.
Here the notation $\widehat k$ means that the integer $k$ has been removed.

\smallskip

Let $\chi$ be an exceptional character of $B_E$.
Then the  isomorphism $\mathcal{L}_s$
sends $\chi$ to the character
  $\mathcal{L}_s(\chi):=\chi_{e}\times \chi_{\Theta_{t_-}}$,
  where $\chi_{e}$ is an exceptional character of $B_1$.
     Since the Jordan decomposition of characters
    commutes with Harish-Chandra induction,
    the irreducible constituents of $F(E_\chi)=R_{L_{r,1}}^{G_{r+1}}(E_\chi\otimes K_1)$
are one-to-one corresponding to the  irreducible constituents of
 $$R_{G_{r_+}\times G_{r_-}\times\bbF_q^{\times}}^{G_{r_++1}\times G_{r_-}}(E_{\mathcal{L}_s(\chi)})
 \cong (R_{G_{r_+}\times\bbF_q^{\times}}^{G_{r_++1}}
 (E_{\chi_{e}}\otimes K_1)\otimes E_{\Theta_{t_-}}=F(E_{\chi_{e}})\otimes  E_{\Theta_{t_-}}.$$
As shown in the proof of \cite[Theorem 6.5]{DVV2} (p. 59),
the character of $F(E_{\chi_{e}})$ has at most 2 irreducible constituents.
 Hence $F(E_\chi)$ has at most 2 irreducible constituents.

\smallskip

By adding a 1-hook to $\Theta_{t_+}$ in all possible ways, we conclude
that the module $F(E_{t_+,t_-})$ is the sum of the two quadratic unipotent modules
$E_{\tuple\Lambda}$ and $E_{\tuple\Lambda'}$ with
$\tuple\Lambda=\Lambda_+\times\Theta_{t_-}$ and $\tuple\Lambda'=\Lambda'_+\times\Theta_{t_-}$,
where
$$\begin{aligned}
\Lambda_+ = &\, \left\{\begin{array}{ccccccc} t_+ & t_+-1 & \ldots  & -t_+ & -t_+-1 & -t_+-2 &  \ldots  \\
				 & & & & -t_+& -t_+-2 & \ldots  \\ \end{array} \right\}~\mbox{and}\\
\Lambda_+'= &\, \left\{\begin{array}{ccccccc} t_++1 & t_+-1 & t_+-2 & \ldots   & -t-1 & \ldots  \\
				 & & & & -t_+-1 & \ldots  \\ \end{array} \right\}.
\end{aligned}$$
Let $B$ and $B'$ be the $\ell$-blocks containing $E_{\tuple\Lambda}$ and $E_{\tuple\Lambda'}$, respectively.
It is easy to see that the $d$-cocores of $\Lambda_+$ and $\Lambda_+'$ are
different. Hence $B$ and $B'$ are different, and so their
idempotents, say $b$ and $b'$, are orthogonal.
It follows that
$$F(E_{t_+,t_-})=bF(E_{t_+,t_-})\oplus b'F(E_{t_+,t_-})=E_{\tuple\Lambda}\oplus E_{\tuple\Lambda'}.$$
Moreover,
the eigenvalues of  $X(E_{t_+,t_-})$  on $E_{\tuple\Lambda}$ and  on
 $E_{\tuple\Lambda'}$  are different: one is $ \epsilon_{t_+}(-q)^{-1-t_+}$, and the other is
 $\epsilon_{t_+}(-q)^{t_+}.$

\smallskip

In the Brauer tree of $B_E$,
the irreducible modular character  $\varphi$ is the $\ell$-reduction of $\chi_{t_+,t_-}$.
Hence $F(E_\varphi)=bF(E_\varphi)\oplus b'F(E_\varphi)$,
 and the operator $X(E_\varphi)$ on   $F(E_\varphi)$ has eigenvalues
$\epsilon_{t_+}(-q)^{-1-t_+}$ and $\epsilon_{t_+}(-q)^{t_+}.$

\smallskip

Since $\varphi$ is an irreducible constituent of the $\ell$-reduction of $\chi$, it follows that
$bF(E_\varphi)$ and $b'F(E_\varphi)$ are constituents of $bF(E_\chi)$ and $b'F(E_\chi)$, respectively.
Hence both $bF(E_\chi)$ and $b'F(E_\chi)$ are non-zero, and so irreducible by
the facts that $F(E_\chi)=bF(E_\chi)\oplus b'F(E_\chi)$ and that $F(E_\chi)$ has at most 2 irreducible constituents.
Now the eigenvalues of
$X(E_\chi)$ on $F(E_\chi)$
must be modulo $\ell$ congruent to the eigenvalues of $X(E_\varphi)$ on $F(E_\varphi)$, which are equal to
$\epsilon_{t_+}(-q)^{-1-t_+}$ and $\epsilon_{t_+}(-q)^{t_+}$.
However, since $\varphi'$ is also an irreducible constituent of the $\ell$-reduction of
$\chi$,  one of $bF(E_{\varphi'})$ or $b'F(E_{\varphi'})$ must be non-zero,
and so $X(E_{\varphi'})$ must have  an eigenvalue modulo $\ell$
congruent to $\epsilon_{t_+}(-q)^{-1-t_+}$ or $\epsilon_{t_+}(-q)^{t_+}$.

\smallskip

To obtain $\epsilon_{t_+}=1$,
we continue to compute the eigenvalues of $X(E_{\tuple\Xi_{1-3t_+}})$ on $bF(E_{\tuple\Xi_{1-3t_+}})$
and $b'F(E_{\tuple\Xi_{1-3t_+}})$.
We have
$$F(E_{\tuple\Xi_{1-3t_+}})=E_{\tuple\Upsilon}\oplus E_{\tuple\Upsilon'}\oplus E_{\tuple\Upsilon''}$$
 where $\tuple\Upsilon=\Upsilon_+\times\Theta_{t_-}$,
$\tuple\Upsilon'=\Upsilon'_+\times\Theta_{t_-}$ and
$\tuple\Upsilon''=\Upsilon_+''\times\Theta_{t_-}$ with
\begin{align*}
\Upsilon_+\, &= \, \left\{ \begin{array}{ccccccc} t_+ & t_+-2 & t_+-3 & \ldots  & \ldots  & \widehat{1-3t_+} &  \ldots   \\
				 & & & -t_++1 &  -t_+ &  \ldots  & \ldots\\ \end{array} \right\},\\
\Upsilon'_+\, &= \, \left\{\begin{array}{cccccccc} t_+-1 & t_+-2 & \ldots  & \ldots &  \ldots  & \widehat{-3t_+} & \ldots \\
				 & & -t_++1 &  -t_+ & -t_+-1 & \ldots & \ldots  \\ \end{array} \right\}~\mbox{and}\\
\Upsilon''_+\, &= \, \left\{ \begin{array}{cccccccc} t_+-1 & t_+-2 & \ldots  & \ldots  &  \ldots  &\widehat{1-3t_+} & \ldots \\
				 & & -t_++2 &  -t_+ & -t_+-1 & \ldots  & \ldots \\ \end{array} \right\}.
\end{align*}
Since both $\Theta_{t_-}$ and $\Upsilon_+''$ are $d$-cocores,
we see that $E_{\tuple\Upsilon''}$ is projective.
Moreover, since the $d$-cocore of the symbol $\Upsilon_+$ is
\begin{align*}
\left\{ \begin{array}{ccccccc} t_+ & t_+-2 & t_+-3 & \ldots  & \ldots   \\
				 & & & -t_+ &  \ldots  \\ \end{array} \right\}
\end{align*}
and the $d$-cocore of the symbol $\Upsilon_+'$ is
\begin{align*}
\left\{\begin{array}{ccccccc} t_+-1 & t_+-2 & \ldots  & \ldots  &  \ldots   \\
				 & &  -t_++1 &  -t_+-1 &  \ldots  \\ \end{array} \right\},
\end{align*}
we conclude that the characters $\chi_{\tuple\Upsilon}$ and $\chi_{\tuple\Upsilon'}$
belong to the $\ell$-blocks $B$ and $B'$, respectively.

\smallskip

Suppose that $t_+ \geqs 2$.
Observe that the symbols $\Xi_{k,\,+}$ all have defect
$$|2t_+-3| = |2(t_+-2)+1|$$ and $E_{-1,s} = E_{0,s}$.
Hence the quadratic unipotent modules $E_{\tuple\Xi_k}$ all lie in the
Harish-Chandra series above $E_{t_+-2,t_-}\otimes  K_1^{4t_+-2}$.
Furthermore, the
bipartition $\mu_k$ such that $\Theta_{t_+-2}(\mu_k) = \Xi_{k,\,+}$ is $((1^{t_+-1-k}),(k+3t_+-1))$,
except when $t_+ =1$  in which case $\mu_k = ((k+2),(1^{-k}))$.

\smallskip

 By the inductive hypothesis,
 the map $\phi_{F^m}(E_{t_+-2,t_-})$
 yields a $K$-algebra isomorphism
 $$\bfH^{q,\,{\tuple\xi_{t_+-2}}}_{K,m}\simto\scrH(K G_{n'}, E_{t_+-2,t_-}\otimes K_1^{m}),$$
 where $n'=n-(4t_+-2)$.
Furthermore, the induced bijection
$$\Irr(KG_{n'}, E_{t_+-2,t_-}\otimes K_1^m)\mathop{\longleftrightarrow}\limits^{1:1}
\Irr(\bfH^{q,{\tuple\xi_{t_+-2}}}_{K,m})$$
maps the module $E_{\Theta_{t-2}(\tuple\mu)\times\Theta_{t_-}}$
to the module $S(\tuple\mu)_{K}^{q,\, \tuple \xi_{t_+-2}}$ for each 2-partition $\tuple\mu$ of $m$.
Under this parametrization, the module $E_{\tuple\Xi_{1-3t_+}}$ of $KG_r$
and the modules $E_{\tuple\Upsilon}$, $E_{\tuple\Upsilon'}$
and $E_{\tuple\Upsilon''}$ of $KG_n$ are respectively mapped to the modules
$$S(\tuple\lambda)_K^{q,\,\tuple\xi_{t_+-2}},\quad
S(\tuple\mu)_K^{q,\,{\tuple\xi_{t_+-2}}},\quad
S(\tuple\mu')_K^{q,\,{\tuple\xi_{t_+-2}}},\quad
S(\tuple\mu'')_K^{q,\,{\tuple\xi_{t_+-2}}}$$
labeled by the following 2-partitions
$$\tuple\lambda=((1^{4t_+-2}),\emptyset),\quad
\tuple\mu=((21^{4t_+-3}),\emptyset),\quad
\tuple\mu'=((1^{4t_+-1}),\emptyset),\quad
\tuple\mu''=((1^{4t_+-2}),(1)).$$
Note that the $({\tuple\xi_{t_+-2}}, q)$-shifted residue of the boxes
$Y(\tuple\mu)\setminus Y(\tuple\lambda)$ and $Y(\tuple\mu')\setminus Y(\tuple\lambda)$
are $(-1)^{t_+} q^{t_+-1}$ and $(-1)^{t_+} q^{-3t_+}$, respectively.
They are congruent to $(-q)^{-1-t_+}$ and $(-q)^{t_+}$ modulo $\ell$,
since $q^{2t_+}$ is congruent to $-1$ modulo $\ell$.
We conclude that the eigenvalues of
the operator $X(E_{\tuple\Xi_{1-3t_+}})$ on
$E_{\tuple\Upsilon}$ and $E_{\tuple\Upsilon'}$
are  congruent to $(-q)^{-1-t_+}$ and
$(-q)^{t_+}$ modulo $\ell$, respectively.
Now, at least one of these must be modulo $\ell$ congruent to
 the eigenvalue of $X(E_{\varphi'})$, since
  $\varphi'$ is an irreducible constituent of the $\ell$-reduction of
$E_{\tuple\Xi_{1-3t_+}}$.

\smallskip

By the previous argument,
the
eigenvalue of $X(E_{\varphi'})$ on $F(E_{\varphi'})$ is modulo $\ell$ congruent to
$\epsilon_{t_+} (-q)^{-1-t_+}$ or $\epsilon_{t_+} (-q)^{t_+}$.
However, the choice of $\ell$ implies that any pair of
$$(-q)^{-1-t_+},\ \ -(-q)^{-1-t_+},\ \ (-q)^{t_+},\ \ -(-q)^{t_+}$$
 are not congruent modulo $\ell$.  This forces
$\epsilon_{t_+} =1$.

\smallskip

We now suppose that $t_+ <2$. In this case, we have $t_+=1$ and $t_-=0$ since $t_+>t_-.$
Hence the quadratic unipotent characters are indeed unipotent, and so
we also have  $\epsilon_{t_+} =1$
 by the proof of \cite[Theorem 6.5]{DVV2}.

\smallskip
\smallskip

Next we prove that $\epsilon'_{t_-} =1$. We keep the previous notation. In particular,
the cuspidal module
$E_{t_+,t_-}$ belongs to the cyclic $\ell$-block $B_E$ and $\chi$ is an exceptional  character of $B_E$.
Since the constituents of
 $F'(E_\chi)=R_{L_{r,1}}^{G_{r+1}}(E_\chi\otimes K_\zeta)$ are one-to-one corresponding to the constituents of
$$R_{G_{r_+}\times G_{r_-}\times\bbF_q^{\times}}^{G_{r_+}\times G_{r_-+1}}(E_{\mathcal{L}_s(\chi)}\otimes K_\zeta)\cong
E_{\chi_{e}}\otimes (R_{G_{r_-}\times\bbF_q^{\times}}^{G_{r_-+1}}(E_{\Theta_{t_-}}\otimes K_\zeta))\cong
E_{\chi_{e}}\otimes F'(E_{\Theta_{t_-}}).$$
Now $F'(E_{\Theta_{t_-}})$ has at most 2 irreducible constituents,
it follows that $F'(E_\chi)$ has at most 2 irreducible constituents.
Hence $X'(E_\chi)$ has at most 2 eigenvalues on $F'(E_\chi)$, whose product is equal to $(-q)^{-1}.$ 

\smallskip

By adding a 1-hook to $\Theta_{t_-}$ in all possible ways, we
have that $F'(E_{t_+,t_-})$ is the sum of the quadratic unipotent characters
$\chi_{\tuple\Pi}$ and $\chi_{\tuple\Pi'}$ with
$\tuple\Pi={\Theta_{t_+}\times\Xi_-}$ and $\tuple\Pi'={\Theta_{t_+}\times\Xi'_-}$,
where
 $$\begin{aligned}
\Xi_- = &\, \left\{\begin{array}{ccccccc} t_- & t_--1 & \ldots  & -t_- & -t_--1 & -t_--2 &  \ldots  \\
				 & & & & -t_-& -t_--2 & \ldots  \\ \end{array} \right\}~\mbox{and}\\
\Xi_-'= &\, \left\{\begin{array}{ccccccc} t_-+1 & t_--1 & t_--2 & \ldots   & -t-1 & \ldots  \\
				 & & & & -t_--1 & \ldots  \\ \end{array} \right\}.
\end{aligned}$$
Notice that both $\Xi_-$ and $\Xi_-'$ are $d$-cocores since $2t_-+1<2t_+.$
Let $\widetilde{B}$ and $\widetilde{B}'$
 be the $\ell$-blocks of $K G_n$ containing $\chi_{\tuple\Pi}$ and $\chi_{\tuple\Pi'}$, respectively,
  with the corresponding idempotents $\widetilde{b}$ and $\widetilde{b}'$.
  Since $\Xi_-$ and $\Xi_-'$ are
different $d$-cocores, the idempotents $\widetilde{b}$ and $\widetilde{b}'$ are orthogonal.
Hence $\widetilde{b}F'(E_{t_+,t_-})=\chi_{\tuple\Pi}$
and $\widetilde{b}'F'(E_{t_+,t_-})=\chi_{\tuple\Pi'}$,
on which we may assume that
$X'(E_{t_+,t_-})$ has the eigenvalues
$\epsilon'_{t_-}(-q)^{-1-t_-}$ and  $\epsilon'_{t_-}(-q)^{t_-}$, respectively.

 \smallskip

Also, we have that $F'(E_\varphi)=\widetilde{b}F'(E_\varphi) \oplus \widetilde{b}'F'(E_\varphi)$ and that
the operator $X'(E_\varphi)$ on $F'(E_\varphi)$ has eigenvalues $\epsilon'_{t_-}(-q)^{-1-t_-}$ and  $\epsilon'_{t_-}(-q)^{t_-}$.

 \smallskip

 Since only one $d$-cohook can be removed from $\Theta_{t_+}$ and
 no $d$-cohook can be removed from $\Xi_-$ (resp. $\Xi'_-$),
 the $\ell$-block $\widetilde{B}$ (resp. $\widetilde{B}'$)
 has cyclic defect groups.
 Furthermore, since the $d$-cocore of $\Theta_{t_+}$ equals $\Theta_{t_+-1}$,
the Brauer trees of $\widetilde{B}$ and $\widetilde{B}'$ are respectively
\begin{center}
\begin{equation}\label{Pi}
\begin{tikzpicture}[scale=.4]
\draw[thick] (-1.7 cm,0) circle (.3cm);
\node [below] at (-1.7 cm,-.5cm) {$\chi_{\tuple\Lambda_{t_+-1}}$};
\draw[thick] (-1.4 cm,0) -- +(2.6 cm,0);
\draw[thick] (1.5 cm,0) circle (.3cm);
\node [below] at (1.5 cm,-.5cm) {$\chi_{\tuple\Lambda_{t_+-2}}$};
\draw[dashed,thick] (2 cm,0) -- +(4 cm,0);
\draw[thick] (6.4 cm,0) circle (.3cm);
\node [below] at (6.4 cm,-.5cm) {$\chi_{\tuple\Lambda_{1-3t_+}}$};
\draw[thick] (6.7 cm,0) -- +(2.6 cm,0);
\draw[thick,fill=black] (9.9 cm,0) circle (.3cm);
\draw[thick] (9.9 cm,0) circle (.5cm);
\node [below] at (9.9 cm,-.55cm) {$\varphi_{\text{exc}}$};
\draw[thick] (10.5 cm,0) -- +(2.6 cm,0);
\draw[thick] (13.4 cm,0) circle (.3cm);
\node [below] at (13.4 cm,-.5cm) {$\chi_{\tuple\Pi}$};
\end{tikzpicture}
\end{equation}
\end{center}
and
\begin{center}
\begin{equation}\label{Pi'}
\begin{tikzpicture}[scale=.4]
\draw[thick] (-1.7 cm,0) circle (.3cm);
\node [below] at (-1.7 cm,-.5cm) {$\chi_{\tuple\Lambda'_{t_+-1}}$};
\draw[thick] (-1.4 cm,0) -- +(2.6 cm,0);
\draw[thick] (1.5 cm,0) circle (.3cm);
\node [below] at (1.5 cm,-.5cm) {$\chi_{\tuple\Lambda'_{t_+-2}}$};
\draw[dashed,thick] (2 cm,0) -- +(4 cm,0);
\draw[thick] (6.4 cm,0) circle (.3cm);
\node [below] at (6.4 cm,-.5cm) {$\chi_{\tuple\Lambda'_{1-3t_+}}$};
\draw[thick] (6.7 cm,0) -- +(2.6 cm,0);
\draw[thick,fill=black] (9.9 cm,0) circle (.3cm);
\draw[thick] (9.9 cm,0) circle (.5cm);
\node [below] at (9.9 cm,-.55cm) {$\varphi'_{\text{exc}}$};
\draw[thick] (10.5 cm,0) -- +(2.6 cm,0);
\draw[thick] (13.4 cm,0) circle (.3cm);
\node [below] at (13.4 cm,-.5cm) {$\chi_{\tuple\Pi'}$};
\end{tikzpicture}
\end{equation}
\end{center}
Here  $\tuple\Lambda_k=\Xi_{k,\,+}\times\Xi_-$
and $\tuple\Lambda'_k=\Xi_{k,\,+}\times\Xi'_-,$
 where the $\Xi_{k,\,+}$ are defined as (\ref{Xi}).
 It turns out that $F'(E_{\tuple\Xi_k})$ is exactly equal to $E_{\tuple\Lambda_k}+ E_{\tuple\Lambda'_k},$ hence $E_{\tuple\Lambda_k}=\widetilde{b}F'(E_{\tuple\Xi_k})$ and $E_{\tuple\Lambda'_k}=\widetilde{b}'F'(E_{\tuple\Xi_k}).$
 Since $\varphi$ is an irreducible constituent of the $\ell$-reduction of $\chi$,
 it follows that   $\widetilde{b}F'(E_\chi)$ and $\widetilde{b}'F'(E_\chi)$ are non-zero.
 Hence both are irreducible and  $$F'(E_\chi)=\widetilde{b}F'(E_\chi)\oplus \widetilde{b}'F'(E_\chi).$$
 Clearly, the character of $\widetilde{b}F'(E_\chi)$ (resp. $\widetilde{b}'F'(E_\chi)$)
 is an exceptional character of $\widetilde{B}$ (resp. $\widetilde{B}'$).

  \smallskip

  We claim that both $\widetilde{b}F'(E_{\varphi'})$ and $\widetilde{b}'F'(E_{\varphi'})$ are non-zero and irreducible.
  In \mbox{fact}, since $\varphi'$ is an irreducible constituent of the $\ell$-reduction of
$\chi$, one of $\widetilde{b}F'(E_{\varphi'})$ or $\widetilde{b}'F'(E_{\varphi'})$ must be non-zero.
Without loss of generality, we may assume that
$\widetilde{b}F'(E_{\varphi'})\neq0$. Since
$E_{\tuple \Pi}$
and $\widetilde{b}F'(E_\chi)$  are in the same Brauer tree,
their $\ell$-reductions have only one irreducible constituent in common.
However, $\widetilde{b}F'(E_{\varphi'})$ appears in the $\ell$-reductions of $\widetilde{b}F'(E_\chi)$
and $\widetilde{b}F'(E_{t_+,t_-})=E_{\tuple \Pi}$. Hence $\widetilde{b}F'(E_{\varphi'})$ is irreducible.
However, the $\ell$-reduction of $F'(E_\chi)$ has two different irreducible constituents, one of which
is now $\widetilde{b}F'(E_{\varphi'})$. Since $\widetilde{b}'F'(E_{\varphi'})$ also appears in
the $\ell$-reduction of $F'(E_\chi)$,
we conclude that $\widetilde{b'}F'(E_{\varphi'})$ is nonzero and irreducible,
proving the claim.

\smallskip

We now consider the eigenvalues of
$X'(E_{\varphi'})$ on $\widetilde{b}F'(E_{\varphi'})$ and $\widetilde{b}'F'(E_{\varphi'})$.
Note that the eigenvalues of $X'(E_{t_+,t_-})$
 $\widetilde{b}F'(E_{t_+,t_-})=E_{\tuple\Pi}$ and $\widetilde{b}'F'(E_{t_+,t_-})=E_{\tuple\Pi'}$
consist of the set $\{\epsilon'_{t_-}(-q)^{-1-t_-}, \epsilon'_{t_-}(-q)^{t_-}\}$.
Moreover, the eigenvalues of
$X'(E_\chi)$ must be modulo $\ell$ congruent to the eigenvalues of $X'(E_\varphi)$ on $F'(E_\varphi)$, which are equal to
$\epsilon'_{t_-}(-q)^{-1-t_-}$
and $\epsilon'_{t_-}(-q)^{t_-}$.
 In addition,
since $\widetilde{b}F'(E_{\varphi'})$ (resp. $\widetilde{b}'F'(E_{\varphi'})$)
is an irreducible constituent of $\widetilde{b}F'(E_\chi)$ (resp. $\widetilde{b}'F'(E_\chi)$),
 the eigenvalue of
$X'(E_{\varphi'})$ on $\widetilde{b}F'(E_{\varphi'})$ (resp. $\widetilde{b}'F'(E_{\varphi'})$)
must be modulo $\ell$  congruent to the eigenvalue of $X'(E_\chi)$ on $\widetilde{b}F'(E_\chi)$
(resp. $\widetilde{b}'F'(E_\chi)$), which is equal to
$\epsilon'_{t_-}(-q)^{-1-t_-}$ or $\epsilon'_{t_-}(-q)^{t_-}$.

\smallskip

Finally, we compute
the eigenvalues of $X'(E_{\tuple\Xi_{1-3t_+}})$ on $\widetilde{b}F'(E_{\tuple\Xi_{1-3t_+}})$,
 by the inductive hypothesis on the eigenvalues of $X'(E_{t_+-2,t_-})$.
Indeed, the module $E_{\tuple\Xi_{1-3t_+}}$
belongs to the Harish-Chandra series above $E_{t_+-2,t_-}\otimes K_1^{u}$, where $u=4t_+-2$.
Also, the eigenvalues of $X'(E_{\tuple\Xi_{1-3t_+}})$ on $F'(E_{\tuple\Xi_{1-3t_+}})$
is a subset of eigenvalues of  $X'(F^{u}(E_{t_+-2,t_-}))$ on $F'F^{u}(E_{t_+-2,t_-})$.
Since
$$\xymatrix{F'F^{u}\ar[r]_{\Delta_{u}}^{\sim}\ar[d]^{X'1_{F^{u}}} &F^{u}F' \ar[d]^{1_{F^{u}}X'}\\
 F'F^{u}\ar[r]_{\Delta_{u}}^{\sim}& F^{u}F'.
}$$
is a commutative diagram by Lemma \ref{Lem:eigenvalues}, the isomorphism
$$\Delta_{u}=1_{F^{(u-1)}}H'\circ1_{F^{(u-2)}}H'1_{F}\circ\cdots\circ
1_{F^2}H'1_{F^{(u-3)}}1_{F}\circ H'1_{F^{(u-2)}}\circ H'1_{F^{(u-1)}}$$
 intertwines $X'1_{F^{u}}$ and $1_{F^{u}}X'$.
Therefore, the eigenvalues of $X'(F^{u}(E_{t_+-2,t_-}))$
are the same as those of $X'(E_{t_+-2,t_-})$ on $F'(E_{t_+-2,t_-})$,
 which are $(-q)^{-1-t_-}$ and $(-q)^{t_-}$ by induction.
 So the eigenvalues of $X'(E_{\tuple\Xi_{1-3t_+}})$ are also $(-q)^{t_-}$ and $(-q)^{-1-t_-}$.

 \smallskip

 Now, with the same argument as for $\epsilon_{t_+}=1$, we get $\epsilon'_{t_-}=1$.

\smallskip
\smallskip

\textbf{Case 2: ${t_+ < t_-}.$} This follows by Case 1 and Lemma \ref{Lem:sym-spinor},
or by a similar argument as for Case 1, interchanging $t_+$ and $t_-.$

\smallskip

\textbf{Case 3: $t_+=t_-=t$}.
Set $r=t_+(t_++1)+t_-(t_-+1)=2t(t+1)$ and $n=r+1$.
In this case, the cuspidal character $E_{t,t}$ is attached to
the symbol $\tuple\Theta=\Theta_{t}\times\Theta_{t}$ with
$$ \Theta_{t} \, = \, \left\{ \begin{array}{ccccccc} t & t-1 & t-2 & \ldots  & -t & -t-1 & \ldots  \\
				 & & & & & -t-1 & \ldots  \\ \end{array} \right\}.$$

\smallskip

We have
that $F'(E_{t,t})$ is the sum of the quadratic unipotent characters
$E_{\tuple\Pi}$ and $E_{\tuple\Pi'}$,
where $\tuple\Pi=\Theta_{t}\times \Xi_-$
and $\tuple\Pi'=\Theta_{t}\times \Xi'_-$ 
with
$$\begin{aligned}
\Xi_- = &\, \left\{\begin{array}{ccccccc} t & t-1 & \ldots  & -t & -t-1 & -t-2 &  \ldots  \\
				 & & & & -t& -t-2 & \ldots  \\ \end{array} \right\}~\mbox{and}\\
\Xi_-'= &\, \left\{\begin{array}{ccccccc} t+1 & t-1 & t-2 & \ldots   & -t-1 & \ldots  \\
				 & & & & -t-1 & \ldots  \\ \end{array} \right\}.
\end{aligned}$$
Also, we have that $F'(E_{\tuple\Pi})$
is the sum of the quadratic unipotent characters
$E_{\tuple\Upsilon}$,
$E_{\tuple\Upsilon'}$ and $E_{\tuple\Upsilon''}$,
where $\tuple\Upsilon=\Theta_{t}\times\Upsilon_{-}$,
 $\tuple\Upsilon'=\Theta_{t}\times\Upsilon'_{-}$ and
$\tuple\Upsilon''=\Theta_{t}\times\Upsilon''_{-}$ with
$$\begin{aligned}
\Upsilon_- = &\, \left\{\begin{array}{ccccccc} t+1 & t-1 & \ldots  & -t & -t-1 & -t-2 &  \ldots  \\
				 & & & & -t& -t-2 & \ldots  \\ \end{array} \right\},\\
\Upsilon_-'= &\, \left\{\begin{array}{cccccccc} t & t-1 & \ldots  & -t-1& -t-2 &-t-3& \ldots  \\
				 & & & -t& -t-1 &-t-3& \ldots  \\ \end{array} \right\}~\mbox{and}\\
\Upsilon''_-= &\, \left\{\begin{array}{cccccccc} t & t-1 & \ldots  & -t-1& -t-2 &-t-3& \ldots  \\
				 & & & 1-t& -t-2 &-t-3& \ldots  \\ \end{array} \right\}.
\end{aligned}$$

We notice that the eigenvalues of $X(E_{t,t})$ on $F(E_{t,t})$
are the same as those of $X((F')^{2}(E_{t,t}))$ on $(F')^{2}(E_{t,t})$.
  This is because the diagram
$$\xymatrix{F(F')^{2} \ar[r]_{\nabla_{2}}^{\sim}\ar[d]^{X1_{(F')^{2}}} &(F')^{2}F  \ar[d]^{1_{(F')^{2}}X}\\
 F(F')^{2} \ar[r]_{\nabla_{2}}^{\sim}& (F')^{2}F.
}$$
is commutative and
$\nabla_{2}=1_{F}H\circ H1_{F'}$
is an isomorphism between functors $F(F')^{2}$ and $(F')^{2}F$. Since $E_{\tuple \Upsilon}$ is a constituent $(F')^2(E_{t,t}),$ we will use the eigenvalues of
  $X(E_{\tuple\Upsilon})$ on $F(E_{\tuple\Upsilon})$ to deduce the eigenvalues of
  $X(E_{t,t})$ on $F(E_{t,t})$.

\smallskip

As in Case 1, we choose a prime $\ell$ such that
only one $d$-cohook can be removed from $\Upsilon_-$
and no $d$-cohook can be removed from $\Theta_{t}$.
This time we choose $\ell$ to be odd
and such that the order of $q$ in $k^\times$ is $f:=4t+4$.
Thus the order of $q^2$ is $d = 2t+2$, and  $\ell$ is also unitary.
As before, we first prove $\epsilon_{t}=1$.
\smallskip

The $\ell$-block containing  $E_{\tuple\Upsilon}$ has
cyclic defect groups.
Furthermore, since the $d$-cocore of $\Upsilon_-$ is
$$\Theta_{t-1}=\begin{aligned}
&\, \left\{\begin{array}{ccccccc} t-1 & t-2 & \ldots & -t+1  & -t & \ldots  \\
				 & & & & -t & \ldots  \\ \end{array} \right\},
\end{aligned}$$
  the Brauer tree of the $\ell$-block containing  $E_{\tuple\Upsilon}$  is
\begin{center}
\begin{tikzpicture}[scale=.4]
\draw[thick] (-1.7 cm,0) circle (.3cm);
\node [below] at (-1.7 cm,-.5cm) {$\chi_{\tuple\Lambda_{t-1}}$};
\draw[thick] (-1.4 cm,0) -- +(2.6 cm,0);
\node [above] at (-0.1 cm,.2cm) {$\varphi_{t-1}$};
\draw[thick] (1.5 cm,0) circle (.3cm);
\node [below] at (1.5 cm,-.5cm) {$\chi_{\tuple\Lambda_{t-2}}$};
\draw[dashed,thick] (2 cm,0) -- +(4 cm,0);
\draw[thick] (6.4 cm,0) circle (.3cm);
\node [below] at (6.4 cm,-.5cm) {$\chi_{\tuple\Lambda_{-3t-1}}$};
\draw[thick] (6.7 cm,0) -- +(2.6 cm,0);
\draw[thick,fill=black] (9.9 cm,0) circle (.3cm);
\node [above] at (8.3 cm,.2cm) {$\varphi_{-3t-1}$};
\draw[thick] (9.9 cm,0) circle (.5cm);
\node [below] at (9.9 cm,-.55cm) {$\phi_{\text{exc}}$};
\draw[thick] (10.5 cm,0) -- +(2.6 cm,0);
\node [above] at (11.6 cm,.2cm) {$\varphi_{-3t-2}$};
\draw[thick] (13.4 cm,0) circle (.3cm);
\node [below] at (13.4 cm,-.5cm) {$\chi_{\tuple\Lambda'}$};
\draw[thick] (13.7 cm,0) -- +(2.6 cm,0);
\node [above] at (15.0 cm,.2cm) {$\varphi_{-3t-3}$};
\draw[thick] (16.6 cm,0) circle (.3cm);
\node [below] at (16.6 cm,-.5cm) {$\chi_{\tuple\Upsilon}$};
\draw[thick] (16.9 cm,0) -- +(2.6 cm,0);
\node [above] at (18.2 cm,.2cm) {$\varphi_{-3t-4}$};
\draw[thick] (19.8 cm,0) circle (.3cm);
\node [below] at (19.8 cm,-.5cm) {$\chi_{\tuple\Lambda''}$};
\end{tikzpicture}
\end{center}
where $\tuple\Lambda'=\Theta_{t}\times\Lambda'_-$, $\tuple\Lambda''
=\Theta_{t}\times\Lambda''_-$ and, for each $k \in \{-3t-1,\ldots,t-1\}$,
the symbol $\tuple\Lambda_k=\Theta_{t}\times\Lambda_{k,-}$
of $E_{\tuple\Lambda_k}$ is obtained by adding the $d$-cohook $(k,k+d)$ to $\Theta_{t-1}$.
Explicitly, we have
$$\begin{aligned}
\Lambda_{k,-}\, =& \, \left\{ \begin{array}{cccccccc} t-1 & t-2 & \ldots   &  \ldots   & \ldots & \widehat{k} & \ldots \\
				 & & k+2t+2 &  -t & -t-1 & \ldots & \ldots \\ \end{array}\right\},\\
\Lambda'_- = &\, \left\{\begin{array}{cccccccc} t & t-1 & \ldots  & -t & -t-1 & -t-2 &-t-3 & \ldots  \\
				 & & & & -t& -t-1 &-t-3 &\ldots  \\ \end{array} \right\}~\mbox{and}\\
\Lambda''_- = &\, \left\{\begin{array}{ccccccc} t+2 & t-1 & \ldots  & -t & -t-1 & -t-2 &  \ldots  \\
				 & & & & -t-1& -t-2 & \ldots  \\ \end{array} \right\}.
\end{aligned}$$
Here the notation $\widehat k$ means that the integer $k$ has been removed.
Clearly, for each $k$, the symbol $\Lambda_{k,-}$ has defect
$|2t-3| = |2(t-2)+1|$.
Therefore, 
the quadratic unipotent characters $E_{\tuple\Lambda_k}$ all lie in the
Harish-Chandra series above $E_{t,t-2}\otimes K_\zeta^{2t+2}$ with the convention $E_{s,-1} = E_{s,0}$.

\smallskip

Now we compute $F(E_{\tuple\Lambda'}),$  $F(E_{\tuple\Lambda''}),$ $F(E_{\tuple\Upsilon})$ and $F(E_{\tuple\Lambda_k})$ for $k \in \{-3t-1,\ldots,t-1\}.$
%
By adding  a 1-hook to $\Theta_t$ in all possible ways, we obtain two symbols
$$\begin{aligned}
\Xi_+ = &\, \left\{\begin{array}{ccccccc} t & t-1 & \ldots  & -t & -t-1 & -t-2 &  \ldots  \\
				 & & & & -t& -t-2 & \ldots  \\ \end{array} \right\}~\mbox{and}\\
\Xi_+'= &\, \left\{\begin{array}{ccccccc} t+1 & t-1 & t-2 & \ldots   & -t-1 & \ldots  \\
				 & & & & -t-1 & \ldots  \\ \end{array} \right\}.
\end{aligned}$$
Hence
$$\begin{aligned}
 F(E_{\tuple\Lambda'})\, =&\, E_{\Xi_+\times\Lambda_-'}\oplus  E_{\Xi'_+\times\Lambda_-'},\\
F(E_{\tuple\Lambda''})\, =&\,  E_{\Xi_+\times\Lambda_-''}\oplus  E_{\Xi'_+\times\Lambda_-''},\\
F(E_{\tuple\Upsilon})\,  =&\,  E_{\Xi_+\times\Upsilon_-}\oplus  E_{\Xi'_+\times\Upsilon_-},\\
\end{aligned}$$
and for $k \in \{-3t-1,\ldots,t-1\}$, we have
$$\begin{aligned}
F(E_{\tuple\Lambda_k})\,  =&\,  E_{\Xi_+\times\Lambda_{k,-}}\oplus  E_{\Xi'_+\times\Lambda_{k,-}}.
\end{aligned}$$

\smallskip
We denote $E_{\tuple\Xi_k}:=E_{\Xi_+\times\Lambda_{k,-}}$ and $E_{\tuple\Xi'_k}:=E_{\Xi'_+\times\Lambda_{k,-}}$
Observe that the above constituents belong to two different blocks, say $B'$ and $B''$,
and that they are exactly distributed into two Brauer trees:

\begin{center}
\begin{tikzpicture}[scale=.4]
\draw[thick] (-1.7 cm,0) circle (.3cm);
\node [below] at (-1.7 cm,-.5cm) {$\chi_{\tuple\Xi_{t-1}}$};
\draw[thick] (-1.4 cm,0) -- +(2.6 cm,0);
\node [above] at (-0.1 cm,.2cm) {$\varphi'_{t-1}$};
\draw[thick] (1.5 cm,0) circle (.3cm);
\node [below] at (1.5 cm,-.5cm) {$\chi_{\tuple\Xi_{t-2}}$};
\draw[dashed,thick] (2 cm,0) -- +(4 cm,0);
\draw[thick] (6.4 cm,0) circle (.3cm);
\node [below] at (6.4 cm,-.5cm) {$\chi_{\tuple\Xi_{-3t-1}}$};
\draw[thick] (6.7 cm,0) -- +(2.6 cm,0);
\draw[thick,fill=black] (9.9 cm,0) circle (.3cm);
\node [above] at (8.3 cm,.2cm) {$\varphi'_{-3t-1}$};
\draw[thick] (9.9 cm,0) circle (.5cm);
\node [below] at (9.9 cm,-.55cm) {$\phi'_{\text{exc}}$};
\draw[thick] (10.5 cm,0) -- +(2.6 cm,0);
\node [above] at (11.6 cm,.2cm) {$\varphi'_{-3t-2}$};
\draw[thick] (13.4 cm,0) circle (.3cm);
\node [below] at (13.4 cm,-.5cm) {$\chi_{\Xi_+\times\Lambda_-'}$};
\draw[thick] (13.7 cm,0) -- +(2.6 cm,0);
\node [above] at (15.0 cm,.2cm) {$\varphi'_{-3t-3}$};
\draw[thick] (16.6 cm,0) circle (.3cm);
\node [below] at (16.6 cm,-.5cm) {$\chi_{\Xi_+\times\Upsilon_-}$};
\draw[thick] (16.9 cm,0) -- +(2.6 cm,0);
\node [above] at (18.2 cm,.2cm) {$\varphi'_{-3t-4}$};
\draw[thick] (19.8 cm,0) circle (.3cm);
\node [below] at (19.8 cm,-.5cm) {$\chi_{\Xi_+\times\Lambda_-''}$};
\end{tikzpicture}
\end{center}
and
\begin{center}
\begin{tikzpicture}[scale=.4]
\draw[thick] (-1.7 cm,0) circle (.3cm);
\node [below] at (-1.9 cm,-.5cm) {$\chi_{\tuple\Xi'_{t-1}}$};
\draw[thick] (-1.4 cm,0) -- +(2.6 cm,0);
\node [above] at (-0.1 cm,.2cm) {$\varphi''_{t-1}$};
\draw[thick] (1.5 cm,0) circle (.3cm);
\node [below] at (1.5 cm,-.5cm) {$\chi_{\tuple\Xi'_{t-2}}$};
\draw[dashed,thick] (2 cm,0) -- +(4 cm,0);
\draw[thick] (6.4 cm,0) circle (.3cm);
\node [below] at (6.4 cm,-.5cm) {$\chi_{\tuple\Xi'_{-3t-1}}$};
\draw[thick] (6.7 cm,0) -- +(2.6 cm,0);
\draw[thick,fill=black] (9.9 cm,0) circle (.3cm);
\node [above] at (8.3 cm,.2cm) {};
\draw[thick] (9.9 cm,0) circle (.5cm);
\node [above] at (8.3 cm,.2cm) {$\varphi''_{-3t-1}$};
\node [below] at (9.9 cm,-.55cm) {$\phi''_{\text{exc}}$};
\draw[thick] (10.5 cm,0) -- +(2.6 cm,0);
\node [above] at (11.6 cm,.2cm) {$\varphi''_{-3t-2}$};
\node [above] at (11.6 cm,.2cm) {};
\draw[thick] (13.4 cm,0) circle (.3cm);
\node [below] at (13.4 cm,-.5cm) {$\chi_{\Xi'_+\times\Lambda_-'}$};
\draw[thick] (13.7 cm,0) -- +(2.6 cm,0);
\node [above] at (15.0 cm,.2cm) {$\varphi''_{-3t-3}$};
\draw[thick] (16.6 cm,0) circle (.3cm);
\node [below] at (16.6 cm,-.5cm) {$\chi_{\Xi'_+\times\Upsilon_-}$};
\draw[thick] (16.9 cm,0) -- +(2.6 cm,0);
\node [above] at (18.2 cm,.2cm) {$\varphi''_{-3t-4}$};
\draw[thick] (19.8 cm,0) circle (.3cm);
\node [below] at (19.8 cm,-.5cm) {$\chi_{\Xi'_+\times\Lambda_-''}$};
\end{tikzpicture}
\end{center}
Let $b'$ and $b''$ be the idempotents of $B'$ and $B''$ in $K G_{n+2}$, respectively.

\smallskip

We claim that for $i \in \{-3t-4,\ldots,t-1\}$,
$F(E_{\varphi_i})=E_{\varphi'_i}\oplus E_{\varphi''_i}$, where $E_{\varphi'_i}=b'F(E_{\varphi_i})$
and $E_{\varphi''_i}=b''F(E_{\varphi_i})$.
Indeed, the claim is true for $i \in \{-3t-4,t-1\}$
since $\varphi_{t-1}$ and $\varphi_{-3t-4}$
are the $\ell$-reductions of $\chi_{\Xi'_+\times\Lambda_{t-1}}$
and $\chi_{\tuple\Lambda''}$, respectively.
For
the remaining cases, we consider the following Brauer sub-tree:
\begin{center}
\begin{tikzpicture}[scale=.4]
\draw[thick] (10.7 cm,0) -- +(2.4 cm,0);
\node [above] at (11.6 cm,.2cm) {$\varphi_{i+1}$};
\draw[thick] (13.4 cm,0) circle (.3cm);
\node [below] at (13.4 cm,-.5cm) {$\psi'$};
\draw[thick] (13.7 cm,0) -- +(2.6 cm,0);
\node [above] at (15.0 cm,.2cm) {$\varphi_i$};
\draw[thick] (16.6 cm,0) circle (.3cm);
\node [below] at (16.6 cm,-.5cm) {$\psi$};
\draw[thick] (16.9 cm,0) -- +(2.6 cm,0);
\node [above] at (18.2 cm,.2cm) {$\varphi_{i-1}$};
\end{tikzpicture}
\end{center}

\smallskip

If $\varphi_i$ is a common irreducible constituent of the $\ell$-reductions of $\psi$ and $\psi'$
(here we assume that $\psi'$ may be exceptional),
then $b'F(E_{\varphi_i})$ (resp. $b''F(E_{\varphi_i})$) is the common constituent of  the $\ell$-reductions of
$b'F(E_{\psi})$ and $b'F(E_{\psi'})$
 (resp. $b''F(E_{\psi})$ and $b''F(E_{\psi'})$).
On one hand, if $b'F(E_{\varphi_i})\neq0$, then $b'F(E_{\varphi_i})$ is irreducible and equal to $\varphi'_i$,
 since otherwise  the $\ell$-reductions of $b'F(E_{\psi})$ and $b'F(E_{\psi'})$ will have
at least two common irreducible constituents,
  contradicting the fact that $b'F(E_{\psi})$ and $b'F(E_{\psi'})$ are in the same Brauer tree.
  On the other hand, $b'F(E_{\varphi_i})$ can not be $0$, since otherwise the $\ell$-reduction of $b'F(E_{\psi})$
will have only one irreducible constituent, which is also a contradiction.
The same argument adapts to $b''F(E_{\varphi_i}).$ So the claim holds.

\smallskip

Now,
 the eigenvalue of  $X(E_{\tuple\Upsilon})$ on $b'F(E_{\tuple\Upsilon})$
 must be modulo $\ell$ congruent to that
  of $X(E_{\tuple \Lambda_{-3t-1}})$ on $b'F(E_{\tuple \Lambda_{-3t-1}})$ since they are in the same Brauer tree.
Also, the eigenvalue of  $X(E_{\tuple\Upsilon})$ on $b'F(E_{\tuple\Upsilon})$
 is an eigenvalue of $X((F')^{2}(E_{t,t}))$ on $F((F')^{2}(E_{t,t}))$
 since $E_{\tuple\Upsilon}$ is a constituent of $(F')^{2}(E_{t,t}).$
Hence
the eigenvalue of $X(E_{\tuple \Lambda_{-3t-1}})$ on $b'F(E_{\tuple \Lambda_{-3t-1}})$
must be modulo $\ell$ congruent to one of the eigenvalues of
$X(E_{t,t})$ on $F(E_{t,t})$, which are $\epsilon_{t}(-q)^{t}$
and $\epsilon_{t}(-q)^{-1-t}.$

\smallskip

Finally, in order to get that $\epsilon_{t}=1$,
we determine the eigenvalues of $X(E_{\tuple \Lambda_{-3t-1}})$ on $b'F(E_{\tuple \Lambda_{-3t-1}})$
and $b''F(E_{\tuple \Lambda_{-3t-1}})$ in terms of those of $X(E_{t,t-2})$.
Note that
$E_{\tuple \Lambda_{-3t-1}}$ belongs to
the Harish-Chandra series above $ E_{t,t-2}\otimes K_\zeta^{2t+2}$.
Also, the eigenvalues of
 $X(E_{\tuple \Lambda_{-3t-1}})$
 on $F(E_{\tuple \Lambda_{-3t-1}})$
are eigenvalues of
  $X((F')^{2t+2}(E_{t,t-2}))$ on $F(F')^{2t+2}(E_{t,t-2})$.
  This is because
$$\xymatrix{F(F')^{2t+2}\ar[r]_{\nabla_{2t+2}}^{\sim}\ar[d]^{X1_{(F')^{2t+2}}} &(F')^{2t+2}F \ar[d]^{1_{(F')^{2t+2}}X}\\
 F(F')^{2t+2}\ar[r]_{\nabla_{2t+2}}^{\sim}& (F')^{2t+2}F.
}$$
is a commutative diagram and
$$\nabla_{2t+2}=1_{F^{(2t+1)}}H\circ1_{F^{(2t)}}H1_{F}
\circ\cdots\circ 1_{F^2}H1_{(F')^{(2t-1)}}1_{F}
\circ H1_{(F')^{(2t)}}\circ H1_{(F')^{(2t+1)}}$$
is an isomorphism between functors $F(F')^{2t+2}$ and $(F')^{2t+2}F$.
 Therefore, we know that
  the eigenvalues of $X((F')^{2t+2}(E_{t,t-2}))$ on $F((F')^{2t+2}(E_{t,t-2}))$
 are the same as those of $X(E_{t,t-2})$ on $F(E_{t,t-2}).$
By the inductive hypothesis on the eigenvalues of $X(E_{t,t-2})$  on $F(E_{t,t-2})$,
we conclude that the eigenvalues of
$X(E_{\tuple \Lambda_{-3t-1}})$  on $F(E_{\tuple \Lambda_{-3t-1}})$ are $(-q)^{t}$ and $(-q)^{-1-t}$.
Note that any two of $(-q)^{-1-t}, -(-q)^{-1-t}, (-q)^{t}$ and $-(-q)^{t}$ are not modulo $\ell$ congruent
since $q^{2t+3} \equiv -q$.  By the above argument, at least one of $(-q)^{t}$ and $(-q)^{-1-t}$ must be congruent to
$\epsilon_{t} (-q)^{-1-t}$ or $\epsilon_{t} (-q)^{t}$.
Thus we deduce that $\epsilon_{t} =1$.

\smallskip

To prove $\epsilon'_{t}=1$, we argue similarly as above, by considering the action of
$X'(E_{t,t})$ on $F'(E_{t,t})$. This finishes the proof.
\end{proof}

\subsection{Categorical action on $\scrQU_K$}\label{sec:QU}
In this section, we shall show that the representation datum on the category  $\scrQU_K$ leads to
a categorical action on it. 

\subsubsection{Quiver Hecke algebras of disconnected quiver}\label{subsec:disjointunion}
We will need the structure of quiver Hecke algebras of disconnected quivers,
which is determined by Rostam \cite{ROS17} (see also \cite{PR21}).

\smallskip

Let $\K$ be a (not necessarily finite) set with a partition $\K = \K_+\sqcup\K_-$.
For ${\tuple\nu}\in \mathbb{J}^m$, we define
 $$\K^{\tuple\nu}=\{\tuple k\in \K^m\mid  k_i \in \K_{{\nu}_i}~\text{for all}~1\leqs i\leqs m\}.$$
In particular,
 $\K^{{\tuple\nu}_0^{\tuple m}} \simeq \K_+^{m_+} \times  \K_-^{m_-}$ for  $\tuple m=(m_+,m_-) \comp_2 \,m$.

 \smallskip

Suppose that $\Gamma_+$ and $ \Gamma_-$ are the loop-free quivers with vertex sets $\K_+$ and $\K_-$, respectively.
Let $\Gamma$ be the quiver that is the disjoint union of $\Gamma_+$ and $ \Gamma_-$.
Then the Cartan datum associated to the
 quiver $\Gamma$ is a direct sum of the
 Cartan data associated to the  quivers $\Gamma_+$ and $\Gamma_-$,
  and the Kac-Moody Lie algebra corresponding to  the  quiver $\Gamma$
  is $\frakg_{\Gamma}=\frakg_{\Gamma_+}\oplus \frakg_{\Gamma_-}.$
As in Remark \ref{rmk:quiver}, let $Q$, $Q^+$ and $Q^-$ be the matrices associated with the quivers $\Gamma$, $\Gamma_+$ and $ \Gamma_-$,
respectively, so that $Q_{i, j}=Q^{+}_{i,j}$, $Q_{i', j'}=Q^{-}_{i',j'}$ and $Q_{k, k'} =Q_{k', k} =1$ for any $i,j, k\in \K_+ $ and $i',j',k'\in \K_{-}$.
Let $\bfH_{m}:=\bfH_{m}(Q)$ be as  in \textsection\ref{subsec:QHA}.
For ${\tuple\nu} \in \mathbb{J}_{\tuple m}$, we define an idempotent
$$e({\tuple\nu}) := \sum_{\tuple{k} \in \K^{\tuple\nu}} e(\tuple{k}),$$
where $e(\tuple{k})$ is as defined in \S \ref{def:KLRalg}.
Then $$e(\tuple m) \coloneqq \sum_{{\tuple\nu} \in \mathbb{J}_{\tuple m}} e({\tuple\nu})$$
is a central idempotent in $\bfH_{m}$ and the sum is a decomposition of orthogonal idempotents. Also, we have the following decomposition
of $\bfH_{m}$ into subalgebras:
$$\bfH_{m} = \bigoplus_{\tuple m \comp_2 \,m} e(\tuple m)\bfH_{m}.$$
\smallskip

The algebra $e(\tuple m)\bfH_{m}$ has the structure of a matrix algebra.
To describe it, let $\tuple m=(m_+,m_-)\comp_2 \,m$.
We write $e_{\tuple m} \coloneqq e({\tuple\nu}_0^{\tuple m})$ for short,
and note that the subalgebra $e_{\tuple m} \bfH_{m} e_{\tuple m}$
(with unit $e_{\tuple m}$) of $e({\tuple m})\bfH_{m}$
is isomorphic to the algebra
$$ \bfH_{m_+}(Q^+) \otimes \bfH_{m_-}(Q^-).$$
The isomorphism can be stated as follows. First, reindex the generators
$\tau_1, \dots, \tau_{m_- - 1}$ and $x_1, \dots, x_{m_-}$ of $\bfH_{m_-}(Q^-)$
by $\tau_{m_++1}, \dots, \tau_{m-1}$ and $x_{m_+ + 1},\dots, x_{m}$, respectively.
For $w = (w_+,  w_-) \in \mathfrak{S}_{m_+}\times\mathfrak{S}_{m_-}$ and
$\tuple{k} = (\tuple{k}^+, \tuple{k}^-) \in \K_+^{m_+} \times \K_-^{m_-},$
 we may set
$$\tau_w^{\otimes}\coloneqq \tau_{w_+} \otimes\tau_{w_-} \in  \bfH_{m_+}(Q^+) \otimes \bfH_{m_-}(Q^-)$$
and
\begin{equation*}
\label{equation:e_otimes}
e^{\otimes}(\tuple{k}) \coloneqq e(\tuple{k}^+)\otimes e(\tuple{k}^-) \in \bfH_{m_+}(Q^+) \otimes \bfH_{m_-}(Q^-).
\end{equation*}
Note from \cite[Lemma 6.16]{ROS17} that $\tau_{\pi_{{\tuple\nu}}} e({\tuple\nu}):=\tau_{a_1} \cdots \tau_{a_r} e({\tuple\nu})$
is well-defined for ${\tuple\nu} \in \mathbb{J}^m$ and a reduced expression $s_{a_1} \cdots s_{a_r}$ of $\pi_{{\tuple\nu}}$.
Then the map, which sends
\begin{itemize}[leftmargin=8mm]
\item the generators $\tau_a^{\otimes} \in  \bfH_{m_+}(Q^+) \otimes \bfH_{m_-}(Q^-)$ for $a \in \{1, \dots, m-1 \} \setminus \{m_+\}$ to $\tau_a e_{\tuple m} \in e_{\tuple m}\bfH_{m}e_{\tuple m}$,
\item the generators $x_b \in  \bfH_{m_+}(Q^+) \otimes \bfH_{m_-}(Q^-)$ for $b \in \{1, \dots, m\}$ to $x_b e_{\tuple m} \in e_{\tuple m}\bfH_{m}e_{\tuple m}$, and
\item the generators $e^{\otimes}(\tuple{k}) \in  \bfH_{m_+}(Q^+) \otimes \bfH_{m_-}(Q^-)$ for $\tuple{k} \in \K^{{\tuple\nu}_0^{\tuple m}}$ to $e(\tuple{k}) \in e_{\tuple m}\bfH_{m}e_{\tuple m},$
\end{itemize}
is an algebra isomorphism from $ \bfH_{m_+}(Q^+) \otimes \bfH_{m_-}(Q^-)$ to
 $e_{\tuple m} \bfH_{m} e_{\tuple m}$ (see \cite[Proposition 6.25]{ROS17}).

\smallskip

By the above isomorphism, we may
identify $ \bfH_{m_+}(Q^+) \otimes \bfH_{m_-}(Q^-)$ with $e_{\tuple m} \bfH_{m} e_{\tuple m}$, and
so
$$e(\tuple m)\bfH_{m}\cong \mathrm{Mat}_{|\mathbb{J}_{\tuple m}|} ( \bfH_{m_+}(Q^+) \otimes \bfH_{m_-}(Q^-)).$$
We now label the rows and the columns of the elements of
 $$\mathrm{Mat}_{|\mathbb{J}_{\tuple m}|} ( \bfH_{m_+}(Q^+) \otimes \bfH_{m_-}(Q^-))$$ by
 $({\tuple\nu}', {\tuple\nu}) \in ({\mathbb{J}_{\tuple m}})^2$,
  and write $E_{{\tuple\nu}', {\tuple\nu}}$
  for the elementary matrix with one $1$ at position $({\tuple\nu}', {\tuple\nu})$ and $0$ elsewhere.
Then for $({\tuple\nu}', {\tuple\nu}) \in ({\mathbb{J}_{\tuple m}})^2$, we have the following
$R$-module isomorphism
\[
e({\tuple\nu}') \bfH_{m} e({\tuple\nu}) \simeq(\bfH_{m_+}(Q^+) \otimes \bfH_{m_-}(Q^-)) E_{{\tuple\nu}', {\tuple\nu}}.
\]
Indeed, if we define

\begin{equation}\label{equation:definition_philambda_psilambda2}
\begin{gathered}
\Phi_{{\tuple\nu}', {\tuple\nu}} :(\bfH_{m_+}(Q^+) \otimes \bfH_{m_-}(Q^-)) E_{{\tuple\nu}', {\tuple\nu}} \to e({\tuple\nu}') \bfH_{m} e({\tuple\nu}),
\\
\Psi_{{\tuple\nu}', {\tuple\nu}} : e({\tuple\nu}') \bfH_{m} e({\tuple\nu}) \to (\bfH_{m_+}(Q^+) \otimes \bfH_{m_-}(Q^-)) E_{{\tuple\nu}', {\tuple\nu}}
\end{gathered}
\end{equation}
by
$$
\begin{array}{cccl}
\Phi_{{\tuple\nu}', {\tuple\nu}}(v E_{{\tuple\nu}', {\tuple\nu}}) &\coloneqq & \tau_{\pi_{{\tuple\nu}'}^{-1}} v \tau_{\pi_{\tuple\nu}}
 &\mbox{for~} v \in\bfH_{m_+}(Q^+) \otimes \bfH_{m_-}(Q^-), \\
\Psi_{{\tuple\nu}', {\tuple\nu}}(w) &\coloneqq & (\tau_{\pi_{{\tuple\nu}'}} w \tau_{\pi_{\tuple\nu}^{-1}}) E_{{\tuple\nu}', {\tuple\nu}}
&\mbox{for~} w \in e({\tuple\nu}')\bfH_{n} e({\tuple\nu}),
\end{array}
$$
then these two maps $\Phi_{\tuple\nu', \tuple \nu}$ and $\Psi_{\tuple\nu', \tuple\nu}$ are inverse isomorphisms.
We now set
\begin{equation}\label{quiverconstruction}
\begin{gathered}
\Phi_{\tuple m} \coloneqq \bigoplus_{({\tuple\nu}', {\tuple\nu}) \in ({\mathbb{J}_{\tuple m}})^2} \Phi_{\tuple \nu', \tuple \nu} : \mathrm{Mat}_{|\mathbb{J}_{\tuple m}|} (\bfH_{m_+}(Q^+) \otimes \bfH_{m_-}(Q^-)) \to e(\tuple m) \bfH_m,
\\
\Psi_{\tuple m} \coloneqq \bigoplus_{({\tuple\nu}', {\tuple\nu}) \in ({\mathbb{J}_{\tuple m}})^2} \Psi_{\tuple \nu', \tuple \nu} : e(\tuple m) \bfH_m \to \mathrm{Mat}_{|\mathbb{J}_{\tuple m}|}(\bfH_{m_+}(Q^+) \otimes \bfH_{m_-}(Q^-)).
\end{gathered}
\end{equation}
By the above properties of $\Phi_{{\tuple\nu}', {\tuple\nu}}$ and $\Psi_{{\tuple\nu}, {\tuple\nu}'}$,
we know that $\Phi_{\tuple m}$ and $\Psi_{\tuple m}$ are inverse $R$-module
isomorphisms.
Therefore, the following is clear. 
\begin{theorem}\cite{ROS17}
\label{theorem:disjoint_guiver}
We have an algebra isomorphism:
\[
\bfH_{m} \simeq \bigoplus_{\tuple m= (m_+,m_-) \comp_2\ m} \Mat_{|\mathbb{J}_{\tuple m}|}(\bfH_{m_+}(Q^+) \otimes \bfH_{m_-}(Q^-)).
\]
\end{theorem}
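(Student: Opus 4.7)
My plan is to proceed in two layers: first a coarse central decomposition indexed by bicompositions $\tuple m \comp_2 m$, and then within each summand a matrix-algebra identification, via the explicit maps $\Phi_{\tuple m}$ and $\Psi_{\tuple m}$ already constructed in \eqref{quiverconstruction}.

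First, I would verify that each $e(\tuple m)=\sum_{\tuple\nu\in\mathbb{J}_{\tuple m}}e(\tuple\nu)$ is a central idempotent of $\bfH_m$, and that these idempotents form a complete orthogonal set summing to $1$. Orthogonality and completeness are immediate from the corresponding properties of the $e(\tuple k)$. Centrality is checked generator by generator: $e(\tuple m)$ commutes with every $x_b$ and every $e(\tuple k)$ trivially, and the Khovanov--Lauda--Rouquier relation $\tau_a e(\tuple k)=e(s_a(\tuple k))\tau_a$ shows that $\tau_a$ commutes with $e(\tuple m)$ because $s_a$ preserves the underlying bicomposition of any $\tuple k$ (it only permutes the $\K_+$ and $\K_-$ positions of $\tuple\nu$). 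This yields an algebra decomposition $\bfH_m=\bigoplus_{\tuple m\comp_2 m}e(\tuple m)\bfH_m$.

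Next, I would show that the corner algebra $e_{\tuple m}\bfH_m e_{\tuple m}$ is isomorphic to $\bfH_{m_+}(Q^+)\otimes\bfH_{m_-}(Q^-)$. On the corner labelled by $\tuple\nu_0^{\tuple m}=(+,\ldots,+,-,\ldots,-)$, only the positions $1,\ldots,m_+$ carry vertices of $\Gamma_+$ and only the positions $m_++1,\ldots,m$ carry vertices of $\Gamma_-$. Therefore, whenever a relation of $\bfH_m$ involving $\tau_a, \tau_{a+1}$ at the crossing $a=m_+$ is evaluated against $e_{\tuple m}$, the matrix entry $Q_{s,t}$ with $s\in\K_+,\ t\in\K_-$ is $1$, making $\tau_{m_+}^2 e_{\tuple m}=e_{\tuple m}$ and collapsing all cross-type braid and commutation relations to trivial ones. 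This lets me check directly that the map sending the generators $\tau_a^{\otimes}, x_b, e^{\otimes}(\tuple k)$ of $\bfH_{m_+}(Q^+)\otimes\bfH_{m_-}(Q^-)$ to $\tau_a e_{\tuple m}$ (for $a\neq m_+$), $x_b e_{\tuple m}$, and $e(\tuple k)$ respectively is a well-defined algebra map; the inverse is supplied by a dimension/basis count using Rouquier's PBW-type basis of quiver Hecke algebras.

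Finally, I would upgrade this to the full matrix decomposition $e(\tuple m)\bfH_m\simeq\Mat_{|\mathbb{J}_{\tuple m}|}(e_{\tuple m}\bfH_m e_{\tuple m})$. For this, the maps $\Phi_{\tuple\nu',\tuple\nu}$ and $\Psi_{\tuple\nu',\tuple\nu}$ are already defined; I would verify that they are mutually inverse $R$-linear isomorphisms at the level of $R$-modules, and that $\Phi_{\tuple m}$ respects multiplication. The additive isomorphism reduces to the statement $e(\tuple\nu')\bfH_m e(\tuple\nu)\simeq(e_{\tuple m}\bfH_m e_{\tuple m})E_{\tuple\nu',\tuple\nu}$, which in turn follows from an analogue of Proposition~\ref{prop:isom} for the quiver Hecke setting: $\tau_{\pi_{\tuple\nu}}e(\tuple\nu)$ realizes a bimodule isomorphism onto $e(\tuple\nu_0)\bfH_m e(\tuple\nu_0)$.

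The main obstacle is checking multiplicativity of $\Phi_{\tuple m}$, which reduces to the single identity
\begin{equation*}
\tau_{\pi_{\tuple\nu}}\tau_{\pi_{\tuple\nu}^{-1}}e_{\tuple m}=e_{\tuple m}
\qquad\text{for every }\tuple\nu\in\mathbb{J}_{\tuple m}.
\end{equation*}
This is exactly the place where the disconnectedness of $\Gamma=\Gamma_+\sqcup\Gamma_-$ is essential. Given a reduced expression $\pi_{\tuple\nu}=s_{a_r}\cdots s_{a_1}$, each simple reflection $s_{a_t}$ in it exchanges a $-$ next to a $+$ (in that order, as in the proof of Lemma~\ref{lem:pinu}); evaluated against the relevant $e(\tuple k)$, one factor of $\tuple k$ lies in $\K_+$ and the other in $\K_-$, so $Q_{k_{a_t},k_{a_t+1}}=1$ and the quadratic relation yields $\tau_{a_t}^2 e(\tuple k)=e(\tuple k)$. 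Inducting on the length of $\pi_{\tuple\nu}$, one telescopes $\tau_{\pi_{\tuple\nu}}\tau_{\pi_{\tuple\nu}^{-1}}e_{\tuple m}$ down to $e_{\tuple m}$, using Matsumoto's theorem (Lemma~\ref{lem:braid}) to ensure independence of the chosen reduced expression. Once this identity is in hand, both the bijectivity of $\Phi_{\tuple m}$ and its compatibility with matrix multiplication follow formally, completing the proof.
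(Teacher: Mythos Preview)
Your proposal is correct and follows the same construction the paper recaps from \cite{ROS17}: the central decomposition by the idempotents $e(\tuple m)$, the identification of the corner $e_{\tuple m}\bfH_m e_{\tuple m}$ with $\bfH_{m_+}(Q^+)\otimes\bfH_{m_-}(Q^-)$, and the matrix isomorphism via $\Phi_{\tuple m},\Psi_{\tuple m}$. You supply the multiplicativity check (via the identity $\tau_{\pi_{\tuple\nu}}\tau_{\pi_{\tuple\nu}^{-1}}e_{\tuple m}=e_{\tuple m}$, deduced from $Q_{s,t}=1$ for $s\in\K_+$, $t\in\K_-$) that the paper leaves implicit when it declares $\Phi_{\tuple m}$ and $\Psi_{\tuple m}$ to be inverse isomorphisms.
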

We define
$\bfH^{\Lambda_+}_{m_+}(Q^+) \otimes \bfH^{\Lambda_-}_{m_-}(Q^-)$ to be
the tensor product of two cyclotomic quiver Hecke algebras $\bfH^{\Lambda_+}_{m_+}(Q^+)$ and $\bfH^{\Lambda_-}_{m_-}(Q^-)$,
where $\Lambda_+ \in \X_{\K_+}^{+}$
and $\Lambda_- \in \X_{\K_-}^{+}$ are integral dominant weights of $\frakg_{\Gamma_+}$ and $\frakg_{\Gamma_-}$, respectively.

\begin{theorem}\cite{ROS17}
\label{theorem:disjoint_guiver_cyclotomic}
The isomorphism in Theorem~\ref{theorem:disjoint_guiver} factors through the cyclotomic quotients, in other words, we have
$$
\bfH_m^{\Lambda}(Q) \simeq \bigoplus_{\tuple m= (m_+,m_-) \comp_2\ m} \mathrm{Mat}_{|\mathbb{J}_{\tuple m}|} (\bfH^{\Lambda_+}_{m_+}(Q^+) \otimes \bfH^{\Lambda_-}_{m_-}(Q^-)),$$
where $\Lambda= \Lambda^+\times\{0\}+\{0\}\times \Lambda^-\in  \X_{\K}^{+}$ is an integral dominant weight of $\frakg_{\Gamma}$.
\end{theorem}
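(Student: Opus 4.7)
The strategy is to show that the isomorphism of Theorem~\ref{theorem:disjoint_guiver} identifies the cyclotomic ideal on the left with a direct sum of matrix ideals on the right, and then pass to the quotient. Specifically, I would show that under $\Psi_m$ the two-sided ideal $\bfH_m(Q)f^\Lambda(x_1)\bfH_m(Q)$ maps isomorphically onto $\bigoplus_{\tuple m\comp_2 m}\mathrm{Mat}_{|\mathbb{J}_{\tuple m}|}(J_{\tuple m})$, where $J_{\tuple m}$ is the two-sided ideal of $\bfH_{m_+}(Q^+)\otimes\bfH_{m_-}(Q^-)$ generated by $f^{\Lambda^+}(x_1)\otimes 1$ and $1\otimes f^{\Lambda^-}(x_1)$ (with the reindexing of the second tensor factor understood as in \S\ref{subsec:disjointunion}).

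The first key observation is that $f^\Lambda$ splits block-wise. Since $\Lambda=\Lambda^+\times\{0\}+\{0\}\times\Lambda^-$ lives in $\X_{\K_+}^+\oplus\X_{\K_-}^+$, while each $\alpha_k^\vee$ sits in the direct summand of $\X^\vee_\K$ indexed by $\K_+$ or $\K_-$ according as $k\in\K_+$ or $k\in\K_-$, we have $\langle\alpha_k^\vee,\Lambda\rangle=\langle\alpha_k^\vee,\Lambda^{\epsilon(k)}\rangle$, where $\epsilon(k)\in\{+,-\}$ is the sign of $k$. Hence $f^\Lambda_k=f^{\Lambda^{\epsilon(k)}}_k$, and consequently
\[
f^\Lambda(x_1)=\sum_{\tuple\nu\in\mathbb{J}^m}f^{\Lambda^{\nu_1}}(x_1)\,e(\tuple\nu).
\]
In particular, for each $\tuple\nu$, the summand $f^{\Lambda^{\nu_1}}(x_1)e(\tuple\nu)$ lies in the cyclotomic ideal on the left.

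Now fix $\tuple m=(m_+,m_-)\comp_2 m$ and analyze the piece of this ideal living in $e(\tuple m)\bfH_m(Q)e(\tuple m)$. Since two-sided ideals of a matrix algebra $\mathrm{Mat}_n(A)$ are precisely of the form $\mathrm{Mat}_n(I)$ for $I\lhd A$, it suffices to exhibit a generating set for the corresponding ideal inside $\bfH_{m_+}(Q^+)\otimes\bfH_{m_-}(Q^-)$ after the identification $e_{\tuple m}\bfH_m(Q)e_{\tuple m}\cong\bfH_{m_+}(Q^+)\otimes\bfH_{m_-}(Q^-)$ of \S\ref{subsec:disjointunion}. The term $f^{\Lambda^+}(x_1)e_{\tuple m}$ (present when $m_+\geqslant 1$) maps directly to $f^{\Lambda^+}(x_1)\otimes 1$, since the first slot of $\tuple\nu_0^{\tuple m}$ has sign $+$. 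For a sign vector $\tuple\nu\in\mathbb{J}_{\tuple m}$ with $\nu_1=-$ (present when $m_-\geqslant 1$), the element $f^{\Lambda^-}(x_1)e(\tuple\nu)$ is in the cyclotomic ideal; pre- and post-multiplying by $\tau_{\pi_{\tuple\nu}}$ and $\tau_{\pi_{\tuple\nu}^{-1}}$ to move it into $e_{\tuple m}\bfH_m(Q)e_{\tuple m}$ and expanding using the KLR relation $(\tau_ax_b-x_{s_a(b)}\tau_a)e(\tuple k)=\pm\delta_{k_a,k_{a+1}}\,e(\tuple k)$ along a reduced expression $\pi_{\tuple\nu}=s_{a_r}\cdots s_{a_1}$, one obtains, modulo terms already in the ideal, the element $1\otimes f^{\Lambda^-}(x_1)$, since for the minimal such $\tuple\nu$ one has $\pi_{\tuple\nu}(1)=m_++1$. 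Thus both generators of $J_{\tuple m}$ arise in the image, and since the cyclotomic ideal is clearly contained in the preimage of $\bigoplus_{\tuple m}\mathrm{Mat}_{|\mathbb{J}_{\tuple m}|}(J_{\tuple m})$, equality follows.

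Taking quotients on both sides yields
\[
\bfH_m^\Lambda(Q)\cong\bigoplus_{\tuple m\comp_2 m}\mathrm{Mat}_{|\mathbb{J}_{\tuple m}|}\!\left(\frac{\bfH_{m_+}(Q^+)\otimes\bfH_{m_-}(Q^-)}{J_{\tuple m}}\right),
\]
and the standard tensor-product identity $(A\otimes B)/(I_A\otimes B+A\otimes I_B)\cong(A/I_A)\otimes(B/I_B)$ for two-sided ideals $I_A\lhd A$ and $I_B\lhd B$, applied with $A=\bfH_{m_+}(Q^+)$, $B=\bfH_{m_-}(Q^-)$, and $I_A$, $I_B$ the respective cyclotomic ideals, gives $(\bfH_{m_+}(Q^+)\otimes\bfH_{m_-}(Q^-))/J_{\tuple m}\cong\bfH^{\Lambda^+}_{m_+}(Q^+)\otimes\bfH^{\Lambda^-}_{m_-}(Q^-)$, completing the proof. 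The main obstacle is the middle step: the non-trivial KLR commutation relations make conjugation of $f^{\Lambda^-}(x_1)$ by $\tau_{\pi_{\tuple\nu}}$ delicate, and one must keep track of the correction terms generated at each braid move to ensure they all lie in the already-produced ideal rather than escaping it.
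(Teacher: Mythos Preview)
The paper does not prove this statement; it is quoted from \cite{ROS17} without argument. Your outline is the natural one and is essentially how Rostam proceeds, so there is no real discrepancy to report.

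That said, the ``main obstacle'' you flag at the end is not an obstacle at all, and seeing why is the key to making the argument clean. By the quiver-Hecke analogue of Lemma~\ref{lem:pinu} (this is \cite[Lemma~6.16]{ROS17}), every simple transposition $s_a$ occurring in a reduced word for $\pi_{\tuple\nu}$ acts on a sign vector whose $a$-th and $(a{+}1)$-st entries are $-$ and $+$ respectively. Hence for every $\tuple k\in\K^{\tuple\nu}$ and every such $a$ one has $k_a\in\K_-$ and $k_{a+1}\in\K_+$; in particular $k_a\neq k_{a+1}$, so the KLR relation reads $(\tau_a x_b-x_{s_a(b)}\tau_a)e(\tuple k)=0$ exactly, with no $\pm e(\tuple k)$ correction. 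It follows that
\[
\tau_{\pi_{\tuple\nu}}\,x_1\,e(\tuple\nu)\,\tau_{\pi_{\tuple\nu}^{-1}}=x_{\pi_{\tuple\nu}(1)}\,e_{\tuple m}
\]
on the nose, and hence the same holds with $x_1$ replaced by any polynomial $f^{\Lambda^{\nu_1}}(x_1)$. There is nothing to keep track of.

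Two further simplifications you should note. First, you do not need to single out a ``minimal'' $\tuple\nu$ with $\nu_1=-$: since $\pi_{\tuple\nu}$ is the minimal-length sort, it preserves the relative order within each sign class, so if $\nu_1=-$ then position~$1$ is the first $-$ and is always sent to $m_{+}+1$; likewise if $\nu_1=+$ then $\pi_{\tuple\nu}(1)=1$. Second, this observation already gives both inclusions at once: each generator $f^{\Lambda^{\nu_1}}(x_1)e(\tuple\nu)$ of $I^\Lambda$ is carried by $\Psi_{\tuple\nu,\tuple\nu}$ to either $(f^{\Lambda^+}(x_1)\otimes 1)E_{\tuple\nu,\tuple\nu}$ or $(1\otimes f^{\Lambda^-}(x_1))E_{\tuple\nu,\tuple\nu}$, which lie in $\mathrm{Mat}_{|\mathbb{J}_{\tuple m}|}(J_{\tuple m})$ and, as $\tuple\nu$ varies, visibly generate it. So your reverse inclusion, which you called ``clear'', is in fact proved by the very same computation.
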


\subsubsection{$\mathfrak{A}(\frakg_{\infty})$-representation on $\scrQU_K$}\label{subsec:QUK}
According to the representation datum on $\scrQU_K$
 constructed in \S \ref{sub:explicit-repdatumSO},
we know from \cite[Lemma 4.7]{Go} and the proof of Theorem \ref{thm:HL-BC} that
 the generalized eigenvalues of both $X$ on $F$ and $X'$ on $F'$ are $q^{\,\bbZ} \sqcup -q^{\,\bbZ}$.
\begin{definition}\label{def:infty}
Let $\scrI_\infty$ and $\scrI'_\infty$ both denote the subset $q^{\,\bbZ} \sqcup -q^{\,\bbZ}$ of $K^\times.$
Denote by $\K_\infty(q)=\scrI_\infty(q)\sqcup \scrI'_\infty(q)$
the quiver that is the disjoint union of the quivers $\scrI_\infty(q)$ and  $\scrI'_\infty(q)$.
We define $\frakg_\infty$ to be the (derived) Kac-Moody algebra associated to the quiver $\K_\infty(q)$,
so that
$\frakg_\infty=\fraks\frakl_{I_\infty}\oplus\fraks\frakl_{I'_\infty}.$ \qed
\end{definition}

\smallskip

For brevity, we will write
$$I_\infty=I_\infty(q), I'_\infty=I'_\infty(q)~\mbox{and}~\K_\infty=\K_\infty(q),$$ and
write
$$
\begin{array}{rcl}
 \{\Lambda_i\times\{0\}\}_{i\in I_\infty} & \sqcup & \{\{0\}\times\Lambda_{i'}\}_{i'\in I_\infty'}, \\
  \{\alpha_i\times\{0\}\}_{i\in I_\infty} & \sqcup & \{\{0\}\times\alpha_{i'}\}_{i'\in I_\infty'}, \\
 \{\alpha_i^\vee\times\{0\}\}_{i\in I_\infty} & \sqcup & \{\{0\}\times\alpha_{i'}^\vee\}_{i'\in I_\infty'}
\end{array}
$$
for
the fundamental weights, simple roots and simple coroots of $\frakg_\infty$,
respectively. If there is no risk of confusion, we shall
write $\Lambda_i$ instead of $\Lambda_i\times\{0\}$,
and $\Lambda'_{i'}$ instead of $\{0\}\times\Lambda_{i'}.$
Then the weight lattice of $\frakg_\infty$ is
$$\P_{\infty} =\big( \bigoplus_{i\in I_\infty} \bbZ \Lambda_i\big)\oplus\big(\bigoplus_{i'\in I_\infty'} \bbZ \Lambda'_{i'}\big),$$
and there is a Lie algebra isomorphism $$(\fraks\frakl_\bbZ)^{\oplus 4} \simto \frakg_{\infty}$$
such that
$$\begin{array}{rclcrcl}
  (\alpha^\vee_k,0,0,0) & \mapsto & \alpha^\vee_{q^{k}}\times\{0\}, & \quad & (0,\alpha^\vee_k,0,0) &  \mapsto & \alpha^\vee_{-q^{k}}\times\{0\}, \\
  (0,0,\alpha^\vee_k,0) & \mapsto &\{0\}\times\alpha^\vee_{q^{k}}, & \quad & (0,0,0,\alpha^\vee_k) & \mapsto &\{0\} \times\alpha^\vee_{-q^{k}}.
\end{array}
$$

Suppose that
$\big(\{E_s\}_{s\in \K_\infty},\{F_s\}_{s\in \K_\infty},\{x_s\}_{s\in \K_\infty},\{\tau_{s,t}\}_{s,t\in \K_\infty}\big)$
(or equivalently, $(E,F,$ $X,T;E',F',X',T';H,H')$) is the
$\mathfrak{A}(\frakg_{\infty})$-representation datum
 on $\scrQU_K$ constructed in \S \ref{sub:explicit-repdatumSO}.
%
As before, let $E_{t_+,t_-}$ be a quadratic unipotent cuspidal module of $G_r$ with $r=r_++r_-$
and $r_\pm=t_\pm(t_\pm+1)$. Assume that $n=m+r$.
If we write $\bfH_m(Q)$  for the quiver Hecke algebra associated with the quiver $\K_\infty$, then
the map
\begin{equation}
\begin{aligned}
\phi_m\, :\bfH_m(Q)&\rightarrow (\bigoplus_{\tuple k,\tuple k'\in \K^m} \Hom(F_{k_m}\cdots
F_{k_1},F_{k'_m}\cdots F_{k'_1}))^\op\cong\End\big(\bigoplus\limits_{\tuple\nu \in \mathbb{J}^m}F^{\tuple\nu}\big)^\op\\
e(\tuple k)&\mapsto 1_{F_{k_m}\cdots F_{k_1}} \\
e( \tuple\nu)&\mapsto 1_{F^{\nu_m}\cdots F^{\nu_1}} \\
x_{a,\tuple k}&\mapsto F_{k_m}\cdots F_{k_{a+1}}x_{k_a}F_{k_{a-1}}\cdots F_{k_1}\\
\tau_{a,\tuple k}&\mapsto F_{k_m}\cdots F_{k_{a+2}}\tau_{k_{a+1},k_a}
F_{k_{a-1}}\cdots F_{k_1}
\end{aligned}
\end{equation}
is an algebra homomorphism.
Evaluating of $\phi_m$ at $E_{t_+,t_-}$, we get the following algebra homomorphism
$$\phi_m(E_{t_+,t_-})\, :\bfH_m(Q)\rightarrow \End_{KG}\big(\bigoplus\limits_{\tuple k \in \K^m}F_{\tuple k}(E_{t_+,t_-})\big)^\op\cong \End_{KG}\big(\bigoplus\limits_{ \tuple\nu \in \mathbb{J}^m}F^{\tuple\nu}(E_{t_+,t_-})\big)^\op$$
 with restrictions
$$\begin{array}{rccl}
 \phi_m(E_{t_+,t_-})\, : & e(\tuple k)\bfH_m(Q)e(\tuple k) & \rightarrow  &  \End_{KG}(F_{\tuple k}(E_{t_+,t_-}))^\op,\\
 \phi_m(E_{t_+,t_-})\, : & e(\tuple\nu)\bfH_m(Q)e(\tuple\nu) & \rightarrow  &  \End_{KG}(F^{\tuple\nu}(E_{t_+,t_-}))^\op, \,\text{and}
 \\
 \phi_m(E_{t_+,t_-})\, : & e(\tuple m)\bfH_m(Q) & \rightarrow  &
                 \End_{KG}\big(\bigoplus\limits_{\tuple\nu \in \mathbb{J}_{\tuple m}}F^{\tuple\nu}(E_{t_+,t_-})\big)^\op.
\end{array}
$$
In particular, when we fix $\tuple m=(m_+,m_-)\comp_2 m$,
the restriction of $\phi_m(E_{t_+,t_-})$ to $e_{\tuple m}\bfH_m(Q)e_{\tuple m}$
is exactly the algebra homomorphism $\phi_{F^{\tuple\nu^{\tuple m}_0}}(E_{t_+,t_-})$ as  defined
 in  \S \ref{subsec:ramiHecke}.

\begin{proposition}\label{keylemma}
The element $\tau_{\pi_{\tuple\nu}}e(\tuple\nu)$ constructed in \S \ref{subsec:disjointunion} under the above homomorphism is exactly the natural transformation $T_{\pi_{\tuple\nu},\tuple\nu}.$ So we have the following commutative diagram:
 $$\xymatrix{e(\tuple m)\bfH_m(Q)\ar[d]^{\phi_m(E_{t_+,t_-})}\ar[r]_{\Psi_{\tuple m}\,\,\,\,\,\,\,\,\,\,\,\,\,\,\,}
 &\Mat_{|\mathbb{J}_{\tuple m}|}(\bfH_{m_+}(Q^+)\otimes \bfH_{m_-}(Q^-)) \ar[d]^{\mathrm{Mat}(\phi_{F^{\tuple \nu_0^{\tuple m}}})}\\
  \End\big(\bigoplus\limits_{\tuple\nu \in \mathbb{J}_{\tuple m}}F^{\tuple\nu}(E_{t_+,t_-})\big)^\op\ar[r]_{\Psi_{\tuple m}\,\,\,\,\,\,\,\,\,\,\,\,\,\,\,} &  \Mat_{|\mathbb{J}_{\tuple m}|}(\End_{KG}(F^{\tuple \nu_0^{\tuple m}}(E_{t_+,t_-})))^\op.
}$$
\end{proposition}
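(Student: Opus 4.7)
The plan is to reduce the proposition to a statement about each elementary generator in a reduced expression of $\pi_{\tuple\nu}$, and then use Lemma~\ref{lem:pinu} to ensure we are always in the ``good'' sign pattern, where the image under $\phi_m$ of a quiver Hecke generator matches a natural transformation built from $H$. Once the first assertion is established, the commutative diagram will follow essentially formally from comparing the definitions of the two $\Psi_{\tuple m}$ maps in \eqref{quiverconstruction} and \eqref{functorconstruction}.

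First I would fix a reduced expression $\pi_{\tuple\nu} = s_{a_r}\cdots s_{a_1}$ and unwind the definitions. On the quiver Hecke side, \cite[Lemma~6.16]{ROS17} guarantees that $\tau_{\pi_{\tuple\nu}} e(\tuple\nu)$ is independent of the choice of reduced expression, and by the defining relations in \S\ref{subsec:QHA} one has
\[
\tau_{\pi_{\tuple\nu}} e(\tuple\nu) \;=\; \sum_{\tuple k \in \K^{\tuple\nu}} \tau_{a_r,\,s_{a_{r-1}}\cdots s_{a_1}(\tuple k)} \cdots \tau_{a_1,\tuple k}\, e(\tuple k).
\]
On the representation datum side, Lemma~\ref{lem:braid} says that the corresponding natural transformation $T_{\pi_{\tuple\nu},\tuple\nu}$ is independent of the choice of reduced expression and factors analogously as
\[
T_{\pi_{\tuple\nu},\tuple\nu} \;=\; T_{a_r,\,s_{a_{r-1}}\cdots s_{a_1}\tuple\nu} \circ \cdots \circ T_{a_1,\tuple\nu}.
\]
So it suffices to identify factor by factor.

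The decisive input is Lemma~\ref{lem:pinu}: for each $t$, setting $\tuple\nu^{(t)} := s_{a_{t-1}}\cdots s_{a_1}\tuple\nu$, the sign pattern satisfies $\nu^{(t)}_{a_t} = -$ and $\nu^{(t)}_{a_t+1} = +$. By the definition of $T_{a_t,\tuple\nu^{(t)}}$ in \S\ref{ssub:Composi-F-NT}, this means
\[
T_{a_t,\tuple\nu^{(t)}} \;=\; F^{\tuple\nu^{(t)}_{[a_t+2,m]}}\, H\, F^{\tuple\nu^{(t)}_{[1,a_t-1]}}.
\]
On the other hand, for any $\tuple k \in \K^{\tuple\nu^{(t)}}$, we have $k_{a_t} \in I'_{\infty}$ and $k_{a_t+1} \in I_{\infty}$, so by the definition of the representation datum on $\scrQU_R$ in \S\ref{sub:explicit-repdatumSO} we have $\tau_{k_{a_t+1},\,k_{a_t}} = H$. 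Consequently
\[
\phi_m\bigl(\tau_{a_t,\tuple k}\bigr) \;=\; F_{k_m}\cdots F_{k_{a_t+2}}\, H\, F_{k_{a_t-1}}\cdots F_{k_1},
\]
which is exactly the restriction of $T_{a_t,\tuple\nu^{(t)}}$ to the eigenspace $F_{\tuple k}$. Summing over $\tuple k \in \K^{\tuple\nu^{(t)}}$ yields $\phi_m\bigl(\sum_{\tuple k}\tau_{a_t,\tuple k}\, e(\tuple k)\bigr) = T_{a_t,\tuple\nu^{(t)}}$. Composing these identifications down the reduced expression (and noting that the codomain of $\phi_m$ is the opposite algebra, so products $\tau_{a_r}\cdots\tau_{a_1}$ in $\bfH_m(Q)$ map to compositions $T_{a_r}\circ\cdots\circ T_{a_1}$ of natural transformations) delivers the first claim.

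For the commutative diagram, recall from \eqref{quiverconstruction} that the isomorphism $\Psi_{\tuple m}$ on the quiver Hecke side sends $w \in e(\tuple\nu')\bfH_m(Q)e(\tuple\nu)$ to $(\tau_{\pi_{\tuple\nu'}} w\, \tau_{\pi_{\tuple\nu}^{-1}})E_{\tuple\nu',\tuple\nu}$, while from \eqref{functorconstruction} the analogous $\Psi_{\tuple m}$ on the functor side sends $Z \in \Hom(F^{\tuple\nu}(E_{t_+,t_-}), F^{\tuple\nu'}(E_{t_+,t_-}))$ to $(T_{\pi_{\tuple\nu'},\tuple\nu'}Z\, T^{-1}_{\pi_{\tuple\nu},\tuple\nu})E_{\tuple\nu',\tuple\nu}$. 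By the first claim, $\phi_m(E_{t_+,t_-})$ carries $\tau_{\pi_{\tuple\nu'}}$ to $T_{\pi_{\tuple\nu'},\tuple\nu'}$; and since the relations $HH' = 1$ and $H'H = 1$ of Proposition~\ref{prop:12relations} imply that $T_{\pi_{\tuple\nu},\tuple\nu}^{-1} = T_{\pi_{\tuple\nu}^{-1},\tuple\nu_0}$ (compare Lemma~\ref{lem:pinu}), the same argument applied to a reduced expression of $\pi_{\tuple\nu}^{-1}$ gives $\phi_m(E_{t_+,t_-})(\tau_{\pi_{\tuple\nu}^{-1}}e(\tuple\nu_0)) = T_{\pi_{\tuple\nu},\tuple\nu}^{-1}$. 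These two intertwining identities make the square commute on each block $e(\tuple\nu')\bfH_m(Q)e(\tuple\nu)$, and summing over $(\tuple\nu',\tuple\nu) \in (\mathbb{J}_{\tuple m})^2$ yields the full commutativity.

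The main obstacle I foresee is purely a bookkeeping one: keeping track of the opposite-algebra convention in $\phi_m$ (so that products in $\bfH_m(Q)$ translate into compositions of natural transformations in the expected order) and making sure that the identification of $\tau_{i,j'}$ with $H$ is applied only when the sign pattern really is $(-,+)$, which is exactly what Lemma~\ref{lem:pinu} guarantees along any reduced expression of $\pi_{\tuple\nu}$. No conceptual difficulty arises beyond these bookkeeping checks, because the matrix isomorphisms $\Psi_{\tuple m}$ of \S\ref{sec:End-F+F-} and \S\ref{subsec:disjointunion} have been defined by identical ``sort-by-$\pi_{\tuple\nu}$'' templates precisely so that they commute with $\phi_m$.
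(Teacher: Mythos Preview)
Your proposal is correct and follows essentially the same approach as the paper: the paper's proof is a two-line appeal to ``their definition and Lemma~\ref{lem:pinu}'' for the first assertion and to ``the construction of $\Psi_{\tuple m}$ in \eqref{functorconstruction} and \eqref{quiverconstruction}'' for the diagram, and you have simply unpacked both of these steps in detail. The only minor slip is writing $T_{\pi_{\tuple\nu},\tuple\nu}^{-1}=T_{\pi_{\tuple\nu}^{-1},\tuple\nu_0}$ without the factor $q^{-l(\pi_{\tuple\nu})}$ from Lemma~\ref{lem:pinu}, but this does not affect the argument since once $\phi_m(E_{t_+,t_-})$ carries $\tau_{\pi_{\tuple\nu}}e(\tuple\nu)$ to $T_{\pi_{\tuple\nu},\tuple\nu}$ it automatically carries inverses to inverses.
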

\begin{proof}
The first assertion immediately follows by their definition and Lemma \ref{lem:pinu},
and the second follows by the first  assertion and
the construction of $\Psi_{\tuple m}$ in \eqref{functorconstruction} and \eqref{quiverconstruction}.
\end{proof}

The following result, which is one of our main theorems of this section,
shows that the direct sum of certain Howlett-Lehrer algebras is indeed a cyclotomic quiver Hecke algebra.

\begin{theorem}\label{thm:F^nu}
 The above homomorphism $\phi_m(E_{t_+,t_-})$ factors through an isomorphism
$$\bfH^{\Lambda_{t_+,t_-}}_m(Q)\cong \End_{KG}\big(\bigoplus\limits_{\tuple\nu \in \mathbb{J}^m}F^{\tuple\nu}(E_{t_+,t_-})\big)^\op$$
where $\Lambda_{t_+,t_-}=\Lambda_{(\tuple\xi_{t_+},\tuple\xi_{t_-})}=
\Lambda_{(-q)^{t_+}}+\Lambda_{(-q)^{-1-t_+}}+\Lambda'_{{(-q)^{t_-}}}+
\Lambda'_{{(-q)^{-1-t_-}}}.$
\end{theorem}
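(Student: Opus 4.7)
The plan is to compare the decomposition of $\bfH^{\Lambda_{t_+,t_-}}_m(Q)$ coming from Theorem \ref{theorem:disjoint_guiver_cyclotomic} with a matching decomposition of the endomorphism algebra, and then to check compatibility of the map $\phi_m(E_{t_+,t_-})$ on each summand. On the endomorphism side, the first observation is that the functors $F$ and $F'$ act differently on the semisimple labels: $F$ sends $KG_n e^{KG_n}_{s_{n_+,n_-}}\mod$ to $KG_{n+1}e^{KG_{n+1}}_{s_{n_++1,n_-}}\mod$ while $F'$ sends it to $KG_{n+1}e^{KG_{n+1}}_{s_{n_+,n_-+1}}\mod$. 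Hence for a fixed $\tuple m=(m_+,m_-)\comp_2 m$, all $F^{\tuple\nu}(E_{t_+,t_-})$ with $\tuple\nu\in\mathbb{J}_{\tuple m}$ lie in a single Lusztig-block determined by $\tuple m$, while different $\tuple m$ produce orthogonal blocks. This yields the $\tuple m$-graded decomposition
$$\End_{KG}\bigl(\bigoplus_{\tuple\nu\in\mathbb{J}^m}F^{\tuple\nu}(E_{t_+,t_-})\bigr)^{\op}=\bigoplus_{\tuple m\comp_2 m}\End_{KG}\bigl(\bigoplus_{\tuple\nu\in\mathbb{J}_{\tuple m}}F^{\tuple\nu}(E_{t_+,t_-})\bigr)^{\op}.$$

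Next, for each fixed $\tuple m$, Theorem \ref{thm:End-F+F-} identifies the $\tuple m$-summand with $\mathrm{Mat}_{|\mathbb{J}_{\tuple m}|}\bigl(\End_{KG}(F^{\tuple\nu_0^{\tuple m}}(E_{t_+,t_-}))\bigr)^{\op}$. Since $F^{\tuple\nu_0^{\tuple m}}(E_{t_+,t_-})=(F')^{m_-}F^{m_+}(E_{t_+,t_-})$ is naturally identified with Harish-Chandra induction from a cuspidal pair, its endomorphism algebra is precisely the ramified Hecke algebra $\scrH(KG_n,E_{t_+,t_-}\otimes K_1^{m_+}\otimes K_\zeta^{m_-})$. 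Theorem \ref{thm:HL-BC} then provides a canonical isomorphism with $\bfH^{q;\tuple\xi_{t_+}}_{K,m_+}\otimes\bfH^{q;\tuple\xi_{t_-}}_{K,m_-}$, and the Brundan–Kleshchev–Rouquier isomorphism (see \S\ref{subsec:minimalcat}) converts each tensor factor into the cyclotomic quiver Hecke algebra $\bfH^{\Lambda^{+}_{t_+}}_{m_+}(Q^+)$ or $\bfH^{\Lambda^{-}_{t_-}}_{m_-}(Q^-)$ with $\Lambda^{+}_{t_+}=\Lambda_{(-q)^{t_+}}+\Lambda_{(-q)^{-1-t_+}}$ and $\Lambda^{-}_{t_-}=\Lambda'_{(-q)^{t_-}}+\Lambda'_{(-q)^{-1-t_-}}$, which sum to $\Lambda_{t_+,t_-}$ exactly.

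On the source side, Theorem \ref{theorem:disjoint_guiver_cyclotomic} gives the parallel decomposition
$$\bfH^{\Lambda_{t_+,t_-}}_m(Q)\simeq\bigoplus_{\tuple m\comp_2 m}\mathrm{Mat}_{|\mathbb{J}_{\tuple m}|}\bigl(\bfH^{\Lambda^{+}_{t_+}}_{m_+}(Q^+)\otimes \bfH^{\Lambda^{-}_{t_-}}_{m_-}(Q^-)\bigr).$$
What remains is to check that $\phi_m(E_{t_+,t_-})$ sends the $\tuple m$-summand of $\bfH_m(Q)$ isomorphically onto the corresponding $\tuple m$-summand of the endomorphism algebra, and that the induced map on each summand matches the matrix algebra isomorphism induced by $\phi_{F^{\tuple\nu_0^{\tuple m}}}(E_{t_+,t_-})$. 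This is precisely what the commutative square in Proposition \ref{keylemma} records: the horizontal matrix-algebra isomorphisms $\Psi_{\tuple m}$ intertwine $\phi_m(E_{t_+,t_-})$ with $\mathrm{Mat}(\phi_{F^{\tuple\nu_0^{\tuple m}}})$. Because $\phi_{F^{\tuple\nu_0^{\tuple m}}}(E_{t_+,t_-})$ factors through the cyclotomic quotient with weight $\Lambda^{+}_{t_+}\oplus\Lambda^{-}_{t_-}$ by Theorem \ref{thm:HL-BC}, the full map $\phi_m(E_{t_+,t_-})$ factors through $\bfH^{\Lambda_{t_+,t_-}}_m(Q)$ and the induced map is an isomorphism on each summand, hence globally.

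The main obstacle is the bookkeeping of the weight identification: one has to verify that the relations $X_1 e(\tuple k)\mapsto k_1 (1+x_1)e(\tuple k)$ coming from Brundan–Kleshchev–Rouquier turn the eigenvalue data $\tuple\xi_{t_+}\sqcup\tuple\xi_{t_-}$ in Theorem \ref{thm:HL-BC} into precisely the cyclotomic condition cut out by $\Lambda_{t_+,t_-}$, and in particular that the four points $(-q)^{\pm t_{\pm}}, (-q)^{-1-t_{\pm}}$ are mutually distinct vertices of $\K_\infty$ with no edges forcing identifications (note $\ell\neq 2$, so $(-q)^{a}$ and $(-q)^{b}$ lie in the same connected component of $\K_\infty$ only when they have matching signs, separating $I_\infty$ from $I'_\infty$). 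Once this weight matching is confirmed, the rest is a direct assembly of Theorems \ref{thm:HL-BC}, \ref{thm:End-F+F-}, \ref{theorem:disjoint_guiver_cyclotomic} and Proposition \ref{keylemma}.
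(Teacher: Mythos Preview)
Your proposal is correct and follows essentially the same route as the paper: decompose both sides over $\tuple m\comp_2 m$ using orthogonality of Lusztig series, reduce each summand to the diagonal $\tuple\nu_0^{\tuple m}$ piece via the matrix-algebra structure and Proposition~\ref{keylemma}, and then invoke Theorem~\ref{thm:HL-BC} together with the Brundan--Kleshchev--Rouquier isomorphism. One small correction to your final paragraph: the separation of $I_\infty$ from $I'_\infty$ is purely formal (they are disjoint copies of $q^{\bbZ}\sqcup -q^{\bbZ}$ by construction of $\K_\infty$), not a consequence of signs, so the weight identification $\Lambda_{t_+,t_-}=\Lambda^+_{t_+}+\Lambda^-_{t_-}$ is immediate from the definition of $\Lambda_{\tuple\xi}$ in \S\ref{subsec:minimalcat} and requires no distinctness check across the two copies.
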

\begin{proof} Let $\tuple m$ and $\tuple m'$ be different 2-composition of $m$.
 If $\tuple\nu\in \mathbb{J}_{\tuple m}$ and  $\tuple\nu'\in \mathbb{J}_{\tuple m'}$,
 then $F^{\tuple\nu}(E_{t_+,t_-})$ and $F^{\tuple\nu'}(E_{t_+,t_-})$ are in different Lusztig series,
 and so $F^{\tuple\nu}(E_{t_+,t_-})$
 and $F^{\tuple\nu'}(E_{t_+,t_-})$
 have no common constituents. In particular,
 $$\Hom_{KG}(F^{\tuple\nu}(E_{t_+,t_-}),F^{\tuple\nu'}(E_{t_+,t_-}))=0.$$
Hence we have
  $$\End _{KG}(\bigoplus_{\tuple\nu \in \mathbb{J}^m}F^{\tuple\nu}(E_{t_+,t_-}))
  \cong \bigoplus\limits_{\tuple m\comp_2\,\, m }\big(\End_{KG} \big(\bigoplus_{\tuple\nu \in \mathbb{J}_{\tuple m}}F^{\tuple\nu}(E_{t_+,t_-})\big)\big).$$
By Proposition \ref{keylemma}, in order to prove the theorem,
it suffices to show that $\phi_m(E_{t_+,t_-})$ restricts to an isomorphism
  $$\bfH^{\Lambda_{t_+}}_{m_+}(Q^+)\otimes\bfH^{\Lambda_{t_-}}_{m_-}(Q^-)\xrightarrow{\sim} \End_{KG}((F')^{m_-}F^{m_+}(E_{t_+,t_-}))^\op.$$
However, by Brundan-Kleshchev-Rouquier equivalence (see \S \ref{subsec:BKR}),
the left side is isomorphic to
$\bfH^{q\,;\,\tuple\xi_{t_+}}_{K,m_+}
\otimes\bfH^{q\,;\,\tuple\xi_{t_-}}_{K,m_-}$, which is isomorphic to the right side by Theorem \ref{thm:HL-BC}.
\end{proof}

\smallskip

Recall that the minimal categorical representation $$\scrL(\Lambda):=\bigoplus_{\beta\in Q^+}\bfH_{\beta}^\Lambda(Q)\mod$$
 provides an action of $\frakA(\frakg_{\infty}).$
 Now we prove that $\scrQU_K$ is isomorphic to a direct sum of minimal categorical representations.

 \smallskip

Let $(KG_n,E_{t_+,t_-})\mod$ be the Serre subcategory of $\scrQU_K$ generated by the modules
$F^{\tuple\nu}(E_{t_+,t_-})$  for all $\tuple \nu\in \mathbb{J}^m$.
It can actually be generated by
 the modules $(F')^{m_-}F^{m_+}(E_{t_+,t_-})$ satisfying $m_++m_-=m$, since
 for a given 2-composition  $\tuple m=(m_+,m_-)\comp_2 m$ of $m$,
Proposition \ref{prop:isom} shows that all functors $F^{\tuple\nu}$ with $\tuple \nu\in \mathbb{J}_{\tuple m}$
 are isomorphic to $(F')^{m_-}F^{m_+}$. In particular,
$$\Irr((KG_n,E_{t_+,t_-})\mod)=\bigsqcup_{(m_+,m_-)\comp_2 m}\Irr(KG_n,E_{t_+,t_-}\otimes K_1^{m_+}\otimes K_\zeta^{m_-})$$
 We define $$\scrQU_{K,t_+,t_-}:=\bigoplus_{n\geqslant 0}(KG_n,E_{t_+,t_-})\mod,$$
and so
$$\scrQU_K=\bigoplus_{{t_+,t_-}\geqslant 0}\scrQU_{K,t_+,t_-}.$$
If $t_-=0$, then $\scrQU_{K,t_+,t_-}$ becomes the category $\scrU_{K,t_+}$ defined in \cite{DVV2}.

\smallskip


 The following result implies Theorem B.

\begin{theorem}\label{thm:ginfinityB}
 Let $t_+, t_- \geqslant 0$ and $\tuple\xi_{t_+}$ and $\tuple\xi_{t_-}$ be as in \eqref{parameter}.\begin{itemize}[leftmargin=8mm]
  \item[$\mathrm{(a)}$]
  The  functors $F_s,E_s$ for $s\in\scrI_{\infty}\sqcup\scrI'_\infty$ yield a representation of
  $\frakA(\frakg_{\infty})$ on $\scrQU_{K,t_+,t_-}$ which is isomorphic to $\scrL(\Lambda_{t_+,t_-})_\infty$.
  \item[$\mathrm{(b)}$]
  The map
  $$|\tuple\mu_+,\tuple\xi_{t_+}\rangle_\infty
  \otimes |\tuple\mu_-,\tuple\xi_{t_-}\rangle_\infty\mapsto [E_{\Theta_{t_+}(\tuple\mu_+),
  \Theta_{t_-}(\tuple\mu_-)}]$$ gives an
  $\frakg_\infty$-module isomorphism $$
  \bfF({\tuple\xi_{t_+}})_{\infty}\otimes
  \bfF({\tuple\xi_{t_-}})_{\infty} \simto [\scrQU_{K,t_+,t_-}].$$
  \end{itemize}
\end{theorem}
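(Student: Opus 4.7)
The plan is to adapt the strategy of \cite{DVV2} from the unipotent setting to the quadratic unipotent one, using as the new key input the identification of ramified Hecke algebras in Theorem \ref{thm:HL-BC} together with its globalization in Theorem \ref{thm:F^nu}. For part (a), the first step is to observe that every simple object of $\scrQU_{K,t_+,t_-}$ appears as a constituent of some $F^{\tuple\nu}(E_{t_+,t_-})$: by the description of quadratic unipotent characters in \S \ref{subsec:quchar}, every irreducible character in $\scrQU_{K,t_+,t_-}$ lies in a Harish-Chandra series above a pair of the form $(L_{r,1^m}, E_{t_+,t_-}\otimes K_1^{m_+}\otimes K_\zeta^{m_-})$, and all such constituents appear in $(F')^{m_-}F^{m_+}(E_{t_+,t_-})$, hence in $F^{\tuple\nu}(E_{t_+,t_-})$ for any $\tuple\nu\in\mathbb{J}_{\tuple m}$ by Proposition \ref{prop:isom}. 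I then define a functor $\Psi: \scrL(\Lambda_{t_+,t_-})\to \scrQU_{K,t_+,t_-}$ by $M\mapsto \big(\bigoplus_{\tuple\nu\in\mathbb{J}^m} F^{\tuple\nu}(E_{t_+,t_-})\big)\otimes_{\bfH_m^{\Lambda_{t_+,t_-}}(Q)} M$ for $M\in \bfH_m^{\Lambda_{t_+,t_-}}(Q)\mod$, using the algebra morphism supplied by Theorem \ref{thm:F^nu}. The functor $\Psi$ is fully faithful because the endomorphism algebras match on the nose, and it is essentially surjective by the preceding observation (note that both categories are semisimple in characteristic zero).

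The next task is to upgrade $\Psi$ to an isomorphism of $\frakA(\frakg_\infty)$-representations in the sense of Definition \ref{equ}, i.e.\ to produce natural isomorphisms $F_s\circ\Psi\simto\Psi\circ F_s^{\Lambda_{t_+,t_-}}$ intertwining $x_s$ and $\tau_{s,t}$. The existence of such isomorphisms is built into the definition of $\Psi$; the nontrivial verification is that the two representation data coincide under them. This reduces, via the block decomposition $\phi_m(E_{t_+,t_-})=\bigoplus_{\tuple m}\phi_m(E_{t_+,t_-})|_{e(\tuple m)\bfH_m(Q)}$, to checking the statement one block at a time, where Proposition \ref{keylemma} supplies the commutative square comparing $\Psi_{\tuple m}$ of \eqref{quiverconstruction} with $\Psi_{\tuple m}$ of \eqref{functorconstruction}. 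I expect this to be the main obstacle: the natural transformations $X, X', T, T', H, H'$ on $\scrQU_{K,t_+,t_-}$ have been constructed indirectly (through the Spin functor, the Morita equivalence $\mathrm{Res}^{\scrQU,(+)}$ of Proposition \ref{prop:res}, and the blockwise decomposition of $F_{n+1,n}$ in \S \ref{sub:explicit-repdatumSO}), so tracking each generator of the minimal categorification to the correct functorial intertwiner on the nose will require combining Proposition \ref{prop:isom}, Proposition \ref{keylemma}, and Theorem \ref{thm:SO-isos} carefully.

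For part (b), the isomorphism of part (a) induces a $\frakg_\infty$-module isomorphism $\bfL(\Lambda_{t_+,t_-})\simto [\scrQU_{K,t_+,t_-}]$ at the Grothendieck group level, so it remains to identify the left-hand side with $\bfF(\tuple\xi_{t_+})_\infty\otimes\bfF(\tuple\xi_{t_-})_\infty$. Because $K$ has characteristic zero, both $\scrI_\infty=q^{\bbZ}\sqcup(-q)^{\bbZ}$ and $\scrI'_\infty$ split as disjoint unions of two $A_\infty$ quivers, and the two entries of $\tuple\xi_{t_\pm}=((-q)^{t_\pm},(-q)^{-1-t_\pm})$ lie in different connected components (one in $q^{\bbZ}$, the other in $(-q)^{\bbZ}$, since $t_\pm$ and $-1-t_\pm$ have opposite parities). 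Proposition \ref{prop:tensorfock} therefore expresses $\bfF(\tuple\xi_{t_\pm})_\infty$ as a tensor product of two level-one Fock spaces, each of which equals $\bfL(\Lambda_\bullet)$ in the $A_\infty$ case, yielding the desired isomorphism $\bfF(\tuple\xi_{t_+})_\infty\otimes\bfF(\tuple\xi_{t_-})_\infty\cong\bfL(\Lambda_{t_+,t_-})$. The explicit matching of basis elements $|\tuple\mu_+,\tuple\xi_{t_+}\rangle\otimes|\tuple\mu_-,\tuple\xi_{t_-}\rangle \mapsto [E_{\Theta_{t_+}(\tuple\mu_+),\Theta_{t_-}(\tuple\mu_-)}]$ then follows from the bijection $\Irr(KG_n,E_{t_+,t_-}\otimes K_1^{m_+}\otimes K_\zeta^{m_-})\leftrightarrow \Irr(\bfH^{q;\tuple\xi_{t_+}}_{K,m_+}\otimes \bfH^{q;\tuple\xi_{t_-}}_{K,m_-})$ of \S \ref{subsec:quchar} together with the standard identification of Specht modules of cyclotomic Hecke algebras with standard monomials in the Fock space.
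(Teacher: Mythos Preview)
Your proposal is correct and follows the same overall route as the paper: both define the functor $\Psi$ (the paper writes $\frakE$) from $\scrL(\Lambda_{t_+,t_-})$ to $\scrQU_{K,t_+,t_-}$ by tensoring with $\bigoplus_{\tuple\nu\in\mathbb{J}^m}F^{\tuple\nu}(E_{t_+,t_-})$, establish it is an equivalence of semisimple $K$-categories via Theorem~\ref{thm:F^nu}, and then upgrade it to an isomorphism of $\frakA(\frakg_\infty)$-representations; part~(b) then reduces in both cases to the identification $\bfF(\tuple\xi_{t_+})_\infty\otimes\bfF(\tuple\xi_{t_-})_\infty=\bfL(\Lambda_{t_+,t_-})_\infty$, valid because the quiver is a union of $A_\infty$'s.

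The one tactical difference is in the intertwining step, where you anticipate complications from tracking through the Spin/Res/blockwise constructions and propose to work directly with the decomposed data $(F_s,x_s,\tau_{s,t})$ block-by-block. The paper sidesteps this by first checking the intertwining at the level of the \emph{undecomposed} functors $F^\pm$ and natural transformations $X^\pm,\,T^{\alpha\beta}$: the isomorphism $\Phi^\pm:\frakE F^\pm\simto F^\pm\frakE$ comes from the fact that the idempotent $\sum_{\tuple\nu\in\mathbb{J}^m}e(\tuple\nu,\pm)$ cuts out precisely the summand $F^\pm\big(\bigoplus_{\tuple\nu}F^{\tuple\nu}(E_{t_+,t_-})\big)$ inside $\bigoplus_{\tuple\nu\in\mathbb{J}^{m+1}}F^{\tuple\nu}(E_{t_+,t_-})$, and then for any $z\in\End\big(\bigoplus_{\tuple w\in\mathbb{J}^d}F^{\tuple w}\big)$ the two actions $\phi_m(z)\otimes 1$ and $1\otimes z$ coincide tautologically by associativity of the tensor product. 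Only afterwards does one restrict to the eigenspace summands $F_s$ and invoke the passage between the two forms of the representation datum. This makes the step you flagged as ``the main obstacle'' essentially formal; neither Proposition~\ref{keylemma} nor Theorem~\ref{thm:SO-isos} is needed at this point.
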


\begin{proof} We first mention that the
$\frakg_\infty$-module $\bfF(\tuple\xi_{t_+})_\infty \otimes \bfF(\tuple\xi_{t_-})_\infty$ in (b)
is well-defined, since the pair $(\tuple\xi_{t_+},\tuple\xi_{t_-})$ belongs to $(\scrI_\infty)^2\times(\scrI'_\infty)^2$.

\smallskip

Note that $\scrQU_{K,t_+,t_-}$
is stable by the functors $F$ and $F'$. Also, by the Mackey formula and \cite[Proposition~2.2]{GHJ}
$\scrQU_{K,t_+,t_-}$ is  stable by their adjoint functors $E$ and $E'$. Hence
the representation datum
$(E,F,X,T;E',F',X',T';H,H')$ on $\scrQU_{K}$ constructed in \S \ref{sub:explicit-repdatumSO}
 restricts to a representation datum on $\scrQU_{K,t_+,t_-}$.
Write $\bfH^{\Lambda_{t_+,t_-}}_{m}:=\bfH^{\Lambda_{t_+,t_-}}_{m}(Q)$ and the functor
$$\frakE_{m}:=(\bigoplus\limits _{\tuple\nu\in  \mathbb{J}^m}F^{\tuple\nu}(E_{t_+,t_-}))
\otimes_{\bfH^{\Lambda_{t_+,t_-}}_{m}}-.$$
We have an equivalence of semisimple abelian $K$-categories
$$\frakE:=\bigoplus\limits _{m\in\bbZ}\frakE_{m}\, :\,
\scrL(\Lambda_{{t_+},{t_-}})_\infty \simto \scrQU_{K,t_+,t_-}.$$
We shall show that it is actually an isomorphism of $\frakg_{\infty}$-representations.

\smallskip

We first claim that there are isomorphisms of functors
$$\Phi^{\pm}:\frakE F^{\pm}\xrightarrow{\sim} F^{\pm}\frakE
~\mbox{and}~ \Psi^{\pm}:\frakE E^{\pm}\xrightarrow{\sim} E^{\pm}\frakE$$
between
$\scrL(\Lambda_{t_+,t_-})_{\infty}$ and $\scrQU_{K,t_+,t_-}$.
In fact,
the functor
  $$F^{\pm}\frakE_{m}\ :\,\bfH_m^{\Lambda_{t_+,t_-}}\mod\longrightarrow(KG_{n+1},E_{t_+,t_-})\mod$$
is obtained by tensoring
with the $(KG_{n+1},\bfH_m^{\Lambda_{t_+,t_-}})$-bimodule
$F^{\pm}\big(\bigoplus\limits_{\tuple\nu\in  \mathbb{J}^m}F^{\tuple\nu}(E_{t_+,t_-})\big).$
On the other hand, the functor
  $$\frakE_{m}F^{\pm}\ :\,\bfH_m^{\Lambda_{t_+,t_-}}\mod\longrightarrow(KG_{n+1},E_{t_+,t_-})\mod$$
is obtained by tensoring with the $(KG_{n+1},\bfH^{\Lambda_{t_+,t_-}}_{m})$-bimodule
$$\big(\bigoplus_{\tuple\nu\in  \mathbb{J}^{m+1}}F^{\tuple\nu}(E_{t_+,t_-})\big)\otimes_{\bfH^{\Lambda_{t_+,t_-}}_{m+1}}\bfH^{\Lambda_{t_+,t_-}}_{m+1}(\sum_{\tuple\nu\in \mathbb{J}^m}e(\tuple \nu,\pm))\otimes_{\bfH^{\Lambda_{t_+,t_-}}_{m}}\bfH^{\Lambda_{t_+,t_-}}_{m}$$
which is isomorphic to $F^{\pm}\big(\bigoplus\limits_{\tuple\nu\in  \mathbb{J}^m}F^{\tuple\nu}(E_{t_+,t_-})\big)$ since
 the idempotent $\sum _{\tuple\nu\in \mathbb{J}^m}e(\tuple\nu,\pm)$ kills the summand $F^{\mp}\big(\bigoplus\limits_{\tuple\nu\in  \mathbb{J}^m}F^{\tuple\nu}(E_{t_+,t_-})\big)$ in $\big(\bigoplus_{\tuple\nu\in  \mathbb{J}^{m+1}}F^{\tuple\nu}(E_{t_+,t_-})\big),$
 where $$e(\tuple\nu,\,+):=\sum\limits_{\tuple k\in \K^{\tuple\nu},i\in I} e(\tuple k, i)~\mbox{and}~
e(\tuple\nu,-):=\sum\limits_{\tuple k\in \K^{\tuple\nu},i'\in I'} e(\tuple k, i').$$
  More precisely,
  the left actions of $KG_{n+1}$
are same in both cases,
  while the
right action of $\bfH^{\Lambda_{t_+,t_-}}_{m}$
comes from the right action of
$\bfH^{\Lambda_{t_+,t_-}}_m$ on $\big(\bigoplus\limits_{\tuple\nu\in  \mathbb{J}^m}F^{\tuple\nu}(E_{t_+,t_-})\big)$
 and the functoriality
of $F$ in the first case,
and from the right action of
 $\bfH^{\Lambda_{t_+,t_-}}_{m+1}$ on $\big(\bigoplus\limits_{\tuple\nu\in  \mathbb{J}^{m+1}}F^{\tuple\nu}(E_{t_+,t_-})\big)$
 and the
inclusion  $\bfH^{\Lambda_{t_+,t_-}}_m \subset\bfH^{\Lambda_{t_+,t_-}}_{m+1}$ in the second case.
We denote the above $(KG_{n+1},\bfH^{\Lambda_{t_+,t_-}}_{m})$-bimodule isomorphism by $\Phi^{\pm},$
so we get isomorphisms of the functors
$\Phi^{\pm}:\frakE F^{\pm}\xrightarrow{\sim} F^{\pm}\frakE.$
Now the isomorphisms $\Psi^{\pm}:\frakE E^{\pm}\xrightarrow{\sim} E^{\pm}\frakE$ follow by adjunction.

\smallskip

Next, we prove that
\begin{itemize}[leftmargin=8mm]
  \item[(i)] the isomorphisms  $\Phi^{\pm}:\frakE F^{\pm}\xrightarrow{\sim} F^{\pm}\frakE$
   intertwine the endomorphisms $\frakE X^{\pm}$ and $X^{\pm}\frakE,$ and
  \item[(ii)] the isomorphisms  $ \Phi^{\alpha,\beta}:=F^{\alpha} \Phi^{\beta}\circ\Phi^{\alpha} F^{\beta}:\frakE F^{\alpha}F^{\beta}\xrightarrow{\sim} F^{\alpha}F^{\beta}\frakE$ and
      $ \Phi^{\beta,\alpha}:=F^{\beta} \Phi^{\alpha}\circ\Phi^{\beta} F^{\alpha}:\frakE F^{\beta}F^{\alpha}\xrightarrow{\sim} F^{\beta}F^{\alpha}\frakE$ satisfy the commutative diagram:
      $$\xymatrix{\frakE F^{\beta}F^{\alpha}\ar[d]^{\frakE T^{ \alpha\beta}}
\ar[r]_{\sim}^{\Phi^{\beta,\alpha}} & F^{\beta}F^{\alpha}\frakE \ar[d]^{T^{\alpha\beta}\frakE}\\
 \frakE F^{\alpha}F^{\beta}\ar[r]_{\sim}^{\Phi^{\alpha,\beta}} &F^{\alpha}F^{\beta}\frakE}$$
\end{itemize}
for any $\alpha, \beta\in \{\pm\}.$ Here we write $X^{+}:=X,$ $X^{-}=X',$ $T^{+,\,+}=T,$ $T^{-,-}=T',$ $T^{+,-}=H'$ and $T^{-,\,+}=H$.
 Indeed, let $z\in\End(\bigoplus\limits_{\tuple w\in \mathbb{J}^{d}}F^{\tuple w})$ for some $d$.
 Let $M\in \bfH^{\Lambda_{t_+,t_-}}_m\mod$, so that $\bigoplus\limits_{\tuple w\in \mathbb{J}^{d}}F^{\tuple w}(M)\in \bfH^{\Lambda_{t_+,t_-}}_{m+d}\mod$.
The action of $z$ on
$\bigoplus\limits_{\tuple w\in \mathbb{J}^{d}}F^{\tuple w}\frakE_{m}(M)$ is represented
by the action of $\phi_{m}(z)\otimes 1$ on
$$\bigoplus\limits_{\tuple w\in \mathbb{J}^{d}}F^{\tuple w}(\bigoplus\limits_{\tuple\nu\in  \mathbb{J}^m}F^{\tuple \nu}(E_{t_+,t_-}))\otimes_{\bfH^{\Lambda_{t_+,t_-}}_{m}}M$$
which is equal to
$$\bigoplus\limits_{\tuple w\in
\mathbb{J}^{d},\tuple\nu\in \mathbb{J}^m}F^{\tuple w}F^{\tuple\nu}(E_{t_+,t_-}\otimes_{\bfH^{\Lambda_{t_+,t_-}}_{m+1}}
\bfH^{\Lambda_{t_+,t_-}}_{m+1})\otimes_{\bfH^{\Lambda_{t_+,t_-}}_{m}}M$$
or
$$\bigoplus\limits_{\tuple u\in \mathbb{J}^{d+m}}F^{\tuple u}(E_{t_+,t_-})\otimes_{\bfH^{\Lambda_{t_+,t_-}}_{d+m}}\big(\bigoplus\limits_{\tuple w\in \mathbb{J}^{d}}F^{\tuple w}(M)\big).$$
 The action of $z$ on
$\frakE_{m+w}\big(\bigoplus\limits_{\tuple w\in \mathbb{J}^{d}}F^{\tuple w}(M)\big)$ is represented
by the action of $1\otimes z$ on $\bigoplus\limits_{\tuple w\in \mathbb{J}^{d}}F^{\tuple w}(M)$ in
$F^{d+m}(E_{t_+,t_-})\otimes_{\bfH^{\Lambda_{t_+,t_-}}_{d+m}}F^d(M).$
They obviously coincide.
Hence (i) and (ii) follow
by taking $z=X^{\pm}\in \End(F^{\pm})$
and  $z=T^{\alpha\beta}\in \Hom(F^{\alpha}F^{\beta},F^{\beta}F^{\alpha})$ for $\alpha,\beta\in\{\pm\}$.

\smallskip
Recall that we have the decompositions
$$F=\bigoplus_{s\in I_{\infty}} F_s,\ \ \ \
F'=\bigoplus_{s'\in I'_{\infty}} F'_{s'}.$$
Restricting to the direct summand $F_s,$
we get  the isomorphism $\Phi_{s}:\frakE F_{s}\xrightarrow{\sim} F_{s}\frakE.$
Moreover, by the equivalence of  representation data $$(E,F,X,T;E',F',X',T';H,H')$$  and $$\big(\{E_s\}_{s\in \K_\infty},\{F_s\}_{s\in\K_\infty},\{x_s\}_{s\in \K_\infty},\{\tau_{s,t}\}_{s,t\in \K_\infty}\big),$$
 we conclude that
 \begin{itemize}[leftmargin=8mm]
  \item[(i)] the isomorphism  $\Phi_s:\frakE F_{s}\xrightarrow{\sim} F_{s}\frakE$
   intertwines the endomorphisms $\frakE x_{s}$ and $x_{s}\frakE,$ and
  \item[(ii)] the isomorphisms  $\Phi_{t,s}:= F_t \Phi_s\circ \Phi_t F_s:
\frakE F_{t}F_{s}\xrightarrow{\sim} F_{t}F_{s}\frakE$ and $ \Phi_{s,t}:=F_s \Phi_t\circ\Phi_s F_t:\frakE F_{s}F_{t}\xrightarrow{\sim} F_{s}F_{t}\frakE$ satisfy the commutative diagram:
      $$\xymatrix{\frakE F_{t}F_{s}\ar[d]^{\frakE \tau_{st}}
\ar[r]_{\sim}^{\Phi_{t,s}} & F_{t}F_{s}\frakE \ar[d]^{\tau_{st}\frakE}\\
 \frakE F_{s}F_{t}\ar[r]_{\sim}^{\Phi_{s,t}} &F_{s}F_{t}\frakE.}$$
\end{itemize}
Thus we have proved that $\frakE$ is an isomorphism of $\frakA(\frakg_{\infty})$-representations, as wanted.

\smallskip

Finally, we equip $\scrQU_{K,t_+,t_-}$ with the
$\frakA(\frakg_{\infty})$-representation which is transferred from the $\frakg_\infty$-representation on
$\scrL(\Lambda_{{\tuple\xi_t}})_{\infty}$ via the equivalence $\frakE$.
We deduce that $\frakE$ induces on the Grothendieck groups a $\frakg_{\infty}$-module isomorphism
$\bfL(\Lambda_{t_+,t_-})_\infty = [\scrL(\Lambda_{t_+,t_-})_{\infty}] \simto [\scrQU_{K,t_+,t_-}]$.
Thus
the theorem
 follows from Theorem \ref{thm:HL-BC} and the $\frakg_{\infty}$-module isomorphism
$\bfF({{\tuple\xi_{t_+}}})_{\infty}\bigotimes\bfF({{\tuple\xi_{t_-}}})_{\infty}=\bfL(\Lambda_{{t_+,t_-}})_{\infty}$.
\end{proof}

\begin{remark}\label{rmk:Morinotcatrep}
(1) We remark here that Morita equivalences of categories may not preserve categorical representations
in general. To see this, we define the category $$\scrL(\Lambda_{\tuple\xi_{t_+}})\otimes
\scrL(\Lambda_{\tuple\xi_{t_-}}):=\bigoplus_{m_+\in\bbN,m_-\in \bbN}
\bfH^{q;\,\tuple\xi_{t_+} }_{K,m_+}\otimes
\bfH^{q;\,\tuple\xi_{t_-} }_{K,m_-}\mod.$$
By \cite{DVV2}, $\scrL(\Lambda_{\tuple\xi_{t_+}})\otimes
\scrL(\Lambda_{\tuple\xi_{t_-}})$ is Morita equivalent to $\scrU_{K,t_+}\otimes\scrU_{K,t_-}$.
Through the functor
$\frakC=\bigoplus\limits_{m_+,m_-\geqs 0}\Hom_{KG_{r+m_++m_-}}((F')^{m_-}F^{m_+}(E_{t_+,t_-}),-)$,
  we know that  $\scrQU_{K,t_+,t_-}$ and $\scrL(\Lambda_{{t_+},{t_-}})_{\infty}$
are equivalent  to the category
 $\scrL(\Lambda_{\tuple \xi_{t_+}})\otimes \scrL(\Lambda_{\tuple\xi_{t_-}})$.
 So $\scrQU_K$ and $\scrU_K\otimes\scrU_K$ are Morita equivalent.
 However,  the latter does not have an
 $\frak{A}(\frak{g}_\infty)$-representation structure.
 \smallskip

 (2) In the modular situation, it is conjectured by Brou\'{e} that $\scrQU_k$ and $\scrU_k\otimes\scrU_k$ are also Morita equivalent.

\end{remark}

\subsection{Categorical action on $\scrQU_k$\label{sec:uk-typeBC}}
Parallel to \cite[\S 6.5]{DVV2},  we consider quadratic unipotent representations in positive characteristic.

\smallskip

\subsubsection{The $\mathfrak{A}(\frakg'_{2d})$-representation on $\scrQU_k$\label{sec:QU2d}}

We have assumed that $\ell\nmid q$ and both $\ell$ and $q$ are odd.
Also, recall that
$d$ (resp. $f$) is the order of $q^2$ (resp. $q$) modulo $\ell$.

\begin{definition} Let $\K_{2d}$ be the quiver obtained from $\K_\infty$ by specialization $\mathcal{O} \to k$.
We define $\frakg'_{2d}$ to be the derived Kac-Moody algebra associated to the quiver $\K_{2d}$.\qed
\end{definition}

\smallskip

 (1) If $f$ is odd then $f=d$, and $-1$ cannot be expressed as a power of $q$ in $k$.
In this case, the quiver $\K_{2d}$ can be decomposed as follows:
$$\K_{2d}=\scrI_{d}\,\sqcup\, \scrI'_{d}=(I_{d,1}\,\sqcup\, I_{d,2})\,\sqcup\, (I'_{d,1}\,\sqcup\, I'_{d,2}),$$
where
$I_{d,1}=q^\bbZ$, $I_{d,2}=-q^\bbZ$,
$I'_{d,1}=q^\bbZ$ and
$I'_{d,2}=-q^{\bbZ}$ are all cyclic quivers
 of size $d$
 and $\scrI_{d}=I_{d,1}\,\sqcup\, I_{d,2}$ and $ \scrI'_{d}=I'_{d,1}\,\sqcup\, I'_{d,2}$.
This yields a Lie algebra isomorphism
$$
({\widetilde{\fraks\frakl}_d}^{\oplus 2})^{\oplus 2}\simeq\fraks\frakl_{\scrI_{d}}\oplus\,\fraks\frakl_{\scrI_{d}'}=\frakg'_{2d}$$
such that
$$\begin{array}{rclcrcl}
  (\alpha^\vee_k,0,0,0) & \mapsto & \alpha^\vee_{q^{k}}\times\{0\}, & \quad\quad & (0,\alpha^\vee_k,0,0) &  \mapsto & \alpha^\vee_{-q^{k}}\times\{0\}, \\
  (0,0,\alpha^\vee_k,0) & \mapsto & \{0\}\times\alpha^\vee_{q^{k}}, & \quad\quad & (0,0,0,\alpha^\vee_k) & \mapsto &\{0\}\times \alpha^\vee_{-q^{k}}.
\end{array}
$$

\smallskip

(2) If $f$ is even, then $f=2d$ and $q^d = -1$. Hence
$$\K_{2d} = \scrI_{2d}\sqcup \scrI'_{2d}$$
is a union of 2 cyclic quiver of size $2d$ and we have an isomorphism
$${\widetilde{\fraks\frakl}_f}^{\oplus 2}\simeq\fraks\frakl_{I_{2d}}\oplus\fraks\frakl_{I'_{2d}}\simeq\frakg'_{2d}$$
such that $(\alpha^\vee_k,0)\mapsto\alpha^\vee_{q^{k}}\times\{0\}$ and $(0,\alpha^\vee_{k'})\mapsto\{0\}\times\alpha^\vee_{q^{k}}$.

\smallskip

The specialization from $\mathcal{O}\subset K$ to $k$ yields a morphism of quivers
$\sp\,:\,\K_\infty\to \K_{2d}$ and a morphism of abelian groups $\P_\infty\to\P_{2d}$
such that $\Lambda_i\mapsto\Lambda_{\sp(i)}$ and $\Lambda_{i'}\mapsto\Lambda_{\sp(i')}$.
The infinite sums
$$E_i=\bigoplus_{\sp(j)=i}E_j,\quad E'_{i'}=\bigoplus_{\sp(j')=i'}E'_{j'}, \quad
F_i=\bigoplus_{\sp(j)=i}F_j, \quad
 F'_{i'}=\bigoplus_{\sp(j')=i'}F'_{j'}$$
give the well-defined operators
of $\frakg'_{2d}$ on $\bfF({\tuple\xi_{t_+}})_{\infty}\otimes\bfF({\tuple\xi_{t_-}})_{\infty}$.
This yields a representation of $\frakg'_{2d}$ on $\bfF({\tuple\xi_{t_+}})_{\infty}\otimes\bfF({\tuple\xi_{t_-}})_{\infty} $ such that
the linear map

$$\begin{array}{rrcl}
  \sp\,:\, & \Res_{\frakg'_{2d}}^{\frakg_\infty}(\bfF({\tuple\xi_{t_+}})_{\infty}
  \otimes\bfF({\tuple\xi_{t_-}})_{\infty}) &  \to
   & \bfF({\tuple\xi_{t_+}})_{2d}\otimes\bfF({\tuple\xi_{t_-}})_{2d} \\
    & |\tuple\mu_+,\tuple\xi_{t_+}\rangle_\infty\otimes |\tuple\mu_-,\tuple\xi_{t_-}\rangle_\infty & \mapsto & |\tuple\mu_+,\tuple\xi_{t_+}
    \rangle_{2d}\otimes |\tuple\mu_-,\tuple\xi_{t_-}\rangle_{2d}
\end{array}
$$\\
is a $\frakg'_{2d}$-equivariant isomorphism.

\smallskip

Under the map $d_\scrQU:[\scrQU_K]\to[\scrQU_k]$ and the isomorphism
$$\bigoplus_{t_+,t_-\in\bbN}(\bfF({\tuple\xi_{t_+}})_{\infty}\otimes
\bfF({\tuple\xi_{t_-}})_{\infty})\simto [\scrQU_{K}]$$
in Theorem \ref{thm:ginfinityB}, the map $\sp$ endows $[\scrQU_k]$ with
a representation of $\frakg'_{2d}$ which is compatible with the representation associated with
the representation datum constructed in \S \ref{sub:explicit-repdatumSO}. More precisely, we have

\begin{proposition}\label{prop:lreductionB}
For each $i \in \scrI_{2d}$ and $i' \in \scrI'_{2d}$, let $k E_i$ and $k F_i$
be the generalized $i$-eigenspace of $X$ on $k E$ and $k F$, respectively,
and let $k E'_{i'}$ and $k F'_{i'}$
be the generalized $i'$-eigenspace of $X'$ on $k E'$ and $k F'$, respectively. Then
  \begin{itemize}[leftmargin=8mm]
    \item[$\mathrm{(a)}$] $[k E_i],$ $[k E'_{i'}],$ $[k F_i],$ $ [k F'_{i'}]$ endow $[\scrQU_k]$ with a structure
    of $\frakg'_{2d}$-module,
    \item[$\mathrm{(b)}$]the decomposition map $d_\scrQU$
     yields a $\frakg'_{2d}$-module isomorphism
     $$ \mathrm{Res}_{\frakg'_{2d}}^{\frakg_\infty} \, [\scrQU_K]\simto [\scrQU_k],$$ and
      \item[$\mathrm{(c)}$] the map
     $|\tuple\mu_+,\tuple\xi_{t_+}\rangle_{2d}\otimes |\tuple\mu_-,\tuple\xi_{t_-}\rangle_{2d} \mapsto d_\scrQU([E_{\Theta_{t_+}(\tuple\mu_+),\Theta_{t_-}(\tuple\mu_-)}])$ yields a
$\frakg'_{2d}$-module isomorphism
$$\bigoplus_{t_+,t_-\in\bbN}(\bfF({\tuple\xi_{t_+}})_{2d}\otimes\bfF({\tuple\xi_{t_-}})_{2d})  \simto [\scrQU_k].$$

  \end{itemize}
\end{proposition}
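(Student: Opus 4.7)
The plan is to deduce Proposition \ref{prop:lreductionB} from its characteristic zero analogue, Theorem \ref{thm:ginfinityB}, by means of the specialization map $\sp: \K_\infty \to \K_{2d}$ described just before the proposition statement.

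For part (a), I would invoke the $\mathfrak{A}(\fraks\frakl_{\K_q})$-representation datum on $\scrQU_k$ constructed in \S\ref{sub:explicit-repdatumSO} (see Theorem \ref{thm:repdatumSO}). This provides the eigenspace decompositions $kF = \bigoplus_{i\in \scrI_{2d}} kF_i$, $kE = \bigoplus_{i\in \scrI_{2d}} kE_i$, and the primed analogues, together with natural transformations $x_s, \tau_{st}$ satisfying the quiver Hecke relations. The set of generalized eigenvalues of $X$ on $kF$ and of $X'$ on $kF'$ is obtained from the characteristic zero eigenvalue set $\scrI_\infty = q^{\bbZ} \sqcup (-q)^{\bbZ}$ (determined in Theorem \ref{thm:HL-BC} as powers of $-q$) by reduction modulo $\ell$, giving exactly $\scrI_{2d} = \sp(\scrI_\infty)$ and similarly for $\scrI'_{2d}$. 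Integrability of the action transfers from characteristic zero, so Theorem \ref{def:2kacmoodyrep} (via the type $A$ reformulation in Theorem \ref{th:slcat2rep}) upgrades this datum to a $\frakg'_{2d}$-module structure on $[\scrQU_k]$.

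For part (b), the key observation is that the functors $E, F, E', F'$ are represented by $(RG_{r+1}, RG_r)$-bimodules of the form $RG_{r+1}\cdot e_{r+1,r}$ and $RG_{r+1}\cdot e'_{r+1,r}$, which are free over $R$ on both sides; hence they commute with the scalar extension $\mathcal{O} \to k$. Consequently the decomposition map $d_\scrQU$ intertwines $[F]$ with $[kF]$ (and similarly for $E,E',F'$) on Grothendieck groups. What must then be checked is that the eigenspace decomposition is compatible with this: the characteristic polynomial of $X$ on each summand is defined over $\mathcal{O}$ (by the $\bfH_{K,m}^q$-module structure coming from Theorem \ref{thm:HL-BC}) and its reduction mod $\ell$ factors according to the fibres of $\sp$. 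Thus $d_\scrQU([F_j]) \subseteq \sum_{\sp(i')=\sp(j)} [kF_{i'}]$, and summing over the fibre of $\sp$ yields $[kF_i] = \sum_{\sp(j)=i} d_\scrQU([F_j])$. Comparing with the definition of the restriction $\mathrm{Res}^{\frakg_\infty}_{\frakg'_{2d}}$ given before the proposition (where $E_i = \bigoplus_{\sp(j)=i} E_j$, etc.), this says exactly that $d_\scrQU$ is a $\frakg'_{2d}$-module isomorphism.

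For part (c), I would combine part (b) with Theorem \ref{thm:ginfinityB}(b): the latter provides the $\frakg_\infty$-module isomorphism $\bigoplus_{t_+,t_-}\bfF(\tuple\xi_{t_+})_\infty \otimes \bfF(\tuple\xi_{t_-})_\infty \simto [\scrQU_K]$ sending $|\tuple\mu_+,\tuple\xi_{t_+}\rangle_\infty \otimes |\tuple\mu_-,\tuple\xi_{t_-}\rangle_\infty$ to $[E_{\Theta_{t_+}(\tuple\mu_+),\Theta_{t_-}(\tuple\mu_-)}]$. Restricting along $\frakg'_{2d}\hookrightarrow \frakg_\infty$ and composing with the $\frakg'_{2d}$-equivariant map $\sp$ on Fock spaces (which sends $|\tuple\mu,\tuple\xi\rangle_\infty$ to $|\tuple\mu,\tuple\xi\rangle_{2d}$) and with the inverse of $d_\scrQU$ yields the desired isomorphism; one then checks the image of a standard basis vector is the class of the $\ell$-reduction of $E_{\Theta_{t_+}(\tuple\mu_+),\Theta_{t_-}(\tuple\mu_-)}$.

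The main obstacle is the compatibility statement used in part (b), namely that the generalized eigenspace decomposition of $X$ and $X'$ is preserved by reduction modulo $\ell$ along the specialization of eigenvalues. This is not automatic from Morita/bimodule generalities (cf.\ Remark \ref{rmk:Morinotcatrep}) and genuinely relies on the explicit description of eigenvalues as powers of $-q$ in Theorem \ref{thm:HL-BC}, which ensures that the $\bfH^{q;\tuple\xi_{t_\pm}}_{\mathcal O, m_\pm}$-action on the $\mathcal O$-lattice interpolates the $K$- and $k$-actions and that the $i$-eigenspace over $K$ specializes to the $\sp(i)$-eigenspace over $k$.
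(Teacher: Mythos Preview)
Your proposal is correct and follows the same route as the paper: deduce everything from the characteristic-zero statement Theorem~\ref{thm:ginfinityB} by transporting along the decomposition map $d_\scrQU$ and the specialization $\sp$. The paper's own proof is a two-line argument that cites Theorem~\ref{basicset} to say $d_\scrQU$ is a linear isomorphism and then invokes Theorem~\ref{thm:ginfinityB}; your write-up is a careful unpacking of exactly what that invocation entails, particularly the compatibility of the generalized eigenspace decomposition with reduction modulo~$\ell$, which the paper leaves implicit in the discussion of $\sp$ preceding the proposition.
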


\begin{proof} Since $\ell$ is odd, it follows from Theorem \ref{basicset} that the decomposition map $d_\scrQU$
is a vector space isomorphism. Hence the proposition holds by Theorem \ref{thm:ginfinityB}.
\end{proof}

\smallskip

\begin{theorem}\label{thm:B} For odd $\ell$ and $q$ with $\ell \nmid q(q^2-1)$,
the representation datum  on
$\scrQU_k$ constructed in \S \ref{sub:explicit-repdatumSO}
yields an $\frakA(\frakg'_{2d})$-representation such that
the decomposition map $d_\scrQU\,:\,[\scrQU_K]\to[\scrQU_k]$ intertwines the representations
of  $\frakg_{\infty}$ and $\frakg'_{2d}$.
There is a $\frakg'_{2d}$-module isomorphism
$$\bigoplus_{t_+,t_-\in\bbN}(\bfF({\tuple\xi_{t_+}})_{2d}\otimes\bfF({\tuple\xi_{t_-}})_{2d})  \simto [\scrQU_k]$$ sending
$|\tuple\mu_+,\tuple\xi_{t_+}\rangle_{2d}\otimes |\tuple\mu_-,\tuple\xi_{t_-}\rangle_{2d}$ to $ d_\scrQU([E_{\Theta_{t_+}(\tuple\mu_+),\Theta_{t_-}(\tuple\mu_-)}])$.
In particular, the classes of the simple quadratic unipotent modules in $[\scrQU_k]$
are weight vectors for the $\frakg'_{2d}$-action.

\end{theorem}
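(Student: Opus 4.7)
The plan is to deduce Theorem \ref{thm:B} by combining three ingredients already established earlier: the representation datum on $\scrQU_k$ from \S\ref{sub:explicit-repdatumSO} (Theorem~\ref{thm:repdatumSO}), the $\frakg'_{2d}$-module structure on $[\scrQU_k]$ obtained by specialisation (Proposition~\ref{prop:lreductionB}), and Rouquier's recognition criterion (Theorem~\ref{def:2kacmoodyrep}). The only genuine issue is verifying the compatibility of the datum with a weight-space decomposition of $\scrQU_k$ indexed by $\P_{\K_{2d}}$.

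First, I would recall that Theorem~\ref{thm:repdatumSO} gives an $\mathfrak{A}(\fraks\frakl_{\K_q})$-representation datum $\bigl(\{E_s\},\{F_s\},\{x_s\},\{\tau_{s,t}\}\bigr)_{s,t\in\K_q}$ on $\scrQU_R$ for $R\in\{K,\mathcal O,k\}$, the quiver $\K_q$ being the disjoint union of the generalised eigen-quivers of $X$ on $F$ and of $X'$ on $F'$. Specialising to $R=k$ and collapsing generalised eigenvalues along $\sp:\K_\infty\to\K_{2d}$ (see the discussion preceding Proposition~\ref{prop:lreductionB}), the datum restricts/regroups to one indexed by $\K_{2d}$: for $i\in\scrI_{2d}$ put $kF_i=\bigoplus_{\sp(j)=i}kF_j$, and similarly for $E_i$, $F'_{i'}$, $E'_{i'}$. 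Because the quiver Hecke relations of Theorem~\ref{thm:repdatumKq} are polynomial in $X$, $X'$ and $T$, $T'$, $H$, $H'$, regrouping summands does not destroy them, and one obtains an $\frakA(\fraks\frakl_{\K_{2d}})=\frakA(\frakg'_{2d})$-representation datum on $\scrQU_k$.

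Second, I would produce the weight decomposition. By the parametrisation of quadratic unipotent blocks in \S\ref{subsec:SOqublock}--\S\ref{subsec:SOblock} (Theorem~\ref{thm:isoblock-FS}), each isolated $\ell$-block of $G_n$ is labelled by a tuple $(\Delta_+\times\Delta_-,w_+,w_-)$ with $\Delta_\pm$ a $d$-core/$d$-cocore; writing
\[
\scrQU_k=\bigoplus_{\omega\in\P_{\K_{2d}}}(\scrQU_k)_\omega
\]
with $(\scrQU_k)_\omega$ the sum of blocks whose associated Fock-space weight (via the bijection in Proposition~\ref{prop:lreductionB}(c)) equals $\omega$, one recovers a grading that is compatible with $[E_s]$ and $[F_s]$ by construction, and for which $[(\scrQU_k)_\omega]=[\scrQU_k]_\omega$. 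Proposition~\ref{prop:lreductionB}(a) then delivers the key input of Theorem~\ref{def:2kacmoodyrep}: the operators $[kE_i],[kE'_{i'}],[kF_i],[kF'_{i'}]$ endow $[\scrQU_k]$ with the structure of an integrable $\frakg'_{2d}$-module, and the local nilpotence of $E_s,F_s$ on $\scrQU_k$ follows because on each block the corresponding cyclotomic quiver Hecke algebra has only finitely many non-trivial residue-$i$ addable/removable nodes. With these verified, Theorem~\ref{def:2kacmoodyrep} upgrades the datum to an $\frakA(\frakg'_{2d})$-representation.

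Finally, the $\frakg'_{2d}$-module isomorphism $\bigoplus_{t_+,t_-}\bigl(\bfF(\tuple\xi_{t_+})_{2d}\otimes\bfF(\tuple\xi_{t_-})_{2d}\bigr)\simto[\scrQU_k]$ and the claim that $d_\scrQU$ intertwines the $\frakg_\infty$- and $\frakg'_{2d}$-actions are literally parts (b), (c) of Proposition~\ref{prop:lreductionB}, combined with Theorem~\ref{thm:ginfinityB}. The weight-vector statement for simple modules is then immediate: by Theorem~\ref{basicset} (odd $\ell$, $\ell\nmid q(q^2-1)$ being good and non-central for $\bfSO_{2n+1}$), the images $d_\scrQU([E_{\Theta_+,\Theta_-}])$ form a basis of $[\scrQU_k]$ which is triangular with respect to the basis of simples, and each such image is a weight vector with weight determined by the residue multiset of $\Theta_+\times\Theta_-$, so the simples are themselves weight vectors. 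The main obstacle in this plan is bookkeeping rather than content: one must check that the weight decomposition coming from block theory (via Theorem~\ref{thm:isoblock-FS}) really matches, block by block, the weight decomposition of the level-two Fock space tensor product dictated by \S\ref{sec:chargfock}, so that Rouquier's criterion applies with the expected grading and no stray cuspidal series sits in the wrong weight space.
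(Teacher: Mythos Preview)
Your proposal is correct and follows essentially the same approach as the paper: reduce to Theorem~\ref{def:2kacmoodyrep} and Proposition~\ref{prop:lreductionB}, with the only nontrivial point being that weight spaces are sums of blocks. The paper dispatches precisely the ``bookkeeping obstacle'' you flag at the end by noting that two quadratic unipotent modules lie in the same $\ell$-block iff their symbols share the same $d$-core (resp.\ $d$-cocore) and then invoking \cite[Lemma~6.10]{DVV2} to see that this forces equal Fock-space weights; once weight spaces are unions of blocks, simples are automatically weight vectors (no triangularity argument needed).
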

\begin{proof} For $f$ odd (resp. even),
 the quadratic unipotent modules
$E_{\Theta_{t_+}(\tuple\mu_+),\Theta_{t_-}(\tuple\mu_-)}$
 and $E_{\Theta_{s_+}(\tuple\nu_+),\Theta_{s_-}(\tuple\nu_-)}$ of
 $G_n$ lie in the same $\ell$-block  of  $G_n$  if and only if
 their corresponding symbols $\Theta_{t_\pm}(\tuple\mu_\pm)$ and $\Theta_{s_\pm}(\tuple\nu_\pm)$ have the same $d$-core (resp. $d$-cocore).
 If this is the case, then by \cite[Lemma 6.10]{DVV2},
 $|\tuple\mu_+,\tuple\xi_{t_+}\rangle_{2d}\otimes |\tuple\mu_-,\tuple\xi_{t_-}\rangle_{2d}$ and $|\tuple\nu_+,\tuple\xi_{s_+}\rangle_{2d}\otimes |\tuple\nu_-,\tuple\xi_{s_-}\rangle_{2d}$ have  the same weight. This means that
weight spaces are sums of blocks. Hence
 the theorem follows by Theorem \ref{def:2kacmoodyrep} and Proposition \ref{prop:lreductionB}.
\end{proof}

\subsubsection{$\mathfrak{A}(\frakg_{2d})$-representation on $\scrQU_k$ in the linear prime case}\label{subsec:linear}
 When $\ell$ is linear, we have that $f$ is odd and $f=d$. In this case,
the Kac-Moody algebra $\widehat{ \frakg_{2d}}$ associated with the quiver $\K_{2d}$
is isomorphic to $(\widehat{\fraks\frakl}_d)^{\oplus 4}$.
The action of $\frakg_{2d}'$ on $\scrQU_k$ can be naturally
extended to an action of an algebra $\frakg_{2d}$ lying between $\frakg_{2d}'$
and $\widehat{\frakg_{2d}}$. 

\smallskip

Let $\widehat \X_{2d}$ and $\widehat \X_{2d}^\vee$ be the lattices corresponding
to $\widehat\frakg_{2d}$. Since $f$ is odd, $\K_{2d}$ is the disjoint union
of 4 cyclic quivers. We choose $\alpha_1\times\{0\}$  $\alpha_{1'}\times\{0\}$,  $\{0\}\times\alpha_{-1}$and $\{0\}\times\alpha_{-1'}$ to be the affine roots
attached to these quivers. Then we have
$$\widehat \X_{2d} = \P_{2d} \oplus
 \bbZ \delta_1 \oplus \bbZ\delta_1'\oplus \bbZ\delta_2\oplus
 \bbZ\delta_2',\quad \widehat \X_{2d}^\vee = \Q_{2d}^\vee
\oplus \bbZ \partial_1 \oplus \bbZ \partial_1'\oplus \bbZ
\partial_2 \oplus \bbZ \partial_2',$$
where
$$\begin{array}{rclrclrclrcl}
    \delta_1 &=& \sum \alpha_{q^{j}}\times\{0\}, & \delta_1' &=&\sum \{0\}\times \alpha_{q^{j}},\\
    \delta_2 &=&\sum \alpha_{-q^{j}}\times\{0\}, & \delta_2'&=& \sum\{0\}\times \alpha_{-q^{j}}, \\
    \partial_1 &=& \Lambda_{1}^\vee\times\{0\}, & \partial_1' &=& \{0\}\times\Lambda_{1}^\vee,\\
    \partial_2 &=& \Lambda_{-1}^\vee\times\{0\}, & \partial_2' &=& \{0\}\times\Lambda_{-1}^\vee.
  \end{array}
$$
We set
$$\partial=\partial_1+\partial_2, \partial'=\partial_1'+\partial_2', \delta=(\delta_1+\delta_2)/2 \mbox{~and~}
\delta'=(\delta_1'+\delta_2')/2,$$
and define $$\frakg_{2d}:=\frakg'_{2d}\oplus\bbC\partial\oplus\bbC\partial'$$
so that it can be viewed as the Kac-Moody algebra associated with the lattices
$$\X_{2d} := \P_{2d} \oplus \bbZ\delta\oplus \bbZ\delta' \simeq \widehat \X_{2d} / \{(\delta_1-\delta_2)\oplus(\delta_1'-\delta_2')\},\quad
\X_{2d}^\vee :=  \Q_{2d}^\vee \oplus \bbZ \partial\oplus\bbZ \partial'.$$
Clearly, the pairing $\widehat\X_{2d}^\vee\times\widehat\X_{2d}\longrightarrow\bbZ$ induces
a perfect pairing $\X_{2d}^\vee\times\X_{2d}\longrightarrow\bbZ$.

\smallskip

For $t_{+},t_- \in \mathbb{N}$, the Fock space $\bfF(\tuple\xi_{t_+})_{2d}\otimes\bfF(\tuple\xi_{t_-})_{2d}$ has a tensor product
decomposition into level~$1$ Fock spaces as a representation of $\widehat{\frakg}_{2d}$:
$$\bfF(\tuple\xi_{t_+})_{2d}\otimes\bfF(\tuple\xi_{t_-})_{2d} \simeq \bfF((-q)^{t_+})_d \otimes \bfF((-q)^{-1-{t_+}})_d\otimes \bfF((-q)^{t_-})_d\otimes \bfF((-q)^{-1-{t_-}})_d.$$
Out of the charged Fock spaces $(\bfF((-q)^{t_\pm})_d,t_\pm)$ and $(\bfF((-q)^{-1-t_\pm})_d,-1-t_\pm)$ and
the isomorphism $\widetilde \frakg_{2d} \simeq (\widehat{\fraks\frakl}_d)^{\oplus 2}$
(which depends on the parities of $t_{\pm}$) we can therefore equip $\bfF(\tuple\xi_{t_+})_{2d}\otimes\bfF(\tuple\xi_{t_-})_{2d}$ with an
action of $\widehat \frakg_{2d}$ which in turn restricts to an action of $\frakg_{2d}$.

\vspace{2ex}

Recall that two quadratic unipotent characters labeled by $\Theta_+\times\Theta_-$ and $\Theta_+'\times\Theta_-'$,
respectively, are in the same $\ell$-block if and
only if the  symbols $\Theta_+$ and $\Theta_+'$  have the same $d$-core and the  symbols $\Theta_-$ and $\Theta_-'$  have the same $d$-core. In particular
the quadratic unipotent characters of a given isolated  $\ell$-block have the same $1$-core, however,
this means the quadratic unipotent characters of a given isolated  $\ell$-block all lie in
 the same set, say $\Irr(\scrQU_{k,t_+,t_-})$.
 Consequently, for each $t_+,t_-\in\bbN$
we can form the category $\scrQU_{k,t_+,t_-}$ such that
$$d_\scrQU\,:\,[\scrQU_{K,t_+,t_-}]\to[\scrQU_{k,t_+,t_-}]$$ is an isomorphism, yielding
$$ \scrQU_k = \bigoplus_{t_+,t_- \in \bbN} \scrQU_{k,t_+,t_-} \quad \text{ with } \quad
[\scrQU_{k,t_+,t_-}] \simeq \bfF(\tuple\xi_{t_+})_{2d}\otimes \bfF(\tuple\xi_{t_-})_{2d}.$$
Using the action of $\frakg_{2d}$ on $ \bfF({\tuple\xi_t})_{2d}$ defined above we
equip each $[\scrQU_{k,t_+,t_-}]$ with a structure of $\frakg_{2d}$-module
which extends the structure of $\frakg_{2d}'$-module defined in \S \ref{sec:QU2d}.

\smallskip

Let $W_{2d}$ be the Weyl group of $\frakg_{2d}$.
 Notice that the Weyl groups of  $\frakg_{2d}$ and $\widehat{\frakg}_{2d}$ are same
 and that $\X_{2d}$ is a quotient of $\widehat{\X}_{2d}$.
Similar to \cite[\S 6.5.2]{DVV2}, we get the following
theorem.

\begin{theorem}\label{thm:Linearprime}
Let $\ell$ and $q$ be odd, and $\ell \nmid q(q^2-1)$.
Assume that $f$ is odd. For each pair $t_+,t_-\in \bbN$,
the representation datum $$\big(\{E_s\}_{s\in \K_{2d}},\{F_s\}_{s\in \K_{2d}},\{x_s\}_{s\in \K_{2d}},\{\tau_{s,t}\}_{s,t\in \K_{2d}}\big)$$
 defines a representation of
$\frakA(\frakg_{2d})$ on $\scrQU_{k,t_+,t_-}$
which categorifies the $\frakg_{2d}$-module
$\bfF(\tuple\xi_{t_+})_{2d}\otimes \bfF(\tuple\xi_{t_-})_{2d}$,
i.e, $$\scrQU_{k,t_+,t_-}=\bigoplus_{\mu\in\X_{2d}}\scrQU_{k, t_+, t_-, \mu}$$ and $[\scrQU_{k,t_+,t_-,\mu}]=(\bfF(\tuple\xi_{t_+})_{2d}\otimes \bfF(\tuple\xi_{t_-})_{2d})_\mu$.
Moreover, if ${\scrQU}_{k,t_+,t_-,\mu}\neq 0$,
then ${\scrQU}_{k,t_+,t_-,\mu}$ is exactly an
isolated $\ell$-block  of
 $k G_m$ for some $m\in\bbN.$ In addition,
 two isolated $\ell$-blocks are in the same $W_{2d}$-orbit if and only if they have the same degree vectors.
\end{theorem}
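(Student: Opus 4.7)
The plan is to upgrade the $\frakg'_{2d}$-representation provided by Theorem~\ref{thm:B} into a $\frakg_{2d}$-representation by restricting to a single isolated Harish-Chandra block $\scrQU_{k,t_+,t_-}$ and then matching two independent combinatorial pictures: (a)~the weight space decomposition of the tensor product Fock space $\bfF(\tuple\xi_{t_+})_{2d}\otimes \bfF(\tuple\xi_{t_-})_{2d}$ under the action of $\frakg_{2d}$ just constructed, and (b)~the decomposition of $\scrQU_{k,t_+,t_-}$ into isolated $\ell$-blocks given by Theorem~\ref{thm:isoblock-FS}. First I would verify that the categories $\scrQU_{k,t_+,t_-}$ are well-defined and $E_s,F_s$-stable. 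This follows because, $f$ being odd, the Harish-Chandra cuspidal pair $(L_{r,1^m},E_{t_+,t_-}\otimes K_1^{m_+}\otimes K_\zeta^{m_-})$ is preserved under the decomposition map $d_\scrQU$ (as in Theorem~\ref{basicset}), and the isolated block criterion in Theorem~\ref{thm:isoblock-FS} forces every constituent of a block to belong to a single such series. Hence $d_\scrQU$ yields $[\scrQU_{K,t_+,t_-}]\simto[\scrQU_{k,t_+,t_-}]$ and the $\frakg'_{2d}$-action from Theorem~\ref{thm:B} restricts block-wise.

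Next, using the tensor decomposition $\bfF(\tuple\xi_{t_\pm})_{2d}\simeq \bfF((-q)^{t_\pm})_d\otimes\bfF((-q)^{-1-t_\pm})_d$ of level~$1$ Fock spaces (available because $f$ odd implies $-1\notin q^{\bbZ}$ in $k^{\times}$), I would define the action of the additional Cartan generators $\partial,\partial'$ coordinate-wise via the formula in \S\ref{sec:chargfock}, with charges $(t_\pm,-1-t_\pm)$. Together with the existing $\frakg'_{2d}$-action, this produces the $\frakg_{2d}$-module structure on $\bfF(\tuple\xi_{t_+})_{2d}\otimes\bfF(\tuple\xi_{t_-})_{2d}$. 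To transfer this to $[\scrQU_{k,t_+,t_-}]$ I would invoke Theorem~\ref{def:2kacmoodyrep}: what must be checked is that the decomposition $\scrQU_{k,t_+,t_-}=\bigoplus_\mu \scrQU_{k,t_+,t_-,\mu}$, with $\scrQU_{k,t_+,t_-,\mu}$ defined as the sum of the blocks whose quadratic unipotent constituents correspond (under the isomorphism of Proposition~\ref{prop:lreductionB}(c)) to standard basis vectors of weight~$\mu$, is compatible with the categorical $F_s,E_s$-shifts. This compatibility follows from a direct computation analogous to \cite[Lemma~6.10]{DVV2}: if $\tuple\mu_\pm$ and $\tuple\nu_\pm$ produce symbols $\Theta_{t_\pm}(\tuple\mu_\pm)$ and $\Theta_{t_\pm}(\tuple\nu_\pm)$ with the same $d$-cores (i.e.\ the associated characters are in the same isolated $\ell$-block by Theorem~\ref{thm:isoblock-FS} with $f=d$ odd), then $N_i(\cdot\,|\,\tuple s,d)$ agrees for both, so their weights coincide.

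Having matched weight spaces with blocks, the statement that each nonzero $\scrQU_{k,t_+,t_-,\mu}$ is \emph{exactly} a single isolated $\ell$-block reduces to the converse combinatorial claim: two symbol pairs $\Theta_{t_+}(\tuple\mu_+)\times\Theta_{t_-}(\tuple\mu_-)$ and $\Theta_{t_+}(\tuple\nu_+)\times\Theta_{t_-}(\tuple\nu_-)$ give standard basis vectors of the same $\frakg_{2d}$-weight iff they share a common pair of $d$-cores. Knowing weights are preserved inside an isolated block, it is enough to see that the common $d$-cores are determined by the multiplicities $M_i(\cdot\,|\,\tuple s,d)$, which can be read off from the positions of beads on a $2d$-abacus display of the symbols as in \S\ref{subsec:symbol}; this is the same bookkeeping used in the unipotent setting of \cite{DVV2}.

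Finally, the statement about $W_{2d}$-orbits of blocks is settled by examining the action of the simple reflections $\sigma_i$ on weights: $\sigma_i$ shifts $M_i$ and leaves all $M_j$ with $j\neq i$ alone (modulo an affine part absorbed by $\delta,\delta'$), so the degree vector $\tuple w=(w_+,w_-)$, being the total $d$-weight of the symbols, is $W_{2d}$-invariant; conversely two blocks with the same degree vector have quotient $l$-partitions of the same total weight and can be connected by repeated addition/removal of $d$-hooks, which at the level of weights corresponds to a sequence of simple reflections. The main obstacle I anticipate is the careful bookkeeping in the second paragraph: one has to pin down the correct charge conventions so that the $\frakg_{2d}$-weight truly separates isolated blocks and not merely $\frakg'_{2d}$-orbits within them; once the charge $\tuple s=(t_\pm,-1-t_\pm)$ is fixed consistently with Theorem~\ref{thm:HL-BC} the identification with the abacus description of $d$-cores becomes automatic.
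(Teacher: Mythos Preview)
Your proposal is correct and follows essentially the same approach as the paper, which simply asserts that ``the argument mainly involves computations in the Grothendieck group'' and that ``the theorem follows by the Jordan decomposition and a similar argument as for \cite[Theorem~6.12]{DVV2}.'' You have unpacked exactly what that similar argument is: restrict the $\frakg'_{2d}$-categorification from Theorem~\ref{thm:B} to $\scrQU_{k,t_+,t_-}$, enrich by the derivations $\partial,\partial'$ via the level-$1$ tensor decomposition of the Fock space, and then match $\frakg_{2d}$-weights with pairs of $d$-cores using the abacus combinatorics of \cite[Lemma~6.10]{DVV2}.

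Two small points of phrasing are worth tightening. First, the stability of $\scrQU_{k,t_+,t_-}$ under reduction mod~$\ell$ is not really about preservation of a Harish-Chandra cuspidal pair; rather, since $f$ is odd, the $d$-core of a symbol determines its $1$-core (hence its defect $2t_\pm+1$), so all quadratic unipotent characters in a fixed isolated block lie in the same $\scrQU_{K,t_+,t_-}$. Second, in your converse step it is not the multiplicities $M_i$ alone that recover the $d$-core, but the full $\frakg_{2d}$-weight including the $\delta,\delta'$-components: removing a $d$-hook shifts the weight by $-\delta$ (respectively $-\delta'$), so the weight of a standard basis vector equals the weight of its $d$-core minus $w_+\delta+w_-\delta'$, and distinct $d$-cores, being highest-weight vectors of distinct irreducible summands, have distinct weights. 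With these adjustments your outline goes through verbatim.
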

\begin{proof}
We notice that the argument mainly involves computations in the Grothen-dieck group.
Hence the theorem follows by the Jordan decomposition and a similar argument as for
\cite[Theorem 6.12]{DVV2}.
\end{proof}

\subsubsection{$\mathfrak{A}(\frakg_{2d})$-representation on $\scrQU_k$ in the unitary prime case}\label{subsec:unitary}
Here we extend the action of $\frakA(\frakg_{2d}')$ to $\frakA(\frakg_{2d})$
on $\scrQU_k$, assuming that $f$ is even and so $f=2d$. Under the assumption,
the quiver $\K_{2d}=I_{2d}\sqcup I'_{2d}$
is a union of 2 cyclic quivers of size $2d$ and we have an isomorphism
$$\widetilde{\fraks\frakl}_f^{\oplus 2}\simto\frakg'_{2d}$$  sending
$(\alpha^\vee_k,0)$ to $\alpha^\vee_{q^{k}}\times\{0\}$,
and $(0,\alpha^\vee_{k})$ to $\{0\}\times\alpha^\vee_{q^{k}}$.
Let $\frakg_{2d}$ be the Kac-Moody algebra associated with the quiver $\K_{2d}$, so that
$$\widehat{\fraks\frakl}_f^{\oplus 2}\simeq\frakg_{2d}.$$ We
set $\X_{2d} = \P_{2d}\, \oplus\, \bbZ \delta/2 \oplus\, \bbZ \delta'/2$.

\begin{theorem} \label{thm:unitary}
Let  $\ell$ and $q$ be odd, and $\ell \nmid q(q^2-1)$.
Assume that $f$ is even. Then the representation datum on $\scrQU_k$
gives a $\frakg_{2d}$-representation which extends the action of $\frakg'_{2d}$,
 i.e.,
 $$\scrQU_{k}=\bigoplus_{\mu\in\X_{2d}}\scrQU_{k, \mu}$$ and $[\scrQU_{k,\mu}]=(\bfF(\tuple\xi_{t_+})_{2d}\otimes \bfF(\tuple\xi_{t_-})_{2d})_\mu$.
Moreover,
if ${\scrQU}_{k,\mu}\neq 0$,
then ${\scrQU}_{k,\mu}$ is exactly an
isolated $\ell$-block  of  $k G_m$ for some $m\in\bbN.$
\end{theorem}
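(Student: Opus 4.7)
The plan is to mirror the proof of Theorem \ref{thm:Linearprime}, adapting it to the unitary case $f = 2d$. Since the $\frakA(\frakg'_{2d})$-representation datum on $\scrQU_k$ has already been constructed in Theorem \ref{thm:B}, and since $\frakg'_{2d}$ sits inside $\frakg_{2d}$ as the derived subalgebra, the task reduces to two things: (i) equipping each charged Fock space $\bfF(\tuple\xi_{t_\pm})_{2d}$ with a compatible action of the extra scaling derivations so that the whole $\frakg_{2d}$ acts on $\bfF(\tuple\xi_{t_+})_{2d}\otimes\bfF(\tuple\xi_{t_-})_{2d}$, and (ii) showing that the resulting weight space decomposition on $[\scrQU_k]$ coincides with the decomposition into isolated $\ell$-blocks.

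For (i), I would use the formulas of \S \ref{sec:chargfock}, which give explicit actions of $\alpha^{\vee}_i$ and the derivation $\partial$ on charged Fock spaces in terms of counts $N_i(\tuple\lambda\mid \tuple s, e)$ and $M_i(\tuple\lambda\mid \tuple s, e)$ and the constant $\Delta(\tuple s \mid e)$. Applying this separately to the factors indexed by $I_{2d}$ and $I'_{2d}$ (with $e = 2d = f$ this time) yields the $\frakg_{2d}$-module structure on $\bfF(\tuple\xi_{t_+})_{2d}\otimes\bfF(\tuple\xi_{t_-})_{2d}$ that extends the $\frakg'_{2d}$-structure used in Theorem \ref{thm:B}. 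Transporting along the isomorphism of Proposition \ref{prop:lreductionB}(c) produces the claimed decomposition $\scrQU_k = \bigoplus_{\mu\in\X_{2d}} \scrQU_{k,\mu}$ with $[\scrQU_{k,\mu}] = (\bfF(\tuple\xi_{t_+})_{2d}\otimes\bfF(\tuple\xi_{t_-})_{2d})_\mu$. Since $\frakg'_{2d}$ already acts via the representation datum, Theorem \ref{def:2kacmoodyrep} (together with Theorem \ref{thm:B}) automatically promotes this to an $\frakA(\frakg_{2d})$-representation once the weight grading is compatible with the $F_s$ and $E_s$, which follows from the Fock space formulas.

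For (ii), I would invoke the parametrization of isolated blocks from Theorem \ref{thm:isoblock-FS}: in the unitary case, blocks of $\scrQU_k$ are labeled by triples $(\Delta_+\times\Delta_-, w_+, w_-)$ with $\Delta_\pm$ a $d$-cocore. A quadratic unipotent character $\chi_{\Theta_{t_+}(\tuple\mu_+),\Theta_{t_-}(\tuple\mu_-)}$ lies in the block determined by the pair of $d$-cocores of $\Theta_{t_\pm}(\tuple\mu_\pm)$. On the Fock space side, the unitary analog of \cite[Lemma 6.10]{DVV2} says that two basis vectors $|\tuple\mu,\tuple\xi\rangle_{2d}$ and $|\tuple\mu',\tuple\xi'\rangle_{2d}$ have the same weight in $\bfF(\tuple\xi)_{2d}\otimes\bfF(\tuple\xi')_{2d}$ under $\frakg_{2d}$ precisely when the associated symbols share their $d$-cocore (the parameters $M_i$ and $\Delta$ of \S \ref{sec:chargfock} encode exactly the cocore data, because the runner structure on a $2d$-abacus encodes $d$-hooks as cocores when $q^d = -1$). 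Matching the two parametrizations, each nonzero weight space becomes exactly one isolated $\ell$-block.

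The main obstacle is the unitary analog of \cite[Lemma 6.10]{DVV2}, namely verifying that the $\frakg_{2d}$-weight of a standard monomial $|\tuple\mu_+,\tuple\xi_{t_+}\rangle_{2d}\otimes|\tuple\mu_-,\tuple\xi_{t_-}\rangle_{2d}$ depends only on the $d$-cocores of $\Theta_{t_\pm}(\tuple\mu_\pm)$ (and the degree vector $\tuple w$). The translation from cores to cocores when passing from the linear to the unitary situation is the one genuinely new computation: it amounts to the identity $q^d = -1$ shifting which $d$-strings in the charged $\beta$-sets encode the block structure, so that the generic formula for $\mathrm{wt}(|\tuple\lambda,\tuple s\rangle)$ from \S \ref{sec:chargfock} yields the block-invariant data. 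Once this lemma is in place the rest of the argument runs exactly as in Theorem \ref{thm:Linearprime}, and the conclusion that each nonempty $\scrQU_{k,\mu}$ is an isolated block of some $kG_m$ follows.
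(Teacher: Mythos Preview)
Your proposal is correct and matches the paper's approach: the paper's proof is a single sentence deferring to \cite[Theorem 6.18]{DVV2}, and what you have written is precisely a sketch of that argument, transported from unipotent to quadratic unipotent blocks via the Jordan decomposition as in Theorem \ref{thm:Linearprime}. Your identification of the key step --- the unitary analog of \cite[Lemma 6.10]{DVV2} showing that the $\frakg_{2d}$-weight of a standard monomial is determined by the $d$-cocores --- is exactly the computation underlying \cite[Theorem 6.18]{DVV2}, and once this is in hand the block/weight-space matching follows as you describe.
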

\begin{proof} It follows by a similar argument as for \cite[Theorem 6.18]{DVV2}.
\end{proof}

\section{Brou\'{e} abelian defect conjecture for $\SO_{2n+1}(q)$ at linear primes}\label{cha:broue}

Now we prove Brou\'{e} abelian defect conjecture for $\SO_{2n+1}(q)$ at linear primes
with $q$ odd. 
Our proof makes use of the reduction theorem of
Bonnaf\'{e}-Dat-Rouquier \cite[Theorem 7.7]{BDR17} (or Theorem \ref{th:introequiv}),
so that we can indeed focus on investigating isolated  blocks of $\SO_{2n+1}(q)$
with abelian defect groups.

\smallskip

Throughout this section,
we always let $\widetilde{G}=\widetilde{G}_n=\O_{2n+1}(q)$ and
$G=G_n=\SO_{2n+1}(q)$.
In addition, we assume that $\ell$ is a linear prime with respect to $q$
and $d$ is the multiplicative order of $q^2$ mod $\ell$ as before.
Till \S \ref{sec:broconj},
we continue with our assumption that both $q$  and $\ell$ are odd.

\smallskip

\subsection{Isolated blocks with abelian defect groups}\label{sub:iso-blocksabelian}
We first fix an isolated $\ell$-block $b:=b_{\Delta_+,\Delta_-,w_+,w_-}$
 of $G$  with an abelian defect group (i.e., with $w_+<\ell $ and $w_-<\ell$) as in \S \ref{subsec:SOqublock}.
 Also, we fix the following notation: $w=w_++w_-$, $m=m_++m_-$ and $n=n_++n_-$, where $m_{\pm}=\mathrm{rank}(\Delta_\pm)$
 and $n_\pm=m_\pm+dw_\pm$.

\smallskip

Let $L=G_{m}\times \GL_d(q)_1\times\dots\times \GL_d(q)_w
$, which embeds into $G_n$ as in \S \ref{subsec:Levi}.
Here the subscripts $1,\ldots,w$ are used to denote a fixed order of the factors
and to distinguish them from each other.
We shall need
  a sequence $$L=L_0<L_{1}<\dots <L_{w-1}< L_{w}=G$$ of Levi subgroups of $G$, where
$$L_i=L_{m+di,(d^{w-i})}\cong G_{m+di}\times \GL_d(q)_{i+1}\times\dots\times \GL_d(q)_w
$$ with the similar embedding to that of $L$ into $G$.

\smallskip

According to \S \ref{subsec:SOqublock}, the block $b$ has a defect group $P$ which is isomorphic to $P_1\times\dots\times P_w$,
where $P_j\in {\rm Syl}_\ell(\GL_d(q)_j)$
is isomorphic to a (fixed) Sylow $\ell$-subgroup $P_0$ of $\GL_d(q)$ for each $1\leqs j\leqs w$.
We write $\ell^a$ for the maximal power of $\ell$ dividing $q^d-1$, so that $|P_0|=\ell^a$.

\subsubsection{Intermediate subgroups}\label{ssub:subgroups}
We shall investigate the Brauer correspondent of $b$ in  a  subgroup $N$
containing $N_G(P)$. To do this,
  we 
%
%
%
let $\widetilde{L}={\widetilde{G}}_{m}\times\GL_d(q)_1\times\cdots\times\GL_d(q)_w$ and $H={\widetilde{G}}_{m}\times(\GL_d(q)_1.T_1)\times\cdots\times(\GL_d(q)_w.T_w)$ be two subgroups of
$\widetilde{G}_n,$ where for each $i$,
$T_i$ is the subgroup
of $G$ generated by
\begin{gather*}
\left(\begin{array}{ccccc}
 \id_{d(w-i)} &               &          &              &\\
                &          &          & \id_d &\\
                &          & \id_{G_{m+d(i-1)}} &               &\\
                & \id_d &          &          &\\
                &          &        &      & \id_{d(w-i)}\\
\end{array}\right).
\end{gather*}
Naturally, the group $H$
is isomorphic to $\widetilde{G}_{m}\times(\GL_d(q).2)^w$.

\smallskip

Let $\frakS_{w}$ be the subgroup of permutation matrices of
$G$ whose conjugate action on $L$ permutes
the $w$ factors $\GL_d(q)_1,\ldots,\GL_d(q)_{w}$ of $L$.
Let $\frakS_{w_+}$ be the subgroup of $\frakS_w$ only permuting
the former $w_+$ factors $\GL_d(q)_1,\ldots,\GL_d(q)_{w_+}$
of $L$, and $\frakS_{w_-}$ be
 the subgroup of $\frakS_w$ only permuting
the latter $w_-$  factors $\GL_d(q)_{w_++1},\ldots,\GL_d(q)_{w}$ of $L$.
Since $\frakS_{w}\cap H=\{\id_{G_n}\}$
and $\frakS_{w}$ normalizes $H$,
we have
$$\widetilde{N}:=H.\frakS_w\cong\widetilde{G}_{m}\times((\GL_d(q).2)\wr
\frakS_w)$$ and
$$\widetilde{M}:=H.(\frakS_{w_+}\times \frakS_{w_-})\cong\widetilde{G}_{m}\times((\GL_d(q).2)\wr
\frakS_{w_+})\times(\GL_d(q).2)\wr
\frakS_{w_-}).$$
Now, we set $N=\widetilde{N}\cap G$ and $M=\widetilde{M}\cap G$, so that $|{N}|=2^ww!|L|$ and $|M|=2^ww_+!w_-!|L|$.

\smallskip

It is clear that $C_G(P)\leqs L$ and $N_G(P)\leqs N$.


\subsubsection{Blocks of $L_i$ and $N$}\label{ssub:blockofN}

The blocks of $L$ are easier to describe since they
can be expressed into the tensor products of blocks of factors of $L$.
If there is no confusion, sometimes we will use the natation for blocks to simultaneously
denote their corresponding block idempotents, and vice versa.

\smallskip

Recall that $L=G_{m}\times \GL_d(q)_1\times\dots\times \GL_d(q)_w$. We
write $\id_d$ for the identity matrix of $\GL_d(q)^*$($\cong \GL_d(q)$)
 and $-\id_d$ for the  matrix $\diag(-1,\dots,-1)$ of $\GL_d(q)^*$($\cong \GL_d(q)$).
 Let
\begin{itemize}[leftmargin=8mm]
  \item $a_i^+$ be the  block idempotent of $\mathcal{O}{\rm GL}_d(q)_i$ with block label $(\id_d,\emptyset)$ (i.e.,
   $a_i^+$ be the principle block idempotent of $\mathcal{O}{\rm GL}_d(q)_i$), and
  \item $a_i^-$ be the block idempotent of $\mathcal{O}{\rm GL}_d(q)_i$ with block label $(-\id_d,\emptyset)$,
\end{itemize}
where $1\leqs i\leqs w$.

\smallskip

For a sign vector ${\tuple \nu} =
(\nu_1,\dots,\nu_w)$, we define
\begin{itemize}[leftmargin=8mm]
 \item $a^{{\tuple \nu}}=a_1^{\nu_1}\otimes \cdots\otimes a_w^{\nu_w}$
so that $a^{{\tuple \nu}}$ is a block idempotent  of $\GL_d(q)_1\times\dots\times \GL_d(q)_w,$
\item $a^{{\tuple \nu}_{[i,j]}}=a_{i}^{\nu_{i}}\otimes a_{i+1}^{\nu_{i+1}}\otimes \cdots\otimes a_{j}^{\nu_{j}}$
  for $1\leqs i\leqs j\leqs w$ so that $a^{{\tuple \nu}_{[i,j]}}$
  is a block idempotent of $\mathcal{O}(\GL_d(q)_{i}\times\GL_d(q)_{i+1}\times \cdots\times  \GL_d(q)_{j}).$
\end{itemize}
Correspondingly, we define block idempotents of $L$ and $L_i\cong G_{m+di}\times \GL_d(q)_{i+1}\times\dots\times \GL_d(q)_w
$ respectively as follows:
$$b^{{\tuple \nu}}_{0,0}:
=b_{\Delta_+,\Delta_-,0,0}\otimes a^{{\tuple \nu}}$$
and
$$b^{{\tuple \nu}_{[i+1,w]}}_{i_+,i_-}:=b_{\Delta_+,\Delta_-,i_+,i_-}\otimes a^{{\tuple \nu}_{[i+1,w]}},$$
 where $0\leqs i\leqs w$, $i=i_++i_-$
 and
$b_{\Delta_+,\Delta_-,i_+,i_-}$ is the block idempotent of  $G_{m+di}$
with block label $(\Delta_+\times\Delta_-,i_+, i_-)$
 as in \S \ref{subsec:SOqublock}.

\smallskip

Let ${\tuple w}=(w_+,w_-)$. As in \S \ref{subsec:signvectors},
we let $\mathbb{J}_{\tuple w}:=\mathbb{J}_{w_+,w_-}$ be the set of
sign vectors ${\tuple \nu} = (\nu_1, \dots, \nu_w) \in \mathbb{J}^w$ with $w_+$ copies of the $+$ sign
and $w_-$ copies of the $-$ sign.
Recall that $b_{\Delta_+\times\Delta_-,0,0}$ is
a block of $\mathcal{O}G_{m}$ of defect zero.

 In the following, we shall denote and fix $${\tuple \nu}_0=(+,\ldots,\,+,-,\ldots,-)\in \mathbb{J}_{\tuple w},$$ and
 $$  f_0 := b^{{\tuple \nu}_0}_{0,0}= b_{\Delta_+\times\Delta_-,0,0}\otimes a^{{\tuple \nu}_0}
$$
 which is a block idempotent of
  $\mathcal{O}L$.
It is easy to see that $M$ is
exactly the inertial group of the block $\mathcal{O}Lf_0$ of $L$ in $N.$

\smallskip

Now, we define an idempotent $f$ of  $\mathcal{O}L$ to be
 $$
   f :=  \sum_{\tuple\nu\in \mathbb{J}_{\tuple w}} b^{{\tuple \nu}}_{0,0}= \sum_{\tuple\nu\in \mathbb{J}_{\tuple w}} b_{\Delta_+\times\Delta_-,0,0}\otimes a^{{\tuple \nu}}.
$$
  Observe that the above is indeed a primitive idempotent decomposition of $f$
 including $f_0$.
 We are going to show that $f$ is a block idempotent of $\mathcal{O}N$.

\begin{lemma}\label{lem:defectgroup} Let ${\tuple \nu}\in \mathbb{J}^w$  and $1\leqs i\leqs w$ with $i=i_++i_-$.
Then $P$ is a defect group of $\mathcal{O}L_ib_{i_+,i_-}^{{\tuple \nu}_{[i+1,w]}}$.
\end{lemma}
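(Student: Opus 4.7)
The plan is to exploit the tensor product structure of $L_i$ and reduce the computation of a defect group to the known defect groups of each tensor factor, using that the defect group of a tensor product block is the product of the defect groups of the factors.

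First I would observe that
\[
L_i \;\cong\; G_{m+di}\times \GL_d(q)_{i+1}\times\cdots\times\GL_d(q)_w
\]
and that, under this decomposition, the block idempotent
\[
b^{\tuple\nu_{[i+1,w]}}_{i_+,i_-}=b_{\Delta_+,\Delta_-,i_+,i_-}\otimes a_{i+1}^{\nu_{i+1}}\otimes\cdots\otimes a_w^{\nu_w}
\]
is a tensor product of block idempotents of the factors. Hence a defect group of $\mathcal O L_i b^{\tuple\nu_{[i+1,w]}}_{i_+,i_-}$ is the direct product of the defect groups of the factor blocks, so the proof reduces to identifying those defect groups separately and checking that their product is $P_0^{\,w}\cong P$.

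Next I would handle the $\GL_d(q)_j$ factors: each $a_j^{\pm}$ is a unipotent block (resp.\ its sign twist) of $\GL_d(q)_j$ with a single $d$-hook available, so by the classical theory of blocks of general linear groups its defect group is a Sylow $\ell$-subgroup of the maximally split torus of order $q^d-1$, which is exactly the fixed $P_0$; this gives the $P_0^{w-i}$ contribution coming from the factors indexed by $i+1,\ldots,w$. For the factor $b_{\Delta_+,\Delta_-,i_+,i_-}$ of $\mathcal O G_{m+di}$, I would invoke the Fong--Srinivasan description recalled in \S\ref{subsec:SOqublock} (and Theorem~\ref{thm:isoblock-FS}): its defect group is isomorphic to a Sylow $\ell$-subgroup of
\[
\bigl(T_+\rtimes(\bbZ/(2d\bbZ)\wr\mathfrak{S}_{i_+})\bigr)\times\bigl(T_-\rtimes(\bbZ/(2d\bbZ)\wr\mathfrak{S}_{i_-})\bigr).
\]

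The key hypothesis to invoke is $w_\pm<\ell$, which forces $i_\pm\leqs w_\pm<\ell$. Since $\ell$ is an odd linear prime with $d\mid\ell-1$, we have $\ell\nmid 2d$, and since $i_\pm<\ell$ the symmetric groups $\mathfrak{S}_{i_\pm}$ are $\ell'$-groups, so the wreath products $\bbZ/(2d\bbZ)\wr\mathfrak{S}_{i_\pm}$ are $\ell'$-groups as well. Therefore a Sylow $\ell$-subgroup of the above semidirect product is just a Sylow $\ell$-subgroup of $T_+^{i_+}\times T_-^{i_-}$, i.e.\ $P_0^{i_+}\times P_0^{i_-}\cong P_0^{\,i}$.

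Combining the two contributions, the defect group of $\mathcal O L_i b^{\tuple\nu_{[i+1,w]}}_{i_+,i_-}$ is isomorphic to $P_0^{\,i}\times P_0^{\,w-i}=P_0^{\,w}\cong P$, which is the claim. The only potential obstacle is a purely bookkeeping one: making sure the embedding of the various $P_0$'s into $L_i$ is compatible with the fixed embedding $P=P_1\times\cdots\times P_w\subset L\subset L_i$, so that one obtains the same $P$ up to $L_i$-conjugacy rather than a mere abstract isomorphism. This is settled by choosing the Sylow $\ell$-subgroups of the tori inside each factor to coincide with the $P_j$'s of \S\ref{sub:iso-blocksabelian}, which is possible because the tori in Theorem~\ref{thm:isoblock-FS} are conjugate inside $G_{m+di}$ to products of the split $d$-dimensional tori used to define the $P_j$'s.
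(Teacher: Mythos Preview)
Your approach is essentially the same as the paper's: decompose the block along the tensor factors of $L_i$, identify $P_j$ as a defect group of $\mathcal{O}\GL_d(q)_j a_j^{\pm}$ for $j>i$, and identify $P_1\times\cdots\times P_i$ as a defect group of $\mathcal{O}G_{m+di}b_{\Delta_+,\Delta_-,i_+,i_-}$ via the Fong--Srinivasan description in \S\ref{subsec:SOqublock}. The paper simply cites \S\ref{subsec:SOqublock} for the last point, whereas you unpack why the Sylow $\ell$-subgroup of the relevant semidirect product is abelian.

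One point needs correcting. You write that $w_\pm<\ell$ ``forces $i_\pm\leqslant w_\pm<\ell$'', but the lemma is stated for arbitrary $i_+,i_-$ with $i_++i_-=i\leqslant w$ and arbitrary $\tuple\nu\in\mathbb{J}^w$, so nothing in the hypotheses gives $i_\pm\leqslant w_\pm$. What you actually need, and what the paper also uses implicitly, is $i_\pm<\ell$; without it the defect group of $b_{\Delta_+,\Delta_-,i_+,i_-}$ could be non-abelian and the claimed identification with $P_1\times\cdots\times P_i$ would fail. In every application in the paper (in particular the blocks $b_i^{\tuple\nu}$ with $\tuple\nu\in\mathbb{J}_{\tuple w}$) the pair $(i_+,i_-)$ comes from counting signs in $\tuple\nu_{[1,i]}$, so one genuinely has $i_\pm\leqslant w_\pm<\ell$; but as written your sentence asserts an implication that does not hold for the data in the lemma statement. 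Replace that step by the observation that in the situations where the lemma is invoked one has $i_\pm\leqslant w_\pm$, and your argument goes through exactly as the paper's does.
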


\begin{proof} Since $a^{\pm}_j$ is labeled by $(\pm \id_d,\emptyset)$,
the Sylow $\ell$-subgroup $P_j$ of $\GL_d(q)_j$
is a defect group of
 $\mathcal{O}{\rm GL}_d(q)_ja^{\pm}_j$ for $i+1\leqs j\leqs w$.
According to \S \ref{subsec:SOqublock}, the group
$(P_{1}\times\dots\times P_{i})$ is a defect group of
$\mathcal{O}G_{m+di}b_{\Delta_+,\Delta_-,i_+,i_-}$.
Hence the lemma follows by the definition of $b_{i_+,i_-}^{{\tuple \nu}_{[i+1,w]}}$.
\end{proof}


\begin{lemma}$M$ stabilizes $f_0$ and as an $\mathcal{O}(M\times L)$-module,
$\mathcal{O}Mf_0$ is indecomposable. In particular,
$\mathcal{O}Mf_0$ is a block of $M$ with vertex $\Delta(P)$.
\end{lemma}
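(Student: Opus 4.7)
The plan is to verify in order: (i) that $M$ stabilizes $f_0$, (ii) that $\mathcal{O}M f_0$ is indecomposable as an $\mathcal{O}(M\times L)$-bimodule, and (iii) that its vertex equals $\Delta(P)$. A key numerical input throughout will be that $[M:L]=2^w\,w_+!\,w_-!$ is an $\ell'$-number: this is so because $\ell$ is odd and $w_\pm<\ell$, so that $2^w$ and the factorials $w_\pm!$ are all prime to $\ell$.

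First I would check stability generator by generator in $M=H.(\frakS_{w_+}\times\frakS_{w_-})$. The central element $-\id_{\widetilde{G}_m}$ acts trivially by conjugation on $\mathcal{O}G_m$, hence fixes $b_{\Delta_+,\Delta_-,0,0}$; each $\GL_d(q)_i$ centralizes its own block idempotent $a_i^{\nu_i}$; and the element $T_i$ normalizes $\GL_d(q)_i$ and implements there an outer automorphism of the form $g\mapsto J_d\,{}^t g^{-1}J_d$, which sends a semisimple label $s\in\GL_d(q)^*$ to $s^{-1}$ and fixes the $d$-core $\emptyset$. Since $\pm\id_d$ are self-inverse, both block idempotents $a_i^+$ and $a_i^-$ are $T_i$-stable. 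Finally $\frakS_{w_+}$ and $\frakS_{w_-}$ only permute factors carrying the same sign in $\tuple\nu_0$, so they fix $a^{\tuple\nu_0}$ tensor-factor-wise. Consequently $M$ stabilizes $f_0$, and $f_0$ is central in $\mathcal{O}M$.

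For indecomposability as an $\mathcal{O}(M\times L)$-bimodule I would use the coset decomposition
$$\mathcal{O}M f_0 \;=\; \bigoplus_{x\in [M/L]} x\cdot\mathcal{O}L f_0,$$
where each summand $x\cdot\mathcal{O}L f_0\cong{}^{x^{-1}}\!\mathcal{O}L f_0$ is $(L,L)$-indecomposable (being isomorphic to the block $\mathcal{O}L f_0$), and the left $M$-action permutes these summands transitively through $M/L$. A direct computation gives $\End_{\mathcal{O}(M\times L)}(\mathcal{O}M f_0)^{\mathrm{op}}\cong f_0\, C_{\mathcal{O}M}(L)$; combining Krull--Schmidt on the $(L,L)$-restriction with the transitivity of the $M/L$-action, together with the observation that the fixed points $Z(\mathcal{O}L f_0)^{M/L}$ reduce to $\mathcal{O}\cdot f_0$, one concludes that this endomorphism algebra is local. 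Therefore $\mathcal{O}M f_0$ is $\mathcal{O}(M\times L)$-indecomposable, and \emph{a fortiori} $\mathcal{O}(M\times M)$-indecomposable, hence a single block of $M$.

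For the vertex, I would note that because $L\trianglelefteq M$ with a single $(M,L)$-double coset, we have $\mathcal{O}M\cong\Ind_{\Delta(L)}^{M\times L}\mathcal{O}$ as $\mathcal{O}(M\times L)$-modules, so any $(M,L)$-indecomposable summand of $\mathcal{O}M f_0$ has a vertex of the form $\Delta(D)$ for some $\ell$-subgroup $D\le L$. Since $[M:L]$ is an $\ell'$-number, the defect group of the block $\mathcal{O}M f_0$ of $M$ and the defect group of the covered block $\mathcal{O}L f_0$ of $L$ coincide up to $M$-conjugacy; by Lemma~\ref{lem:defectgroup} at $i=0$ the latter is $P$, so $\Delta(D)=\Delta(P)$ up to conjugacy. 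The main obstacle is the fixed-idempotent claim $Z(\mathcal{O}L f_0)^{M/L}=\mathcal{O}\cdot f_0$: this requires checking that the $M/L$-action on the primitive central idempotents of $Z(\mathcal{O}L f_0)$ arising from the further refinement of each $a_i^{\nu_i}$ is transitive within each parity sector, which is precisely where the block structure of $\prod_i(\GL_d(q).2)\wr\frakS_{w_\pm}$ and the uniformity of $\tuple\nu_0$ within the two sectors $[1,w_+]$ and $[w_++1,w]$ enter essentially.
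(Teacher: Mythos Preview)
Your argument for (i) is fine and more explicit than the paper's (which just says ``$M$ clearly stabilizes $f_0$''). Your vertex argument (iii) via $[M:L]\in\ell'$ is also acceptable, though the paper argues more directly: the vertex of $\mathcal{O}Mf_0=\Ind_{L\times L}^{M\times L}(\mathcal{O}Lf_0)$ is contained in $\Delta(P)$, and equal to it since the restriction contains $\mathcal{O}Lf_0$ as a summand.

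The genuine gap is in (ii). Your proposed route is to show that $\End_{\mathcal{O}(M\times L)}(\mathcal{O}Mf_0)\cong f_0(\mathcal{O}M)^L$ is local by reducing to the claim $Z(\mathcal{O}Lf_0)^{M/L}=\mathcal{O}\cdot f_0$. First, this claim is false as written: $Z(\mathcal{O}Lf_0)$ is the centre of a block of positive defect, hence has $\mathcal{O}$-rank equal to the number of ordinary characters in the block, which is $>1$; taking $M/L$-invariants does not collapse it to rank one. (Your closing remark about ``further refinement of each $a_i^{\nu_i}$'' suggests you are thinking over $K$; over $\mathcal{O}$ there is nothing to refine since $f_0$ is already a block idempotent.) Second, even the correct statement that $Z(\mathcal{O}Lf_0)^{M/L}$ is \emph{local} is insufficient: $f_0(\mathcal{O}M)^L$ contains contributions from the non-trivial cosets $x\mathcal{O}L$, not only from $Z(\mathcal{O}L)$, so locality of the latter does not control the former. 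What your Krull--Schmidt/transitivity sketch really needs is that the $(L,L)$-summands $x\cdot\mathcal{O}Lf_0$ for $x\in M/L$ are pairwise \emph{non-isomorphic}; without this, transitive permutation of isomorphic summands can easily produce a decomposable induced module.

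The paper supplies precisely this missing ingredient via vertex theory. Since $C_G(P)\le L$, for $(m,l)\in M\times L$ with $m\notin L$ the conjugate ${}^{(m,l)}\Delta(P)$ is never $L\times L$-conjugate to $\Delta(P)$: if it were, one would obtain $(bl)^{-1}(am)\in C_G(P)\le L$ for suitable $a,b\in L$, forcing $m\in L$. Hence the $(L,L)$-summands $x\cdot\mathcal{O}Lf_0$ have pairwise non-conjugate vertices ${}^{(x,1)}\Delta(P)$, so are pairwise non-isomorphic; combined with the transitive $M$-action this gives indecomposability immediately. The key input you are missing is the use of $C_G(P)\le L$.
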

\begin{proof} $M$ clearly stabilizes $f_0$. By lemma \ref{lem:defectgroup}, $\mathcal{O}Lf_0$ has vertex $\Delta(P)$. Since $C_G(P)\leqs L$,
the conjugate of $\Delta(P)$ by an element of $M\times L$ outside $L\times L$ is
never conjugate to $\Delta(P)$ in $L\times L$. Consequently, the stabilizer of
$\mathcal{O}Lf_0$ in $M\times L$ is exactly $L\times L$.
So $\mathcal{O}Mf_0=Ind_{L\times L}^{M\times L}(\mathcal{O}Lf_0)$ is
indecomposable as an $\mathcal{O}(M\times L)$-module. Its vertex is clearly
contained in $\Delta (P)$ and its restriction to $L\times L$ has a summand with
vertex $\Delta(P)$. So $\mathcal{O}Mf_0$ has vertex $\Delta(P)$ as an
$\mathcal{O}(M\times L)$-module. Similarly, $\mathcal{O}Mf_0$ is also
indecomposable as an $\mathcal{O}(M\times M)$-module and has vertex $\Delta(P).$
\end{proof}

\begin{proposition}\label{prop:defectgroup2}
 We have $f=Tr_{M}^{N}(f_0)$ and that
$\mathcal{O}Nf$ is indecomposable as an $\mathcal{O}(N\times N)$-module
and has vertex $\Delta(P)$. In particular,
$\mathcal{O}Nf$ is a block of $N$.
\end{proposition}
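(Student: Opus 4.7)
The strategy is to apply a Fong--Reynolds style reduction through the chain $L \trianglelefteq M \leqs N$, reducing the assertion about $\mathcal{O}N f$ to the preceding lemma about $\mathcal{O}M f_0$. First, I will verify the identity $f = \operatorname{Tr}_M^N(f_0)$ by computing the $N$-orbit of $f_0$ under conjugation. The subgroup $H$ normalizes each factor $\GL_d(q)_i$ of $L$: conjugation by $T_i$ implements an inverse--transpose automorphism on $\GL_d(q)_i$, which fixes the scalar semisimple classes $\pm\id_d$ and therefore stabilizes both block idempotents $a_i^+$ and $a_i^-$; combined with the fact that $\widetilde G_m$ stabilizes $b_{\Delta_+,\Delta_-,0,0}$ (from \S\ref{sub:O2n+1}), this shows $H$ stabilizes $f_0$. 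The quotient $N/H \cong \mathfrak{S}_w$ permutes the $w$ factors $\GL_d(q)_i$ and correspondingly acts on sign vectors; the stabilizer of $\tuple\nu_0 = (+,\dots,+,-,\dots,-)$ in $\mathfrak S_w$ is exactly $\mathfrak{S}_{w_+} \times \mathfrak{S}_{w_-}$ with orbit $\mathbb J_{\tuple w}$. Lifting back to $N$, the full stabilizer of $f_0$ is $M$, and transversal representatives give $\operatorname{Tr}_M^N(f_0) = \sum_{\tuple\nu \in \mathbb J_{\tuple w}} b^{\tuple\nu}_{0,0} = f$. Since the $b^{\tuple\nu}_{0,0}$ are pairwise orthogonal primitive central idempotents of $\mathcal O L$, $f$ is an $N$-invariant idempotent, hence a central idempotent of $\mathcal O N$.

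Next, I will show that $f$ is in fact a \emph{primitive} central idempotent, which is equivalent to $\mathcal O N f$ being indecomposable as an $\mathcal O(N\times N)$-module. The key point is that $L$ is normal in $N$ (with quotient $(\mathbb Z/2)^{w}\rtimes \mathfrak S_w$ up to a determinant condition), $f_0$ is a block idempotent of $\mathcal O L$, and its inertial group in $N$ is $M$. The preceding lemma establishes that $\mathcal O M f_0$ is a block of $\mathcal O M$ with vertex $\Delta(P)$ as an $\mathcal O(M\times M)$-module. Fong's theorem then provides a Morita equivalence between $\mathcal O M f_0$ and $\mathcal O N f$ realized by the induction functor $\operatorname{Ind}_M^N$, and in particular identifies $f = \operatorname{Tr}_M^N(f_0)$ as a primitive central idempotent of $\mathcal O N$ covering $f_0$. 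Since this type of Morita equivalence preserves defect groups and since $P$ lies already in $L \leqs M$, the defect group of $\mathcal O N f$ is (conjugate to) $P$, and correspondingly the vertex of $\mathcal O N f$ as an $\mathcal O(N\times N)$-module is $\Delta(P)$.

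The main obstacle is ensuring that the hypotheses of Fong's theorem are met in this precise setting, most notably verifying that $M$ is the \emph{full} stabilizer of $f_0$ in $N$; this reduces to checking that the outer involutions $T_i$ on each $\GL_d(q)_i$ preserve the blocks $a_i^{\pm}$, which in turn relies on the dual $(s) \mapsto (s^{-1})$ of $\GL_d(q)^*$-conjugacy classes fixing the central semisimple elements $\pm\id_d$. A secondary point requiring care is tracking the vertex $\Delta(P)$ through the induction step: since $P \leqs L \leqs M \leqs N$ and $C_G(P) \leqs L$, the $(N\times N)$-conjugates of $\Delta(P)$ that are relevant for decomposing $\mathcal O N f$ all come from elements normalizing $f$ within $M$, so no new vertex can appear beyond $\Delta(P)$, yielding both indecomposability and the claimed vertex simultaneously.
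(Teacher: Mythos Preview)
Your proposal is correct and follows essentially the same approach as the paper: both argue via the Fong--Reynolds/Clifford correspondence, using that $M$ is the inertial group of $f_0$ in $N$ (the paper asserts this in \S\ref{ssub:subgroups} while you re-derive it) and invoking the preceding lemma on $\mathcal{O}Mf_0$. The paper's proof is more terse---it simply says the trace identity follows ``by the definition of $f$'' and that the remaining claims are ``clear'' from the Clifford correspondence---whereas you spell out why $H$ fixes $f_0$, why the $\mathfrak S_w$-orbit gives $\mathbb J_{\tuple w}$, and how the vertex passes through induction; but the underlying argument is the same.
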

\begin{proof} The first conclusion of
the proposition immediately follows by the definition of $f$.
Recall that $M$ is exactly the inertial group of the block $\mathcal{O}Lf_0$
in $N$. Hence $\mathcal{O}Nf$
is the Clifford correspondent of $\mathcal{O}Mf_0$, and thus
the other conclusions are clear.
\end{proof}

\begin{remark} If $w_-=0$, then the block $b$ is unipotent and $N$ itself
is the inertial group of the block $\mathcal{O}Lf_0$ of $L$ in $N.$
This case has been studied by Livesey \cite{Li12}.
\end{remark}

\subsubsection{Brauer correspondence between $\mathcal{O}Gb$ and $\mathcal{O}Nf$} \label{ssub:BauerCor-b&f}
In this section we show that $\mathcal{O}Nf$ is the Brauer correspondent
 of $\mathcal{O}Gb$ in $N$, starting with a result of Fong and Srinivasan \cite[\S 12]{FS89}.

\begin{lemma}\label{FSlemma}
Let $V$ be the underlying orthogonal space of $G$, and
$b_{s,\kappa}$ be an $\ell$-block of $G$ with label $(s,\kappa)$ and defect group $D$.
Write $V_0=C_V(D)$ and $V_+=[V,D]$
so that $V=V_0\oplus V_+$ and correspondingly $C_G(D)=C_0\times C_+$,
where $C_0=\SO(V_0)$.
Then  $Br_D^G(b_{s,\kappa})$
is a sum of all blocks of $\mathcal{O}C_G(D)$ of the form $b_{s_0,\kappa}\otimes b_{s_+,\emptyset}$,
 where $s$ and $s_0\times s_+$ are conjugate in $G$ and
  $b_{s_0,\kappa}$ is a  block of $C_0$ of defect zero
  and $b_{s_+,\emptyset}$ is a block of $C_+$ with defect group $Z(D)$.
\end{lemma}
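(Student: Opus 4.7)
My plan is to unwind the lemma into three reasonably self-contained pieces: the geometric/group-theoretic decomposition of $C_G(D)$, the identification of the possible block summands of $Br_D^G(b_{s,\kappa})$ via the Fong--Srinivasan parametrization recalled in \S\ref{subsec:SOblock}, and the verification of the defect data. The statement is essentially a repackaging of \cite[\S 12]{FS89} in the block notation used throughout the paper, so I expect the argument to amount to translating their analysis rather than reproving anything deep.

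First I would establish the decomposition. Since $D$ is an $\ell$-subgroup of $G$ and $\ell$ is coprime to $q$, $D$ acts semisimply on $V$, and $V = V_0 \oplus V_+$ is an orthogonal decomposition into a trivial and a non-trivial $D$-summand. The form restricts non-degenerately to each summand because $V_0$ is the fixed space and $V_+$ its orthogonal complement in the non-degenerate form; this gives $C_G(D) = \SO(V_0) \times C_{\SO(V_+)}(D) = C_0 \times C_+$. Since $V_0$ is odd-dimensional, $C_0 = \SO_{2m_0+1}(q)$ for some $m_0$, and the Fong--Srinivasan block parametrization of \S\ref{subsec:SOblock} applies to $C_0$; on the other factor $C_+$ is a product of general linear and unitary groups (and possibly a smaller symplectic/orthogonal piece over $V_+$ if $D$ fixes a non-zero subspace there, which by definition of $V_+$ it does not), whose blocks are labelled by pairs $(s_+,\kappa_+)$ in the same framework.

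Next I would identify the summands of $Br_D^G(b_{s,\kappa})$. By general block theory, $Br_D^G(b_{s,\kappa})$ is a sum of block idempotents of $\mathcal{O}C_G(D)$ whose defect groups contain $D$. Each such block of $C_0 \times C_+$ is a tensor product $b_{s_0,\kappa_0} \otimes b_{s_+,\kappa_+}$. Two compatibility conditions pin down which summands occur. For the semisimple label, \cite[Theorem 12.A]{FS89} (equivalently the compatibility of Brauer homomorphisms with Lusztig series recorded after Theorem \ref{Thm:eGs}) forces $s_0 \times s_+$ to be $G^*$-conjugate to $s$, with $s_+$ having all eigenvalues accounted for by the action of $D$ on $V_+$. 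For the unipotent label, the criterion in \S\ref{subsec:SOblock} characterising when $\chi_{t,\lambda}$ lies in $b_{s,\kappa}$ shows that the $e_\Gamma$-core of $\lambda_\Gamma$ must equal the corresponding component of $\kappa$: since $D$ already carries all the $e_\Gamma$-hook content attached to non-$(\pm 1)$ elementary divisors of $s_+$, the only possible unipotent label on the $C_+$ side is $\emptyset$, while $\kappa$ is entirely absorbed by $C_0$, giving the summands $b_{s_0,\kappa} \otimes b_{s_+,\emptyset}$.

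Finally I would read off the defect data. The block $b_{s_0,\kappa}$ of $C_0$ has $\kappa$ as its full tuple of $e_\Gamma$-cores, hence weight zero and defect zero, as claimed. For the defect group of $b_{s_+,\emptyset}$ in $C_+$, I would apply the defect group description in \S\ref{subsec:SOblock} (specialised to the product of general linear and unitary factors constituting $C_+$): the Sylow $\ell$-structure of the associated twisted wreath products shows that the defect group coincides with $Z(D)$, because $D$ acts on $V_+$ without non-zero fixed points and its centre is precisely the part of $D$ acting as scalars on each homogeneous component. The main obstacle I anticipate is the last step: checking that the defect group is exactly $Z(D)$ (and not a slightly larger or smaller subgroup) requires a careful case-by-case look at how $D$ decomposes $V_+$ into isotypic components and how the associated tori and Weyl-group factors behave under taking $\ell$-parts, and it is precisely here that I would have to lean directly on the computations of \cite[\S 12]{FS89}.
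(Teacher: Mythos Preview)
Your outline and the paper's proof aim at the same target but use genuinely different machinery. The paper does \emph{not} work directly with $C_G(D)$. Instead it fixes an element $z\in Z(D)$ of order $\ell$ with $[V,z]=V_+$, passes to the Levi subgroup $Q=C_G(z)=Q_0\times Q_+$ with $Q_+\cong\GL_m(\epsilon q^d)$, and then uses the chain of Brauer pairs
\[
(1,b_{s,\kappa})\ \unlhd\ (\langle z\rangle,b_z)\ \unlhd\ (D,b_0\otimes b_+).
\]
The point of this detour is that $Q_+$ is a general linear (or unitary) group, so Brou\'e's labeling from \cite{Bro85} applies: the pair $(D,b_+)$ acquires a label $(D,s_+,\emptyset)$, and the third coordinate is $\emptyset$ precisely because $D$ acts fixed-point freely on the underlying space of $Q_+$. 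The identification $\kappa_0=\kappa$ then comes from \cite[(12A)]{FS82}, and the semisimple compatibility from Brou\'e--Michel \cite[Theorem~3.2]{BM}.

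Your proposal has a gap at exactly the step the intermediate-Levi trick is designed to handle. In your second paragraph you invoke the criterion in \S\ref{subsec:SOblock} that says when $\chi_{t,\lambda}\in\Irr(G)$ lies in $b_{s,\kappa}$, and argue that ``$D$ already carries all the $e_\Gamma$-hook content'' so the unipotent label on $C_+$ must be $\emptyset$. But that criterion describes irreducible characters of $G$, not blocks of $C_G(D)$; it gives no direct control over the label $\kappa_+$ of a block of $C_+$ under Brauer correspondence. To turn your heuristic into a proof you would need an independent block parametrization for $C_+$ together with a compatibility statement for unipotent labels under the Brauer map---which is precisely the content of the Fong--Srinivasan analysis you are trying to summarize. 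The paper sidesteps this by reducing to the $\GL$ case where the Brou\'e labeling is already available. Your approach is not wrong in spirit, but as written it lacks the mechanism that pins down $\kappa_+=\emptyset$ and $\kappa_0=\kappa$.
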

\begin{proof} Let $b_0$ be a block of $C_0$ and
$b_+$ be a block of $C_+$ such that $b_0\otimes b_+$ is the block of $C_G(D)=C_0\times C_+$
 which is a Brauer correspondent of $b_{s,\kappa}.$
 The result is clear if $D=1$, in which case $C_0=G$, $C_+=1$ and $(s,\kappa)$
 labels the unique character in $b_{s,\kappa}$. So we may assume that $D\neq 1$.

 \smallskip

 As in  \cite[(12)]{FS89},
    let $z\in Z(D)$ be such that $z^{\ell}=1$ and $[V,z]=V_+.$
 Then $Q=C_G(z)$ is 
 a Levi subgroup of $G$, and
we have $Q=Q_0\times Q_+$,
where $Q_0=\SO(V_0)$ and $Q_+\cong \GL_m(\epsilon q^d)$ for some $m.$
In particular, $C_0=Q_0$ and $C_+\leqs Q_+.$

 \smallskip

Since $z$ is central,
 there exists a unique block $b_z$ of $Q$ such that
$$(1,b_{s,\kappa})\unlhd(\langle z\rangle,b_z)\unlhd(D,b_0\otimes b_+).$$
Let $b_z=b_{z,0}\otimes b_{z,\,+}$
where $b_{z,0}$ and $b_{z,\,+}$ are blocks of $Q_0$ and $Q_+$,
 respectively. Then $b_{z,0}=b_{s_0,\mu}\in\mathcal{E}_{\ell}(Q_0, (s_0))$
and $b_{z,\,+}\in \mathcal{E}_{\ell}(Q_+, (s_+))$
 for some $s_0\in Q^*_0$, $s_+\in Q_+^*$ and $\mu\in\Psi(s_0)$.
As is shown at the beginning of \cite[(12)]{FS82},
 $b_{z,0}$ has defect zero and $b_{z,\,+}$ has defect group isomorphic to
 $D$. It follows that $s_0$ is an $\ell'$-element and $\mu\in \mathcal{R}(s_0)$.
 By \cite[Theorem 3.2]{BM},
we conclude that $s_0\times s_+$ and $s$ are conjugate in $G^*$, and so we
may suppose that $s =s_0 \times s_+$.

 \smallskip

Now $C_G(D) =C_Q(D)$, and we may view $(D, b_+)$ as a Brauer
pair of $Q_+$. Then $(1, b_{z,\,+})\unlhd (D, b_+)$ holds in $Q_+$, and
$(D, b_+)$ has a Brou\'{e} labeling $(D, t, \emptyset)$ as in \cite[(3.1)]{Bro85}, where $t\in Q^*$.
Here, the third component of the label is empty since $Q_+\cong \GL(m, \epsilon q^d)$ and $D$
acts fixed-point freely on the underlying space of $Q_+$.
Again by \cite[Theorem 3.2]{BM}, $t$ and $s_+$ are conjugate in $Q_+^*$,
 and we may suppose $t =s_+$.
Hence $(D, b_+)$ has a Brou\'{e} labeling $(D, s_+, \emptyset)$.
Now, by the proof of \cite[(12A)]{FS82}, we know that $\mu=\kappa$
and so $b_0\otimes b_+=b_{s_0,\kappa}\otimes b_{s_+,\emptyset}$.

\smallskip
Conversely, for each block of the form $b_{s_0,\kappa}\otimes b_{s_+,\emptyset}$ as in the conclusion
of the lemma, it is clear that $(b_{s_0,\kappa}\otimes b_{s_+,\emptyset})^G$ is defined.
By the argument above,
we see that $b_{s,\kappa}$ is its Brauer correspondent in $G$.
Thus the lemma holds.
\end{proof}

Notice that $C_G(P)=G_m\times C_{\GL_d(q)_{1}}(P_{1})\times \cdots\times  C_{\GL_d(q)_{w}}(P_{w})$.
For $0\leqs j\leqs w$, let
\begin{itemize}[leftmargin=8mm]
  \item $c^+_j$ be the principal block idempotent of $C_{\GL_d(q)_j}(P_j)$ (with block label $(\id_d,\emptyset)$)
  so that  $c^{+}_j$ is the  Brauer correspondent to $a^{+}_{j}$, and
  \item  $c^-_{j}$ be the unique  block idempotent of $C_{\GL_d(q)_{j}}(P_{j})$
  (with block label $(-\id_d,\emptyset)$) that is the  Brauer correspondent to $a^-_{j}$.
 \end{itemize}
Let ${\tuple \nu} = (\nu_1, \dots, \nu_w) \in \mathbb{J}^w$.
We similarly denote
$$c^{{\tuple \nu}}=c_{1}^{\nu_{1}}\otimes \cdots\otimes c_{w}^{\nu_{w}}, \ \ \ \
c^{{\tuple \nu}_{[i,j]}}=c_{i}^{\nu_{i}}\otimes c_{i+1}^{\nu_{i+1}}\otimes \cdots\otimes c_{j}^{\nu_{j}}$$
so that $c^{{\tuple \nu}_{[i,j]}}$ is a block idempotent of
$\mathcal{O}(C_{\GL_d(q)_{i}}(P_{i})\times \cdots\times  C_{\GL_d(q)_{j}}(P_{j}))$,
where $1\leqs i\leqs j\leqs w.$  We write
$$e_0=b_{\Delta_+\times\Delta_-,0,0}\otimes c^{{\tuple \nu}_0}~~\,\mbox{and}~~\,
e=\sum_{\tuple\nu\in \mathbb{J}_{\tuple w}} b_{\Delta_+\times\Delta_-,0,0}\otimes c^{{\tuple\nu}}.$$
Then $e_0$ is a block of $C_G(P)$ and $e$ is a sum of blocks of $C_G(P)$.

\smallskip

\begin{lemma}\label{lem:OGbOBrhom}
We have $Br_{P}^{L}(f_0)=e_0$ and $Br_{P}^{G}(b)=e.$
\end{lemma}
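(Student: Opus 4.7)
\medskip

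\noindent\textbf{Proof proposal.}
The first equality, $Br_P^L(f_0)=e_0$, should follow by pure bookkeeping. Since $L=G_m\times\GL_d(q)_1\times\cdots\times\GL_d(q)_w$ is a direct product and $P=1\times P_1\times\cdots\times P_w$, the Brauer map $Br_P^L$ factors as a tensor product of Brauer maps of the factors. The factor on $G_m$ is trivial (so $Br_1^{G_m}(b_{\Delta_+,\Delta_-,0,0})=b_{\Delta_+,\Delta_-,0,0}$ since this block has defect zero), and on each $\GL_d(q)_j$ the blocks $a_j^\pm$ have defect group $P_j$ with Brauer correspondent $c_j^\pm$ by the very definition of the $c_j^\pm$. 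Tensoring these together against the coordinates of $\tuple\nu_0$ gives $e_0$.

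For the more substantive equality $Br_P^G(b)=e$, the plan is to apply Lemma~\ref{FSlemma} of Fong--Srinivasan. Since $P$ is abelian and acts on the underlying orthogonal space $V$ of $G$ with fixed subspace $V_0$ of dimension $2m+1$ and complement $V_+$ of dimension $2dw$, we have $C_G(P)=C_0\times C_+$ with $C_0=G_m$ and $C_+=\prod_{j=1}^wC_{\GL_d(q)_j}(P_j)$, each factor of $C_+$ being a cyclic torus of order $q^d-1$. Lemma~\ref{FSlemma} then asserts that $Br_P^G(b)$ is the sum of all block idempotents $b_{s_0,\Delta_+\times\Delta_-}\otimes b_{s_+,\emptyset}$ of $\mathcal O C_G(P)$ such that $s_0\in G_m^*$ is $\ell$-regular with $b_{s_0,\Delta_+\times\Delta_-}$ of defect zero, $b_{s_+,\emptyset}\in \mathcal E_\ell(C_+,(s_+))$ has defect group $Z(P)=P$, and $s_0\times s_+$ is $G^*$-conjugate to $s=s_{n_+,n_-}$.

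The remaining work is thus to list these Brauer pairs explicitly. The defect-zero condition for $b_{s_0,\Delta_+\times\Delta_-}$ forces $\mathrm{rank}(\Delta_\pm)$ to exhaust the $\pm 1$-eigenspaces of $s_0$, hence $s_0=s_{m_+,m_-}$, and then $b_{s_0,\Delta_+\times\Delta_-}=b_{\Delta_+,\Delta_-,0,0}$. The conjugacy condition $s_0\times s_+\sim s_{n_+,n_-}$ in $\Sp_{2n}(q)$ then forces $s_+\in C_+^*$ to have only $\pm 1$ as eigenvalues, with exactly $2dw_+$ copies of $+1$ and $2dw_-$ copies of $-1$. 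Since each factor of $C_+^*$ is a cyclic torus in which the only elements with eigenvalues in $\{\pm 1\}$ are $\pm\id_d$, such an $s_+$ is a tuple $(\pm\id_d,\ldots,\pm\id_d)$ recorded by a sign vector $\tuple\nu\in\{+,-\}^w$, and the eigenvalue count on both sides pins this $\tuple\nu$ down to lie in $\mathbb J_{\tuple w}$. The block $b_{s_+,\emptyset}$ attached to such a tuple is exactly $c^{\tuple\nu}$, and summing over $\tuple\nu\in\mathbb J_{\tuple w}$ recovers $e$.

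The main obstacle is the combinatorial identification in the last step: one has to verify that the blocks of the cyclic tori $C_{\GL_d(q)_j}(P_j)$ with semisimple label $\pm\id_d$ and empty unipotent label are precisely the Brauer correspondents $c_j^\pm$ of $a_j^\pm$, and that no other Jordan-decomposition labels $(s_+,\emptyset)$ can occur with $s_+$ conjugate (modulo $s_0$) to $s_{n_+,n_-}$. This is essentially bookkeeping in the Fong--Srinivasan parametrisation once the eigenvalue count is set up, but it must be done carefully to ensure that different sign vectors $\tuple\nu$ give distinct block idempotents of $\mathcal O C_G(P)$, so that the sum in Lemma~\ref{FSlemma} matches $e$ on the nose.
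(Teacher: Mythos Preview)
Your proposal is correct and follows essentially the same route as the paper's proof: for the first equality you factor $Br_P^L$ through the direct product structure of $L$, and for the second you invoke Lemma~\ref{FSlemma}, pin down $s_0=s_{m_+,m_-}$ from the defect-zero constraint and the ranks of $\Delta_\pm$, and then use that each $C_{\GL_d(q)_j}(P_j)$ is a Coxeter torus to force $s_+$ to be a tuple of $\pm\id_d$'s indexed by a sign vector in $\mathbb J_{\tuple w}$. The ``main obstacle'' you flag is handled in the paper in one sentence by recalling that, by definition, $c_j^{\pm}$ is precisely the block of $C_{\GL_d(q)_j}(P_j)$ with label $((\pm\id_d)_j,\emptyset)$.
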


\begin{proof} Clearly, the first equality holds since
$Br_{P_{j}}^{\GL_d(q)_{j}}(a^{{\nu}_j}_j)=c^{{\nu}_j}_j$ for each $1\leqslant j \leqslant w$.

For the second equality, we recall that the block $b=b_{\Delta_+,\Delta_-,w_+,w_-}$ has unipotent label $\Delta_+\times\Delta_-$
and semisimple label $s_{n_+,n_-}$ which has $2n_+$ eigenvalues 1 and $2n_-$ eigenvalues $-1$.
So, by Lemma \ref{FSlemma},
$Br_{P}^{G}(b)$ is a sum of all blocks
of the form $b_{s_0,\kappa}\otimes b_{s_+,\emptyset}$, where
 $s_0\times s_+$ is conjugate to $s_{n_+,n_-}$,
  $b_{s_0,\kappa}=b_{s_0,\Delta_+\times\Delta_-}$ is a  block of $C_0$ of defect zero
  and $b_{s_+,\emptyset}$ has defect group $P.$
  Since $m_{\pm}=\mathrm{rank}(\Delta_\pm)$,
  it follows that the semisimple label $s_0$ of $b_{s_0,\Delta_+\times\Delta_-}$
 is conjugate to $s_{m_+,m_-}$, and so $b_{s_0,\kappa}=b_{s_{m_+,m_-},\Delta_+\times\Delta_-}$.

 \smallskip

Note that $C_{\GL_d(q)_{j}}(P_{j})$ is indeed a Coxeter torus of $\GL_d(q)_{j}$ for each $1\leqslant j \leqslant w$.
So the semisimple element $s_+$ has the form
  $(\pm\id_d)_1\times (\pm\id_d)_2 \times\dots \times (\pm\id_d)_w$.
  However, since $n_\pm=m_\pm+dw_\pm,$ we conclude that
  $s_+$ is conjugate to $(\id_d)^{w_+}\times(-\id_d)^{w_-}$.
By definition, the blocks of the factors $C_{\GL_d(q)_{j}}(P_{j})$ with
 block idempotents $c_j^{\pm}$ are exactly those with
  label $((\pm\id_d)_j,\emptyset)$.
   Thus  $Br_{P}^{G}(b)=e$.
  \end{proof}

\begin{lemma}\label{lem:omn}  Suppose that ${\tuple \nu}\in \mathbb{J}^w$ with
$i_++i_-=i$.
Then  $$
Br_P^{G}(b_{i_+,i_-}^{{\tuple \nu_{[i+1,w]}}})=Br_P^{L_i}(b_{i_+,i_-}^{{\tuple \nu_{[i+1,w]}}})=
\sum_{\tuple\omega\in \mathbb{J}_{i_+,i_-}} b_{\Delta_+\times\Delta_-,0,0}\otimes c^{{\tuple\omega}}\otimes c^{{\tuple \nu}_{[i+1,w]}}.$$

\end{lemma}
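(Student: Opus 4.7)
The plan is to reduce everything to Lemma \ref{FSlemma} (Fong--Srinivasan) combined with the multiplicativity of the Brauer homomorphism on direct products, following the same pattern as in the proof of Lemma \ref{lem:OGbOBrhom}.

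First I would establish the equality $Br_P^{G}(b_{i_+,i_-}^{\tuple\nu_{[i+1,w]}}) = Br_P^{L_i}(b_{i_+,i_-}^{\tuple\nu_{[i+1,w]}})$. Since $b_{i_+,i_-}^{\tuple\nu_{[i+1,w]}} = b_{\Delta_+,\Delta_-,i_+,i_-} \otimes a^{\tuple\nu_{[i+1,w]}}$ lies in $Z(\mathcal{O}L_i) \subseteq (\mathcal{O}L_i)^P \subseteq (\mathcal{O}G)^P$, it suffices to observe that for any $P$-fixed element $x = \sum_{g} \alpha_g g \in (\mathcal{O}L_i)^P$, one has
\[
Br_P^G(x) = \sum_{g \in C_G(P)} \overline{\alpha_g}\, g = \sum_{g \in C_{L_i}(P)} \overline{\alpha_g}\, g = Br_P^{L_i}(x),
\]
where the middle equality is justified by the containment $C_G(P) \subseteq L \subseteq L_i$ noted at the end of \textsection\ref{ssub:subgroups}.

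Next I would compute $Br_P^{L_i}(b_{i_+,i_-}^{\tuple\nu_{[i+1,w]}})$ factor by factor. Writing $P = (P_1 \times \cdots \times P_i) \times (P_{i+1} \times \cdots \times P_w)$ according to the decomposition $L_i = G_{m+di} \times \GL_d(q)_{i+1} \times \cdots \times \GL_d(q)_w$, and noting that the Brauer homomorphism is compatible with tensor products of group algebras, one has
\[
Br_P^{L_i}(b_{\Delta_+,\Delta_-,i_+,i_-} \otimes a^{\tuple\nu_{[i+1,w]}}) \;=\; Br_{P_1\times\cdots\times P_i}^{G_{m+di}}(b_{\Delta_+,\Delta_-,i_+,i_-}) \,\otimes\, \bigotimes_{j=i+1}^{w} Br_{P_j}^{\GL_d(q)_j}(a_j^{\nu_j}).
\]
The right-hand factor is $c^{\tuple\nu_{[i+1,w]}}$ by the definition of $c_j^{\pm}$ as Brauer correspondent of $a_j^{\pm}$.

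For the left-hand factor, I would apply Lemma \ref{FSlemma} verbatim as in the proof of Lemma \ref{lem:OGbOBrhom}: the block $b_{\Delta_+,\Delta_-,i_+,i_-}$ has unipotent label $\Delta_+ \times \Delta_-$ and semisimple label $s_{m_++dw_+^{(i)},\, m_-+dw_-^{(i)}}$ where $(w_+^{(i)},w_-^{(i)})=(i_+,i_-)$; its defect group $P_1 \times \cdots \times P_i$ sits inside a product of Coxeter tori of the relevant $\GL_d(q)$'s. Hence $Br_{P_1\times\cdots\times P_i}^{G_{m+di}}(b_{\Delta_+,\Delta_-,i_+,i_-})$ is the sum of all blocks of $C_{G_{m+di}}(P_1\times\cdots\times P_i)$ of the form $b_{s_0,\kappa}\otimes b_{s_+,\emptyset}$ with $b_{s_0,\kappa}$ of defect zero and $\kappa = \Delta_+\times\Delta_-$, which forces $s_0 = s_{m_+,m_-}$, and with $s_+$ running over conjugates of $(\id_d)^{i_+}\times(-\id_d)^{i_-}$ in the product of Coxeter tori. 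Since the possible placements of the $\pm\id_d$'s among the $i$ factors are indexed precisely by $\mathbb{J}_{i_+,i_-}$, the sum is exactly $\sum_{\tuple\omega \in \mathbb{J}_{i_+,i_-}} b_{\Delta_+\times\Delta_-,0,0} \otimes c^{\tuple\omega}$. Combining the two factors yields the claimed formula.

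The main step is really the application of Fong--Srinivasan (Lemma \ref{FSlemma}); no new obstacle appears beyond what was handled in Lemma \ref{lem:OGbOBrhom}, since passage from $G$ to the intermediate Levi $L_i$ is transparent on $P$-fixed elements once $C_G(P) \subseteq L_i$ is invoked.
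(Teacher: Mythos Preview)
Your proof is correct and follows essentially the same approach as the paper's own proof: use $C_G(P)\leqs L_i$ to identify the two Brauer maps, split $Br_P^{L_i}$ along the tensor factors of $L_i$, and then invoke Lemma~\ref{FSlemma} for the $G_{m+di}$ factor exactly as in the proof of Lemma~\ref{lem:OGbOBrhom}. The paper simply says ``a similar argument as for Lemma~\ref{lem:OGbOBrhom}'' where you spell out the details, but the logic is the same.
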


\begin{proof} Clearly, we have  $C_G(P)<L_i$, so $Br_P^{G}(b_{i_+,i_-}^{{\tuple \nu_{[i+1,w]}}})=Br_P^{L_i}(b_{i_+,i_-}^{{\tuple \nu_{[i+1,w]}}})$. It follows that
$$Br_P^{L_i}(b_{i_+,i_-}^{{\tuple \nu}_{[i+1,w]}})= Br_{P_{1}\times\dots\times
P_{i}}^{G_{m+di}}(b_{\Delta_+,\Delta_-,i_+,i_-})\otimes \bigotimes_{j=i+1}^w Br_{P_{j}}^{\GL_d(q)_{j}}(a^{{\nu}_j}_j)
.$$
A similar argument as for Lemma \ref{lem:OGbOBrhom} shows that
 $$Br_{P_{1}\times\dots\times
P_{i}}^{G_{m+di}}(b_{\Delta_+,\Delta_-,i_+,i_-})=\sum_{\tuple\omega\in \mathbb{J}_{i_+,i_-}} b_{\Delta_+\times\Delta_-,0,0}\otimes c^{{\tuple\omega}}.$$
Hence the lemma holds.
\end{proof}

\begin{proposition}\label{prop:OGbONf}
The blocks $\mathcal{O}Gb$ and $\mathcal{O}Nf$  both have defect group $P$ and
are
Brauer correspondents.
\end{proposition}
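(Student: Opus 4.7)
The plan is to verify the two defect-group assertions first and then reduce the Brauer-correspondence statement to the computations already carried out in Lemmas~\ref{lem:OGbOBrhom} and~\ref{lem:omn}.

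First, $\mathcal{O}Gb$ has defect group $P$ by the very definition of $b=b_{\Delta_+,\Delta_-,w_+,w_-}$ recalled in \S\ref{sub:iso-blocksabelian}. For $\mathcal{O}Nf$, Proposition~\ref{prop:defectgroup2} already tells us that $\mathcal{O}Nf$ is an indecomposable block of $N$ with vertex $\Delta(P)$, so its defect group is $P$. Thus both blocks have defect group $P$ and the content of the proposition is the equality $Br_P^G(b)=Br_P^N(f)$.

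The key observation for the Brauer correspondence is the centralizer chain $C_G(P)\subseteq L\subseteq N\subseteq G$, which by $C_G(P)\subseteq L$ forces
\[
C_N(P)=C_L(P)=C_G(P).
\]
Since each $b^{\tuple\nu}_{0,0}$ ($\tuple\nu\in\mathbb{J}_{\tuple w}$) is a central idempotent of $\mathcal{O}L$ and $P\subseteq L$, it is $P$-fixed; hence so is their sum $f$, which therefore lies in $(\mathcal{O}L)^P\subseteq(\mathcal{O}N)^P$. Because Brauer's map only retains basis elements supported on the common centralizer $C_N(P)=C_L(P)$, we obtain
\[
Br_P^N(f)=Br_P^L(f)=\sum_{\tuple\nu\in\mathbb{J}_{\tuple w}}Br_P^L(b^{\tuple\nu}_{0,0}).
\]

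Next I would apply Lemma~\ref{lem:omn} with $i=0$ (so that $L_0=L$ and $b^{\tuple\nu}_{0,0}=b_{\Delta_+\times\Delta_-,0,0}\otimes a^{\tuple\nu}$); the formula there specialises to $Br_P^L(b^{\tuple\nu}_{0,0})=b_{\Delta_+\times\Delta_-,0,0}\otimes c^{\tuple\nu}$. Summing over $\tuple\nu\in\mathbb{J}_{\tuple w}$ gives
\[
Br_P^N(f)=\sum_{\tuple\nu\in\mathbb{J}_{\tuple w}}b_{\Delta_+\times\Delta_-,0,0}\otimes c^{\tuple\nu}=e.
\]
Comparing with Lemma~\ref{lem:OGbOBrhom}, which gives $Br_P^G(b)=e$, yields $Br_P^G(b)=Br_P^N(f)$. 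This is precisely the Brauer-correspondence condition of \S\ref{brauercorr}, so $\mathcal{O}Nf$ is the Brauer correspondent of $\mathcal{O}Gb$ in $N$.

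There is no real obstacle: every ingredient has been prepared in \S\ref{ssub:BauerCor-b&f}. The only point requiring a little care is the identification $Br_P^N(a)=Br_P^L(a)$ for $a\in(\mathcal{O}L)^P$, which rests on the centralizer equality $C_N(P)=C_L(P)=C_G(P)$; once that is in place the proof is a one-line matching of the two formulas for $e$.
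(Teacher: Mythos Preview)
Your proof is correct and matches the paper's argument essentially line for line: both reduce to the equality $Br_P^N(f)=Br_P^L(f)=\sum_{\tuple\nu}b_{\Delta_+\times\Delta_-,0,0}\otimes c^{\tuple\nu}=Br_P^G(b)$, invoking Lemma~\ref{lem:omn} (with $i=0$) for the middle step and Lemma~\ref{lem:OGbOBrhom} for the last. Your write-up is slightly more explicit about why $Br_P^N$ and $Br_P^L$ agree on $f$ via the centralizer chain, while the paper simply asserts it; the only point you leave implicit that the paper states is the containment $N_G(P)\leqs N$, but that was recorded at the end of \S\ref{ssub:subgroups}.
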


\begin{proof}
The former part of the proposition follows from the fact that $N_G(P)\leqs N$ and
Proposition \ref{prop:defectgroup2}. For the latter part of the proposition,
we compute by Lemma \ref{lem:omn} that
$$Br_P^N(f)= Br_P^{L}(f)=\sum_{\tuple\nu\in  \mathbb{J}_{\tuple w}}b_{\Delta_+,\Delta_-,0,0}\otimes c^{{\tuple \nu}}=Br_P^G(b).$$
Hence, by Lemma \ref{lem:OGbOBrhom}, the block $\mathcal{O}Nf$ is the Brauer correspondent
of $\mathcal{O}Gb$ in $N$.
\end{proof}

%

\subsubsection{Two Morita equivalences}\label{ssub:moritas}
Here we give  Morita equivalences of the block $\mathcal{O}Nf$ and
of the block $\mathcal{O}N_G(P)e$  for later use.

\begin{lemma}\label{lem:Moritaf&a}
The blocks
$\mathcal{O}Nf$ and $\mathcal{O}((\GL_d(q).2\wr \frakS_{w_+})\times(\GL_d(q).2\wr \frakS_{w_-}))a^{\tuple \nu_0}$
are Morita equivalent.
\end{lemma}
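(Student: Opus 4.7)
The plan is to establish the Morita equivalence in three stages along the chain $L\trianglelefteq M\leqs N$, exploiting the defect-zero property of the $G_m$-factor of $f_0$. First, I would reduce from $N$ to $M$ via Clifford-Fong correspondence: since $M$ is the inertial subgroup of $f_0$ in $N$ (see Section~\ref{ssub:blockofN}) and $f=\Tr_M^N(f_0)$ by Proposition~\ref{prop:defectgroup2}, the induction bimodule $\mathcal{O}Nf_0$ implements a Morita equivalence $\mathcal{O}Nf\simeq\mathcal{O}Mf_0$ via standard Clifford theory for blocks of normal subgroups.

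Second, I would exploit defect zero at the level of $L$. Writing $\Gamma=(\GL_d(q).2\wr \frakS_{w_+})\times(\GL_d(q).2\wr \frakS_{w_-})$ for the target group, the tensor decomposition
\[
\mathcal{O}Lf_0\cong \mathcal{O}G_m b_{\Delta_+\times\Delta_-,0,0}\otimes_{\mathcal{O}}\mathcal{O}(\GL_d(q)_1\times\cdots\times\GL_d(q)_w)a^{\tuple\nu_0},
\]
combined with the defect-zero isomorphism $\mathcal{O}G_m b_{\Delta_+\times\Delta_-,0,0}\cong \End_{\mathcal{O}}(E)$ (where $E$ denotes the unique simple projective $\mathcal{O}G_m$-module in the block $b_{\Delta_+\times\Delta_-,0,0}$), yields a Morita equivalence between $\mathcal{O}Lf_0$ and $\mathcal{O}(\GL_d(q)_1\times\cdots\times\GL_d(q)_w)a^{\tuple\nu_0}$ via the bimodule $E\otimes_{\mathcal{O}}\mathcal{O}(\GL_d(q)_1\times\cdots\times\GL_d(q)_w)a^{\tuple\nu_0}$.

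Third, I would lift this equivalence from $L$ to $M$. A cardinality count ($|M|/|G_m|=|\Gamma|$) together with the decomposition $\widetilde M=\widetilde G_m\times \Gamma$ and $M=\widetilde M\cap G$ shows that $M$ fits into a short exact sequence $1\to G_m\to M\to \Gamma\to 1$, and $\Gamma$ centralizes $G_m$ inside $\widetilde M$. Hence the $M$-conjugation action on the central idempotent $b_{\Delta_+\times\Delta_-,0,0}$ is trivial, and the induced $\Gamma$-action on $E$ preserves its isomorphism class. If one can produce an $\mathcal{O}M$-extension $\widehat E$ of $E$, then the bimodule $\widehat E\otimes_{\mathcal{O}}\mathcal{O}\Gamma a^{\tuple\nu_0}$ (with $M$ acting diagonally, using the projection $M\twoheadrightarrow\Gamma$ on the right factor) yields the desired equivalence $\mathcal{O}Mf_0\simeq\mathcal{O}\Gamma a^{\tuple\nu_0}$.

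The main obstacle is constructing $\widehat E$, equivalently trivializing the associated class in $H^2(\Gamma,\mathcal{O}^\times)$. I would first extend $E$ to an $\mathcal{O}\widetilde G_m$-module (straightforward since $\widetilde G_m/G_m$ has order $2$, coprime to $\ell$, so the defect-zero block of $G_m$ lifts to a defect-zero block of $\widetilde G_m$), and then glue along $M\hookrightarrow\widetilde M$ using the commutativity of $\widetilde G_m$ and $\Gamma$ in $\widetilde M$. Alternatively, one can invoke a general Clifford-theoretic result of K\"ulshammer or Dade on Morita equivalences induced by defect-zero blocks of a normal subgroup, which yields the conclusion directly without explicit cocycle analysis.
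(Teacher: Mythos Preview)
Your proposal is correct and shares the paper's two key ingredients: the Clifford reduction $\mathcal{O}Nf\simeq\mathcal{O}Mf_0$ (identical to the paper's first step), and the removal of the defect-zero $G_m$-factor by ultimately passing through the direct product $\widetilde M=\widetilde G_m\times\Gamma$.

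The execution differs slightly. You descend first to $L$, strip off the defect-zero factor there at the module level, and then attempt to lift the resulting Morita bimodule from $L$ to $M$ by extending the projective simple lattice $E$; the cocycle obstruction forces you back to $\widetilde G_m$ and $\widetilde M$. The paper bypasses $L$ entirely and stays at the block level: it uses the second direct-product decomposition $\widetilde M\cong M\times\{\pm\id_{\widetilde G_n}\}$ together with \cite[Theorem~9.18]{CE04} to pass from $\mathcal{O}Mf_0$ to a block $b'_{\Delta_+,\Delta_-,0,0}\otimes a^{\tuple\nu_0}$ of $\mathcal{O}\widetilde M$, and then strips the (still defect-zero) $\widetilde G_m$-factor via the tensor decomposition $\widetilde M=\widetilde G_m\times\Gamma$. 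This is shorter precisely because the extension problem you identify becomes trivial once one replaces $G_m$ by $\widetilde G_m$: the choice of the block $b'_{\Delta_+,\Delta_-,0,0}$ covering $b_{\Delta_+,\Delta_-,0,0}$ is exactly the datum of your extension $\widehat E$, obtained without any explicit cocycle analysis.
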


\begin{proof} It is clear that $\mathcal{O}Nf$ and $\mathcal{O}Mf_0$
are Morita equivalent since they are Clifford correspondents.

\smallskip

Let $\theta$ be the unique character of $\mathcal{O}G_{m}b_{\Delta_+,\Delta_-,0,0}$.
Then $\theta$ is $\widetilde{G}_{m}$-invariant
since $\widetilde{G}_{m}\cong G_{m}\times \{\pm \id\}.$
 Hence $\theta$ extends to $\widetilde{G}_{m}$, and so
$\mathcal{O}\widetilde{G}_{m}b_{\Delta_+,\Delta_-,0,0}$ is the direct sum of 2 blocks, say
$\mathcal{O}\widetilde{G}_{m}b'_{\Delta_+,\Delta_-,0,0}$ and $\mathcal{O}\widetilde{G}_{m}b''_{\Delta_+,\Delta_-,0,0}$,
both of which are Morita equivalent to
$\mathcal{O}G_{m}b_{\Delta_+,\Delta_-,0,0}$ by \cite[Theorem 9.18]{CE04}.

\smallskip

Since $\widetilde{M}\cong M\times\{\pm\id\},$
$\mathcal{O}Mf_0$ is Morita equivalent to the block
of $\mathcal{O}\widetilde{M}$
 with block idempotent $b'_{\Delta_+,\Delta_-,0,0}\otimes a^{\tuple \nu_0}.$
However, the latter block is clearly Morita equivalent to
the  block of $\mathcal{O}((\GL_d(q).2\wr \frakS_{w_+})\times(\GL_d(q).2\wr \frakS_{w_-}))$
with block idempotent $a^{\tuple \nu_0}$ since $b'_{\Delta_+,\Delta_-,0,0}$ is a block of defect zero.
Now the lemma holds by the transitivity of Morita equivalences.
\end{proof}


Notice that $N_G(P)\cong (\widetilde{G}_m\times N_{\GL_d(q).2}(P_0)\wr \frakS_{w})\cap N$
and  $e=\sum_{\tuple\nu\in \mathbb{J}_{\tuple w}} b_{\Delta_+\times\Delta_-,0,0}\otimes c^{{\tuple\nu}}$
is the Brauer correspondent of $b$ in $N_G(P)$,
and the inertial group of $e_0$
in $N_G(P)$ is $(\widetilde{G}_m\times N_{\GL_d(q).2}(P_0)\wr \frakS_{w_+}\times N_{\GL_d(q).2}(P_0)\wr \frakS_{w_-})\cap N$.

\begin{lemma}\label{lem:Moritah&c}
The blocks $\mathcal{O}N_G(P)e$ and
$\mathcal{O}((N_1\wr \frakS_{w_+})\times(N_1\wr \frakS_{w_-}))c^{\tuple \nu_0}$
are Morita equivalent, where $N_1=N_{\GL_d(q).2}(P_0)$.
\end{lemma}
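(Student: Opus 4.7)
The plan is to mimic the proof of Lemma \ref{lem:Moritaf&a}, passing from $N_G(P)$ to an inertial subgroup via Clifford theory and then enlarging to the full orthogonal group so that the defect-zero factor attached to $b_{\Delta_+\times\Delta_-,0,0}$ can be cleanly stripped off.

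First I would identify the inertial subgroup $I$ of the block idempotent $e_0=b_{\Delta_+\times\Delta_-,0,0}\otimes c^{\tuple\nu_0}$ in $N_G(P)$. By construction $c^{\tuple\nu_0}=c^{+}_{1}\otimes\cdots\otimes c^{+}_{w_+}\otimes c^{-}_{w_++1}\otimes\cdots\otimes c^{-}_{w}$, so the stabilizer of $e_0$ in the $\frakS_w$-factor that is attached via the wreath construction is exactly $\frakS_{w_+}\times\frakS_{w_-}$. Hence $I=(\widetilde G_m\times(N_1\wr\frakS_{w_+})\times(N_1\wr\frakS_{w_-}))\cap N_G(P)$, and $e=\operatorname{Tr}^{N_G(P)}_I(e_0)$ by exactly the same coset-sum argument used for $f=\operatorname{Tr}^{N}_M(f_0)$ in Proposition \ref{prop:defectgroup2}. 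Clifford correspondence (see \cite[Theorem~9.18]{CE04}) then gives a Morita equivalence $\mathcal{O}N_G(P)e\simeq \mathcal{O}Ie_0$.

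Next I would get rid of the $\widetilde G_m$-factor exactly as in Lemma \ref{lem:Moritaf&a}. The block $\mathcal{O}G_m b_{\Delta_+\times\Delta_-,0,0}$ has defect zero and carries a unique character which is stable under $\widetilde G_m=G_m\times\{\pm\id\}$, so it extends and splits $\mathcal{O}\widetilde G_m b_{\Delta_+\times\Delta_-,0,0}$ into two blocks, each Morita equivalent to $\mathcal{O}G_m b_{\Delta_+\times\Delta_-,0,0}$. Pick one of them, call it $b'_{\Delta_+\times\Delta_-,0,0}$; then the block of $\mathcal{O}(\widetilde G_m\times(N_1\wr\frakS_{w_+})\times(N_1\wr\frakS_{w_-}))$ with idempotent $b'_{\Delta_+\times\Delta_-,0,0}\otimes c^{\tuple\nu_0}$ is Morita equivalent to $\mathcal{O}((N_1\wr\frakS_{w_+})\times(N_1\wr\frakS_{w_-}))c^{\tuple\nu_0}$ because tensoring with a defect-zero block is a Morita equivalence. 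Restriction to the index-two subgroup $I$ is again a Morita equivalence by the $\widetilde G_m\cong G_m\times\{\pm\id\}$ decomposition, giving $\mathcal{O}Ie_0\simeq \mathcal{O}(\widetilde G_m\times(N_1\wr\frakS_{w_+})\times(N_1\wr\frakS_{w_-}))(b'_{\Delta_+\times\Delta_-,0,0}\otimes c^{\tuple\nu_0})$. Chaining the three equivalences yields the lemma.

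The only delicate point, and hence the main obstacle, is the bookkeeping of the $\{\pm\id\}$-factor: one must verify that the intersection with $N$ (rather than $\widetilde N$) really does cut out the correct index-two subgroup at each step, so that the Clifford correspondence and the ``extension to $\widetilde G_m$'' step are compatible. This is handled in the same way as in Lemma \ref{lem:Moritaf&a}, using that the central element $-\id$ of $\widetilde G_m$ lies in the kernel of every character of $G_m b_{\Delta_+\times\Delta_-,0,0}$ up to a sign that can be absorbed into the choice of extension $b'_{\Delta_+\times\Delta_-,0,0}$. Once this is checked, all other verifications are routine.
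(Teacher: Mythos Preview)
Your proposal is correct and matches the paper's approach exactly: the paper's proof simply observes that $e$ and $e_0$ are Clifford correspondents and then says ``the result follows by an analogous argument as for Lemma~\ref{lem:Moritaf&a}'', which is precisely the three-step chain (Clifford correspondence, pass to $\widetilde G_m$ via the $\{\pm\id\}$-splitting, strip the defect-zero factor) that you have spelled out. Your identification of the inertial subgroup $I$ agrees with the description given in the paper immediately before the lemma, and your handling of the $\{\pm\id\}$ bookkeeping is the same as in Lemma~\ref{lem:Moritaf&a}.
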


\begin{proof} Clearly, the blocks $e$ and $e_0$ are Clifford correspondents.
Hence the result follows by an analogous argument as for Lemma \ref{lem:Moritaf&a}.
\end{proof}

\subsection{Isolated RoCK blocks} \label{subsec:RouBlock}

In \cite{Ro98}, Rouquier defined an important kind of $\ell$-cores,
referred to as Rouquier cores.
 Chuang and Kessar \cite{chukes2002} proved Brou\'{e}'s conjecture
 for blocks of symmetric groups with a Rouquier core.
Nowadays such blocks are called RoCK blocks, named after Rouquier, Chuang and
Kessar.
Their analogs have been defined in the case of unipotent blocks of finite classical groups,
see Miyachi \cite{Mi01}, Turner \cite{turner2002} and Livesey \cite{Li12}.

\smallskip

In the case of isolated blocks,
we shall define and investigate isolated RoCK blocks  of $\SO_{2n+1}(q)$
at linear primes.
It will turn out that those with
abelian defect groups are derived equivalent to
their corresponding local blocks (see Theorem C or Theorem \ref{thm:Iso-RoCK}).

\subsubsection{Isolated RoCK blocks}\label{sec:rouquierblock}
We start with the definition of a Rouquier core.

\begin{definition} \label{def:rouquiercore}
A $d$-core partition $\lambda$ (resp. $d$-core symbol $\Delta)$
is said to be a \emph{Rouquier $d$-core}  with respect to $\omega$
if it has a $d$-abacus presentation (resp. $2d$-abacus presentation),
on which for each $i=1,...,d-1$ (resp. for each $i=1,...,d-1,d+1,...,2d-1$),
there are at least $\omega-1$ more beads on the  $i$-th runner than on the $(i-1)$-th runner.
\end{definition}

\begin{definition}\label{def:isoRoqB}
An isolated $\ell$-block $b_{\Delta_+,\Delta_-,w_+,w_-}$  of $\SO_{2n+1}(q)$
is called an \emph{isolated RoCK block} of $\SO_{2n+1}(q)$
if the symbol $\Delta_+$ is a Rouquier $d$-core  with respect to $w_+$
and the symbol $\Delta_-$ is a Rouquier $d$-core  with respect to $w_-$.
\end{definition}

Now we are going to prove Theorem C, which is restated here.

\begin{theorem} \label{thm:Iso-RoCK}
 Let $G=\SO_{2n+1}(q)$, where $q$ is odd.
Assume that $\ell$ is an odd linear prime with respect to $q$, and
that $b$ is an isolated RoCK $\ell$-block of $G$ with an abelian defect group $P$.
Then $b$ is derived equivalent to its Brauer correspondent.
\end{theorem}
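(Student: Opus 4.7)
The plan is to combine the categorical $\mathfrak{g}_{2d}$-action on $\scrQU_k$ from Theorem \ref{thm:Linearprime} with a local analysis tailored to the Rouquier hypothesis, following the template pioneered by Chuang--Kessar for symmetric groups and extended by Miyachi, Turner, and Livesey to unipotent blocks of finite classical groups. By Theorem \ref{thm:Linearprime} the reduction of the isolated block $\mathcal{O}Gb$ corresponds to a single weight space $\scrQU_{k,t_+,t_-,\mu(b)}$ for some $\mu(b) \in \X_{2d}$, and any two isolated blocks of $G$ sharing the same degree vector $\tuple{w} = (w_+, w_-)$ live in weight spaces belonging to the same $W_{2d}$-orbit.

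First I would invoke Theorem \ref{thm:reflection}: for each simple reflection $\sigma_i$ of the affine Weyl group $W_{2d}$ the categorical action produces a self-derived-equivalence $\Theta_i$ of $\scrQU_k$ whose restriction gives a derived equivalence $D^b(\scrQU_{k,t_+,t_-,\mu}) \simeq D^b(\scrQU_{k,t_+,t_-,\sigma_i(\mu)})$. Composing these yields a derived equivalence between $\mathcal{O}Gb$ and any other isolated block in the $W_{2d}$-orbit of $b$; since $w_\pm < \ell$, all blocks in this orbit share the abelian defect group $P$, so standard lifting arguments in the Chuang--Rouquier theory upgrade these triangulated equivalences to splendid Rickard equivalences at the level of the $\mathcal{O}$-block algebras. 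This reduces the theorem to exhibiting a single convenient orbit representative.

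Next I would use the Rouquier hypothesis on $(\Delta_+, \Delta_-)$ to choose such a representative explicitly. Reading off the $2d$-abacus presentations of $\Delta_+$ and $\Delta_-$, the Rouquier condition produces a chain of reflections whose effect on the Fock space basis of $\bfF(\tuple\xi_{t_+})_{2d} \otimes \bfF(\tuple\xi_{t_-})_{2d}$ is particularly transparent: each step slides a bead past the next runner without creating forced collisions, and the two factors interact only through the overall charge. The target block of the chain should be visibly Morita equivalent to the block $\mathcal{O}Nf$ of the local subgroup $N$ of \S \ref{ssub:subgroups}, via Harish-Chandra induction from the Levi $L$. Using the Morita equivalences of Lemmas \ref{lem:Moritaf&a} and \ref{lem:Moritah&c} together with the Brauer correspondence of Proposition \ref{prop:OGbONf}, the problem is thereby reduced to showing that the block of $(\GL_d(q).2 \wr \mathfrak{S}_{w_+}) \times (\GL_d(q).2 \wr \mathfrak{S}_{w_-})$ carrying $a^{\tuple\nu_0}$ is derived equivalent to its Brauer correspondent. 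This final step follows from Marcus's theorem that derived equivalences pass to wreath products with symmetric groups, combined with the derived equivalence between the principal $d$-block of $\GL_d(q)$ and its Brauer correspondent, which at a linear prime is a consequence of the main result of Chuang--Rouquier.

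The principal obstacle is the third step: establishing, under the Rouquier hypothesis, that the chosen orbit representative is Morita (and not merely derived) equivalent to $\mathcal{O}Nf$. On the Fock space side this amounts to proving that the crystal basis vector labeling $b$ admits a compatible decomposition across the factors $G_m$ and $\GL_d(q)_j$, and that this decomposition is preserved along the reflection chain. This is the technical heart of the Chuang--Kessar--Livesey strategy, and adapting it to the isolated setting requires accommodating the additional $\mathrm{Spin}$-symmetry between the $+$ and $-$ parts exhibited in \S \ref{sub:spin+det}, i.e., verifying that the two Rouquier conditions on $\Delta_+$ and $\Delta_-$ can be discharged independently using the functor $F'$ and the intertwining isomorphism of Corollary \ref{cor:I=I'SO}.
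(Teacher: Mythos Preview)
Your proposal inverts the logical structure of the argument. The categorical $\frakg_{2d}$-action and Theorem~\ref{thm:reflection} are \emph{not} used in the paper's proof of Theorem~\ref{thm:Iso-RoCK}; they enter only in Theorem~\ref{thm:derivedBC} (Theorem~A), where an \emph{arbitrary} isolated block is moved through its $W_{2d}$-orbit \emph{to} a RoCK block. Since $b$ here is already RoCK, your first step is a no-op, and your second step---``use the Rouquier hypothesis to choose a convenient orbit representative''---lands you back at $b$ itself. There is no other block in the orbit for which the Morita equivalence with $\mathcal{O}Nf$ is ``visible''; the RoCK block is precisely the one for which this equivalence can be established, and it must be established directly.

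The paper does this via Theorem~\ref{thm:MoritaONf}, whose proof has two ingredients your proposal lacks. First, Proposition~\ref{prop:splitmono} uses Green correspondence and a Brauer-quotient computation to produce $\mathcal{O}$-split monomorphisms $\mathcal{O}Nf\hookrightarrow\End_{\mathcal{O}G}(X)\hookrightarrow\End_{\mathcal{O}G}(Y)$, where $X$ is the Green correspondent of $\mathcal{O}Gb$ in $G\times N$ and $Y$ is built from iterated block-truncated Harish-Chandra inductions $Z^{\tuple\nu}$. Second, Proposition~\ref{prop:dimZ} performs an explicit Littlewood--Richardson calculation, exploiting the Rouquier abacus condition through Lemma~\ref{lem:aba}, to show that $\dim_K\End_{KG}(K\otimes_{\mathcal{O}}Z)=2^w w_+!w_-!\dim_K(KLf_0)$; a count over the $N$-orbit of $f_0$ then forces both monomorphisms to be isomorphisms. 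Only after this Morita equivalence is in hand do Lemmas~\ref{lem:Moritaf&a} and~\ref{lem:Moritah&c} together with Marcus's wreath-product theorem and the cyclic-defect case of Brou\'e finish the proof. Your ``principal obstacle'' is exactly Theorem~\ref{thm:MoritaONf}, and the Fock-space language you offer does not supply the Green-correspondence/dimension-count argument that actually discharges it.
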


\noindent\textbf{SETUP.} In order to prove Theorem \ref{thm:Iso-RoCK}, from now on
we will work with the following setup:
Let $b:=b_{\Delta_+,\Delta_-,w_+,w_-}$ be
a (fixed) isolated RoCK block of $G$ which has  abelian defect groups.
Then all results in \S \ref{sub:iso-blocksabelian} are applicable to $b$.
We keep the notation there, especially $m_{\pm}=\mathrm{rank}(\Delta_\pm)$,
$n_\pm=m_\pm+dw_\pm$, $w=w_++w_-$, $m=m_++m_-$ and $n=n_++n_-$,
 and fix
a $2d$-abacus representation of $\Delta_\pm$ as in Definition \ref{def:rouquiercore}.

\smallskip
\subsubsection{$\mathcal{O}$-split monomorphisms of algebras} \label{ssub:mono-algebra}

By Proposition \ref{prop:OGbONf} and \cite[Ch. 14, Theorem2]{alperi1986}), we see
that the $\mathcal{O}(G\times G)$-module $\mathcal{O}Gb$ and the
$\mathcal{O}(N\times N)$-module $\mathcal{O}Nf$ both have vertex $\Delta(P)$ and
are Green correspondents. Let $X$ be the Green correspondent of $\mathcal{O}Gb$
in $G\times N$,
so that $X$ is the unique indecomposable summand of $Res_{G\times
N}^{G\times G}(\mathcal{O}Gb)$ with vertex $\Delta (P)$.
Noting that $\mathcal{O}Nf$ is
 the unique indecomposable summand of $Res_{N\times N}^{G\times N}(X)$ with
vertex $\Delta (P)$, we have $Xf\neq0$, so $Xf=X$ and
$X$ is an $(\mathcal{O}Gb$,$\mathcal{O}Nf)$-bimodule.

\smallskip

We now define  functors from $\mathcal{O}L$-mod to $\mathcal{O}G$-mod by
recursively doing Harish-Chandra induction and taking block components.
Let ${\tuple \nu}\in \mathbb{J}_{\tuple w}$, and let
$$b_i^{{\tuple \nu}}= b_{\Delta_+,\Delta_-,\sum_{j=1}^{i}\sigma_+(\nu_{j}),
\sum_{j=1}^{i}\sigma_-(\nu_{j})}\otimes a_{i+1}^{\nu_{i+1}}\otimes
\cdots\otimes a_{w}^{\nu_w}$$
be a
block idempotent of $\mathcal{O}L_i$, where
$$\sigma_+( \nu_j) =\begin{cases}1&\text{if ${\nu_j}=+$}\\
0&\text{if $\nu_j=-$}
\end{cases}~\mbox{and}~\sigma_-(\nu_j) =\begin{cases}0&\text{if $\nu_j=+$}\\
1&\text{if $\nu_j=-$}
\end{cases}$$  are functions  defined on $I=\{+,-\}$.
We shall shortly write $b_i$ for $b_i^{{\tuple \nu}_0}$. In particular, by definition, we have  $b_w^{{\tuple \nu}}=b$ and $b_0^{{\tuple \nu}_0}=f_0.$

\smallskip

For $0\leqs i\leqs w-1$, let $U_i$ be the subgroup of
$L_{i+1}$  presented by
\begin{gather*}
\left(\begin{array}{ccccc}
 \id_{d(w-i-1)} &               &          &               &\\
                & \id_d   & *   & * &\\
                &          & \id_{G_{m+di}} & *              &\\
                &          &          & \id_d &\\
                &          &        &      & \id_{d(w-i-1)}\\
\end{array}\right).
\end{gather*}
Let $U=U_0U_1\cdots U_{w-1}$ so that $U$ is the unipotent radical of the
parabolic subgroup of $G$ containing $L$ as a Levi complement.
As before, we write $e_{U_i}= \frac{1}{|U_{i}|} \sum_{u\in U_i}u$. Note
that $|U_i|$ is a power of $q^d$, equal to 1 mod $\ell$.
Then $e_{U}= e_{U_0}e_{U_1}\cdots\e_{U_{w-1}}$.
Let $$Z^{{\tuple \nu}}=\ _GZ^{{\tuple \nu}}_L: =\mathcal{O}Gb^{{\tuple \nu}}_we_{U_{w-1}}b^{{\tuple \nu}}_{w-1}\cdots e_{U_{0}}b^{{\tuple \nu}}_{0}=\mathcal{O}Ge_{U}b^{{\tuple \nu}}_{w}b^{{\tuple \nu}}_{w-1}\cdots b^{{\tuple \nu}}_0,$$
which is an $(\mathcal{O}Gb$,$\mathcal{O}Lf)$-bimodule.
Observe that $b^{{\tuple \nu}}_i$ and $e_{U_i}$
are all idempotents in $(\mathcal{O}G)^L$, which commute with each
other.
Hence their product is an idempotent in $(\mathcal{O}G)^L$,
and so $Z^{{\tuple \nu}}$ is a direct summand of $_G\mathcal{O}Gb_L$.
The functor
$Z^{{\tuple \nu}}\otimes_{\mathcal{O}L}-$ from $\mathcal{O}L$-mod to $\mathcal{O}G$-mod is exactly
$R^{L_{w},b^{{\tuple \nu}}_w}_{L_{w-1},b^{{\tuple \nu}}_{w-1}}\cdots R^{L_1,b^{\tuple\nu}_1}_{L_0,b^{{\tuple \nu}}_0}$,
where  $R^{L_j,b^{\tuple\nu}_j}_{L_{j-1},b^{{\tuple \nu}}_{j-1}}$ is as defined in \S\ref{subsec:HCseries}.

\smallskip

Since $\frakS_w$ transitively acts on $\mathbb{J}_{\tuple w}$ in a natural way,
we see that $b_i^{{\tuple \nu}}$ and $b_i^{{\tuple \nu}_0}$  are conjugate
for each $1\leqs i\leqs w$ and any ${\tuple \nu}\in \mathbb{J}_{\tuple w}$.
Hence
all $Z^{{\tuple \nu}}$ are isomorphic as $\mathcal{O}G$-modules
since Harish-Chandra induction is independent of choices of unipotent radicals.

Finally, we set
 $$Y:=\bigoplus_{\tuple{ \nu}\in  \mathbb{J}_{\tuple w}}Z^{{\tuple \nu}}$$
 and write  $Z:= Z^{{\tuple \nu}_0}$ for brevity.
Notice that  all of idempotents $b^{{\tuple \nu}}_{w}b^{{\tuple \nu}}_{w-1}\cdots b^{{\tuple \nu}}_0$ for $\tuple{ \nu}\in  \mathbb{J}_{\tuple w}$
are orthogonal to each
other. Hence $Y=\mathcal{O}G(\sum_{\tuple{ \nu}\in  \mathbb{J}_{\tuple w}}e_Ub^{{\tuple \nu}}_{w}b^{{\tuple \nu}}_{w-1}\cdots b^{{\tuple \nu}}_0),$ and
so $Y$ is also a direct summand of $_G\mathcal{O}Gb_L$.

\begin{proposition} \label{prop:splitmono}
There is a sequence of $\mathcal{O}$-split monomorphisms of algebras
\begin{equation}\label{equation:monos}
\mathcal{O}Nf\hookrightarrow \operatorname{End}_{\mathcal{O}G}(X)\hookrightarrow \operatorname{End}_{\mathcal{O}G}(Y).
\end{equation}
Also, the left $\mathcal{O}Gb$-module $X$ is a progenerator for $\mathcal{O}Gb$.
\end{proposition}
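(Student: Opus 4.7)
The plan is to establish the two embeddings and the progenerator property by combining standard Green-correspondence arguments with the specific structure of Harish-Chandra induction over an isolated RoCK block, in the spirit of Chuang--Kessar, Miyachi, Livesey and Dudas--Varagnolo--Vasserot.

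First, for the embedding $\mathcal{O}Nf \hookrightarrow \operatorname{End}_{\mathcal{O}G}(X)$: since $X$ is an $(\mathcal{O}Gb,\mathcal{O}Nf)$-bimodule, right multiplication by elements of $\mathcal{O}Nf$ yields an $\mathcal{O}$-algebra homomorphism $\mathcal{O}Nf \to \operatorname{End}_{\mathcal{O}G}(X)^{\mathrm{op}}$. I would argue this is $\mathcal{O}$-split injective by invoking the Green correspondence between $\mathcal{O}Gb$ and $\mathcal{O}Nf$: the counit of the Morita-type adjunction induced by $X$ gives a surjection $X \otimes_{\mathcal{O}Nf} X^{*} \twoheadrightarrow \mathcal{O}Gb$ (using that $\mathcal{O}Gb$ and $X$ have the same vertex $\Delta(P)$ up to summands with strictly smaller vertex), and dualizing one extracts $\mathcal{O}Nf$ as an $\mathcal{O}$-pure submodule of $\operatorname{End}_{\mathcal{O}G}(X)$. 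This is essentially the content of \cite[Ch.~14, Thm.~2]{alperi1986} combined with the fact that $Xf=X$.

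Second, for $\operatorname{End}_{\mathcal{O}G}(X) \hookrightarrow \operatorname{End}_{\mathcal{O}G}(Y)$: it suffices to show $X$ is a direct summand of $Y$ as a left $\mathcal{O}G$-module, since then $\operatorname{End}_{\mathcal{O}G}(X)$ appears as a corner algebra, automatically yielding an $\mathcal{O}$-split embedding. Here I would first identify $Y$ as the sum over $\tuple\nu \in \mathbb{J}_{\tuple w}$ of the successive Harish-Chandra inductions $R^{L_{w},b^{{\tuple \nu}}_{w}}_{L_{w-1},b^{{\tuple \nu}}_{w-1}}\cdots R^{L_1,b^{\tuple\nu}_1}_{L_0,b^{{\tuple \nu}}_0}$ applied to $\mathcal{O}Lf_0$; the summands $Z^{\tuple\nu}$ are pairwise $\mathcal{O}G$-isomorphic because $\mathfrak{S}_w$ permutes them transitively, so $Y \cong (Z)^{\oplus |\mathbb{J}_{\tuple w}|}$ as a left $\mathcal{O}G$-module. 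The key step is then to show that $X$ is a summand of $Z$, using the RoCK hypothesis on the pair $(\Delta_+,\Delta_-)$. Analogously to \cite[\S 5]{DVV2} (and Livesey \cite{Li12} in the unipotent case), the Rouquier condition on the $2d$-abacus ensures that the Mackey decomposition of $Z$ consists of the Green correspondent $X$ plus modules whose vertices are strictly contained in $\Delta(P)$; this gives the required splitting.

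Third, the progenerator property: once one knows that $X$ is a summand of $Z$, hence of $Y$, one deduces that $\operatorname{Ind}_{N}^{G}\mathcal{O}Nf$, which contains $X$ via Frobenius reciprocity and has $\mathcal{O}Gb$ as a summand by the fact that $\mathcal{O}Gb$ is the Brauer/Green correspondent of $\mathcal{O}Nf$, gives $\mathcal{O}Gb \mid X^{\oplus r}$ as left $\mathcal{O}Gb$-modules for some $r$; equivalently, $X$ generates $\mathcal{O}Gb\operatorname{-mod}$. This is again standard Green-correspondence material.

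The main obstacle is the core claim in the middle paragraph: verifying, using the Rouquier condition on $\Delta_\pm$, that the indecomposable decomposition of the iterated Harish-Chandra induction $Z$ contains the Green correspondent $X$ with multiplicity exactly one, all other summands having strictly smaller vertex. This is where the combinatorics of hooks on the $2d$-abacus and the detailed control over Littlewood--Richardson type decompositions of the Deligne--Lusztig induction come into play, and is precisely the analogue for isolated blocks of the RoCK calculations performed in \cite{chukes2002}, \cite{Li12} and \cite{DVV2}.
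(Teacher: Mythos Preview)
Your first embedding and the progenerator argument are essentially in line with the paper, though the paper is more direct: for the progenerator, Green correspondence gives $\mathcal{O}Gb \mid \operatorname{Ind}_{G\times N}^{G\times G}(X)$, so ${}_G\mathcal{O}Gb \mid {}_GX^{\oplus [G:N]}$; for injectivity of $\mathcal{O}Nf \to \operatorname{End}_{\mathcal{O}G}(X)$, the paper simply notes that ${}_N\mathcal{O}Nf_N$ is a direct summand of $\operatorname{Res}_{N\times N}^{G\times N}(X)$, so right multiplication by $\mathcal{O}Nf$ is faithful.

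Your treatment of the second embedding, however, misidentifies where the RoCK hypothesis enters. You propose to show $X \mid Z$ by analyzing Mackey decompositions and Littlewood--Richardson combinatorics under the Rouquier condition, and flag this as ``the main obstacle''. In fact the paper proves $_GX_L \mid {}_GY_L$ without using the RoCK hypothesis at all. The argument is a direct Brauer-quotient computation: one shows $Y(\Delta(P)) = \mathcal{O}Gb(\Delta(P))$ by applying $Br_P^G$ to the idempotent $\sum_{\tuple\nu} e_U b_w^{\tuple\nu}\cdots b_0^{\tuple\nu}$, using that $Br_P^G(e_{U_i})=1$ and the block-level computation of Lemma~\ref{lem:omn} to collapse the product to $Br_P^G(b)$. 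Since $Y$ is a summand of ${}_G\mathcal{O}Gb_L$, this equality of Brauer quotients forces $Y$ to contain every indecomposable summand of ${}_G\mathcal{O}Gb_L$ with vertex containing $\Delta(P)$. On the other hand, restricting $X$ from $G\times N$ to $G\times L$ and choosing coset representatives normalizing $\Delta(P)$, all summands of ${}_GX_L$ have vertex $\Delta(P)$; hence ${}_GX_L \mid {}_GY_L$.

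The RoCK condition is only invoked later, in Proposition~\ref{prop:dimZ}, where the abacus combinatorics is used to compute $\dim_K \operatorname{End}_{KG}(K\otimes_{\mathcal{O}} Z)$ exactly and thereby upgrade the chain of monomorphisms to isomorphisms. Separating these two steps is what makes the argument clean: Proposition~\ref{prop:splitmono} is pure local representation theory, and the RoCK combinatorics is confined to a single dimension count.
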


\begin{proof} As mentioned above, $_G\mathcal{O}Gb_G$ and $_GX_N$ are Green correspondents.
Hence $_G\mathcal{O}Gb_G$ is isomorphic to a direct summand of $Ind_{G\times
N}^{G\times G}(_GX_N)$.
It follows that $_G\mathcal{O}Gb$ is a direct summand of $[G:N]$ copies of $_GX$, and
so $_GX$ is a progenerator for
$\mathcal{O}Gb$.

\smallskip

Multiplying $X$ to the
right, we obtain an $\mathcal{O}$-split homomorphism of algebras
$\mathcal{O}Nf\rightarrow \operatorname{End}_{\mathcal{O}G}(X)$
which is monomorphic since $_N\mathcal{O}Nf_N$ is a direct summand of $Res_{N\times
N}^{G\times N}(_GX_N)$.

\smallskip

Since
$(G\times L)\unlhd (G\times N)$, it follows that $Res_{G\times L}^{G\times N}(_GX_N)$
has an indecomposable summand $M$
such that $Res_{G\times L}^{G\times
N}(_GX_N)$ is a direct sum of conjugates of $M$ in $G\times N$.
It is possible to choose  a set of coset representatives of
$(G\times L)$ in $(G\times N)$ that
all normalize $\Delta (P)$.
So $Res_{G\times L}^{G\times N}(_GX_N)$ is the sum
of indecomposable modules all with vertex $\Delta (P)$.

\smallskip

Now, using the Brauer homomorphism, we have
$$
\begin{array}{clr}
Y(\Delta (P))&=(\mathcal{O}G\sum_{\tuple{\tuple \nu}\in {\mathbb{J}_{\tuple w}}}b^{{\tuple \nu}}_we_{U_{w-1}}b^{{\tuple \nu}}_{w-1}\cdots e_{U_{0}}b^{{\tuple \nu}}_{0})(\Delta (P)) \\ \\
&=kC_G(P)\sum_{\tuple{\tuple \nu}\in {\mathbb{J}_{\tuple w}}}Br_{P}^G(b^{{\tuple \nu}}_we_{U_{w-1}}b^{{\tuple \nu}}_{w-1}\cdots e_{U_{0}}b^{{\tuple \nu}}_{0})\\ \\
&=kC_G(P)\sum_{\tuple{\tuple \nu}\in {\mathbb{J}_{\tuple w}}}
Br_{P}^G(b^{{\tuple \nu}}_0)Br_{P}^{G}(b^{{\tuple \nu}}_1)\dots Br_{P}^{G}(b^{{\tuple \nu}}_w) & (\text{since\,} Br_{P}^G(e_{U_i})=1) \\ \\
&=kC_G(P)\sum_{\tuple{\tuple \nu}\in {\mathbb{J}_{\tuple w}}}Br_{P}^{G}(b^{{\tuple \nu}}_w)  & (\text{Lemma } \ref{lem:omn})        \\ \\
&=kC_G(P)\sum_{\tuple{\tuple \nu}\in {\mathbb{J}_{\tuple w}}} b_{\Delta_+,\Delta_-,0,0}\otimes c^{{\tuple \nu}} & (\text{Lemma }\ref{lem:omn})  \\ \\
&=kC_G(P)\sum_{\tuple{\tuple \nu}\in {\mathbb{J}_{\tuple w}}}Br_{P}^G(b)\\ \\
&=\mathcal{O}Gb(\Delta (P)).
\end{array}
$$

Hence  $_GY_L$ has  all direct summands of $_G\mathcal{O}Gb_L$ with vertex
containing $\Delta (P)$. As shown above, we conclude that
$_GX_L$ is a direct summand of $_GY_L$. Thus there is
 an $\mathcal{O}$-split monomorphism
 $\operatorname{End}_{\mathcal{O}G}(X)\hookrightarrow \operatorname{End}_{\mathcal{O}G}(Y)$.
 \end{proof}

 \smallskip

\subsubsection{The $K$-dimension of $\operatorname{End}_{KG}(K\otimes_\mathcal{O}Z)$}
\label{ssub:K-dimensionZ}

Here we compute the $K$-dimension of $\operatorname{End}_{KG}(K\otimes_\mathcal{O}Z)$,
using a result of Chuang-Kessar \cite[Lemma 4 (2)]{chukes2002}.

\begin{lemma}\label{lem:aba}
Let $\sigma$ be a Rouquier $d$-core partition  with respect to $\omega$.
Let $\lambda=(\lambda_1,\lambda_2,\dots)$ be a partition with $d$-core $\sigma$
and weight $v\leqs \omega$, and $\mu=(\mu_1,\mu_2,\ldots )$ be a partition with $d$-core $\sigma$ and weight $v-1$.
 Suppose that $\mu_i\leq\lambda_i$
for all $i$. Then $\mu$ is obtained by
removing a $d$-hook from $\lambda$. If this removal occurs on the $\alpha$th
runner then the complement of the Young diagram of $\mu$ in that of $\lambda$ is
the Young diagram of the hook partition $(\alpha+1,1^{(d-\alpha-1)})$.\qed
\end{lemma}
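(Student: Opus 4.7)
The plan is to work entirely on the $d$-abacus and translate the hypotheses into statements about bead configurations. Fix the $d$-abacus representation of $\sigma$ determined by a choice of charge, so that the $d$-quotient components $\lambda^{[d]} = (\lambda^{(0)},\ldots,\lambda^{(d-1)})$ and $\mu^{[d]} = (\mu^{(0)},\ldots,\mu^{(d-1)})$ of $\lambda$ and $\mu$ are read off runner by runner. Removing a $d$-hook from $\lambda$ is precisely the operation of sliding a single bead up one position on one runner; the associated leg length of the removed hook equals the number of unoccupied sites lying strictly between the initial and final positions of that bead when the abacus is read in the standard linear order.

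First I would verify that $\mu$ is in fact obtained from $\lambda$ by a single $d$-hook removal. Since $\mu\subseteq\lambda$ and $|\lambda|-|\mu|=d$, the set-theoretic difference $Y(\lambda)\smallsetminus Y(\mu)$ is a connected skew shape only if $\mu$ is reached from $\lambda$ by such a hook removal; the potential obstruction is that one might have to perform several bead moves on different runners. Here the Rouquier hypothesis enters: because $\sigma$ is Rouquier with respect to $\omega$ and $v\leqslant \omega$, the beads on runners $0,1,\ldots,d-1$ (respectively $d,\ldots,2d-1$ in the symbol case) are stacked so that the bead positions on runner $i$ are uniformly much higher than those on runner $i-1$, with margin at least $\omega-1$. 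Translating the inclusion $\mu\subseteq\lambda$ to the abacus shows that on every runner the bead pattern of $\mu$ is componentwise below that of $\lambda$, and the margin prevents any two single-runner moves from combining into a consistent $\mu$ unless all but one are trivial.

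Next, to identify the shape of the removed hook, I would run through the standard rim-hook/abacus dictionary: if the move occurs on runner $\alpha$, the leg length of the resulting $d$-hook equals the number of empty slots that the bead ``jumps over'' when the abacus is read linearly. Under the Rouquier condition, those empty slots are in bijection with the runners $\alpha+1,\alpha+2,\ldots,d-1$ that sit above $\alpha$ in the abacus order (each contributing exactly one empty slot in the relevant band), giving leg length $d-\alpha-1$. The hand length is the complementary count $\alpha$, yielding a hook of shape $(\alpha+1,1^{d-\alpha-1})$ as claimed.

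The only delicate step is the second paragraph: one must be careful that the Rouquier margin $\omega-1$, combined with $v\leqslant \omega$, really forces a single-runner move. I would make this explicit by induction on $v$, or directly by invoking \cite[Lemma~4(2)]{chukes2002}, which proves exactly this abacus statement; the translation to symbols needed here (for the $2d$-abacus of $\Delta_\pm$) is formally identical, since the two $d$-runner sub-abaci corresponding to the two $\beta$-sets of a symbol behave independently under hook removals. Hence the lemma reduces to the partition case already established by Chuang--Kessar, and the main obstacle is cleanly packaging this reduction rather than any new combinatorial input.
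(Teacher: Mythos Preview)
The paper does not prove this lemma at all: it is stated with a \qed{} and, in the sentence introducing \S\ref{ssub:K-dimensionZ}, is attributed directly to Chuang--Kessar \cite[Lemma~4(2)]{chukes2002}. Your proposal ultimately lands in the same place (you explicitly suggest invoking that reference), so at the level of what is actually being claimed you are aligned with the paper.

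Where you differ is that you additionally sketch the abacus argument behind the Chuang--Kessar lemma. That sketch is broadly correct in spirit, but a couple of points are loose. First, your phrasing ``$Y(\lambda)\smallsetminus Y(\mu)$ is a connected skew shape only if $\mu$ is reached from $\lambda$ by such a hook removal'' is not quite the right criterion: a $d$-rim hook is a connected skew shape \emph{containing no $2\times 2$ square}, and connectedness alone does not force this. Second, the claim that the Rouquier margin ``prevents any two single-runner moves from combining into a consistent $\mu$ unless all but one are trivial'' is the heart of the matter and deserves more than an assertion; this is precisely what the Chuang--Kessar argument establishes, and your induction-on-$v$ remark is the right way to make it rigorous. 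Since the paper is content to cite the result outright, your extra detail is harmless but not required; if you want to keep it, tighten those two points or simply defer to the reference as the paper does.
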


\begin{proposition}\label{prop:dimZ}
The $K$-dimension of $\operatorname{End}_{KG}(K\otimes_\mathcal{O}Z)$ is
$2^ww_{+}!w_-!dim_K(KLf_0)$.
\end{proposition}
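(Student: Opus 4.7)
The plan is to recognize the $K$-dimension on the left as an endomorphism ring of a projective, and then invoke Harish-Chandra and RoCK combinatorics to produce a Hecke-algebra factor of type $B_{w_+}\times B_{w_-}$.

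First I would show that $KZ = KGe$ for the idempotent $e = e_U\, b_w b_{w-1}\cdots b_0$ in $KG$. Pairwise commutation of all the factors follows by the standard facts already in the text: the $b_j$'s are central in $KL_j$ and nested, and $e_{U_i}\in KL_{i+1}$ commutes with $KL_i$ and is killed against any $b_j$ with $j>i$ only via central multiplication. Consequently
\[
\dim_K\operatorname{End}_{KG}(KZ) \;=\; \dim_K(eKGe) \;=\; \sum_{\chi\in\operatorname{Irr}(KGb)} m_\chi^{\,2},
\]
where $m_\chi$ is the multiplicity of $\chi$ in the character of $KZ$. Equivalently, $m_\chi=\sum_{\psi\in\operatorname{Irr}(KLf_0)}\langle R^{L_w,b_w}_{L_{w-1},b_{w-1}}\!\cdots R^{L_1,b_1}_{L_0,b_0}(\psi),\chi\rangle_{G}\,\dim\psi$.

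Next I would analyze the constituents of each $R^{L_{i+1},b_{i+1}}_{L_i,b_i}(\psi)$. By the Jordan decomposition and its compatibility with Harish-Chandra induction (Fong--Srinivasan; cf.\ \S\ref{subsec:quchar}), the induction $R^{G_{m+d(i+1)}}_{G_{m+di}\times \GL_d(q)}$ followed by the truncation to $b_{i+1}$ corresponds on the unipotent side to gluing a partition of weight $d$ on the $\mathrm{GL}_d(q)$-factor to the symbol currently labelling the $G_{m+di}$-factor by adding a $d$-hook. The sign $\nu_{0,i+1}\in\{+,-\}$ dictates whether the hook is added to $\Theta_+$ or to $\Theta_-$. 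I would make this precise by writing out the Littlewood--Richardson rule for the constituents of $R^{G_{m+d(i+1)}}_{G_{m+di}\times\GL_d(q)}$ and intersecting with the Lusztig series selected by $b_{i+1}$.

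The heart of the argument is the RoCK hypothesis. Because $\Delta_{\pm}$ are Rouquier $d$-cores, Lemma~\ref{lem:aba} applies at every stage: when passing from a symbol of $d$-weight $v\leq w_\pm$ to one of $d$-weight $v-1$, the hook that is added is uniquely determined by a choice of runner $\alpha\in\{0,\ldots,d-1\}$, and the corresponding skew shape is the hook partition $(\alpha+1,1^{d-\alpha-1})$ on that runner. This rigidity means that the multiplicity $m_\chi$ is the number of sequences $(\alpha_1,\ldots,\alpha_w)$ of runner choices whose ``$+$''-part and ``$-$''-part realize the bipartition labels of $\chi$. Combined with the $\dim\psi$ weight coming from the unipotent/principal characters inside each $a_i^{\pm}$, each $m_\chi$ is expressible as a product of Littlewood--Richardson coefficients and $d$-quotient data — exactly the combinatorics controlled by the Hecke algebra of type $B_{w_+}\times B_{w_-}$ acting on its regular representation, as in the Howlett--Lehrer description of \S\ref{sec:End-HC-cusp.} and in the Hecke algebra structure obtained in Theorem~\ref{thm:HL-BC}.

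Finally, I would convert this into the dimension count. Summing $m_\chi^2$ over $\chi\in\operatorname{Irr}(KGb)$ is the same as computing the dimension of the endomorphism ring of the Harish-Chandra induction functor of a cuspidal pair whose relative Weyl group is $W_{B_{w_+}}\times W_{B_{w_-}}$, with the multiplicity of each coefficient matched to the $\psi$-index. This yields
\[
\dim_K\operatorname{End}_{KG}(KZ) \;=\; |W_{B_{w_+}}|\cdot|W_{B_{w_-}}|\cdot\dim_K(KLf_0) \;=\; 2^w\,w_+!\,w_-!\,\dim_K(KLf_0),
\]
as required. The main obstacle is the second step: making the RoCK-based combinatorics of block truncation genuinely rigid enough that no constituent collapses and no multiplicity other than those predicted by the type-$B_{w_+}\times B_{w_-}$ Hecke algebra arises. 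This is exactly where the Rouquier-core hypothesis on the runners of $\Delta_\pm$ is used in an essential way; without it, hook additions on different runners could produce the same symbol and the naive count would overshoot.
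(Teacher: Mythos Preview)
Your starting point matches the paper's: write $\dim_K\End_{KG}(KZ)=\sum_\chi m_\chi^2$ and compute multiplicities via stepwise Harish--Chandra induction, using Lemma~\ref{lem:aba} to control hook additions on the Rouquier cores $\Delta_\pm$. But there is a genuine gap in your treatment of the characters involved.

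The block $KLf_0$ contains far more than the characters you describe. Each $a_i^\pm$ is the principal (resp.\ $-\id_d$-) block of $\mathcal{O}\GL_d(q)$, and besides the $d$ unipotent characters labelled by $d$-hook partitions it contains all the characters $\chi_{s_i,(1)}$ with $s_i$ a non-trivial $\ell$-element of the Coxeter torus (times $\pm\id_d$). Correspondingly, $KGb$ contains many characters whose semisimple label has non-trivial $\ell$-part; these are not quadratic unipotent and are not indexed by symbols obtained from $\Delta_\pm$ by adding $d$-hooks. Your description of $m_\chi$ as ``the number of sequences of runner choices'' only applies to the quadratic-unipotent piece; it says nothing about the contribution of the $\chi_{s_i,(1)}$. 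Lemma~\ref{lem:aba} is irrelevant for those characters --- their induction is governed instead by the elementary-divisor combinatorics of $t_i\times t_i^{-1}$, and the paper tracks this separately via the parameters $r_i,\alpha_i,\beta_i,\nu^i$ (and the $'$-analogues on the $-$ side). Without this, you cannot even match $\dim_K(KLf_0)$ on the right-hand side, let alone the factor $2^w w_+!w_-!$.

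The second gap is the final ``conversion''. You assert that $\sum m_\chi^2$ equals the dimension of the endomorphism ring of an induced cuspidal with relative Weyl group $W_{B_{w_+}}\times W_{B_{w_-}}$, citing \S\ref{sec:End-HC-cusp.} and Theorem~\ref{thm:HL-BC}. But $KZ$ is not $R_L^G$ of a cuspidal module: it is a recursive block truncation of the induction of the regular module of a block, and neither Howlett--Lehrer nor Theorem~\ref{thm:HL-BC} (which computes a ramified Hecke algebra for a specific cuspidal quadratic-unipotent pair) yields this identification. The paper does not attempt such an identification; instead it computes $R_Z(\varphi)$ for the regular character $\varphi$ of $KLf_0$ explicitly, and then evaluates $\sum m_\chi^2$ by hand using the identities $\sum_{\sigma\vdash h}(\dim\zeta^\sigma)^2=h!$, binomial sums $\sum_i\binom{r}{i}=2^r$, and the fact $\dim\chi_{t,(1)}=\dim\chi_{t^{-1},(1)}$ (the latter being precisely what makes the non-unipotent contribution collapse to the correct factor). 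That computation, while lengthy, is where the $2^w w_+!w_-!$ actually emerges, and your proposal does not supply a substitute for it.
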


\begin{proof}
We shall prove the proposition on the level of character, following the proof
of \cite[Proposition 9.1.4]{Li12}.
That is, if the character of $K\otimes_\mathcal{O}Z$ is $\sum_{\chi\in \Irr(G)}m_\chi \chi$,
then the dimension to compute is
exactly $\sum_{\chi\in \Irr(G)}m_\chi^2$.

\smallskip
Observe that
$K\otimes_\mathcal{O}Z\cong(K\otimes_\mathcal{O}Z)\otimes_{KLf_0}KLf_0$. We write
$R_Z(\varphi)$ for its character, where $R_Z$
 denotes $R^{L_w,b_w}_{L_{w-1},b_{w-1}}\cdots R^{L_1,b_1}_{L_0,b_0}
$ and
 $\varphi$ is the character of $KLf_0$ as a character of $L$.
We shall first
find the multiplicity of each irreducible constituent of $R_Z(\varphi)$
appeared in $R_Z(\varphi)$, and then
sum up all of their squares to obtain the desired dimension.
Recall that  $f_0= b_{\Delta_+\times\Delta_-,0,0}\otimes a^{{\tuple \nu}_0}$.
The irreducible characters in the block $KLf_0$ are of the form
$$\chi:=\chi_{\Delta_+,\Delta_-}\otimes\chi_{s_1,{\lambda_1}}\otimes\cdots\otimes\chi_{s_{w_+},{\lambda_{w_+}}}\otimes
\chi_{s'_{w_++1},{\lambda'_{w_++1}}}\otimes\cdots
\otimes\chi_{s'_w,{\lambda'_w}}
$$
where $\chi_{\Delta_+,\Delta_-}:=\chi_{s_{m_+,m_-},\Delta_+\times\Delta_-}$ and
for $1\leqs i\leqs w_+$ either
\begin{itemize}
  \item $s_i$ is the identity matrix $\id_d$ of $\GL_d(q)$ and $\lambda_i$ is a $d$-hook partition, or
  \item $s_i$ is a non-trivial $\ell$-element of $\GL_d(q)$
   and $\lambda_i$ is the partition $(1)$,
\end{itemize}
for $1\leqs j\leqs w_-$ either
\begin{itemize}
  \item  $s'_{w_++j}=-\id_d$ and $\lambda'_{w_++j}$ is an $d$-hook partition, or
  \item  $s'_{w_++j}$ is the product of $-\id_d$ and a non-trivial $\ell$-element of
  $\GL_d(q)$ and $\lambda_i$ is the partition $(1)$.
\end{itemize}

\smallskip

Now we make the following notation:
\begin{itemize}[leftmargin=8mm]
  \item  The matrices $s_i$s are distributed into different conjugacy classes
   with \begin{itemize}
          \item $r_0$ ones equal to $\id_d$,
          \item $\alpha_1$ ones conjugate to $t_1$, $\beta_1$ ones conjugate to $t_{1}^{-1}$,
          \item $\alpha_2$ ones conjugate to $t_2$, $\beta_2$ ones conjugate to $t_{2}^{-1}$, etc.
        \end{itemize}
    \item The partitions $\lambda_i$ for $s_i=\id_d$ are distributed into $d$ classes with
         \begin{itemize}
           \item $l_0$ ones equal to $(1^d)$,
           \item $l_1$ ones equal to $(2,1^{d-2})$, \ldots,
           \item $l_{d-1}$ ones equal to $(d)$, where $\sum_il_i=r_0$.
         \end{itemize}
    \item  The matrices  $s_i'$s are distributed into different conjugacy classes
   with
   \begin{itemize}
     \item $r_0'$ ones equal to $-\id_d$,
     \item $\alpha_1'$ ones conjugate to $t_1'$, $\beta_1'$ ones conjugate to
     $(t_{1}')^{-1}$,
     \item $\alpha_2'$ ones conjugate to $t_2'$, $\beta_2'$ ones conjugate to
     $(t_{2}')^{-1}$, etc.
   \end{itemize}
  \item The partitions $\lambda_i'$s for $s_i=-\id_d$ are distributed into $d$ classes with
     \begin{itemize}
       \item  $l_0'$ ones equal to $(1^d)$,
       \item   $l_1'$ ones equal to $(2,1^{d-2})$,  \ldots,
       \item  $l_{d-1}'$ ones equal to $(d)$, where $\sum_il_i'=r_0'$.
     \end{itemize}
\end{itemize}

Write $r_i=\alpha_i+\beta_i$ and $r'_i=\alpha'_i+\beta'_i$ so that $\sum_ir_i=w_+$ and $\sum_ir'_i=w_-$.

\smallskip

To compute $R_Z(\chi)$,
we shall take a step-by-step strategy by recursively using
 the Littlewood-Richardson coefficients and
the branching rules. Each step will be essentially from
$G_{m+di}\times \GL_d(q)$ to $G_{m+d(i+1)}$
with the characters of other factors (i.e., copies of $\GL_d(q)$) kept untouched.
 This is similar to the way as in \cite[\S 7.2]{Li12} for unipotent characters,
depending on Lemma \ref{lem:aba}.
 To do this, we define a block $b_{\Delta_+,\Delta_-,i}$ of $G_{m+di}$ by $b_{\Delta_+,\Delta_-,i}:=b_{\Delta_+,\Delta_-,i,0}$ for $0\leqs i\leqs w_+$ and
$b_{\Delta_+,\Delta_-,i}:=b_{\Delta_+,\Delta_-,w_+,i-w_+}$ for $w_+ < i\leqs w.$
For a quadratic unipotent character $\chi_{\Lambda_+,\Lambda_-}$
of
$G_{m+di}$ such that $\Lambda_\pm$ has $d$-core $\Delta_\pm$,
Lemma \ref{lem:aba} shows that for $0\leqs i\leqs w_+-1$
$$\begin{array}{l}
  R_{G_{m+di}\times\GL_d(q)_{i+1},b_{\Delta_+,\Delta_-,i}\otimes a^+_{i+1}
}^{G_{m+d(i+1)},b_{\Delta_+,\Delta_-,i+1}}(\chi_{\Lambda_+,\Lambda_-}\otimes\chi_{1,(\alpha+1,1^{(d-\alpha-1)})}
) \\
=\chi_{\Theta_+',\Lambda_-}+\chi_{\Theta_+'',\Lambda_-},
\end{array}
$$
 where
$\Theta_+'$ and $\Theta_+''$ are symbols
obtained from $\Lambda_+$ by sliding a bead $1$ place down  $\alpha$th and
$(\alpha+d)$th runner, respectively.
Similarly, for $w_+ \leqs i\leqs w-1,$
$$\begin{array}{l}
R_{G_{m+di}\times\GL_d(q)_{i+1},b_{\Delta_+,\Delta_-,i}\otimes a^-_{i+1}
}^{G_{m+d(i+1)},b_{\Delta_+,\Delta_-,i+1}}(\chi_{\Lambda_+,\Lambda_-}
\otimes\chi_{-1,(\alpha+1,1^{(d-\alpha-1)})}) \\
=\chi_{\Lambda_+,\Theta_-'}+\chi_{\Lambda_+,\Theta_-''},
\end{array}
$$
 where
$\Theta_-'$ and $\Theta_-''$ are bipartitions
obtained from $\Lambda_+$ by sliding a bead $1$ place down  $\alpha$th and
$(\alpha+d)$th runner, respectively.

\smallskip

Suppose that we slide single beads $j$ times down the $l$th runner of a core after $w$ steps,
in which the bottom bead has been moved down $\sigma_1^l$ times,
the second bottom bead has been moved down $\sigma_2^l$
times, etc, with $\sigma_1^l\geqs \sigma_2^l\geqs\cdots$ and
$\sum_i\sigma_i^l=j$.
The number of ways of sliding single beads from the beginning to the end
is equal to the number of ways of writing the numbers
$1,\ldots,j$ in the Young diagram of the partition $\sigma^l:=(\sigma_1^l,\sigma_2^l,\ldots)$
such that numbers increase across rows and down columns.
This is exactly the same as the degree of the
character $\zeta^{\sigma^l}$ of the symmetric group $\mathfrak{S}_j$ corresponding to
the partition $\sigma^l$.
Hence, we have

\begin{align*}
R_Z(\chi)=&
\sum_{\widehat{\Upsilon}}{l_0\choose|\sigma^0|}\dim\zeta^{\sigma^0}\dim\zeta^{\tau^0}
\cdots{l_{d-1}\choose|\sigma^{d-1}|}\dim\zeta^{\sigma^{d-1}}\dim\zeta^{\tau^{d-1}}\\
&{l'_0\choose|\sigma'^0|}\dim\zeta^{\sigma'^0}\dim\zeta^{\tau'^0}
\cdots{l'_{d-1}\choose|\sigma'^{d-1}|}\dim\zeta^{\sigma'^{d-1}}\dim\zeta^{\tau'^{d-1}} \\
&
\dim\zeta^{\nu^1}\dim\zeta^{\nu'^1}\dim\zeta^{\nu^2}\dim\zeta^{\nu'^2}\cdots\\
&\chi(s_{m_+,m_-}\times s_1\times \overline{s_1}\times\cdots\times s_{w_+}\times
\overline{s_{w_+}}\\
&\times s'_{w_++1}\times
\overline{s'_{w_++1}}\times\cdots\times s'_{w}\times
\overline{s'_{w}},\mu)
\end{align*}
where
$\widehat{\Upsilon}$ represents
$$
\begin{array}{cc}
 l_0+\cdots+l_{d-1}=r_0 & |\sigma^i|+|\tau^i|=l_i, |\nu^i|=r_i \\
l'_0+\cdots+l'_{d-1}=r'_0 & |\sigma'^i|+|\tau'^i|=l'_i, |\nu'^i|=r'_i,
\end{array}
$$
and
each part of $\mu$ is as follows:
\begin{itemize}[leftmargin=8mm]
  \item  $\mu_{\Gamma_i}=\nu^i$ with $\Gamma_i$ the minimal polynomial of $t_i\times t_i^{-1}$,
   \item  $\mu_{\Gamma'_i}=\nu'^i$ with $\Gamma'_i$ the minimal
polynomial of $t'_i\times {t'_i}^{-1}$,
  \item  $\mu_{X-1}$ is the symbol whose $d$-core is $\Delta_+$ and whose
$d$-quotients are $[\sigma^0,\ldots,\sigma^{d-1}]$ and $[\tau^0,\ldots,\tau^{d-1}]$
with respect to the $2d$-abacus representation of $\Delta_+$, and
  \item  $\mu_{X+1}$ is the symbol whose $d$-core is $\Delta_-$ whose
$d$-quotients are $[\sigma'^0,\ldots,\sigma'^{d-1}]$ and
$[\tau'^0,\ldots,\tau'^{d-1}]$
with respect to the $2d$-abacus representation of $\Delta_-$.
\end{itemize}

\smallskip

To express $R_Z(\varphi)$,
we notice that we get the same character of $G$ from $R_Z$
after permutating the $\lambda_i$s of $\chi$.
 The number of those permutations is$$\frac{w_+!}{l_0!l_1!\cdots
l_{d-1}!\alpha_1!\beta_1!\alpha_2!\beta_2!\cdots}\frac{w_-!}{l'_0!l'_1!\cdots
l'_{d-1}!\alpha'_1!\beta'_1!\alpha'_2!\beta'_2!\cdots}.$$
Since  $\varphi=
\sum_{\chi\in {\rm Irr}(\mathcal{O}Lf_0)}\chi(1)\chi,$
 we have
\begin{align*}
R_Z(\varphi)=\sum_{\Upsilon^+}&\frac{w_+!}{l_0!l_1!\cdots
l_{d-1}!\alpha_1!\beta_1!\alpha_2!\beta_2!\cdots}\frac{w_-!}{l'_0!l'_1!\cdots
l'_{d-1}!\alpha'_1!\beta'_1!\alpha'_2!\beta'_2!\cdots}\\
&\dim(\chi_{\Delta_+,\Delta_-}\otimes\chi_{s_1,{\lambda_1}}\otimes\cdots\otimes\chi_{s_{w_+},{\lambda_{w_+}}}
\otimes
\chi_{s'_{w_++1},{\lambda_{w_++1}}}\otimes\cdots
\otimes\chi_{s'_w,{\lambda_w}})\\
&{l_0\choose|\sigma^0|}\dim\zeta^{\sigma^0}\dim\zeta^{\tau^0}\cdots{l_{d-1}
\choose|\sigma^{d-1}|}\dim\zeta^{\sigma^{d-1}}\dim\zeta^{\tau^{d-1}}\\
&{l'_0\choose|\sigma'^0|}\dim\zeta^{\sigma'^0}\dim\zeta^{\tau'^0}\cdots{l'_{d-1}
\choose|\sigma'^{d-1}|}\dim\zeta^{\sigma'^{d-1}}\dim\zeta^{\tau'^{d-1}}\\
&\dim\zeta^{\nu^1}\dim\zeta^{\nu'^1}\dim\zeta^{\nu^2}\dim\zeta^{\nu'^2}\cdots\\
&\chi(s_{m_+,m_-}\times s_1\times \overline{s_1}\times\cdots\times s_{w_+}\times
\overline{s_{w_+}}\\ &\times s'_{w_++1}\times
\overline{s'_{w_++1}}\times\cdots\times s'_{w}\times
\overline{s'_{w}},\mu),
\end{align*}
where $\Upsilon^+=\Upsilon \cup \Upsilon'$ with
 $\Upsilon'=\{\alpha_i+\beta_i=r_i, \alpha'_i+\beta'_i=r_i'\}$ and
 $$\Upsilon=
\left\{\begin{array}{ll}
l_0+\cdots+l_{d-1}+r_1+r_2+\cdots=w_+, & |\sigma^i|+|\tau^i|=l_i~\mbox{and}\\
l'_0+\cdots+l'_{d-1}+r'_1+r'_2+\cdots=w_-, & |\sigma'^i|+|\tau'^i|=l'_i.
\end{array}\right.
$$

Splitting $\Upsilon^+$ into $\Upsilon$ and $\Upsilon'$, we obtain

\begin{align*}
R_Z(\varphi)=\sum_{\Upsilon}&\frac{w_+!}{l_0!l_1!\cdots l_{d-1}!r_1!r_2!\cdots}\frac{w_-!}{l'_0!l_1!\cdots l'_{d-1}!r'_1!r'_2!\cdots}\\
&{l_0\choose|\sigma^0|}\dim\zeta^{\sigma^0}\dim\zeta^{\tau^0}\cdots{l_{d-1}
\choose|\sigma^{d-1}|}\dim\zeta^{\sigma^{d-1}}\dim\zeta^{\tau^{d-1}}\\
&{l'_0\choose|\sigma'^0|}\dim\zeta^{\sigma'^0}\dim\zeta^{\tau'^0}\cdots{l'_{d-1}
\choose|\sigma'^{d-1}|}\dim\zeta^{\sigma'^{d-1}}\dim\zeta^{\tau'^{d-1}}\\
&\dim\zeta^{\nu^1}\dim\zeta^{\nu'^1}\dim\zeta^{\nu^2}\dim\zeta^{\nu'^2}\cdots\\
&[\sum_{\Upsilon'}{r_1\choose\alpha_1}{r'_1\choose\alpha'_1}{r_2\choose\alpha_2}{r'_2\choose\alpha'_2}\cdots
\dim(\chi_{\Delta_+,\Delta_-}\otimes\\
&\chi_{s_1,{\lambda_1}}\otimes\cdots\otimes\chi_{s_{w_+},{\lambda_{w_+}}}\otimes
\chi_{s'_{w_++1},{\lambda_{w_++1}}}\otimes\cdots
\otimes\chi_{s'_w,{\lambda_w}})]\\
&\chi(s_{m_+,m_-}\times s_1\times \overline{s_1}\times\cdots\times s_{w_+}\times
\overline{s_{w_+}}\\ &\times s'_{w_++1}\times
\overline{s'_{w_++1}}\times\cdots\times s'_{w}\times
\overline{s'_{w}},\mu).
\end{align*}
So
the dimension ${\rm Dim}$ of $\operatorname{End}_{KG}((K\otimes_\mathcal{O}Z)\otimes_KKLf)$ over $K$ is
\begin{align*}
\sum_{\Upsilon}&(\frac{w_+!}{l_0!l_1!\cdots l_{d-1}!r_1!r_2!\cdots}\frac{w_-!}{l'_0!l_1!\cdots l'_{d-1}!r'_1!r'_2!\cdots})^2\\
&{l_0\choose|\sigma^0|}^2(\dim\zeta^{\sigma^0})^2(\dim\zeta^{\tau^0})^2\cdots{l_{d-1}
\choose|\sigma^{d-1}|}^2(\dim\zeta^{\sigma^{d-1}})^2(\dim\zeta^{\tau^{d-1}})^2\\
&{l'_0\choose|\sigma'^0|}^2(\dim\zeta^{\sigma'^0})^2(\dim\zeta^{\tau'^0})^2\cdots{l'_{d-1}
\choose|\sigma'^{d-1}|}^2(\dim\zeta^{\sigma'^{d-1}})^2(\dim\zeta^{\tau'^{d-1}})^2\\
&(\dim\zeta^{\nu^1})^2(\dim\zeta^{\nu'^1})^2(\dim\zeta^{\nu^2})^2(\dim\zeta^{\nu'^2})^2\cdots\\
&[\sum_{\Upsilon'}{r_1\choose\alpha_1}{r'_1\choose\alpha'_1}{r_2\choose\alpha_2}{r'_2\choose\alpha'_2}\cdots
\dim(\chi_{\Delta_+,\Delta_-}\otimes\\
&\chi_{s_1,{\lambda_1}}\otimes\cdots\otimes\chi_{s_{w_+},{\lambda_{w_+}}}\otimes
\chi_{s'_{w_++1},{\lambda_{w_++1}}}\otimes\cdots
\otimes\chi_{s'_w,{\lambda_w}})]^2,
\end{align*}
which is equal to
\begin{align*}
\sum_{\Upsilon}&
(\frac{w_+!}{l_0!l_1!\cdots
l_{d-1}!r_1!r_2!\cdots})^2{l_0\choose\sigma_0}^2\cdots{l_{d-1}\choose\sigma_{d-1}}
^2\\&
(\frac{w_-!}{l'_0!l'_1!\cdots
l'_{d-1}!r'_1!r'_2!\cdots})^2{l'_0\choose\sigma'_0}^2\cdots{l'_{d-1}\choose\sigma'_{d-1}}
^2\\
&[\sum_{\sigma^i\vdash\sigma_i,\tau^i\vdash\tau_i}(\dim\zeta^{\sigma^0}
)^2(\dim\zeta^{\tau^0})^2\cdots(\dim\zeta^{\sigma^{d-1}})^2(\dim\zeta^{\tau^{d-1}}
)^2]\\
&[\sum_{\sigma'^i\vdash\sigma'_i,\tau'^i\vdash\tau'_i}(\dim\zeta^{\sigma'^0}
)^2(\dim\zeta^{\tau'^0})^2\cdots(\dim\zeta^{\sigma'^{d-1}})^2(\dim\zeta^{\tau'^{d-1}}
)^2]\\
&(\dim\zeta^{\nu^1})^2(\dim\zeta^{\nu'^1})^2(\dim\zeta^{\nu^2})^2(\dim\zeta^{\nu'^2})^2\cdots\\
&[\sum_{\Upsilon'}{r_1\choose\alpha_1}{r'_1\choose\alpha'_1}{r_2\choose\alpha_2}{r'_2\choose\alpha'_2}\cdots
\dim(\chi_{s_{m_+,m_-},{\Delta_+\times\Delta_-}}\otimes\\
&\chi_{s_1,{\lambda_1}}\otimes\cdots\chi_{s_{w_+},{\lambda_{w_+}}}\otimes
\chi_{s'_{w_++1},{\lambda_{w_++1}}}\otimes\cdots
\otimes\chi_{s'_w,{\lambda_w}})]^2.
\end{align*}
It follows from the fact $\sum_{\sigma\vdash h}\dim(\zeta^\sigma)^2=h!$ that
\begin{align*}
{\rm Dim}=\sum_{\Upsilon}&(\frac{w_+!}{l_0!l_1!\cdots
l_{d-1}!r_1!r_2!\cdots})^2{l_0\choose\sigma_0}^2\cdots{l_{d-1}\choose\sigma_{d-1}}
^2\sigma_0!\tau_0!\cdots\sigma_{d-1}!\tau_{d-1}!r_1!r_2!\cdots\\
&(\frac{w_-!}{l'_0!l'_1!\cdots
l'_{d-1}!r'_1!r'_2!\cdots})^2{l'_0\choose\sigma'_0}^2\cdots{l'_{d-1}\choose\sigma'_{d-1}}
^2\sigma'_0!\tau'_0!\cdots\sigma'_{d-1}!\tau'_{d-1}!r'_1!r'_2!\cdots\\
&[\sum_{\Upsilon'}{r_1\choose\alpha_1}{r'_1\choose\alpha'_1}{r_2\choose\alpha_2}{r'_2\choose\alpha'_2}\cdots
\dim(\chi_{\Delta_+,\Delta_-})\\
&\dim(\chi_{s_1,{\lambda_1}}) \cdots\dim(\chi_{s_{w_+},{\lambda_{w_+}}})\dim(\chi_{s'_{w_++1},{\lambda_{w_++1}}})\cdots
\dim(\chi_{s'_w,{\lambda_w}})]^2.
\end{align*}

By \cite[Lemma 7.3.1]{Li12}, we know that $\dim(\chi_{t,(1)})=\dim(\chi_{t^{-1},(1)})$,
so for fixed $(r_1,r_2,\ldots)$,
$$\dim(\chi_{\Delta_+,\Delta_-})\dim(\chi_{s_1,\lambda_1})\cdots \dim(\chi_{s_w,\lambda_w})$$
keeps constant when $\alpha_i$ varies. Combining this and the fact that $\sum_{i=0}^r {r\choose i}=2^r$,
we get

\begin{align*}
{\rm Dim}=w_+!w_-!\sum_{\Upsilon}&\frac{w_+!}{l_0!l_1!\cdots
l_{d-1}!r_1!r_2!\cdots}{l_0\choose\sigma_0}\cdots{l_{d-1}\choose\sigma_{d-1}}\\
&\frac{w_-!}{l'_0!l'_1!\cdots
l'_{d-1}!r'_1!r'_2!\cdots}{l'_0\choose\sigma'_0}\cdots{l'_{d-1}\choose\sigma'_{d-1}}\\
&2^{\sum
r_i}2^{\sum
r'_i}[\sum_{\Upsilon'}{r_1\choose\alpha_1}{r'_1\choose\alpha'_1}{r_2\choose\alpha_2}{r'_2\choose\alpha'_2}\cdots
\\
&\dim(\chi_{\Delta_+,\Delta_-}\otimes\chi_{s_1,{\lambda_1}}\otimes\cdots\chi_{s_{w_+},{\lambda_{w_+}}}\otimes\\
&\chi_{s'_{w_++1},{\lambda_{w_++1}}}\otimes\cdots
\otimes\chi_{s'_w,{\lambda_w}})^2],
\end{align*}
and so
\begin{align*}
{\rm Dim}=&w_+!w_-!\sum_{\Upsilon_0}\frac{w_+!}{l_0!l_1!\cdots l_{d-1}!r_1!r_2!\cdots}\frac{w_-!}{l'_0!l'_1!\cdots l'_{d-1}!r'_1!r'_2!\cdots}\\
&2^{\sum r_i}2^{\sum
l_i}2^{\sum r'_i}2^{\sum
l'_i}[\sum_{\Upsilon'}{r_1\choose\alpha_1}{r'_1\choose\alpha'_1}{r_2\choose\alpha_2}{r'_2\choose\alpha'_2}\cdots
\\
&\dim(\chi_{\Delta_+,\Delta_-}\otimes\chi_{s_1,{\lambda_1}}\otimes\cdots\chi_{s_{w_+},{\lambda_{w_+}}}\otimes\\
&\chi_{s'_{w_++1},{\lambda_{w_++1}}}\otimes\cdots
\otimes\chi_{s'_w,{\lambda_w}})^2],
\end{align*}
where $\Upsilon_0=\{l_0+\cdots+r_1+\cdots=w_+, l'_0+\cdots+r_1'+\cdots=w_-\}$.
If we write
$$ \Upsilon_1=\left\{\begin{array}{l}
  l_0+\cdots+l_{d-1}+\alpha_1+\beta_1+\alpha_2+\beta_2\cdots=w_+,~\mbox{and} \\
  l'_0+\cdots+l'_{d-1}+\alpha'_1+\beta'_1+\alpha'_2+\beta'_2\cdots=w_-
\end{array}\right.
$$
then
 $$
\begin{array}{rcl}
{\rm Dim}&=&2^ww_+!w_-!\sum_{\Upsilon_1}\frac{w_+!}{l_0!l_1!\cdots
l_{d-1}!\alpha_1!\beta_1!\alpha_2!\beta_2!\cdots}\frac{w_-!}{l'_0!l'_1!\cdots
l'_{d-1}!\alpha'_1!\beta'_1!\alpha'_2!\beta'_2!\cdots}\\
&&\dim(\chi_{\Delta_+\times\Delta_-}\otimes\chi_{s_1,{\lambda_1}}\otimes
\cdots\chi_{s_{w_+},{\lambda_{w_+}}}\otimes\\
&&\chi_{s'_{w_++1},{\lambda_{w_++1}}}\otimes\cdots
\otimes\chi_{s'_w,{\lambda_w}})^2\\
&=&2^ww_+!w_-!\dim_K(KLf_0).
\end{array}$$
\end{proof}

\subsubsection{Morita equivalence between $\mathcal{O}Gb$ and $\mathcal{O}Nf$}

\begin{theorem}\label{thm:MoritaONf} The block
$\mathcal{O}Nf$ is Morita equivalent to $\mathcal{O}Gb$.
\end{theorem}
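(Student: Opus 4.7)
The plan is to promote the chain of $\mathcal{O}$-split monomorphisms $\mathcal{O}Nf\hookrightarrow\End_{\mathcal{O}G}(X)\hookrightarrow\End_{\mathcal{O}G}(Y)$ supplied by Proposition~\ref{prop:splitmono} to a chain of $\mathcal{O}$-algebra isomorphisms by matching $K$-dimensions, and then conclude the Morita equivalence through Morita's theorem using the progenerator property of $X$ (also from Proposition~\ref{prop:splitmono}). The underlying reason this works is that $Y=\mathcal{O}G\bigl(\sum_{\tuple\nu\in\mathbb{J}_{\tuple w}}e_Ub^{\tuple\nu}_w\cdots b^{\tuple\nu}_0\bigr)$ is a direct summand of $\mathcal{O}G$ (the idempotent lies in $\mathcal{O}G$ since $|U|$ is a power of $q^d\equiv 1\pmod\ell$ and hence invertible in $\mathcal{O}$), so $\End_{\mathcal{O}G}(Y)$ is $\mathcal{O}$-free. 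Together with the $\mathcal{O}$-splitness of the composite, this means that an equality of $K$-dimensions between $\mathcal{O}Nf$ and $\End_{\mathcal{O}G}(Y)$ automatically forces both monomorphisms in the chain to be isomorphisms over $\mathcal{O}$.

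For the dimension computation I would first observe that all $Z^{\tuple\nu}$ are isomorphic as left $\mathcal{O}G$-modules to $Z$, so $Y\simeq Z^{|\mathbb{J}_{\tuple w}|}$ as $\mathcal{O}G$-modules, and Proposition~\ref{prop:dimZ} then gives
$$\dim_K\End_{KG}(K\otimes_{\mathcal{O}}Y)=|\mathbb{J}_{\tuple w}|^2\dim_K\End_{KG}(K\otimes_{\mathcal{O}}Z)=|\mathbb{J}_{\tuple w}|^2\cdot 2^{w}w_+!\,w_-!\dim_K(KLf_0).$$
On the other side, since $f_0$ is fixed by its stabilizer $M$ in $N$, it is central in $KM$, so $KMf_0\simeq\bigoplus_{m\in M/L}m\cdot KLf_0$ as $K$-vector spaces and $\dim_K(KMf_0)=[M:L]\dim_K(KLf_0)=2^{w}w_+!\,w_-!\dim_K(KLf_0)$. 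Clifford theory for blocks then yields a bijection $\psi\mapsto\Ind_M^N\psi$ between $\Irr(KMf_0)$ and $\Irr(KNf)$ with $(\Ind_M^N\psi)(1)=[N:M]\psi(1)$, so summing squares of degrees gives
$$\dim_K(KNf)=[N:M]^2\dim_K(KMf_0)=|\mathbb{J}_{\tuple w}|^2\cdot 2^{w}w_+!\,w_-!\dim_K(KLf_0),$$
where $[N:M]=|\mathfrak{S}_w/(\mathfrak{S}_{w_+}\times\mathfrak{S}_{w_-})|=|\mathbb{J}_{\tuple w}|$. The two dimensions coincide.

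By the argument of the first paragraph the chain of $\mathcal{O}$-split monomorphisms becomes a chain of $\mathcal{O}$-algebra isomorphisms, and in particular the right action of $Nf$ on $X$ provides $\mathcal{O}Nf\simeq\End_{\mathcal{O}G}(X)^{\op}$ (the opposite appearing via the usual antiautomorphism of the group algebra). Since $X$ is a progenerator for $\mathcal{O}Gb$ by Proposition~\ref{prop:splitmono}, Morita's theorem then guarantees that the $(\mathcal{O}Gb,\mathcal{O}Nf)$-bimodule $X$ induces the desired Morita equivalence between $\mathcal{O}Gb$ and $\mathcal{O}Nf$. The principal difficulty in the strategy lies upstream, in Proposition~\ref{prop:dimZ}: the identity $\dim_K\End_{KG}(K\otimes_{\mathcal{O}}Z)=2^{w}w_+!\,w_-!\dim_K(KLf_0)$ depends critically on the Rouquier-core hypothesis through Lemma~\ref{lem:aba} (via a delicate bead-sliding argument on the $2d$-abacus of $\Delta_\pm$), and without it the two dimensions computed above would fail to match. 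Once Proposition~\ref{prop:dimZ} is available, the present synthesis via Clifford theory, $\mathcal{O}$-splitness and Morita's theorem is comparatively routine.
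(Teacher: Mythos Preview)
Your proposal is correct and follows essentially the same approach as the paper: match the $K$-dimensions of $\mathcal{O}Nf$ and $\End_{\mathcal{O}G}(Y)$ via the decomposition $Y\simeq Z^{|\mathbb{J}_{\tuple w}|}$, Proposition~\ref{prop:dimZ}, and Clifford theory for the inertia subgroup $M$, so that the $\mathcal{O}$-split monomorphisms of Proposition~\ref{prop:splitmono} become isomorphisms and the progenerator $X$ yields the Morita equivalence. Your write-up is slightly more explicit than the paper's in justifying that $\End_{\mathcal{O}G}(Y)$ is $\mathcal{O}$-free and in spelling out the Clifford-theoretic dimension count, but the argument is the same.
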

\begin{proof}
We first prove that  $\mathcal{O}Nf$ and $\operatorname{End}_{\mathcal{O}G}(Y)$ have the same
$\mathcal{O}$-rank. It suffices to show that
$K\otimes_\mathcal{O}\mathcal{O}Nf$ and $K\otimes_\mathcal{O}\operatorname{End}_{\mathcal{O}G}(Y)$
have the same $K$-dimension.

Since  $Z^{\tuple \nu}$ are isomorphic as $\mathcal{O}G$-module for all $\tuple \nu\in \mathbb{J}_{\tuple w}$,
 we have that $\operatorname{End}_{\mathcal{O}G}(Y)$
 is a matrix algebra of $\operatorname{End}_{\mathcal{O}G}(Z)$ of
 size $\frac{w!}{w_+!w_-!}.$
 Thus $$\dim_K\operatorname{End}_{KG}(Y)=(\frac{w!}{w_+!w_-!})^2
 \dim_K\operatorname{End}_{KG}(Z)$$
 which is equal to
 $(\frac{w!}{w_+!w_-!})^2\dim_K(KMf_0)$ by Proposition \ref{prop:dimZ}.
 However, this is exactly $\dim_K(KNf)$ by the fact that $|N:M|=\frac{w!}{w_+!w_-!}$ and
 Clifford's theory of blocks \cite[\S 5.2]{Nagao}.

 \smallskip

Now all monomorphisms in (\ref{equation:monos}) 
become isomorphisms. Since $X$ is a progenerator
as a left $\mathcal{O}Gb$-module by  Proposition \ref{prop:splitmono},
it follows that $_{\mathcal{O}Gb}X_{\mathcal{O}Nf}$ induces
a Morita equivalence between $\mathcal{O}Gb$ and $\mathcal{O}Nf$.
\end{proof}

\smallskip

We can now  prove Theorem \ref{thm:Iso-RoCK}.

\begin{proof}[Proof of Theorem \ref{thm:Iso-RoCK}]
It is known that Brou\'{e}'s abelian defect group conjecture is true for blocks
with cyclic defect groups. Hence the block $a^+$ (resp. $a^-$) of $\mathcal{O}(\GL_d(q).2)$ is derived
equivalent to the block $c^+$ (resp. $c^-$) of $\mathcal{O}N_{\GL_d(q).2}(P_0)$.
By \cite[Theorem 4.3(b)]{marcus1994},
the  block $a^{\tuple\nu_0}$ of
$\mathcal{O}((\GL_d(q).2\wr \frakS_{w_+})\times(\GL_d(q).2\wr \frakS_{w_-}))$
is derived equivalent to the  block $c^{\tuple\nu_0}$ of
$\mathcal{O}((N_{\GL_d(q).2}(P_0)\wr \frakS_{w_+})\times (N_{\GL_d(q).2}(P_0)\wr \frakS_{w_-}))$.
Now the result follows by Theorem \ref{thm:MoritaONf}, and
Lemmas \ref{lem:Moritaf&a} and \ref{lem:Moritah&c}.
\end{proof}

\subsection{Brou\'{e}'s conjecture}\label{sec:broconj}

At the end, we prove Theorem A, which can be reformulated as follows.

\begin{theorem}\label{thm:derivedBC}
 Let $G=\SO_{2n+1}(q)$, where $q$ is odd. Assume that $\ell$ is a linear prime with respect to $q$.
Let $B$ be an $\ell$-block of $G$ over $R$ where $R=k$ or $\mathcal{O}$, and $P$ be a defect group of $B$.
If $P$ is abelian, then $B$ is derived equivalent to its Brauer correspondent in $N_{G}(P)$.
\end{theorem}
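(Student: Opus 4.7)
The proof will assemble three pillars developed in the paper into a chain of (splendid) derived equivalences: Bonnaf\'e-Dat-Rouquier's reduction, the $\mathfrak{A}(\mathfrak{g}_{2d})$-categorification of $\scrQU_k$, and Theorem C for isolated RoCK blocks. I would proceed by induction on $n$. Given a block $B$ of $\mathcal{O}G$ with semisimple label $s$ and abelian defect group $P$, I first apply Theorem \ref{th:introequiv}: if $s$ is not isolated in $\bfG^{\ast}$, then $\mathcal{O}G e_s^G$ is splendidly Rickard equivalent to $\mathcal{O}N e_s^L$ for $N = N_G(\bfL, e_s^L)$ and $\bfL^{\ast}$ minimal containing $C^{\circ}_{\bfG^{\ast}}(s)$. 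The equivalence sends $B$ to a block $B'$ of $\mathcal{O}N$ with matching defect group and Brauer correspondent; since $L \cong G_m \times \prod_i \GL_{m_i}(\varepsilon_i q^{e_i})$ with $m < n$, Clifford theory combined with the induction hypothesis on $G_m$ and Chuang-Rouquier \cite{CR} for $\GL$-blocks at linear primes disposes of $B'$. Thus I may assume $B = b_{\Delta_+, \Delta_-, w_+, w_-}$ is isolated, with $w_\pm < \ell$.

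Next, by Theorem \ref{thm:Linearprime} the block $B$ sits inside a weight space of the $\mathfrak{A}(\mathfrak{g}_{2d})$-action on $\scrQU_k$, and two isolated blocks lie in the same $W_{2d}$-orbit iff they share their degree vector $(w_+, w_-)$. I then locate within the $W_{2d}$-orbit of $B$ an isolated RoCK block $B^{\circ} = b_{\Delta^{\circ}_+, \Delta^{\circ}_-, w_+, w_-}$: by acting with suitable translations in the coroot lattice of $\mathfrak{g}_{2d}$, each $\Delta_\pm$ can be shifted on its $2d$-abacus until the Rouquier condition of Definition \ref{def:rouquiercore} holds runner by runner. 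Theorem \ref{thm:reflection}, applied to the $2$-categorification built from the representation datum of \S \ref{sub:explicit-repdatumSO}, then supplies derived self-equivalences $\Theta_i$ of $\scrQU_k$ realising the simple reflections of $W_{2d}$; composing a suitable sequence yields $D^b(kGB) \simeq D^b(kGB^{\circ})$. By Theorem \ref{thm:Iso-RoCK} (Theorem C), the RoCK block $B^{\circ}$ is already Morita equivalent to its Brauer correspondent $B^{\circ}_{\mathrm{loc}}$ in $N_G(P^{\circ})$.

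To close the square, I would show that each $\Theta_i$ is actually splendid and therefore descends via the Brauer functor at $\Delta(P^{\circ})$ to a derived self-equivalence of the corresponding sum of Brauer correspondents, giving $D^b(kN_G(P)B_{\mathrm{loc}}) \simeq D^b(kN_G(P^{\circ})B^{\circ}_{\mathrm{loc}})$. Concatenating the four equivalences gives $B \sim B^{\circ} \sim B^{\circ}_{\mathrm{loc}} \sim B_{\mathrm{loc}}$, and the lift from $k$ to $\mathcal{O}$ is standard for splendid equivalences over abelian defect. \emph{The main obstacle} is precisely this local-global compatibility of the reflection functors: one must show that the tilting complexes entering the $\Theta_i$ are built out of $p$-permutation summands of compositions of the bifunctors $F, F', E, E'$ represented by the permutation bimodules $\mathcal{O}G_{n+1}e_{n+1,n}$ and $\mathcal{O}G_{n+1}e'_{n+1,n}$ of \S \ref{subsec:idempotent}, so that $\Theta_i$ commutes with the Brauer functor up to quasi-isomorphism. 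The blueprint for this argument is the unipotent treatment of Dudas-Varagnolo-Vasserot \cite{DVV2}, but the symmetry between $F$ and $F'$ induced by the spinor norm (Proposition \ref{prop:spin-SO}) and the sign twists recorded in Theorem \ref{thm:SO-isos} demand additional bookkeeping when identifying the bimodule summands on the global and local sides.
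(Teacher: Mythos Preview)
Your strategy matches the paper's in broad strokes---Bonnaf\'e--Dat--Rouquier reduction to isolated blocks, reflection functors to move within a $W_{2d}$-orbit to a RoCK block, Theorem~C for the RoCK block, then lift to $\mathcal{O}$---but you are making the final step much harder than necessary, and the ``main obstacle'' you flag is one the paper simply avoids.

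The paper does \emph{not} attempt to show that the reflection functors $\Theta_i$ are splendid in a way compatible with the Brauer functor so as to descend the equivalence $D^b(B)\simeq D^b(B^\circ)$ to the local side. Instead, it observes directly that the Brauer correspondents $B_{\mathrm{loc}}$ and $B^\circ_{\mathrm{loc}}$ are Morita equivalent \emph{to each other} for the elementary reason that, by Lemma~\ref{lem:Moritah&c}, each is Morita equivalent to the same block $\mathcal{O}\big((N_{\GL_d(q).2}(P_0)\wr\mathfrak{S}_{w_+})\times(N_{\GL_d(q).2}(P_0)\wr\mathfrak{S}_{w_-})\big)c^{\tuple\nu_0}$, which depends only on the degree vector $(w_+,w_-)$ and not on the cores $\Delta_\pm$. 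So the square closes by a direct Morita equivalence of the local blocks, and no descent argument is needed. Splendidness is invoked only at the very end, and only for the purpose of lifting the composite equivalence from $k$ to $\mathcal{O}$ via Rickard's lifting theorem.

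Two smaller points: first, Theorem~C gives a \emph{derived} equivalence between $B^\circ$ and its Brauer correspondent, not a Morita equivalence as you state (the chain in its proof passes through the Marcus derived equivalence for wreath products). Second, the paper separately dispatches the case $\ell=2$ (where isolated blocks have abelian defect only for $n=0$) and the case $\ell\mid q-1$ (where every isolated block is already RoCK, so $d=1$ and Theorem~C applies directly); your proposal implicitly relies on the categorification, which requires $\ell$ odd and $\ell\nmid q(q^2-1)$.
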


\begin{proof} We first deal with the case where $R=k$.  By the reduction theorem of
Bonnaf\'{e}-Dat-Rouquier \cite[Theorem 7.7]{BDR17}
(or Theorem \ref{th:introequiv}),
there is a Morita equivalence between the block $B$ and an isolated  block of a Levi subgroup $L$ of $G$ preserving the local structure,
 where $L$ is of the form $$\SO_{2m+1}(q)\times \prod_{i}\GL_{n_i}(q^{m_i})\times \prod_{j}U_{n_j}(q^{m_j}).$$
 Since $\ell$ is linear (i.e., $f$ is odd),
 the order of $q^{m_j}$ mod $\ell$ is odd and
 thus $-q^{m_j}$ mod $\ell $ is even for any $m_j\in \bbN.$
 It follows that $\ell$ is also a linear prime for all $U_{n_j}(q^{m_j})$s.
 Assume that the theorem holds for isolated block of $\SO_{2m+1}$ for all $m$.
 Then the theorem immediately holds since
 Brou\'{e}'s abelian defect group conjecture has been proved to be true for
  $\GL_n(q)$ \cite{CR} and for $U_n(q)$ in the case of linear primes \cite{DVV2}.

  \smallskip

So it remains to prove that the theorem holds for isolated blocks of $G$.
When $\ell=2$,
 all quadratic unipotent characters are in $\mathcal E_{2}(G, (1))$.
By \cite[Theorem 21.14]{CE04}, all of them belong to
 the principal $2$-block of $G$, which in this case
 the defect groups are abelian if and only if $n=0,$ i.e., $G=G_0=1.$
 So there is nothing to prove.
Hence we may assume that $\ell\neq 2.$
 If $\ell$
divides $q-1,$
then every  isolated $\ell$-block of $G$
is an isolated RoCK block by definition, in which case Theorem \ref{thm:Iso-RoCK} applies.
So we may assume that $\ell\nmid q-1$. By Theorem \ref{thm:Linearprime},
the isolated blocks of $G$ in $\scrQU_{k,t_+,t_-}$ with the same degree vector $\tuple w$
form a single orbit under the action of the affine Weyl group $W_{2d}$.
Observe that there is always an isolated RoCK block in each orbit.
Let $B'$ be such a block in the orbit of $B.$
 An application of Theorem \ref{thm:Iso-RoCK} shows that
$B'$ is derived equivalent to its Brauer correspondent.
By Lemma \ref{lem:Moritah&c}, the Brauer correspondents of $B$
and $B'$ are Morita equivalent to the same block of some group.
Now the theorem follows since
 the derive equivalence of $B$ and $B'$
is guaranteed by
a result of Chuang and Rouquier \cite[Theorem~6.4]{CR} saying that
isolated blocks of $G$ lying in the same orbit under the action of
$W_{2d}$ are derived equivalent.

\smallskip

Now we consider the case where $R=\mathcal{O}.$
Notice that the above derived equivalence
can be indeed chosen to be a splendid Rickard equivalence over $k$
by a similar argument as for \cite[Theorem 7.6 or 7.20]{CR}.
Hence the lifting theorem of splendid Rickard equivalence \cite[Theorem 5.2]{Ri96}
yields a splendid Rickard equivalence over $\mathcal{O},$ which of course induces a
 derived equivalence between $B$ and its correspondence.
\end{proof}

\subsection*{Acknowledgements}
We are deeply grateful to Xin Huang, Gunter Malle, Lizhong Wang and Wolfgang Willems
for inspiring discussions, and to Zhicheng Feng, Conghui Li and Zhenye Li
for improving an earlier version of this paper. In particular, the second author would like to
express his deepest thanks to Gunter Malle for his warm-hearted help during
visiting him at TU Kaiserslautern supported by the Alexander von Humboldt Foundation.

\end{document}